\setlist[enumerate, 1]{label=(\roman*)}
\setlist[itemize]{leftmargin=1.5em}
\setlist[description]{leftmargin=1em}
\setlist[itemize, 1]{label=$\blacktriangleright$}
\setlist[itemize, 2]{label=$\bullet$}
\newcommand\nobreakpar{\par\nobreak\@afterheading} 
\def\arraystretch{1.5}
\theoremstyle{plain}
\newtheorem{theorem}{Theorem}[section]
\newtheorem{corollary}{Corollary}[section]
\newtheorem{lemma}{Lemma}[section]
\newtheorem{proposition}{Proposition}[section]
\theoremstyle{definition}
\newtheorem{definition}{Definition}[section]
\newtheorem{example}{Example}[section]
\theoremstyle{remark}
\newtheorem{remark}{Remark}[section]
\newcommand{\Z}{\mathbb{Z}}
\newcommand{\E}{\mathbb{E}}
\newcommand{\R}{\mathbb{R}}
\newcommand{\C}{\mathbb{C}}
\renewcommand{\L}{\mathbb{L}}
\newcommand{\HH}{\mathbb{H}}
\newcommand{\dS}{\mathrm{d}\S}
\renewcommand{\S}{\mathbb{S}}
\renewcommand{\P}{\mathrm{P}}
\newcommand{\RP}{\R\mathrm{P}}
\newcommand{\CP}{\C\mathrm{P}}
\newcommand{\arsinh}{\operatorname{arsinh}}
\newcommand{\arcosh}{\operatorname{arcosh}}
\newcommand{\PGL}{\operatorname{PGL}}
\newcommand{\PO}{\operatorname{PO}}
\newcommand{\set}[2]{\left\{#1\ \middle| \ #2 \right\}}
\newcommand{\abs}[1]{\left\vert #1 \right\vert}
\newcommand{\scalarprod}[2]{\left\langle #1, #2 \right\rangle}
\newcommand{\dotprod}[2]{#1 \cdot #2}
\newcommand\varpm{\mathbin{\vcenter{\hbox{%
  \oalign{\hfil$\scriptstyle+$\hfil\cr
          \noalign{\kern-.3ex}
          $\scriptscriptstyle({-})$\cr}%
      }}}}
\newcommand\restrict[1]{\raisebox{-.5ex}{$\big|$}_{#1}}
\newcommand\transpose[1]{{#1}^\intercal}
\newcommand\invtranspose[1]{{#1}^{-\intercal}}
\newcommand{\p}[1]{\boldsymbol{#1}}
\newcommand{\quadric}{\mathcal{Q}}
\newcommand{\secquadric}{\widetilde{\mathcal{Q}}}
\newcommand{\cone}[2]{C_{#1}(#2)}
\newcommand{\ck}[3]{K_{#1}\left(#2, #3\right)}
\newcommand{\cksphere}[2]{S_{#2}(#1)}
\newcommand{\inv}[4]{I_{#1, #2}\left(#3, #4\right)}
\newcommand{\mob}{\mathcal{S}}
\newcommand{\hyp}{\mathcal{H}}
\newcommand{\chyp}{\overline{\hyp}}
\newcommand{\ds}{\mathrm{dS}}
\newcommand{\cds}{\overline{\ds}}
\newcommand{\ellipb}{\mathcal{O}}
\newcommand{\ellip}{\mathcal{E}}
\newcommand{\euclb}{\mathcal{C}}
\newcommand{\eucl}{\mathbf{E}}
\newcommand{\pscalarprod}[3]{\scalarprod{\pi_{#1}(#2)}{\pi_{#1}(#3)}}
\newcommand{\spheres}{\mathfrak{S}}
\newcommand{\sprojection}[1]{\pi_{#1}^\spheres}
\newcommand{\spprojection}[1]{\pi_{#1}^{\spheres*}}
\newcommand{\lag}{\mathcal{B}}
\newcommand{\laghyp}{\lag_{\mathrm{hyp}}}
\newcommand{\laghyptrafos}{\mathbf{Lag}_{\mathrm{hyp}}}
\newcommand{\lagell}{\lag_{\mathrm{ell}}}
\newcommand{\lagelltrafos}{\mathbf{Lag}_{\mathrm{ell}}}
\newcommand{\lageucl}{\lag_{\mathrm{euc}}}
\newcommand{\lageucltrafos}{\mathbf{Lag}_{\mathrm{euc}}}
\newcommand{\lie}{\mathcal{L}}
\newcommand{\liesc}[2]{\scalarprod{#1}{#2}}
\newcommand{\ospheres}{\vec{\mathscr{S}}}
\newcommand{\nospheres}{\mathscr{S}}
\newcommand{\hyplag}{\lag_{\mathrm{hyp}}}
\newcommand{\baseplane}{\mathbf{B}}
\newcommand{\lieperp}{\perp}
\newcommand{\lietrafos}{\mathbf{Lie}}
\newcommand{\mobtrafos}{\mathbf{Mob}}
\newcommand{\lagperp}{\perp}
\newcommand{\cbichyp}{\mathcal{Q}}
\newcommand{\project}[1]{\widehat{#1}}
\newcommand{\id}{\mathrm{id}}
\newcommand{\embedS}{\sigma_{\S^N}}
\newcommand{\embedR}{\sigma_{\R^N}}
\newcommand{\Span}{\operatorname{span}}
\newcommand{\jac}[1]{\operatorname{#1}}
\newcommand{\KK}{\mathsf{K}}
\renewcommand{\eqref}[1]{(\refeq{#1})}
\newenvironment{acknowledgements}{
\paragraph{Acknowledgement.}
}
\title{
   Non-Euclidean Laguerre geometry and incircular nets 
}
\author{
   Alexander I. Bobenko$^1$, Carl O. R. Lutz$^1$, Helmut Pottmann$^2$, Jan Techter$^1$ \bigskip\\
   $^1$Institut f\"ur Mathematik, TU Berlin, \\
   Str.\@ des 17.\@ Juni 136, 10623 Berlin, Germany \bigskip\\
   $^2$Visual Computing Center, KAUST, \\
   Thuwal 23955-6900, Saudi Arabia
}
\date{\today}
\begin{document}

\maketitle

\noindent
\textbf{Abstract.}
Classical (Euclidean) Laguerre geometry studies oriented hyperplanes, oriented hyperspheres,
and their oriented contact in Euclidean space.
We describe how this can be generalized to arbitrary Cayley-Klein spaces, in particular hyperbolic and elliptic space,
and study the corresponding groups of Laguerre transformations.
We give an introduction to Lie geometry
and describe how these Laguerre geometries can be obtained as subgeometries.
As an application of two-dimensional Lie and Laguerre geometry
we study the properties of 
incircular nets.
\vspace{0.1cm}

\begin{center}
  \includegraphics[width=0.6\textwidth]{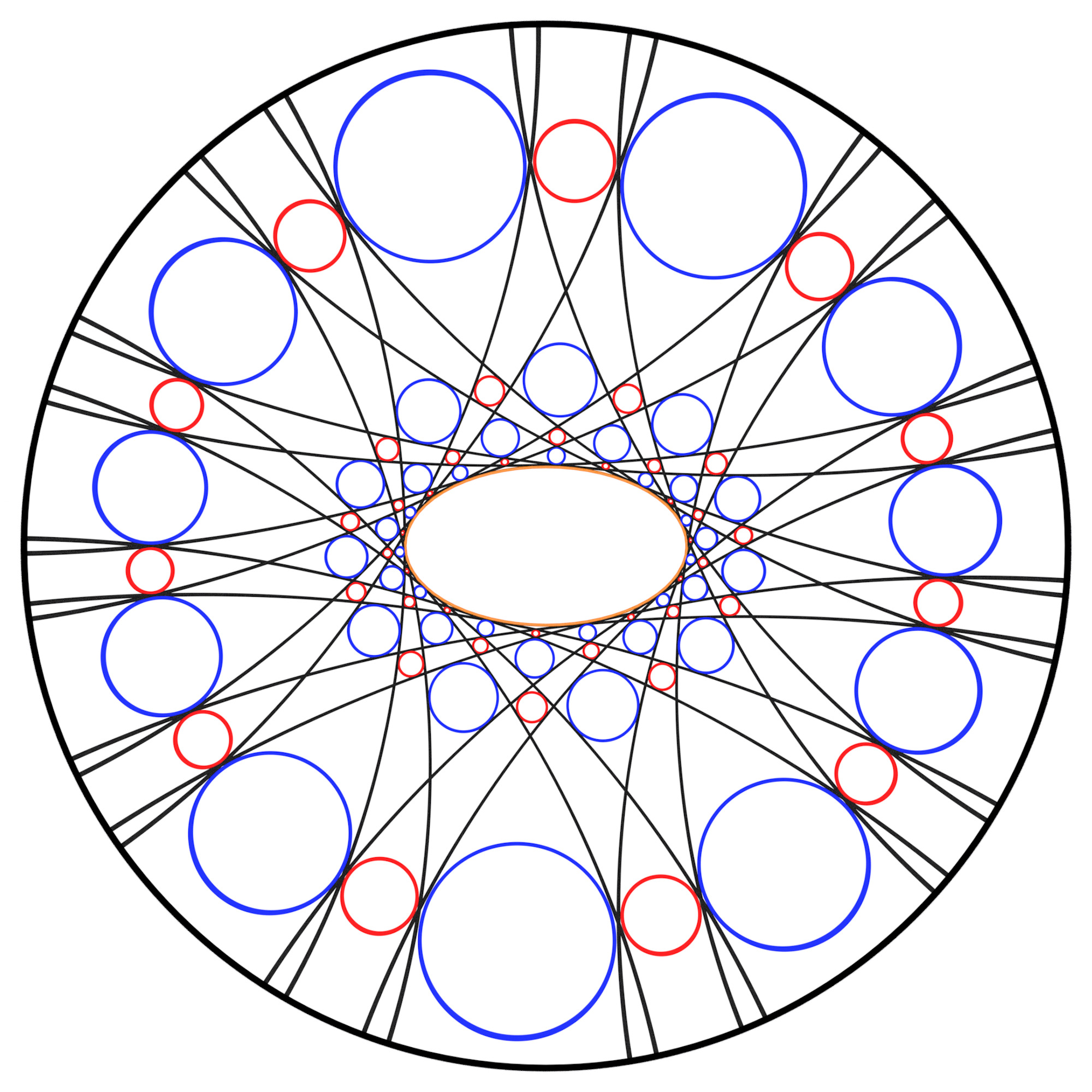}  
\end{center}


\vspace{\fill}

\begin{acknowledgements}
  This research was supported by the DFG Collaborative Research Center TRR 109 ``Discretization in Geometry and Dynamics''.
  H.\ Pottmann's participation in this program has been supported through grant I2978 of the Austrian Science Fund.
  We would like to thank Oliver Gross and Nina Smeenk for their assistance in creating the figures.
\end{acknowledgements}

\newpage

\tableofcontents

\newpage

\section{Introduction}
The discovery of \emph{non-Euclidean geometry} by Lobachevsky, Bolyai and Gauss was a revolution which might be compared with the discovery of the spherical form of the Earth. It turned out that there exist other geometric worlds with points, straight lines and circles, and they have natural geometric properties generalizing the ones of classical Euclidean geometry. The latter is recovered in the limit when the curvature of the space goes to zero. Almost immediately after the invention of hyperbolic geometry Lobachevsky and Gauss posed the question about the real geometry of our world and even tried to measure it experimentally. This played a crucial role in the further development of geometry and physics. Indeed, in the hyperbolic space conventional Euclidean translations and rotations are replaced by the group of hyperbolic isometric transformations. In the three-dimensional case this group coincides with the Lorentz group of our space-time, which is central in Einstein's special theory of relativity.

Felix Klein in his \emph{Erlangen program} of 1872 \cite{K2} revolutionized the point of view on geometry by declaring the transformation group as the conceptually central notion. The traditional view is that geometry studies the space around us. Due to Klein, geometry is the study of invariants under a group of transformations. This was the organizing principle which brought an order into various facts accumulated in geometry, or rather, into different geometries that had been discovered.

Various \emph{transformation groups} naturally lead to various geometries including projective, affine, spherical, hyperbolic, M\"obius, Lie, Pl\"ucker, and Laguerre geometries. Many beautiful results were obtained during the classical period of the theory. A good presentation can be found in the books by Wilhelm Blaschke \cite{Bl2a,Bl2}, which is probably the most comprehensive source of knowledge of the corresponding geometries. Unfortunately till now these books exist only in German. 

Modern revival of the interest in \emph{classical geometries} and their recent development is in much extent due to the possibility of their investigation by computational methods. Computers enable experimental and numerical investigations of geometries as well as their visualization. Classical geometries became visible! Also physics contributed with more and more involved transformation groups and problems.

Last but not least are \emph{applications} in computer graphics, geometry processing, architectural geometry and even computer simulation of dynamics and other physical processes. M\"obius geometry is probably the most popular geometry in this context. For numerous applications of classical geometries we refer in particular to \cite{BS,PW}. 

This small book is on a rather ``exotic'' geometry called \emph{non-Euclidean Laguerre geometry}. Euclidean Laguerre geometry, M\"obius geometry and Lie geometry belong to its close environment and also appear in this book. Before we come to precise mathematical explanations let us give a rough idea of these geometries in the plane. The basic geometric objects in these geometries are points, straight lines and circles.  Whereas M\"obius geometry is dealing with points and circles and has no notion of a straight line, Laguerre geometry is the geometry of circles and straight lines and has no notion of a point. Incidences in M\"obius geometry, like points lie on circles, in Laguerre geometry correspond to the tangency condition between circles and straight lines (more precisely, oriented circles and lines which are in oriented contact). In the non-Euclidean case, straight lines are replaced by geodesics (see Figure~\ref{fig:intro}).
Generalizations of Laguerre geometry to non-Euclidean space have already been studied by Beck \cite{Be}, Graf \cite{G34, G37, G39} and Fladt \cite{F1, F2}, mainly in dimension 2.

\begin{figure}[h]
  \centering
  \raisebox{0.7cm}{
    \includegraphics[width=0.4\textwidth]{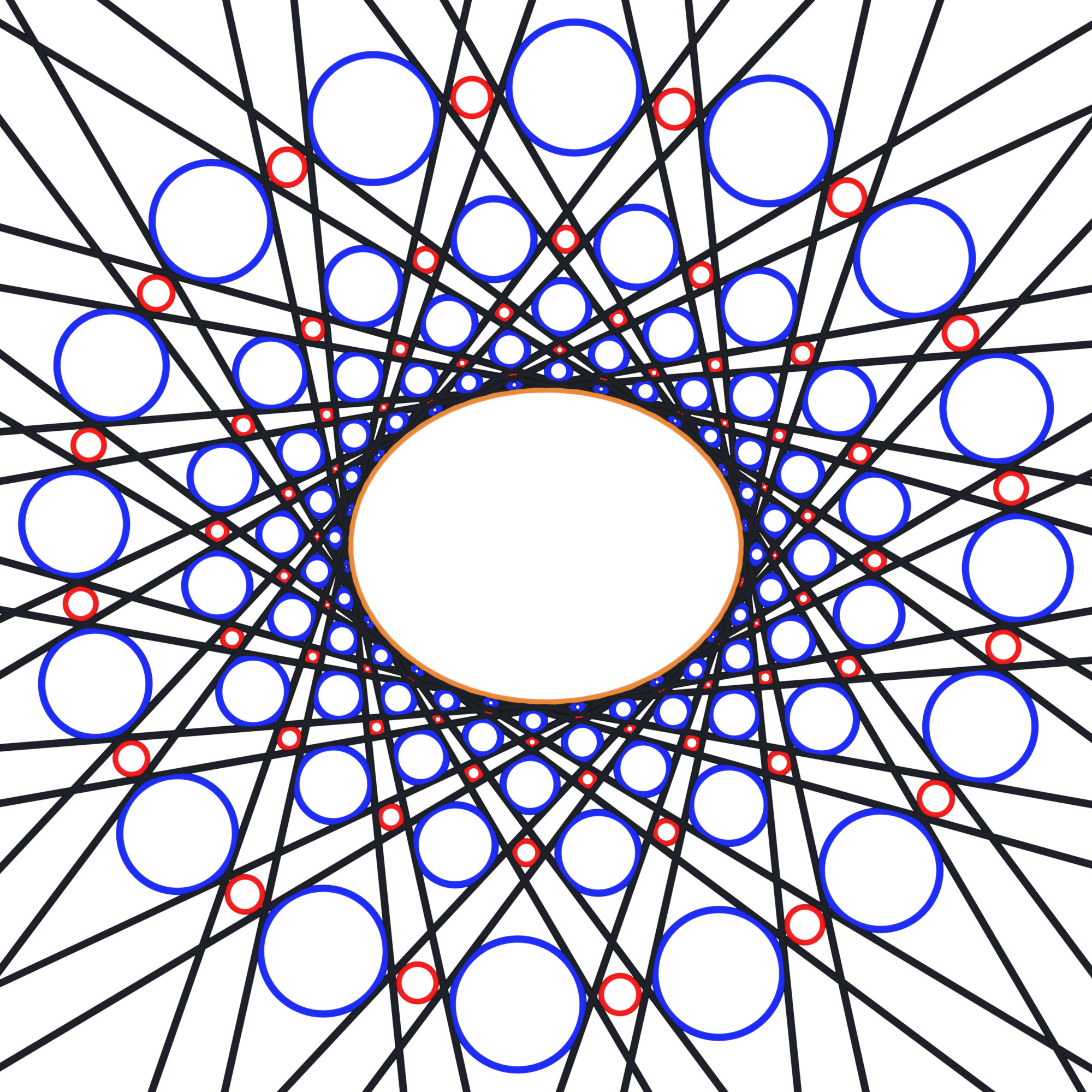}
  }
    \includegraphics[width=0.48\textwidth]{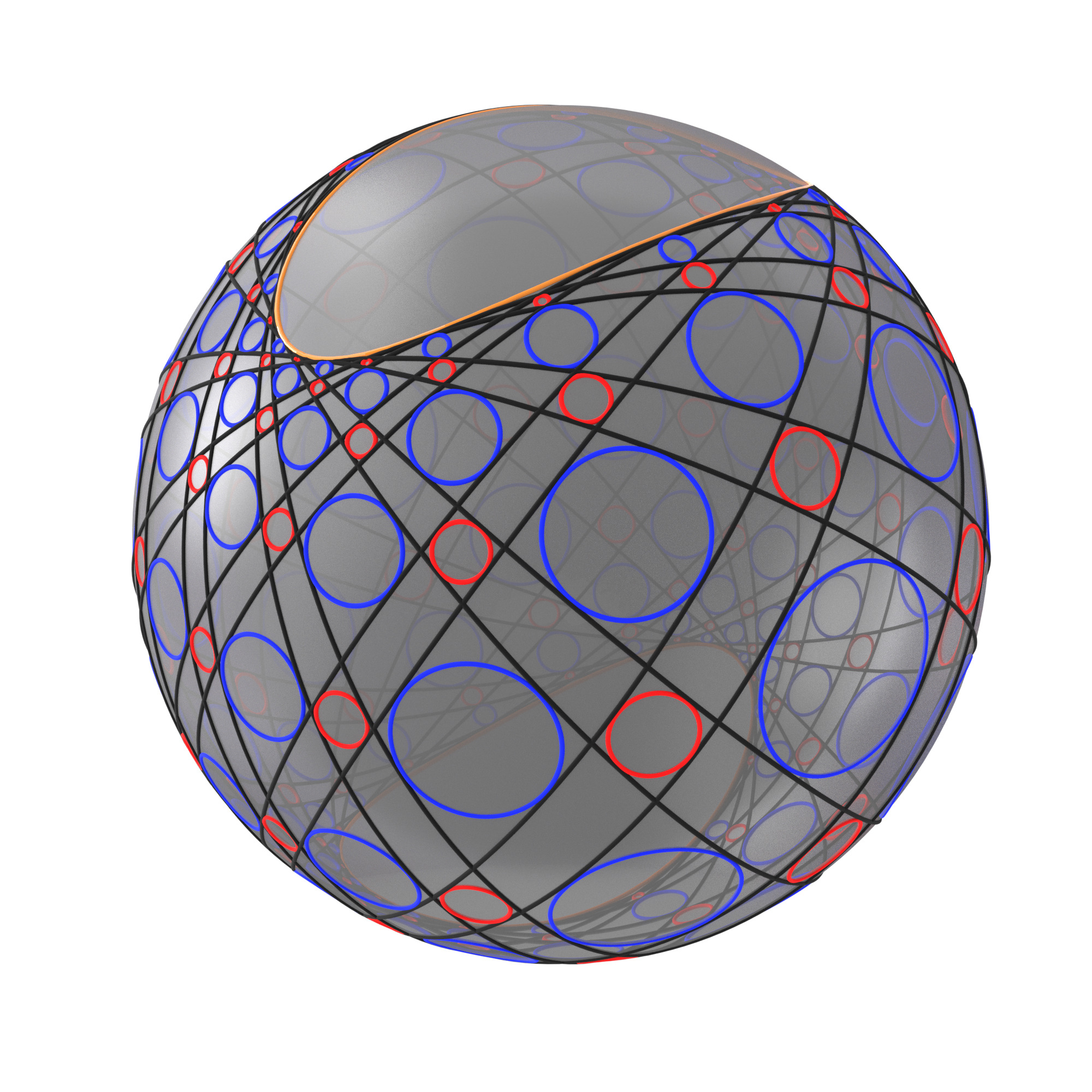}
  \caption{
    Euclidean and elliptic checkerboard incircular nets as instances of Euclidean and non-Euclidean (spherical) Laguerre geometries.
    Straight lines and circles are tangent and can be oriented so that their orientations coincide at the points of tangency (oriented contact).
    The ``straight lines'', or geodesics, on the sphere are great circles.
 }
\label{fig:intro}
\end{figure}

Classically, (Euclidean) \emph{Laguerre geometry} is the geometry of oriented hyperplanes, oriented hyperspheres, and their oriented contact in Euclidean space \cite{L}.
It is named after Laguerre \cite{HPR}, and was actively studied in dimensions 2 and 3 in the early twentieth century,
see, e.g., \cite{Bl1, Bl2}.
In \cite{Ben1} and \cite{Y} the relation between Laguerre geometry and projective planes over commutative rings, e.g.\ dual numbers, is investigated.

More recently, Laguerre geometry has been employed in specific applications, most notably in connection with offsets. 
These are curves or surfaces which lie at constant
normal distance to each other and have various applications in Computer-Aided Design and Manufacturing (see e.g. \cite{Far}). 
Viewing a curve or surface as a set of oriented tangents or tangent planes, respectively, the offsetting operation is a special 
Laguerre transformation and thus Laguerre geometry is a natural geometry for the study of offsets. Examples of its use include
the determination of all families of offsets that are rational algebraic and
therefore possess exact representations in NURBS-based 3D modeling systems \cite{Far,PPa,PPb}.  Discrete versions of offset surfaces
play an important role in \emph{discrete differential geometry} in connection with the definition of discrete curvatures \cite{BPW} and in \emph{architectural geometry} \cite{PLW}.

The knowledge of Laguerre geometry as a counterpart to the more familiar M\"obius geometry is a useful tool in
research. It allows one to study sphere geometric concepts within both of these two geometries, which may open up new applications. 
An example for that is furnished by \emph{circular meshes}, a M\"obius geometric concept, and \emph{conical meshes},
their Laguerre geometric counterparts \cite{BS07,BS,PW2}. Both of them are discrete versions of curvature line parameterizations of surfaces, but have different properties in view of applications. It turned out that conical meshes
are preferable for the realization of architectural freeform structures. The main reason is an offset property which 
facilitates the design and fabrication of supporting beam layouts \cite{PLW}. 
Even more remarkable is the fact that the supporting structures with the cleanest node geometry are based on so-called 
edge offset meshes and are also of a Laguerre geometric nature \cite{PGB}. Quadrilateral structures of this type impose a shape
restriction. They are discrete versions of Laguerre isothermic surfaces \cite{Bl2,BS06}, a special case of which are 
Laguerre minimal surfaces \cite{Bl2,PGM,PGB,SPG}. The ``dual'' viewpoints of M\"obius and Laguerre geometry also led
to different discretizations and applications of surface parameterizations which run symmetrically to the principal
directions \cite{PW20}.

The most comprehensive text on Laguerre geometry is the classical book by Blaschke \cite{Bl2}, where however only the Euclidean case is treated.
There exists no systematic presentation of non-Euclidean Laguerre geometry in the literature.
The goal of the present book is twofold.
On one hand, it is supposed to be a comprehensive presentation of non-Euclidean Laguerre geometry, and thus has the character of a textbook.
On the other hand, Section \ref{sec:icnets} presents new results.
We demonstrate the power of Laguerre geometry on the example of \emph{checkerboard incircular nets} introduced in \cite{AB},
give a unified treatment of these nets in all space forms, and describe them explicitly.
Checkerboard incircular nets are Laguerre geometric generalizations of incircular nets introduced by Böhm \cite{B}, which are defined as congruences of straight lines in the plane with the combinatorics of the square grid such that each elementary quadrilateral admits an incircle.
They are closely related to (discrete) confocal conics \cite{BSST16, BSST18}.
The construction and geometry of incircular nets and their Laguerre geometric generalization to checkerboard incircular nets have been discussed in great detail.
Explicit parametrizations for the Euclidean cases were derived in \cite{BST},
while different higher dimensional analogues of incircular nets were studied in \cite{ABST} and \cite{AB}.
In this book we further generalize planar checkerboard incircular nets to Lie geometry,
and show that these may be classified in terms of checkerboard incircular nets in hyperbolic/elliptic/Euclidean Laguerre geometry.
We prove incidence theorems of Miquel type and show that all lines of a checkerboard incircular net are tangent to a hypercycle.
This generalizes the results from \cite{BST} and leads to a unified treatment of checkerboard incircular nets in all space forms.
Visualizations and geometric data for checkerboard incircular nets can also be found at \cite{gallery}.

In Section \ref{sec:elementary} we begin our treatment of non-Euclidean Laguerre geometry by introducing elementary models for Laguerre geometry in the elliptic and hyperbolic plane.
The intention here is to enable the reader to quickly get a glimpse of this geometry without reference to the following more general discussions.

In Section \ref{sec:laguerre} we show how Laguerre geometry can be obtained in a unified way for an arbitrary \emph{Cayley-Klein space} of any dimension.
In the spirit of Klein's Erlangen program this is done in a purely projective setup for which we introduce the foundations on quadrics (Sections \ref{sec:preliminaries}), Cayley-Klein spaces (Section \ref{sec:Cayley-Klein-metric}), and central projections (Section \ref{sec:projection}) \cite{K, Blproj, Gie}.
For a Cayley-Klein space $\mathcal{K} \subset \RP^n$ the space of hyperplanes is lifted to a quadric $\lag \subset \RP^{n+1}$, which we call the Laguerre quadric.
Vice versa, the projection from the Laguerre quadric yields a double cover of the space of $\mathcal{K}$-hyperplanes
which can be interpreted as carrying their orientation.
In the projection hyperplanar sections of $\lag$ correspond to spheres of the Cayley-Klein space $\mathcal{K}$.
The corresponding group of quadric preserving transformations, which maps hyperplanar sections of $\lag$ to hyperplanar sections of $\lag$,
naturally induces the group of transformations of oriented $\mathcal{K}$-hyperplanes, which preserves the oriented contact to Cayley-Klein spheres.
We explicitly carry out this general construction in the cases of hyperbolic and elliptic geometry,
yielding hyperbolic Laguerre geometry and elliptic Laguerre geometry, respectively.
The (classical) Euclidean case constitutes a degenerate case of this construction, which we treat in Appendix \ref{sec:euclidean}.
In Appendix \ref{sec:invariant} we treat an invariant of two points on a quadric, which is closely related to the Cayley-Klein distance, and of which the classical \emph{inversive distance} introduced Coxeter \cite{Cox} turns out to be a special case.

In Section \ref{sec:lie} we show how the different Laguerre geometries appear as subgeometries of Lie geometry. 
\emph{Lie (sphere) geometry} is the geometry of oriented hyperspheres of the $n$-sphere $\S^n \subset \R^{n+1}$, and their oriented contact \cite{Bl2, C, BS}.
Laguerre geometry can be obtained by distinguishing the set of ``oriented hyperplanes''
as a sphere complex among the set of oriented hyperspheres, the so called plane complex.
Classically, the plane complex is taken to be parabolic, which leads to the notion of Euclidean Laguerre geometry,
where elements of the plane complex are interpreted as oriented hyperplanes of Euclidean space.
Choosing an elliptic or hyperbolic sphere complex, on the other hand, allows for the interpretation of the elements
of the plane complex as oriented hyperplanes in hyperbolic or elliptic space, respectively.
The group of Lie transformations that preserve the set of ``oriented hyperplanes'' covers the group of Laguerre transformations.

\newpage
\section{Two-dimensional non-Euclidean Laguerre geometry}
\label{sec:elementary}
In this section we present Laguerre geometry in the elliptic and hyperbolic plane in an elementary way, without reference to the following more general discussions.
The intention here is to enable the reader to quickly get a glimpse of this geometry without diving into the details.

In Laguerre geometry in these planes, we consider \emph{oriented lines} and \emph{oriented circles} as the basic objects and \emph{orientated contact} (tangency) as the basic relation between them. 
A point in the elliptic or hyperbolic plane is considered as an oriented circle, being in contact with all oriented lines passing through it.

A \emph{Laguerre transformation} is bijective in the sets of oriented lines and oriented circles, respectively, and preserves oriented contact.
It is important to note that in general Laguerre transformations do not preserve points.
Points are special oriented circles and are therefore mapped to oriented circles.

We will now present elementary \emph{quadric models} of Laguerre geometry in the elliptic and hyperbolic plane,
in which oriented lines are represented by points of a quadric in projective 3-space and oriented circles appear as the planar sections of that quadric.
In this quadric model, Laguerre transformations are seen as projective transformations which map the quadric onto itself.

We first discuss the simpler case of \emph{elliptic Laguerre geometry} and then proceed towards \emph{hyperbolic Laguerre geometry}.

\subsection{Two-dimensional elliptic Laguerre geometry}
We use the sphere model of the elliptic plane $\mathcal{E}$.
Points of $\mathcal{E}$ are seen as pairs of antipodal points of the unit sphere $\S^2 \subset \R^3$.
Oriented lines of $\mathcal{E}$ appear as oriented great circles in $\S^2$
and oriented circles in $\mathcal{E}$ correspond to oriented circles (different from great circles) in $\S^2$.

\begin{figure}[h]
  \centering
  \begin{overpic}[width=0.4\textwidth]{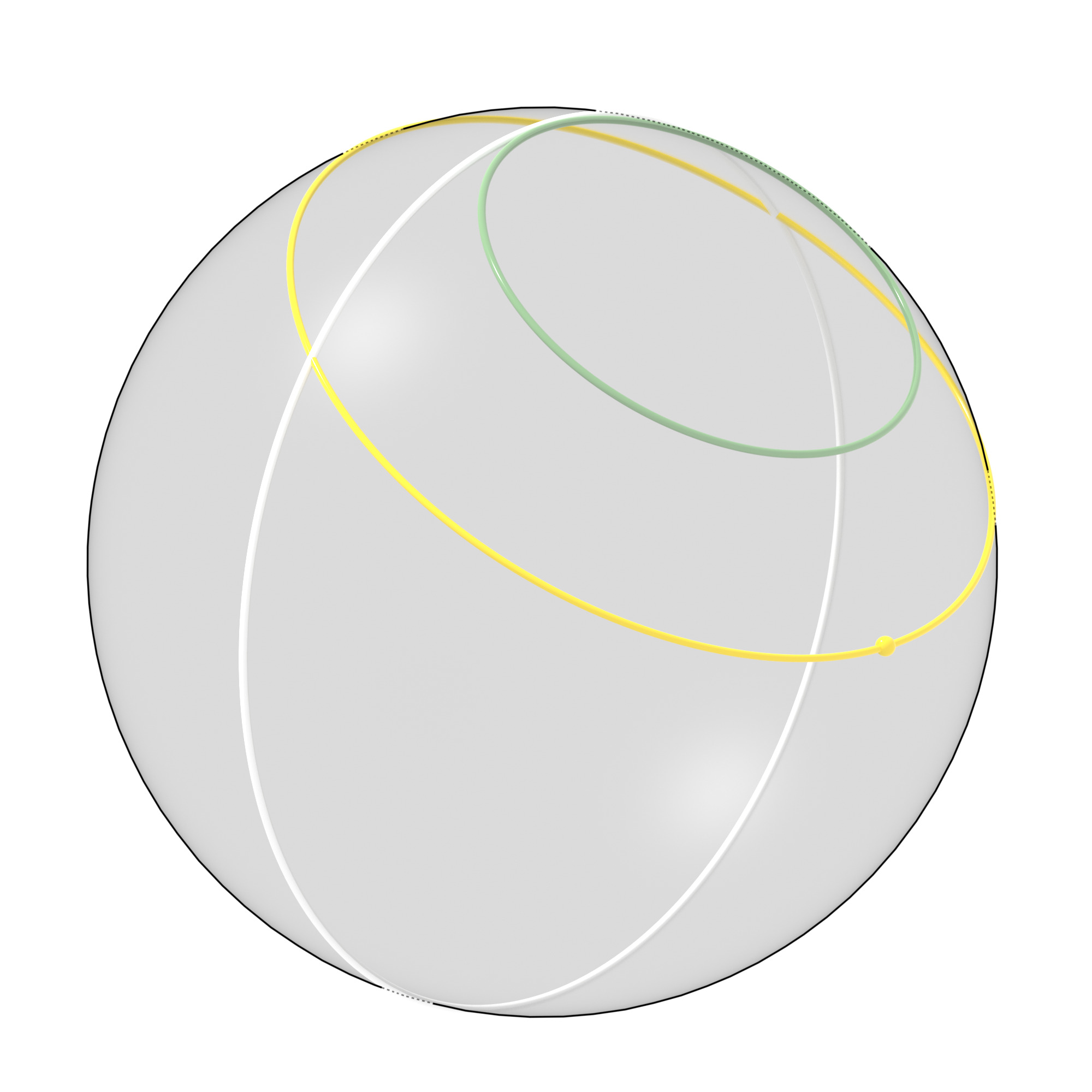}
    \put(80,15){$\S^2$}
    \put(24,30){$\ell$}
    \put(40,40){$G(c)$}
    \put(55,58){$c$}
    \put(81,36){$\p{x}$}
  \end{overpic}
  \caption{
    The unit sphere $\S^2$ as a model of elliptic Laguerre geometry.
    An oriented line $\ell$ is represented by the spherical center $\p{x}$ of a great circle.
    An oriented circle $c$ is represented by a planar section $G(c)$,
    which is composed of the spherical centers of the great circles in oriented contact with the circle.
  }
  \label{fig:elliptic-laguerre-geometry}
\end{figure}

\paragraph{Oriented lines}
An oriented great circle $\ell \subset \S^2$ defines two half-spheres,
one of which lies on the positive side, which shall be the left side when running on $\ell$ in terms of the orientation (see Figure~\ref{fig:elliptic-laguerre-geometry}). 
We now represent an oriented great circle $\ell$ by its spherical center $\p{x}$.
This is the intersection point of the circle's rotational axis with $\S^2$ which lies on the positive side of $\ell$.

\paragraph{Oriented circles}
Let us now consider all oriented lines $\ell$ of $\mathcal{E}$ which are tangent to an oriented circle.
In the sphere model, this yields all oriented great circles of $\S^2$ which are tangent to an oriented circle $c \subset \S^2$.
By rotational symmetry, their centers $\p{x}$ form a circle $G(c)$.
The set of centers $G(c)$ does not degenerate to a point as $c$ is not a great circle, while it is a great circle if $c$ is a point.

\paragraph{Laguerre transformations}
It now becomes clear how to realize Laguerre transformations of the elliptic plane in the sphere model:
These are projective transformations that map the sphere $\S^2$ to itself.
Planar sections $G(c) \subset \S^2$ (oriented circles) are mapped to planar sections, but in general great circles are not mapped to great circles.
This expresses the fact that Laguerre transformations map oriented circles to oriented circles, but do in general not preserve points.

In Möbius geometry these projective automorphisms of $\S^2$ also appear as Möbius transformations, but the meaning of points is different.
Summarizing, we can say that the elliptic Laguerre group is isomorphic to the (Euclidean) M\"obius group.
Both appear as projective automorphisms of the sphere, but in elliptic Laguerre geometry,
points of the sphere have the meaning of centers of great circles, representing oriented straight lines of $\mathcal{E}$.

\subsection{Two-dimensional hyperbolic Laguerre geometry}
We employ the projective model of the hyperbolic plane $\mathcal{H}$ with an absolute circle $\mathcal{S}$.
Recall that points of $\mathcal{H}$ are points inside the circle $\mathcal{S}$,
with the points of $\mathcal{S}$ playing the role of points at infinity (ideal points). 
Straight lines $\ell \subset \mathcal{H}$ are seen as straight line segments bounded by two ideal points $\p{m}_-$ and $\p{m}_+$.
The line obtains an orientation by traversing it from $\p{m}_-$ to $\p{m}_+$ (see Figure~\ref{fig:hyperbolic-laguerre-geometry-lines}).

To obtain a quadric model for Laguerre geometry in $\mathcal{H}$, we view the absolute circle $\mathcal{S}$
\[
  x^2+y^2=1
\]
as the smallest circle on the rotational hyperboloid $\hyplag$
\[
  x^2+y^2-z^2 = 1,
\]
which lies in its symmetry plane $z=0$.
This hyperboloid carries two families of straight lines (rulings).
Two rulings are obtained by intersecting $\hyplag$ with the tangent plane $x=1$, yielding $z=\pm y$.
By rotation about the $z$-axis, the line $x=1, z=y$ generates the family of rulings $R_+$,
and likewise $x=1, z=-y$ generates the rulings $R_-$. 
Through each point of $\hyplag$ there passes exactly one line of $R_+$ and one line of $R_-$.

\begin{figure}[h]
  \centering
  \begin{overpic}[width=0.6\textwidth]{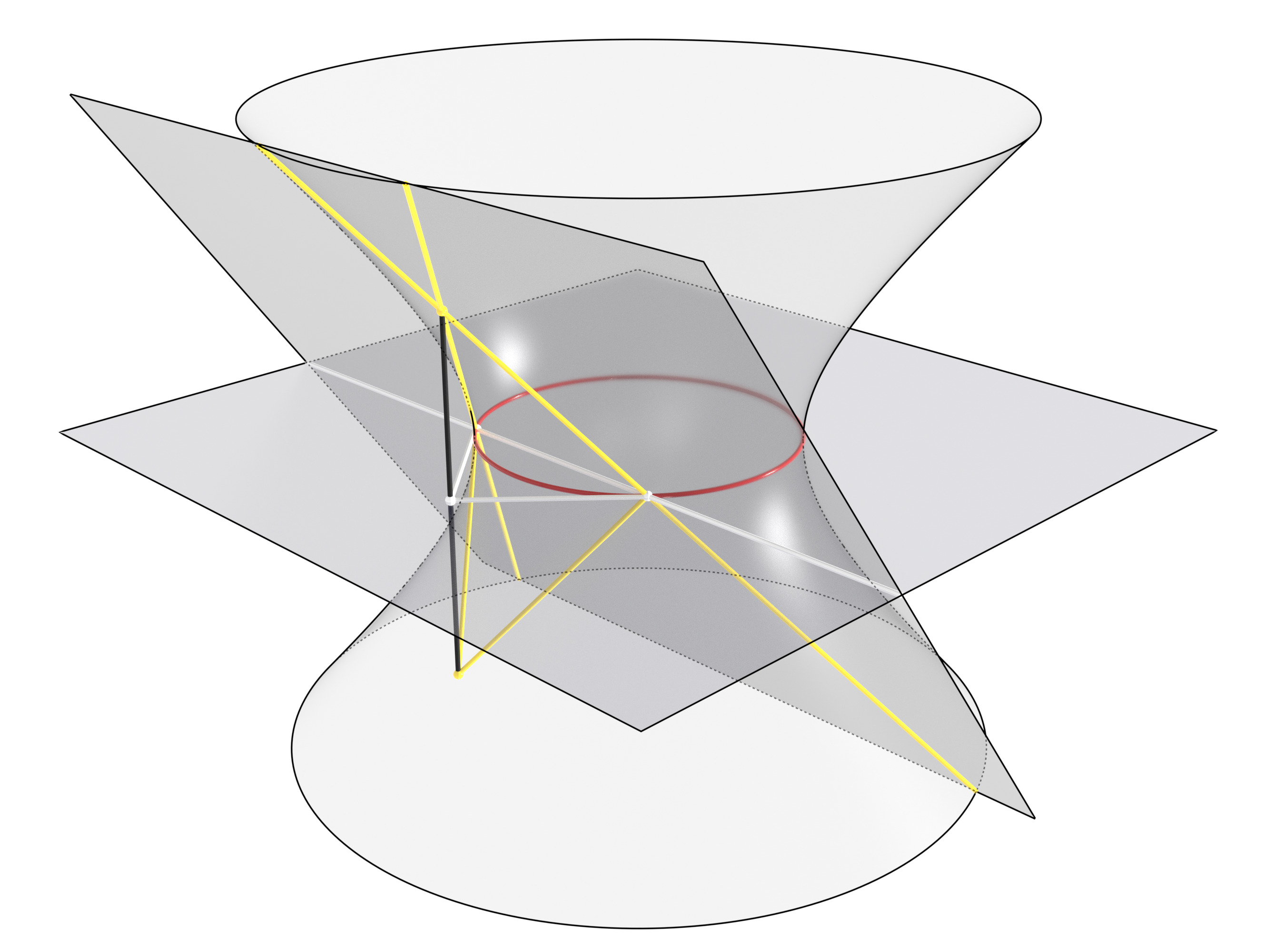}
    \put(82,63){$\mathcal{\hyplag}$}
    \put(30,48){$\p{x}$}
    \put(30,20){$\p{x}'$}
  \end{overpic}
  \hspace{\fill}
  \raisebox{0.7cm}{
    \begin{overpic}[width=0.35\textwidth]{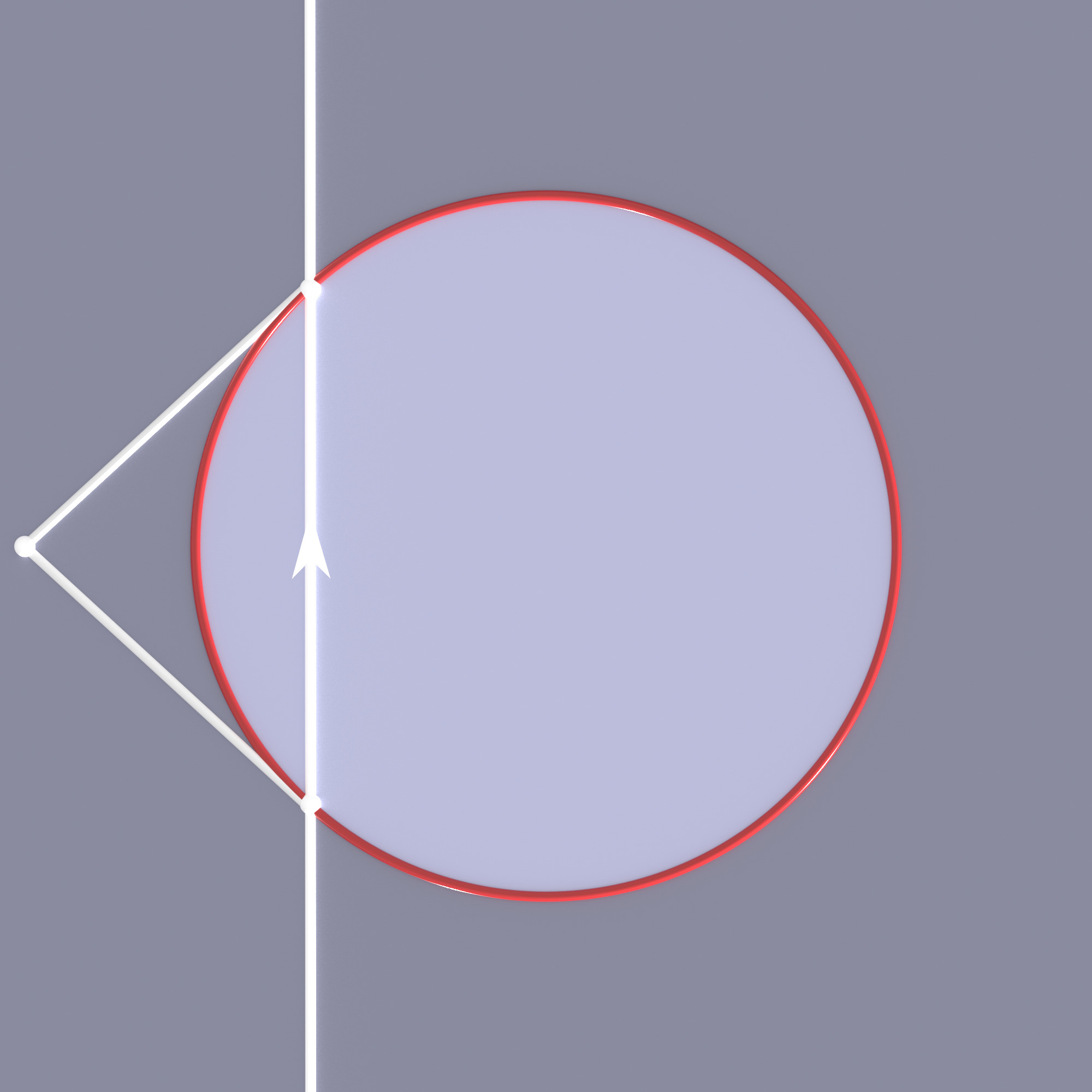}
      \put(32,47){$\ell$}
      \put(55,47){$\mathcal{H}$}
      \put(80,30){$\mathcal{S}$}
      \put(32,71){$\p{m}_+$}
      \put(32,26){$\p{m}_-$}
    \end{overpic}
  }
  \caption{
    A rotational hyperboloid $\hyplag$ as a model of hyperbolic Laguerre geometry.
    It contains the absolute circle $\mathcal{S}$ of the projective model of the hyperbolic plane $\mathcal{H}$ at $z=0$.
    An oriented line $\ell$ is represented by the intersection $\p{x}$ of two rulings of $\hyplag$ through the two ideal points $\p{m}_-$ and $\p{m}_+$.
    The tangent plane of $\hyplag$ at the point $\p{x}$ intersects the hyperbolic plane in the line $\ell$.
    A different choice of rulings yields the point $\p{x}'$ which represents the same line with opposite orientation.
  }
  \label{fig:hyperbolic-laguerre-geometry-lines}
\end{figure}

\paragraph{Oriented lines}
We now lift an oriented straight line $\ell$ of $\mathcal{H}$ to a point on the quadric $\hyplag$ as follows (see Figure~\ref{fig:hyperbolic-laguerre-geometry-lines}):
We intersect the ruling of $R_+$ through $\p{m}_-$ with the ruling of $R_-$ through $\p{m}_+$, yielding a point $\p{x} \in \hyplag$.
The plane spanned by $R_-$ and $R_+$ is the tangent plane of $\hyplag$ at $\p{x}$ and this tangent plane intersects the base plane $z=0$ in the line $\ell$.
Changing the orientation of $\ell$ to $\ell'$ yields a point $\p{x}'$
which is the reflection of $\p{x}$ in the base plane $z=0$.
The connecting line $\p{x} \wedge \p{x}'$ is the polar line of $\ell$ with respect to the quadric $\hyplag$,
and the orthogonal projection of $\p{x}$, respectively $\p{x}'$, onto the base plane $z=0$ is the pole of $\ell$ with respect to the absolute circle $\mathcal{S}$. 

Parallel oriented straight lines share an ideal point and thus correspond to points which lie on the same ruling of $\hyplag$.

\paragraph{Oriented circles}
For the oriented circles of $\mathcal{H}$ the situation is slightly more complicated since different types of circles come into play (see
Figures~\ref{fig:hyperbolic-laguerre-geometry-circles} and~\ref{fig:hyperbolic-laguerre-geometry-points}). 
The first three types (see Figure~\ref{fig:hyperbolic-laguerre-geometry-circles} (a,b,c)) arise from generalized hyperbolic circles.
Those appear as special conics $c$ in the projective model.
For visualization, it is probably easiest to employ the sphere model of hyperbolic geometry and view the conic $c$ as the orthogonal projection of a circle on the sphere $x^2+y^2+z^2=1$ onto the plane $z=0$.
Such a conic $c$ together with the absolute circle $\mathcal{S}$ spans a pencil which contains a doubly counted line $L$.
The conic $c$ and the absolute circle $\mathcal{S}$ touch in two points on the line $L$.
These may be two different real points as in Figure~\ref{fig:hyperbolic-laguerre-geometry-circles} (c),
in which case $c$ is a curve of constant distance to the hyperbolic line given by $L$.
But they can also be two complex conjugate points as in Figure~\ref{fig:hyperbolic-laguerre-geometry-circles} (a),
in which case $c$ is a hyperbolic circle with its center being the pole of the line $L$ with respect to the absolute circle $\mathcal{S}$.
Lastly, the two points may coincide as in Figure~\ref{fig:hyperbolic-laguerre-geometry-circles} (b),
which corresponds to the case of a horocycle, represented by the conic $c$ which has third order contact with $\mathcal{S}$.
Hyperbolic Laguerre geometry, however, also considers circles as in Figure~\ref{fig:hyperbolic-laguerre-geometry-circles} (d),
whose tangent lines correspond to real straight lines in $\mathcal{H}$, but the envelope lies in the deSitter space (outsite $\mathcal{S}$).
\begin{figure}[H]
  \centering
  \begin{overpic}[width=0.6\textwidth]{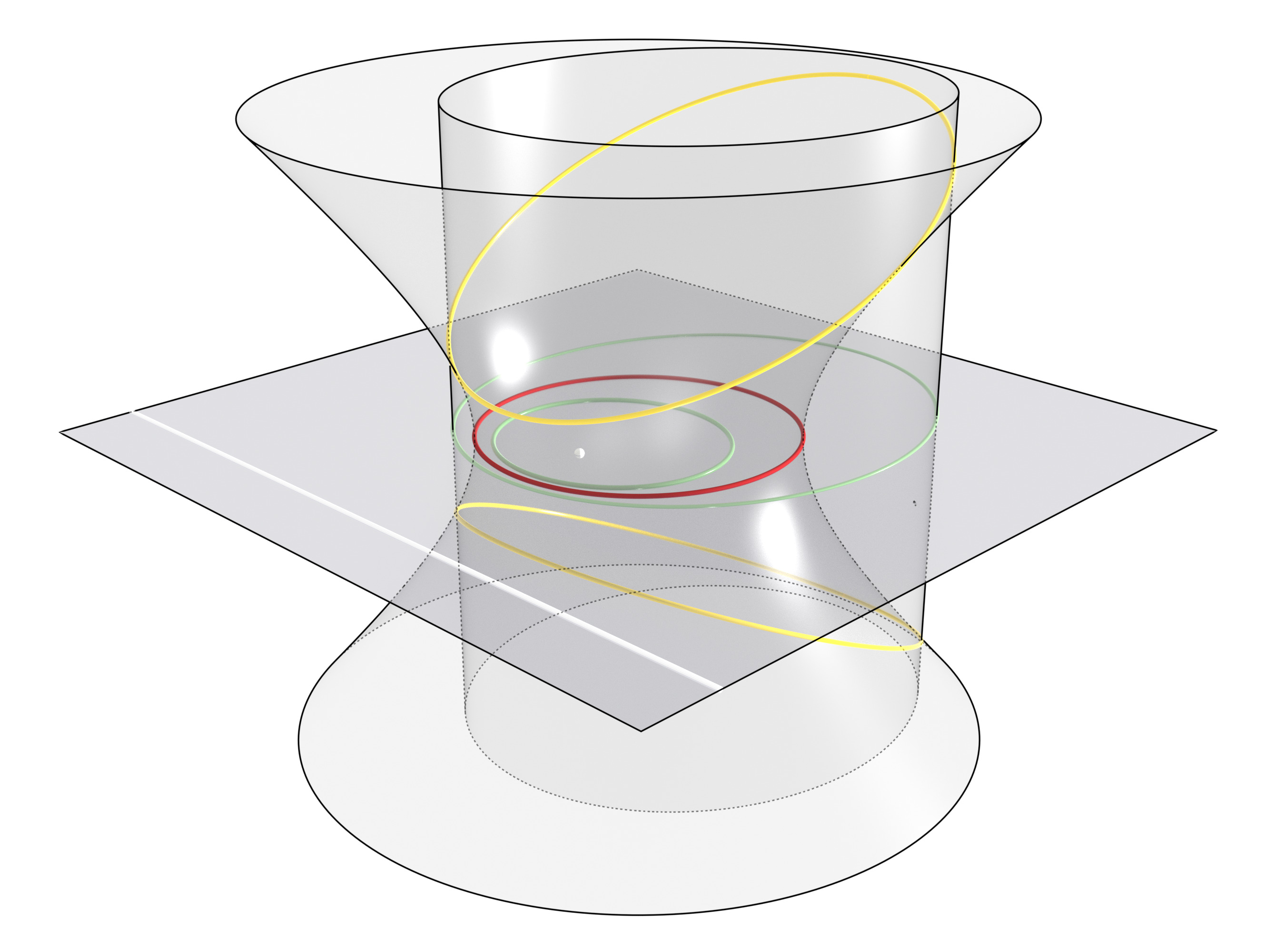}
    \put(82,63){$\mathcal{\hyplag}$}
    \put(33,14){$\Gamma$}
    \put(63,20){$G(c)$}
  \end{overpic}
  \hspace{\fill}
  \raisebox{0.7cm}{
    \begin{overpic}[width=0.35\textwidth]{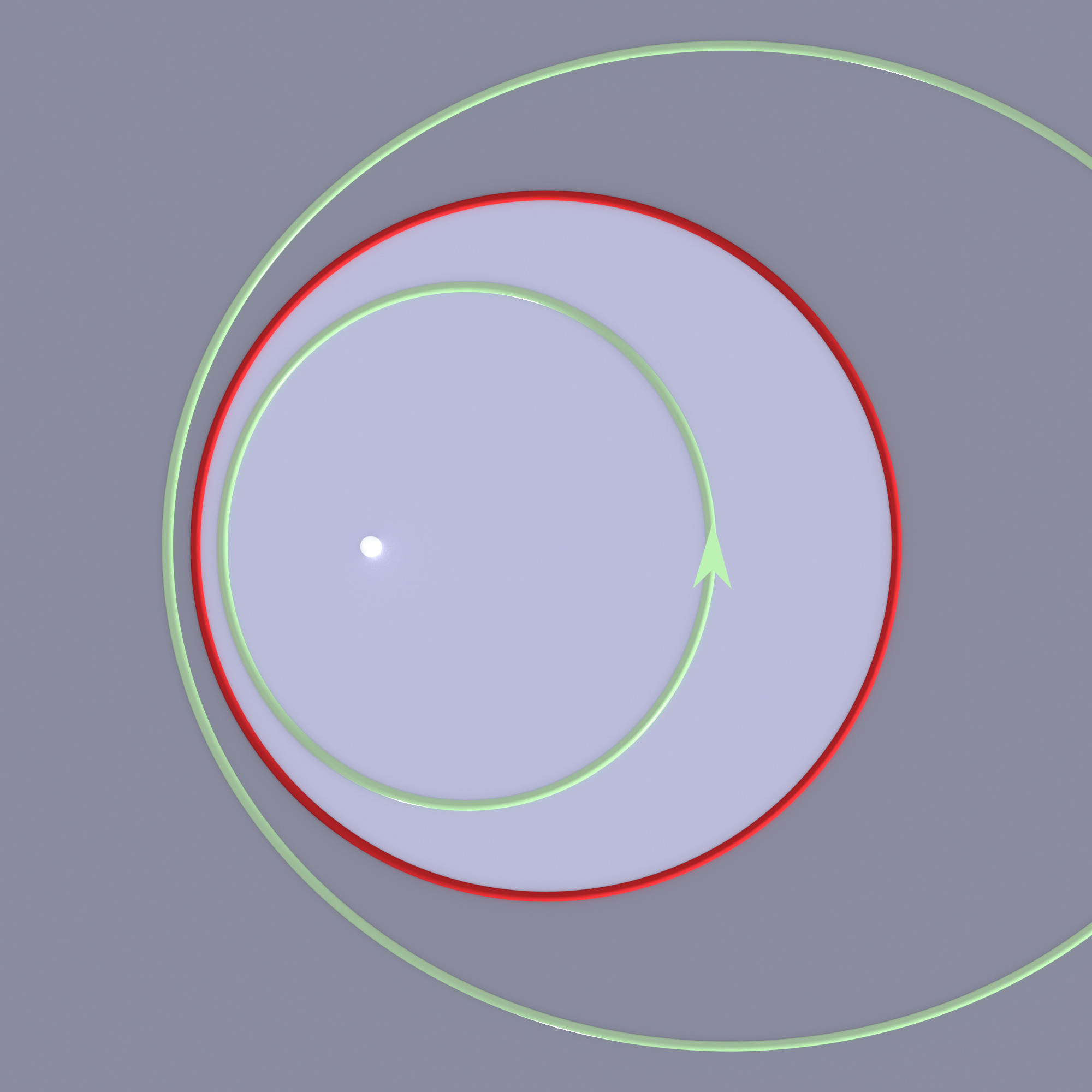}
      \put(80,30){$\mathcal{S}$}
      \put(50,47){$\mathcal{H}$}
      \put(33,30){$c$}
      \put(10,30){$c^\perp$}
    \end{overpic}
  }
  \caption{
    Lifting an oriented hyperbolic circle $c$ to the Laguerre quadric $\hyplag$.
    The polar conic $c^\perp$ of $c$ with respect to $\mathcal{S}$ is a deSitter circle.
    The quadratic cylinder $\Gamma$ over $c^\perp$ intersects the hyperboloid $\hyplag$ in two conics.
    The planar section $G(c)$ represents the circle $c$ in its given orientation.
  }
  \label{fig:hyperbolic-laguerre-geometry-circle-lift}
\end{figure}
\begin{figure}[H]
  \begin{center}
    \begin{overpic}[width=0.45\textwidth]{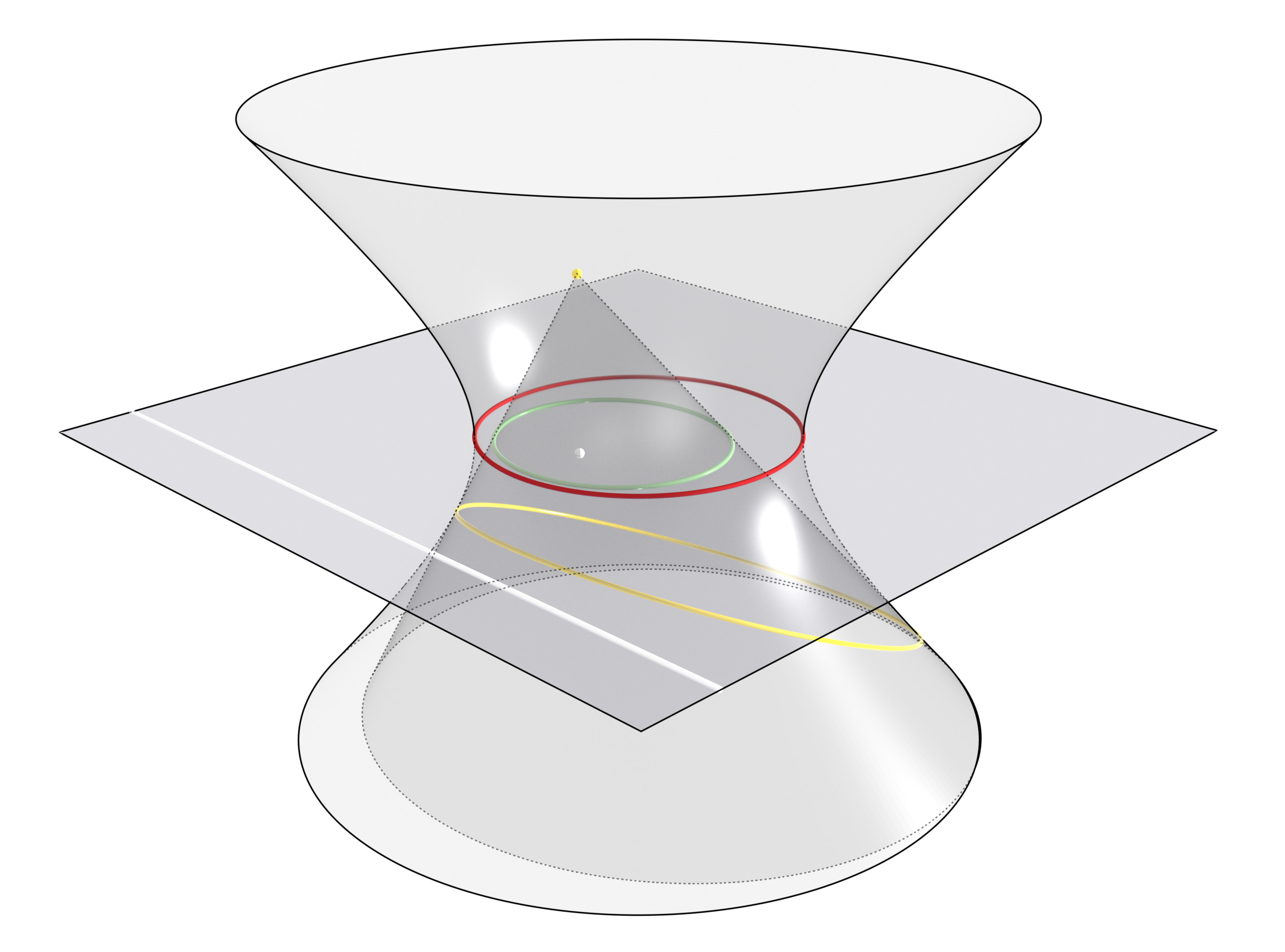}
      \put(-8,36){(a)}
      \put(82,63){$\mathcal{\hyplag}$}
      \put(63,18){$G(c)$}
    \end{overpic}
    \raisebox{0.35cm}{
      \begin{overpic}[width=0.28\textwidth]{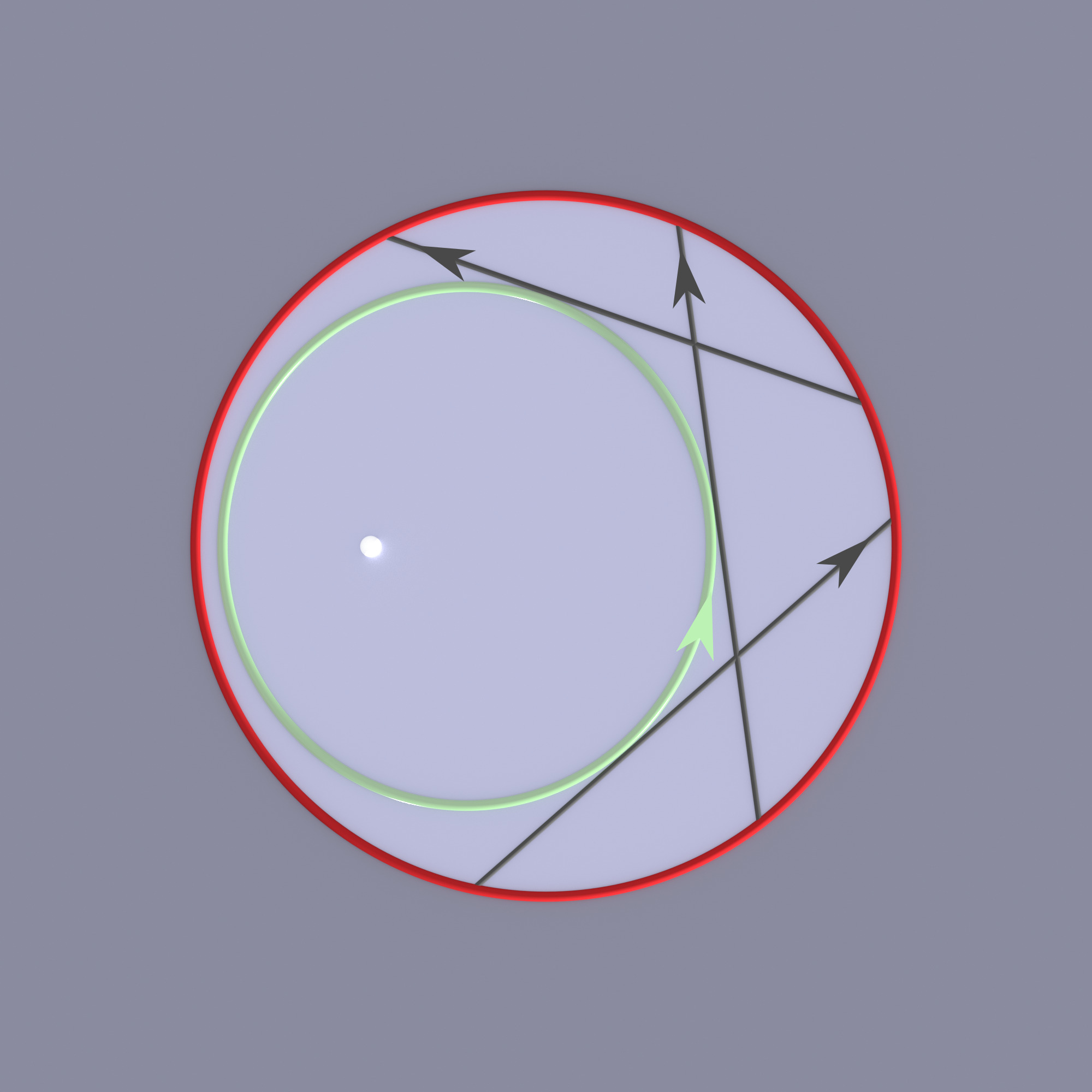}
        \put(50,47){$\mathcal{H}$}
        \put(80,30){$\mathcal{S}$}
        \put(33,30){$c$}
      \end{overpic}
    }\\
    \begin{overpic}[width=0.45\textwidth]{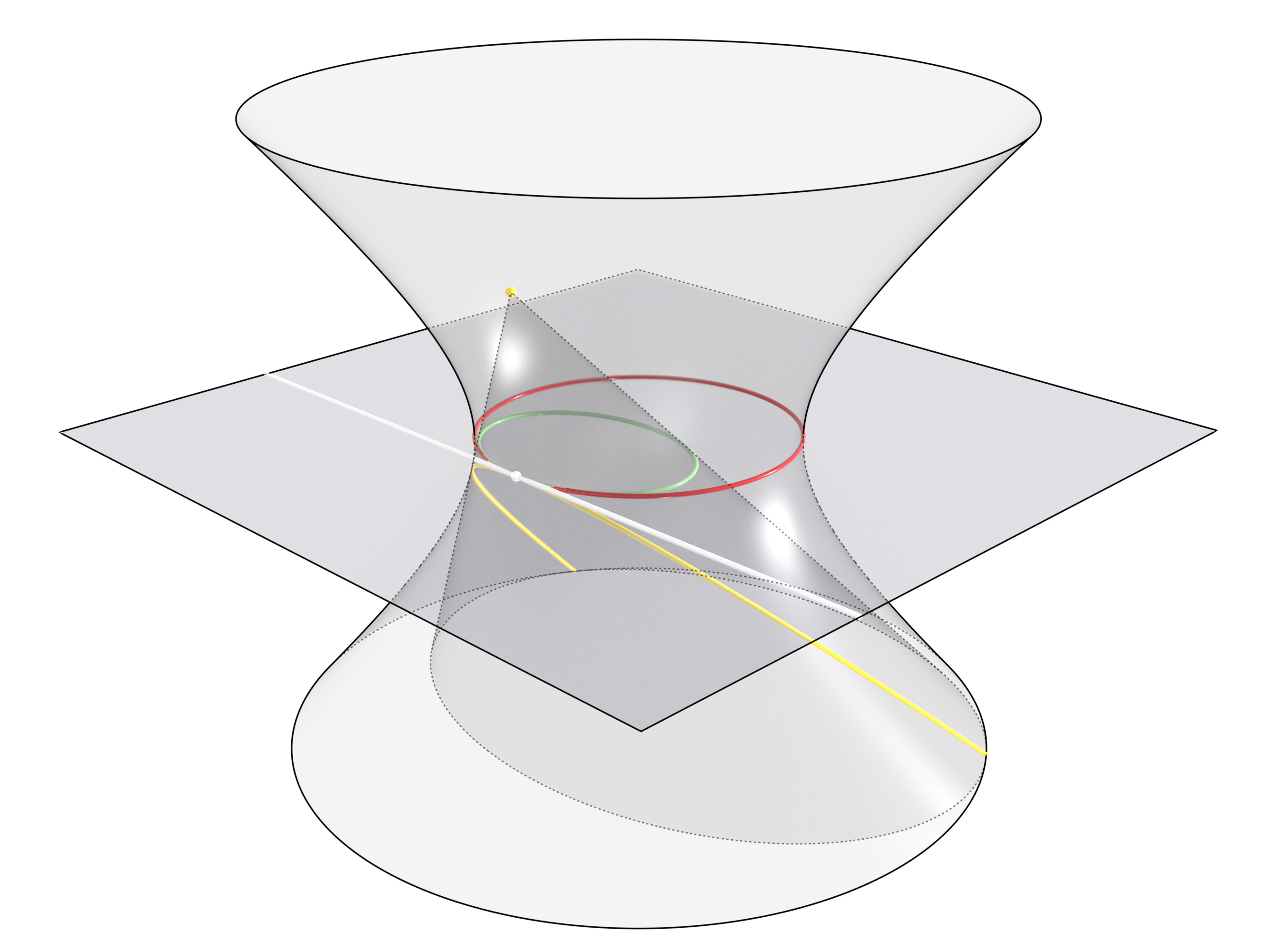}
      \put(-8,36){(b)}
    \end{overpic}
    \raisebox{0.35cm}{
      \begin{overpic}[width=0.28\textwidth]{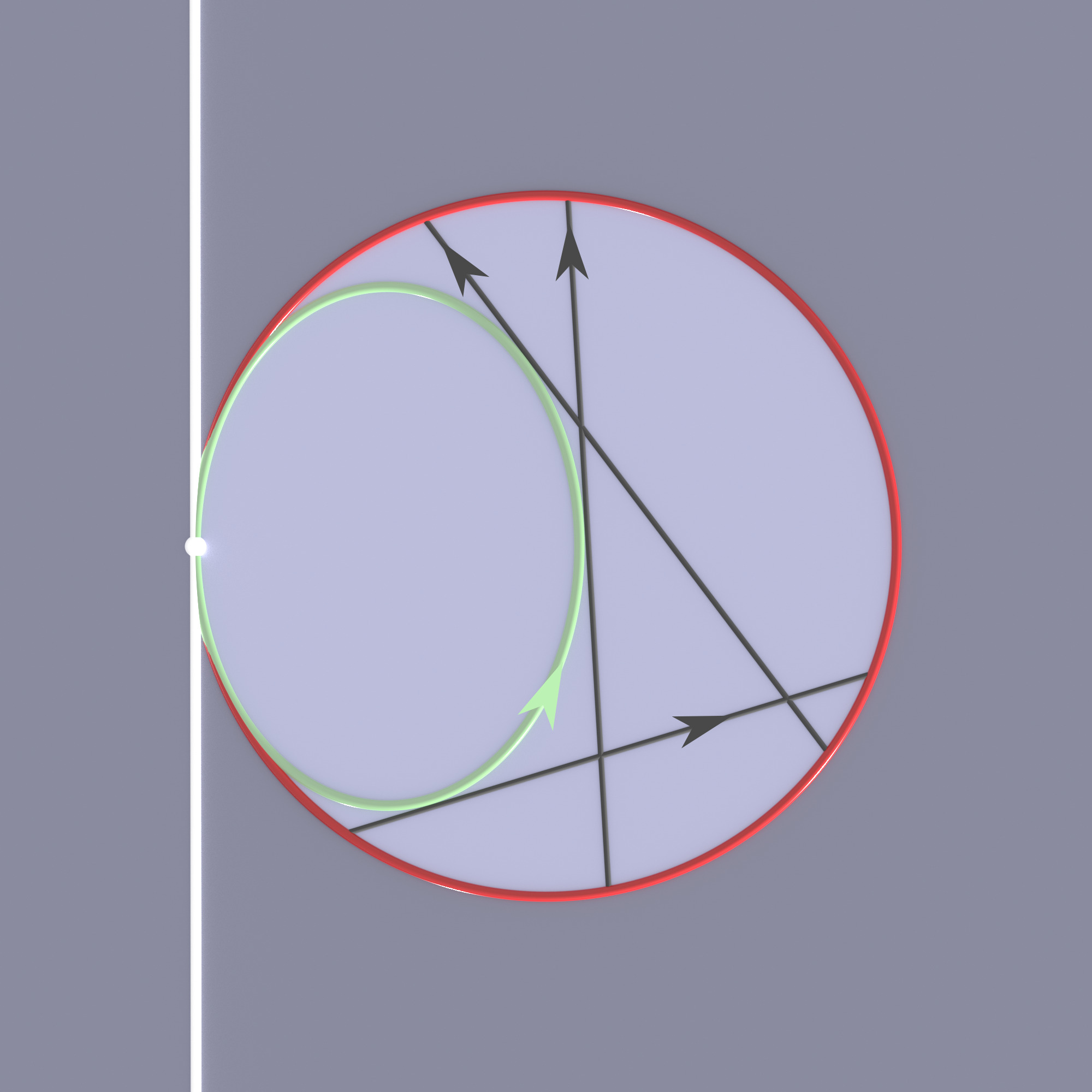}
        \put(20,5){$L$}
      \end{overpic}
    }\\
    \begin{overpic}[width=0.45\textwidth]{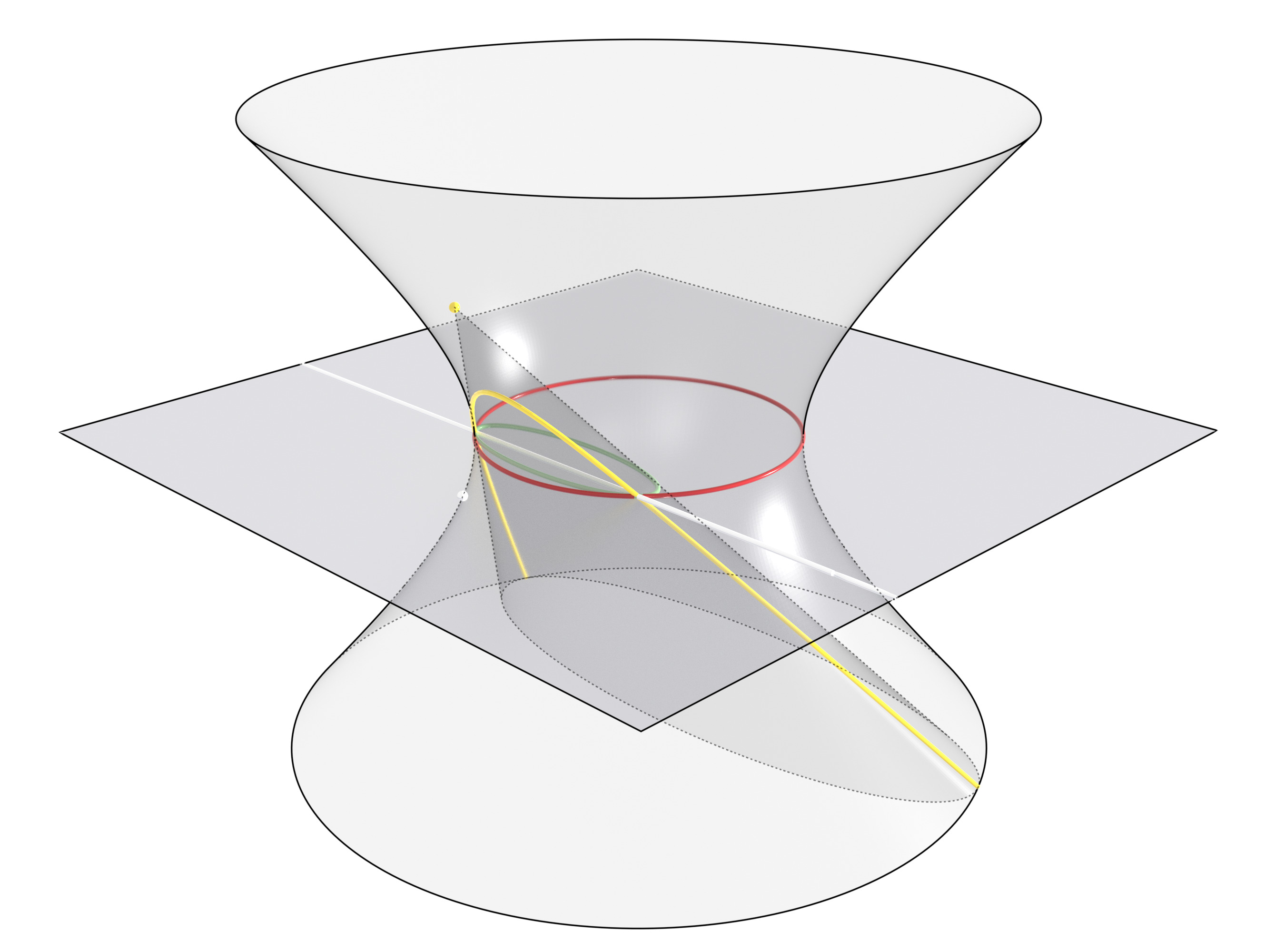}
      \put(-8,36){(c)}
    \end{overpic}
    \raisebox{0.35cm}{
      \includegraphics[width=0.28\textwidth]{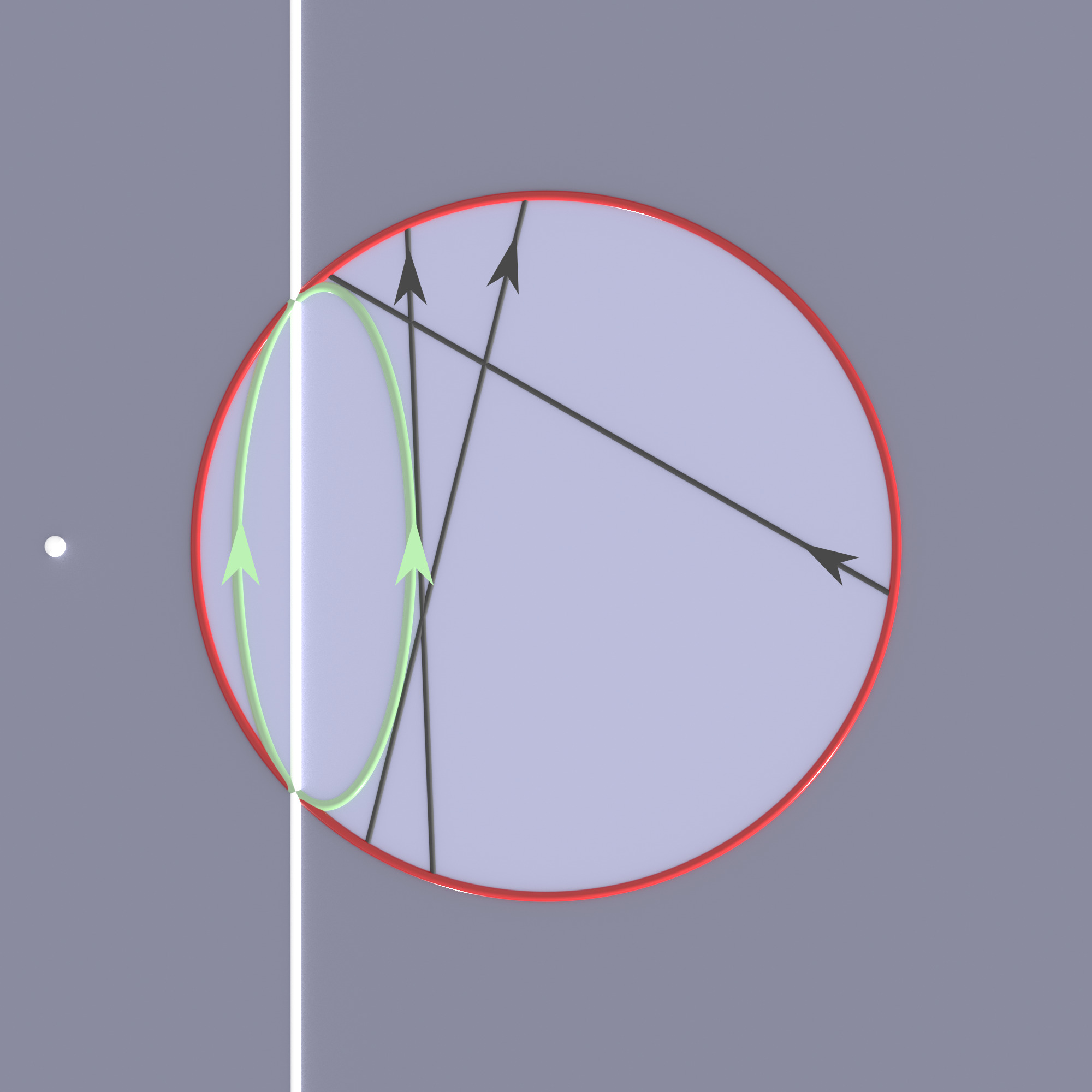}
    }\\
    \begin{overpic}[width=0.45\textwidth]{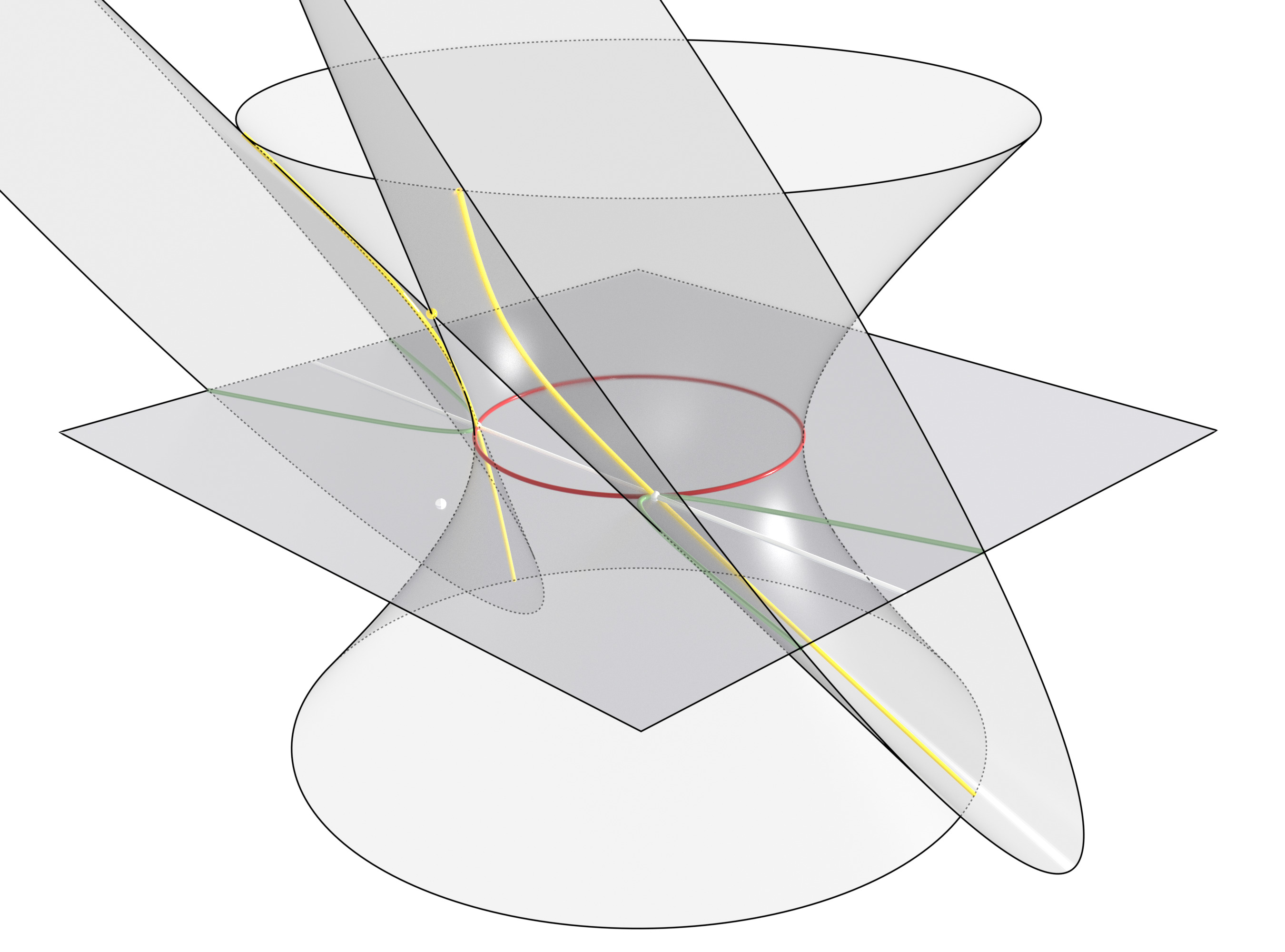}
      \put(-8,36){(d)}
    \end{overpic}
    \raisebox{0.35cm}{
      \includegraphics[width=0.28\textwidth]{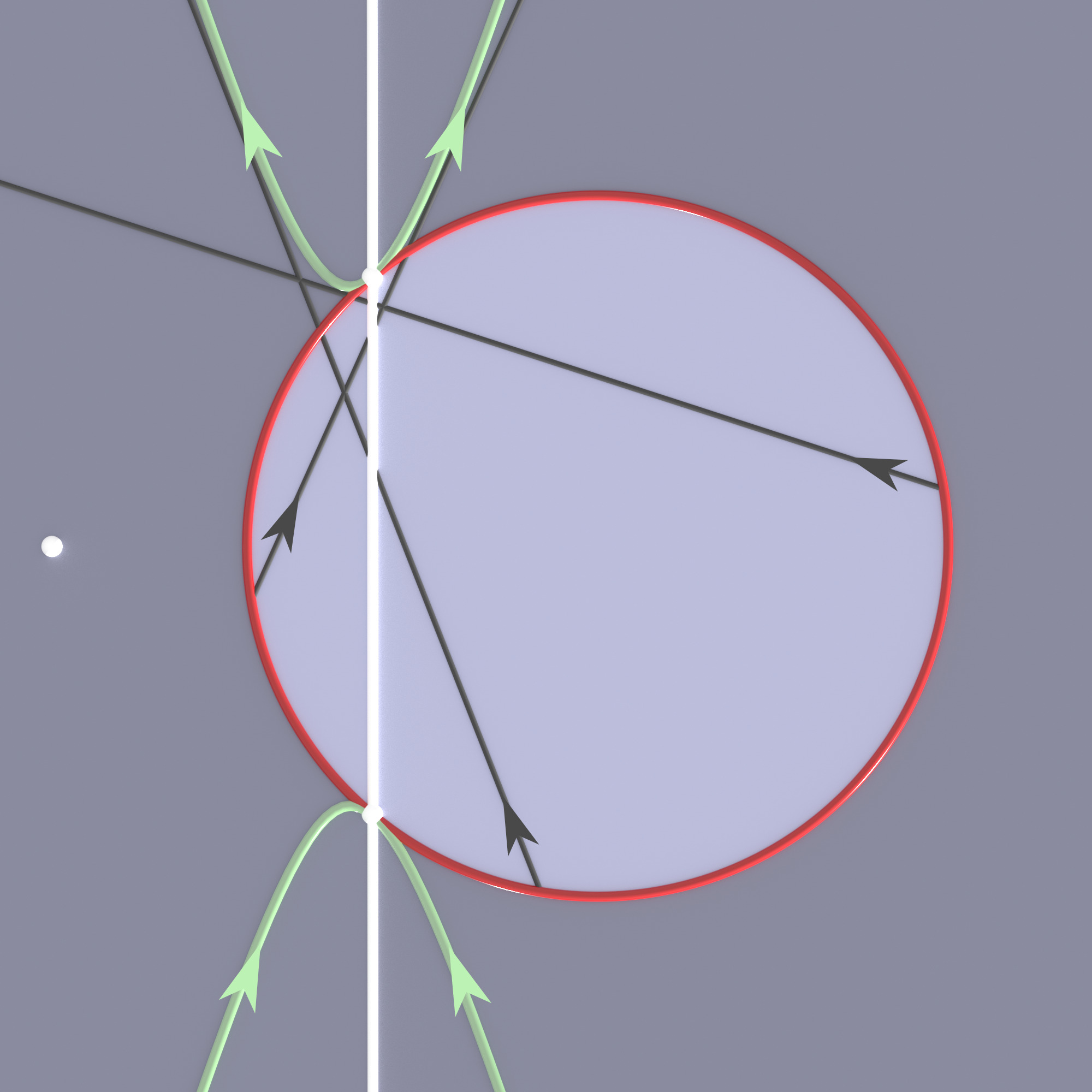}
    }
  \end{center}
  \caption{
    Oriented circles in hyperbolic Laguerre geometry.
    On the Laguerre quadric $\hyplag$ oriented circles are represented by planar sections,
    or, by polarity, the pole of the corresponding plane with respect to $\hyplag$.
    The first three types arise from generalized hyperbolic circles:
    (a) ordinary hyperbolic circle, (b) horocycle, (c) curve of constant distance to a hyperbolic line.
    The fourth type is a deSitter circle with hyperbolic tangent lines (d).
    Their envelope lies outside $\mathcal{H}$ (in the deSitter plane).
  }
  \label{fig:hyperbolic-laguerre-geometry-circles}
\end{figure}

\begin{figure}[H]
  \centering
  \begin{overpic}[width=0.45\textwidth]{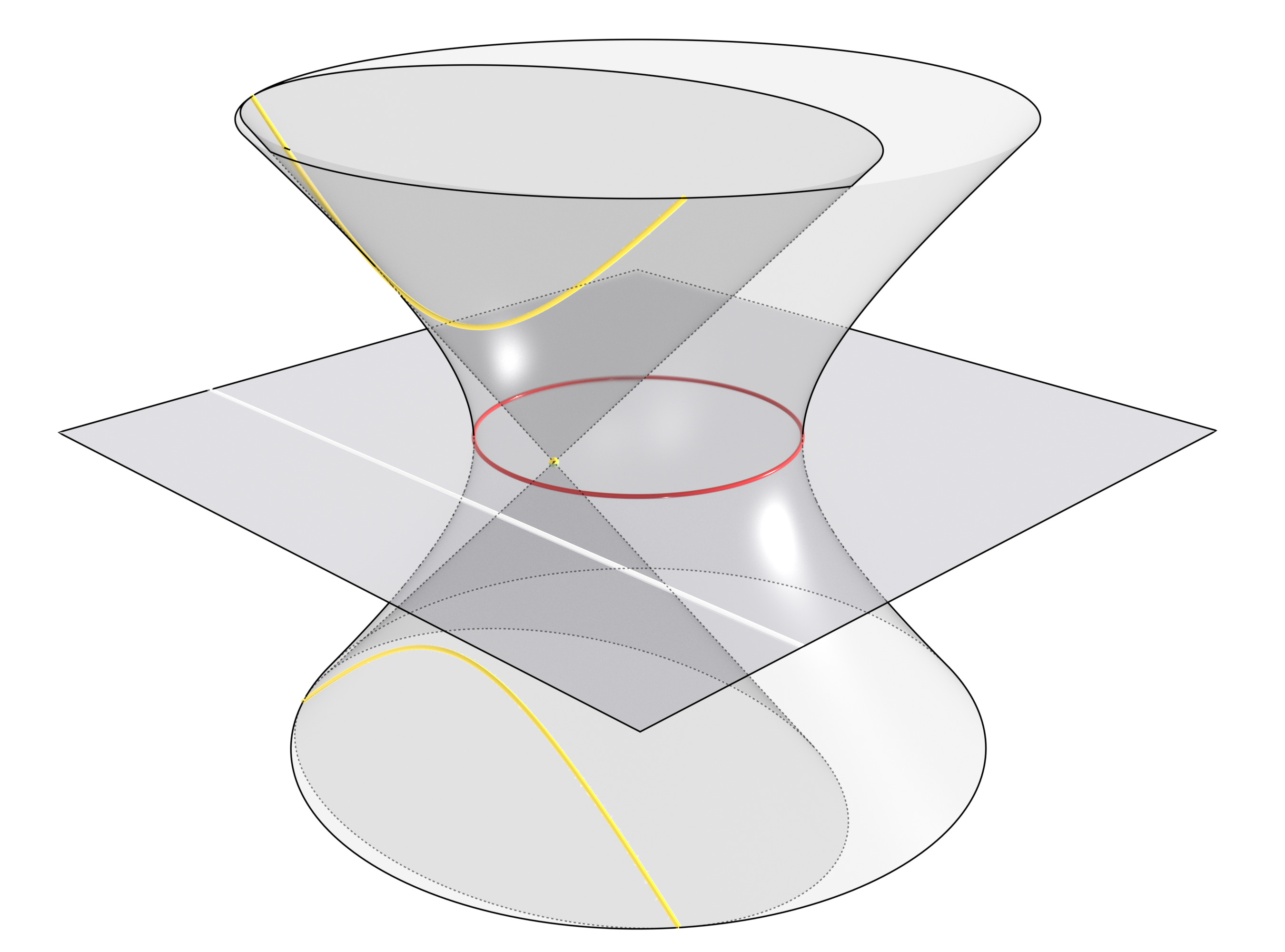}
    \put(-8,36){(a)}
    \put(82,63){$\mathcal{\hyplag}$}
    \put(33,10){$G(c)$}
  \end{overpic}
  \raisebox{0.35cm}{
    \begin{overpic}[width=0.28\textwidth]{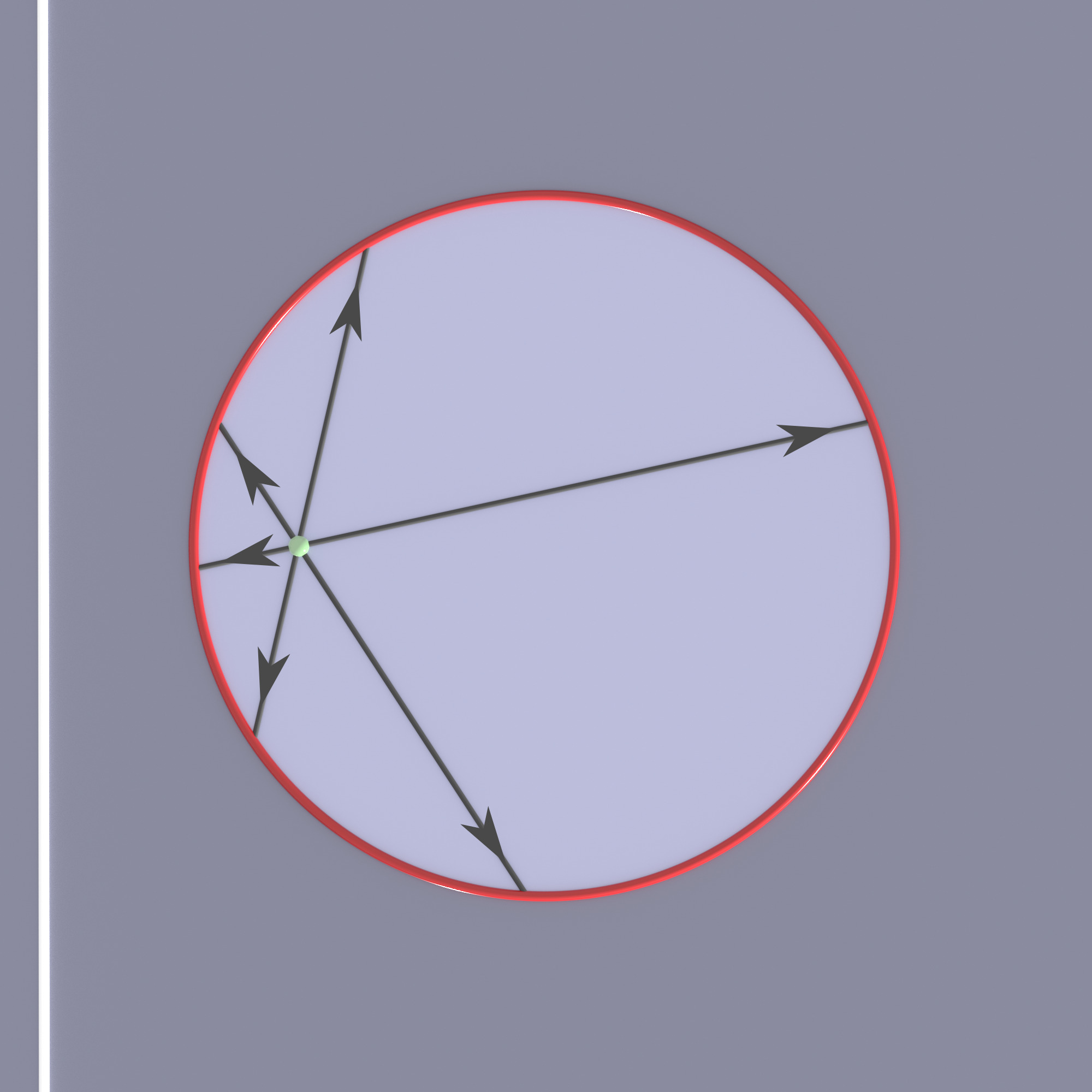}
      \put(80,30){$\mathcal{S}$}
      \put(55,42){$\mathcal{H}$}
    \end{overpic}
  }\\
  \begin{overpic}[width=0.45\textwidth]{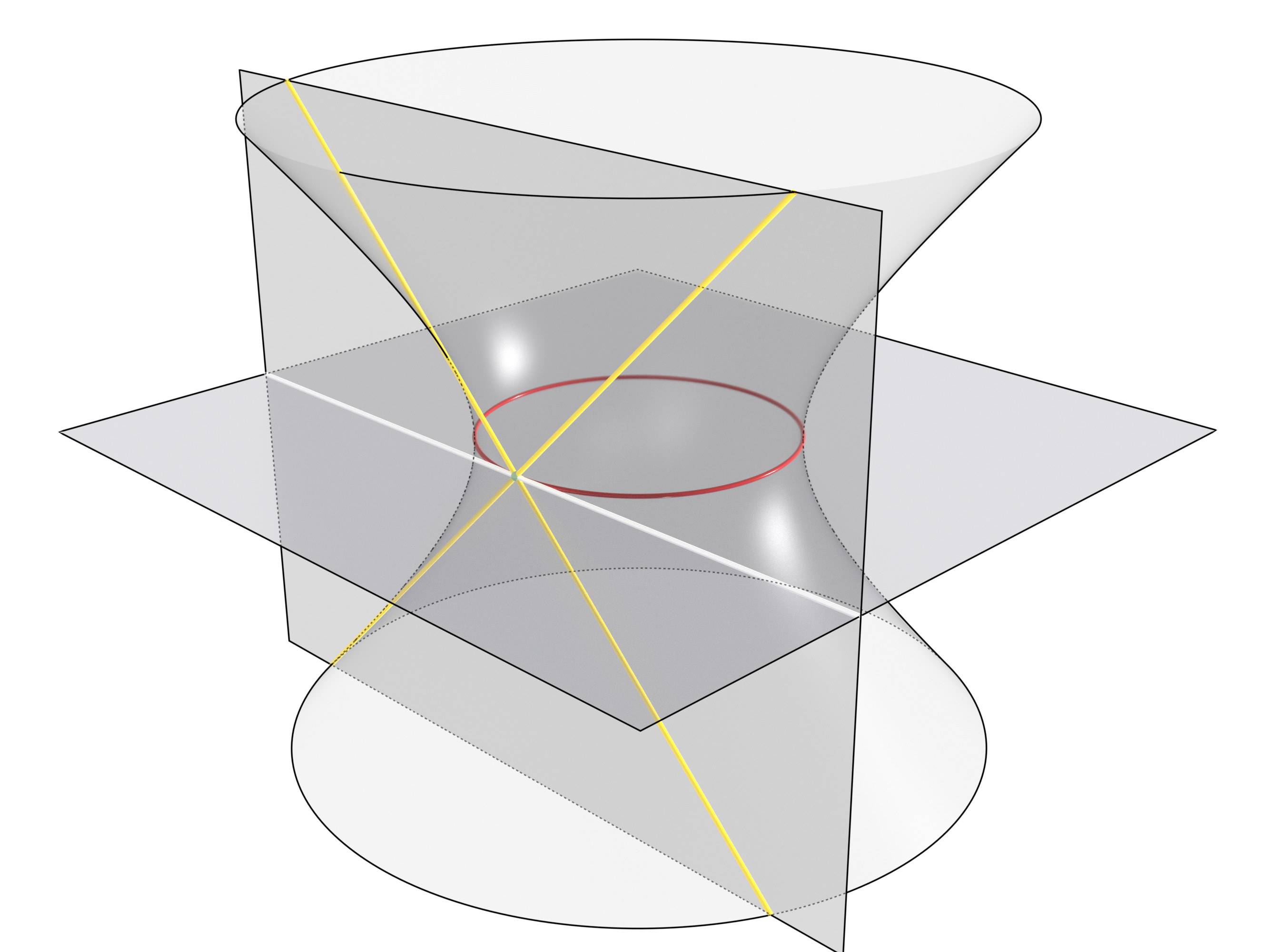}
    \put(-8,36){(b)}
  \end{overpic}
  \raisebox{0.35cm}{
    \includegraphics[width=0.28\textwidth]{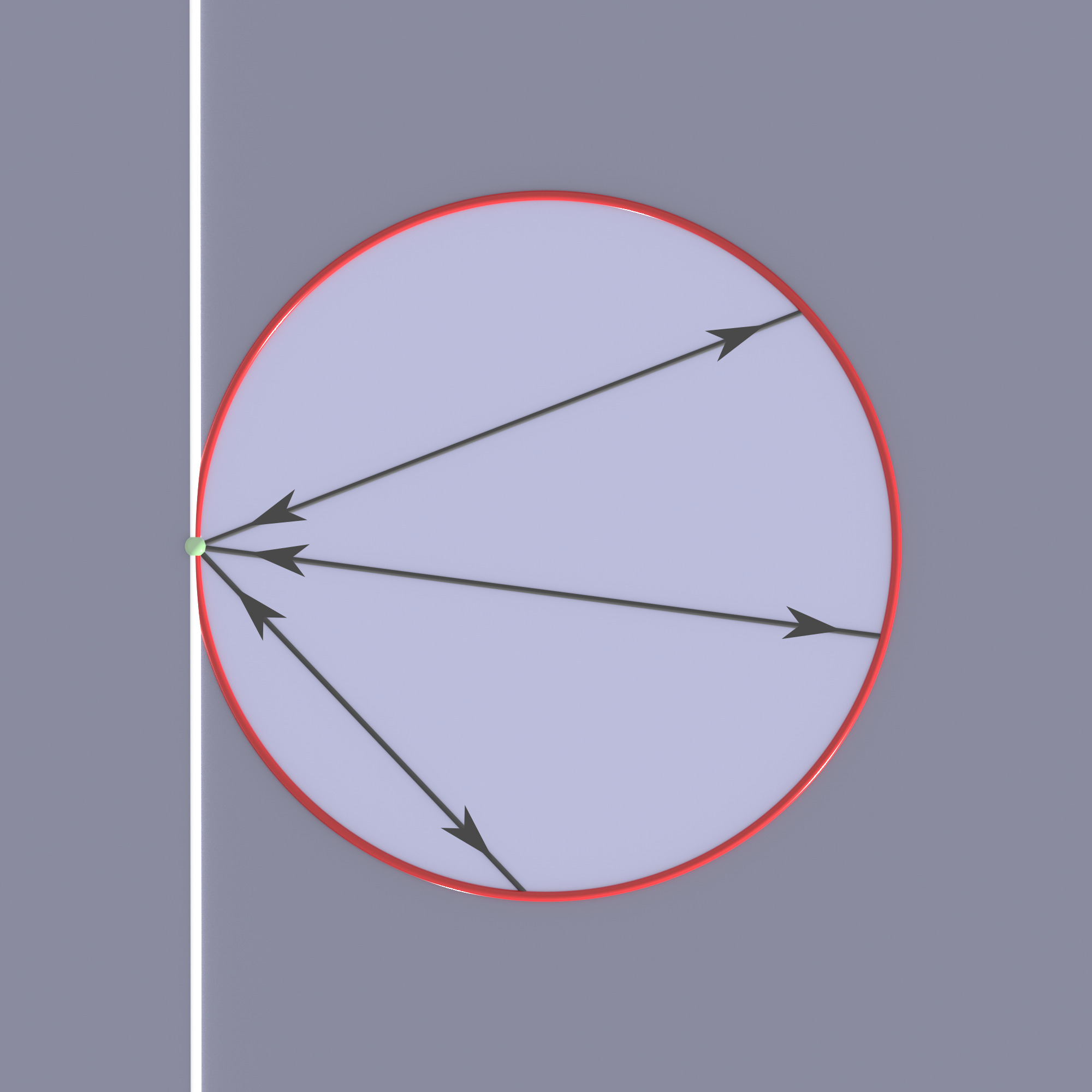}
  }\\
  \begin{overpic}[width=0.45\textwidth]{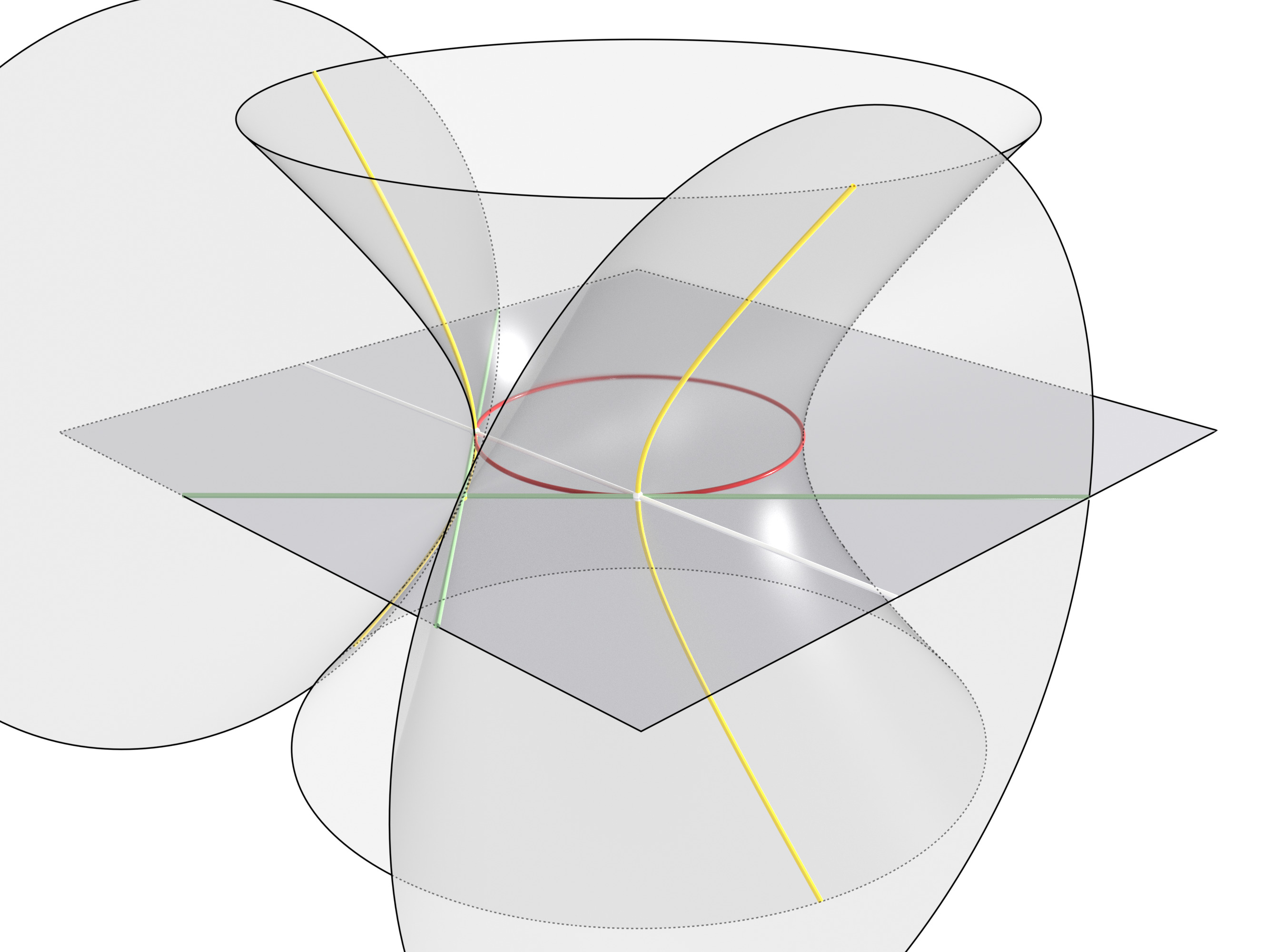}
    \put(-8,36){(c)}
  \end{overpic}
  \raisebox{0.35cm}{
    \includegraphics[width=0.28\textwidth]{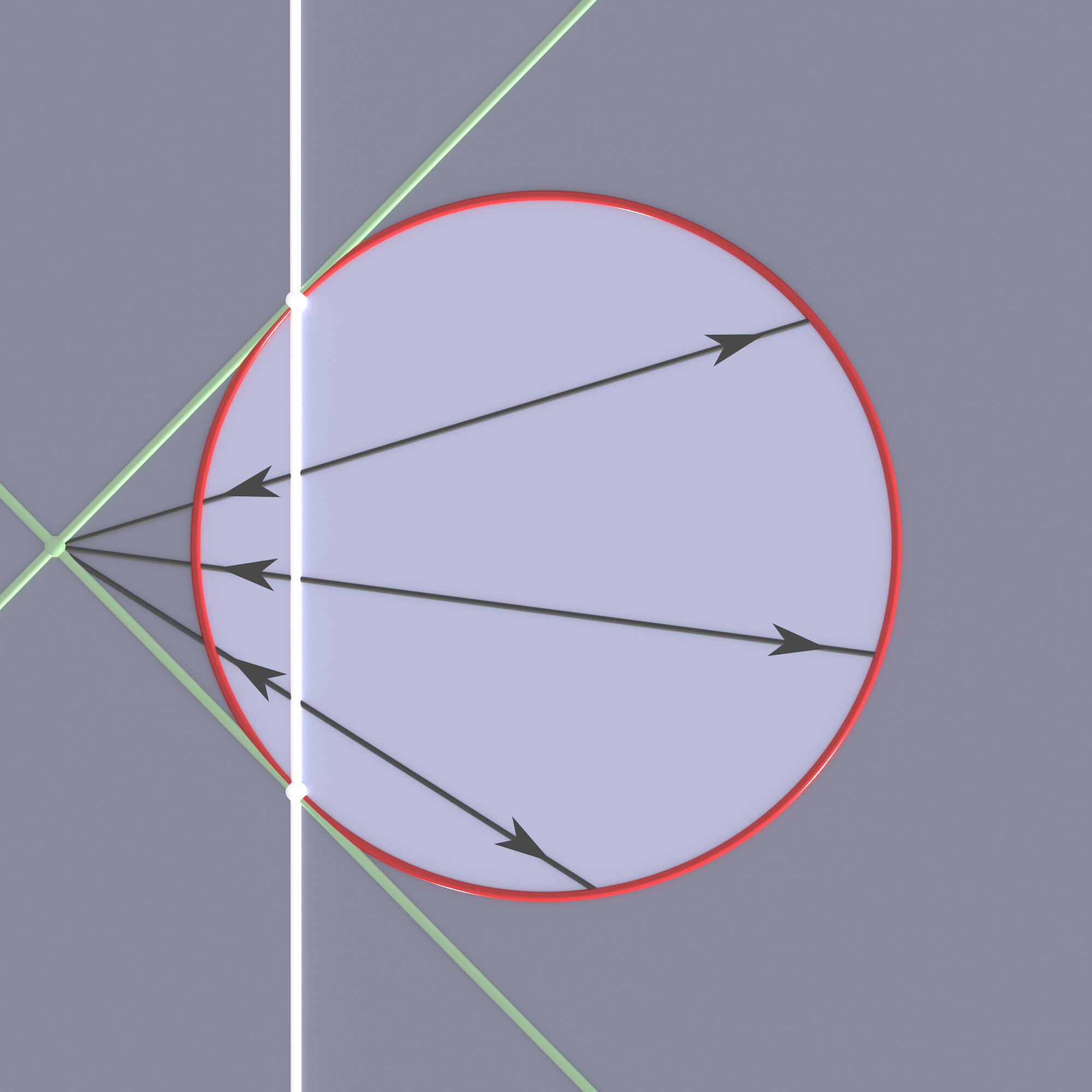}
  }
  \caption{
    Points in hyperbolic Laguerre geometry.
    In Laguerre geometry, points are oriented circles as well.
    Their images in the quadric model $\hyplag$ are sections with planes that are orthogonal to the base plane of $\mathcal{H}$.
    Every tangent line appears with both orientations.
    (a) An ordinary point in the hyperbolic plane $\mathcal{H}$.
    (b) An ideal point on the absolute circle $\mathcal{S}$.
    It defines two pencils of "parallel" oriented lines.
    In the quadric model these pencils correspond to the rulings of $\hyplag$.
    (c) A deSitter point.
    Only part of the lines through this point define hyperbolic lines.
  }
  \label{fig:hyperbolic-laguerre-geometry-points}
\end{figure}

To transform these oriented hyperbolic circles $c$ to the quadric model we apply polarity with respect to $\hyplag$. 
The tangents of $c$ are mapped to the lines of a quadratic cylinder $\Gamma$ (parallel to the $z$-axis, see Figure~\ref{fig:hyperbolic-laguerre-geometry-circle-lift}).
Its intersection with $\hyplag$ is composed of two conics (in planes symmetric to $z=0$).
This follows from the tangency of $\Gamma$ and $\hyplag$ in the ideal points $c \cap \mathcal{S} = L \cap \mathcal{S}$.
One of the two planar sections, $G(c)$, represents the oriented circle $c$ in its given orientation;
the other, symmetric section corresponds to the reverse orientation.
The orthogonal projection $c^\perp$ of such a planar section is polar to $c$ with respect to $\mathcal{S}$
and thus a conic which touches $\mathcal{S}$ in the same points as $c$ does.
However, this conic $c^\perp$ is not a hyperbolic circle, but a deSitter circle (outside~$\mathcal{S}$).

Let us now turn to points which also define Laguerre circles and are presented in Figure~\ref{fig:hyperbolic-laguerre-geometry-points}.
In the quadric model these circles correspond to sections with planes parallel to the $z$-axis
(passing through the polar line of the point $c$ with respect to $\mathcal{S}$). 
Note that beside hyperbolic points (Figure~\ref{fig:hyperbolic-laguerre-geometry-points} (a)) there is a further type of point-like circles, shown in Figure~\ref{fig:hyperbolic-laguerre-geometry-points} (c),
where the common point of tangents  lies in the deSitter space, outside $\mathcal{S}$.
Figure~\ref{fig:hyperbolic-laguerre-geometry-points} (b) illustrates the case of an ideal point $\p{x} \in \mathcal{S}$ viewed as set of lines.
This set can be oriented in two ways and such defines two pencils of parallel oriented lines.
They are not considered as oriented Laguerre circles and correspond to the intersection of $\hyplag$ with its tangent plane at $\p{x}$, which decomposes into two rulings.
Each of the two pencils corresponds to a ruling of $\hyplag$.

Having discussed all these cases we can state that oriented Laguerre circles of the hyperbolic plane $\mathcal{H}$ correspond precisely to the planar sections of $\hyplag$ different from rulings.

\paragraph{Laguerre transformations}
Finally, having the quadric model at our disposal, we turn to Laguerre transformations.
Laguerre transformations of the hyperbolic plane appear as projective transformations that map the hyperboloid $\hyplag$ to itself.
Those are exactly the maps that act bijectively on the set of points and planar sections of the projectively extended quadric $\hyplag$.

Again we see that Laguerre transformations do not preserve the special circles whose envelopes are points.
Those belong to planar sections of $\hyplag$ in $z$-parallel planes
and this special property of a plane is in general not preserved under a projective automorphism of $\hyplag$. 

A projective automorphism of $\hyplag$ maps rulings to rulings.
Thus, hyperbolic Laguerre transformations preserve parallelity of oriented straight lines.
The same is true in Euclidean Laguerre geometry but does not apply in the elliptic plane where there is no parallelism of straight lines.


\newpage
\section{Quadrics in projective space}
\label{sec:preliminaries}
We begin our general discussions with the introduction of \emph{quadrics} in projective space, see, e.g., \cite{K, Blproj, Gie}.
\subsection{Projective geometry}

Consider the $n$-dimensional \emph{real projective space}
\[
  \RP^n \coloneqq \P(\R^{n+1}) \coloneqq \faktor{ \left( \R^{n+1} \setminus \{0\} \right) }{ \sim }
\]
as it is generated via projectivization from its \emph{homogeneous coordinate space}
$\R^{n+1}$ using the equivalence relation
\[
x \sim y \quad\Leftrightarrow\quad x = \lambda y ~ \text{for some}~\lambda\in\R.
\]
We denote points in $\RP^{n}$ and its \emph{homogeneous coordinates} by
\[
  \p{x} = [x] = [x_1, \ldots, x_{n+1}].
\]
\emph{Affine coordinates} are given by normalizing one homogeneous coordinate to be equal to one
and then dropping this coordinate, e.g.,
\[
  \left( \frac{x_1}{x_{n+1}}, \ldots, \frac{x_n}{x_{n+1}} \right).
\]
Points with $x_{n+1}=0$, for which this normalization is not possible,
are said to lie on the \emph{hyperplane at infinity}.

The \emph{projectivization operator} $\P$ acts on any subset of the homogeneous coordinate space.
In particular, a \emph{projective subspace} $\p{U} \subset \RP^n$ is given by the projectivization
of a linear subspace $U \subset \R^{n+1}$,
\[
  \p{U} = \P(U), \quad \dim\p{U} = \dim U - 1.
\]
To denote projective subspaces spanned by a given set of points $\p{a}_1, \ldots \p{a}_k$
with linear independent homogeneous coordinate vectors we use the exterior product
\[
  \p{a}_1 \wedge \cdots \wedge \p{a}_k \coloneqq [a_1 \wedge \cdots \wedge a_k] = \P(\Span\{a_1,\ldots,a_k\}).
\]
 
The group of \emph{projective transformations} is induced by the group of linear
transformations of $\R^{n+1}$ and denoted by $\PGL(n+1)$.
A projective transformation maps projective subspaces to projective subspaces,
while preserving their dimension and incidences.
The \emph{fundamental theorem of real projective geometry}
states that this property characterizes projective transformations.
\begin{theorem}
  \label{thm:fundamental-theorem}
  Let $n \geq 2$, and $W \subset \RP^n$ be a non-empty open subset.
  Let $f : W \rightarrow \RP^n$ be an injective map that maps
  intersections of $k$-dimensional projective subspaces with $W$
  to intersections of $k$-dimensional projective subspaces with $f(W)$ for some $1 \leq k \leq n-1$.
  Then $f$ is the restriction of a unique projective transformation of $\RP^n$.
\end{theorem}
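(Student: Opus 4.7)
The overall plan is a classical von Staudt-style argument: first reduce to the case of a line-preserving map ($k=1$), then use the harmonic conjugate construction to extract the projective structure, and finally identify $f$ with the unique projective transformation determined by its values on a projective basis lying inside $W$.

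First I would reduce to $k=1$. Assume the hypothesis holds for some $k \geq 2$, and consider three points $\p{a}, \p{b}, \p{c}$ of $W$ lying on a line $L$. I would choose $k$-dimensional subspaces $K_1, \ldots, K_r \supset L$ in sufficiently general position so that $K_1 \cap \cdots \cap K_r = L$; this is possible because the family of $k$-planes containing $L$ sweeps out $\RP^n$. By hypothesis each image $K_i' = f(K_i \cap W)$ lies in a $k$-plane and contains $f(\p{a}), f(\p{b}), f(\p{c})$. A dimension and genericity argument together with injectivity of $f$ forces the intersection of the $K_i'$ to be one-dimensional, so the three image points are collinear. Hence $f$ takes line segments in $W$ to line segments, and inductively it takes any $r$-plane section of $W$ into an $r$-plane.

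Next, with $f$ shown to preserve lines and all incidences, I would invoke the complete quadrangle construction to show that $f$ preserves harmonic quadruples on any line in $W$: the fourth harmonic conjugate of three collinear points is constructed purely by joins and intersections of lines, all of which remain in $W$ after possibly shrinking neighbourhoods. Once harmonic preservation is established, pick a projective basis $\p{e}_0, \ldots, \p{e}_{n+1}$ lying in $W$ (possible since $W$ is non-empty and open) and let $T \in \PGL(n+1)$ be the unique projective transformation with $T(\p{e}_i) = f(\p{e}_i)$. The composition $g \coloneqq T^{-1} \circ f$ fixes this basis pointwise, preserves harmonic quadruples, and maps each line into itself. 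Restricting $g$ to a line in $W$ containing three fixed points yields a bijection onto its image that preserves harmonic conjugates; by the standard reconstruction of the field operations on $\RP^1$ from harmonic quadruples, this forces $g$ to be induced by a field automorphism of $\R$. Since $\R$ admits no nontrivial field automorphism, $g$ is the identity on $W$, hence $f = T|_W$.

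Uniqueness of $T$ is then immediate since a projective transformation is determined by its action on a projective basis, and such a basis was found inside $W$. The main obstacle is not the algebraic content but the bookkeeping around the open set $W$: every auxiliary construction (the general-position $k$-planes through $L$, the vertices of the complete quadrangle, and the von Staudt field operations) must be realised inside $W$. This is handled by noting that at each step the admissible configurations form a Zariski-open and hence topologically dense subset of the parameter space, so auxiliary points can always be picked in $W$; extending the identity $f = T|_W$ to the unique projective transformation on all of $\RP^n$ is then automatic.
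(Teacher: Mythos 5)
The paper itself offers no proof of this statement: it is quoted as the classical \emph{fundamental theorem of real projective geometry} (with references to Klein, Blaschke, Giering), so your proposal can only be judged on its own terms. Your overall strategy --- reduce to preservation of collinearity, run the von Staudt harmonic-conjugate machinery, and finish with the fact that $\R$ admits no nontrivial field automorphism --- is the standard and correct route.

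Two steps, however, are asserted rather than argued, and as written neither goes through. First, in the reduction to $k=1$ you claim that ``a dimension and genericity argument together with injectivity'' forces $\bigcap_i K_i'$ to be one-dimensional. Nothing you have said excludes that all the image $k$-planes $K_i'$ contain a common $2$-plane through $f(\p{a}),f(\p{b}),f(\p{c})$, and a naive dimension count on $\bigcup_i K_i'$ fails for large $k$ (already for hyperplanes, the union of all hyperplanes through a fixed $2$-plane is all of $\RP^n$). The mechanism that actually works is different: since the hypothesis gives $f(K\cap W)=K'\cap f(W)$ as a \emph{full} section and $f$ is injective, one obtains $f(K_1\cap K_2\cap W)=K_1'\cap K_2'\cap f(W)$ with $K_1'\neq K_2'$ whenever $K_1\cap W\neq K_2\cap W$, and this sets up a descending induction on the dimension of the sections; that induction is where the work lies and it is absent from your sketch. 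Second, after normalizing by $T$ you assert that $g=T^{-1}\circ f$ ``maps each line into itself'' and then restrict $g$ to ``a line in $W$ containing three fixed points''. At that stage $g$ is only known to fix the $n+2$ basis points, which are in general position, so no line contains three of them, and $g$ preserves only lines spanned by pairs of fixed points. The classical argument must first propagate fixed points through the complete quadrilateral on the basis (intersections of preserved lines are fixed; a line carrying three fixed points is pointwise fixed by the one-dimensional harmonic-preservation theorem; repeat), before concluding that every point of $W$ lies on two preserved lines and is therefore fixed. Relatedly, the one-dimensional statement you invoke is itself a \emph{local} von Staudt/Darboux theorem for an injection of an interval, whose real proof uses order and monotonicity, not merely the triviality of $\operatorname{Aut}(\R)$. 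None of this invalidates your plan, but these are precisely the places where the content of the theorem lives, and both are currently replaced by a gesture.
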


For a projective subgroup $G \subset \PGL(n+1)$ we denote the \emph{stabilizer} of a finite number of points $\p{v}_1, \ldots \p{v}_m \in \RP^n$ by
\begin{equation}
  \label{eq:stabilizer}
  G_{\p{v}_1, \ldots, \p{v}_m} \coloneqq \set{g \in G}{g(\p{v}_i) = \p{v}_i,~\text{for}~i=1,\ldots,m}.
\end{equation}

\subsection{Quadrics}
Let $\scalarprod{\cdot}{\cdot}$ be a non-zero symmetric bilinear form on $\R^{n+1}$.
A vector $x\in\R^{n+1}$ is called
\begin{itemize}
\item \emph{spacelike} if $\scalarprod{x}{x} > 0$,
\item \emph{timelike} if $\scalarprod{x}{x} < 0$,
\item \emph{lightlike}, or \emph{isotropic}, if $\scalarprod{x}{x} = 0$.
\end{itemize}
There always exists an \emph{orthogonal basis} with respect to $\scalarprod{\cdot}{\cdot}$,
i.e.\ a basis $(e_i)_{i = 1, \ldots, n+1}$ satisfying $\scalarprod{e_i}{e_j} = 0$ if $i\neq j$.
The triple $(r,s,t)$, consisting of the numbers of spacelike ($r$), timelike ($s$), and lightlike ($t$)
vectors in $(e_i)_{i = 1, \ldots, n+1}$ is called the \emph{signature} of $\scalarprod{\cdot}{\cdot}$.
It is invariant under linear transformations.
If $t=0$, the bilinear form $\scalarprod{\cdot}{\cdot}$ is called \emph{non-degenerate},
in which case we might omit its value in the signature.
We alternatively write the signature in the form
\[
  (\underbrace{+\cdots+}_r \underbrace{-\cdots-}_s \underbrace{0\cdots0}_t).
\]

The space $\R^{n+1}$ together with a bilinear form of signature $(r,s,t)$ is denoted by $\R^{r,s,t}$.
The zero set of the quadratic form corresponding to $\scalarprod{\cdot}{\cdot}$
\[
   \L^{r,s,t} \coloneqq \set{x\in\R^{n+1}}{\scalarprod{x}{x} = 0}
\]
is called the \emph{light cone}, or \emph{isotropic cone}.
Its projectivization
\[
  \quadric \coloneqq \P(\L^{r,s,t}) = \set{\p x \in\RP^n}{\scalarprod{x}{x} = 0} \subset \RP^n
\]
defines a \emph{quadric} in $\RP^{n}$
(quadrics in $\RP^2$ are called \emph{conics}).

A point $\p{x} \in \quadric$ contained in the kernel of the corresponding bilinear form $\scalarprod{\cdot}{\cdot}$, i.e.
\[
  \scalarprod{x}{y} = 0\qquad \text{for all}~ y \in \R^{n+1}
\]
is called a \emph{vertex} of $\quadric$.
A quadric is called \emph{non-degenerate} if it contains no vertices, or equivalently if $t = 0$.
If $\quadric$ is degenerate, i.e. $t > 0$, its set of vertices is a projective subspace of dimension $t-1$.

A non-zero scalar multiple of $\scalarprod{\cdot}{\cdot}$ defines the same quadric $\mathcal{Q}$.
Vice versa, if $\mathcal{Q}$ is non-empty and does not solely consist of vertices it uniquely determines its corresponding symmetric bilinear form up to a non-zero scalar multiple.
Upon considering the complexification of real quadrics
\[
  \quadric_\C \coloneqq \set{\p x \in\CP^n}{\scalarprod{x}{x} = 0} \subset \CP^n
\]
this correspondence holds in all cases,
and it is convenient to generally identify the term quadrics and symmetric bilinear forms up to non-zero scalar multiples.

The signature of a quadric is well-defined up to interchanging $r$ and $s$.
The signature of a projective subspace $\p{U} = \P(U)$ is defined by the signature of the bilinear form restricted to $U$.
After a choice of the sign for the bilinear form of $\quadric$ the signs for the signature of $\p{U}$ are fixed.

A quadric $\quadric$ naturally defines two regions in the projective space $\RP^n$,
\begin{equation}
  \label{eq:quadric-sides}
  \begin{aligned}
    \quadric^+ &\coloneqq \set{\p{x} \in \RP^n}{\scalarprod{x}{x} > 0},\\
    \quadric^- &\coloneqq \set{\p{x} \in \RP^n}{\scalarprod{x}{x} < 0},
  \end{aligned}
\end{equation}
which we call the two \emph{sides} of the quadric.
Which side is ``+'' and which side is ``-'' is only determined after choosing the sign for the bilinear form of $\quadric$.

A projective subspace entirely contained in the quadric $\quadric$ is called an \emph{isotropic subspace}.
A quadric with signature $(r,s,t)$ contains isotropic subspaces of dimension $\min\{r, s\} + t - 1$ through every point.

Consider the following examples of quadrics in $\RP^n$ with different signatures.
\begin{example}\
  \label{ex:quadrics}
  \nobreakpar
  \begin{enumerate}
  \item A quadric with signature $(n+1,0)$ is empty in $\RP^n$.
    By either identifying the quadric with its bilinear form up to non-zero scalar multiples
    or by complexification $\quadric_\C \subset \CP^n$,
    we consider this to be an admissible non-degenerate quadric,
    which only happens to have an empty real part.
    Note that one side of the quadric $\quadric^+ = \RP^n$ is the whole space,
    while the other side $\quadric^- = \varnothing$ is empty.
    \label{ex:quadrics-empty}
  \item A quadric with signature $(n,1)$ is an ``oval quadric''.
    It is projectively equivalent to the $(n-1)$-dimensional sphere $\S^{n-1}$.
  \item A quadric with signature $(n-1,2)$ is a higher dimensional analogue of a doubly ruled quadric in $\RP^3$.
    It contains lines as isotropic subspaces through every point, but no planes.
  \item A quadric with signature $(r,s,1)$ is a cone.
    It consists of all lines connecting its vertex to a non-degenerate quadric of signature $(r,s)$,
    given by its intersection with a hyperplane not containing the vertex.
    Note that if $r=0$ or $s=0$ (the real part of) the cone only consists of the vertex.
    The remaining part of the cone can be considered as imaginary (cf.\ Example~\ref{ex:quadrics-empty}).
    \label{ex:quadrics-cone}
  \item A quadric with signature $(1,0,n)$ is a ``doubly counted hyperplane''.
  \end{enumerate}
\end{example}

For non-neutral signature, i.e.\ $r \neq s$, and $rs \neq 0$, the subgroup of projective transformations
preserving the quadric $\quadric$ is exactly the \emph{projective orthogonal group} $\PO(r,s,t)$,
i.e.\ the projectivization of all linear transformations that preserve the bilinear form $\scalarprod{\cdot}{\cdot}$.
For simplicity, we call $\PO(r,s,t)$ the ``group of transformations that preserve the quadric $\quadric$'' for all signatures.
\begin{remark}
  In the case $r = s$ the statement remains true if we exclude projective transformations that interchange the two sides \eqref{eq:quadric-sides} of the quadric.
  In the case $rs=0$ the statement remains true upon complexification.
\end{remark}
The fundamental theorem of real projective geometry (see Theorem \ref{thm:fundamental-theorem})
may be specialized to quadrics.
\begin{theorem}
  \label{thm:fundamental-theorem-quadrics}
  Let $n \geq 3$, $\quadric \subset \RP^n$ be a non-degenerate non-empty quadric in $\RP^n$,
  and $W \subset \quadric$ be a non-empty open subset of the quadric.
  Let $f : W \rightarrow \quadric$ be an injective map that maps
  intersections of $k$-dimensional projective subspaces with $W$
  to intersections of $k$-dimensional projective subspaces with $f(W)$ for some $2 \leq k \leq n-1$.
  Then $f$ is the restriction of a unique projective transformation of $\RP^n$ that preserves the quadric $\quadric$.
\end{theorem}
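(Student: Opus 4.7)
The plan is to reduce the statement to the fundamental theorem of real projective geometry (Theorem~\ref{thm:fundamental-theorem}) by extending $f$ to an injective, line-preserving map on an open subset of $\RP^n$. The key geometric input is that, since $\quadric$ is non-degenerate and $k\ge 2$, any $k$-plane $\Pi\subset\RP^n$ whose section $\Pi\cap\quadric$ is a non-degenerate quadric of dimension $k-1\ge 1$ is recovered as the projective span of that section.

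First I would induce from $f$ a map $F$ on $k$-planes. Given a $k$-plane $\Pi$ such that $\Pi\cap\quadric$ is non-degenerate and $\Pi\cap W$ is non-empty (hence open in $\Pi\cap\quadric$), the hypothesis says $f(\Pi\cap W)\subset \Pi'\cap\quadric$ for some $k$-plane $\Pi'$. Because $\Pi\cap W$ is open in a non-degenerate section of dimension $\ge 1$, its image contains more than $k+1$ points in general position and hence uniquely determines $\Pi'$; injectivity of $f$ then makes $F$ injective on the open family of such $k$-planes. A short argument, using $f(\Pi_1\cap\Pi_2\cap W)=f(\Pi_1\cap W)\cap f(\Pi_2\cap W)$ together with non-degeneracy of the lower-dimensional sections involved, shows that $F$ preserves intersection dimensions between pairs of $k$-planes whose intersection already contains quadric points in $W$.

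Next I would promote this $k$-plane datum to an incidence-preserving extension of $f$ to an open subset of $\RP^n$. Iterating intersections of $k$-planes produces projective subspaces of all intermediate dimensions down to $0$, giving candidate images of points in $\RP^n$; iterating spans (equivalently, taking polars with respect to $\quadric$, which $F$ preserves once it preserves $k$-plane sections) delivers the remaining subspaces. Combining, $F$ extends to an injective map $\bar f$ defined on an open subset $U\subset\RP^n$ containing $W$, which sends lines into lines. Theorem~\ref{thm:fundamental-theorem} then yields a unique projective transformation $g\in\PGL(n+1)$ with $g|_U=\bar f$, hence $g|_W=f$. Since $g$ sends the open subset $W\subset\quadric$ into $\quadric$, and since $W$ spans $\RP^n$ by non-degeneracy of $\quadric$, the transformation $g$ must preserve $\quadric$.

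The main technical obstacle is establishing incidence preservation of $F$ when $k=2$: two generic $2$-planes in $\RP^n$ ($n\ge 3$) meet only at a point, which typically lies off $\quadric$, so $f$ carries no direct information about that intersection. The resolution is to work through chains of $k$-planes that share a common $(k-1)$-dimensional section of $\quadric$ meeting $W$ --- such configurations are abundant because $W$ is open in a non-degenerate quadric of dimension $n-1\ge 2$ --- and to propagate incidence information along these chains. The uniqueness claim follows immediately from the uniqueness statement in Theorem~\ref{thm:fundamental-theorem} applied to $\bar f$.
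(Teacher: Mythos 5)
The paper states Theorem~\ref{thm:fundamental-theorem-quadrics} without proof, as a classical specialization of Theorem~\ref{thm:fundamental-theorem}, so your proposal must stand on its own. Your overall strategy --- extend $f$ from $W\subset\quadric$ to an injective, line-preserving map $\bar f$ on an open subset of $\RP^n$ and then invoke Theorem~\ref{thm:fundamental-theorem} --- is the natural one, and the easy endgame is fine (uniqueness from $n+2$ points of $W$ in general position; $g(\quadric)=\quadric$ because $W$ is dense in the irreducible quadric). But the construction of $\bar f$, which is the entire content of the reduction, is not actually carried out, and the device you offer for the difficulty you yourself flag does not resolve it.

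Concretely: to define $\bar f(\p{p})$ for $\p{p}\notin\quadric$ you intersect images $F(\Pi_1)\cap F(\Pi_2)$ of $k$-planes through $\p{p}$. The hypothesis only controls intersections that meet $W$: if $\Pi_1\cap\Pi_2$ contains points of $W$, then $F(\Pi_1)\cap F(\Pi_2)$ contains their $f$-images. Your ``chains of $k$-planes sharing a $(k-1)$-dimensional section meeting $W$'' propagate exactly this kind of information, and every point so controlled lies \emph{on} $\quadric$; they therefore never pin down the image of a point off the quadric. Already in the essential base case --- a secant line $\Lambda$ with $\Lambda\cap W\neq\varnothing$ and a point $\p{p}\in\Lambda\setminus\quadric$ --- nothing locates $\bar f(\p{p})$ on the image line, and for a pencil of secants of a conic $C=\Pi\cap\quadric$ through $\p{p}$ nothing shows the image secants of the image conic are again concurrent (for $k=2$, $n\ge 5$ the images of two $2$-planes meeting only in $\p{p}$ need not meet at all). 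This consistency statement is precisely where the real work lies; classically it is handled either by Miquel/bundle-type incidence configurations that use the ambient quadric, or by stereographic projection from a point $\p{q}\in W$ and from $f(\p{q})$, which converts the statement into an instance of Theorem~\ref{thm:fundamental-theorem} in $\p{q}^\perp\simeq\RP^{n-1}$ applied to a genuinely open set. Two subsidiary assertions feed into the same gap: that $f(\Pi\cap W)$ spans a full $k$-plane (injectivity alone does not produce $k+1$ image points in general position, since no continuity is assumed), and that $F$ ``preserves polars'' --- the latter is essentially equivalent to the conclusion and cannot be assumed at that stage.
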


For a non-degenerate quadric every such transformation
can be decomposed into a finite number of reflections in hyperplanes
by the theorem of Cartan and Dieudonn{\'e}.
\begin{theorem}
  \label{thm:cartan}
  Let $\quadric \subset \RP^n$ be a non-degenerate quadric of signature $(r,s)$.
  Then each element of the corresponding projective orthogonal group $\PO(r,s)$
  is the composition of at most $n+1$ reflections in hyperplanes,
  i.e. transformations of the form
  \[
    \sigma_{\p{q}} : \RP^n \rightarrow \RP^n, \qquad [x] \mapsto \left[x - 2 \frac{\scalarprod{x}{q}}{\scalarprod{q}{q}}q \right]
  \]
  for some $\p{q} \in \RP^n \setminus \quadric$.
\end{theorem}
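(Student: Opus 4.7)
The plan is to lift the problem to the linear orthogonal group $\Ortho(r,s)$ acting on $V \coloneqq \R^{n+1}$. Any $g \in \PO(r,s)$ is the projectivization of some $\tilde g \in \Ortho(r,s)$, and the projective reflection $\sigma_{\p q}$ in the statement is the projectivization of the linear reflection $\tilde\sigma_q : x \mapsto x - 2\tfrac{\scalarprod{x}{q}}{\scalarprod{q}{q}} q$ in the hyperplane $q^\perp$. Since projectivization is a group homomorphism, it suffices to prove that every $\tilde g \in \Ortho(r,s)$ on a non-degenerate quadratic space of dimension $N = n+1$ factors as a product of at most $N$ such hyperplane reflections.

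I would then proceed by induction on $N$, the base $N=1$ being immediate. The inductive engine is the following reduction: if $\tilde g$ fixes some non-isotropic vector $v$, then $\tilde g$ preserves the non-degenerate hyperplane $v^\perp \subset V$ and restricts to an orthogonal transformation of an $(N{-}1)$-dimensional non-degenerate space. By induction that restriction is a product of at most $N-1$ hyperplane reflections of $v^\perp$, and each extends uniquely to a hyperplane reflection of $V$ fixing $v$, yielding a decomposition of $\tilde g$ with at most $N-1$ factors.

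The core step is to arrange such a fixed non-isotropic vector after pre-composing with at most one reflection. Picking any non-isotropic $v$ and setting $q \coloneqq v - \tilde g(v)$, the identity $\scalarprod{\tilde g(v)}{\tilde g(v)} = \scalarprod{v}{v}$ gives $\scalarprod{q}{q} = 2(\scalarprod{v}{v} - \scalarprod{v}{\tilde g(v)})$, and a short computation then yields $\tilde\sigma_q(\tilde g(v)) = v$, so $\tilde\sigma_q \circ \tilde g$ fixes $v$ and $\tilde g$ decomposes into at most $1 + (N-1) = N$ reflections.

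The main obstacle is the \emph{residual case} in which $\tilde g$ fixes no non-isotropic vector \emph{and} $v - \tilde g(v)$ is isotropic for every non-isotropic $v$; here the reflection $\tilde\sigma_q$ is not defined. One exploits the polarization identity $\scalarprod{v - \tilde g(v)}{v - \tilde g(v)} + \scalarprod{v + \tilde g(v)}{v + \tilde g(v)} = 4\scalarprod{v}{v}$ to conclude that $v + \tilde g(v)$ is necessarily non-isotropic, and pre-composes with $\tilde\sigma_{v + \tilde g(v)}$, which sends $\tilde g(v) \mapsto -v$. A naive second reflection along $v$ gives only the weaker bound $N+2$, so Artin's refinement is needed: one shows that in this residual configuration $\tilde g$ must act as $-\id$ on a non-degenerate invariant subspace, which itself admits an economical factorization into reflections recovering the bound $N$. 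I expect this residual subcase to be the main technical hurdle; the rest of the argument is a direct induction built around the reflection identity $\tilde\sigma_{v - \tilde g(v)}(\tilde g(v)) = v$. Projectivizing the resulting factorization yields the claim for $\PO(r,s)$.
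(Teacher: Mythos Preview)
The paper does not prove this theorem; it is stated as the classical Cartan--Dieudonn\'e result and used without argument. Your outline follows the standard inductive proof via $\Ortho(r,s)$, and the main reduction---fix a non-isotropic vector after at most one reflection $\tilde\sigma_{v-\tilde g(v)}$---is correct and is exactly the classical route.

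Your handling of the residual case, however, contains a genuine error. You claim that in this case $\tilde g$ acts as $-\id$ on a non-degenerate invariant subspace, but this contradicts the residual hypothesis: if $\tilde g(w) = -w$ for a non-isotropic $w$, then $w - \tilde g(w) = 2w$ is non-isotropic. The actual mechanism is different. From $\scalarprod{(\tilde g - \id)v}{(\tilde g - \id)v} = 0$ for all $v$ (extend from non-isotropic $v$ by polarization) one deduces that $\operatorname{im}(\tilde g - \id)$ is totally isotropic; since $\ker(\tilde g - \id)$ must also be totally isotropic (otherwise Case~1 applies), the dimension count $\dim\ker + \dim\operatorname{im} = N$ forces $r = s$ and both subspaces to have dimension exactly $r$. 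The sharp bound $N$ is then recovered by a separate argument specific to this neutral-signature situation (as in Artin's \emph{Geometric Algebra} or Scharlau's \emph{Quadratic and Hermitian Forms}). A minor point: your naive count in the residual case should give $N+1$ (two reflections to fix $v$, then at most $N-1$ by induction on $v^\perp$), not $N+2$.
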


\subsection{Polarity}
\label{sec:polarity}
\begin{figure}
  \centering
  \def\svgwidth{0.44\textwidth}
\begingroup%
  \makeatletter%
  \providecommand\color[2][]{%
    \errmessage{(Inkscape) Color is used for the text in Inkscape, but the package 'color.sty' is not loaded}%
    \renewcommand\color[2][]{}%
  }%
  \providecommand\transparent[1]{%
    \errmessage{(Inkscape) Transparency is used (non-zero) for the text in Inkscape, but the package 'transparent.sty' is not loaded}%
    \renewcommand\transparent[1]{}%
  }%
  \providecommand\rotatebox[2]{#2}%
  \newcommand*\fsize{\dimexpr\f@size pt\relax}%
  \newcommand*\lineheight[1]{\fontsize{\fsize}{#1\fsize}\selectfont}%
  \ifx\svgwidth\undefined%
    \setlength{\unitlength}{2025.31640625bp}%
    \ifx\svgscale\undefined%
      \relax%
    \else%
      \setlength{\unitlength}{\unitlength * \real{\svgscale}}%
    \fi%
  \else%
    \setlength{\unitlength}{\svgwidth}%
  \fi%
  \global\let\svgwidth\undefined%
  \global\let\svgscale\undefined%
  \makeatother%
  \begin{picture}(1,1)%
    \lineheight{1}%
    \setlength\tabcolsep{0pt}%
    \put(0,0){\includegraphics[width=\unitlength,page=1]{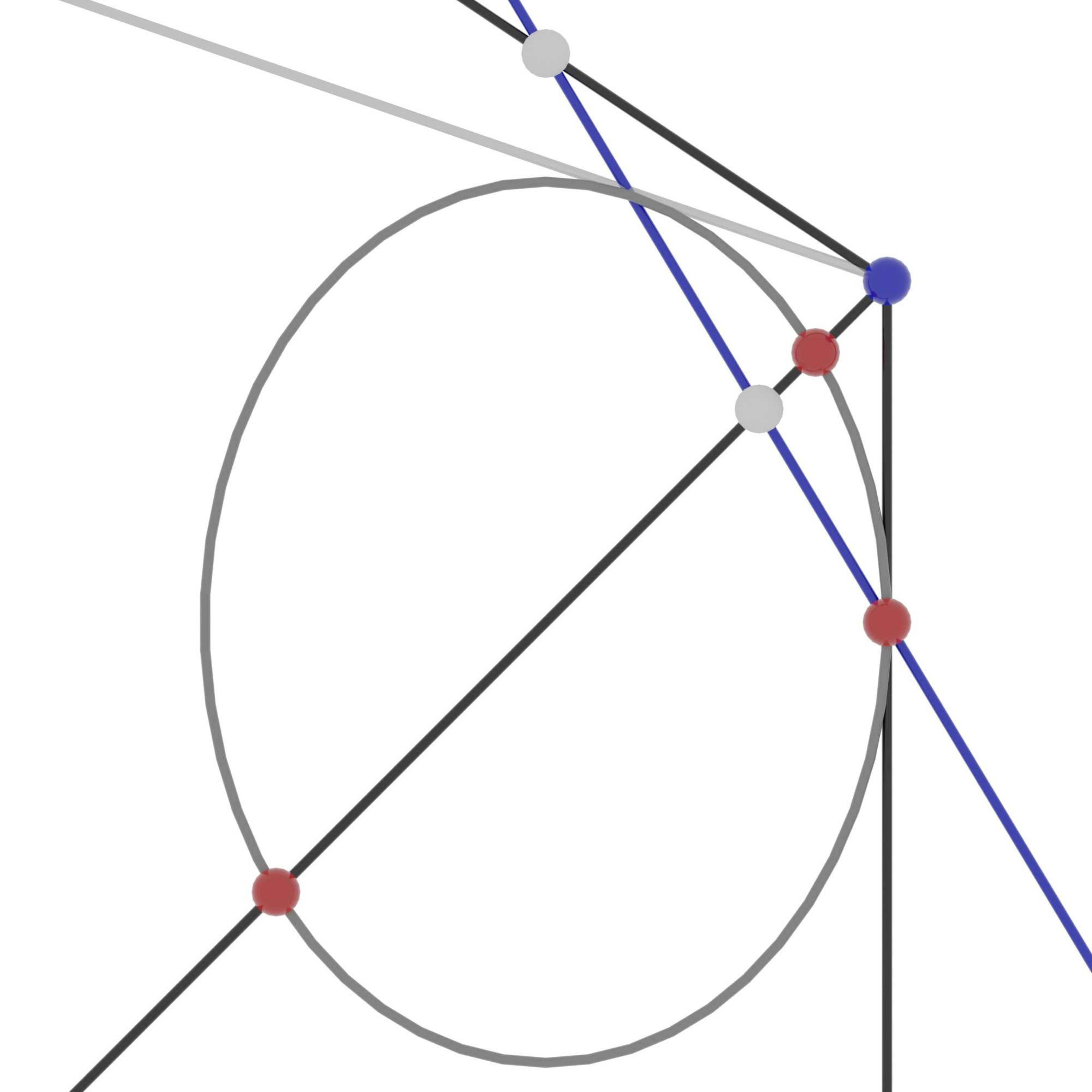}}%
    \put(0.84268291,0.73902438){\color[rgb]{0,0,0}\makebox(0,0)[lt]{\lineheight{1.25}\smash{\begin{tabular}[t]{l}$\p{x}$\end{tabular}}}}%
    \put(0.89999998,0.31707321){\color[rgb]{0,0,0}\makebox(0,0)[lt]{\lineheight{1.25}\smash{\begin{tabular}[t]{l}$\p{x}^{\perp}$\end{tabular}}}}%
    \put(0.83658532,0.04024391){\color[rgb]{0,0,0}\makebox(0,0)[lt]{\lineheight{1.25}\smash{\begin{tabular}[t]{l}$(+0)$\end{tabular}}}}%
    \put(0.49756096,0.37804879){\color[rgb]{0,0,0}\makebox(0,0)[lt]{\lineheight{1.25}\smash{\begin{tabular}[t]{l}$(+-)$\end{tabular}}}}%
    \put(0.62804879,0.88536586){\color[rgb]{0,0,0}\makebox(0,0)[lt]{\lineheight{1.25}\smash{\begin{tabular}[t]{l}$(++)$\end{tabular}}}}%
    \put(0.17073171,0.65853658){\color[rgb]{0,0,0}\makebox(0,0)[lt]{\lineheight{1.25}\smash{\begin{tabular}[t]{l}$\quadric$\end{tabular}}}}%
  \end{picture}%
\endgroup%

  \hspace{0.08\textwidth}
  \def\svgwidth{0.44\textwidth}
\begingroup%
  \makeatletter%
  \providecommand\color[2][]{%
    \errmessage{(Inkscape) Color is used for the text in Inkscape, but the package 'color.sty' is not loaded}%
    \renewcommand\color[2][]{}%
  }%
  \providecommand\transparent[1]{%
    \errmessage{(Inkscape) Transparency is used (non-zero) for the text in Inkscape, but the package 'transparent.sty' is not loaded}%
    \renewcommand\transparent[1]{}%
  }%
  \providecommand\rotatebox[2]{#2}%
  \newcommand*\fsize{\dimexpr\f@size pt\relax}%
  \newcommand*\lineheight[1]{\fontsize{\fsize}{#1\fsize}\selectfont}%
  \ifx\svgwidth\undefined%
    \setlength{\unitlength}{2025.31640625bp}%
    \ifx\svgscale\undefined%
      \relax%
    \else%
      \setlength{\unitlength}{\unitlength * \real{\svgscale}}%
    \fi%
  \else%
    \setlength{\unitlength}{\svgwidth}%
  \fi%
  \global\let\svgwidth\undefined%
  \global\let\svgscale\undefined%
  \makeatother%
  \begin{picture}(1,1)%
    \lineheight{1}%
    \setlength\tabcolsep{0pt}%
    \put(0,0){\includegraphics[width=\unitlength,page=1]{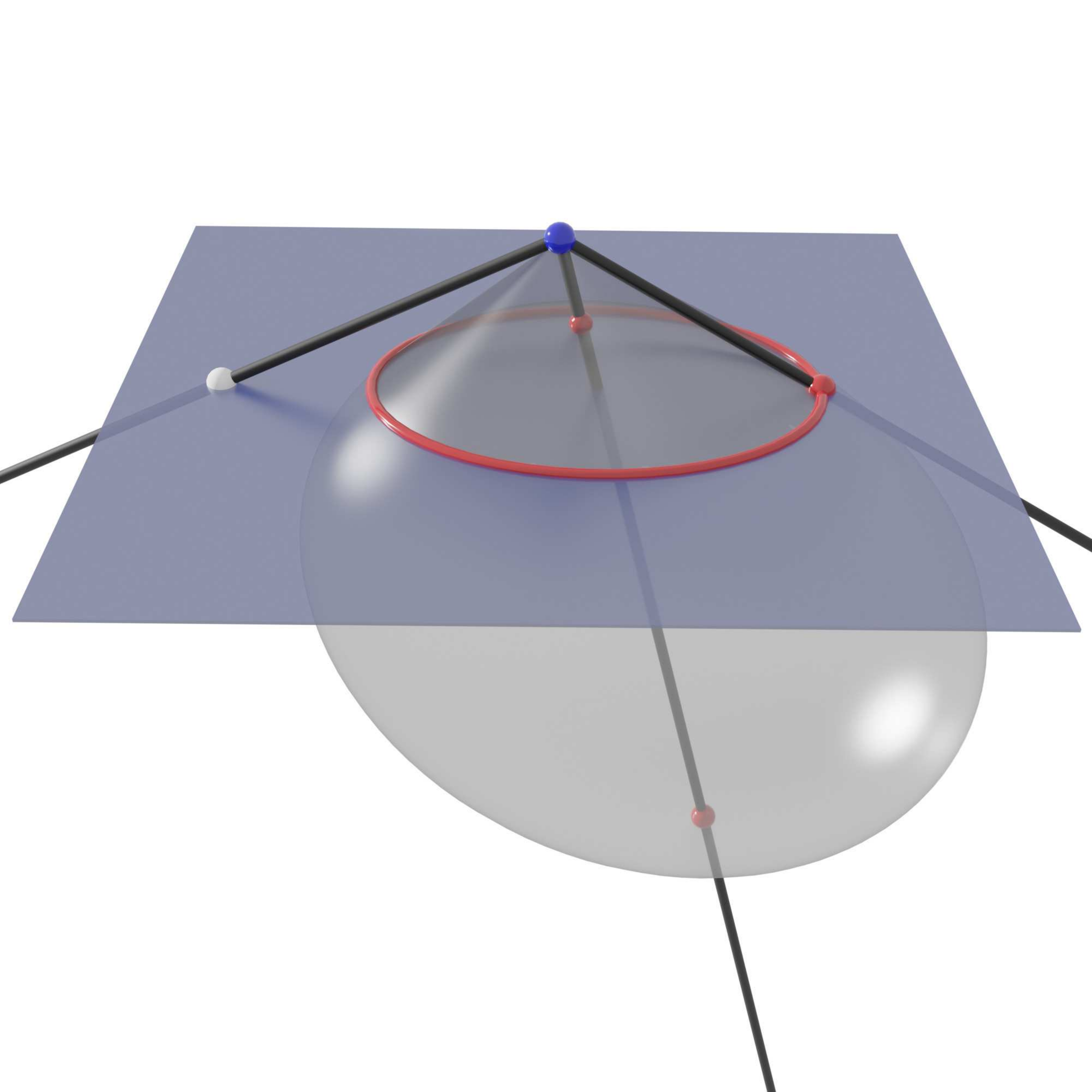}}%
    \put(0.50233642,0.82009346){\color[rgb]{0,0,0}\makebox(0,0)[lt]{\lineheight{1.25}\smash{\begin{tabular}[t]{l}$\p{x}$\end{tabular}}}}%
    \put(0.08411215,0.47546729){\color[rgb]{0,0,0}\makebox(0,0)[lt]{\lineheight{1.25}\smash{\begin{tabular}[t]{l}$\p{x}^{\perp}$\end{tabular}}}}%
    \put(0.69158881,0.10280376){\color[rgb]{0,0,0}\makebox(0,0)[lt]{\lineheight{1.25}\smash{\begin{tabular}[t]{l}$(+-)$\end{tabular}}}}%
    \put(0.24182243,0.73831775){\color[rgb]{0,0,0}\makebox(0,0)[lt]{\lineheight{1.25}\smash{\begin{tabular}[t]{l}$(++)$\end{tabular}}}}%
    \put(0.88317761,0.59813083){\color[rgb]{0,0,0}\makebox(0,0)[lt]{\lineheight{1.25}\smash{\begin{tabular}[t]{l}$(+0)$\end{tabular}}}}%
    \put(0.46028037,0.33060748){\color[rgb]{0,0,0}\makebox(0,0)[lt]{\lineheight{1.25}\smash{\begin{tabular}[t]{l}$\quadric$\end{tabular}}}}%
  \end{picture}%
\endgroup%

  \caption{
    Polarity with respect to a conic $\quadric$ with signature $(++-)$ in $\RP^2$ (\emph{left})
    and a quadric of signature $(+++-)$ in $\RP^3$ (\emph{right}).
    The point $\p{x}$ and its polar hyperplane $\p{x}^\perp$ are shown as well as the cone of contact from the point $\p{x}$.
    Lines through $\p{x}$ that are ``inside'' (signature $(+-)$), ``on'' (signature $(+0)$), and ``outside'' (signature $(++)$)
    the cone intersect the quadric in 2, 1, or 0 points respectively.
  }
  \label{fig:polarity}
\end{figure}

A quadric induces the notion of \emph{polarity} between projective subspaces (see Figure \ref{fig:polarity}).
For a projective subspace $\p{U} = \P(U)\subset\RP^{n}$,
where $U\subset\R^{n+1}$ is a linear subspace,
the \emph{polar subspace} of $\p{U}$ is defined as
\[
   \p{U}^{\perp}
   \coloneqq \set{\p x\in\RP^{n}}{\scalarprod{x}{y} = 0\;\text{for all}\;y\in U}.
\]
If $\quadric$ is non-degenerate, the dimensions of two polar subspaces satisfy the following relation:
\[
  \dim\p{U} + \dim\p{U}^\perp = n - 1.
\]
A refinement of this statement, which includes the signatures of the two polar subspaces, is captured in the following lemma.
\begin{lemma}
  \label{lemma:orthogonal-subspaces}
  Let $\quadric \subset \RP^n$ be a non-degenerate quadric of signature $(r, s)$.
  Then the signature $(\tilde r, \tilde s, \tilde t)$ of a subspace $\p{U} \subset \RP^n$
  and the signature $(\tilde r_\perp, \tilde s_\perp, \tilde t_\perp)$ of its polar subspace $\p{U}^\perp$
  with respect to $\quadric$ satisfy
  \[
    r = \tilde r + \tilde r_\perp + \tilde t, \quad
    s = \tilde s + \tilde s_\perp + \tilde t, \quad
    \tilde t = \tilde t_\perp.
  \]
  In particular, $\tilde t \leq \min\{r, s\}$.
\end{lemma}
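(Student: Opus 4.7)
The plan is to reduce the claim to an orthogonal Witt-style decomposition of $\R^{n+1}$ into a hyperbolic part containing the ``bad'' directions and a non-degenerate part built from $U$ and $U^\perp$, so that signatures just add.

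First I would identify what $\tilde t$ really measures. The radical of $\scalarprod{\cdot}{\cdot}\big|_U$ is exactly $U \cap U^\perp$, and its dimension equals the number of null vectors in any orthogonal basis of $U$, that is $\tilde t$. Since $\scalarprod{\cdot}{\cdot}$ is non-degenerate on $\R^{n+1}$, we have $(U^\perp)^\perp = U$, so by the same argument the radical of the form on $U^\perp$ is $U^\perp \cap U$, giving $\tilde t_\perp = \dim(U\cap U^\perp) = \tilde t$. This handles the third identity.

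Next I would set up the decomposition. Let $W \coloneqq U \cap U^\perp$, and pick non-degenerate complements $U_0 \subset U$ and $U_0' \subset U^\perp$ to $W$, so that $U_0$ has signature $(\tilde r,\tilde s)$ and $U_0'$ has signature $(\tilde r_\perp,\tilde s_\perp)$. Because $W\subset U^\perp$, we have $U_0\perp W$, and similarly $U_0'\perp W$; and because $U_0\subset U$, $U_0'\subset U^\perp$, we also have $U_0\perp U_0'$. A dimension count shows $U + U^\perp = W \oplus U_0 \oplus U_0'$ and $(U+U^\perp)^\perp = W$, so $\dim(U+U^\perp) = n+1-\tilde t$. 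Now I would choose a vector-space complement $Y$ of $U+U^\perp$ in $\R^{n+1}$, of dimension $\tilde t$, and adjust it by adding a suitable linear map $Y \to U_0\oplus U_0'$ so that $Y \perp (U_0\oplus U_0')$; this is possible because the form on $U_0\oplus U_0'$ is non-degenerate, so any linear functional on it is realized by some vector.

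At this point $\R^{n+1} = (W\oplus Y) \oplus (U_0\oplus U_0')$ is an orthogonal direct sum. The main step is to verify that the form on $W\oplus Y$ is non-degenerate: if $w+y$ lies in its radical, then $y\in W^\perp = U+U^\perp$ so $y=0$, and then $w$ is orthogonal to $Y$, to $U_0\oplus U_0'$, and (being in the totally isotropic $W$) to $W$, forcing $w=0$ by non-degeneracy on $\R^{n+1}$. Since $W\oplus Y$ has dimension $2\tilde t$ and contains the totally isotropic subspace $W$ of dimension $\tilde t$, Witt's inequality forces its signature to be exactly $(\tilde t,\tilde t)$. Adding the signatures of the two orthogonal summands yields $(r,s) = (\tilde r+\tilde r_\perp+\tilde t,\ \tilde s+\tilde s_\perp+\tilde t)$, and the inequality $\tilde t \le \min\{r,s\}$ is immediate from non-negativity of $\tilde r,\tilde s,\tilde r_\perp,\tilde s_\perp$.

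The main obstacle is the construction of $Y$: one must argue that a complement of $W^\perp$ can be chosen orthogonal to the non-degenerate part $U_0 \oplus U_0'$, and then that the resulting $W\oplus Y$ is automatically non-degenerate. Once that is in place, the signatures literally add by orthogonality, and the three claimed identities drop out simultaneously.
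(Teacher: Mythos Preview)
The paper states this lemma without proof, treating it as a standard fact from the theory of quadratic forms; there is therefore no paper argument to compare against. Your Witt-style decomposition is correct: the identification $\tilde t = \dim(U\cap U^\perp) = \tilde t_\perp$ is immediate from non-degeneracy, the adjustment of the complement $Y$ to make it orthogonal to the non-degenerate block $U_0\oplus U_0'$ is valid precisely because that block is non-degenerate, and your check that $W\oplus Y$ is itself non-degenerate (via $W^\perp = U+U^\perp$) goes through. The only small point to make explicit is that $(U\cap U^\perp)^\perp = U + U^\perp$ uses non-degeneracy of the ambient form; once that is noted, the signature of the $2\tilde t$-dimensional block is forced to be $(\tilde t,\tilde t)$ by the Witt bound, and additivity under orthogonal direct sum finishes the argument.
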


For a point $\p{x} \in \quadric$ on a quadric, which is not a vertex,
the tangent hyperplane of $\quadric$ at $\p{x}$ is given by its polar hyperplane $\p{x}^\perp$.
If $\quadric$ has signature $(r,s,t)$ then the tangent plane has signature $(r-1,s-1,t+1)$.
Furthermore, for a non-degenerate quadric a projective subspace is tangent to $\quadric$ if and only if its signature is degenerate.

A projective line not contained in a quadric can intersect the quadric in either zero, one, or two points (see Figure \ref{fig:polarity}).
\begin{lemma}
  \label{lem:quadric-line-intersection}
  Let $\quadric \subset \RP^n$ be a quadric,
  $\p{x}, \p{y} \in \RP^{n}$, $\p{x} \neq \p{y}$ be two points,
  and define
  \[
    \Delta \coloneqq \scalarprod{x}{y}^2 - \scalarprod{x}{x}\scalarprod{y}{y}.
  \]
  \begin{itemize}
  \item If $\Delta > 0$, then the line $\p{x} \wedge \p{y}$ has signature $(+-)$ and intersects $\quadric$ in two points
    \[
      \p{x}_\pm = \left[ \scalarprod{y}{y}x + \left( -\scalarprod{x}{y} \pm \sqrt{\Delta} \right) y \right].
    \]
  \item If $\Delta < 0$, then the line $\p{x} \wedge \p{y}$ has signature $(++)$ or $(--)$
    and intersects $\quadric$ in no real points, but in two complex conjugate points
    \[
      \p{x}_{\pm} = \left[ \scalarprod{y}{y}x + \left( -\scalarprod{x}{y} \pm i\sqrt{-\Delta} \right) y \right].
    \]
  \item If $\Delta = 0$, then the line $\p{x} \wedge \p{y}$ has signature $(+0)$ or $(-0)$
    and it is tangent to $\quadric$ in the point
    \[
      \p{\tilde{x}} = \left[ \scalarprod{y}{y}x - \scalarprod{x}{y} y \right],
    \]
    or it has signature $(00)$ and is contained in $\quadric$ (isotropic line).
  \end{itemize}
\end{lemma}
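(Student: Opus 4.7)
The plan is to restrict the bilinear form $\scalarprod{\cdot}{\cdot}$ to the two-dimensional linear subspace $\Span\{x,y\} \subset \R^{n+1}$ that projectivizes to the line $\p{x} \wedge \p{y}$, and to read off both the signature of the line and its intersections with $\quadric$ from the resulting binary quadratic form. Every point of the line has a representative $\alpha x + \beta y$ with $(\alpha,\beta) \neq (0,0)$, and
\[
  Q(\alpha,\beta) \coloneqq \scalarprod{\alpha x + \beta y}{\alpha x + \beta y} = \alpha^2 \scalarprod{x}{x} + 2\alpha\beta \scalarprod{x}{y} + \beta^2 \scalarprod{y}{y}.
\]
The Gram matrix of this binary form with respect to the basis $(x,y)$ has determinant $\scalarprod{x}{x}\scalarprod{y}{y} - \scalarprod{x}{y}^2 = -\Delta$, a fact on which the entire case analysis hinges.

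First I would establish the claim about the signature of the line, which is by definition the signature of $Q$. If $\Delta > 0$ then the Gram determinant is negative, so $Q$ is indefinite and the line has signature $(+-)$. If $\Delta < 0$ the Gram determinant is positive, so $Q$ is definite, giving signature $(++)$ or $(--)$ according to the sign of either diagonal entry. If $\Delta = 0$ then $Q$ is degenerate, so the signature is $(+0)$, $(-0)$, or $(00)$, the last occurring precisely when $\scalarprod{x}{x} = \scalarprod{x}{y} = \scalarprod{y}{y} = 0$; in that case $Q$ vanishes identically and the whole line lies on $\quadric$, which is the isotropic case.

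Next I would locate the intersections of the line with $\quadric$ by solving $Q(\alpha,\beta) = 0$. Assuming first that $\scalarprod{y}{y} \neq 0$, one views this as a quadratic in $\alpha$ with $\beta$ fixed (or vice versa) and solves it by the usual formula; normalizing so that $\alpha = \scalarprod{y}{y}$ yields the stated representatives $\p{x}_\pm = [\scalarprod{y}{y} x + (-\scalarprod{x}{y} \pm \sqrt{\Delta}) y]$, which are real and distinct when $\Delta > 0$ and complex conjugate when $\Delta < 0$. When $\Delta = 0$ the two roots coalesce to the single tangency point $\p{\tilde x} = [\scalarprod{y}{y} x - \scalarprod{x}{y} y]$. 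A direct substitution into $\scalarprod{\cdot}{\cdot}$ verifies that each of these vectors is isotropic; this is a brief expansion that simultaneously handles the three sign regimes of $\Delta$.

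The main obstacle is not any single computation but the handling of degenerate edge cases, in particular when $\scalarprod{y}{y} = 0$, since then the displayed formulas collapse. To cover these I would swap the roles of $x$ and $y$ (which leaves $\Delta$ invariant) whenever $\scalarprod{y}{y} = 0 \neq \scalarprod{x}{x}$, and treat the remaining subcase $\scalarprod{x}{x} = \scalarprod{y}{y} = 0$ separately. There $\Delta = \scalarprod{x}{y}^2 \geq 0$: if $\scalarprod{x}{y} \neq 0$ then $\Delta > 0$, the line has signature $(+-)$, and the two intersection points are simply $\p{x}$ and $\p{y}$ themselves; if $\scalarprod{x}{y} = 0$ then $\Delta = 0$ and we are in the fully isotropic $(00)$ case already covered. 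These case distinctions complete the proof.
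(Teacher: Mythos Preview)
The paper states this lemma without proof, treating it as an elementary fact about the restriction of a quadratic form to a two-dimensional subspace. Your argument is exactly the standard one: compute the Gram determinant $-\Delta$ of the restricted form, read off the signature from its sign, and solve the binary quadratic $Q(\alpha,\beta)=0$ to obtain the stated intersection points. Your treatment of the degenerate cases $\scalarprod{y}{y}=0$ is in fact more careful than the lemma's own statement, whose displayed formulas tacitly assume $\scalarprod{y}{y}\neq 0$; your observation that one can swap $x$ and $y$, and that the residual case $\scalarprod{x}{x}=\scalarprod{y}{y}=0$ forces $\Delta=\scalarprod{x}{y}^2\ge 0$ with intersection points $\p{x},\p{y}$ themselves (or the whole line when $\scalarprod{x}{y}=0$), closes the argument cleanly.
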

The last point of the preceding lemma gives rise to the following definition of the cone of contact (see Figure \ref{fig:polarity}).
\begin{definition}
  \label{def:cone-of-contact}
  Let $\quadric \subset \RP^n$ be a quadric with corresponding bilinear form $\scalarprod{\cdot}{\cdot}$,
  and $\p{x} \in \RP^n \setminus \quadric$.
  Define the quadratic form
  \[
    \Delta_x(y) \coloneqq \scalarprod{x}{y}^2 - \scalarprod{x}{x}\scalarprod{y}{y}.
  \]
  Then the corresponding quadric
  \[
    \cone{\quadric}{\p{x}} \coloneqq \set{\p{y} \in \RP^n}{ \Delta_x(y) = 0}
  \]
  is called the \emph{cone of contact}, or \emph{tangent cone}, to $\quadric$ from the point $\p{x}$.
\end{definition}
The points of tangency of the cone of contact lie in the polar hyperplane of its vertex.
\begin{lemma}
  \label{lem:cone-of-contact}
  Let $\quadric \subset \RP^n$ be a quadric.
  For a point $\p{x} \in \RP^n \setminus \quadric$ the cone of contact to $\quadric$ from $\p{x}$ is given by
  \[
    \cone{\quadric}{\p{x}} = \bigcup\limits_{\p{y} \in \p{x}^\perp \cap \quadric} \p{x} \wedge \p{y}.
  \]
\end{lemma}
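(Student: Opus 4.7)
The plan is to verify the equality of sets by proving two inclusions, using the classification of line--quadric intersections from Lemma \ref{lem:quadric-line-intersection} as the main tool.

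\emph{The inclusion $(\supseteq)$.} For any $\p{y} \in \p{x}^\perp \cap \quadric$ one has $\scalarprod{x}{y} = 0$ and $\scalarprod{y}{y} = 0$. I would substitute a general point $z = \alpha x + \beta y$ on the line $\p{x} \wedge \p{y}$ into the quadratic form $\Delta_x$: the bilinearity plus the two vanishing relations collapse everything to
\[
\Delta_x(z) = \alpha^2\scalarprod{x}{x}^2 - \scalarprod{x}{x} \cdot \alpha^2\scalarprod{x}{x} = 0,
\]
so the whole line is contained in the cone $\cone{\quadric}{\p{x}}$.

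\emph{The inclusion $(\subseteq)$.} Take $\p{z} \in \cone{\quadric}{\p{x}}$, so that $\Delta_x(z) = 0$. If $\p{z} = \p{x}$ there is nothing to show (any line through $\p{x}$ to a point of $\p{x}^\perp \cap \quadric$ works, and the existence of such a point can be arranged, passing to $\quadric_\C$ if necessary). Otherwise, apply Lemma \ref{lem:quadric-line-intersection} to the line $\p{x} \wedge \p{z}$: the vanishing of $\Delta$ forces the line to be tangent to $\quadric$ at the explicit point
\[
\p{y} = \bigl[\scalarprod{z}{z}x - \scalarprod{x}{z}z\bigr],
\]
or else to be entirely isotropic, in which case any point of the line (in particular one different from $\p{x}$) plays the role of $\p{y}$. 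A direct computation using $\Delta_x(z) = \scalarprod{x}{z}^2 - \scalarprod{x}{x}\scalarprod{z}{z} = 0$ shows both
\[
\scalarprod{x}{y} = \scalarprod{z}{z}\scalarprod{x}{x} - \scalarprod{x}{z}^2 = -\Delta_x(z) = 0, \qquad
\scalarprod{y}{y} = -\scalarprod{z}{z}\Delta_x(z) = 0,
\]
so $\p{y} \in \p{x}^\perp \cap \quadric$ and $\p{z}$ lies on the line $\p{x} \wedge \p{y}$.

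\emph{Main obstacle.} The essential content is entirely carried by Lemma \ref{lem:quadric-line-intersection}; the remaining work is just bookkeeping. The only subtle point is the degenerate case in which the line $\p{x} \wedge \p{z}$ is itself isotropic (signature $(00)$), where the ``point of tangency'' from Lemma \ref{lem:quadric-line-intersection} is ill-defined. Here I would argue separately that such an isotropic line must meet $\p{x}^\perp$ in a point of $\quadric$ (indeed, every point of the line already lies in $\quadric$, and by dimension reasons the line meets $\p{x}^\perp$), so the inclusion still holds. A second minor caveat is that the real intersection $\p{x}^\perp \cap \quadric$ could be empty for certain signatures; as is standard in this projective setup, this is resolved by interpreting both sides of the identity in the complexification $\quadric_\C$, consistently with the paper's convention for empty quadrics stated after Example \ref{ex:quadrics}.
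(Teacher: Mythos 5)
The paper states this lemma without proof, so there is no in-text argument to compare against; your verification via Lemma \ref{lem:quadric-line-intersection} is correct and is the natural one. One small correction to your handling of the degenerate case: since $\p{x} \notin \quadric$, no line through $\p{x}$ can have signature $(00)$, so the isotropic-line alternative you set aside is vacuous here; the only genuine degeneracy is that the tangent-point formula $\bigl[\scalarprod{z}{z}x - \scalarprod{x}{z}z\bigr]$ yields the zero vector precisely when $\scalarprod{z}{z} = \scalarprod{x}{z} = 0$, i.e.\ when $\p{z}$ itself already lies in $\p{x}^\perp \cap \quadric$, which is trivially covered. Your caveat about the real intersection $\p{x}^\perp \cap \quadric$ being empty is consistent with the remark the paper places immediately after the lemma.
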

\begin{remark}
  For a non-degenerate quadric $\quadric$ the intersection $\p{x}^\perp \cap \quadric$ always results in a non-degenerate quadric in $\p{x}^\perp$.
  If the restriction of the corresponding bilinear form has signature $(n,0)$ or $(0,n)$ the intersection can be considered as imaginary.
  The real part of the cone only consists of the vertex in this case (cf.\ Example \ref{ex:quadrics} \ref{ex:quadrics-cone}).
\end{remark}

\subsection{Pencils of quadrics}
Let $\quadric_1, \quadric_2 \subset \RP^n$ be two distinct quadrics with corresponding bilinear forms $\scalarprod{\cdot}{\cdot}_1, \scalarprod{\cdot}{\cdot}_2$ respectively.
Every linear combination of these two bilinear forms yields a quadric.
The family of quadrics obtained by all linear combinations of the two bilinear forms is called a \emph{pencil of quadrics}
(see Figure \ref{fig:concentric-cayley-klein-circles}):
\[
  \quadric_1 \wedge \quadric_2
  \coloneqq \left( \quadric_{[\lambda_1,\lambda_2]} \right)_{[\lambda_1, \lambda_2] \in \RP^1}, \qquad
  \quadric_{[\lambda_1,\lambda_2]} \coloneqq \set{\p{x}\in\RP^n}{\lambda_1\scalarprod{x}{x}_1 + \lambda_2\scalarprod{x}{x}_2 = 0}.
\]
This is a line in the \emph{projective space of quadrics of $\RP^n$}.

A pencil of quadrics is called \emph{non-degenerate} if it does not consist exclusively of degenerate quadrics.
It contains at most $n+1$ degenerate quadrics.

A point contained in the intersection of two quadrics from a pencil of quadrics is called a \emph{base point}.
It is then contained in every quadric of the pencil.
The variety of base points has (at least) codimension~2.
\begin{example}
  \label{ex:maximal-contact-pencil}
  The pencil of quadrics $\quadric \wedge \cone{\quadric}{\p{x}}$ spanned by a non-degenerate quadric $\quadric$
  and the cone of contact $C_{\quadric}(\p{x})$ from a point $\p{x} \in \RP^n\setminus \quadric$ contains as degenerate quadrics only the
  cone $\cone{\quadric}{\p{x}}$ and the polar hyperplane $\p{x}^\perp$.
  It is comprised of exactly the quadrics that are tangent to $\quadric$ in $\quadric \cap \p{x}^\perp$.
\end{example}


\newpage
\section{Cayley-Klein spaces}
\label{sec:Cayley-Klein-metric}

In Klein's \emph{Erlangen program} Euclidean and non-Euclidean geometries are considered as subgeometries of projective geometry.
Projective models for, e.g., hyperbolic, deSitter, and elliptic space can be obtained by using a quadric to induce the corresponding metric \cite{K}.
In this section we introduce the corresponding general notion of \emph{Cayley-Klein spaces} and their groups of \emph{isometries}, see, e.g., \cite{K, Blproj, Gie}.
We put a particular emphasis on the description of hyperplanes, hyperspheres, and their mutual relations.

\subsection{Cayley-Klein distance}
A quadric within a projective space induces an invariant for pairs of points.
\begin{definition}
  \label{def:Cayley-Klein-distance}
  Let $\quadric \subset \RP^n$ be a quadric with corresponding bilinear form $\scalarprod{\cdot}{\cdot}$.
  Then we denote by
  \[
    \ck{\quadric}{\p{x}}{\p{y}} \coloneqq \frac{\scalarprod{x}{y}^2}{\scalarprod{x}{x}\scalarprod{y}{y}}
  \]
  the \emph{Cayley-Klein distance} of any two points $\p{x}, \p{y} \in \RP^n \setminus \quadric$ that are not on the quadric.

  In the presence of a Cayley-Klein distance the quadric $\mathcal{Q}$ is called the \emph{absolute quadric}.
\end{definition}
\begin{remark}
  The name Cayley-Klein distance, or Cayley-Klein metric, is usually assigned to an actual metric derived from the above quantity
  as, for example, the hyperbolic metric (cf.\ Section \ref{sec:hyperbolic-space}).
  Nevertheless, we prefer to assign it to this basic quantity associated with an arbitrary quadric.
\end{remark}
The Cayley-Klein distance is projectively well-defined,
in the sense that it depends neither on the choice of the bilinear form corresponding to the quadric $\quadric$
nor on the choice of homogeneous coordinate vectors for the points $\p{x}$ and $\p{y}$.
Furthermore, it is invariant under the group of projective transformations that preserve the quadric $\quadric$,
which we call the corresponding group of \emph{isometries}.

The Cayley-Klein distance can be positive or negative depending on the relative location of the two points
with respect to the quadric, cf.\ \eqref{eq:quadric-sides}.
\begin{proposition}
  \label{prop:quadric-sides}
  For two points $\p{x}, \p{y} \in \RP^n \setminus \quadric$ with $\scalarprod{x}{y} \neq 0$:
  \begin{itemize}
  \item $\ck{\quadric}{\p{x}}{\p{y}} > 0$ if $\p{x}$ and $\p{y}$ are on the same side of $\quadric$,
  \item $\ck{\quadric}{\p{x}}{\p{y}} < 0$ if $\p{x}$ and $\p{y}$ are on opposite sides of $\quadric$.
  \end{itemize}
\end{proposition}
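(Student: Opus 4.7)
The proposition is essentially an unpacking of the definition of the Cayley-Klein distance together with the definition \eqref{eq:quadric-sides} of the two sides of the quadric, so the plan is to make the sign analysis explicit and hope no subtlety hides in the projective setup.

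The first thing I would do is confirm that the expression $\ck{\quadric}{\p{x}}{\p{y}} = \scalarprod{x}{y}^2/(\scalarprod{x}{x}\scalarprod{y}{y})$ is well-defined and well-signed. Since $\p{x},\p{y} \notin \quadric$, both $\scalarprod{x}{x}$ and $\scalarprod{y}{y}$ are nonzero, so the denominator makes sense. The numerator is $\scalarprod{x}{y}^2$, which by the hypothesis $\scalarprod{x}{y} \neq 0$ is \emph{strictly positive}. Moreover, since the whole expression is a ratio of quadratic quantities in the homogeneous coordinate representatives, it is invariant under rescaling of $x$ and $y$, so the sign truly depends only on $\p{x}$ and $\p{y}$ (and not on the scalar normalization of the bilinear form, since that scalar enters the numerator squared and the denominator twice as well).

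Given this, the sign of $\ck{\quadric}{\p{x}}{\p{y}}$ agrees with the sign of the product $\scalarprod{x}{x}\scalarprod{y}{y}$. Now I would simply invoke the definition \eqref{eq:quadric-sides}: by definition $\p{x} \in \quadric^+$ iff $\scalarprod{x}{x} > 0$ and $\p{x} \in \quadric^-$ iff $\scalarprod{x}{x} < 0$, and analogously for $\p{y}$. Hence $\scalarprod{x}{x}\scalarprod{y}{y} > 0$ precisely when $\p{x}$ and $\p{y}$ lie on the same side of $\quadric$, and $\scalarprod{x}{x}\scalarprod{y}{y} < 0$ precisely when they lie on opposite sides. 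Combining with the positivity of $\scalarprod{x}{y}^2$ yields the claim.

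The only place where a reader might want reassurance is the projective consistency: once a sign is chosen for the bilinear form representing $\quadric$, the labels ``$+$'' and ``$-$'' for the two sides are fixed, and this same sign choice enters both factors $\scalarprod{x}{x}$ and $\scalarprod{y}{y}$, so the sign of the product, and hence the statement, does not depend on that choice. There is no real obstacle here; the main thing is to be explicit that the hypothesis $\scalarprod{x}{y}\neq 0$ is used only to exclude the degenerate case $\ck{\quadric}{\p{x}}{\p{y}} = 0$ (which would correspond to $\p{x}$ and $\p{y}$ being conjugate with respect to $\quadric$).
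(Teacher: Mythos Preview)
Your argument is correct and is exactly the natural one: the numerator $\scalarprod{x}{y}^2$ is strictly positive by hypothesis, and the sign of the denominator $\scalarprod{x}{x}\scalarprod{y}{y}$ is positive or negative according as $\p{x},\p{y}$ lie on the same or opposite sides of $\quadric$ by the definition \eqref{eq:quadric-sides}. The paper in fact gives no explicit proof of this proposition, treating it as immediate from the definitions; your write-up simply spells out that immediacy, including the projective well-definedness check, so there is nothing to compare beyond noting that you have supplied the routine verification the paper omits.
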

A \emph{Cayley-Klein space} is usually considered to be one side of the quadric, i.e. $\quadric^+$ or $\quadric^-$,
together with a (pseudo-)metric derived from the Cayley-Klein distance,
or equivalently, together with the transformation group of isometries.

\subsection{Cayley-Klein spheres}
\begin{figure}
  \centering
  \includegraphics[width=0.32\textwidth]{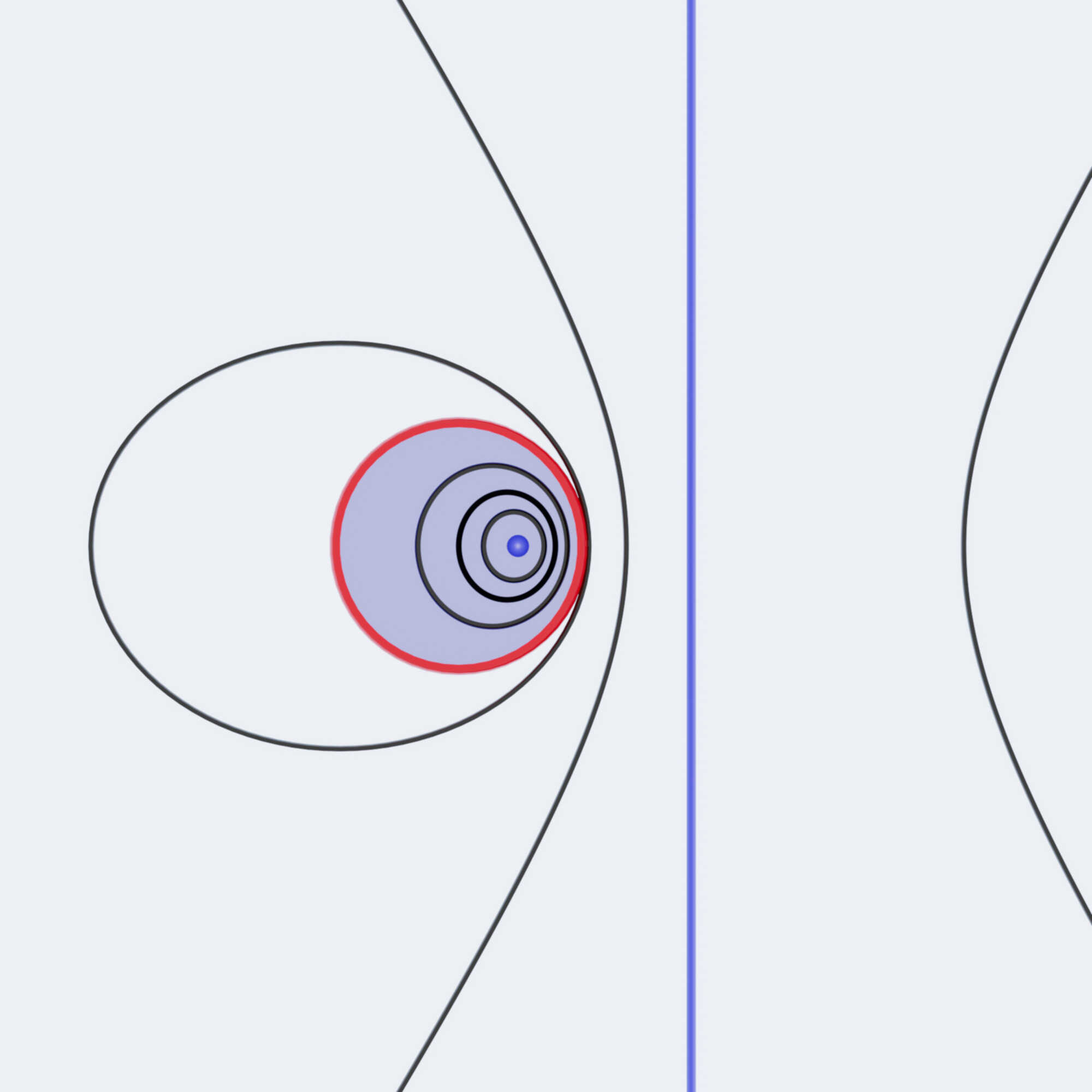}
  \includegraphics[width=0.32\textwidth]{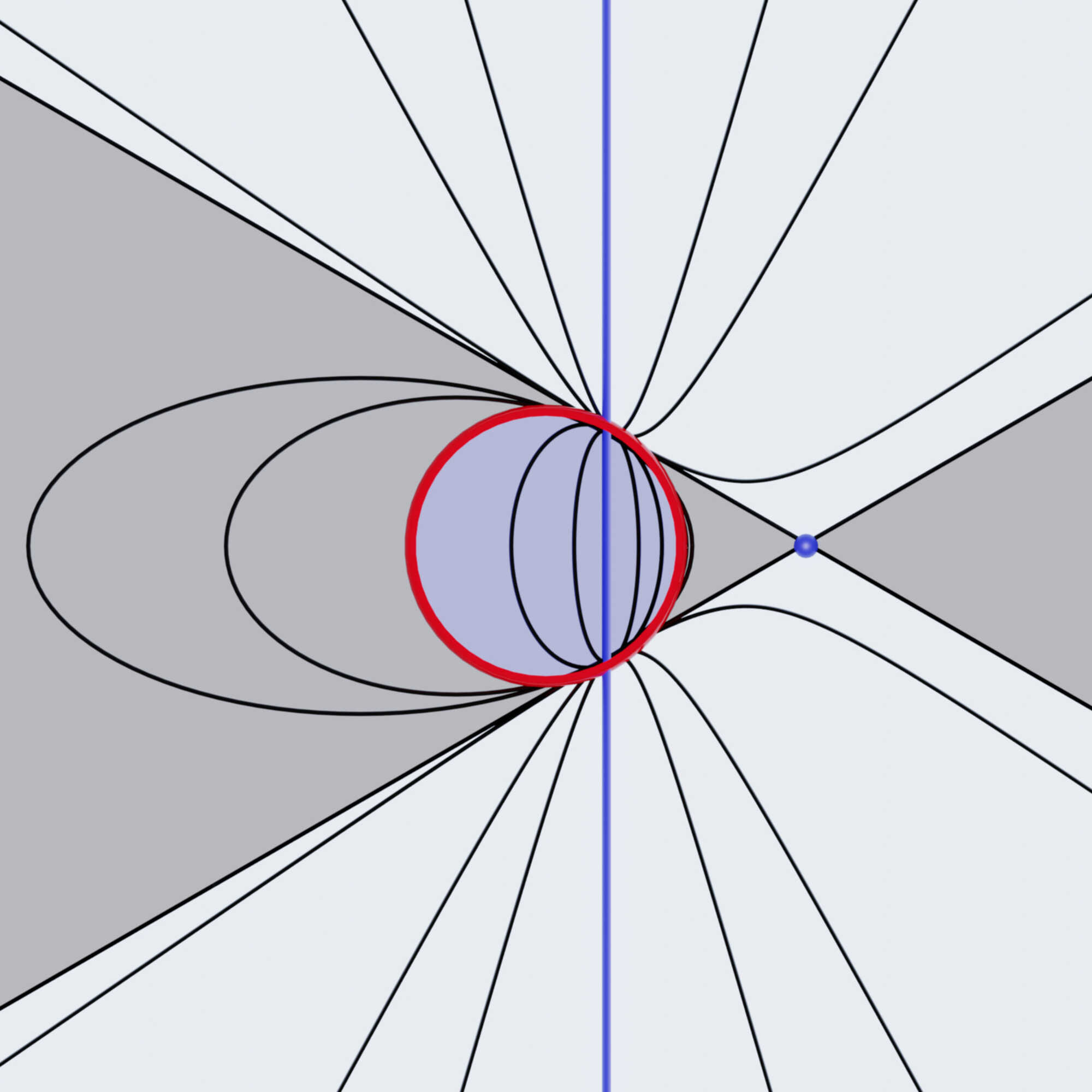}
  \includegraphics[width=0.32\textwidth]{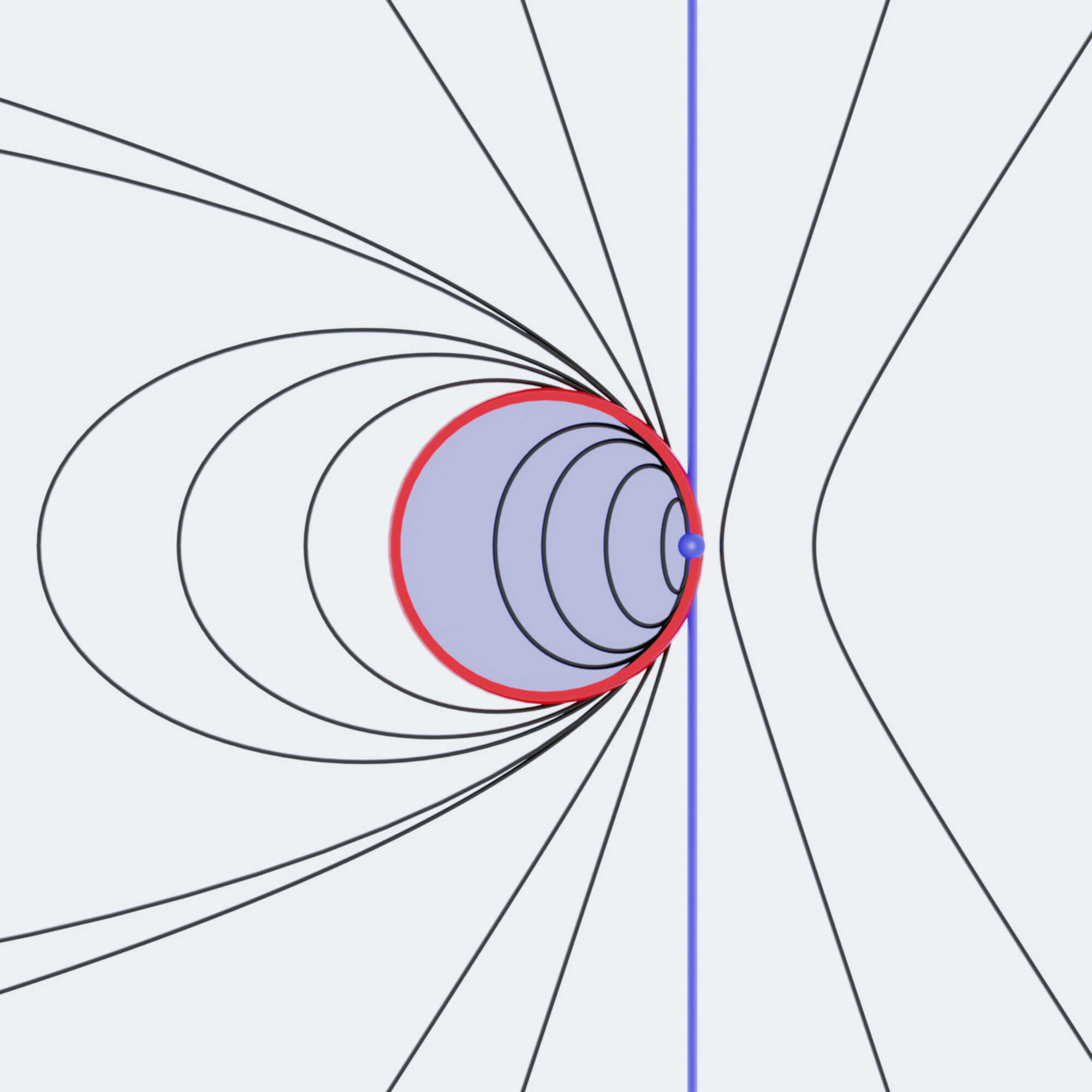}
  \caption{
    Concentric Cayley-Klein circles in the hyperbolic/deSitter plane.
    \emph{Left:} Concentric circles with hyperbolic center.
    \emph{Middle:} Concentric circles with deSitter center.
    \emph{Right:} Concentric horocycles with center on the absolute conic.
  }
\label{fig:concentric-cayley-klein-circles}
\end{figure}

Having a notion of ``distance'' allows for the definition of corresponding spheres (see Figure \ref{fig:concentric-cayley-klein-circles}).
\begin{definition}
  \label{def:Cayley-Klein-sphere}
  Let $\quadric \subset \RP^n$ be a quadric, $\p{x} \in \RP^n\setminus\quadric$, and $\mu \in \R \cup \{\infty\}$.
  Then we call the set
  \[
    \cksphere{\p{x}}{\mu} \coloneqq \set{\p{y} \in \RP^n}{ \ck{\quadric}{\p{x}}{\p{y}} = \mu}
  \]
  the \emph{Cayley-Klein hypersphere} with \emph{center} $\p{x}$ and \emph{Cayley-Klein radius} $\mu$.
\end{definition}
\begin{remark}\
  \label{rem:Cayley-Klein-spheres}
  \nobreakpar
  \begin{enumerate}
  \item
    Due to the fact that the Cayley-Klein sphere equation can be written as
    \begin{equation}
      \label{eq:Cayley-Klein-sphere}
      \scalarprod{x}{y}^2 - \mu \scalarprod{x}{x}\scalarprod{y}{y} = 0,
    \end{equation}
    we may include into the set $\cksphere{\p{x}}{\mu}$ points $\p{y} \in \quadric$ on the quadric,
    and also allow for $\mu = \infty$ (cf.\ Proposition~\ref{prop:concentric-sphere-pencil}).
  \item
    \label{rem:Cayley-Klein-horospheres}
    Given the center $\p{x}$ of a Cayley-Klein sphere one can further rewrite the Cayley-Klein sphere equation \eqref{eq:Cayley-Klein-sphere} as
    \[
      \scalarprod{x}{y}^2 - \tilde{\mu}\scalarprod{y}{y} = 0,
    \]
    where $\tilde{\mu} \coloneqq \mu\scalarprod{x}{x}$.
    While $\tilde{\mu}$ is not projectively invariant anymore,
    the solution set of this equation still invariantly describes a Cayley-Klein sphere.
    We may now allow for centers on the quadric $\p{x} \in \quadric$ which gives rise to \emph{Cayley-Klein horospheres}
    (see Figure \ref{fig:concentric-cayley-klein-circles}, right).
  \item
    We will occasionally denote all these cases including Cayley-Klein horospheres by the term \emph{Cayley-Klein spheres}.
  \end{enumerate}
\end{remark}
\begin{proposition}
  \label{prop:Cayley-Klein-spheres}
  For a Cayley-Klein sphere with center $\p{x} \in \RP^n \setminus \quadric$ and Cayley-Klein radius $\mu$ one has:
  \begin{itemize}
  \item If $\mu < 0$ the center and the points of a Cayley-Klein sphere are on \emph{opposite sides} of the quadric.
  \item If $\mu > 0$ the center and the points of a Cayley-Klein sphere are on the \emph{same side} of the quadric.
  \item If $\mu = 0$ the Cayley-Klein sphere is given by the (doubly counted) \emph{polar hyperplane} $\p{x}^\perp$.
  \item If $\mu = 1$ the Cayley-Klein sphere is the \emph{cone of contact} $\cone{\quadric}{\p{x}}$ touching $\quadric$,
    which is also called the \emph{null-sphere} with center $\p{x}$.
  \item If $\mu = \infty$ the Cayley-Klein sphere is the \emph{absolute quadric} $\quadric$.
  \end{itemize}
\end{proposition}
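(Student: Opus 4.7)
The plan is to verify each of the five bullet points directly from the two equivalent forms of the sphere equation. The starting point is the identity
\[
\cksphere{\p{x}}{\mu} = \set{\p{y} \in \RP^n}{\scalarprod{x}{y}^2 - \mu\scalarprod{x}{x}\scalarprod{y}{y} = 0},
\]
which is just the homogenized form of $\ck{\quadric}{\p{x}}{\p{y}} = \mu$ noted in Remark~\ref{rem:Cayley-Klein-spheres}. Since $\p{x} \notin \quadric$ we have $\scalarprod{x}{x} \neq 0$, so the algebra is unobstructed.

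First I would dispose of the two purely algebraic cases $\mu = 0$ and $\mu = 1$. Substituting $\mu = 0$ collapses the equation to $\scalarprod{x}{y}^2 = 0$, i.e.\ $\scalarprod{x}{y} = 0$ taken with multiplicity two; this is precisely the (doubly counted) polar hyperplane $\p{x}^\perp$. Substituting $\mu = 1$ reproduces the defining equation $\Delta_x(y) = 0$ of the cone of contact from Definition~\ref{def:cone-of-contact}, so $\cksphere{\p{x}}{1} = \cone{\quadric}{\p{x}}$.

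For the sign statements $\mu > 0$ and $\mu < 0$, I would argue from the quotient form $\mu = \scalarprod{x}{y}^2/(\scalarprod{x}{x}\scalarprod{y}{y})$, which is meaningful for $\p{y} \in \cksphere{\p{x}}{\mu} \setminus \quadric$ because $\mu \neq 0$ forces $\scalarprod{x}{y} \neq 0$, hence $\scalarprod{y}{y} \neq 0$ as well. Since the numerator is strictly positive, the sign of $\mu$ equals the sign of $\scalarprod{x}{x}\scalarprod{y}{y}$, and Proposition~\ref{prop:quadric-sides} immediately translates this into the ``same side'' vs.\ ``opposite sides'' dichotomy. (Points of $\cksphere{\p{x}}{\mu}$ that do lie on $\quadric$ satisfy $\scalarprod{x}{y} = 0$ by the homogenized equation, and the statement about sides only concerns points off the quadric, so these are vacuously fine.)

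The one case that needs a small interpretive step, and is the main obstacle, is $\mu = \infty$. Here I would read the sphere equation projectively: writing $\mu = [\lambda_2 : \lambda_1] \in \RP^1$ the homogeneous equation $\lambda_1 \scalarprod{x}{y}^2 - \lambda_2 \scalarprod{x}{x}\scalarprod{y}{y} = 0$ defines a pencil, and the value $\mu = \infty$ corresponds to $[\lambda_2 : \lambda_1] = [1:0]$, yielding $\scalarprod{x}{x}\scalarprod{y}{y} = 0$. Because $\scalarprod{x}{x} \neq 0$, this reduces to $\scalarprod{y}{y} = 0$, so $\cksphere{\p{x}}{\infty} = \quadric$. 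This closes the proof.
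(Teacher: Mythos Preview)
Your proof is correct and follows essentially the same route as the paper, which simply writes ``Follows from Proposition~\ref{prop:quadric-sides} and Lemma~\ref{lem:cone-of-contact}.'' You have merely unpacked that one-liner: the sign cases come from Proposition~\ref{prop:quadric-sides}, the $\mu=1$ case is the defining equation of the cone of contact, and the $\mu=0$ and $\mu=\infty$ cases are read off directly from the homogenized sphere equation.
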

\begin{proof}
  Follows from Proposition \ref{prop:quadric-sides} and Lemma \ref{lem:cone-of-contact}.
\end{proof}
Fixing the center and varying the radius of a Cayley-Klein sphere results in a family of concentric spheres
(see Figure \ref{fig:concentric-cayley-klein-circles}).
\begin{definition}
  \label{def:cencentric-spheres}
  Given a quadric $\quadric \subset \RP^n$ and a point $\p{x} \in \RP^n \setminus \quadric$ we call the family
  \[
    \left( \cksphere{\p{x}}{\mu} \right)_{\mu \in \R\cup\{\infty\}}
  \]
  \emph{concentric Cayley-Klein spheres} with center $\p{x}$.
\end{definition}
\begin{proposition}
  \label{prop:concentric-sphere-pencil}
  Let $\quadric \subset \RP^n$ be a quadric.
  Then the family of concentric Cayley-Klein spheres with center $\p{x} \in \RP^n \setminus \quadric$
  is the pencil of quadrics $\quadric \wedge \cone{\quadric}{\p{x}}$
  spanned by the absolute quadric $\quadric$ and the cone of contact $\cone{\quadric}{\p{x}}$,
  or equivalently, by $\quadric$ and the (doubly counted) polar hyperplane $\p{x}^\perp$ (cf. Example \ref{ex:maximal-contact-pencil}).
\end{proposition}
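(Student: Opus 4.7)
The plan is to read off the pencil structure directly from the sphere equation \eqref{eq:Cayley-Klein-sphere}. The key observation is that
\[
  \scalarprod{x}{y}^2 - \mu\, \scalarprod{x}{x}\scalarprod{y}{y} = 0
\]
is manifestly a linear combination of two bilinear forms in $y$: the form $y \mapsto \scalarprod{y}{y}$ that cuts out $\quadric$, and the rank-one form $y \mapsto \scalarprod{x}{y}^2$ that cuts out the doubly counted polar hyperplane $\p{x}^\perp$. Since $\p{x} \notin \quadric$ implies $\scalarprod{x}{x} \neq 0$, both forms are genuinely nonzero and linearly independent, so they span a bona fide pencil.

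First I would verify that the parameter correspondence is a bijection. Writing an arbitrary member of the pencil as $\lambda_1 \scalarprod{y}{y} + \lambda_2 \scalarprod{x}{y}^2 = 0$ with $[\lambda_1, \lambda_2] \in \RP^1$, the case $\lambda_2 = 0$ yields the absolute quadric $\quadric$ itself, which by Proposition \ref{prop:Cayley-Klein-spheres} is precisely the concentric sphere of radius $\mu = \infty$. For $\lambda_2 \neq 0$ we normalize $\lambda_2 = 1$ and set $\mu \coloneqq -\lambda_1/\scalarprod{x}{x}$, recovering exactly \eqref{eq:Cayley-Klein-sphere}; as $\lambda_1$ ranges over $\R$, $\mu$ sweeps out all of $\R$. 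Hence the family $\bigl(\cksphere{\p{x}}{\mu}\bigr)_{\mu \in \R \cup \{\infty\}}$ coincides with the pencil $\quadric \wedge \p{x}^\perp$.

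For the equivalent description via the cone of contact, I would note that the value $\mu = 1$ in \eqref{eq:Cayley-Klein-sphere} gives precisely the defining equation $\Delta_x(y) = 0$ of $\cone{\quadric}{\p{x}}$ (Definition \ref{def:cone-of-contact}). Thus $\cone{\quadric}{\p{x}}$ is a member of the pencil $\quadric \wedge \p{x}^\perp$, and it is distinct from $\quadric$ since $\p{x} \notin \quadric$. Any two distinct elements of a pencil span the same pencil, so $\quadric \wedge \cone{\quadric}{\p{x}} = \quadric \wedge \p{x}^\perp$, which completes the identification.

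No serious obstacle arises: the whole statement is essentially a rewriting of the sphere equation as a linear combination of the two extremal forms. The only points requiring care are checking that the bijection $[\lambda_1, \lambda_2] \leftrightarrow \mu$ handles the value $\mu = \infty$ correctly, and that the chosen pairs of spanning quadrics are each linearly independent — both of which reduce to $\scalarprod{x}{x} \neq 0$.
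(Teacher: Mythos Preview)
Your proof is correct and follows essentially the same approach as the paper: both recognize that the sphere equation \eqref{eq:Cayley-Klein-sphere} is linear in $\mu$ and hence describes a pencil, then identify $\quadric$, $\cone{\quadric}{\p{x}}$, and $\p{x}^\perp$ as particular members (at $\mu = \infty$, $\mu = 1$, $\mu = 0$ respectively) via Proposition~\ref{prop:Cayley-Klein-spheres}. Your version is simply more explicit about the bijection $[\lambda_1,\lambda_2] \leftrightarrow \mu$ and the linear independence of the spanning forms, details the paper leaves implicit.
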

\begin{proof}
  Writing the Cayley-Klein sphere equation as \eqref{eq:Cayley-Klein-sphere} 
  we find that it is a linear equation in $\mu$ describing a pencil of quadrics.
  As observed in Proposition \ref{prop:Cayley-Klein-spheres} it contains, in particular,
  the quadric $\quadric$, the cone $\cone{\quadric}{\p{x}}$, and the hyperplane $\p{x}^\perp$.
\end{proof}
This leads to a further characterization of Cayley-Klein spheres among all quadrics.
\begin{corollary}
  Let $\quadric \subset \RP^n$ be a non-degenerate quadric.
  Then another quadric is a Cayley-Klein sphere if and only if it is tangent to $\quadric$
  in the (possibly imaginary) intersection with a hyperplane.
\end{corollary}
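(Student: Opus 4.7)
The plan is to reduce the corollary to statements already in hand about the pencil $\quadric \wedge \cone{\quadric}{\p{x}}$, exploiting the bijective correspondence between points $\p{x} \in \RP^n$ and hyperplanes $H \subset \RP^n$ afforded by polarity in the non-degenerate quadric $\quadric$.

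For the \emph{only if} direction, let $\cksphere{\p{x}}{\mu}$ be a Cayley-Klein sphere with center $\p{x} \in \RP^n \setminus \quadric$. Proposition~\ref{prop:concentric-sphere-pencil} places it in the pencil $\quadric \wedge \cone{\quadric}{\p{x}}$, and Example~\ref{ex:maximal-contact-pencil} then asserts that every member of this pencil is tangent to $\quadric$ along $\quadric \cap \p{x}^\perp$, which is the intersection of $\quadric$ with the hyperplane $\p{x}^\perp$. This is precisely the desired tangency.

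For the \emph{if} direction, suppose a quadric $\quadric'$ is tangent to $\quadric$ along the (possibly imaginary) intersection $\quadric \cap H$ for some hyperplane $H \subset \RP^n$. Since $\quadric$ is non-degenerate, polarity is a well-defined involution on points and hyperplanes, so there is a unique pole $\p{x} \in \RP^n$ with $H = \p{x}^\perp$; we may take $\p{x} \notin \quadric$ so that $\p{x}$ qualifies as a Cayley-Klein center. Example~\ref{ex:maximal-contact-pencil} states that the pencil $\quadric \wedge \cone{\quadric}{\p{x}}$ consists of exactly the quadrics tangent to $\quadric$ along $\quadric \cap \p{x}^\perp$; hence $\quadric'$ lies in this pencil. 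Proposition~\ref{prop:concentric-sphere-pencil} then identifies this pencil with the family of concentric Cayley-Klein spheres centered at $\p{x}$, so $\quadric'$ is a Cayley-Klein sphere as claimed.

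The only place where real work could hide is in the \emph{exactly} clause of Example~\ref{ex:maximal-contact-pencil}, which one should at least note schematically: if the bilinear form of $\quadric'$ is tangent to that of $\quadric$ along all points of $\quadric \cap \p{x}^\perp$, then a standard linear-algebra argument (passing to a basis adapted to $H$) forces the difference of the two bilinear forms to be a scalar multiple of the rank-one form $\ell_H \otimes \ell_H$, where $\ell_H$ is a linear equation cutting out $H$; this is precisely the condition for membership in the pencil $\quadric \wedge (2\p{x}^\perp) = \quadric \wedge \cone{\quadric}{\p{x}}$. Apart from this verification the argument is just a concatenation of the two cited results.
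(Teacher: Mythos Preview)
Your proof is correct and follows essentially the same approach as the paper, which simply cites Proposition~\ref{prop:concentric-sphere-pencil} and Example~\ref{ex:maximal-contact-pencil}; you have merely spelled out the two directions and the role of polarity in the converse. One small quibble: your clause ``we may take $\p{x} \notin \quadric$'' is not something you can choose---if $H$ is tangent to $\quadric$ then its pole lies on $\quadric$, and the resulting quadric is a Cayley-Klein horosphere (cf.\ Remark~\ref{rem:Cayley-Klein-spheres}); the paper's brief proof glosses over this case as well.
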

\begin{proof}
  Follows from Proposition \ref{prop:concentric-sphere-pencil} and Example \ref{ex:maximal-contact-pencil}.
\end{proof}
\begin{remark}
  A pencil of concentric Cayley-Klein horospheres with center $\p{x}~\in~\quadric$
  is spanned by the absolute quadric $\quadric$ and the (doubly counted) tangent hyperplane $\p{x}^\perp$,
  which yields third order contact between each horosphere and the absolute quadric.
\end{remark}

\subsection{Polarity of Cayley-Klein spheres}
\begin{figure}
  \centering
  \def\svgwidth{0.44\textwidth}
\begingroup%
  \makeatletter%
  \providecommand\color[2][]{%
    \errmessage{(Inkscape) Color is used for the text in Inkscape, but the package 'color.sty' is not loaded}%
    \renewcommand\color[2][]{}%
  }%
  \providecommand\transparent[1]{%
    \errmessage{(Inkscape) Transparency is used (non-zero) for the text in Inkscape, but the package 'transparent.sty' is not loaded}%
    \renewcommand\transparent[1]{}%
  }%
  \providecommand\rotatebox[2]{#2}%
  \newcommand*\fsize{\dimexpr\f@size pt\relax}%
  \newcommand*\lineheight[1]{\fontsize{\fsize}{#1\fsize}\selectfont}%
  \ifx\svgwidth\undefined%
    \setlength{\unitlength}{2025.31640625bp}%
    \ifx\svgscale\undefined%
      \relax%
    \else%
      \setlength{\unitlength}{\unitlength * \real{\svgscale}}%
    \fi%
  \else%
    \setlength{\unitlength}{\svgwidth}%
  \fi%
  \global\let\svgwidth\undefined%
  \global\let\svgscale\undefined%
  \makeatother%
  \begin{picture}(1,1)%
    \lineheight{1}%
    \setlength\tabcolsep{0pt}%
    \put(0,0){\includegraphics[width=\unitlength,page=1]{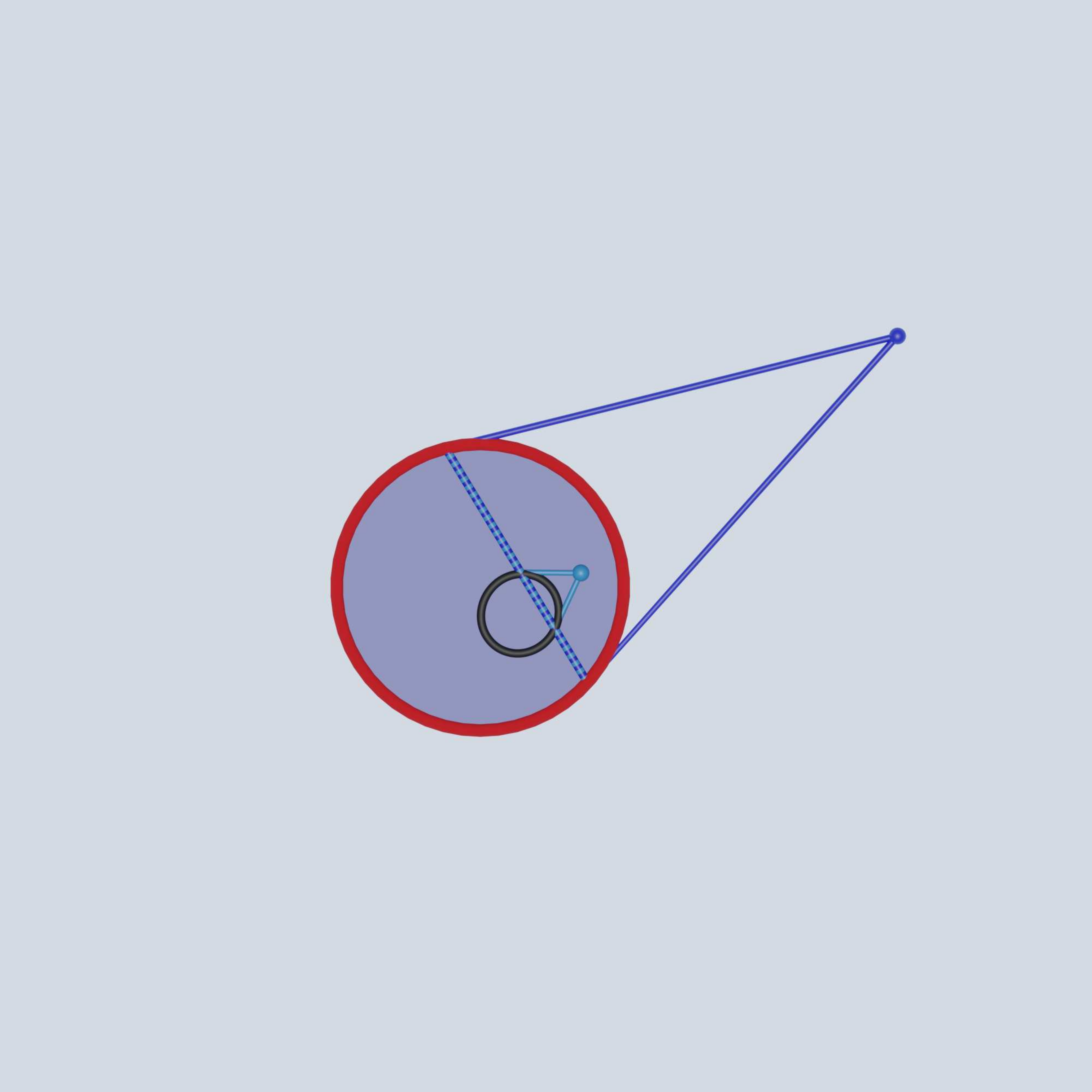}}%
    \put(0.83177569,0.69742991){\color[rgb]{0,0,0}\makebox(0,0)[lt]{\lineheight{1.25}\smash{\begin{tabular}[t]{l}{\footnotesize $\p{z}$}\end{tabular}}}}%
    \put(0.51546923,0.50039689){\color[rgb]{0,0,0}\makebox(0,0)[lt]{\lineheight{1.25}\smash{\begin{tabular}[t]{l}{\footnotesize $\p{y}$}\end{tabular}}}}%
    \put(0.34846635,0.29855397){\color[rgb]{0,0,0}\makebox(0,0)[lt]{\lineheight{1.25}\smash{\begin{tabular}[t]{l}{\footnotesize $\quadric$}\end{tabular}}}}%
    \put(0.32116137,0.45880734){\color[rgb]{0,0,0}\makebox(0,0)[lt]{\lineheight{1.25}\smash{\begin{tabular}[t]{l}{\footnotesize $\cksphere{\p{x}}{\mu}$}\end{tabular}}}}%
  \end{picture}%
\endgroup%

  \hspace{1cm}
  \def\svgwidth{0.44\textwidth}
\begingroup%
  \makeatletter%
  \providecommand\color[2][]{%
    \errmessage{(Inkscape) Color is used for the text in Inkscape, but the package 'color.sty' is not loaded}%
    \renewcommand\color[2][]{}%
  }%
  \providecommand\transparent[1]{%
    \errmessage{(Inkscape) Transparency is used (non-zero) for the text in Inkscape, but the package 'transparent.sty' is not loaded}%
    \renewcommand\transparent[1]{}%
  }%
  \providecommand\rotatebox[2]{#2}%
  \newcommand*\fsize{\dimexpr\f@size pt\relax}%
  \newcommand*\lineheight[1]{\fontsize{\fsize}{#1\fsize}\selectfont}%
  \ifx\svgwidth\undefined%
    \setlength{\unitlength}{2025.31640625bp}%
    \ifx\svgscale\undefined%
      \relax%
    \else%
      \setlength{\unitlength}{\unitlength * \real{\svgscale}}%
    \fi%
  \else%
    \setlength{\unitlength}{\svgwidth}%
  \fi%
  \global\let\svgwidth\undefined%
  \global\let\svgscale\undefined%
  \makeatother%
  \begin{picture}(1,1)%
    \lineheight{1}%
    \setlength\tabcolsep{0pt}%
    \put(0,0){\includegraphics[width=\unitlength,page=1]{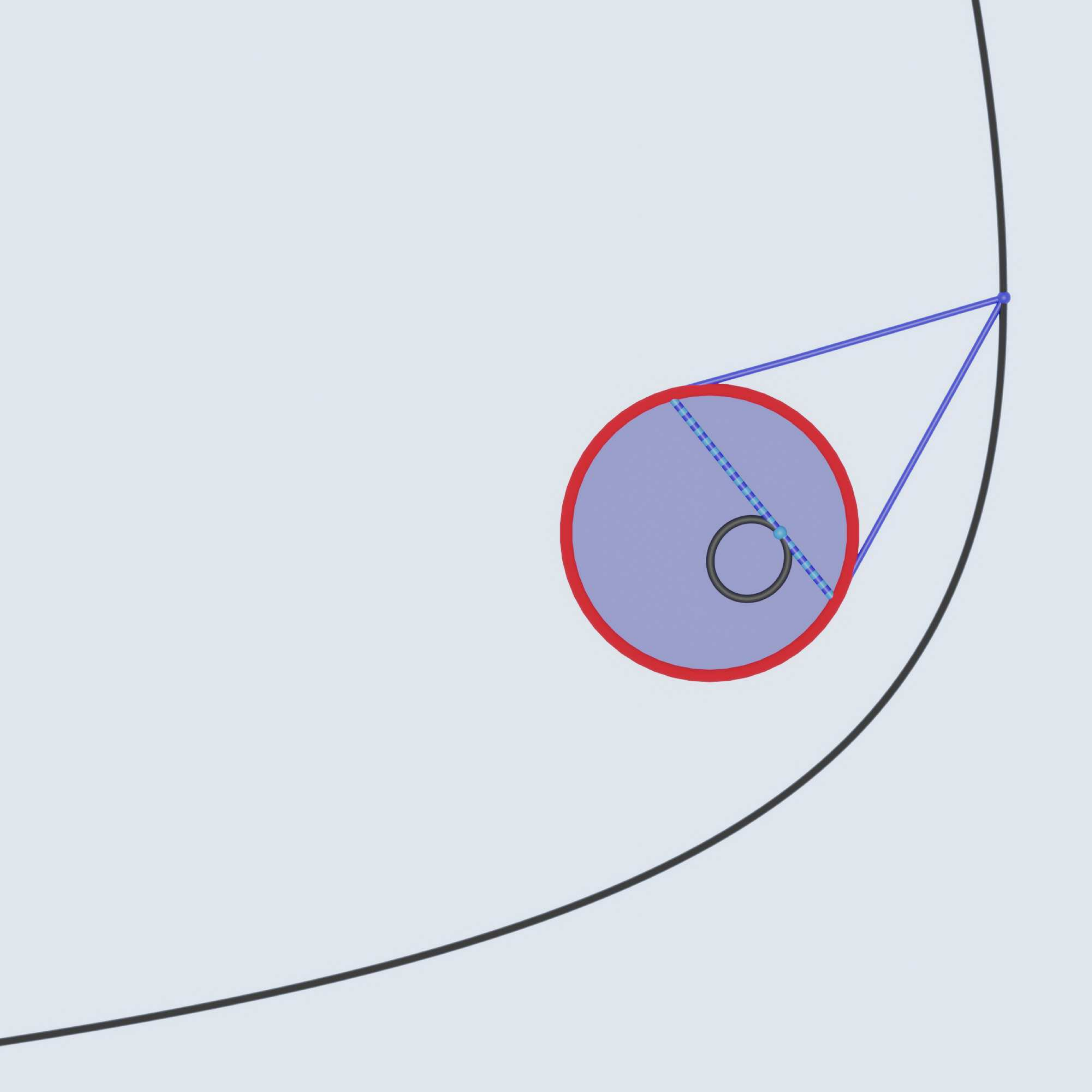}}%
    \put(0.52987227,0.48622236){\color[rgb]{0,0,0}\makebox(0,0)[lt]{\lineheight{1.25}\smash{\begin{tabular}[t]{l}{\footnotesize $\cksphere{\p{x}}{\mu}$}\end{tabular}}}}%
    \put(0.7095458,0.53872831){\color[rgb]{0,0,0}\makebox(0,0)[lt]{\lineheight{1.25}\smash{\begin{tabular}[t]{l}{\footnotesize $\p{y}$}\end{tabular}}}}%
    \put(0.94109093,0.72727245){\color[rgb]{0,0,0}\makebox(0,0)[lt]{\lineheight{1.25}\smash{\begin{tabular}[t]{l}{\footnotesize $\p{z}$}\end{tabular}}}}%
    \put(0.5786707,0.34183095){\color[rgb]{0,0,0}\makebox(0,0)[lt]{\lineheight{1.25}\smash{\begin{tabular}[t]{l}{\footnotesize $\quadric$}\end{tabular}}}}%
    \put(0.60163605,0.15550865){\color[rgb]{0,0,0}\makebox(0,0)[lt]{\lineheight{1.25}\smash{\begin{tabular}[t]{l}{\footnotesize $\cksphere{\p{x}}{\tilde{\mu}}$}\end{tabular}}}}%
  \end{picture}%
\endgroup%

  \caption{
    \emph{Left:} Polarity with respect to a Cayley-Klein sphere $\cksphere{\p{x}}{\mu}$ and the absolute quadric $\quadric$.
    \emph{Right:} A Cayley-Klein sphere $\cksphere{\p{x}}{\mu}$ and its (concentric) polar Cayley-Klein sphere $\cksphere{\p{x}}{\tilde{\mu}}$.
  }
\label{fig:polarity-cayley-klein-circles}
\end{figure}

To describe spheres in terms of their tangent planes we turn our attention towards polarity in Cayley-Klein spheres
(see Figure \ref{fig:polarity-cayley-klein-circles}).
\begin{lemma}
  \label{lem:Cayley-Klein-sphere-polarity}
  The bilinear form corresponding to a Cayley-Klein sphere $\cksphere{\p{x}}{\mu}$ with center ${\p{x}\in\RP^n\setminus \quadric}$
  and Cayley-Klein radius $\mu$ is given by
  \[
    b(y,\tilde{y}) = \scalarprod{x}{y}\scalarprod{x}{\tilde{y}} - \mu\scalarprod{x}{x}\scalarprod{y}{\tilde{y}},
    \qquad
    y, \tilde{y} \in \R^{n+1}.
  \]
  Thus, for a point $\p{y} \in \RP^n$ the pole $\p{z}$ with respect to the absolute quadric $\quadric$
  of the polar hyperplane of $\p{y}$ with respect to $\cksphere{\p{x}}{\mu}$ is given by
  \begin{equation}
    \label{eq:Cayley-Klein-sphere-pole}
    z = \scalarprod{x}{y}x - \mu\scalarprod{x}{x}y.
  \end{equation}
\end{lemma}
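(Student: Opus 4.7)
The plan is to recognize that the Cayley-Klein sphere equation rewritten as in \eqref{eq:Cayley-Klein-sphere} already presents $\cksphere{\p{x}}{\mu}$ as the zero set of an explicit quadratic form in $y$, namely
\[
Q_{x,\mu}(y) \;\coloneqq\; \scalarprod{x}{y}^2 - \mu \scalarprod{x}{x}\scalarprod{y}{y},
\]
with $x$ regarded as a fixed homogeneous coordinate vector. The first step is simply to polarize this quadratic form: the unique symmetric bilinear form satisfying $b(y,y)=Q_{x,\mu}(y)$ is read off by inspection (each term is a product of two factors linear in $y$ and can be split symmetrically between $y$ and $\tilde y$), yielding
\[
b(y,\tilde y) = \scalarprod{x}{y}\scalarprod{x}{\tilde y} - \mu\scalarprod{x}{x}\scalarprod{y}{\tilde y}.
\]
A one-line verification that $b(y,y) = Q_{x,\mu}(y)$, together with bilinearity and symmetry, is all that is required for the first claim.

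For the second claim I would simply rearrange $b(y,\tilde y)$ to isolate a single scalar product against $\tilde y$:
\[
b(y,\tilde y) \;=\; \scalarprod{\scalarprod{x}{y}\,x - \mu\scalarprod{x}{x}\,y}{\tilde y}.
\]
Hence the polar hyperplane of $\p{y}$ with respect to $\cksphere{\p{x}}{\mu}$, which by definition is the projectivization of the kernel of $b(y,\cdot)$, coincides with the set $\{\p{\tilde y}\in\RP^n \mid \scalarprod{z}{\tilde y}=0\}$ where $z \coloneqq \scalarprod{x}{y}x - \mu\scalarprod{x}{x}y$. By the definition of polarity with respect to the absolute quadric $\quadric$ (Section~\ref{sec:polarity}), this is exactly the polar hyperplane of $\p{z}$ with respect to $\quadric$, so $\p{z}$ is the pole sought, giving formula \eqref{eq:Cayley-Klein-sphere-pole}.

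There is essentially no obstacle; the only subtlety worth a remark is projective well-definedness. The bilinear form $b$ depends on the chosen lift $x \in \R^{n+1}$ of $\p{x}$: rescaling $x\mapsto\lambda x$ multiplies $b$ by $\lambda^2$, which is a nonzero scalar and therefore leaves the quadric $\cksphere{\p{x}}{\mu}$ unchanged, as required. Similarly, the expression for $z$ in \eqref{eq:Cayley-Klein-sphere-pole} scales by $\lambda^2$ under $x\mapsto \lambda x$ and linearly in the chosen lift of $\p{y}$, so the projective point $\p{z}$ is well defined. With these observations noted, the lemma follows directly from polarization.
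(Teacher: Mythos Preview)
Your proof is correct and follows essentially the same approach as the paper: both start from the quadratic form \eqref{eq:Cayley-Klein-sphere} and polarize it. The paper invokes the polarization identity $b(y,\tilde y)=\tfrac{1}{2}(\Delta(y+\tilde y)-\Delta(y)-\Delta(\tilde y))$ explicitly, whereas you read off the symmetric bilinear form by inspection; you also spell out the derivation of \eqref{eq:Cayley-Klein-sphere-pole} and the projective well-definedness, which the paper leaves implicit.
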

\begin{proof}
  The quadratic form of the Cayley-Klein sphere $\cksphere{\p{x}}{\mu}$ is given by \eqref{eq:Cayley-Klein-sphere}:
  \[
    \Delta(y) \coloneqq \scalarprod{x}{y}^2 - \mu\scalarprod{x}{x}\scalarprod{y}{y}.
  \]
  The corresponding bilinear form can be obtained by
  $b(y, \tilde{y}) = \frac{1}{2}(\Delta(y+\tilde{y}) - \Delta(y) - \Delta(\tilde{y}))$.
\end{proof}
For every point on a Cayley-Klein sphere the tangent hyperplane in that point is given by polarity in the Cayley-Klein sphere.
Now the tangent hyperplanes of a Cayley-Klein sphere, in turn, may equivalently be described
by their poles with respect to the absolute quadric $\quadric$ (see Figure~\ref{fig:polarity-cayley-klein-circles}).
\begin{proposition}
  Let $\p{x} \in \RP^n \setminus \quadric$ and $\mu \in \R\setminus\{0,1\}$.
  Then the poles (with respect to the absolute quadric $\quadric$) of the tangent hyperplanes of the Cayley-Klein sphere $\cksphere{\p{x}}{\mu}$
  are the points of a concentric Cayley-Klein sphere $\cksphere{\p{x}}{\tilde{\mu}}$ with
  \[
    \mu + \tilde{\mu} = 1,
  \]
  and vice versa.
\end{proposition}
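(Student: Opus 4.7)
The plan is to exploit the explicit pole formula \eqref{eq:Cayley-Klein-sphere-pole} from Lemma~\ref{lem:Cayley-Klein-sphere-polarity}: for $\p{y}$ on the Cayley-Klein sphere $\cksphere{\p{x}}{\mu}$, its polar hyperplane with respect to $\cksphere{\p{x}}{\mu}$ is the tangent hyperplane at $\p{y}$, and the $\quadric$-pole of that tangent hyperplane is the point $\p{z}$ with homogeneous representative $z = \scalarprod{x}{y}x - \mu\scalarprod{x}{x}y$. The whole statement then reduces to computing the Cayley-Klein distance $\ck{\quadric}{\p{x}}{\p{z}}$ and showing it equals $1-\mu$, using the sphere equation $\scalarprod{x}{y}^2 = \mu\scalarprod{x}{x}\scalarprod{y}{y}$ as the crucial substitution.

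First I would compute $\scalarprod{x}{z}$, which is immediate by bilinearity:
\[
\scalarprod{x}{z} = \scalarprod{x}{y}\scalarprod{x}{x} - \mu\scalarprod{x}{x}\scalarprod{x}{y} = (1-\mu)\scalarprod{x}{x}\scalarprod{x}{y},
\]
so $\scalarprod{x}{z}^2 = (1-\mu)^2\scalarprod{x}{x}^2\scalarprod{x}{y}^2$. Then I would expand
\[
\scalarprod{z}{z} = \scalarprod{x}{x}\scalarprod{x}{y}^2(1-2\mu) + \mu^2\scalarprod{x}{x}^2\scalarprod{y}{y}
\]
and apply the sphere relation $\scalarprod{x}{y}^2 = \mu\scalarprod{x}{x}\scalarprod{y}{y}$ to collapse this to
\[
\scalarprod{z}{z} = \mu(1-\mu)\scalarprod{x}{x}^2\scalarprod{y}{y}.
\]
Forming the ratio and using the sphere relation once more in the numerator, the factors $\scalarprod{x}{x}$ and $\scalarprod{y}{y}$ cancel and one is left with
\[
\ck{\quadric}{\p{x}}{\p{z}} = \frac{(1-\mu)^2}{1-\mu} = 1-\mu,
\]
so $\p{z}$ lies on $\cksphere{\p{x}}{\tilde\mu}$ with $\tilde\mu = 1 - \mu$, as claimed. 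The hypothesis $\mu \in \R\setminus\{0,1\}$ is exactly what guarantees that none of the denominators vanish and that neither $\cksphere{\p{x}}{\mu}$ nor $\cksphere{\p{x}}{\tilde\mu}$ degenerates into the polar hyperplane $\p{x}^\perp$ or the null-cone $\cone{\quadric}{\p{x}}$ described in Proposition~\ref{prop:Cayley-Klein-spheres}.

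For the converse I would note that the map $\p{y} \mapsto \p{z}$ extends to the projective endomorphism $y \mapsto \scalarprod{x}{y}x - \mu\scalarprod{x}{x}y$, which fixes $\p{x}$ (eigenvalue $(1-\mu)\scalarprod{x}{x}$) and acts by the scalar $-\mu\scalarprod{x}{x}$ on the polar hyperplane $\p{x}^\perp$; composing with the analogous map for $\tilde\mu = 1-\mu$ yields a homothety in each eigenspace, so the construction is essentially involutive on the pencil of concentric Cayley-Klein spheres. Thus applying the first half of the argument with $\mu$ replaced by $\tilde\mu$ gives that the $\quadric$-poles of tangent hyperplanes of $\cksphere{\p{x}}{\tilde\mu}$ populate $\cksphere{\p{x}}{\mu}$. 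The only real obstacle is the bookkeeping in the expansion of $\scalarprod{z}{z}$ and the clean factorization $\mu - 2\mu^2 + \mu^2 = \mu(1-\mu)$; there is no conceptual subtlety once the sphere relation is substituted at the right moment.
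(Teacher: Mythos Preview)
Your proof is correct and essentially identical to the paper's: both invoke the pole formula from Lemma~\ref{lem:Cayley-Klein-sphere-polarity}, compute $\ck{\quadric}{\p{x}}{\p{z}}$, and use the sphere relation $\scalarprod{x}{y}^2 = \mu\scalarprod{x}{x}\scalarprod{y}{y}$ to simplify it to $1-\mu$. The paper leaves the ``vice versa'' implicit (by the symmetry $\mu \leftrightarrow 1-\mu$), whereas you spell out the involutive structure explicitly, which is a nice addition.
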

\begin{proof}
  Let $\p{y} \in \cksphere{\p{x}}{\mu}$ be a point on the Cayley-Klein sphere.
  Then the tangent plane to $\cksphere{\p{x}}{\mu}$ at the point $\p{y}$ is the polar plane of $\p{y}$ with respect to $\cksphere{\p{x}}{\mu}$.
  According to Lemma \ref{lem:Cayley-Klein-sphere-polarity} the pole $\p{z}$
  of that tangent plane is given by \eqref{eq:Cayley-Klein-sphere-pole}.
  Computing the Cayley-Klein distance of this point to the center $\p{x}$ we obtain
  \[
    \ck{\quadric}{\p{x}}{\p{y}}
    = \frac{\scalarprod{x}{z}^2}{\scalarprod{x}{x}\scalarprod{z}{z}}
    = \frac{\scalarprod{x}{y}^2(1 - \mu)}{\scalarprod{x}{y}^2(1-2\mu) + \mu^2\scalarprod{x}{x}\scalarprod{y}{y}}
    = 1 - \mu,
  \]
  where we used $\scalarprod{x}{y}^2 = \mu\scalarprod{x}{x}\scalarprod{y}{y}$.
\end{proof}
\begin{definition}
  \label{def:polar-Cayley-Klein-sphere}
  For a Cayley-Klein sphere $\cksphere{\p{x}}{\mu}$ we call the Cayley-Klein sphere $\cksphere{\p{x}}{1-\mu}$,
  consisting of all poles (with respect to the absolute quadric $\quadric$) of tangent planes of $\cksphere{\p{x}}{\mu}$,
  its \emph{polar Cayley-Klein sphere}.
\end{definition}
\begin{remark}
  The two degenerate Cayley-Klein spheres $\p{x}^\perp$ and $\cone{\quadric}{\p{x}}$ corresponding
  to the values $\mu = 0$ and $\mu = 1$ respectively, may be treated as being mutually polar.
  Then polarity defines a projective involution on a pencil of concentric Cayley-Klein spheres
  with fixed points at $\mu = \frac{1}{2}$ and $\mu = \infty$.
\end{remark}

\subsection{Hyperbolic geometry}
\label{sec:hyperbolic-space}
Let $\scalarprod{\cdot}{\cdot}$ be the standard non-degenerate bilinear form of signature $(n,1)$, i.e.
\[
  \scalarprod{x}{y} \coloneqq x_1y_1 + \ldots + x_ny_n - x_{n+1}y_{n+1}
\]
for $x, y \in \R^{n+1}$, and denote by $\mob \subset \RP^n$ the corresponding quadric.
We identify the ``inside'' of $\mob$, cf.\ \eqref{eq:quadric-sides}, with the $n$-dimensional \emph{hyperbolic space}
\[
  \hyp \coloneqq \mob^-.
\]
For two points $\p{x}, \p{y} \in \hyp$ one has $\ck{\mob}{\p{x}}{\p{y}} \geq 1$,
and the quantity $d$ given by
\[
  \ck{\mob}{\p{x}}{\p{y}} = \cosh^2 d(\p{x}, \p{y})
\]
defines a metric on $\hyp$ of constant negative sectional curvature.
The corresponding group of isometries is given by $\PO(n,1)$ and called the group of \emph{hyperbolic motions}.
The absolute quadric $\mob$ consists of the points at (metric) infinity.
We call the union
\[
  \chyp \coloneqq \hyp \cup \mob
\]
the \emph{compactified hyperbolic space}.

In this \emph{projective model} of hyperbolic geometry \emph{geodesics} are given by intersections
of projective lines in $\RP^n$ with $\hyp$,
while, more generally, \emph{hyperbolic subspaces} (totally geodesic submanifolds) are given by intersections
of projective subspaces in $\RP^n$ with $\hyp$.
Thus, by polarity, every point $\p{m} \in \ds$ in the ``outside'' of hyperbolic space,
\[
  \ds \coloneqq \mob^+,
\]
which is called \emph{deSitter space}, corresponds to a hyperbolic hyperplane $\p{m}^\perp \cap \hyp$.

Consider two hyperbolic hyperplanes with poles $\p{k}, \p{m} \in \ds$.
\begin{itemize}
\item If $\ck{\mob}{\p{k}}{\p{m}} < 1$, the two hyperplanes intersect in $\hyp$,
  and their hyperbolic intersection angle~$\alpha$, or equivalently its conjugate angle $\pi - \alpha$ is given by
  \[
    \ck{\mob}{\p{k}}{\p{m}} = \cos^2 \alpha(\p{k}^\perp, \p{m}^\perp).
  \]
\item If $\ck{\mob}{\p{k}}{\p{m}} > 1$, the two hyperplanes do not intersect in $\hyp$,
  and their hyperbolic distance is given by
  \[
    \ck{\mob}{\p{k}}{\p{m}} = \cosh^2 d(\p{k}^\perp, \p{m}^\perp).
  \]
  The corresponding projective hyperplanes intersect in $(\p{k}\wedge\p{m})^\perp \subset \ds$.
\item If $\ck{\mob}{\p{k}}{\p{m}} = 0$, the two hyperplanes are parallel, i.e., they intersect on $\mob$.
\end{itemize}
Finally, the hyperbolic distance of a point $\p{x} \in \hyp$ and a hyperbolic hyperplane with pole $\p{m} \in \ds$ is given by
\[
  \ck{\mob}{\p{x}}{\p{m}} = -\sinh^2 d(\p{x}, \p{m}^\perp).
\]

It is occasionally useful to employ a certain normalization of the homogeneous coordinate vectors:
\[
  \begin{aligned}
    \HH^n &\coloneqq \set{x = (x_1, \ldots, x_{n+1})\in \R^{n,1}}{\scalarprod{x}{x} = -1,\, x_{n+1} \geq 0},\\
    \widetilde{\dS}^n &\coloneqq \set{m = (m_1, \ldots, m_{n+1})\in \R^{n,1}}{\scalarprod{m}{m} = 1}.
  \end{aligned}
\]
Then $\P(\HH^n) = \hyp$ is an embedding and $\P(\widetilde{\dS}^n) = \ds$ is a double cover.
For $x, y \in \HH^n$ and $k, m \in \widetilde{\dS}^n$ above distance formulas become
\[
  \begin{aligned}
    \scalarprod{x}{y} &= - \cosh d(\p{x}, \p{y}),\\
    \abs{\scalarprod{k}{m}} &= \cos \alpha(\p{k}^\perp, \p{m}^\perp), &&\text{if}\, \abs{\scalarprod{k}{m}} \leq 1  \\
    \abs{\scalarprod{k}{m}} &= \cosh d(\p{k}^\perp, \p{m}^\perp), &&\text{if}\, \abs{\scalarprod{k}{m}} \geq 1  \\
    \abs{\scalarprod{x}{m}} &= \sinh d(\p{x}, \p{m}^\perp).
  \end{aligned}
\]
\begin{remark}
  \label{rem:oriented-hyperbolic-planes}
  The double cover $\P(\widetilde{\dS}^n) = \ds$ of deSitter space can be used to encode the orientation of the corresponding polar hyperplanes,
  e.g., by endowing the hyperbolic hyperplane corresponding to $m \in \widetilde{\dS}^n$
  with a normal vector in the direction of the hyperbolic halfspace on which the bilinear form
  with points $x\in \HH^n$ is positive: $\scalarprod{x}{m} > 0$.
  Using the double cover to encode orientation
  one may omit the absolute value in $\scalarprod{x}{m} = \cos d$
  to obtain an oriented hyperbolic distance $d$ between a point and an hyperbolic hyperplane.
  Similarly, one may omit the absolute value in $\scalarprod{k}{m} = \cos\alpha$
  which allows to distinguish the intersection angle $\alpha$ and its conjugate angle $\pi - \alpha$.
\end{remark}

We now turn our attention to the Cayley-Klein spheres of hyperbolic/deSitter geometry.
First, consider a pencil of concentric Cayley-Klein spheres $\cksphere{\p{x}}{\mu}$
with center inside hyperbolic space $\p{x} \in \hyp$, $x \in \HH^n$.
Depending on the value of $\mu \in \R \cup \{ \infty \}$ we obtain the following types of hyperbolic/deSitter spheres
(see Figure \ref{fig:concentric-cayley-klein-circles}, left):
\begin{itemize}
\item $\mu < 0$: A \emph{deSitter sphere} with hyperbolic center.
\item $0<\mu<1$: $\cksphere{\p{x}}{\mu}$ is empty.
\item $1<\mu<\infty$: A \emph{hyperbolic sphere} with center $\p{x} \in \hyp$ and hyperbolic radius $r = \arcosh\sqrt{\mu} > 0$:
  \[
    \cksphere{\p{x}}{\mu} =
    \set{\p{y} \in \hyp}{ \ck{\mob}{\p{x}}{\p{y}} = \cosh^2 r } =
    \P\left( \set{y \in \HH^n}{ \scalarprod{x}{y} = - \cosh r } \right).
  \]
\end{itemize}
Second, consider a pencil of concentric Cayley-Klein spheres $\cksphere{\p{m}}{\mu}$
with center outside hyperbolic space $\p{m} \in \ds$, $m \in \widetilde{\dS}^n$
(see Figure \ref{fig:concentric-cayley-klein-circles}, middle):
\begin{itemize}
\item $\mu < 0$: A \emph{hypersurface of constant hyperbolic distance} $r = \arsinh\sqrt{\mu} > 0$ to the hyperbolic plane $\p{m}^\perp \cap \hyp$:
  \[
    \cksphere{\p{m}}{\mu}
    = \set{\p{y} \in \hyp}{ \ck{\mob}{\p{m}}{\p{y}} = -\sinh^2r}
    = \P\left( \set{y \in \HH^N}{ \abs{\scalarprod{m}{y}} = \sinh r} \right).
  \]
\item $0<\mu<1$: A \emph{deSitter sphere} tangent to $\mob$.
  All its tangent hyperplanes are hyperbolic hyperplanes.
\item $1<\mu<\infty$: A \emph{deSitter sphere} tangent to $\mob$
  with no hyperbolic tangent hyperplanes.
\end{itemize}
Third, a pencil of concentric Cayley-Klein horospheres with center on the absolute quadric $\p{x} \in \mob$, $x \in \L^{n,1}$
consists of \emph{hyperbolic horospheres} and \emph{deSitter horospheres} (see Figure \ref{fig:concentric-cayley-klein-circles}, right).

\subsection{Elliptic geometry}
\begin{figure}
  \centering
  \includegraphics[width=0.44\textwidth]{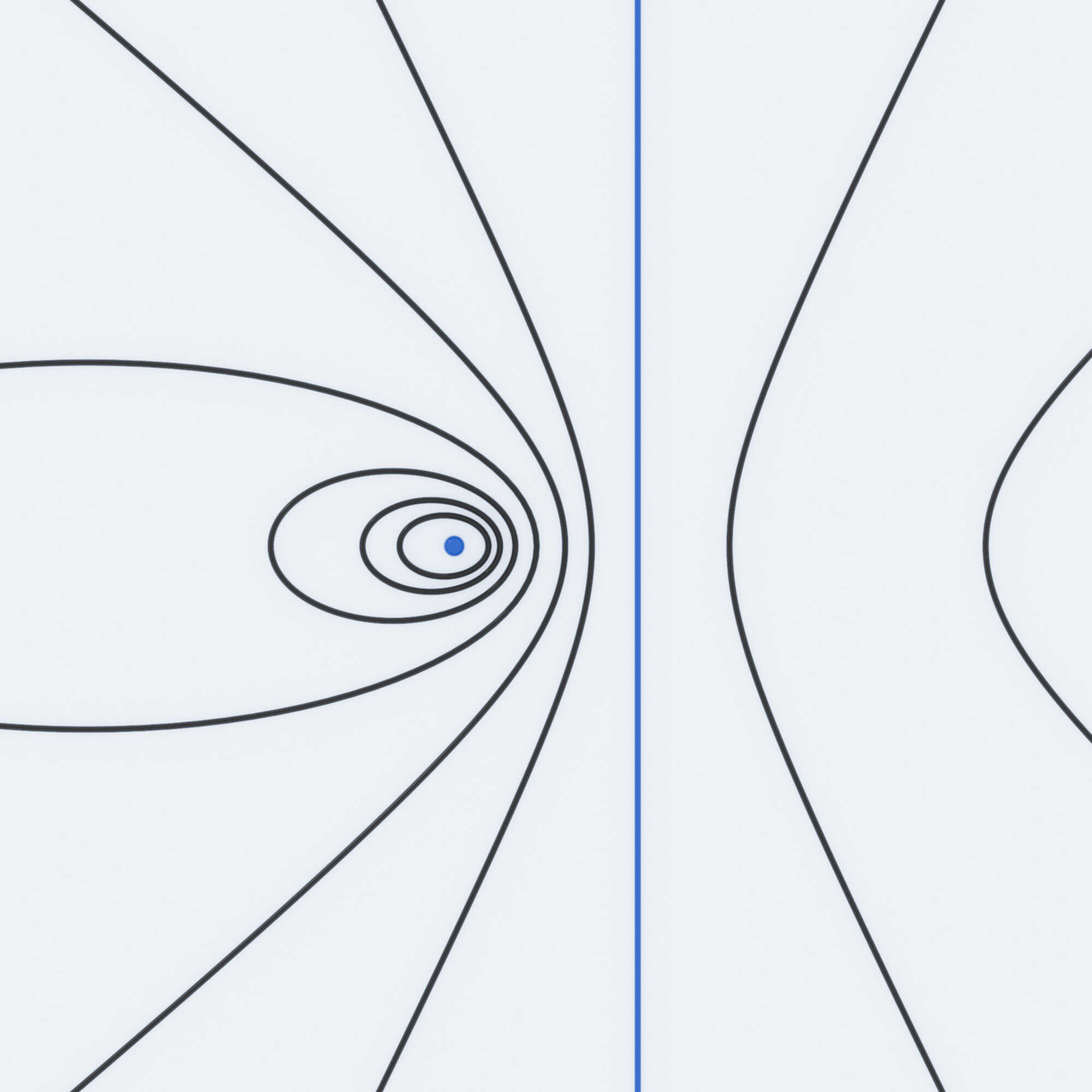}
  \caption{
    Concentric Cayley-Klein circles in the elliptic plane.
  }
\label{fig:concentric-elliptic-circles}
\end{figure}
\label{sec:elliptic-space}
For $x, y \in \R^{n+1}$ we denote by 
\[
  \dotprod{x}{y} \coloneqq x_1y_1 + \ldots x_ny_n + x_{n+1}y_{n+1}
\]
the standard (positive definite) scalar product on $\R^{n+1}$, i.e. the standard non-degenerate bilinear form of signature $(n+1,0)$.
The corresponding quadric $\ellipb \subset \RP^n$ is empty (or purely imaginary, cf.\ Example~\ref{ex:quadrics}~\ref{ex:quadrics-empty}),
as well as the set $\ellipb^- = \varnothing$, while
\[
  \ellip \coloneqq \ellipb^+ = \RP^n
\]
is the whole projective space, which we identify with the $n$-dimensional \emph{elliptic space}.
For two points $\p{x}, \p{y} \in \ellip$ one always has $0 \leq \ck{\ellipb}{\p{x}}{\p{y}} \leq 1$ and the quantity $d$ given by
\[
  \ck{\ellipb}{\p{x}}{\p{y}} = \cos^2 d(\p{x}, \p{y})
\]
defines a metric on $\ellip$ of constant positive sectional curvature.
The corresponding group of isometries is given by $\PO(n+1)$ and called the group of \emph{elliptic motions}.

In this \emph{projective model} of elliptic geometry \emph{geodesics} are given by projective lines,
while, more generally, \emph{elliptic subspaces} are given by projective subspaces.
By polarity, there is a one-to-one correspondence of points $\p{x} \in \ellip$ in elliptic space
and elliptic hyperplanes $\p{x}^\perp$.

Two hyperplanes in elliptic space always intersect.
If $\p{x}, \p{y} \in \ellip$ are the poles of two elliptic hyperplanes,
then their intersection angle $\alpha$, or equivalently its conjugate angle $\pi - \alpha$ is given by
\[
  \ck{\ellipb}{\p{x}}{\p{y}} = \cos^2 \alpha(\p{x}^\perp, \p{y}^\perp).
\]
The distance of a point $\p{x} \in \RP^n$ and an elliptic hyperplane with pole $\p{y} \in \RP^n$ is given by
\[
  \ck{\ellipb}{\p{x}}{\p{y}} = \sin^2 \alpha(\p{x}, \p{y}^\perp).
\]

One may normalize the homogeneous coordinate vectors of points in elliptic space to lie on a sphere:
\[
  \S^n \coloneqq \set{x \in \R^{n+1}}{\dotprod{x}{x} = 1}.
\]
Then $\P(\S^n) = \ellip$ is a double cover, where antipodal points of the sphere are identified.
In this normalization elliptic planes correspond to great spheres of $\S^n$,
and it turns out that elliptic geometry is a double cover of \emph{spherical geometry}.
For $x, y \in \S^n$ above distance formulas become
\[
  \begin{aligned}
    \abs{\dotprod{x}{y}} &= \cos d(\p{x}, \p{y}),\\
    \abs{\dotprod{x}{y}} &= \cos \alpha(\p{x}^\perp, \p{y}^\perp),\\
    \abs{\dotprod{x}{y}} &= \sin d(\p{x}, \p{y}^\perp),
  \end{aligned}
\]
\begin{remark}
  The pole $\p{x} \in \ellip$ of an elliptic hyperplane $\p{x}^\perp$ has two lifts to the sphere, $x, -x \in \S^n$,
  which may be used to encode the orientation of the hyperplane (cf.\ Remark \ref{rem:oriented-hyperbolic-planes}).
  This allows for omitting the absolute values in above distance formulas,
  while taking distances to be signed and distinguishing between intersection angles and their conjugate angles.
\end{remark}

A Cayley-Klein sphere in elliptic space $\cksphere{\p{x}}{\mu}$ with center $\p{x} \in \ellip$, $x \in \S^n$,
is not empty if and only if $0 \leq \mu \leq 1$ (see Figure \ref{fig:concentric-elliptic-circles}).
In this case it corresponds to an \emph{elliptic sphere} with center $\p{x} \in \ellip$ and elliptic radius $0 \leq r = \arccos\sqrt\mu \leq \frac{\pi}{2}$:
\[
  \cksphere{\p{x}}{\mu} =
  \set{\p{y} \in \ellip}{ \ck{\ellipb}{\p{x}}{\p{y}} = \cos^2 r } =
  \P\left( \set{y \in \S^n}{ \abs{\dotprod{x}{y}} = \cos r } \right).
\]


\newpage
\section{Central projection of quadrics and Möbius geometry}
\label{sec:projection}

In this section we study the general construction of \emph{central projection} of a quadric from a point onto its polar hyperplane, see, e.g., \cite{K, Blproj, Gie}.
This leads to a double cover of a Cayley-Klein space in the hyperplane such that the spheres in that Cayley-Klein space correspond to hyperplanar sections of the quadric.
Vice versa, a Cayley-Klein space can be lifted to a quadric in a projective space of one dimension higher,
such that Cayley-Klein spheres lift to hyperplanar sections of the quadric.
In this way, hyperbolic and elliptic geometry can be lifted to \emph{Möbius geometry},
and Möbius geometry may be seen as the geometry of points and spheres of the hyperbolic or elliptic space, respectively.
We demonstrate how the group of \emph{Möbius transformations} can be decomposed into the respective isometries
and scalings along concentric spheres.

\subsection{The involution and projection induced by a point}
\begin{figure}
  \centering
  \def\svgwidth{0.44\textwidth}
\begingroup%
  \makeatletter%
  \providecommand\color[2][]{%
    \errmessage{(Inkscape) Color is used for the text in Inkscape, but the package 'color.sty' is not loaded}%
    \renewcommand\color[2][]{}%
  }%
  \providecommand\transparent[1]{%
    \errmessage{(Inkscape) Transparency is used (non-zero) for the text in Inkscape, but the package 'transparent.sty' is not loaded}%
    \renewcommand\transparent[1]{}%
  }%
  \providecommand\rotatebox[2]{#2}%
  \newcommand*\fsize{\dimexpr\f@size pt\relax}%
  \newcommand*\lineheight[1]{\fontsize{\fsize}{#1\fsize}\selectfont}%
  \ifx\svgwidth\undefined%
    \setlength{\unitlength}{2025.31640625bp}%
    \ifx\svgscale\undefined%
      \relax%
    \else%
      \setlength{\unitlength}{\unitlength * \real{\svgscale}}%
    \fi%
  \else%
    \setlength{\unitlength}{\svgwidth}%
  \fi%
  \global\let\svgwidth\undefined%
  \global\let\svgscale\undefined%
  \makeatother%
  \begin{picture}(1,1)%
    \lineheight{1}%
    \setlength\tabcolsep{0pt}%
    \put(0,0){\includegraphics[width=\unitlength,page=1]{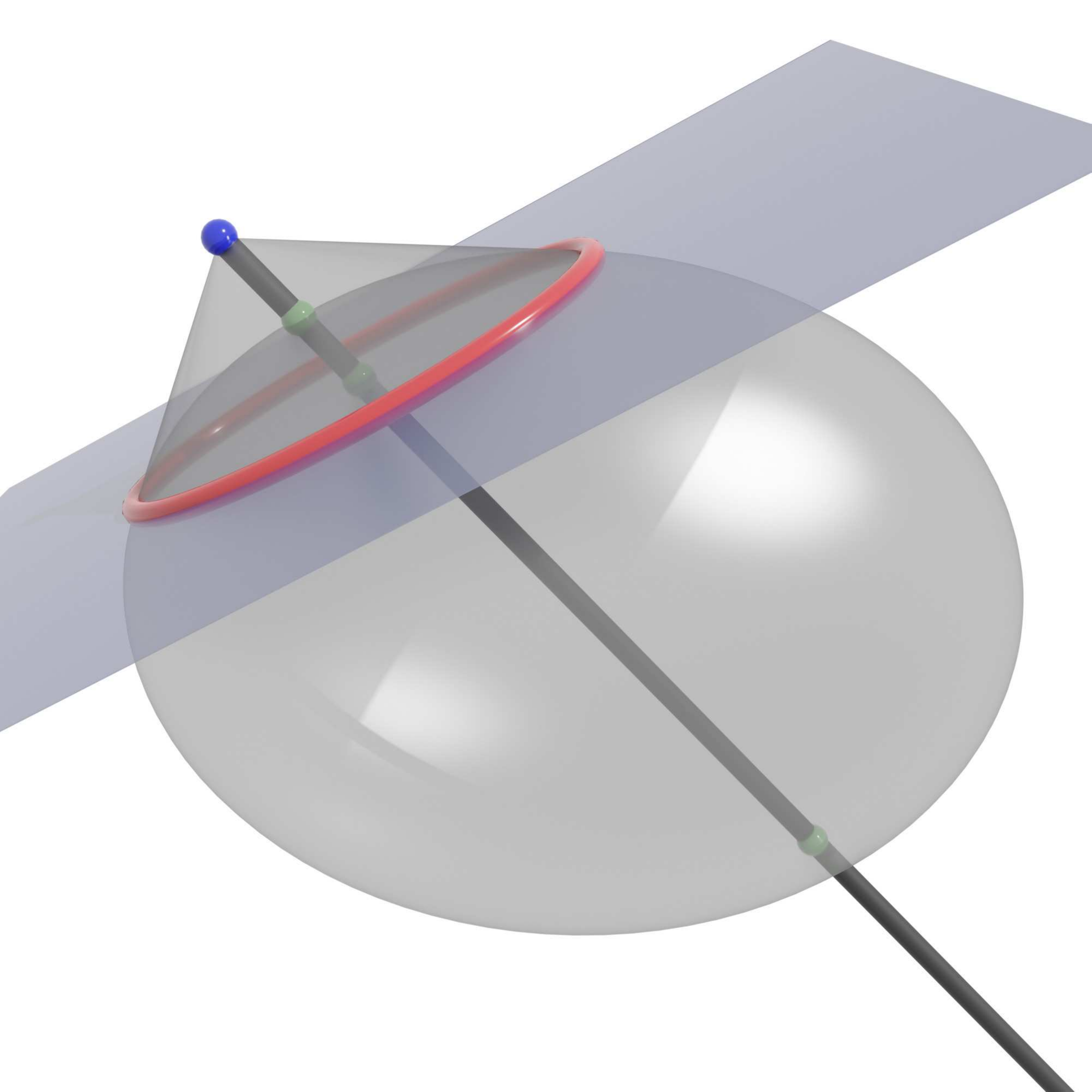}}%
    \put(0.81503577,0.84367542){\color[rgb]{0,0,0}\makebox(0,0)[lt]{\lineheight{1.25}\smash{\begin{tabular}[t]{l}{\footnotesize $\p{q}^{\perp}$}\end{tabular}}}}%
    \put(0.16467781,0.81742244){\color[rgb]{0,0,0}\makebox(0,0)[lt]{\lineheight{1.25}\smash{\begin{tabular}[t]{l}{\footnotesize $\p{q}$}\end{tabular}}}}%
    \put(0.29236274,0.74701671){\color[rgb]{0,0,0}\makebox(0,0)[lt]{\lineheight{1.25}\smash{\begin{tabular}[t]{l}{\footnotesize $\p{x}$}\end{tabular}}}}%
    \put(0.35560861,0.68257756){\color[rgb]{0,0,0}\makebox(0,0)[lt]{\lineheight{1.25}\smash{\begin{tabular}[t]{l}{\footnotesize $\pi_{\p{q}}(\p{x})$}\end{tabular}}}}%
    \put(0.76911539,0.26187866){\color[rgb]{0,0,0}\makebox(0,0)[lt]{\lineheight{1.25}\smash{\begin{tabular}[t]{l}{\footnotesize $\sigma_{\p{q}}(\p{x})$}\end{tabular}}}}%
    \put(0.37112171,0.29832935){\color[rgb]{0,0,0}\makebox(0,0)[lt]{\lineheight{1.25}\smash{\begin{tabular}[t]{l}{\footnotesize $\quadric$}\end{tabular}}}}%
  \end{picture}%
\endgroup%

  \hspace{0.08\textwidth}
  \def\svgwidth{0.44\textwidth}
\begingroup%
  \makeatletter%
  \providecommand\color[2][]{%
    \errmessage{(Inkscape) Color is used for the text in Inkscape, but the package 'color.sty' is not loaded}%
    \renewcommand\color[2][]{}%
  }%
  \providecommand\transparent[1]{%
    \errmessage{(Inkscape) Transparency is used (non-zero) for the text in Inkscape, but the package 'transparent.sty' is not loaded}%
    \renewcommand\transparent[1]{}%
  }%
  \providecommand\rotatebox[2]{#2}%
  \newcommand*\fsize{\dimexpr\f@size pt\relax}%
  \newcommand*\lineheight[1]{\fontsize{\fsize}{#1\fsize}\selectfont}%
  \ifx\svgwidth\undefined%
    \setlength{\unitlength}{2025.31640625bp}%
    \ifx\svgscale\undefined%
      \relax%
    \else%
      \setlength{\unitlength}{\unitlength * \real{\svgscale}}%
    \fi%
  \else%
    \setlength{\unitlength}{\svgwidth}%
  \fi%
  \global\let\svgwidth\undefined%
  \global\let\svgscale\undefined%
  \makeatother%
  \begin{picture}(1,1)%
    \lineheight{1}%
    \setlength\tabcolsep{0pt}%
    \put(0,0){\includegraphics[width=\unitlength,page=1]{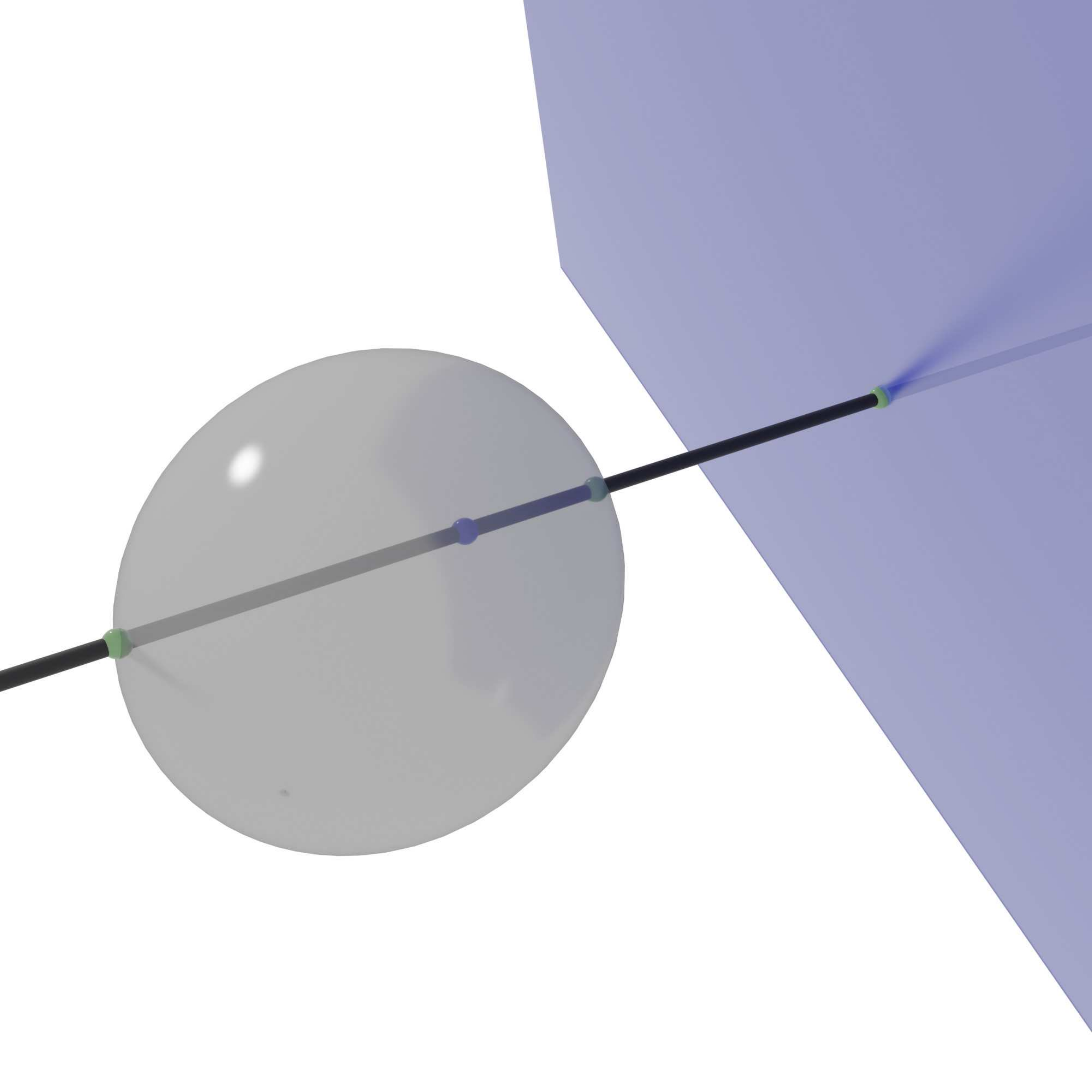}}%
    \put(0.4117023,0.44762505){\color[rgb]{0,0,0}\makebox(0,0)[lt]{\lineheight{1.25}\smash{\begin{tabular}[t]{l}{\footnotesize $\p{q}$}\end{tabular}}}}%
    \put(0.54367464,0.49576889){\color[rgb]{0,0,0}\makebox(0,0)[lt]{\lineheight{1.25}\smash{\begin{tabular}[t]{l}{\footnotesize $\p{x}$}\end{tabular}}}}%
    \put(0.77150199,0.58288107){\color[rgb]{0,0,0}\makebox(0,0)[lt]{\lineheight{1.25}\smash{\begin{tabular}[t]{l}{\footnotesize $\pi_{\p{q}}(\p{x})$}\end{tabular}}}}%
    \put(0.08208191,0.34660419){\color[rgb]{0,0,0}\makebox(0,0)[lt]{\lineheight{1.25}\smash{\begin{tabular}[t]{l}{\footnotesize $\sigma_{\p{q}}(\p{x})$}\end{tabular}}}}%
    \put(0.55970697,0.93490971){\color[rgb]{0,0,0}\makebox(0,0)[lt]{\lineheight{1.25}\smash{\begin{tabular}[t]{l}{\footnotesize $\p{q}^{\perp}$}\end{tabular}}}}%
    \put(0.30778765,0.23323902){\color[rgb]{0,0,0}\makebox(0,0)[lt]{\lineheight{1.25}\smash{\begin{tabular}[t]{l}{\footnotesize $\quadric$}\end{tabular}}}}%
  \end{picture}%
\endgroup%

  \caption{
    The involution and projection of an oval quadric $\quadric \subset \RP^3$ induced by a point $\p{q}$ not on the quadric.
    \emph{Left:} The point $\p{q}$ lies ``outside'' the quadric.
    \emph{Right:} The point $\p{q}$ lies ``inside'' the quadric.
  }
\label{fig:involution}
\end{figure}
Let $\scalarprod{\cdot}{\cdot}$ be a bilinear form on $\R^{n+2}$ of signature $(r,s,t)$,
and denote by $\quadric \subset \RP^{n+1}$ the corresponding quadric.
We introduce the central projection of $\quadric$ from a point $\p{q}$ not on the quadric onto a hyperplane of $\RP^{n+1}$
which is canonically chosen to be the polar hyperplane of $\p{q}$.
\begin{definition}
  \label{def:involution-projection}
  A point $\p{q} \in \RP^{n+1} \setminus \quadric$ not on the quadric
  induces two maps
  \[
    \sigma_{\p{q}}, \pi_{\p{q}} \colon \RP^{n+1} \rightarrow \RP^{n+1}
  \]
  \begin{equation}
    \sigma_{\p{q}} : [x] \mapsto \left[ \sigma_{q}(x) \right] = \left[ x - 2 \frac{\scalarprod{x}{q}}{\scalarprod{q}{q}} q \right],\qquad
    \pi_{\p{q}} : [x] \mapsto \left[ \pi_q(x) \right] = \left[ x - \frac{\scalarprod{x}{q}}{\scalarprod{q}{q}} q \right],
  \end{equation}
  which we call the \emph{associated involution} and \emph{projection} respectively.
\end{definition}
\begin{remark}
  The involution $\sigma_{\p{q}}$ is also called \emph{reflection in the hyperplane $\p{q}^\perp$} (cf.\ Theorem~\ref{thm:cartan}).
\end{remark}
We summarize the main properties of this involution and projection in the following proposition.
\begin{proposition}\
  \label{prop:involution-projection}
  \nobreakpar
  \begin{enumerate}
  \item The map $\sigma_{\p{q}}$ is a projective involution that fixes $\p{q}$, i.e.,
    \[
      \sigma_{\p{q}} \in \PO(r,s,t)_{\p{q}},
      \qquad
      \sigma_{\p{q}} \circ \sigma_{\p{q}} = \id,
    \]
    It further fixes every point on the polar hyperplane $\p{q}^\perp$.
    
    For every line through $\p{q}$ that intersects the quadric $\quadric$ the involution $\sigma_{\p{q}}$ interchanges the two intersection points,
    while for a line through $\p{q}$ that touches the quadric $\quadric$ it fixes the touching point (cf.\ Lemma \ref{lem:quadric-line-intersection}).
  \item
    \label{prop:involution-projection-double-cover}
    The map $\pi_{\p{q}}$ is a projection onto $\p{q}^\perp \simeq \RP^n$.
    Its restriction onto the quadric
    \[
      \pi_{\p{q}}\restrict{\quadric} : \quadric \rightarrow \pi_{\p{q}}(\quadric)
    \]
    is a double cover with branch locus $\quadric \cap \p{q}^\perp$.
  \item The involution and projection together satisfy
    \[
      \pi_{\p{q}} \circ \sigma_{\p{q}} = \pi_{\p{q}}.
    \]
    Vice versa, if two distinct points $\p{x}, \p{y} \in \RP^{n+1}$ project to the same point $\pi_{\p{q}}(\p{x}) = \pi_{\p{q}}(\p{y})$,
    then $\p{x} = \sigma_{\p{q}}(\p{y})$.
    This gives rise to a one-to-one correspondence of the projection and the quotient
    \[
      \pi_{\p{q}}(\quadric) \simeq \faktor{\quadric}{\sigma_{\p{q}}}.
    \]
  \end{enumerate}
\end{proposition}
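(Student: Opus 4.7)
The plan is to read all three claims off from direct computations with the defining formulas for $\sigma_{\p{q}}$ and $\pi_{\p{q}}$, then to interpret them geometrically by restricting to lines through $\p{q}$. Every such line is fixed setwise by $\sigma_{\p{q}}$ and collapses to a single point of $\p{q}^\perp$ under $\pi_{\p{q}}$, and this common line-by-line picture is what ties all three parts together.

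For (i), the key intermediate identity is $\scalarprod{\sigma_q(x)}{q} = -\scalarprod{x}{q}$, which is immediate from the definition. Substituting $\sigma_q(x)$ back into $\sigma_q$ then collapses to $x$, giving the involution property. From the formula one reads off $\sigma_q(q) = -q$ and $\sigma_q(y) = y$ for $y \in q^\perp$. Preservation of the bilinear form is a one-line expansion in which the cross-terms cancel against the quadratic term, yielding $\sigma_{\p{q}} \in \PO(r,s,t)_{\p{q}}$. For the action on a line $L$ through $\p{q}$: since $\sigma_{\p{q}}|_L$ is a projective involution of $L \cong \RP^1$ fixing the two distinct points $\p{q}$ and $L \cap \p{q}^\perp$, it is the unique such involution; because it also preserves $\quadric$, the one or two points of $L \cap \quadric$ are permuted among themselves, which by Lemma~\ref{lem:quadric-line-intersection} forces the stated swap/fixed behaviour.

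For (ii), the identity $\scalarprod{\pi_q(x)}{q} = 0$ is immediate, so $\pi_{\p{q}}$ lands in $\p{q}^\perp \simeq \RP^n$, and well-definedness on $\quadric$ follows from the observation that $\pi_q(x) = 0$ forces $x \in \R q$, which is excluded since $\p{q} \notin \quadric$. The fibre of $\pi_{\p{q}}$ over a point $\p{y} \in \p{q}^\perp$ is the punctured line $(\p{q} \wedge \p{y})\setminus\{\p{q}\}$; intersecting with $\quadric$ and invoking Lemma~\ref{lem:quadric-line-intersection} on $\p{q}\wedge\p{y}$ shows the fibre has two points generically, exactly one point when the line is tangent, and is empty otherwise. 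The tangency condition $\scalarprod{q}{x}^2 = \scalarprod{q}{q}\scalarprod{x}{x}$ on $\p{x}\in\quadric$ reduces to $\scalarprod{q}{x}=0$, so the branch locus is precisely $\quadric \cap \p{q}^\perp$.

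For (iii), the identity $\pi_{\p{q}}\circ\sigma_{\p{q}} = \pi_{\p{q}}$ follows from a single substitution using $\scalarprod{\sigma_q(x)}{q} = -\scalarprod{x}{q}$. For the converse, two distinct points with a common $\pi_{\p{q}}$-image lie on a common line $L$ through $\p{q}$; when both are on $\quadric$, part~(i) identifies them as the two intersection points of $L$ with $\quadric$ and shows they are exchanged by $\sigma_{\p{q}}$, which combined with $\sigma_{\p{q}}$-invariance of $\quadric$ gives the bijection $\pi_{\p{q}}(\quadric) \simeq \quadric/\sigma_{\p{q}}$. The main (and only real) subtlety is that arbitrary distinct points of $\RP^{n+1}$ on a line through $\p{q}$ need not be $\sigma_{\p{q}}$-related — the involution $\sigma_{\p{q}}|_L$ pairs points by a harmonic-conjugate relation with respect to the two fixed points $\p{q}$ and $L\cap\p{q}^\perp$ — so the converse statement is genuine precisely on $\quadric$, and this is exactly where the intersection restriction built into part~(i) is used.
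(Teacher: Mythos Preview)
The paper states this proposition without proof, so there is no argument to compare against; your direct-computation approach is the natural one and is correct.

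Your observation in part (iii) is sharp and worth emphasising: the converse as literally stated in the paper---for arbitrary distinct $\p{x}, \p{y} \in \RP^{n+1}$---is false, since on a line $L$ through $\p{q}$ the involution $\sigma_{\p{q}}|_L$ only exchanges the harmonic conjugate pairs with respect to $\p{q}$ and $L\cap\p{q}^\perp$, not arbitrary pairs with the same projection. The claim becomes true precisely when $\p{x}, \p{y} \in \quadric$, because a line through $\p{q}$ meets $\quadric$ in at most two points and these are forced to be a $\sigma_{\p{q}}$-pair by your argument in (i). This is exactly what is needed for the quotient identification $\pi_{\p{q}}(\quadric) \simeq \quadric/\sigma_{\p{q}}$, which is the only use the paper makes of the converse, so the imprecision is harmless---but you are right to flag it.
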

\begin{remark}
  The involution $\sigma_{\p{q}}$ and projection $\pi_{\p{q}}$ act in the same way as described in Proposition~\ref{prop:involution-projection}
  on every quadric from the pencil $\quadric \wedge \cone{\quadric}{\p{q}}$ spanned by $\quadric$
  and the cone of contact $\cone{\quadric}{\p{q}}$ with vertex $\p{q}$ (cf.\ Example \ref{ex:maximal-contact-pencil}).
\end{remark}

The intersection
\[
  \secquadric \coloneqq \quadric \cap \p{q}^\perp
\]
is a quadric of signature
\begin{itemize}
\item $(r-1,s,t)$ if $\scalarprod{q}{q} > 0$, or
\item $(r,s-1,t)$ if $\scalarprod{q}{q} < 0$.
\end{itemize}
The projection of a quadric $\quadric \subset \RP^{n+1}$ from a point $\p{q} \in \RP^{n+1} \setminus \quadric$
onto its polar hyperplane $\p{q}^\perp$ is a double cover of the ``inside'' or the ``outside'', cf.\ \eqref{eq:quadric-sides},
of $\secquadric = \p{q}^\perp \cap \quadric$ depending on the signature of $\p{q}$.
\begin{proposition}
  \label{prop:projection-side}
  Let $\p{q} \in \RP^{n+1} \setminus \quadric$.
  Then
  \begin{itemize}
  \item $\pi_{\p{q}}(\quadric) = \secquadric^- \cup \secquadric$, if $\scalarprod{q}{q} > 0$,
  \item $\pi_{\p{q}}(\quadric) = \secquadric^+ \cup \secquadric$, if $\scalarprod{q}{q} < 0$,
  \end{itemize}
\end{proposition}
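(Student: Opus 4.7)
The plan is to pass a point $\p{x} \in \quadric$ through the projection formula and read off the signature of its image directly.  For $\p{x} \in \quadric$ we have $\scalarprod{x}{x}=0$, so the explicit formula for $\pi_q$ from Definition~\ref{def:involution-projection} gives, after expanding $\scalarprod{\pi_q(x)}{\pi_q(x)}$ and using $\scalarprod{x}{x}=0$,
\[
  \scalarprod{\pi_q(x)}{\pi_q(x)} \;=\; -\,\frac{\scalarprod{x}{q}^{2}}{\scalarprod{q}{q}}.
\]
Since $\scalarprod{x}{q}^{2}\ge 0$, the sign of the left-hand side is the opposite of $\sgn\scalarprod{q}{q}$, with equality to zero exactly when $\scalarprod{x}{q}=0$, i.e.\ when $\p{x}\in\p{q}^\perp$ and hence $\p{x}\in\secquadric$.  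Noting that the bilinear form restricted to $\p{q}^\perp$ inherits the sign convention that defines the sides $\secquadric^\pm$, this computation immediately yields the inclusion $\pi_{\p{q}}(\quadric)\subseteq \secquadric^{-}\cup\secquadric$ when $\scalarprod{q}{q}>0$, and $\pi_{\p{q}}(\quadric)\subseteq \secquadric^{+}\cup\secquadric$ when $\scalarprod{q}{q}<0$.

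For the reverse inclusion, given $\p{y}\in\p{q}^\perp$ with $\scalarprod{y}{y}$ of the appropriate sign, I would construct an explicit preimage on the line $\p{y}\wedge\p{q}$.  Any representative of the form $x=y+\lambda q$ satisfies $\pi_{\p{q}}([x])=\p{y}$, because $\scalarprod{y}{q}=0$ causes the projection to kill the $\lambda q$ contribution.  Imposing $\scalarprod{x}{x}=0$ reduces, using $\scalarprod{y}{q}=0$, to
\[
  \lambda^{2} \;=\; -\,\frac{\scalarprod{y}{y}}{\scalarprod{q}{q}},
\]
which admits a real solution precisely when $\scalarprod{y}{y}$ and $\scalarprod{q}{q}$ have opposite signs or $\scalarprod{y}{y}=0$.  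This matches the two cases of the proposition exactly, and (when $\scalarprod{y}{y}\ne 0$) produces the two preimages $\pm\lambda$ predicted by the double-cover statement of Proposition~\ref{prop:involution-projection}~(ii).

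I do not expect a real obstacle here; the proof is two short computations sandwiching a sign-tracking observation.  The only conceptual point to be careful about is the convention that $\secquadric^{\pm}$ is defined by the \emph{same} bilinear form as $\quadric$ (restricted to $\p{q}^\perp$), so that the sign of $\scalarprod{\pi_q(x)}{\pi_q(x)}$ read in $\RP^{n+1}$ really is the sign that classifies $\pi_{\p{q}}(\p{x})$ as lying in $\secquadric^{+}$ or $\secquadric^{-}$ inside $\p{q}^\perp$.  After noting this, the two displayed computations above close both inclusions simultaneously.
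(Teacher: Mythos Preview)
Your proof is correct and follows essentially the same approach as the paper: both compute $\scalarprod{\pi_q(x)}{\pi_q(x)}$ for $\p{x}\in\quadric$ and read off the sign (the paper writes $x=\alpha q+\pi_q(x)$ with $\alpha=\scalarprod{x}{q}/\scalarprod{q}{q}$, arriving at the same expression $-\scalarprod{x}{q}^2/\scalarprod{q}{q}$). Your argument is in fact more complete, since you explicitly supply the reverse inclusion by constructing a preimage $x=y+\lambda q$ on the line through $\p{q}$, whereas the paper's proof only treats the forward inclusion and leaves surjectivity implicit.
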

\begin{proof}
  Decompose the homogeneous coordinate vector of a point $\p{x} \in \quadric$ into its projection onto $q$ and $q^\perp$
  \[
    x = \alpha q + \pi_q(x),
  \]
  with some $\alpha \in \R$.
  Then
  \[
    0 = \scalarprod{x}{x} = \alpha^2 \scalarprod{q}{q} + \pscalarprod{q}{x}{x}
  \]
  and thus
  \[
    \pscalarprod{q}{x}{x} = - \alpha^2 \scalarprod{q}{q}
    \left\{
    \begin{aligned}
      &< 0, &&\text{if}~\scalarprod{q}{q} \geq 0\\
      &> 0, &&\text{if}~\scalarprod{q}{q} \leq 0.
    \end{aligned}
    \right.
  \]
\end{proof}
The following proposition shows how the Cayley-Klein distance induced by $\secquadric$ for points in the projection $\pi_{\p{q}}(\quadric)$ 
can be lifted to the points on $\quadric$.
\begin{proposition}
  \label{prop:Cayley-Klein-distance-lift}
  Let $\p{q} \in \RP^{n+1} \setminus \quadric$ and $\p{x}, \p{y} \in \quadric$.
  Then the Cayley-Klein distance with respect to $\secquadric$ of their projections $\pi_{\p{q}}(\p{x}), \pi_{\p{q}}(\p{y})$ is given by
  \begin{equation}
    \label{eq:Cayley-Klein-distance-lift}
    \ck{\secquadric}{\pi_{\p{q}}(\p{x})}{\pi_{\p{q}}(\p{y})} = 
    \left( 1 - \frac{\scalarprod{x}{y}\scalarprod{q}{q}}{\scalarprod{x}{q}\scalarprod{y}{q}} \right)^2.
  \end{equation}
\end{proposition}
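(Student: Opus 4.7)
The plan is a direct algebraic calculation using the explicit formula for $\pi_{\p{q}}$ from Definition~\ref{def:involution-projection}. Since $\secquadric$ is the restriction of $\quadric$ to $\p{q}^\perp$, its bilinear form is simply $\scalarprod{\cdot}{\cdot}$ restricted to $\p{q}^\perp$, so all three pairings appearing in $\ck{\secquadric}{\pi_{\p{q}}(\p{x})}{\pi_{\p{q}}(\p{y})}$ can be computed in the ambient form $\scalarprod{\cdot}{\cdot}$ on $\R^{n+2}$ after plugging in the projection formula.

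First I would expand
\[
\scalarprod{\pi_q(x)}{\pi_q(y)} = \Bigl\langle x - \tfrac{\scalarprod{x}{q}}{\scalarprod{q}{q}}q,\; y - \tfrac{\scalarprod{y}{q}}{\scalarprod{q}{q}}q \Bigr\rangle.
\]
The cross terms combine to give $-2\scalarprod{x}{q}\scalarprod{y}{q}/\scalarprod{q}{q}$, while the $qq$-term contributes $+\scalarprod{x}{q}\scalarprod{y}{q}/\scalarprod{q}{q}$, leaving
\[
\scalarprod{\pi_q(x)}{\pi_q(y)} = \scalarprod{x}{y} - \frac{\scalarprod{x}{q}\scalarprod{y}{q}}{\scalarprod{q}{q}}.
\]
Specializing to $y=x$ and using the crucial fact that $\p{x},\p{y}\in \quadric$ means $\scalarprod{x}{x}=\scalarprod{y}{y}=0$, I obtain
\[
\scalarprod{\pi_q(x)}{\pi_q(x)} = -\frac{\scalarprod{x}{q}^2}{\scalarprod{q}{q}}, \qquad \scalarprod{\pi_q(y)}{\pi_q(y)} = -\frac{\scalarprod{y}{q}^2}{\scalarprod{q}{q}}.
\]

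Substituting into the definition of the Cayley-Klein distance (Definition~\ref{def:Cayley-Klein-distance}), the common $1/\scalarprod{q}{q}$ factors arrange so that
\[
\ck{\secquadric}{\pi_{\p{q}}(\p{x})}{\pi_{\p{q}}(\p{y})}
= \frac{\left(\scalarprod{x}{y} - \scalarprod{x}{q}\scalarprod{y}{q}/\scalarprod{q}{q}\right)^2}{\scalarprod{x}{q}^2\scalarprod{y}{q}^2/\scalarprod{q}{q}^2}
= \left(\frac{\scalarprod{x}{y}\scalarprod{q}{q}}{\scalarprod{x}{q}\scalarprod{y}{q}} - 1\right)^2,
\]
which equals the right-hand side of \eqref{eq:Cayley-Klein-distance-lift}. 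Squaring absorbs the sign, so the identity holds as stated.

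There is really no serious obstacle: the computation is one line once the projection formula is expanded. The only thing to watch is that the denominators $\scalarprod{x}{q}\scalarprod{y}{q}$ and $\scalarprod{q}{q}$ are nonzero, which is guaranteed by $\p{q}\notin\quadric$ together with the assumption, implicit in the statement of the Cayley-Klein distance, that $\pi_{\p{q}}(\p{x}),\pi_{\p{q}}(\p{y})\notin\secquadric$ (equivalently, neither $\p{x}$ nor $\p{y}$ lies on $\p{q}^\perp$, so that the projection is not the branch locus of Proposition~\ref{prop:involution-projection}\,\ref{prop:involution-projection-double-cover}).
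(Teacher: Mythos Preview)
Your proof is correct and is essentially the same direct algebraic verification as the paper's: both compute the three pairings $\scalarprod{\pi_q(x)}{\pi_q(y)}$, $\scalarprod{\pi_q(x)}{\pi_q(x)}$, $\scalarprod{\pi_q(y)}{\pi_q(y)}$ using the projection formula together with the isotropy conditions $\scalarprod{x}{x}=\scalarprod{y}{y}=0$. The only cosmetic difference is that the paper writes the decomposition as $x=\alpha q+\pi_q(x)$ and works from the right-hand side toward the Cayley--Klein quotient, whereas you expand $\pi_q$ explicitly and work from the left-hand side; the underlying computation is identical.
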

\begin{proof}
  We decompose the homogeneous coordinate vectors of $x, y$ into their projections onto $q$ and $q^\perp$
  \[
    x = \alpha q + \pi_q(x), \qquad y = \beta q + \pi_q(y)
  \]
  with some $\alpha, \beta \in \R$.
  Then,
  \[
    1 - \frac{\scalarprod{x}{y}\scalarprod{q}{q}}{\scalarprod{x}{q}\scalarprod{y}{q}}
    = 1 - \frac{\left(\alpha\beta\scalarprod{q}{q} + \scalarprod{x}{y}_q\right)\scalarprod{q}{q}}{\alpha\beta\scalarprod{q}{q}^2}
    = - \frac{\pscalarprod{q}{x}{y}}{\alpha\beta\scalarprod{q}{q}}.
  \]
  Now with
  \[
    0 = \scalarprod{x}{x} = \alpha^2 \scalarprod{q}{q} + \pscalarprod{q}{x}{x},
  \]
  and the analogous equation for $y$ we obtain
  \[
    \frac{\pscalarprod{q}{x}{y}^2}{\alpha^2\beta^2\scalarprod{q}{q}^2}
    = \frac{\pscalarprod{q}{x}{y}^2}{\pscalarprod{q}{x}{x}\pscalarprod{q}{y}{y}}.
  \]
\end{proof}
\begin{remark}
  Omitting the square for the quantity on the right hand side of equation \eqref{eq:Cayley-Klein-distance-lift}
  leads to a signed version of the lifted Cayley-Klein distance (see Appendix \ref{sec:invariant}).

  While the Cayley-Klein distance can, in general, be both positive or negative,
  the right hand side of equation \eqref{eq:Cayley-Klein-distance-lift} is always positive.
  This corresponds to the fact that the projection of $\quadric$ only always covers one side of $\secquadric$.
  Though having no real preimages the points on the other side of $\secquadric$ may be viewed as projections of certain imaginary points of $\quadric$
  (see Proposition \ref{prop:q-spheres-planar-sections2}).
\end{remark}

The transformation group induced by $\PO(r,s,t)_{\p{q}}$, cf.\ \eqref{eq:stabilizer}, onto $\p{q}^\perp$
is exactly the group of projective transformations $\PO(\tilde{r},\tilde{s},\tilde{t})$ that preserve the quadric $\secquadric$.
It is doubly covered by $\PO(r,s,t)_{\p{q}}$ and can be identified with the quotient
\begin{equation}
  \PO(\tilde{r},\tilde{s},\tilde{t})
  \simeq \faktor{\PO(r,s,t)_{\p q}}{\sigma_{\p q}}.
  \label{eq:lie-trafos-quotient}
\end{equation}
Note that $\PO(r,s,t)_{\p q}$ is the largest subgroup of $\PO(r,s,t)$ admitting this quotient,
i.e.\ the subgroup of transformations that commute with $\sigma_{\p q}$.

\subsection{Cayley-Klein spheres as planar sections}
\begin{figure}
  \centering
  \def\svgwidth{0.54\textwidth}    
  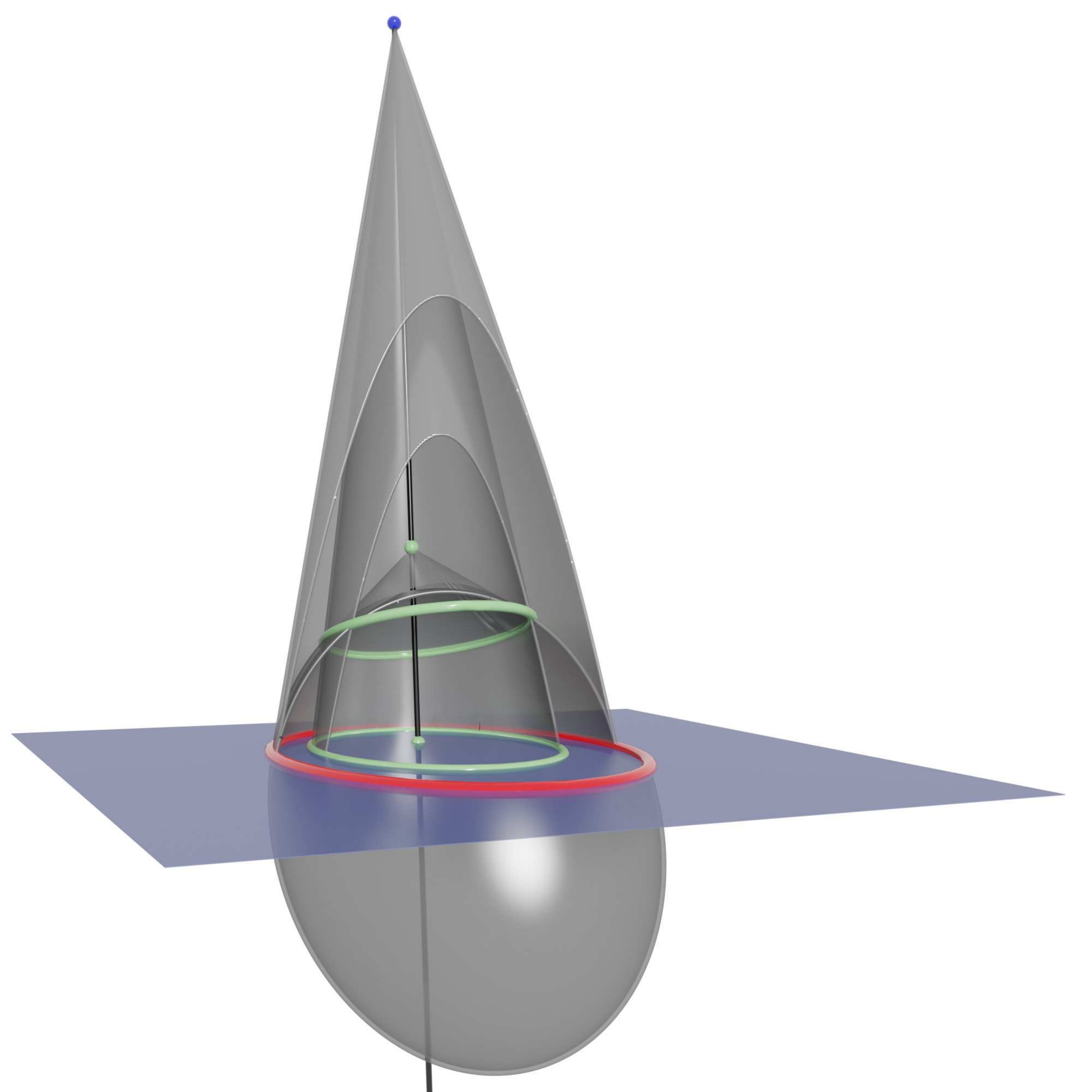
  \def\svgwidth{0.44\textwidth}    
  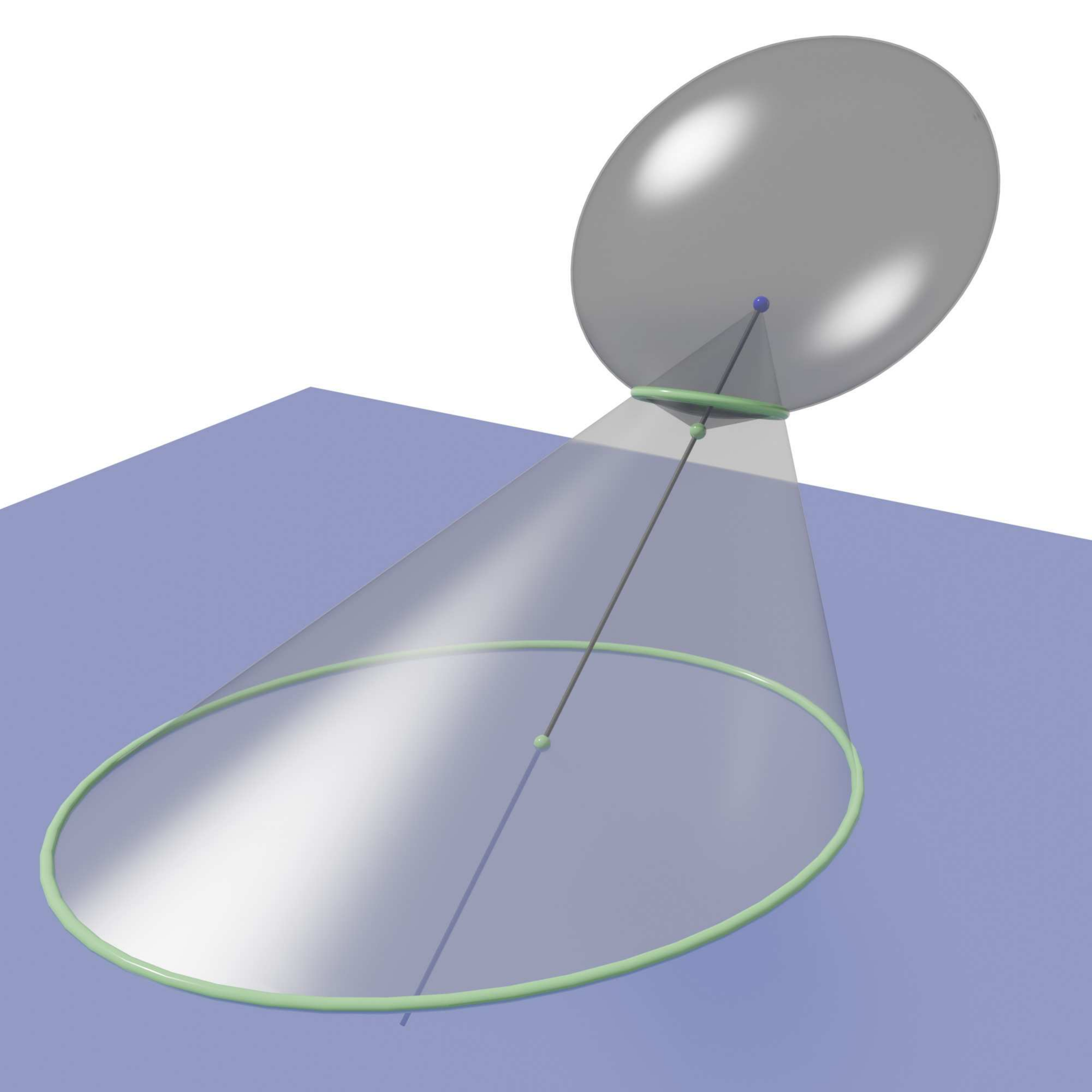
  \caption{
    The central projection of a hyperplanar section $\p{x}^\perp \cap \quadric$ of a quadric $\quadric \subset \RP^3$ from a point $\p{q}$.
    Its image is a Cayley-Klein sphere $\sprojection{\p{q}}(\p{x}) \subset \pi_{\p{q}}(\quadric)$ with respect to the absolute quadric $\secquadric$.
    Its center is given by $\pi_{\p{q}}(\p{x})$.
    The cone of contact can be used to distinguish the type of Cayley-Klein sphere that is obtained in the projection.
  }
  \label{fig:planar-section-projection}
\end{figure}
From now on, let $\quadric$ be a non-degenerate quadric of signature $(r,s)$.
Then each section of the quadric $\quadric$ with a hyperplane
can be identified with the pole of that hyperplane.
\begin{definition}
  \label{def:Q-sphere}
  We call a non-empty intersection of the quadric $\quadric$ with a hyperplane a \emph{$\quadric$-sphere},
  and identify it with the pole of the hyperplane.
  Thus, we call
  \[
    \spheres \coloneqq \set{\p{x} \in \RP^{n+1}}{\p{x}^\perp \cap \quadric \neq \varnothing}
  \]
  the \emph{space of $\quadric$-spheres}.
\end{definition}
\begin{remark}\
  \label{rem:Q-spheres}
  \nobreakpar
  \begin{enumerate}
  \item
    \label{rem:Q-spheres-degenerate}
    The intersection of $\quadric$ with a tangent hyperplane only consists of one point, or a cone (see Example~\ref{ex:quadrics}~\ref{ex:quadrics-cone}).
    To exclude these degenerate cases one might want to take $\spheres \setminus \quadric$ instead as the ``space of spheres''.
  \item
    Depending on the signature of of the quadric $\quadric$ only the following three cases can occur (w.l.o.g., $r \geq s$):
    \begin{itemize}
    \item $\spheres = \varnothing$ if $\quadric$ has signature $(n+2,0)$,
    \item $\spheres = \quadric^+ \cup \quadric$ if $\quadric$ has signature $(n+1,1)$,
    \item $\spheres = \RP^{n+1}$ else.
    \end{itemize}
  \end{enumerate}

\end{remark}
It turns out that every $\quadric$-sphere projects down to a Cayley-Klein sphere in $\pi_{\p{q}}(\quadric)$,
where the type of sphere can be distinguished by the two sides of the cone of contact $\cone{\quadric}{\p{q}}$.
Denote by
\begin{equation}
  \label{eq:quadratic-form-cone}
  \Delta_q(x) = \scalarprod{x}{q}^2 - \scalarprod{x}{x}\scalarprod{q}{q} = - \scalarprod{q}{q}\pscalarprod{q}{x}{x}
\end{equation}
the quadratic form of the cone of contact $\cone{\quadric}{\p{q}}$ (see Definition \ref{def:cone-of-contact}).
\begin{proposition}
  \label{prop:Q-sphere-projection}
  Consider the map
  \[
    \sprojection{\p{q}} : \p{x} \mapsto \pi_{\p{q}}(\p{x}^\perp \cap \quadric),
  \]
  for $\p{x} \in \spheres$.
  Then for every $\p{x} \in \spheres$ the image $\sprojection{\p{q}}(\p{x})$ is a Cayley-Klein sphere with points in $\pi_{\p{q}}(\quadric)$ (see Figure \ref{fig:planar-section-projection}).
  \begin{itemize}
  \item For $\Delta_q(x) \neq 0$ the image is a \emph{Cayley-Klein sphere} with center $\pi_{\p{q}}(\p{x})$ and Cayley-Klein radius
    \[
      \mu = \frac{\scalarprod{x}{q}^2}{\Delta_q(x)},
    \]
    i.e.\
    \[
      \pi_{\p{q}}(\p{x}^\perp \cap \quadric) = S_\mu\left(\pi_{\p{q}}(\p{x})\right).
    \]
    \begin{itemize}
    \item If $\Delta_q(x) > 0$, then $\pi_{\p{q}}(\p{x}) \in \pi_{\p{q}}(\quadric) \setminus \secquadric$.
    \item If $\Delta_q(x) < 0$, then $\pi_{\p{q}}(\p{x}) \in \p{q}^\perp \setminus \pi_{\p{q}}(\quadric)$.
    \end{itemize}
  \item For $\Delta_q(x) = 0$ the image is a \emph{Cayley-Klein horosphere} with center $\pi_{\p{q}}(x) \in \secquadric$.
  \item For $\p{x} \in \spheres \cap \p{q}^\perp$ the image is a \emph{hyperplane} in $\pi_{\p{q}}(\quadric)$ with pole $\p{x}$.
  \item For $\p{x} \in \quadric$ the image is the \emph{cone of contact} $\cone{\secquadric}{\pi_{\p{q}}(\p{x})} \subset \pi_{\p{q}}(\quadric)$.
  \item For $\p{x} = \p{q}$ the image is the \emph{absolute quadric} $\secquadric$.
  \end{itemize}
\end{proposition}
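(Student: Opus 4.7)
The plan is to compute, in the hyperplane $\p{q}^\perp$ where the projection lands, an explicit quadratic equation for $\sprojection{\p q}(\p x)$ and match it term by term with the Cayley-Klein sphere equation \eqref{eq:Cayley-Klein-sphere} with respect to $\secquadric$. The entire argument reduces to identifying two quadratic forms in $y$.

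First I would parameterize $\pi_{\p{q}}(\quadric)$. For a representative vector $y \in \p{q}^\perp$, i.e.\ $\scalarprod{y}{q}=0$, the two preimages of $\p{y}$ under $\pi_{\p{q}}$ have the form $[y + \alpha q]$; by Proposition~\ref{prop:involution-projection} one of them lies on $\quadric$ iff $\alpha^{2}\scalarprod{q}{q} = -\scalarprod{y}{y}$. In the generic case $\scalarprod{x}{q}\neq 0$, imposing the polarity condition $\scalarprod{y + \alpha q}{x}=0$ determines $\alpha = -\scalarprod{y}{x}/\scalarprod{q}{x}$, and substituting back into the isotropy equation yields
\[
\scalarprod{q}{q}\,\scalarprod{x}{y}^{2} + \scalarprod{x}{q}^{2}\,\scalarprod{y}{y} \;=\; 0,
\]
which is the equation cutting out $\sprojection{\p q}(\p x)$ inside $\p{q}^\perp$.

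Next I would recognize this as a Cayley-Klein sphere equation. Since $\scalarprod{q}{y}=0$ one has $\scalarprod{x}{y} = \scalarprod{\pi_q(x)}{y}$, and by \eqref{eq:quadratic-form-cone} also $\scalarprod{\pi_q(x)}{\pi_q(x)} = -\Delta_q(x)/\scalarprod{q}{q}$. Rewriting the displayed equation in terms of $\pi_q(x)$ puts it precisely in the form \eqref{eq:Cayley-Klein-sphere} with center $\pi_{\p q}(\p x)$ and Cayley-Klein radius
\[
\mu \;=\; \frac{\scalarprod{x}{q}^{2}}{\Delta_q(x)},
\]
as claimed. The sign of $\scalarprod{\pi_q(x)}{\pi_q(x)} = -\Delta_q(x)/\scalarprod{q}{q}$, together with Proposition~\ref{prop:projection-side}, determines on which side of $\secquadric$ the center $\pi_{\p q}(\p x)$ lies, splitting the subcases $\Delta_q(x)\gtrless 0$ exactly as stated.

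Finally, I would dispose of the degenerate cases by specializing the same computation. If $\Delta_q(x)=0$, then $\pi_{\p q}(\p x)\in\secquadric$ and the same displayed equation is read directly as the horosphere form of Remark~\ref{rem:Cayley-Klein-spheres} with $\tilde{\mu} = -\scalarprod{x}{q}^{2}/\scalarprod{q}{q}$. If $\p{x}\in\p{q}^\perp$, so $\scalarprod{x}{q}=0$, the polarity condition $\scalarprod{y + \alpha q}{x}=0$ collapses to the linear equation $\scalarprod{x}{y}=0$, cutting out the hyperplane in $\p{q}^\perp$ whose pole with respect to $\secquadric$ is $\p{x}$ itself. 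If $\p{x}\in\quadric$, then $\scalarprod{x}{x}=0$ forces $\Delta_q(x)=\scalarprod{x}{q}^{2}$ and hence $\mu=1$, so by Proposition~\ref{prop:Cayley-Klein-spheres} the image is the cone of contact $\cone{\secquadric}{\pi_{\p q}(\p x)}$. If $\p{x}=\p{q}$, then $\p{x}^\perp \cap \quadric = \secquadric$ is already contained in $\p{q}^\perp$ and fixed pointwise by $\pi_{\p q}$. The main hurdle will be keeping the bookkeeping transparent across these overlapping cases and correctly transitioning between the projectively invariant form \eqref{eq:Cayley-Klein-sphere} and the non-invariant horosphere rewriting of Remark~\ref{rem:Cayley-Klein-spheres}, especially when $\p{x}$ simultaneously lies on $\quadric$ and on $\p{q}^\perp$.
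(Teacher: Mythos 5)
Your proposal is correct. It runs the same underlying computation as the paper but in the opposite direction: the paper picks a point $\p{y}\in\p{x}^\perp\cap\quadric$, decomposes $x$ and $y$ along $q$ and $\p{q}^\perp$, and evaluates the Cayley-Klein distance between $\pi_{\p{q}}(\p{x})$ and $\pi_{\p{q}}(\p{y})$ to find that it equals $\mu=\scalarprod{x}{q}^2/\Delta_q(x)$; you instead start from a point $\p{y}$ of $\p{q}^\perp$, eliminate the fibre parameter $\alpha$, and obtain the defining quadratic equation $\scalarprod{q}{q}\scalarprod{x}{y}^2+\scalarprod{x}{q}^2\scalarprod{y}{y}=0$ of the image, which you then match against \eqref{eq:Cayley-Klein-sphere}. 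The two computations are algebraically equivalent, but your packaging buys two small things: it establishes the set equality $\pi_{\p{q}}(\p{x}^\perp\cap\quadric)=S_\mu(\pi_{\p{q}}(\p{x}))$ in one step (the paper's argument directly yields only the inclusion of the image in the sphere, with the reverse inclusion effectively deferred to Proposition \ref{prop:sphere-double-cover}), and it handles the degenerate cases $\Delta_q(x)=0$, $\p{x}\in\p{q}^\perp$, $\p{x}\in\quadric$, $\p{x}=\p{q}$ by direct specialization rather than through the dictionary $\mu=0$, $\mu=1$ of Proposition \ref{prop:Cayley-Klein-spheres}. The one point to record carefully in the case $\p{x}\in\p{q}^\perp$ is that the polarity condition no longer determines $\alpha$, so the linear equation $\scalarprod{x}{y}=0$ must still be intersected with the solvability condition $-\scalarprod{y}{y}/\scalarprod{q}{q}\geq 0$ for $\alpha$; this is exactly what restricts the hyperplane to $\pi_{\p{q}}(\quadric)$ as the statement requires.
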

\begin{proof}
  We show the claim for points not on the cone of contact.
  Thus, let $\p{x} \in \spheres \setminus \cone{\quadric}{\p{q}}$, i.e., $\Delta_q(x) \neq 0$.
  Let $\p{y} \in \p{x}^\perp \cap \quadric$ be a point on the corresponding $\quadric$-sphere.
  Then we find for the projections of their homogeneous coordinate vectors
  \[
    \pscalarprod{q}{x}{y} = - \frac{\scalarprod{x}{q}\scalarprod{y}{q}}{\scalarprod{q}{q}},\quad
    \pscalarprod{q}{x}{x} = - \frac{\Delta_q(x)}{\scalarprod{q}{q}},\quad
    \pscalarprod{q}{y}{y} = - \frac{\scalarprod{y}{q}^2}{\scalarprod{q}{q}},
  \]
  and thus
  \[
    \ck{\secquadric}{\pi_{\p{q}}(\p{x})}{\pi_{\p{q}}(\p{y})}
    = \frac{\pscalarprod{q}{x}{y}^2}{\pscalarprod{q}{x}{x}\pscalarprod{q}{y}{y}}
    = \frac{\scalarprod{x}{q}^2}{\Delta_q(x)}
    = \mu.
  \]
  Therefore, $\sprojection{\p{q}}(\p{x})$ is a Cayley-Klein sphere with center $\pi_{\p{q}}(\p{x})$ and radius $\mu$.
  
  We know that $\pi_{\p{q}}(\p{y}) \in \pi_{\p{q}}(\quadric)$.
  Hence, according to Proposition \ref{prop:Cayley-Klein-spheres}, the sign of $\mu$, which is equal to the sign of $\Delta_q(x)$,
  determines which side of $\secquadric$ the center $\pi_{\p{q}}(\p{x})$ lies on.
  Further we find that,
  \[
    \mu = 0 ~\Leftrightarrow~ \p{x} \in \p{q}^\perp,
    ~\text{and}~
    \mu = 1 ~\Leftrightarrow~ \p{x} \in \quadric,
  \]
  which, again according to Proposition \ref{prop:Cayley-Klein-spheres},
  corresponds to a hyperplane and the cone of contact respectively.
\end{proof}
The map $\sprojection{\p{q}}$ covers the whole space of Cayley-Klein spheres with points in $\pi_{\p{q}}(\quadric)$.
\begin{proposition}
  \label{prop:sphere-double-cover}
  The map $\sprojection{\p{q}}$ constitutes a double cover of the set of Cayley-Klein spheres in $\pi_{\p{q}}(\quadric)$ with respect to $\secquadric$.
  Its ramification points are given by $(\p{q}^\perp \cup \{\p{q}\}) \cap \spheres$, and its covering involution is $\sigma_{\p{q}}$.
\end{proposition}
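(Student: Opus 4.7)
The plan is to establish three things in order: (i) the map $\sprojection{\p{q}}$ is surjective onto the set of Cayley-Klein spheres in $\pi_{\p{q}}(\quadric)$ with respect to $\secquadric$, (ii) the fibers are exactly the $\sigma_{\p{q}}$-orbits $\{\p{x}, \sigma_{\p{q}}(\p{x})\}$, and (iii) the ramification (i.e.\ single-point) fibers are precisely those for which $\p{x}$ is fixed by $\sigma_{\p{q}}$, namely $\p{x} \in (\p{q}^\perp \cup \{\p{q}\}) \cap \spheres$.

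First, I would verify that $\sigma_{\p{q}}$ is indeed a covering involution. Since $\sigma_{\p{q}} \in \PO(r,s,t)_{\p{q}}$ preserves $\quadric$ and fixes $\p{q}^\perp$ pointwise (Proposition~\ref{prop:involution-projection}), it sends the hyperplanar section $\p{x}^\perp \cap \quadric$ to $\sigma_{\p{q}}(\p{x})^\perp \cap \quadric$. Applying $\pi_{\p{q}}$ and using the identity $\pi_{\p{q}} \circ \sigma_{\p{q}} = \pi_{\p{q}}$ shows $\sprojection{\p{q}}(\sigma_{\p{q}}(\p{x})) = \sprojection{\p{q}}(\p{x})$. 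Thus the fibers of $\sprojection{\p{q}}$ contain the $\sigma_{\p{q}}$-orbits.

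Second, for the reverse inclusion I would exploit that by Proposition~\ref{prop:Q-sphere-projection} the image $\sprojection{\p{q}}(\p{x})$ determines both its center $\pi_{\p{q}}(\p{x})$ and its Cayley-Klein radius $\mu = \scalarprod{x}{q}^2/\Delta_q(x)$. If $\sprojection{\p{q}}(\p{x}) = \sprojection{\p{q}}(\p{y})$, then $\pi_{\p{q}}(\p{x}) = \pi_{\p{q}}(\p{y})$, and by Proposition~\ref{prop:involution-projection}~\ref{prop:involution-projection-double-cover} this forces $\p{y} \in \{\p{x}, \sigma_{\p{q}}(\p{x})\}$ (after noting that the degenerate cases $\p{x} \in \p{q}^\perp$ and $\p{x} = \p{q}$ identify their own single-element fibers uniquely via the center alone, the hyperplane/absolute quadric cases of Proposition~\ref{prop:Q-sphere-projection}).

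Third, I would prove surjectivity constructively. Given a Cayley-Klein sphere $\cksphere{\p{c}}{\mu}$ with $\p{c} \in \pi_{\p{q}}(\quadric)$ (including the degenerate cases $\mu \in \{0,1,\infty\}$ and Cayley-Klein horospheres), I would lift $\p{c}$ to $\quadric$ via Proposition~\ref{prop:involution-projection}~\ref{prop:involution-projection-double-cover}, obtaining two preimages $\p{x}, \sigma_{\p{q}}(\p{x}) \in \quadric$ with $\pi_{\p{q}}(\p{x}) = \p{c}$, and then slide along the line $\p{q} \wedge \p{x}$ (which has the same projection as $\p{x}$) to adjust the ratio $\scalarprod{x}{q}^2/\Delta_q(x)$ to the prescribed $\mu$. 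Since $\Delta_q(\p{x})$ and $\scalarprod{x}{q}^2$ transform differently along this line, one degree of freedom is precisely what is needed; the special values $\mu=0,1,\infty$ correspond respectively to $\p{x} \in \p{q}^\perp$, $\p{x} \in \quadric$, $\p{x} = \p{q}$, matching the degenerate output cases enumerated in Proposition~\ref{prop:Q-sphere-projection}.

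Finally, the ramification locus consists of those $\p{x}$ whose $\sigma_{\p{q}}$-orbit is a singleton, i.e.\ the fixed locus of $\sigma_{\p{q}}$, which by Proposition~\ref{prop:involution-projection} is exactly $\p{q}^\perp \cup \{\p{q}\}$; intersecting with $\spheres$ gives the stated ramification set. I expect the main subtlety to be bookkeeping the degenerate cases (hyperplanes, cones of contact, absolute quadric, horospheres) so that the claim ``double cover of the set of Cayley-Klein spheres'' holds uniformly — the generic argument is immediate from the identity $\sprojection{\p{q}} \circ \sigma_{\p{q}} = \sprojection{\p{q}}$ combined with the fiber analysis above.
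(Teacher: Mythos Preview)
Your overall strategy is the right one and matches the paper's, but two steps do not go through as written.

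In your fiber analysis you conclude from $\pi_{\p{q}}(\p{x}) = \pi_{\p{q}}(\p{y})$ that $\p{y} \in \{\p{x}, \sigma_{\p{q}}(\p{x})\}$ by invoking Proposition~\ref{prop:involution-projection}~\ref{prop:involution-projection-double-cover}. But that result describes the fibers of $\pi_{\p{q}}$ \emph{restricted to $\quadric$}; on all of $\spheres$ the fiber of $\pi_{\p{q}}$ over a point is the entire line through $\p{q}$, not a pair. You must still use the radius condition to pin $\p{y}$ down: writing $x = \tilde{x} + \lambda q$ with $\tilde{x} = \pi_q(x)$, the equation $\scalarprod{x}{q}^2/\Delta_q(x) = \mu$ becomes $\lambda^2 = -\mu\,\scalarprod{\tilde{x}}{\tilde{x}}/\scalarprod{q}{q}$, which has exactly the two solutions $\pm\lambda$ related by $\sigma_{\p{q}}$. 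This is precisely how the paper argues.

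In your surjectivity step you write ``given $\cksphere{\p{c}}{\mu}$ with $\p{c}\in\pi_{\p{q}}(\quadric)$'' and then lift $\p{c}$ to $\quadric$. This misreads the hypothesis: it is the \emph{points} of the sphere that lie in $\pi_{\p{q}}(\quadric)$, not the center --- by Proposition~\ref{prop:Q-sphere-projection} the center lies outside $\pi_{\p{q}}(\quadric)$ whenever $\Delta_q(x)<0$, and then your lift to $\quadric$ does not exist. The detour through $\quadric$ is in any case unnecessary: work directly on the line $\p{q}\wedge\p{c}$ as above. The nontrivial content you are glossing over with ``one degree of freedom is precisely what is needed'' is twofold: first, that $-\mu\,\scalarprod{\tilde{x}}{\tilde{x}}/\scalarprod{q}{q}\geq 0$ so that $\lambda$ is real (the paper isolates this as Lemma~\ref{lem:where-is-the-sphere}, using exactly that the sphere has points in $\pi_{\p{q}}(\quadric)$); second, that the resulting $\p{x}_\pm$ actually lie in $\spheres$, i.e.\ $\p{x}_\pm^\perp\cap\quadric\neq\varnothing$, which the paper verifies by explicitly exhibiting preimages in $\quadric$ of points on the given Cayley-Klein sphere.
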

\begin{proof}
  We show that every Cayley-Klein sphere with points in $\pi_{\p{q}}(\quadric)$ possesses exactly two preimages,
  which are interchanged by $\sigma_{\p{q}}$, unless it is a hyperplane.
  The same is true for Cayley-Klein horospheres.

  Consider a Cayley-Klein sphere $S_{\mu}(\p{\widetilde{x}})$ with center $\p{\widetilde{x}} \in \p{q}^\perp \setminus \secquadric$,
  Cayley-Klein radius $\mu \in \R$ and points in $\pi_{\p{q}}(\quadric)$.
  Then, according to Proposition \ref{prop:Q-sphere-projection},
  a preimage $\p{x} \in \spheres$, $\sprojection{\p{q}}(\p{x}) = S_{\mu}(\p{\widetilde{x}})$ must satisfy $\pi_{\p{q}}(\p{x}) = \p{\widetilde{x}}$, i.e.
  \[
    x = \widetilde{x} + \lambda q
  \]
  for some $\lambda \in \R$, and
  \[
    \frac{\scalarprod{x}{q}^2}{\Delta_q(x)} = \mu,
  \]
  which is equivalent to
  \[
    \lambda^2 = - \mu \frac{\scalarprod{\widetilde{x}}{\widetilde{x}}}{\scalarprod{q}{q}}.
  \]
  According to Lemma \ref{lem:where-is-the-sphere} we have $- \mu \frac{\scalarprod{\widetilde{x}}{\widetilde{x}}}{\scalarprod{q}{q}} \geq 0$
  since $S_\mu(\p{\widetilde{x}}) \subset \pi_{\p{q}}(\quadric)$,
  and thus
  \[
    x_\pm \coloneqq \widetilde{x} \pm \sqrt{- \mu \frac{\scalarprod{\widetilde{x}}{\widetilde{x}}}{\scalarprod{q}{q}}} q
  \]
  defines one or two (real) points $\p{x}_\pm$ provided that $\p{x}_\pm \in \spheres$.
  
  The two points are interchanged by the involution, $\sigma_{\p{q}}(\p{x}_\pm) = \p{x}_\mp$,
  and we have
  \[
    \p{x}_+ = \p{x}_- ~\Leftrightarrow~ \mu = 0,
  \]
  in which case $\p{x}_\pm = \p{\widetilde{x}} \in \p{q}^\perp$.
  
  To see that $\p{x}_\pm^\perp \cap \quadric \neq \varnothing$, first assume $\mu \neq 0$.
  We show that any point $\p{\widetilde{y}} \in S_\mu(\p{\widetilde{x}})$ on the Cayley-Klein sphere,
  has (real) preimages $\p{y}_\pm \in \quadric$, i.e.\ $\pi_{\p{q}}(\p{y}_\pm) = \p{\widetilde{y}}$,
  that lie in the polar hyperplane of $\p{x}_\pm$ respectively.
  Indeed, the points
  \[
    y_\pm \coloneqq \pm \scalarprod{q}{q} \sqrt{- \mu \frac{\scalarprod{\widetilde{x}}{\widetilde{x}}}{\scalarprod{q}{q}}} \widetilde{y} - \scalarprod{\widetilde{x}}{\widetilde{y}} q
  \]
  satisfy
  \[
    \scalarprod{y_\pm}{y_\pm}
    = - \scalarprod{q}{q} \left(
      \mu \scalarprod{\widetilde{x}}{\widetilde{x}} \scalarprod{\widetilde{y}}{\widetilde{y}} - \scalarprod{\widetilde{x}}{\widetilde{y}}^2
    \right)
    = 0,
  \]
  and
  \[
    \scalarprod{x_\pm}{y_\pm}
    = \pm \scalarprod{q}{q} \scalarprod{\widetilde{x}}{\widetilde{y}} \sqrt{- \mu \frac{\scalarprod{\widetilde{x}}{\widetilde{x}}}{\scalarprod{q}{q}}}
    \mp \scalarprod{q}{q} \scalarprod{\widetilde{x}}{\widetilde{y}} \sqrt{- \mu \frac{\scalarprod{\widetilde{x}}{\widetilde{x}}}{\scalarprod{q}{q}}}
    = 0.
  \]

  If $\mu = 0$, then $\p{\widetilde{x}} = \p{x}_+ = \p{x}_-$,
  and the whole line $\p{\widetilde{y}} \wedge \p{q}$ lies in the polar hyperplane of $\p{\widetilde{x}}$.
  Since $\p{y} \in \pi_{\p{q}}(\quadric)$ the line $\p{\widetilde{y}} \wedge \p{q}$ has two real intersection points with $\quadric$,
  which serve as preimages for $\p{\widetilde{y}}$.
\end{proof}
\begin{lemma}
  \label{lem:where-is-the-sphere}
  A Cayley-Klein sphere with center $\p{\widetilde{x}} \in \p{q}^\perp \setminus \secquadric$ and Cayley-Klein radius $\mu \in \R$
  has points in $\pi_{\p{q}}(\quadric)$ if and only if
  \[
    - \mu \frac{\scalarprod{\widetilde{x}}{\widetilde{x}}}{\scalarprod{q}{q}} \geq 0.
  \]
\end{lemma}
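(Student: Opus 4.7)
The plan is to translate both conditions into sign conditions on the scalars $\scalarprod{y}{y}$, $\scalarprod{\widetilde{x}}{\widetilde{x}}$, $\mu$, $\scalarprod{q}{q}$, and then observe that they are compatible precisely when the stated inequality holds. The two building blocks are Proposition~\ref{prop:projection-side}, which characterises $\pi_{\p{q}}(\quadric)$ as exactly the set of $\p{y}$ with $\scalarprod{y}{y}\scalarprod{q}{q}\leq 0$, and the Cayley-Klein sphere equation~\eqref{eq:Cayley-Klein-sphere} in the form $\scalarprod{\widetilde{x}}{y}^2=\mu\scalarprod{\widetilde{x}}{\widetilde{x}}\scalarprod{y}{y}$, whose left-hand side being a square forces $\mu\scalarprod{\widetilde{x}}{\widetilde{x}}\scalarprod{y}{y}\geq 0$ for every point on $S_\mu(\p{\widetilde{x}})$.

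For the forward direction, I would take any $\p{y}\in S_\mu(\p{\widetilde{x}})\cap\pi_{\p{q}}(\quadric)$ and multiply the two inequalities to obtain
\[
  \mu\scalarprod{\widetilde{x}}{\widetilde{x}}\scalarprod{q}{q}\scalarprod{y}{y}^2\leq 0 .
\]
In the generic case $\scalarprod{y}{y}\neq 0$, division by $\scalarprod{y}{y}^2\scalarprod{q}{q}^2>0$ yields exactly $-\mu\scalarprod{\widetilde{x}}{\widetilde{x}}/\scalarprod{q}{q}\geq 0$. The boundary case $\mu=0$ is immediate, since the stated inequality is trivially an equality (recall $\scalarprod{\widetilde{x}}{\widetilde{x}}\neq 0$ because $\p{\widetilde{x}}\notin\secquadric$).

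For the converse I would reverse-engineer a preimage on $\quadric$. Given the inequality, set $\lambda^2 \coloneqq -\mu\scalarprod{\widetilde{x}}{\widetilde{x}}/\scalarprod{q}{q}\geq 0$, which is thus realisable by a real $\lambda$, and put $x\coloneqq\widetilde{x}+\lambda q$. Since $\scalarprod{\widetilde{x}}{q}=0$, a direct computation gives $\scalarprod{x}{q}=\lambda\scalarprod{q}{q}$, $\scalarprod{x}{x}=\scalarprod{\widetilde{x}}{\widetilde{x}}+\lambda^2\scalarprod{q}{q}$, and therefore $\Delta_q(x)=-\scalarprod{\widetilde{x}}{\widetilde{x}}\scalarprod{q}{q}$ and $\scalarprod{x}{q}^2/\Delta_q(x)=\mu$. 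By Proposition~\ref{prop:Q-sphere-projection} this means $\sprojection{\p{q}}(\p{x})=S_\mu(\p{\widetilde{x}})$, so the sphere arises as the $\pi_{\p{q}}$-image of $\p{x}^\perp\cap\quadric$ and hence consists of points in $\pi_{\p{q}}(\quadric)$.

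The main obstacle is the converse direction: the construction above presumes $\p{x}\in\spheres$, i.e.\ that $\p{x}^\perp\cap\quadric$ is non-empty. This reduces to a signature check on the restriction of $\scalarprod{\cdot}{\cdot}$ to $\p{x}^\perp$, which since $\quadric$ is non-degenerate and $\p{\widetilde{x}}\notin\secquadric$ admits isotropic directions in all relevant cases; degenerate subcases (such as $\p{x}\in\quadric$, giving $\mu=1$ and a cone of contact, or $\p{x}\in\p{q}^\perp$, giving $\lambda=0$, $\mu=0$ and the polar hyperplane) are handled directly and are consistent with the inequality.
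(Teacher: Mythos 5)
Your ``only if'' direction is correct and is in substance the paper's own argument: the paper deduces the sign condition by combining Proposition~\ref{prop:Cayley-Klein-spheres} (the points of $\cksphere{\p{\widetilde{x}}}{\mu}$ lie on the side of $\secquadric$ with sign $\sgn(\mu\scalarprod{\widetilde{x}}{\widetilde{x}})$) with Proposition~\ref{prop:projection-side} (the set $\pi_{\p{q}}(\quadric)$ is the side with sign $-\sgn(\scalarprod{q}{q})$), whereas you multiply the two inequalities $\scalarprod{\widetilde{x}}{y}^2=\mu\scalarprod{\widetilde{x}}{\widetilde{x}}\scalarprod{y}{y}\geq 0$ and $\scalarprod{y}{y}\scalarprod{q}{q}\leq 0$ directly; this is the same bookkeeping in computational form. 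One case escapes your dichotomy ``$\scalarprod{y}{y}\neq 0$ or $\mu=0$'', namely a point $\p{y}\in\secquadric$ lying on the sphere: this forces $\scalarprod{\widetilde{x}}{y}=0$ and constrains $\mu$ not at all, so strictly one must read the statement as ``has points in $\pi_{\p{q}}(\quadric)\setminus\secquadric$''. The paper is equally silent on this, so it is a minor point.

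The genuine gap is in the converse, and you identified the right obstacle but then dismissed it incorrectly. The claim that the restriction of the bilinear form to $\p{x}^\perp$ ``admits isotropic directions in all relevant cases'' is false. Take $\quadric$ of signature $(n+1,1)$, $\scalarprod{q}{q}>0$, and a center with $\scalarprod{\widetilde{x}}{\widetilde{x}}<0$, so that $-\mu\scalarprod{\widetilde{x}}{\widetilde{x}}/\scalarprod{q}{q}\geq 0$ holds for every $\mu\geq 0$. For $0<\mu<1$ your lift satisfies $\scalarprod{x}{x}=\scalarprod{\widetilde{x}}{\widetilde{x}}+\lambda^2\scalarprod{q}{q}=(1-\mu)\scalarprod{\widetilde{x}}{\widetilde{x}}<0$, hence $\p{x}^\perp$ has signature $(n+1,0)$ and $\p{x}^\perp\cap\quadric=\varnothing$; correspondingly $\cksphere{\p{\widetilde{x}}}{\mu}$ is empty (compare the list of hyperbolic Cayley-Klein spheres in Section~\ref{sec:hyperbolic-space}, where a hyperbolic center with $0<\mu<1$ gives the empty set). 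So the ``if'' direction cannot be rescued by a signature count: read literally it fails for such signatures, and the paper's own two-line proof tacitly assumes the sphere is non-empty. Since the lemma is only invoked in the ``only if'' direction (in the proof of Proposition~\ref{prop:sphere-double-cover}, where a non-empty $\cksphere{\p{\widetilde{x}}}{\mu}\subset\pi_{\p{q}}(\quadric)$ is given), nothing downstream breaks; but your write-up should either restrict the equivalence to non-empty spheres or record the failure, rather than assert that the signature check goes through.
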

\begin{proof}
  Follows from Proposition \ref{prop:Cayley-Klein-spheres} and Proposition \ref{prop:projection-side}.
\end{proof}

\begin{figure}
  \centering
  \def\svgwidth{0.5\textwidth}
  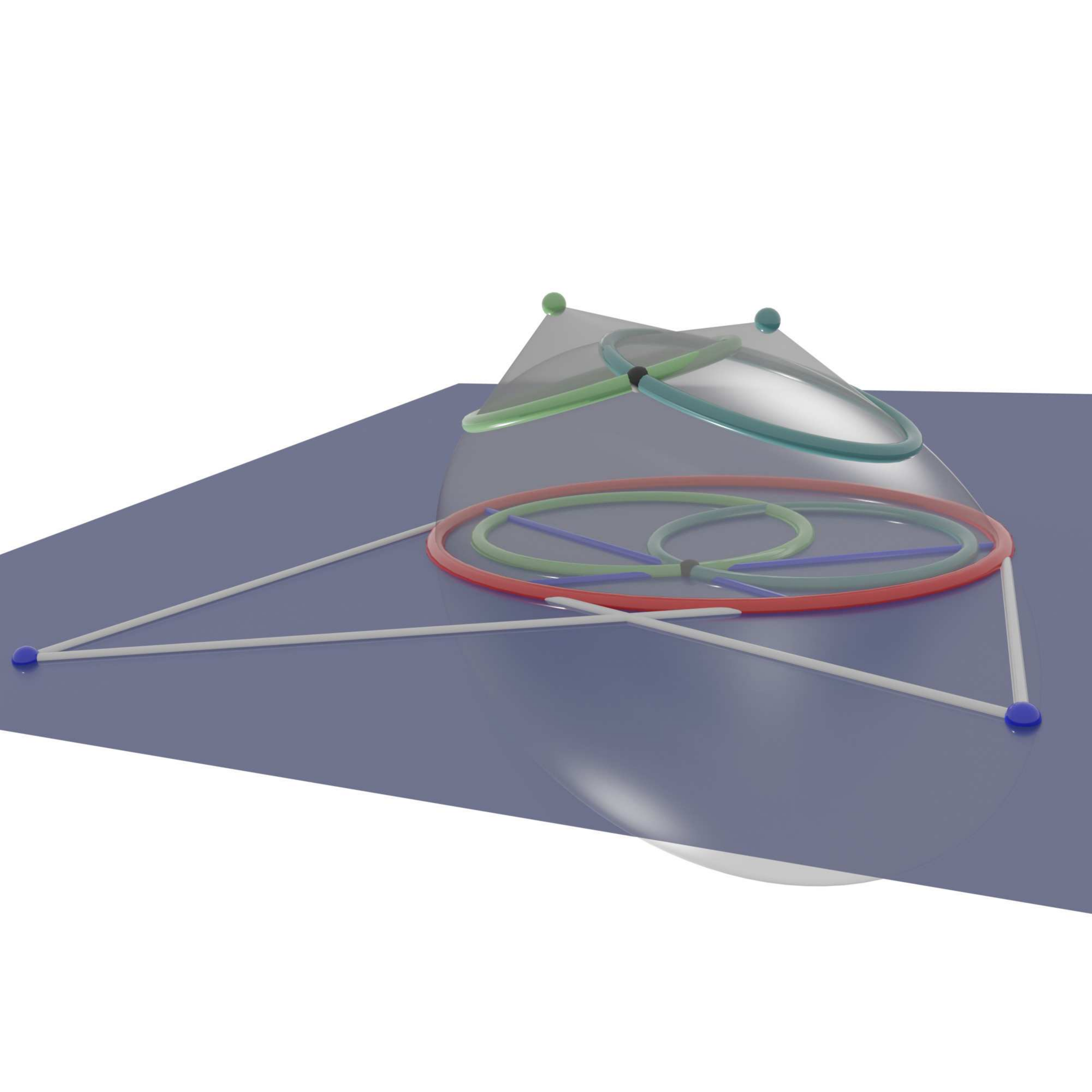
  \caption{
    The Cayley-Klein distance with respect to $\quadric$ corresponds to the Cayley-Klein intersection angle
    in the central projection to $\pi_{\p{q}}(\quadric)$ (see Proposition \ref{prop:sphere-angle}).
  }
  \label{fig:intersection-angle}
\end{figure}
Thus, we have found that the lift of the Cayley-Klein space $\pi_{\p{q}}(\quadric)$ to the quadric $\quadric$
leads to a linearization of the corresponding Cayley-Klein spheres, in the sense that they become planar sections of $\quadric$,
which we represent by their polar points.

For two intersecting Cayley-Klein spheres we call the Cayley-Klein distance of the poles
of the two tangent hyperplanes (with respect to the absolute quadric) their \emph{Cayley-Klein intersection angle}.
It is independent of the chosen intersection point.
The Cayley-Klein distance of two points in $\spheres$ describes exactly this Cayley-Klein intersection angle
in the projection to $\pi_{\p{q}}(\quadric)$ (see Figure \ref{fig:intersection-angle}).
\begin{proposition}
  \label{prop:sphere-angle}
  Let $\p{x}_1, \p{x}_2 \in \spheres$ such that the corresponding $\quadric$-spheres intersect.
  Let
  \[
    \p{y} \in \quadric \cap \p{x}_1^\perp \cap \p{x}_2^\perp
  \]
  be a point in that intersection, and $\p{\tilde{y}} \coloneqq \pi_{\p{q}}(\p{y})$ its projection.
  Let $S_1, S_2$ be the two projected Cayley-Klein spheres corresponding to $\p{x}_1$, $\p{x}_2$ respectively
  \[
    S_1 \coloneqq \sprojection{\p{q}}(\p{x}_1), \qquad S_2 \coloneqq \sprojection{\p{q}}(\p{x}_2).
  \]
  Let $\p{\tilde{y}}_1$, $\p{\tilde{y}}_2$ be the two poles of the tangent hyperplanes of $S_1$, $S_2$ at $\p{\tilde{y}}$ respectively.
  Then
  \[
    \ck{\quadric}{\p{x}_1}{\p{x}_2} = \ck{\widetilde{\quadric}}{\p{\tilde{y}}_1}{\p{\tilde{y}}_2}.
  \]
\end{proposition}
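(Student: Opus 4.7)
\emph{Proof plan.}
My plan is to compute both $\p{\tilde y}_1$ and $\p{\tilde y}_2$ explicitly as homogeneous coordinate vectors in $\R^{n+2}$ and then verify the equality by a direct calculation that collapses thanks to the incidences $\p{y}\in\quadric\cap\p{x}_1^\perp\cap\p{x}_2^\perp$. The main input is the polarity formula of Lemma~\ref{lem:Cayley-Klein-sphere-polarity} applied inside the hyperplane $\p{q}^\perp$ equipped with the absolute quadric $\secquadric$, whose associated bilinear form is the restriction of $\scalarprod{\cdot}{\cdot}$, namely $\pscalarprod{q}{\cdot}{\cdot}$.

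First I would record, using Proposition~\ref{prop:Q-sphere-projection}, that $S_i = S_{\mu_i}(\pi_{\p q}(\p{x}_i))$ with $\mu_i = \scalarprod{x_i}{q}^2 / \Delta_q(x_i)$, and that $\pscalarprod{q}{x_i}{x_i} = -\Delta_q(x_i)/\scalarprod{q}{q}$ and $\pscalarprod{q}{x_i}{y} = -\scalarprod{x_i}{q}\scalarprod{y}{q}/\scalarprod{q}{q}$ (using $\scalarprod{x_i}{y}=0$). Lemma~\ref{lem:Cayley-Klein-sphere-polarity} then yields
\[
  \tilde y_i \;=\; \pscalarprod{q}{x_i}{y}\,\pi_q(x_i) \;-\; \mu_i\,\pscalarprod{q}{x_i}{x_i}\,\pi_q(y).
\]
Substituting the expressions above and clearing the common factor $-\scalarprod{x_i}{q}/\scalarprod{q}{q}$, the entire contribution along $q$ cancels and one is left with the remarkably clean representative
\[
  \tilde y_i \;\sim\; \scalarprod{y}{q}\, x_i - \scalarprod{x_i}{q}\, y,
\]
which I would verify directly lies in $\p{q}^\perp$. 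Hence $\pscalarprod{q}{\tilde y_i}{\tilde y_j} = \scalarprod{\tilde y_i}{\tilde y_j}$ and everything may be computed in the ambient bilinear form.

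Next I would expand $\scalarprod{\tilde y_i}{\tilde y_j}$ and note that the three cross terms $\scalarprod{x_i}{y}$, $\scalarprod{x_j}{y}$, $\scalarprod{y}{y}$ all vanish because $\p{y}\in\p{x}_i^\perp\cap\p{x}_j^\perp\cap\quadric$. What survives is
\[
  \scalarprod{\tilde y_1}{\tilde y_2} = \scalarprod{y}{q}^2\,\scalarprod{x_1}{x_2},\qquad
  \scalarprod{\tilde y_i}{\tilde y_i} = \scalarprod{y}{q}^2\,\scalarprod{x_i}{x_i}.
\]
Forming the Cayley-Klein distance with respect to $\secquadric$, the factor $\scalarprod{y}{q}^4$ cancels and one obtains $\ck{\secquadric}{\p{\tilde y}_1}{\p{\tilde y}_2} = \scalarprod{x_1}{x_2}^2 / (\scalarprod{x_1}{x_1}\scalarprod{x_2}{x_2}) = \ck{\quadric}{\p{x}_1}{\p{x}_2}$, concluding the argument.

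The only delicate point is the first step: correctly invoking Lemma~\ref{lem:Cayley-Klein-sphere-polarity} inside the projected setting, where the role of $\R^{n+1}$ is played by $\p{q}^\perp$, the ``absolute quadric'' is $\secquadric$, and the bilinear form is $\pscalarprod{q}{\cdot}{\cdot}$. Once this translation is in place, the fact that the representative $\scalarprod{y}{q}x_i-\scalarprod{x_i}{q}y$ automatically lies in $\p{q}^\perp$ is what makes the subsequent computation collapse and is, in my view, the cleanest feature of the proof. Everything else is then the algebraic exploitation of $\scalarprod{x_i}{y}=\scalarprod{y}{y}=0$.
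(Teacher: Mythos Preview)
Your proof is correct and takes a genuinely different route than the paper. The paper proceeds by a two-sided computation: it first rewrites $\ck{\quadric}{\p{x}_1}{\p{x}_2}$ entirely in terms of the projected data $\tilde x_1,\tilde x_2,\tilde y\in\p{q}^\perp$ (using the decomposition $x_i=\tilde x_i+\alpha_i q$, $y=\tilde y+\lambda q$ and eliminating $\alpha_i,\lambda$ from the incidence conditions), and then separately expands $\ck{\widetilde{\quadric}}{\p{\tilde y}_1}{\p{\tilde y}_2}$ from the polarity formula in the same projected variables, checking that the two expressions agree. Your approach instead discovers the clean ambient representative $\tilde y_i\sim\scalarprod{y}{q}\,x_i-\scalarprod{x_i}{q}\,y$, which already lies in $\p{q}^\perp$, so that the $\widetilde\quadric$-pairing coincides with the ambient pairing and the incidences $\scalarprod{x_i}{y}=\scalarprod{y}{y}=0$ make the computation collapse in one line. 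What you gain is brevity and a transparent reason for the identity (the common factor $\scalarprod{y}{q}^2$); what the paper's approach buys is that it stays entirely within the projected picture and exhibits the common value explicitly as a symmetric expression in $\tilde x_1,\tilde x_2,\tilde y$, which may be conceptually useful elsewhere.
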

\begin{proof}
  First, we express the Cayley-Klein distance $\ck{\quadric}{\p{x}_1}{\p{x}_2}$
  in terms of the projected centers $\p{\tilde{x}}_1 \coloneqq \pi_{\p{q}}(\p{x}_1)$, $\p{\tilde{x}}_2 \coloneqq \pi_{\p{q}}(\p{x}_2)$
  and the projected intersection point $\p{\tilde{y}}$.
  To this end, we write
  \[
    x_1 = \tilde{x}_1 + \alpha_1 q,\qquad
    x_2 = \tilde{x}_2 + \alpha_2 q,\qquad
    y = \tilde{y} + \lambda q,
  \]
  for some $\alpha_1, \alpha_2, \lambda \in \R$.
  From $\scalarprod{y}{y} = \scalarprod{x_1}{y} = \scalarprod{x_2}{y} = 0$ we obtain
  \[
    \lambda^2 = - \frac{\scalarprod{\tilde{y}}{\tilde{y}}}{\scalarprod{q}{q}}, \qquad
    \alpha_1\lambda = - \frac{\scalarprod{\tilde{y}}{\tilde{x}_1}}{\scalarprod{q}{q}}, \qquad
    \alpha_2\lambda = - \frac{\scalarprod{\tilde{y}}{\tilde{x}_2}}{\scalarprod{q}{q}},
  \]
  and therefore
  \[
    \alpha_1\alpha_2 = - \frac{\scalarprod{\tilde{y}}{\tilde{x}_1}\scalarprod{\tilde{y}}{\tilde{x}_2}}{\scalarprod{\tilde{y}}{\tilde{y}}\scalarprod{q}{q}},\qquad
    \left(\alpha_1\right)^2 = - \frac{\scalarprod{\tilde{y}}{\tilde{x}_1}^2}{\scalarprod{\tilde{y}}{\tilde{y}}\scalarprod{q}{q}},\qquad
    \left(\alpha_2\right)^2 = - \frac{\scalarprod{\tilde{y}}{\tilde{x}_2}^2}{\scalarprod{\tilde{y}}{\tilde{y}}\scalarprod{q}{q}}.
  \]
  Using this we find
  \[
    \scalarprod{x_1}{x_2} = \scalarprod{\tilde{x}_1}{\tilde{x}_2} - \frac{\scalarprod{\tilde{y}}{\tilde{x}_1}\scalarprod{\tilde{y}}{\tilde{x}_2}}{\scalarprod{\tilde{y}}{\tilde{y}}},\quad
    \scalarprod{x_1}{x_1} = \scalarprod{\tilde{x}_1}{\tilde{x}_1} - \frac{\scalarprod{\tilde{y}}{\tilde{x}_1}^2}{\scalarprod{\tilde{y}}{\tilde{y}}},\quad
    \scalarprod{x_1}{x_1} = \scalarprod{\tilde{x}_1}{\tilde{x}_1} - \frac{\scalarprod{\tilde{y}}{\tilde{x}_1}^2}{\scalarprod{\tilde{y}}{\tilde{y}}},
  \]
  and thus
  \begin{equation}
    \label{eq:angle-proof-lhs}
    \ck{\quadric}{\p{x}_1}{\p{x}_2} =
    \frac{\scalarprod{x_1}{x_2}^2}{\scalarprod{x_1}{x_1}\scalarprod{x_2}{x_2}} =
    \frac{\left(\scalarprod{\tilde{x}_1}{\tilde{x}_2}\scalarprod{\tilde{y}}{\tilde{y}} - \scalarprod{\tilde{y}}{\tilde{x}_1}\scalarprod{\tilde{y}}{\tilde{x}_2}\right)^2}{\left(\scalarprod{\tilde{x}_1}{\tilde{x}_1}\scalarprod{\tilde{y}}{\tilde{y}}-\scalarprod{\tilde{y}}{\tilde{x}_1}^2\right)\left(\scalarprod{\tilde{x}_2}{\tilde{x}_2}\scalarprod{\tilde{y}}{\tilde{y}}-\scalarprod{\tilde{y}}{\tilde{x}_2}^2\right)}.
  \end{equation}
  
  Secondly, we express the right hand side $\ck{\widetilde{\quadric}}{\p{\tilde{y}}_1}{\p{\tilde{y}}_2}$ in terms of the same quantities.
  From \eqref{eq:Cayley-Klein-sphere-pole} we know that the poles $\p{\tilde{y}}_1$, $\p{\tilde{y}}_2$ of the tangent planes
  (with respect to $\widetilde{\quadric}$) are given by
  \[
    \tilde{y}_1 = \scalarprod{\tilde{x}_1}{\tilde{y}}\tilde{x}_1 - \mu_1 \scalarprod{\tilde{x}_1}{\tilde{x}_1}\tilde{y},\qquad
    \tilde{y}_2 = \scalarprod{\tilde{x}_2}{\tilde{y}}\tilde{x}_2 - \mu_2 \scalarprod{\tilde{x}_2}{\tilde{x}_2}\tilde{y},\qquad
  \]
  where
  \[
    \mu_1 = \frac{\scalarprod{\tilde{x}_1}{\tilde{y}}^2}{\scalarprod{\tilde{x}_1}{\tilde{x}_1}\scalarprod{\tilde{y}}{\tilde{y}}},\qquad
    \mu_2 = \frac{\scalarprod{\tilde{x}_2}{\tilde{y}}^2}{\scalarprod{\tilde{x}_2}{\tilde{x}_2}\scalarprod{\tilde{y}}{\tilde{y}}}
  \]
  are the Cayley-Klein radii of $S_1$ and $S_2$.
  From this we obtain
  \[
    \begin{aligned}
      &\scalarprod{\tilde{y}_1}{\tilde{y}_2} = \scalarprod{\tilde{x}_1}{\tilde{y}}\scalarprod{\tilde{x}_2}{\tilde{y}}\left(
        \scalarprod{\tilde{x}_1}{\tilde{x}_2} - \frac{\scalarprod{\tilde{x}_1}{\tilde{y}}\scalarprod{\tilde{x}_2}{\tilde{y}}}{\scalarprod{\tilde{y}}{\tilde{y}}}
      \right),\\
      \scalarprod{\tilde{y}_1}{\tilde{y}_1} = \scalarprod{\tilde{x}_1}{\tilde{y}}^2&\left(
        \scalarprod{\tilde{x}_1}{\tilde{x}_1} - \frac{\scalarprod{\tilde{x}_1}{\tilde{y}}^2}{\scalarprod{\tilde{y}}{\tilde{y}}}
      \right),\qquad
      \scalarprod{\tilde{y}_2}{\tilde{y}_2} = \scalarprod{\tilde{x}_2}{\tilde{y}}^2\left(
        \scalarprod{\tilde{x}_2}{\tilde{x}_2} - \frac{\scalarprod{\tilde{x}_2}{\tilde{y}}^2}{\scalarprod{\tilde{y}}{\tilde{y}}}
      \right)
    \end{aligned}
  \]
  Substituting into
  \[
    \ck{\widetilde{\quadric}}{\p{\tilde{y}}_1}{\p{\tilde{y}}_2} =
    \frac{\scalarprod{\tilde{y}_1}{\tilde{y}_2}^2}{\scalarprod{\tilde{y}_1}{\tilde{y}_1}\scalarprod{\tilde{y}_2}{\tilde{y}_2}}
  \]
  leads to the same as in \eqref{eq:angle-proof-lhs}.
\end{proof}
\begin{remark}\
  \label{rem:intersection-angle}
  \nobreakpar
  \begin{enumerate}
  \item
    \label{rem:intersection-angle-lift}
    Starting with two intersecting Cayley-Klein spheres in $\pi_{\p{q}}(\quadric)$
    the lifted $\quadric$-spheres must be chosen such that they intersect as well.
    Only then will the Cayley-Klein distance of the poles of the lifted spheres recover the Cayley-Klein intersection angle.
  \item
    Every quadric comes with a naturally induced (pseudo-)conformal structure, see e.g.\ \cite{P}.
    The Cayley-Klein distance between the two points $\p{x}_1, \p{x}_2 \in \spheres$ also coincides with the angle
    measured in this conformal structure.
  \end{enumerate}
\end{remark}
As a corollary of Theorem \ref{thm:fundamental-theorem-quadrics} we can now characterize the (local) transformations
of a Cayley-Klein space $\pi_{\p{q}}(\quadric)$ that map hyperspheres to hyperspheres as the projective orthogonal transformations in the lift to the quadric $\quadric$.
\begin{theorem}
  \label{thm:mobius-transformation-lift}
  Let $n \geq 2$, $\quadric \subset \RP^{n+1}$ be a non-degenerate quadric, and $\p{q} \in \RP^{n+1} \setminus \quadric$.
  Consider the Cayley-Klein space $\pi_{\p{q}}(\quadric)$ endowed with the Cayley-Klein metric induced by $\widetilde{\quadric} = \quadric \cap \p{q}^\perp$.
  Let $W \subset \pi_{\p{q}}(\quadric)$ be a non-empty open subset,
  and $f : W \rightarrow \pi_{\p{q}}(\quadric)$ be an injective map that maps intersections of Cayley-Klein hyperspheres with $W$
  to intersections of Cayley-Klein hyperspheres with $f(W)$.
  Then $f$ is the restriction of a projective transformation ${\RP^{n+1} \rightarrow \RP^{n+1}}$ that preserves the quadric $\quadric$.
\end{theorem}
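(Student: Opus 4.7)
The plan is to reduce the statement to the fundamental theorem of real projective geometry for quadrics (Theorem~\ref{thm:fundamental-theorem-quadrics}) by lifting $f$ through the double cover $\pi_{\p{q}}\restrict{\quadric}$ of Proposition~\ref{prop:involution-projection}. First I would shrink $W$ to a non-empty open subset $W_0 \subset W$ such that both $W_0$ and $f(W_0)$ are contractible and avoid the branch locus $\secquadric = \quadric \cap \p{q}^\perp$; over such sets $\pi_{\p{q}}\restrict{\quadric}$ admits continuous sections. Picking sections $s\colon W_0 \to \quadric$ and $s'\colon f(W_0) \to \quadric$ and setting $\tilde W \coloneqq s(W_0)$, the composition
\[
  \tilde f \coloneqq s' \circ f \circ \pi_{\p{q}}\restrict{\tilde W} \colon \tilde W \longrightarrow \quadric
\]
is an injective map between open subsets of $\quadric$.

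Next I would translate the sphere-preserving hypothesis on $f$ into the hypothesis of Theorem~\ref{thm:fundamental-theorem-quadrics} for $\tilde f$. By Propositions~\ref{prop:Q-sphere-projection} and~\ref{prop:sphere-double-cover}, every Cayley-Klein hypersphere (including horospheres and hyperplanes) in $\pi_{\p{q}}(\quadric)$ is the $\pi_{\p{q}}$-image of a hyperplanar section of $\quadric$, and its full preimage under $\pi_{\p{q}}$ is the union of two such sections interchanged by $\sigma_{\p{q}}$. Consequently, $\tilde f$ sends intersections of hyperplanes of $\RP^{n+1}$ with $\tilde W$ to intersections of hyperplanes of $\RP^{n+1}$ with $\tilde f(\tilde W)$. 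Invoking Theorem~\ref{thm:fundamental-theorem-quadrics} with $k = n$ — admissible because the ambient dimension is $n+1 \geq 3$ and $2 \leq n \leq n$ — yields a unique projective transformation $F \in \PO(r,s)$ preserving $\quadric$ with $F\restrict{\tilde W} = \tilde f$. Projecting back via $\pi_{\p{q}}$, the induced transformation of $\pi_{\p{q}}(\quadric)$ coincides with $f$ on $W_0$, and applying the same local lifting procedure around arbitrary points of $W$, together with the uniqueness part of Theorem~\ref{thm:fundamental-theorem-quadrics}, shows that $F$ is independent of these choices and descends to $f$ on all of $W$.

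The main obstacle is the double-cover bookkeeping. One must ensure that the sections $s$ and $s'$ can be chosen consistently on the shrunken open sets, and verify that the four possible choices — replacing $s$ or $s'$ by $\sigma_{\p{q}} \circ s$ or $\sigma_{\p{q}} \circ s'$ — either leave $F$ unchanged or replace it by pre- or post-composition with $\sigma_{\p{q}}$; by~\eqref{eq:lie-trafos-quotient} all four candidates descend to the same transformation of $\pi_{\p{q}}(\quadric)$, which by construction agrees with $f$. One also has to be careful that the translation between Cayley-Klein hyperspheres and hyperplanar sections of $\quadric$ is faithful, i.e.\ that tangent hyperplanes and hyperplanes through $\p{q}$ (which correspond to Cayley-Klein horospheres and to hyperplanes in $\pi_{\p{q}}(\quadric)$) are accounted for in the hypothesis on $f$. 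Once these points are settled, Theorem~\ref{thm:fundamental-theorem-quadrics} does the heavy lifting.
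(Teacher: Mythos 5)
Your proposal is correct and follows exactly the paper's route: the paper's own proof is the one-line remark that after lifting $W$ and $f(W)$ to $\quadric$ the claim follows from Theorem~\ref{thm:fundamental-theorem-quadrics}, and your argument is precisely that reduction, with the double-cover bookkeeping and the choice of $k=n$ spelled out. The extra care you take with the sections $s$, $s'$ and the four sign choices is exactly the detail the paper leaves implicit.
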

\begin{proof}
  After lifting the open sets $W$ and $f(W)$ to $\quadric$ the statement follows from Theorem \ref{thm:fundamental-theorem-quadrics}.
\end{proof}
\begin{remark}\
  \label{rem:moebius-transformation-lift}
  \nobreakpar
  \begin{enumerate}
  \item
    If the transformation $f$ is defined on the whole space $\pi_{\p{q}}(\quadric)$ its lift must fix the point $\p{q}$.
    Thus, in this case $f$ must be an isometry of $\pi_{\p{q}}(\quadric)$.
  \item
    If $n \geq 3$ the condition on $f$ of mapping hyperspheres to hyperspheres may be weakened to $f$ being a conformal transformation,
    i.e. preserving Cayley-Klein angles between arbitrary hypersurfaces (generalized Liouville's theorem, see \cite{P, Ben2}).
  \item
    \label{rem:conformal-geometry-lift}
    The group of projective transformations $\PO(r,s)$ that preserve the quadric $\quadric$ maps $\quadric$-spheres to $\quadric$-spheres.
    In the projection to $\pi_{\p{q}}(\quadric)$ it may be interpreted as the group of transformations
    that map ``oriented points'' of $\pi_{\p{q}}(\quadric)$ to ``oriented points'' of $\pi_{\p{q}}(\quadric)$,
    while preserving Cayley-Klein spheres.
    It contains the subgroup $\PO(r,s)_{\p{q}}$ of isometries of $\pi_{\p{q}}(\quadric)$.
    The involution $\sigma_{\p{q}}$ plays the role of ``orientation reversion''.
    For ``hyperbolic Möbius geometry'' see, e.g.\ \cite{Som}, and for ``oriented points'' of the hyperbolic plane \cite{Y}.
  \end{enumerate}
\end{remark}

\subsection{Scaling along concentric spheres}
\begin{figure}
  \centering
  \def\svgwidth{0.5\textwidth}
\begingroup%
  \makeatletter%
  \providecommand\color[2][]{%
    \errmessage{(Inkscape) Color is used for the text in Inkscape, but the package 'color.sty' is not loaded}%
    \renewcommand\color[2][]{}%
  }%
  \providecommand\transparent[1]{%
    \errmessage{(Inkscape) Transparency is used (non-zero) for the text in Inkscape, but the package 'transparent.sty' is not loaded}%
    \renewcommand\transparent[1]{}%
  }%
  \providecommand\rotatebox[2]{#2}%
  \newcommand*\fsize{\dimexpr\f@size pt\relax}%
  \newcommand*\lineheight[1]{\fontsize{\fsize}{#1\fsize}\selectfont}%
  \ifx\svgwidth\undefined%
    \setlength{\unitlength}{2025.31640625bp}%
    \ifx\svgscale\undefined%
      \relax%
    \else%
      \setlength{\unitlength}{\unitlength * \real{\svgscale}}%
    \fi%
  \else%
    \setlength{\unitlength}{\svgwidth}%
  \fi%
  \global\let\svgwidth\undefined%
  \global\let\svgscale\undefined%
  \makeatother%
  \begin{picture}(1,1)%
    \lineheight{1}%
    \setlength\tabcolsep{0pt}%
    \put(0,0){\includegraphics[width=\unitlength,page=1]{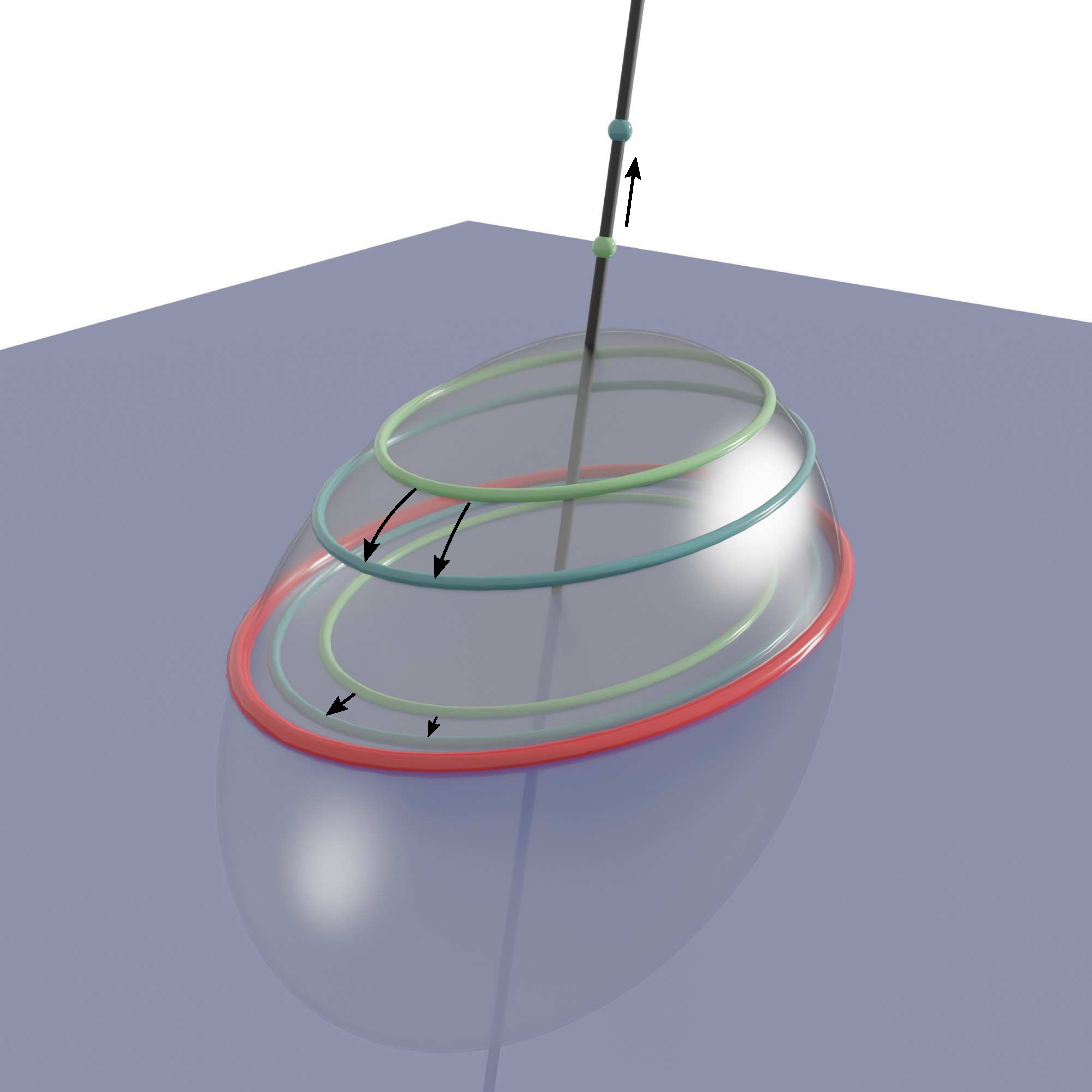}}%
    \put(0.57109557,0.75407927){\color[rgb]{0,0,0}\makebox(0,0)[lt]{\lineheight{1.25}\smash{\begin{tabular}[t]{l}{\small $\p{x}_1$}\end{tabular}}}}%
    \put(0.5885781,0.88228439){\color[rgb]{0,0,0}\makebox(0,0)[lt]{\lineheight{1.25}\smash{\begin{tabular}[t]{l}{\small $\p{x}_2$}\end{tabular}}}}%
    \put(0.60722608,0.81118882){\color[rgb]{0,0,0}\makebox(0,0)[lt]{\lineheight{1.25}\smash{\begin{tabular}[t]{l}{\small $T_{\p{x}_1,\p{x}_2}$}\end{tabular}}}}%
  \end{picture}%
\endgroup%

  \caption{
    Scaling along a pencil of concentric Cayley-Klein spheres in the lift and in the projection.
  }
  \label{fig:scalings}
\end{figure}
The transformation group $\PO(r,s)$ contains the isometries of $\pi_{\p{q}}(\quadric)$,
given by $\PO(r,s)_{\p{q}}$.
It turns out that the only transformations additionally needed to generate the whole group $\PO(r,s)$
are ``scalings'' along concentric spheres.

In the lift to $\spheres$ pencils of concentric Cayley-Klein spheres in $\pi_{\p{q}}(\quadric)$
correspond to lines in $\spheres$ through $\p{q}$ (cf.\ Proposition \ref{prop:Q-sphere-projection}).
\begin{proposition}
  The preimage under the map $\sprojection{\p{q}}$ of a family of concentric Cayley-Klein spheres
  in $\pi_{\p{q}}(\quadric)$ with center $\p{\widetilde{x}} \in \p{q}^\perp$
  is given by the line $\ell \coloneqq (\p{\widetilde{x}} \wedge \p{q}) \cap \spheres$.

  For every $\p{x} \in \ell$ the hyperplane $\p{x}^\perp$ that defines the $\quadric$-sphere by intersection with $\quadric$
  contains the polar subspace $(\p{\widetilde{x}} \wedge \p{q})^\perp$.
\end{proposition}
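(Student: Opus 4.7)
The plan is to invoke Proposition~\ref{prop:Q-sphere-projection}, which identifies the center of the projected Cayley-Klein sphere $\sprojection{\p{q}}(\p{x})$ as $\pi_{\p{q}}(\p{x})$. Hence a point $\p{x} \in \spheres$ has image concentric to $\p{\widetilde{x}}$ precisely when $\pi_{\p{q}}(\p{x}) = \p{\widetilde{x}}$, modulo the degenerate cases (hyperplanes, cones of contact, the absolute quadric, horospheres) which must be matched separately.

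First I would observe that the projection $\pi_{\p{q}}$ is the central projection from the point $\p{q}$ onto the hyperplane $\p{q}^\perp$, so its fibers are exactly the lines through $\p{q}$. Therefore the condition $\pi_{\p{q}}(\p{x}) = \p{\widetilde{x}}$ is equivalent to $\p{x}$ lying on the line $\p{\widetilde{x}} \wedge \p{q}$. Restricting to $\spheres$ gives precisely $\ell$. Conversely, for any $\p{x} \in \ell \setminus \{\p{q}\}$, write $x = \widetilde{x} + \lambda q$ for some $\lambda \in \R$; then $\pi_{\p{q}}(\p{x}) = \p{\widetilde{x}}$ and Proposition~\ref{prop:Q-sphere-projection} yields a Cayley-Klein sphere with center $\p{\widetilde{x}}$ and Cayley-Klein radius $\mu = \scalarprod{x}{q}^2 / \Delta_q(x)$, where $\Delta_q(x) = -\scalarprod{q}{q}\scalarprod{\widetilde{x}}{\widetilde{x}}$ is fixed along $\ell$ while $\scalarprod{x}{q} = \lambda\scalarprod{q}{q}$ varies, so $\mu$ sweeps out all admissible values in $\R$ as $\lambda$ varies over $\R$. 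The endpoint $\p{x} = \p{q}$ gives the absolute quadric $\secquadric$ (i.e.\ $\mu = \infty$), and the intersection point $\p{x} = \p{\widetilde{x}} = \ell \cap \p{q}^\perp$ yields the doubly counted polar hyperplane ($\mu = 0$). I would then cover the horospherical case ($\p{\widetilde{x}} \in \secquadric$) by using the alternative form of the Cayley-Klein sphere equation from Remark~\ref{rem:Cayley-Klein-spheres}~\ref{rem:Cayley-Klein-horospheres}, and check via the same parametrization that the pencil of horospheres is recovered.

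For the second assertion I would use the elementary polarity fact that $\p{U} \subset \p{V}$ implies $\p{V}^\perp \subset \p{U}^\perp$ (a direct consequence of the definition of the polar subspace). Since every $\p{x} \in \ell$ satisfies $\p{x} \subset \p{\widetilde{x}} \wedge \p{q}$, taking polars yields $(\p{\widetilde{x}} \wedge \p{q})^\perp \subset \p{x}^\perp$. Equivalently, this can be seen directly: any $y \in (\widetilde{x} \wedge q)^\perp$ satisfies $\scalarprod{\widetilde{x}}{y} = \scalarprod{q}{y} = 0$, hence $\scalarprod{x}{y} = 0$ for $x = \widetilde{x} + \lambda q$.

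The only mild obstacle I anticipate is the bookkeeping of the degenerate cases in Proposition~\ref{prop:Q-sphere-projection} and making sure the parametrization $x = \widetilde{x} + \lambda q$ gives a bijection (up to the covering involution $\sigma_{\p{q}}$ that interchanges $\pm\lambda$) between $\ell$ and the full pencil of concentric Cayley-Klein spheres including its degenerate members; once the correspondence $\mu \leftrightarrow \lambda^2$ is written out explicitly, this is routine.
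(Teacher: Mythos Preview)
Your proposal is correct and follows exactly the approach the paper intends: the paper states this proposition without proof, merely prefacing it with ``cf.\ Proposition~\ref{prop:Q-sphere-projection}'', so the argument via identifying fibers of $\pi_{\p{q}}$ as lines through $\p{q}$ and reading off the center from Proposition~\ref{prop:Q-sphere-projection} is precisely what is implicit. Your treatment of the second assertion via inclusion-reversal of polarity is the natural one-line justification.
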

\begin{definition}
  We call a line in $\spheres$ a \emph{pencil of $\quadric$-spheres},
  and a line in $\spheres$ containing the point $\p{q}$ a \emph{pencil of concentric $\quadric$-spheres (with respect to $\p{q}$)}.
\end{definition}
For every pencil of $\quadric$-spheres there is a distinguished one-parameter family of projective orthogonal transformations
that preserve the pencil and each hyperplane through the corresponding line (see Figure \ref{fig:scalings}).
\begin{proposition}
  Let $\p{x}_1, \p{x}_2 \in \spheres$ with $\ck{\quadric}{\p{x}_1}{\p{x}_2} > 0$.
  Then there is a unique transformation $T_{\p{x}_1, \p{x}_2} \in \PO(r,s)$ that maps $\p{x}_1$ to $\p{x}_2$
  and preserves every hyperplane through the line $\p{x}_1 \wedge \p{x}_2$.

  It satisfies $\left(T_{\p{x}_1, \p{x}_2}\right)^{-1} = T_{\p{x}_2, \p{x}_1}$.
\end{proposition}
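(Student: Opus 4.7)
The plan is to build $T$ from the orthogonal splitting $\R^{n+2} = L \oplus L^{\perp}$, where $L = \Span\{x_1, x_2\}$. The hypothesis $\ck{\quadric}{\p{x}_1}{\p{x}_2} > 0$ forces $\scalarprod{x_1}{x_1}$ and $\scalarprod{x_2}{x_2}$ to be nonzero and of the same sign, and (outside the degenerate tangent case $\ck{\quadric}{\p{x}_1}{\p{x}_2} = 1$) makes $\scalarprod{\cdot}{\cdot}|_{L}$ non-degenerate, so the splitting is honest.

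I would then set $T \coloneqq T_{L} \oplus \id_{L^{\perp}}$, where $T_{L}$ is the unique element of the identity component of $\Ortho(L)$ sending $x_1$ to $\lambda x_2$ with $\lambda^{2} = \scalarprod{x_1}{x_1}/\scalarprod{x_2}{x_2}$. Existence is immediate case by case: when $L$ is definite, $\operatorname{SO}(L) \cong \operatorname{SO}(2)$ acts transitively on the ``norm circle'' $\{v \in L : \scalarprod{v}{v} = \scalarprod{x_1}{x_1}\}$, and when $L$ has signature $(+-)$ the identity component $\operatorname{SO}^{+}(L)$ acts transitively on each of its two sheets, the positivity of the Cayley-Klein distance being precisely what places $x_1$ and $\lambda x_2$ on the same sheet. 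Verifying that $T \in \PO(r,s)$, that $T(\p{x}_1) = \p{x}_2$, and that $T$ preserves every hyperplane through $\ell \coloneqq \p{x}_1 \wedge \p{x}_2$ (each such hyperplane having the form $\P(L \oplus W)$ for some $W \subset L^{\perp}$) is then routine.

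For uniqueness, let $T'$ be any candidate. Since $T'$ preserves every hyperplane through $\ell$, for each $\p{p} \notin \ell$ the image $T'(\p{p})$ lies in the intersection of all hyperplanes containing $\ell \cup \{\p{p}\}$, which is the projective plane $\ell \vee \p{p}$; hence $T'$ induces the identity on the quotient $\RP^{n+1}/\ell$. Combined with form-preservation $\scalarprod{T'x}{T'y} = \scalarprod{x}{y}$, a short argument (first on $L^{\perp} \times L^{\perp}$, where the perturbation $V = T' - \id$ has totally isotropic image in $L$, and then on $L^{\perp} \times L$, which eliminates the surviving indefinite case) forces $T'|_{L^{\perp}} = \id_{L^{\perp}}$. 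Thus $T'|_{L} \in \Ortho(L)$ maps $\p{x}_1$ to $\p{x}_2$, and restricting to the preceding paragraph's distinguished one-parameter family (the identity component of the stabilizer) singles out $T'|_{L} = T_{L}$. Finally, $T^{-1}$ lies in the same one-parameter family, preserves every hyperplane through $\ell$, and maps $\p{x}_2 \to \p{x}_1$, so $T^{-1} = T_{\p{x}_2, \p{x}_1}$ by the uniqueness just proved.

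The main obstacle is the uniqueness claim: the bare conditions in the statement are actually satisfied both by $T_{L}$ and by the reflection in $\Ortho(L)$ swapping $x_1$ and $\lambda x_2$, so strict uniqueness in $\PO(r,s)$ requires invoking the ``distinguished one-parameter family'' from the preceding paragraph to select the identity-component element. The hyperplane-plus-form-preservation argument that first collapses $T'$ to an element of $\Ortho(L)$ acting on $L$ is the technical core of the proof.
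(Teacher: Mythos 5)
The paper states this proposition without proof, so your argument can only be judged on its own terms. Your construction $T = T_L \oplus \id_{L^\perp}$ is the natural one for the generic case, and your observation that the stated conditions alone do not single out a unique element of $\PO(r,s)$ is well taken: composing with $(-\id_L)\oplus\id_{L^\perp}$, or with the reflection of $L$ extended by $\id_{L^\perp}$, produces further transformations satisfying every hypothesis, and the paper's own restriction of $T^{\text{(s)}}_t$ to $t\in[-\pi/2,\pi/2]$ in Section~\ref{sec:hyperbolic-laguerre-transformations} shows the authors are implicitly normalizing. Be aware, however, that your proposed fix is still not quite sufficient in the definite case: $R_\theta$ and $R_{\theta+\pi}=-R_\theta$ both lie in the identity component of $\Ortho(L)$ and both send $\p{x}_1$ to $\p{x}_2$ projectively, so ``identity component'' alone leaves a residual two-fold ambiguity that must be resolved by fixing the sign of $\lambda$ (equivalently, a choice of lifts of $\p{x}_1,\p{x}_2$).

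The genuine gap is the tangent case. The hypothesis $\ck{\quadric}{\p{x}_1}{\p{x}_2}>0$ includes the value $\ck{\quadric}{\p{x}_1}{\p{x}_2}=1$, where by Lemma~\ref{lem:quadric-line-intersection} the line $\p{x}_1\wedge\p{x}_2$ is tangent to $\quadric$ and the restriction of the bilinear form to $L=\Span\{x_1,x_2\}$ is degenerate. Then $L\cap L^\perp\neq\{0\}$, the splitting $\R^{n+2}=L\oplus L^\perp$ does not exist, and both your construction and your uniqueness argument (which treats $T'|_{L^\perp}$ and $T'|_{L}$ as independently meaningful) collapse. You flag this case parenthetically but never return to it, and it is not a removable technicality: it is exactly the scaling along concentric horospheres, realized in the paper by the unipotent family $T^{\text{(h)}}_t$, and it is needed for the three types of scalings in Remark~\ref{rem:scaling-types} and for the decomposition of the hyperbolic Laguerre group. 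To close the gap you must construct, separately from the elliptic and hyperbolic cases, a parabolic isometry: a transvection fixing the tangent point $\left[\scalarprod{x_2}{x_2}x_1-\scalarprod{x_1}{x_2}x_2\right]$ and acting unipotently on a three-dimensional non-degenerate subspace containing $L$, extended by the identity on its orthogonal complement.
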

\begin{definition}
  For two points $\p{x}_1, \p{x}_2 \in \spheres$ on a pencil of concentric $\quadric$-spheres with respect to $\p{q}$,
  i.e. $\p{q} \in \p{x}_1 \wedge \p{x}_2$, we call the transformation $T_{\p{x}_1, \p{x}_2} \in \PO(r,s)$
  a \emph{scaling along the pencil of concentric spheres $\p{x}_1 \wedge \p{x}_2$}.
\end{definition}
\begin{remark}
  \label{rem:scaling-types}
  There are three types of scalings along concentric spheres depending on the signature of the line $\p{x}_1 \wedge \p{x}_2$.
\end{remark}
In the projection $\sprojection{\p{q}}$ to $\pi_{\p{q}}(\quadric)$ the line $\p{x}_1 \wedge \p{x}_2$ through $\p{q}$
corresponds to a pencil of concentric Cayley-Klein spheres.
The transformation $T_{\p{x}_1, \p{x}_2}$ maps spheres of this pencil to spheres of this pencil.

Every transformation from $\PO(r,s)$ may be decomposed into a (lift of an) isometry of $\pi_{\p{q}}(\quadric)$
and a scaling along a pencil of concentric spheres.
\begin{proposition}
  Let $f \in \PO(r,s)$. Then $f$ can be written as
  \[
    f = T_{\p{q}, \p{x}} \circ \Phi = \Psi \circ T_{\p{y}, \p{q}}
  \]
  with $\p{x} \coloneqq f(\p{q})$, $\p{y} \coloneqq f^{-1}(\p{q})$ and some $\Phi, \Psi \in \PO(r,s)_{\p{q}}$.
\end{proposition}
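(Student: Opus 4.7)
The plan is direct: define the scaling as the ``radial part'' of $f$ and show that the remaining factor fixes $\p{q}$, which forces it to lie in $\PO(r,s)_{\p{q}}$.

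For the first decomposition, I set $\p{x} \coloneqq f(\p{q})$ and define $\Phi \coloneqq T_{\p{x},\p{q}} \circ f$. Using $(T_{\p{q},\p{x}})^{-1} = T_{\p{x},\p{q}}$ from the preceding proposition, this gives $f = T_{\p{q},\p{x}} \circ \Phi$, and it remains to check that $T_{\p{q},\p{x}}$ is well-defined and that $\Phi \in \PO(r,s)_{\p{q}}$. For the former, I fix a lift $F \in \Ortho(r,s)$ of $f$ and take $x \coloneqq F(q)$ as a representative of $\p{x}$, so that $\scalarprod{x}{x} = \scalarprod{q}{q}$, and consequently
\[
\ck{\quadric}{\p{q}}{\p{x}} = \frac{\scalarprod{q}{x}^2}{\scalarprod{q}{q}\,\scalarprod{x}{x}} = \frac{\scalarprod{q}{x}^2}{\scalarprod{q}{q}^2} \geq 0,
\]
which is strictly positive in the generic case $\scalarprod{q}{x} \neq 0$, placing us in the regime of the preceding proposition. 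For the latter, $\Phi$ fixes $\p{q}$ by construction: $\Phi(\p{q}) = T_{\p{x},\p{q}}(f(\p{q})) = T_{\p{x},\p{q}}(\p{x}) = \p{q}$ by the defining property of the scaling. Since $\Phi$ is a composition of elements of $\PO(r,s)$ and fixes $\p{q}$, it lies in $\PO(r,s)_{\p{q}}$.

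The second decomposition is entirely analogous: with $\p{y} \coloneqq f^{-1}(\p{q})$, I set $\Psi \coloneqq f \circ T_{\p{q},\p{y}}$, giving $f = \Psi \circ T_{\p{y},\p{q}}$, and verify $\Psi(\p{q}) = f(T_{\p{q},\p{y}}(\p{q})) = f(\p{y}) = \p{q}$, so $\Psi \in \PO(r,s)_{\p{q}}$. Uniqueness of each $T_{\p{q},\p{x}}$, $T_{\p{y},\p{q}}$ from the previous proposition then makes $\Phi$ and $\Psi$ uniquely determined by $f$.

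The main obstacle is the degenerate case $\scalarprod{q}{x} = 0$, i.e.\ when $f$ sends $\p{q}$ into its polar hyperplane; here $\ck{\quadric}{\p{q}}{\p{x}} = 0$ and the scaling $T_{\p{q},\p{x}}$ is not covered by the existence statement. I would handle this by pre-composing $f$ with a suitable $\Phi_0 \in \PO(r,s)_{\p{q}}$ chosen so that $(f \circ \Phi_0)(\p{q})$ lies off $\p{q}^\perp$, applying the generic case to $f \circ \Phi_0$, and then reabsorbing $\Phi_0^{-1}$ into the $\Phi$ factor; such a $\Phi_0$ exists as soon as the orbit of $\p{q}$ under $\PO(r,s)_{\p{q}}$ acting on $\spheres \setminus \p{q}^\perp$ is nonempty, which follows from transitivity of the stabilizer on its orbit of $\p{x}$-like points of the same signature.
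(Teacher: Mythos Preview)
Your main argument---defining $\Phi \coloneqq T_{\p{x},\p{q}} \circ f$ and checking $\Phi(\p{q}) = \p{q}$, and likewise for $\Psi$---is exactly the paper's one-line proof (``We observe that $T_{\p{x}, \p{q}} \circ f,\ f \circ T_{\p{q}, \p{y}} \in \PO(r,s)_{\p{q}}$'').

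Your treatment of the degenerate case $\scalarprod{q}{x} = 0$, however, does not work. You propose to pre-compose $f$ with some $\Phi_0 \in \PO(r,s)_{\p{q}}$ so that $(f \circ \Phi_0)(\p{q})$ lands off $\p{q}^\perp$; but $\Phi_0(\p{q}) = \p{q}$ by definition of the stabilizer, so $(f \circ \Phi_0)(\p{q}) = f(\p{q}) = \p{x}$ is unchanged. Post-composing fares no better: the stabilizer of $\p{q}$ preserves the hyperplane $\p{q}^\perp$, so if $\p{x} \in \p{q}^\perp$ then $\Psi_0(\p{x}) \in \p{q}^\perp$ for every $\Psi_0 \in \PO(r,s)_{\p{q}}$. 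Thus no composition with stabilizer elements can escape this case. (For what it is worth, the paper's own proof does not address this boundary situation either; the statement tacitly presumes that $T_{\p{q},\p{x}}$ is defined.)
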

\begin{proof}
  We observe that $T_{\p{x}, \p{q}} \circ f, f \circ T_{\p{q}, \p{y}} \in \PO(r,s)_{\p{q}}$.
\end{proof}
\begin{remark}
  \label{rem:mobius-decomposition}
  To generate all transformations of $\PO(r,s)$ one may further restrict to (at most) three arbitrarily chosen
  one-parameter families of scalings (one of each type, cf. Remark \ref{rem:scaling-types}).
  Then a transformation $f \in \PO(r,s)$ can be written as
  \[
    f = \Phi \circ T \circ \Psi
  \]
  where $\Phi, \Psi \in \PO(r,s)_{\p{q}}$ and $T$ is exactly one of the three chosen scalings.
\end{remark}

\subsection{Möbius geometry}
\label{sec:mobius-geometry}
Let $\scalarprod{\cdot}{\cdot}$ be the standard non-degenerate bilinear form of signature $(n+1,1)$, i.e.
\[
  \scalarprod{x}{y} \coloneqq x_1y_1 + \ldots + x_{n+1}y_{n+1} - x_{n+2}y_{n+2}
\]
for $x, y \in \R^{n+2}$, and denote by $\mob \subset \RP^{n+1}$ the corresponding quadric,
which we call the \emph{Möbius quadric}.

The Möbius quadric is projectively equivalent to the standard round sphere $\mob \simeq \S^n \subset \R^{n+1}$.
In this correspondence intersections of $\mob$ with hyperplanes of $\RP^{n+1}$,
i.e. the $\mob$-spheres, are identified with hyperspheres of $\S^n$ (cf.\ Proposition~\ref{prop:non-oriented-sphere-polarity}).
The corresponding transformation group
\[
  \mobtrafos \coloneqq \PO(n+1,1)
\]
of \emph{Möbius transformations} leaves the quadric $\mob$ invariant and maps hyperplanes to hyperplanes.
Thus Möbius geometry may be seen as the geometry of points on $\S^n$ in which hyperspheres are mapped to hyperspheres.

The set of poles of hyperplanes that have non-empty intersections with the Möbius quadric $\mob$, i.e., the space of $\mob$-spheres, is given by
\[
  \spheres = \mob^+ \cup \mob.
\]
where $\mob^+$ is the ``outside'' of $\mob$.

\begin{remark}
  \label{rem:inversive-distance}
  The Cayley-Klein metric on $\spheres$ that is induced by the Möbius quadric $\mob$ is called the \emph{inversive distance}, see \cite{Cox}.
  For two intersecting hyperspheres of $\S^n$ it is equal to the cosine of their intersection angle.
  For a signed version of this quantity see Section \ref{sec:signed-inversive-distance}.
  Comparing with Section \ref{sec:hyperbolic-space} this same Cayley-Klein metric
  also induces $(n+1)$-dimensional hyperbolic geometry on the ``inside'' $\mob^-$ of the Möbius quadric,
  and $(n+1)$-dimensional deSitter geometry on the ``outside'' $\mob^+$ of the Möbius quadric.
\end{remark}

Central projection of the $(n+1)$-dimensional Möbius quadric from a point leads to a double cover of $n$-dimensional hyperbolic/elliptic space.

\subsection{Hyperbolic geometry and Möbius geometry}
\label{sec:hyperbolic-geometry-from-mobius}
\begin{figure}
  \centering
  \includegraphics[width=0.32\textwidth]{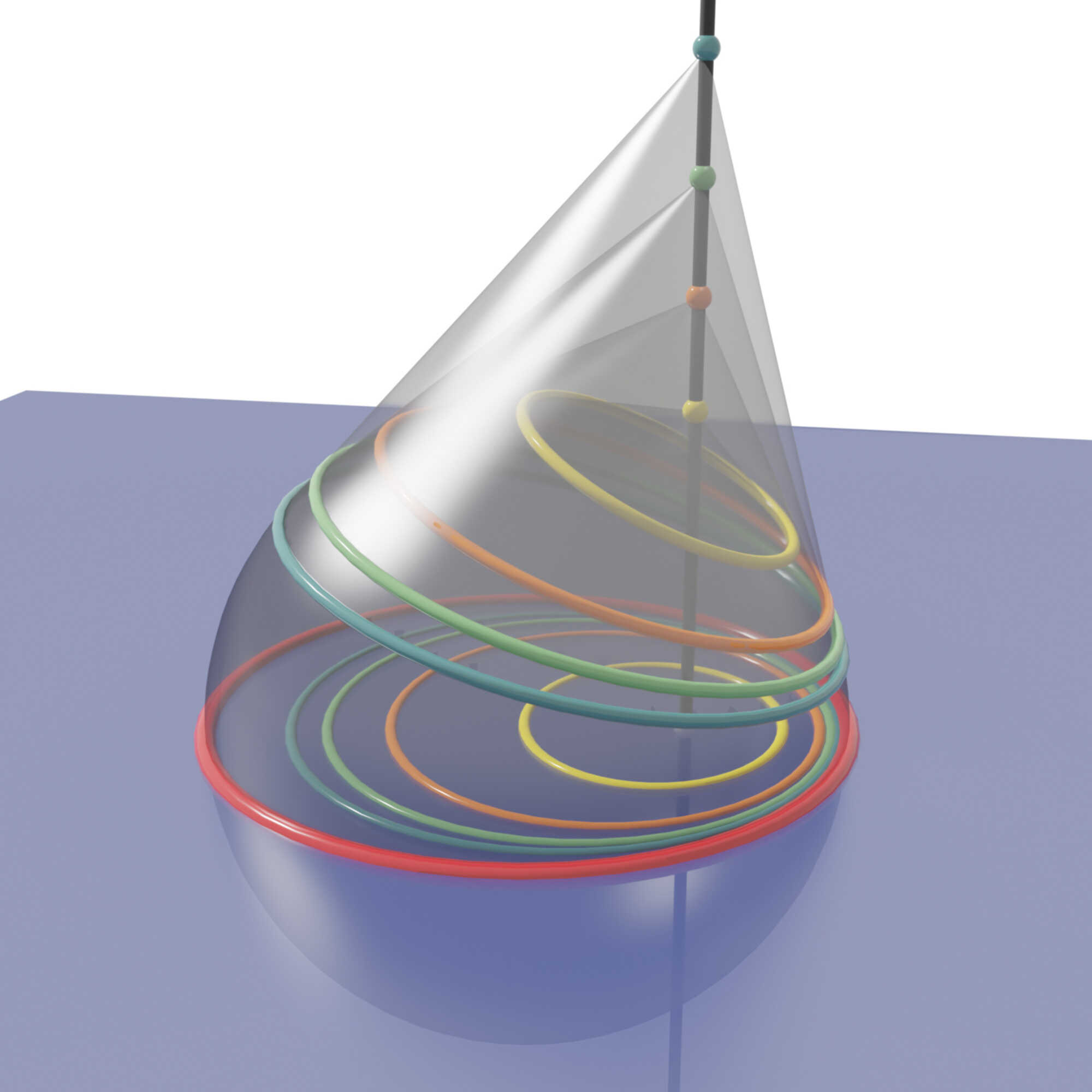}
  \includegraphics[width=0.32\textwidth]{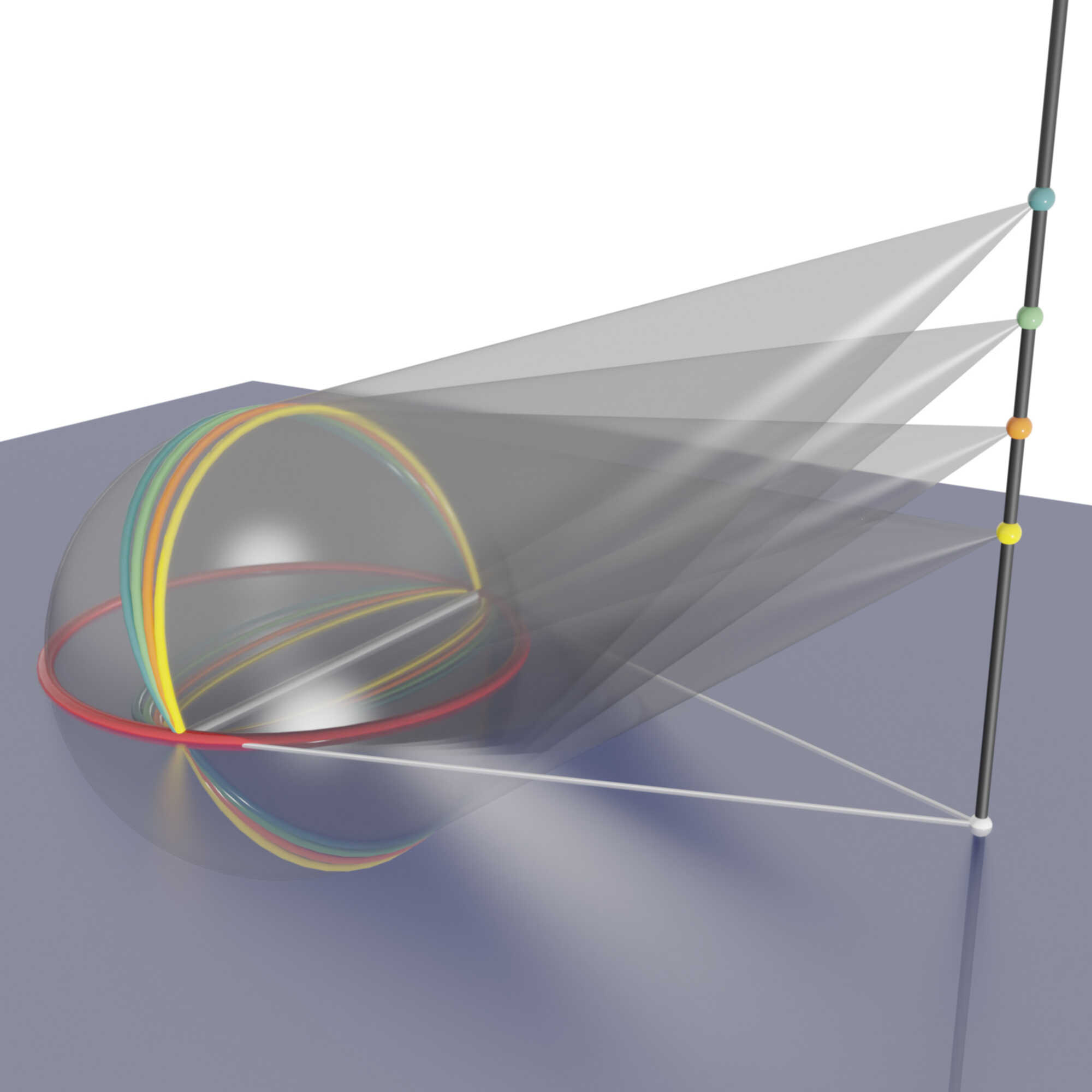}
  \includegraphics[width=0.32\textwidth]{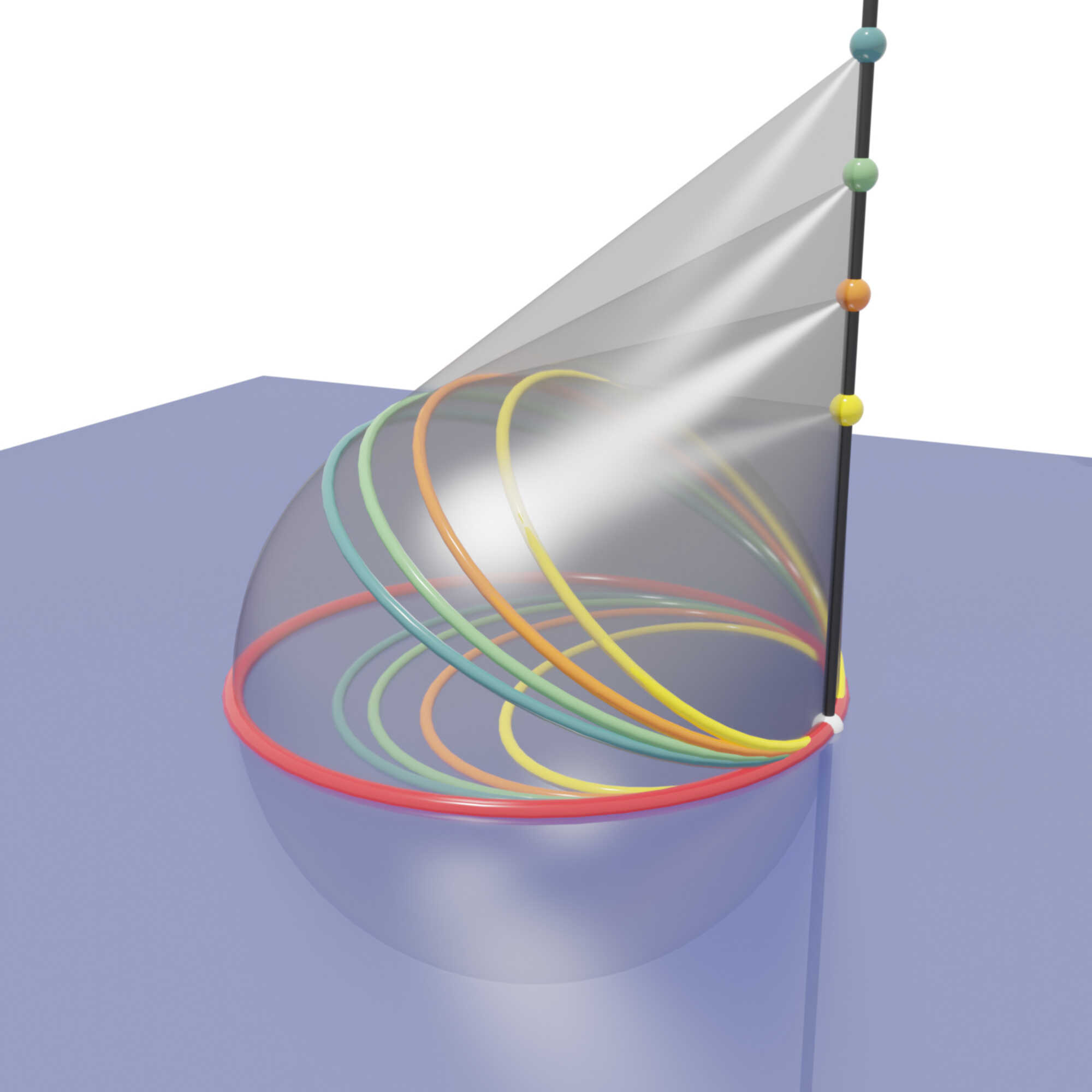}
  \caption{
    Hyperbolic geometry and its lift to M\"obius geometry.
    \emph{Left:} Concentric hyperbolic circles.
    \emph{Middle:} Constant distance curves to a common line.
    \emph{Right:} Concentric horocycles with center on the absolute conic.
  }
  \label{fig:hyperbolic-mobius-geometry}
\end{figure}

Given the Möbius quadric $\mob \subset \RP^n$ choose a point $\p{q} \in \RP^{n+1}$, $\scalarprod{q}{q} > 0$, w.l.o.g.
\[
  \p{q} \coloneqq [e_{n+1}] = [0, \ldots 0, 1, 0].
\]
The corresponding involution and projection take the form
\[
  \begin{aligned}
    &\sigma_{\p{q}} : [x_1, \ldots, x_n, x_{n+1}, x_{n+2}] \mapsto [x_1, \ldots, x_n, -x_{n+1}, x_{n+2}],\\
    &\pi_{\p{q}} : [x_1, \ldots, x_n, x_{n+1}, x_{n+2}] \mapsto [x_1, \ldots, x_n, 0, x_{n+2}].
  \end{aligned}
\]
The quadric in the polar hyperplane of $\p{q}$
\[
  \widetilde{\mob} = \mob \cap \p{q}^\perp
\]
has signature $(n,1)$.
Its ``inside'' $\hyp = \widetilde{\mob}^-$ can be identified with $n$-dimensional hyperbolic space (cf.\ Section \ref{sec:hyperbolic-space}),
and the Möbius quadric projects down to the compactified hyperbolic space
\[
  \chyp = \pi_{\p{q}}(\mob).
\]

According to Proposition \ref{prop:Q-sphere-projection}, an $\mob$-sphere,
which we identify with a point in $\spheres = \mob^+ \cup \mob$ projects to the different types of \emph{generalized hyperbolic spheres} in $\chyp$
(see Figure~\ref{fig:hyperbolic-mobius-geometry} and Table~\ref{tab:hyperbolic-mobius-geometry}).
\begin{proposition}
  Under the map
  \[
    \sprojection{\p{q}} : \p{x} \mapsto \pi_{\p{q}}(x^\perp \cap \mob)
  \]
  a point $\p{x} \in \spheres = \mob^+ \cup \mob$
  \begin{itemize}
  \item with $\p{x} \in \mob$ is mapped to a \emph{point} $\pi_{\p{q}}(\p{x}) \in \chyp$,
  \item with $\p{x} \in \p{q}^\perp$, i.e.\ $x_{n+1} = 0$, is mapped to a \emph{hyperbolic hyperplane} in $\chyp$ with pole $\p{x}$,
  \item with $\pscalarprod{q}{x}{x} < 0$ is mapped to a \emph{hyperbolic sphere} in $\chyp$ with center $\pi_{\p{q}}(\p{x})$. In the normalization $\pscalarprod{q}{x}{x} = -1$ its hyperbolic radius is given by $r \geq 0$, where $\cosh^2 r = x_{n+1}^2$,
  \item with $\pscalarprod{q}{x}{x} > 0$ is mapped to a \emph{hyperbolic surface of constant distance} in $\chyp$
    to a hyperbolic hyperplane with pole $\pi_{\p{q}}(x)$,
    In the normalization $\pscalarprod{q}{x}{x} = 1$ its hyperbolic distance is given by $r \geq 0$, where $\sinh^2 r = x_{n+1}^2$.
  \item with $\pscalarprod{q}{x}{x} = 0$ is mapped to a \emph{hyperbolic horosphere}.
  \end{itemize}
\end{proposition}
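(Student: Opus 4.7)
The plan is to treat this proposition as a direct specialization of Proposition \ref{prop:Q-sphere-projection} to the concrete setting where $\quadric = \mob$ has signature $(n+1,1)$ and $\p{q} = [e_{n+1}]$ with $\scalarprod{q}{q} = 1 > 0$. In this setting, by Proposition \ref{prop:projection-side}, we have $\pi_{\p{q}}(\mob) = \widetilde{\mob}^- \cup \widetilde{\mob} = \hyp \cup \widetilde{\mob} = \chyp$, so the image already lives in the compactified hyperbolic space, and Proposition \ref{prop:Q-sphere-projection} tells us that each $\p{x} \in \spheres$ projects to a Cayley-Klein sphere with respect to $\widetilde{\mob}$. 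The work therefore consists in translating the five bullet cases of Proposition \ref{prop:Q-sphere-projection} into the hyperbolic/deSitter language of Section \ref{sec:hyperbolic-space}, and in reading off the hyperbolic radius (resp.\ distance) from the Cayley-Klein radius $\mu = \scalarprod{x}{q}^2/\Delta_q(x)$ under the stated normalizations.

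First I would dispatch the three degenerate cases. For $\p{x} \in \mob$, Proposition \ref{prop:Q-sphere-projection} yields the cone of contact $\cone{\widetilde{\mob}}{\pi_{\p{q}}(\p{x})}$, which in hyperbolic geometry is the null-sphere at $\pi_{\p{q}}(\p{x}) \in \chyp$, i.e.\ a point. For $\p{x} \in \p{q}^\perp$ (equivalently $x_{n+1} = 0$), Proposition \ref{prop:Q-sphere-projection} yields a hyperplane with pole $\p{x}$, which is nothing but a hyperbolic hyperplane in $\chyp$. For $\pscalarprod{q}{x}{x} = 0$ we use \eqref{eq:quadratic-form-cone}, which with $\scalarprod{q}{q} = 1$ gives $\Delta_q(x) = -\pscalarprod{q}{x}{x} = 0$, so Proposition \ref{prop:Q-sphere-projection} directly produces a Cayley-Klein horosphere with center $\pi_{\p{q}}(\p{x}) \in \widetilde{\mob}$, which in the hyperbolic picture is exactly a (generalized) horosphere.

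Next I would handle the two generic cases together. Using $\p{q} = [e_{n+1}]$ and $\scalarprod{q}{q} = 1$, one computes $\scalarprod{x}{q} = x_{n+1}$ and $\Delta_q(x) = x_{n+1}^2 - \scalarprod{x}{x} = -\pscalarprod{q}{x}{x}$, so the Cayley-Klein radius from Proposition \ref{prop:Q-sphere-projection} becomes
\[
  \mu \;=\; \frac{\scalarprod{x}{q}^2}{\Delta_q(x)} \;=\; -\,\frac{x_{n+1}^2}{\pscalarprod{q}{x}{x}}.
\]
If $\pscalarprod{q}{x}{x} < 0$ then $\mu > 0$ and the center $\pi_{\p{q}}(\p{x})$ lies in $\hyp$ by Proposition \ref{prop:Q-sphere-projection}; applying the hyperbolic Cayley-Klein formula $\ck{\mob}{\p{x}}{\p{y}} = \cosh^2 d(\p{x},\p{y})$ from Section \ref{sec:hyperbolic-space} identifies the image as a hyperbolic sphere of radius $r$ with $\cosh^2 r = \mu$, and the normalization $\pscalarprod{q}{x}{x} = -1$ reduces this to $\cosh^2 r = x_{n+1}^2$. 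If $\pscalarprod{q}{x}{x} > 0$ then $\mu < 0$ and $\pi_{\p{q}}(\p{x}) \in \ds$; the deSitter-center formula $\ck{\mob}{\p{m}}{\p{y}} = -\sinh^2 d(\p{y},\p{m}^\perp)$ then identifies the image as a hypersurface of constant hyperbolic distance $r$ to the hyperbolic hyperplane polar to $\pi_{\p{q}}(\p{x})$ with $\sinh^2 r = -\mu$, which under $\pscalarprod{q}{x}{x} = 1$ collapses to $\sinh^2 r = x_{n+1}^2$.

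There is no real obstacle here beyond careful bookkeeping: the main subtlety is to verify that the side of $\widetilde{\mob}$ on which the center lies (hyperbolic vs.\ deSitter) matches the sign of $\pscalarprod{q}{x}{x}$ via Proposition \ref{prop:Q-sphere-projection}, and that the two normalizations $\pscalarprod{q}{x}{x} = \pm 1$ cleanly produce $\cosh^2 r = x_{n+1}^2$ and $\sinh^2 r = x_{n+1}^2$ respectively. Both follow from the single identity $\Delta_q(x) = -\pscalarprod{q}{x}{x}$ available because $\scalarprod{q}{q}=1$, so the proof reduces to invoking Proposition \ref{prop:Q-sphere-projection} together with the hyperbolic Cayley-Klein distance formulas of Section \ref{sec:hyperbolic-space}.
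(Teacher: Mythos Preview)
Your proposal is correct and follows essentially the same approach as the paper: invoke Proposition~\ref{prop:Q-sphere-projection}, use \eqref{eq:quadratic-form-cone} with $\scalarprod{q}{q}=1$ to get $\Delta_q(x) = -\pscalarprod{q}{x}{x}$ and hence $\mu = -x_{n+1}^2/\pscalarprod{q}{x}{x}$, and then match each case against the hyperbolic Cayley-Klein distance formulas of Section~\ref{sec:hyperbolic-space}. Your write-up is in fact more detailed than the paper's rather terse proof, which simply points to these same ingredients.
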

\begin{proof}
  Compare Section \ref{sec:hyperbolic-space} for the different types of possible Cayley-Klein spheres in hyperbolic space.
  Following Proposition \ref{prop:Q-sphere-projection} they can be distinguished by the sign
  of the quadratic form $\Delta_q(x)$, or, comparing with equation \eqref{eq:quadratic-form-cone}, by the sign of $\pscalarprod{q}{x}{x}$.
  Furthermore, the center of the Cayley-Klein sphere corresponding to $\p{x}$
  is given by $\pi_{\p{q}}(\p{x})$, while its Cayley-Klein radius is given by
  \[
    \mu
    = \frac{\scalarprod{x}{q}^2}{\Delta_q(x)}
    = - \frac{\scalarprod{x}{q}^2}{\scalarprod{q}{q}\pscalarprod{q}{x}{x}}
    = - \frac{x_{n+1}^2}{\pscalarprod{q}{x}{x}}.
  \]
\end{proof}
\begin{remark}\
  \label{rem:hyperbolic-mobius}
  \nobreakpar
  \begin{enumerate}
  \item The map $\sprojection{\p{q}}$ is a double cover of the set of generalized hyperbolic spheres,
    branching on the subset of hyperbolic planes (see Proposition \ref{prop:sphere-double-cover}).
  \item The Cayley-Klein distance induced on $\spheres$ by $\mob$ measures the Cayley-Klein angle
    between the corresponding generalized hyperbolic spheres (see Proposition \ref{prop:sphere-angle})
    if their lifts intersect (see Remark~\ref{rem:intersection-angle}~\ref{rem:intersection-angle-lift}),
    and more generally their inversive distance (see Remark \ref{rem:inversive-distance}).
  \item
    \label{rem:hyperbolic-mobius-transformations}
    In the projection to $\chyp$ Möbius transformations map generalized hyperbolic spheres to generalized hyperbolic spheres
    (see Remark ~\ref{rem:moebius-transformation-lift}~\ref{rem:conformal-geometry-lift}).
    Vice versa, every (local) transformation of the hyperbolic space that maps generalized hyperbolic spheres to generalized hyperbolic spheres
    is the restriction of a Möbius transformation
    (see Theorem \ref{thm:mobius-transformation-lift})
  \item Every Möbius transformation can be decomposed into two hyperbolic isometries
    and a scaling along either a fixed pencil of concentric hyperbolic spheres, distance surfaces, or horospheres
    (see Remark \ref{rem:mobius-decomposition}).
  \end{enumerate}
\end{remark}
\begin{table}[H]
  \centering
  \def\arraystretch{2.0}
  \begin{tabular}{c|c}
    \textbf{hyperbolic geometry} & \textbf{Möbius geometry}\\
    \hline\hline
    \makecell{\emph{point}\\ $\p{y} \in \hyp$, $y = (\hat{y},y_{n+1}) \in \HH^n$} & $[\hat{y}, \pm 1, y_{n+1}] \in \mob$ \\
    \hline
    \makecell{\emph{hyperplane}\\ with pole $\p{y} \in \ds$, $y = (\hat{y},y_{n+1}) \in \dS^n$} & $[\hat{y}, 0, y_{n+1}] \in \mob^+ \cap \p{q}^\perp$ \\
    \hline
    \makecell{\emph{sphere}\\ with center $\p{y} \in \hyp$, $y = (\hat{y},y_{n+1}) \in \HH^n$\\ and radius $r > 0$} & $[\hat{y}, \pm \cosh r, y_{n+1}] \in \mob^+ \cap \cone{\mob}{\p{q}}^+$ \\
    \hline
    \makecell{\emph{surface of constant distance}\\ $r > 0$ to a hyperplane\\ with pole $\p{y} \in \ds$, $y = (\hat{y},y_{n+1}) \in \dS^n$} & $[\hat{y}, \pm \sinh r, y_{n+1}] \in \mob^+ \cap \cone{\mob}{\p{q}}^-$ \\
    \hline
    \makecell{\emph{horosphere}\\ with center $\p{y} \in \widetilde{\mob}$, $y = (\hat{y},y_{n+1}) \in \L^{n,1}$} & $[\hat{y}, \pm e^r, y_{n+1}] \in \mob^+ \cap \cone{\mob}{\p{q}}$
  \end{tabular}
  \caption{
    The lifts of generalized hyperbolic spheres to Möbius geometry.
  }
  \label{tab:hyperbolic-mobius-geometry}
\end{table}

\subsection{Elliptic geometry and Möbius geometry}
\label{sec:elliptic-geometry-from-mobius}
\begin{figure}
  \centering
  \includegraphics[width=0.45\textwidth]{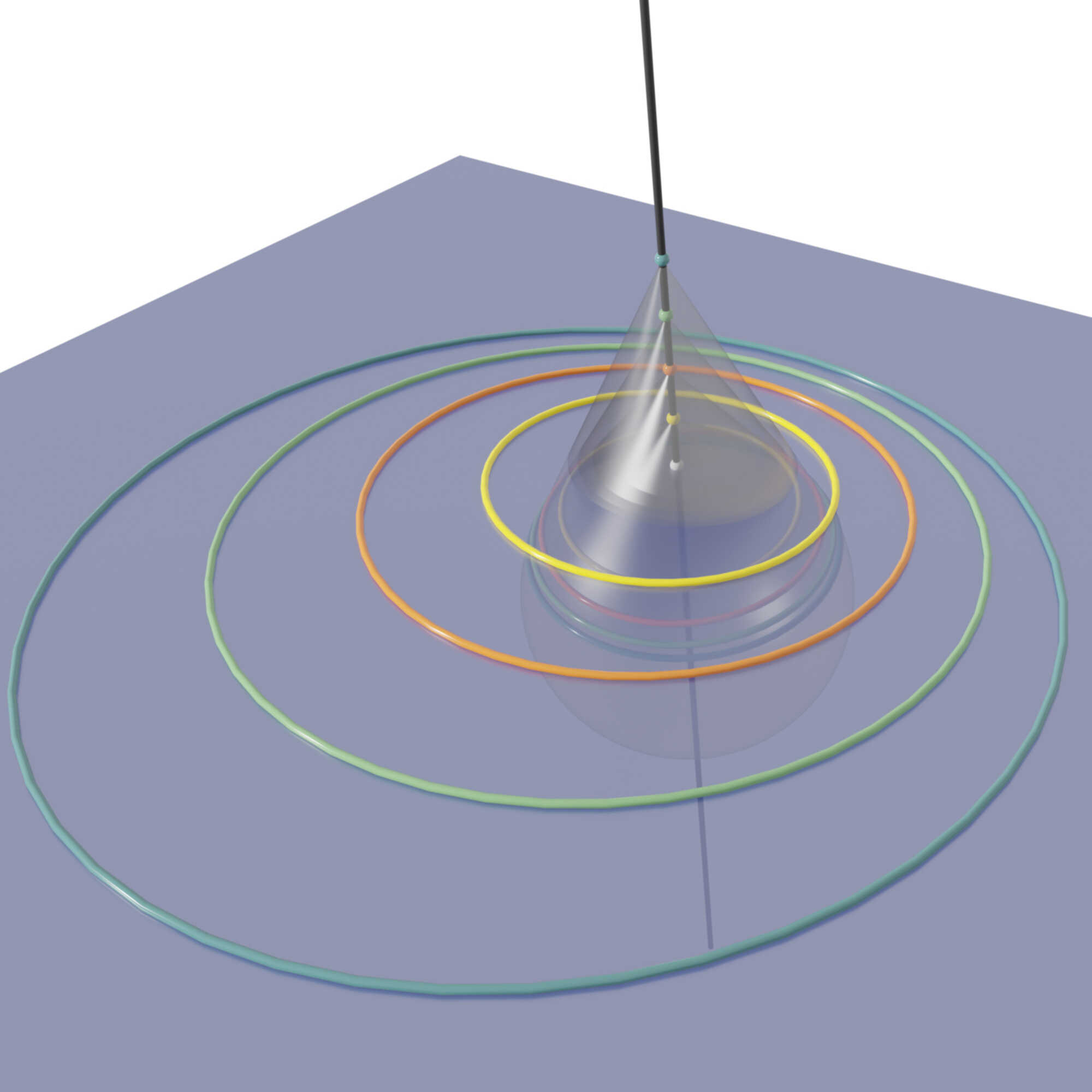}
  \caption{
    Concentric circles in elliptic geometry and its lift to Möbius geometry.
  }
  \label{fig:elliptic-mobius-geometry}
\end{figure}

Given the Möbius quadric $\mob \subset \RP^n$ choose a point $\p{q} \in \RP^{n+1}$, $\scalarprod{q}{q} < 0$, w.l.o.g.
\[
  \p{q} \coloneqq [e_{n+1}] = [0, \ldots 0, 0, 1].
\]
The corresponding involution and projection take the form
\[
  \begin{aligned}
    &\sigma_{\p{q}} : [x_1, \ldots, x_{n+1}, x_{n+2}] \mapsto [x_1, \ldots, x_{n+1}, -x_{n+2}]\\
    &\pi_{\p{q}} : [x_1, \ldots, x_{n+1}, x_{n+2}] \mapsto [x_1, \ldots, x_{n+1}, 0]
  \end{aligned}
\]
The quadric in the polar hyperplane of $\p{q}$
\[
  \widetilde{\mob} = \mob \cap \p{q}^\perp
\]
is imaginary, and has signature $(n+1,0)$.
The Möbius quadric projections down to its ``outside''
\[
  \ellip = \widetilde{\mob}^+,
\]
which can be identified with $n$-dimensional elliptic space (cf.\ Section \ref{sec:elliptic-space}).

According to Proposition \ref{prop:Q-sphere-projection} an $\mob$-sphere projects to an elliptic sphere in $\ellip$
(see Figure~\ref{fig:elliptic-mobius-geometry} and Table \ref{tab:elliptic-mobius-geometry}).
\begin{proposition}
  Under the map
  \[
    \sprojection{\p{q}} : x \mapsto \pi_{\p{q}}(x^\perp \cap \mob)
  \]
  a point $\p{x} \in \spheres = \mob^+ \cup \mob$
  \begin{itemize}
  \item with $\p{x} \in \mob$ is mapped to a \emph{point} $\pi_{\p{q}}(\p{x}) \in \ellip$,
  \item with $\p{x} \in \p{q}^\perp$, i.e.\ $x_{n+2} = 0$, is mapped to an \emph{elliptic plane} in $\ellip$ with pole $\p{x}$,
  \item with $\pscalarprod{q}{x}{x} > 0$ is mapped to an \emph{elliptic sphere} in $\ellip$ with center $\pi_{\p{q}}(\p{x})$,
    In the normalization $\pscalarprod{q}{x}{x} = 1$ its elliptic radius is given by $r \geq 0$, where $\cos^2 r = x_{n+2}^2$.
    It also has constant elliptic distance $R \geq 0$, where $\sin^2 R = x_{n+1}^2$, to the polar hyperplane of $\pi_{\p{q}}(\p{x})$.
  \end{itemize}
\end{proposition}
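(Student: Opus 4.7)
The plan is to specialize Proposition \ref{prop:Q-sphere-projection} to the present choice of center $\p{q} = [e_{n+2}]$ with $\scalarprod{q}{q} = -1$, and then read off the coordinate formulae. First I would rewrite the cone quadratic form from \eqref{eq:quadratic-form-cone}: since $\scalarprod{x}{q} = -x_{n+2}$ and $\pscalarprod{q}{x}{x} = x_1^2 + \cdots + x_{n+1}^2$, one obtains $\Delta_q(x) = -\scalarprod{q}{q}\pscalarprod{q}{x}{x} = \pscalarprod{q}{x}{x} \geq 0$, with equality if and only if $\p{x} = \p{q}$. This collapses the case distinction of Proposition \ref{prop:Q-sphere-projection} to just three subcases: points on $\mob$, $\mob$-spheres lying in $\p{q}^\perp$ (hyperplanes), and the generic case $\pscalarprod{q}{x}{x} > 0$. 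In particular there are no horospheres and no ``exterior'' Cayley-Klein spheres, which is consistent with $\widetilde{\mob} = \ellipb$ being purely imaginary and so providing no real tangency locus.

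For the first case, $\p{x} \in \mob$ forces $\p{x}^\perp \cap \mob$ to be the cone of contact at $\p{x}$, whose projection is the single point $\pi_{\p{q}}(\p{x}) \in \ellip$. For the hyperplane case $\p{x} \in \p{q}^\perp$, the claim is inherited verbatim from Proposition \ref{prop:Q-sphere-projection}. For the generic case I would substitute into the formula $\mu = \scalarprod{x}{q}^2 / \Delta_q(x)$ and obtain $\mu = x_{n+2}^2 / \pscalarprod{q}{x}{x}$, which equals $x_{n+2}^2$ in the normalization $\pscalarprod{q}{x}{x} = 1$. The elliptic radius identity $\mu = \cos^2 r$ from Section \ref{sec:elliptic-space} then gives $\cos^2 r = x_{n+2}^2$, and the corresponding identity $\ck{\ellipb}{\p{x}_0}{\p{y}} = \sin^2 R$ for the distance of $\p{y}$ to the polar hyperplane of the center $\p{x}_0 = \pi_{\p{q}}(\p{x})$, also from Section \ref{sec:elliptic-space}, yields the assertion about $R$.

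The only care required is bookkeeping: tracking which coordinate is timelike under the chosen $\p{q}$, verifying that the normalization $\pscalarprod{q}{x}{x} = 1$ is admissible for every $\p{x} \in \spheres$ with $\p{x} \neq \p{q}$, and checking that the real-valued-ness of $r$ (that is, $0 \leq x_{n+2}^2 \leq 1$ in this normalization) is guaranteed by $\p{x} \in \mob^+ \cup \mob$, i.e.\ by $\scalarprod{x}{x} \geq 0$. There is no new conceptual input beyond Proposition \ref{prop:Q-sphere-projection}, and the whole argument mirrors the hyperbolic proof given just above, with the definite signature of $\ellipb$ simply removing the extra subcases that appeared there.
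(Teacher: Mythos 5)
Your approach is exactly the one the paper intends: the paper actually gives no proof for this proposition and instead proves only the hyperbolic analogue in Section \ref{sec:hyperbolic-geometry-from-mobius}, whose argument (specialize Proposition \ref{prop:Q-sphere-projection}, compute $\Delta_q(x) = -\scalarprod{q}{q}\pscalarprod{q}{x}{x}$ and $\mu = \scalarprod{x}{q}^2/\Delta_q(x)$, then compare with the catalogue of Cayley-Klein spheres in the relevant space form) is precisely what you reproduce with $\scalarprod{q}{q}=-1$. Your observations that $\Delta_q(x) = \pscalarprod{q}{x}{x} > 0$ for $\p{x}\neq\p{q}$ kills the horosphere and exterior cases, and that $\scalarprod{x}{x}\geq 0$ forces $x_{n+2}^2\leq 1$ in the normalization, are correct and are the only points needing care.

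One caveat on the last clause. Your own identity gives, for $\p{y}$ on the sphere, $\sin^2 R = \ck{\ellipb}{\pi_{\p{q}}(\p{x})}{\p{y}} = \mu = x_{n+2}^2$ (equivalently $R = \tfrac{\pi}{2}-r$), whereas the proposition asserts $\sin^2 R = x_{n+1}^2$. These do not agree for a general center, so you cannot say your computation ``yields the assertion about $R$'' without comment. Almost certainly the statement contains a typo and should read $x_{n+2}^2$ (the only other reading that produces $x_{n+1}^2$ is the distance of the \emph{center} $\pi_{\p{q}}(\p{x})$ to the fixed coordinate hyperplane $[e_{n+1}]^\perp$, since $\sin^2 d = x_{n+1}^2$ for a unit-normalized center, but that is not what the sentence says). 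You should either derive $\sin^2 R = x_{n+2}^2$ and flag the discrepancy, or state explicitly which hyperplane is meant; as written, the final step silently asserts agreement where there is none.
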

\begin{remark}\
  \label{rem:elliptic-mobius}
  \nobreakpar
  \begin{enumerate}
  \item The map $\sprojection{\p{q}}$ is a double cover of the set of elliptic spheres,
    branching on the subset of elliptic planes (see Proposition \ref{prop:sphere-double-cover}).
  \item The Cayley-Klein distance induced on $\spheres$ by $\mob$ measures the Cayley-Klein angle
    between the corresponding elliptic spheres (see Proposition \ref{prop:sphere-angle})
    if their lifts intersect (see Remark~\ref{rem:intersection-angle}~\ref{rem:intersection-angle-lift}),
    and more generally their inversive distance (see Remark \ref{rem:inversive-distance}).
  \item
    \label{rem:elliptic-mobius-transformations}
    In the projection to $\ellip$, Möbius transformations map elliptic spheres to elliptic spheres
    (see Remark ~\ref{rem:moebius-transformation-lift}~\ref{rem:conformal-geometry-lift}).
    Vice versa, every (local) transformation of elliptic space that maps elliptic spheres to elliptic spheres
    is the restriction of a Möbius transformation (see Theorem \ref{thm:mobius-transformation-lift}).
  \item Every Möbius transformation can be decomposed into two elliptic isometries
    and a scaling along one fixed pencil of concentric elliptic spheres (see Remark \ref{rem:mobius-decomposition}).
  \end{enumerate}
\end{remark}
\begin{remark}
  Upon the identification of the Möbius quadric with the sphere $\mob \simeq \S^n$ the group of Möbius transformations
  fixing the point $\p{q}$ is the group of spherical transformations, yielding spherical geometry,
  which is a double cover of elliptic geometry.
\end{remark}
\begin{table}[H]
  \centering
  \def\arraystretch{2.0}
  \begin{tabular}{c|c}
    \textbf{elliptic geometry} & \textbf{Möbius geometry}\\
    \hline\hline
    \makecell{\emph{point}\\ $\p{y} \in \ellip$, $y \in \S^n$} & $[y, \pm 1] \in \mob$ \\
    \hline
    \makecell{\emph{hyperplane}\\ with pole $\p{y} \in \ellip$, $y \in \S^n$} & $[y, 0] \in \p{q}^\perp$ \\
    \hline
    \makecell{\emph{sphere}\\ with center $\p{y} \in \ellip$, $y \in \S^n$\\ and radius $r > 0$} & $[y, \pm \cos r] \in \mob^+$
  \end{tabular}
  \caption{
    The lifts of elliptic spheres to Möbius geometry.
  }
  \label{tab:elliptic-mobius-geometry}
\end{table}


\newpage
\section{Non-Euclidean Laguerre geometry}
\label{sec:laguerre}

The primary objects in \emph{Möbius geometry} are points on $\mob$, which yield a double cover of the points in hyperbolic/elliptic space,
and spheres, which yield a double cover of the spheres in hyperbolic/elliptic space.
The primary incidence between these objects is \emph{a point lying on a sphere}.

\emph{Laguerre geometry} is dual to Möbius geometry in the sense that the primary objects are hyperplanes, and spheres,
both being a double cover of the corresponding objects in hyperbolic/elliptic space,
while the primary incidence between these objects is \emph{a plane being tangent to a sphere}.

In this section we introduce the concept of \emph{polar projection} of a quadric.
Similar to the central projection of a quadric it yields a double cover of certain hyperplanes of a Cayley-Klein space.
While the double cover of points in a space form in the case of Möbius geometry may be interpreted
as ``oriented points'' (cf.\ Remark ~\ref{rem:moebius-transformation-lift}~\ref{rem:conformal-geometry-lift}), in the case of Laguerre geometry this leads to the perhaps more intuitive notion of ``oriented hyperplanes''.

A decomposition of the corresponding groups of \emph{Laguerre transformations} into isometries and scalings can be obtained in an analogous way to the decomposition of the Möbius group.

We discuss the cases of hyperbolic and elliptic Laguerre geometry in detail, including coordinate representations for the different geometric objects appearing in each case.
A treatment of the Euclidean case can be found in Appendix \ref{sec:euclidean}.

\subsection{Polar projection}
\begin{figure}
  \centering
  \def\svgwidth{0.47\textwidth}
\begingroup%
  \makeatletter%
  \providecommand\color[2][]{%
    \errmessage{(Inkscape) Color is used for the text in Inkscape, but the package 'color.sty' is not loaded}%
    \renewcommand\color[2][]{}%
  }%
  \providecommand\transparent[1]{%
    \errmessage{(Inkscape) Transparency is used (non-zero) for the text in Inkscape, but the package 'transparent.sty' is not loaded}%
    \renewcommand\transparent[1]{}%
  }%
  \providecommand\rotatebox[2]{#2}%
  \newcommand*\fsize{\dimexpr\f@size pt\relax}%
  \newcommand*\lineheight[1]{\fontsize{\fsize}{#1\fsize}\selectfont}%
  \ifx\svgwidth\undefined%
    \setlength{\unitlength}{2025.31640625bp}%
    \ifx\svgscale\undefined%
      \relax%
    \else%
      \setlength{\unitlength}{\unitlength * \real{\svgscale}}%
    \fi%
  \else%
    \setlength{\unitlength}{\svgwidth}%
  \fi%
  \global\let\svgwidth\undefined%
  \global\let\svgscale\undefined%
  \makeatother%
  \begin{picture}(1,1)%
    \lineheight{1}%
    \setlength\tabcolsep{0pt}%
    \put(0,0){\includegraphics[width=\unitlength,page=1]{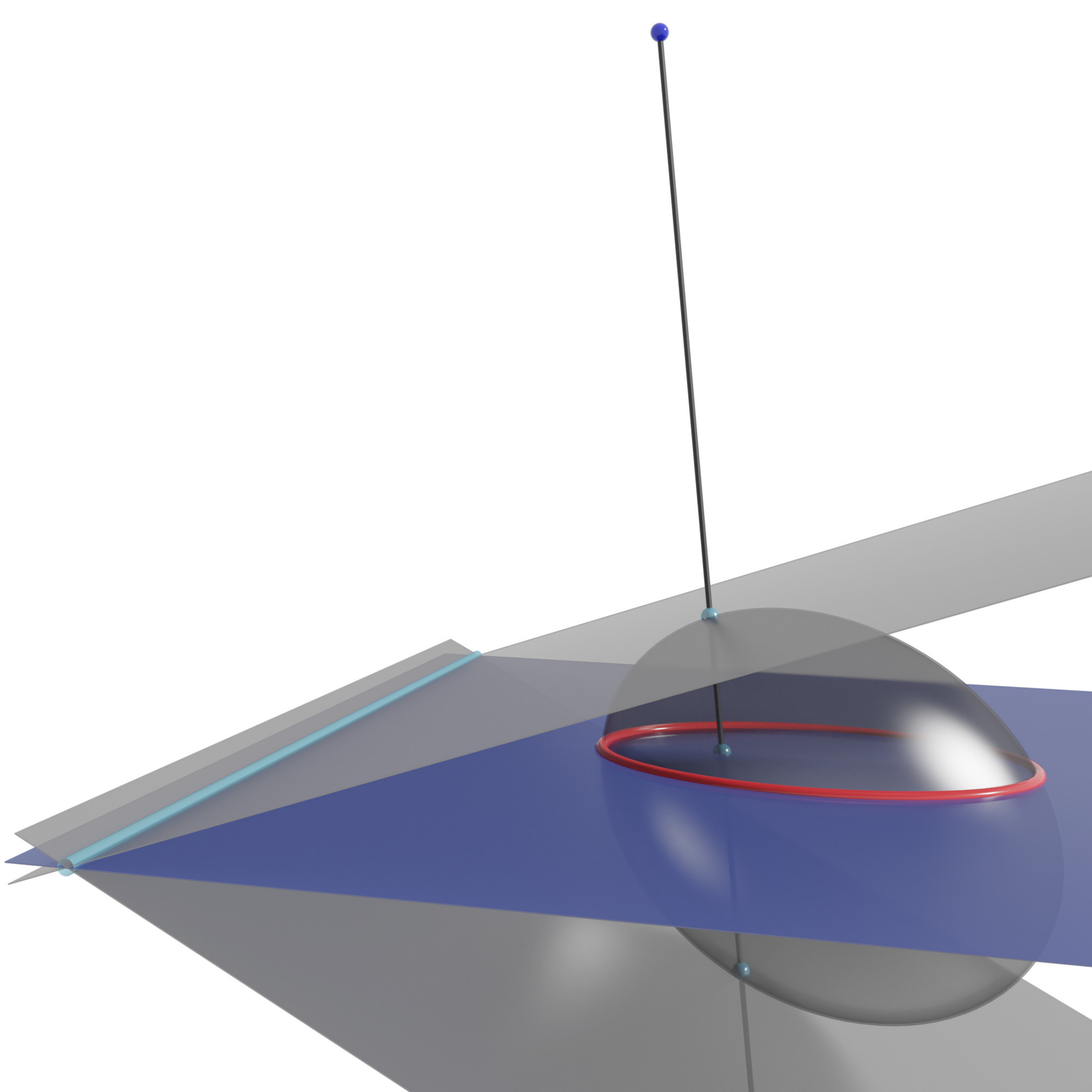}}%
    \put(0.6630064,0.44803619){\color[rgb]{0,0,0}\makebox(0,0)[lt]{\lineheight{1.25}\smash{\begin{tabular}[t]{l}{\footnotesize $\p{x}$}\end{tabular}}}}%
    \put(0.61724084,0.98144908){\color[rgb]{0,0,0}\makebox(0,0)[lt]{\lineheight{1.25}\smash{\begin{tabular}[t]{l}{\footnotesize $\p{p}$}\end{tabular}}}}%
    \put(0.7867122,0.21653265){\color[rgb]{0,0,0}\makebox(0,0)[lt]{\lineheight{1.25}\smash{\begin{tabular}[t]{l}{\footnotesize $\secquadric$}\end{tabular}}}}%
    \put(0.67722993,0.3167713){\color[rgb]{0,0,0}\makebox(0,0)[lt]{\lineheight{1.25}\smash{\begin{tabular}[t]{l}{\footnotesize $\pi_{\p{p}}(\p{x})$}\end{tabular}}}}%
    \put(0.09369771,0.3167713){\color[rgb]{0,0,0}\makebox(0,0)[lt]{\lineheight{1.25}\smash{\begin{tabular}[t]{l}{\footnotesize $\pi^*_{\p{p}}(\p{x})$}\end{tabular}}}}%
  \end{picture}%
\endgroup%

  \hspace{0.02\textwidth}
  \def\svgwidth{0.47\textwidth}
  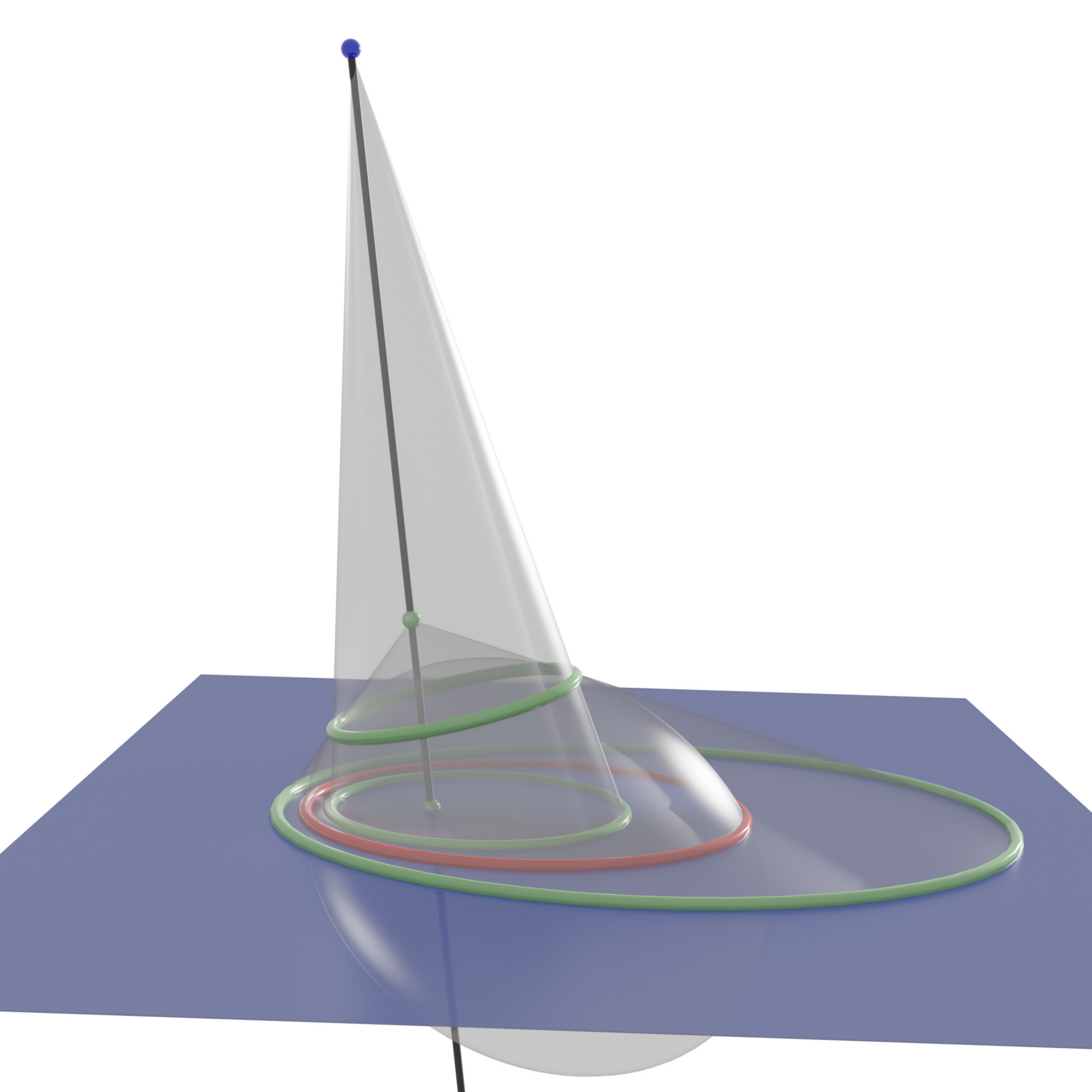
  \caption{
    \emph{Left:} Polar projection of points on the quadric $\quadric$.
    \emph{Right:} Polar projection of $\quadric$-spheres.
  }
\label{fig:polar-projection}
\end{figure}
Let $\quadric \subset \RP^{n+1}$ be a quadric.
We have seen that the projection $\pi_{\p{p}}$ of the quadric $\mathcal{Q}$ leads to a double cover of the points of $\pi_{\p{p}}(\quadric) \subset \p{p}^\perp$ 
(cf.\ Proposition \ref{prop:involution-projection}), i.e.\ the points ``inside'' or ``outside'' the quadric
\[
  \widetilde{\quadric} = \quadric \cap \p{p}^\perp.
\]
Correspondingly, the $\quadric$-spheres yield a double cover of the Cayley-Klein spheres in $\pi_{\p{p}}(\quadric)$
(cf.\ Proposition \ref{prop:sphere-double-cover}).
We now investigate the corresponding properties for polar hyperplanes and polar Cayley-Klein spheres.
\begin{definition}
  Let $\mathcal{Q} \subset \RP^{n+1}$ be a quadric and $\p{p} \in \RP^{n+1} \setminus \mathcal{Q}$.
  Then we call the map
  \[
    \pi^*_{\p{p}} : \p{x} \mapsto \p{x}^\perp \cap \p{p}^\perp = (\p{x} \wedge \p{p})^\perp,
  \]
  that maps a point $\p{x} \in \RP^{n+1}$ to the intersection of its polar hyperplane $\p{x}^\perp$ with $\p{p}^\perp$,
  the \emph{polar projection (associated with the point $\p{p}$)}.
\end{definition}
The projection $\pi_{\p{p}}$ and the polar projection $\pi^*_{\p{p}}$ map the same point to
a point in $\p{p}^\perp$ and its polar hyperplane respectively.
\begin{proposition}
  For a point $\p{x} \in \RP^{n+1}$ its projection $\pi_{\p{p}}(\p{x}) \in \p{p}^\perp$
  is the pole of its polar projection $\pi_{\p{p}}^*(\p{x}) \subset \p{p}^\perp$,
  where polarity in $\p{p}^\perp \simeq \RP^n$ is taken with respect to $\widetilde{\quadric}$.
\end{proposition}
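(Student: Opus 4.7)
The plan is to reduce the claim to a one-line bilinear identity after unfolding the two relevant definitions. Polarity inside $\p{p}^\perp \simeq \RP^n$ with respect to $\widetilde{\quadric}$ is induced by the restriction of the ambient bilinear form $\scalarprod{\cdot}{\cdot}$ to the linear subspace $p^\perp \subset \R^{n+2}$; this restriction is non-degenerate because $\scalarprod{p}{p} \neq 0$, so the polar hyperplane of any point of $\p{p}^\perp$ is well-defined inside $\p{p}^\perp$. In parallel, from Definition~\ref{def:involution-projection} I have the explicit representative
\[
  \pi_p(x) = x - \frac{\scalarprod{x}{p}}{\scalarprod{p}{p}}\, p,
\]
which manifestly satisfies $\scalarprod{\pi_p(x)}{p} = 0$, so $\pi_{\p{p}}(\p{x})$ genuinely lies in $\p{p}^\perp$ and is a legitimate candidate for a pole.

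Next I would characterize both hyperplanes of $\p{p}^\perp$ as the zero sets of a linear form on $\p{p}^\perp$ and compare. By definition, $\pi_{\p{p}}^*(\p{x}) = \p{x}^\perp \cap \p{p}^\perp$ consists of those $\p{y} \in \p{p}^\perp$ with $\scalarprod{y}{x} = 0$. The polar hyperplane of $\pi_{\p{p}}(\p{x})$ inside $\p{p}^\perp$ with respect to $\widetilde{\quadric}$ consists of those $\p{y} \in \p{p}^\perp$ with $\scalarprod{y}{\pi_p(x)} = 0$. For any $\p{y} \in \p{p}^\perp$ I have $\scalarprod{y}{p} = 0$, so bilinearity yields
\[
  \scalarprod{y}{\pi_p(x)} = \scalarprod{y}{x} - \frac{\scalarprod{x}{p}}{\scalarprod{p}{p}} \scalarprod{y}{p} = \scalarprod{y}{x}.
\]
Thus the two defining conditions coincide on $\p{p}^\perp$, which identifies the two hyperplanes of $\p{p}^\perp$ and finishes the proof.

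There is essentially no obstacle here: the content of the statement is precisely that $\pi_p$ projects away the $p$-component of $x$, and that component is invisible to any $y \in \p{p}^\perp$. The only minor point worth flagging is that one implicitly excludes the degenerate case $\p{x} = \p{p}$ (where $\pi_p(x)$ is the zero vector and $\pi_p^*(x)$ is all of $\p{p}^\perp$); in all other cases the argument above is a direct verification and no geometric case analysis is needed.
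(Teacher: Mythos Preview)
Your proof is correct. The paper states this proposition without proof, treating it as immediate from the definitions; your argument is exactly the one-line bilinear verification that justifies it.
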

If we restrict the polar projection $\pi_{\p{p}}^*$ to the quadric $\quadric$ we obtain a map to the hyperplanes of $\p{p}^\perp$,
which are poles of image points of the projection $\pi_{\p{p}}$ (see Figure \ref{fig:polar-projection}).
This map leads to a double cover of the polar hyperplanes (cf.\ Proposition \ref{prop:involution-projection}).
\begin{proposition}
  The restriction of the polar projection onto the quadric $\pi_{\p{p}}^*\restrict{\mathcal{Q}}$
  is a double cover of the set of all hyperplanes that are polar to the points in $\pi_{\p{p}}(\mathcal{Q})$
  with branch locus $\mathcal{Q} \cap \p{p}^\perp$.
\end{proposition}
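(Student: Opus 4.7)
The plan is to factor $\pi_{\p{p}}^*\restrict{\mathcal{Q}}$ through $\pi_{\p{p}}\restrict{\mathcal{Q}}$, and then apply Proposition~\ref{prop:involution-projection}(\ref{prop:involution-projection-double-cover}). By the proposition immediately preceding the statement, for every $\p{x} \in \mathcal{Q}$ the hyperplane $\pi_{\p{p}}^*(\p{x}) \subset \p{p}^\perp$ is the polar in $\p{p}^\perp$ of the point $\pi_{\p{p}}(\p{x})$ with respect to $\widetilde{\mathcal{Q}} = \mathcal{Q} \cap \p{p}^\perp$. Denoting this polarity map by $P$, I would record the identity
\[
  \pi_{\p{p}}^*\restrict{\mathcal{Q}} \;=\; P \circ \pi_{\p{p}}\restrict{\mathcal{Q}},
\]
which reduces everything to properties of $P$ and of the central projection $\pi_{\p{p}}\restrict{\mathcal{Q}}$.

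Next I would verify that $P$ is a bijection between points and hyperplanes of $\p{p}^\perp$, so that its image of $\pi_{\p{p}}(\mathcal{Q})$ is exactly the set of hyperplanes described in the statement. This amounts to checking that $\widetilde{\mathcal{Q}}$ is non-degenerate, which follows from Lemma~\ref{lemma:orthogonal-subspaces} applied to the hyperplane $\p{p}^\perp$ together with $\p{p} \notin \mathcal{Q}$: the polar of $\p{p}^\perp$ is the single point $\p{p}$ of non-isotropic signature, forcing $\tilde{t} = \tilde{t}_\perp = 0$.

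Finally I would invoke Proposition~\ref{prop:involution-projection}(\ref{prop:involution-projection-double-cover}), which states that $\pi_{\p{p}}\restrict{\mathcal{Q}}$ is a double cover of $\pi_{\p{p}}(\mathcal{Q})$ with branch locus $\mathcal{Q} \cap \p{p}^\perp$, and then transfer the double-cover structure across the bijection $P$: the two preimages of a hyperplane $H = P(\p{y})$ under $\pi_{\p{p}}^*\restrict{\mathcal{Q}}$ are exactly the two preimages of $\p{y}$ under $\pi_{\p{p}}\restrict{\mathcal{Q}}$, so they collapse precisely on $\mathcal{Q} \cap \p{p}^\perp$. I do not expect a genuine obstacle here; the only subtle point is to confirm that ``polar to the points in $\pi_{\p{p}}(\mathcal{Q})$'' is read consistently as polarity within $\p{p}^\perp$ with respect to $\widetilde{\mathcal{Q}}$ (equivalently, as the intersection with $\p{p}^\perp$ of the ambient polar with respect to $\mathcal{Q}$), so that the target set of the double cover matches what the statement prescribes.
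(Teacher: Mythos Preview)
Your proposal is correct and matches the paper's (implicit) argument: the paper states this proposition without proof, relying on the immediately preceding proposition (which gives precisely your factorization $\pi_{\p{p}}^*\restrict{\mathcal{Q}} = P \circ \pi_{\p{p}}\restrict{\mathcal{Q}}$) together with Proposition~\ref{prop:involution-projection}\ref{prop:involution-projection-double-cover}. Your check that $\widetilde{\mathcal{Q}}$ is non-degenerate via Lemma~\ref{lemma:orthogonal-subspaces} is the right way to justify that $P$ is a bijection, and this non-degeneracy is indeed assumed in the paper's standing hypotheses from Section~\ref{sec:projection} onward.
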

\begin{remark}
  \label{rem:oriented-planes}
  The double cover can be interpreted as carrying the additional information of the orientation of these hyperplanes,
  where the involution $\sigma_{\p{p}}$ plays the role of \emph{orientation reversion}.
\end{remark}
By polarity every point $\p{x} \in \spheres$ corresponds to a $\quadric$-sphere $\p{x}^\perp \cap \quadric$ (see Definition \ref{def:Q-sphere}).
In the projection to $\pi_{\p{p}}(\quadric)$ it becomes a Cayley-Klein sphere (see Proposition \ref{prop:Q-sphere-projection}),
which is obtained from the point $\p{x}$ by the map
\[
  \sprojection{\p{p}} : \p{x} \mapsto \pi_{\p{p}}(\p{x}^\perp \cap \quadric)
\]
The polar projection $\pi_{\p{p}}^*$ of each point of a $\quadric$-sphere
yields a tangent plane of the polar Cayley-Klein sphere of $\sprojection{\p{p}}(\p{x})$ (see Definition \ref{def:polar-Cayley-Klein-sphere}).
The points of the polar Cayley-Klein sphere are therefore obtained by the map
\[
  \spprojection{\p{p}} : \p{x} \mapsto \cone{\quadric}{\p{x}} \cap \p{p}^\perp,
\]
where $\cone{\quadric}{\p{x}}$ is the cone of contact (see Definition \ref{def:cone-of-contact})
to $\quadric$ with vertex $\p{x}$ (see Figure \ref{fig:polar-projection}).
\begin{proposition}
  For $\p{x} \in \spheres$ the two Cayley-Klein spheres $\sprojection{\p{p}}(\p{x})$ and $\spprojection{\p{p}}(\p{x})$ are mutually polar
  Cayley-Klein spheres in $\p{p}^\perp$ with respect to $\widetilde{\quadric}$.
\end{proposition}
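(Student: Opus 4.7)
The plan is to show that both $\sprojection{\p{p}}(\p{x})$ and $\spprojection{\p{p}}(\p{x})$ are Cayley-Klein spheres in $\p{p}^\perp$ (with respect to $\secquadric$) sharing the common center $\pi_{\p{p}}(\p{x})$, and that their Cayley-Klein radii sum to $1$. By Definition \ref{def:polar-Cayley-Klein-sphere}, this makes them mutually polar.

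The first half is already done: Proposition \ref{prop:Q-sphere-projection} (applied with $\p{p}$ in place of $\p{q}$) identifies $\sprojection{\p{p}}(\p{x})$ as the Cayley-Klein sphere with center $\pi_{\p{p}}(\p{x})$ and Cayley-Klein radius $\mu = \scalarprod{x}{p}^2/\Delta_p(x)$, in the generic case $\Delta_p(x)\neq 0$. For the second half, I would restrict the quadratic form $\Delta_x(y) = \scalarprod{x}{y}^2 - \scalarprod{x}{x}\scalarprod{y}{y}$ of the cone of contact $\cone{\quadric}{\p{x}}$ to the hyperplane $\p{p}^\perp$. Decomposing $x = \tilde{x} + \alpha p$ with $\tilde{x} \coloneqq \pi_p(x)$ and $\alpha \coloneqq \scalarprod{x}{p}/\scalarprod{p}{p}$, any $y$ with $\scalarprod{p}{y}=0$ satisfies $\scalarprod{x}{y} = \scalarprod{\tilde{x}}{y}$, so
\[
\Delta_x(y) = \scalarprod{\tilde{x}}{y}^2 - \scalarprod{x}{x}\scalarprod{y}{y}.
\]
This is the defining quadratic form \eqref{eq:Cayley-Klein-sphere} of a Cayley-Klein sphere in $\p{p}^\perp$ centered at $\pi_{\p{p}}(\p{x})$, with radius $\tilde{\mu} = \scalarprod{x}{x}/\scalarprod{\tilde{x}}{\tilde{x}}$. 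Substituting $\scalarprod{\tilde{x}}{\tilde{x}} = \scalarprod{x}{x} - \scalarprod{x}{p}^2/\scalarprod{p}{p} = -\Delta_p(x)/\scalarprod{p}{p}$ and rearranging via the identity $\scalarprod{x}{x}\scalarprod{p}{p} = \scalarprod{x}{p}^2 - \Delta_p(x)$ gives $\tilde{\mu} = 1-\mu$, as desired.

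The main technical nuisance is handling the degenerate possibilities listed in Proposition \ref{prop:Q-sphere-projection}: when $\p{x} \in \p{p}^\perp$ the projected sphere degenerates to a hyperplane ($\mu=0$) while its counterpart becomes the full cone of contact ($\tilde{\mu}=1$), and the two roles swap when $\p{x} \in \quadric$; these are precisely the boundary pair singled out in the remark following Definition \ref{def:polar-Cayley-Klein-sphere}. The horospherical case $\Delta_p(x)=0$ (where both sides become Cayley-Klein horospheres) is covered by the extended sphere conventions of Remark \ref{rem:Cayley-Klein-spheres} and amounts to the same linear matching of coefficients within the pencil $\quadric \wedge \cone{\quadric}{\p{x}}$ restricted to $\p{p}^\perp$ (Example \ref{ex:maximal-contact-pencil}).
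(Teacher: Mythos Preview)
Your argument is correct. The paper states this proposition without a formal proof, relying instead on the short discussion preceding it: since $\pi_{\p{p}}^*(\p{y})$ is the polar hyperplane (with respect to $\secquadric$) of $\pi_{\p{p}}(\p{y})$ for each $\p{y}\in\p{x}^\perp\cap\quadric$, the polar projections of the points on the $\quadric$-sphere are exactly the tangent hyperplanes of the polar Cayley-Klein sphere of $\sprojection{\p{p}}(\p{x})$, and the touching points of those tangent hyperplanes are picked out by intersecting the cone of contact $\cone{\quadric}{\p{x}}$ with $\p{p}^\perp$.

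Your route is somewhat different: rather than invoking the polarity interpretation, you restrict the quadratic form $\Delta_x$ of $\cone{\quadric}{\p{x}}$ to $\p{p}^\perp$ and read off directly that it defines a Cayley-Klein sphere concentric with $\sprojection{\p{p}}(\p{x})$ whose radius satisfies $\tilde\mu=1-\mu$. This is a clean computational verification that bypasses the duality picture; it has the advantage of making the radius relation explicit and of handling the degenerate cases ($\mu=0$, $\mu=1$, horospheres) uniformly through the same formula. The paper's conceptual argument, on the other hand, makes the geometric meaning of $\spprojection{\p{p}}$ transparent and explains \emph{why} the cone of contact is the right object, without any coordinate computation.
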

This leads to a polar version of Proposition \ref{prop:sphere-double-cover}.
\begin{proposition}
  \label{prop:polar-sphere-double-cover}
  The map $\spprojection{\p{p}}$ constitutes a double cover of the set of Cayley-Klein spheres
  which are polar to Cayley-Klein spheres in $\pi_{\p{p}}(\quadric)$ with respect to $\secquadric$.
  Its ramification points are given by $(\p{p}^\perp \cup \{\p{p}\}) \cap \spheres$,
  and its covering involution is $\sigma_{\p{p}}$.
\end{proposition}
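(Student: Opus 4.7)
The plan is to derive this proposition directly from Proposition~\ref{prop:sphere-double-cover} via the polarity relation between $\sprojection{\p{p}}(\p{x})$ and $\spprojection{\p{p}}(\p{x})$ that was established in the preceding proposition. Polarity with respect to $\secquadric$ in $\p{p}^\perp$ is an involutive bijection on projective subspaces, and by Definition~\ref{def:polar-Cayley-Klein-sphere} it sends each Cayley-Klein sphere to another (concentric) Cayley-Klein sphere via the involution $\mu\mapsto 1-\mu$ on each concentric pencil. In particular, it restricts to a bijection from the image of $\sprojection{\p{p}}$ (the Cayley-Klein spheres in $\pi_{\p{p}}(\quadric)$) onto the image of $\spprojection{\p{p}}$ (the Cayley-Klein spheres polar to them), which is precisely the target set named in the proposition.

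Denoting this polarity by $P$, the preceding proposition can be rephrased as $\spprojection{\p{p}}=P\circ \sprojection{\p{p}}$. Because $P$ is a bijection between the relevant sets, the double cover property, the description of the branch locus as $(\p{p}^\perp\cup\{\p{p}\})\cap\spheres$, and the identification of the covering involution established for $\sprojection{\p{p}}$ in Proposition~\ref{prop:sphere-double-cover} all transfer verbatim to $\spprojection{\p{p}}$: the two preimages of $\spprojection{\p{p}}(\p{x})$ are exactly the two preimages under $\sprojection{\p{p}}$ of the uniquely determined point $P^{-1}(\spprojection{\p{p}}(\p{x}))=\sprojection{\p{p}}(\p{x})$, and they coincide precisely where the preimages under $\sprojection{\p{p}}$ do.

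To make the last step explicit and isolate the only item not already contained in previous results, I would verify directly that $\sigma_{\p{p}}$ is a deck transformation of $\spprojection{\p{p}}$. Since $\sigma_{\p{p}}\in\PO(r,s,t)_{\p{p}}$ preserves the quadric $\quadric$, the cone of contact transforms equivariantly,
\[
  \cone{\quadric}{\sigma_{\p{p}}(\p{x})}=\sigma_{\p{p}}\bigl(\cone{\quadric}{\p{x}}\bigr),
\]
and, because $\sigma_{\p{p}}$ fixes every point of $\p{p}^\perp$,
\[
  \spprojection{\p{p}}(\sigma_{\p{p}}(\p{x}))=\sigma_{\p{p}}\bigl(\cone{\quadric}{\p{x}}\bigr)\cap\p{p}^\perp=\cone{\quadric}{\p{x}}\cap\p{p}^\perp=\spprojection{\p{p}}(\p{x}).
\]

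The main (and really only) potential obstacle is verifying that polarity with respect to $\secquadric$ is a well-defined bijection between the two families of Cayley-Klein spheres involved, including the degenerate cases $\mu=0$ (hyperplanes) and $\mu=1$ (tangent cones); this is handled by the convention that these two limit cases are treated as mutually polar, as noted right after Definition~\ref{def:polar-Cayley-Klein-sphere}. Beyond this formal check, no further computation is required, and the proposition follows by pulling the statements of Proposition~\ref{prop:sphere-double-cover} through the bijection $P$.
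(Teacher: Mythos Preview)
Your proposal is correct and matches the paper's approach exactly: the paper does not give an explicit proof but simply presents the proposition as ``a polar version of Proposition~\ref{prop:sphere-double-cover},'' relying on the preceding proposition that $\sprojection{\p{p}}(\p{x})$ and $\spprojection{\p{p}}(\p{x})$ are mutually polar. Your elaboration via the factorization $\spprojection{\p{p}}=P\circ\sprojection{\p{p}}$ and the direct check that $\sigma_{\p{p}}$ is a deck transformation makes this implicit argument precise.
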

\begin{remark}
  \label{rem:Laguerre-spheres}
  Following Remark \ref{rem:oriented-planes} we may endow the Cayley-Klein spheres that are polar to Cayley-Klein spheres
  in $\pi_{\p{p}}(\quadric)$ with an orientation by lifting them to planar sections of $\quadric$, i.e.\ $\quadric$-spheres.
  We call the planar section, or equivalently their oriented projections, \emph{Laguerre spheres (of $\pi_{\p{p}}(\quadric)$)}.
  The involution $\sigma_{\p{p}}$ acts on Laguerre spheres as orientation reversion.
\end{remark}
The Cayley-Klein distance of two points in $\spheres$ describes the Cayley-Klein tangent distance
between the two corresponding Cayley-Klein spheres in the projection to $\pi_{\p{p}}(\quadric)$.
This is the polar version of Proposition \ref{prop:sphere-angle}.
\begin{proposition}
  \label{prop:tangent-distance}
  Let $\p{x}_1, \p{x}_2 \in \spheres$ such that the corresponding $\quadric$-spheres intersect.
  Let
  \[
    \p{y} \in \quadric \cap \p{x}_1^\perp \cap \p{x}_2^\perp
  \]
  be a point in that intersection, and $\p{\tilde{y}} \coloneqq \pi_{\p{p}}^*(\p{y})$ its polar projection.
  Let $S_1, S_2$ be the two polar projected Cayley-Klein spheres corresponding to $\p{x}_1$, $\p{x}_2$ respectively
  \[
    S_1 \coloneqq \spprojection{\p{p}}(\p{x}_1), \qquad S_2 \coloneqq \spprojection{\p{p}}(\p{x}_2).
  \]
  Let $\p{\tilde{y}}_1$, $\p{\tilde{y}}_2$ be the two tangent points of $\p{\tilde{y}}$ to $S_1$, $S_2$ respectively.
  Then
  \[
    \ck{\quadric}{\p{x}_1}{\p{x}_2} = \ck{\widetilde{\quadric}}{\p{\tilde{y}}_1}{\p{\tilde{y}}_2}.
  \]
\end{proposition}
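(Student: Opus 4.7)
The plan is to determine explicit homogeneous coordinates for the tangent points $\p{\tilde{y}}_i$ in terms of $\p{y}$, $\p{x}_i$, $\p{p}$, and then evaluate both Cayley-Klein distances by direct calculation in $\R^{n+2}$. Throughout, I will exploit the three available orthogonality relations: $\scalarprod{y}{y} = 0$ (since $\p{y} \in \quadric$) and $\scalarprod{x_i}{y} = 0$ for $i=1,2$ (since $\p{y}$ lies on both $\quadric$-spheres).

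Because $\p{y}$ is a common point of the two $\quadric$-spheres, its polar projection $\pi_{\p{p}}^*(\p{y}) = (\p{y} \wedge \p{p})^\perp$ is a common tangent hyperplane of $S_1$ and $S_2$ in $\p{p}^\perp$, so $\p{\tilde{y}}_i$ is the unique point of $S_i$ lying in this hyperplane and must lie in the plane $\p{x}_i \wedge \p{y} \wedge \p{p}$. Making the ansatz $\tilde{y}_i = \alpha x_i + \beta y + \gamma p$, the conditions $\scalarprod{p}{\tilde{y}_i} = 0$ and $\scalarprod{y}{\tilde{y}_i} = 0$, combined with $\scalarprod{x_i}{y} = \scalarprod{y}{y} = 0$, force $\gamma = 0$ and $\beta/\alpha = -\scalarprod{p}{x_i}/\scalarprod{p}{y}$, giving
\[
\tilde{y}_i = \scalarprod{p}{y}\, x_i - \scalarprod{p}{x_i}\, y.
\]
A short computation then yields $\scalarprod{x_i}{\tilde{y}_i} = \scalarprod{p}{y}\scalarprod{x_i}{x_i}$ and $\scalarprod{\tilde{y}_i}{\tilde{y}_i} = \scalarprod{p}{y}^2\scalarprod{x_i}{x_i}$, confirming both that $\p{\tilde{y}}_i \in S_i$ (the identity $\scalarprod{x_i}{\tilde{y}_i}^2 = \scalarprod{x_i}{x_i}\scalarprod{\tilde{y}_i}{\tilde{y}_i}$ holds) and that the tangent plane of $S_i$ at $\p{\tilde{y}}_i$, computed via Lemma \ref{lem:Cayley-Klein-sphere-polarity}, agrees with $\pi_{\p{p}}^*(\p{y})$.

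With the formula in hand, expand $\scalarprod{\tilde{y}_1}{\tilde{y}_2}$ and observe that every term carrying a factor of $\scalarprod{x_1}{y}$, $\scalarprod{x_2}{y}$, or $\scalarprod{y}{y}$ vanishes, leaving $\scalarprod{\tilde{y}_1}{\tilde{y}_2} = \scalarprod{p}{y}^2\scalarprod{x_1}{x_2}$. The common factor $\scalarprod{p}{y}^4$ then cancels in the Cayley-Klein ratio, yielding $\ck{\widetilde{\quadric}}{\p{\tilde{y}}_1}{\p{\tilde{y}}_2} = \ck{\quadric}{\p{x}_1}{\p{x}_2}$. The only mildly delicate step is isolating the correct ansatz for $\p{\tilde{y}}_i$; after that the computation is mechanical and parallels the proof of Proposition \ref{prop:sphere-angle}. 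A conceptually cleaner alternative would be to derive the statement by dualizing Proposition \ref{prop:sphere-angle} via polarity with respect to $\quadric$, but the direct calculation above has the virtue of being essentially four line identities using the vanishings $\scalarprod{x_i}{y} = \scalarprod{y}{y} = 0$.
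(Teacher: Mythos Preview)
Your proof is correct. The paper's own proof is the one-line polarity argument you mention at the end: it simply invokes Proposition~\ref{prop:sphere-angle} and observes that under polarity with respect to $\quadric$, an intersection point of the projected spheres becomes a common tangent hyperplane, and the poles of tangent planes become the tangent points, so the Cayley-Klein intersection angle becomes the tangent distance.

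Your route is genuinely different: you produce explicit homogeneous coordinates $\tilde{y}_i = \scalarprod{p}{y}\,x_i - \scalarprod{p}{x_i}\,y$ for the tangent points and verify the identity by direct computation, using the vanishings $\scalarprod{y}{y} = \scalarprod{x_i}{y} = 0$ to collapse all cross terms. This is longer but self-contained: it does not rely on Proposition~\ref{prop:sphere-angle} at all, and the explicit formula for $\tilde{y}_i$ is a useful byproduct. The paper's approach is more conceptual and reveals that Propositions~\ref{prop:sphere-angle} and~\ref{prop:tangent-distance} are a single statement viewed from dual sides, but it presupposes the lengthy calculation already carried out in the proof of Proposition~\ref{prop:sphere-angle}. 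One small remark: your claim that the tangent point ``must lie in the plane $\p{x}_i \wedge \p{y} \wedge \p{p}$'' is really a heuristic for the ansatz rather than an a priori fact; the argument stands because you subsequently \emph{verify} that the constructed point lies on $S_i$ and that its tangent plane is $\pi_{\p{p}}^*(\p{y})$, and uniqueness of the tangent point then closes the argument.
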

\begin{proof}
  Consider Proposition \ref{prop:sphere-angle}.
  By polarity, the intersection point of the spheres becomes a common tangent hyperplane,
  and the intersection angle becomes the distance of the two tangent points.
\end{proof}
\begin{remark}
  \label{rem:tangent-distance}
  Following Remark \ref{rem:oriented-planes} and Remark \ref{rem:Laguerre-spheres}, a common point in the lift
  of two (oriented) Laguerre spheres corresponds to a common oriented tangent hyperplane.
  Thus the Cayley-Klein distance on $\spheres$ is the Cayley-Klein tangent distance between two (oriented) Laguerre spheres
  (cf.\ Remark~\ref{rem:intersection-angle}~\ref{rem:intersection-angle-lift}).
\end{remark}

\subsection{Hyperbolic Laguerre geometry}
\label{sec:hyperbolic-laguerre-geometry}
When projecting down from Möbius geometry to hyperbolic geometry (cf.\ Section \ref{sec:hyperbolic-geometry-from-mobius})
we obtain a double cover of the points in hyperbolic space.
Hyperbolic planes, on the other hand, are represented by points in deSitter space, or ``outside'' hyperbolic space, by polarity.
Thus, to obtain hyperbolic Laguerre geometry, instead of the Möbius quadric,
we choose a quadric that projects to deSitter space.
\begin{definition}\
  \label{def:laguerre-hyperbolic}
  \nobreakpar
  \begin{enumerate}
  \item We call the quadric
    \[
      \laghyp \subset \RP^{n+1}
    \]
    corresponding to the standard bilinear form of signature $(n,2)$ in $\R^{n+2}$, i.e.,
    \[
      \scalarprod{x}{y} \coloneqq x_1y_1 + \ldots + x_ny_n - x_{n+1}y_{n+1} - x_{n+2}y_{n+2}
    \]
    for $x, y \in \R^{n+2}$, the \emph{hyperbolic Laguerre quadric}.
  \item The corresponding transformation group
    \[
      \laghyptrafos \coloneqq \PO(n,2)
    \]
    is called the group of \emph{hyperbolic Laguerre transformations}.
  \end{enumerate}
\end{definition}
To recover hyperbolic space in the projection, choose a point $\p{p} \in \RP^{n+1}$ with $\scalarprod{p}{p} < 1$, w.l.o.g.,
\[
  \p{p} \coloneqq [e_{n+2}] = [0, \ldots, 0, 1].
\]
The corresponding involution and projection take the form
\[
  \begin{aligned}
    &\sigma_{\p{p}} : [x_1, \ldots, x_{n+1}, x_{n+2}] \mapsto [x_1, \ldots, , x_{n+1}, -x_{n+2}],\\
    &\pi_{\p{p}} : [x_1, \ldots, x_{n+1}, x_{n+2}] \mapsto [x_1, \ldots, , x_{n+1}, 0].
  \end{aligned}
\]
The quadric in the polar hyperplane of $\p{p}$
\[
  \widetilde{\mob} \coloneqq \laghyp \cap \p{p}^\perp
\]
has signature $(n,1)$, and its ``inside'' $\hyp = \widetilde{\mob}^-$ can be identified with $n$-dimensional hyperbolic space,
while its ``outside'' $\ds = \widetilde{\mob}^+$ can be identified with $n$-dimensional deSitter space (cf.~Section \ref{sec:hyperbolic-space}).

Under the projection $\pi_{\p{p}}$ the hyperbolic Laguerre quadric projects down to the compactified deSitter space
\[
  \pi_{\p{p}}(\laghyp) = \cds = \ds \cup \widetilde{\mob}.
\]
Thus the polar projection $\pi_{\p{p}}^*$ of a point on $\laghyp$ yields a hyperbolic hyperplane,
where the double cover can be interpreted as encoding the orientation of that hyperplane (see Figure~\ref{fig:hyperbolic-laguerre-geometry-lines}).
\begin{remark}
  The quadric $\laghyp$ is the projective version of the hyperboloid $\widetilde{\dS}^n$ introduced in Section \ref{sec:hyperbolic-space}
  as a double cover of deSitter space.
\end{remark}

We call the hyperplanar sections of $\laghyp$, i.e.\ the $\laghyp$-spheres, \emph{hyperbolic Laguerre spheres}.
By polarity, we identify the space of hyperbolic Laguerre spheres with the whole space
\begin{equation}
  \label{eq:laguerre-spheres-hyperbolic}
  \spheres = \RP^{n+1}.
\end{equation}
\begin{remark}
  As we have done in Section \ref{sec:elementary}, one might want to exclude oriented hyperplanes from the space of hyperbolic Laguerre spheres and thus take $\spheres = \RP^{n+1} \setminus \laghyp$ (cf.\ Remark~\ref{rem:Q-spheres}~\ref{rem:Q-spheres-degenerate}).
\end{remark}
Under the polar projection $\spprojection{\p{p}}$ points in $\spheres$ are mapped to the spheres that are polar to deSitter spheres
(see Table \ref{tab:hyperbolic-laguerre-geometry} and Figure~\ref{fig:hyperbolic-laguerre-geometry-circles}).
\begin{theorem}
  \label{thm:hyperbolic-Laguerre-spheres}
  Under the map
  \[
    \spprojection{\p{p}} : \p{x} \mapsto \cone{\laghyp}{\p{x}} \cap \p{p}^\perp
  \]
  a point $\p{x} \in \spheres = \RP^{n+1}$
  \begin{itemize}
  \item with $\p{x} \in \laghyp$ is mapped to a \emph{hyperbolic hyperplane} with pole $\pi_{\p{p}}(\p{x}) \in \cds$,
  \item with $\pscalarprod{p}{x}{x} < 0$ is mapped to a \emph{hyperbolic sphere} in $\chyp$ with center $\pi_{\p{p}}(\p{x})$. In the normalization $\pscalarprod{p}{x}{x} = -1$ its hyperbolic radius is given by $r \geq 0$, where $\sinh^2 r = x_{n+2}^2$,
  \item with $\pscalarprod{p}{x}{x} = 0$ is mapped to a \emph{hyperbolic horosphere}.
  \item with $\pscalarprod{p}{x}{x} > 0$ and $\scalarprod{x}{x} < 0$ is mapped to a \emph{hyperbolic surface of constant distance} in $\chyp$
    to a hyperbolic hyperplane with pole $\pi_{\p{p}}(x)$,
    In the normalization $\pscalarprod{p}{x}{x} = 1$ its hyperbolic distance is given by $r \geq 0$, where $\cosh^2 r = x_{n+2}^2$.
  \item with $\pscalarprod{p}{x}{x} > 0$ and $\scalarprod{x}{x} > 0$ is mapped to a \emph{deSitter sphere} in $\cds$ with center $\pi_{\p{p}}(\p{x})$.
    In the normalization $\pscalarprod{p}{x}{x} = 1$ its deSitter radius is given by $r \geq 0$, where $\cos^2 r = x_{n+2}^2$.

  \end{itemize}
\end{theorem}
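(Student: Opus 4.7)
The plan is to reduce everything to Proposition \ref{prop:Q-sphere-projection}, which already classifies the non-polar projected sphere $\sprojection{\p{p}}(\p{x})$, and then pass to the polar Cayley-Klein sphere (Definition \ref{def:polar-Cayley-Klein-sphere}) and identify its type via the hyperbolic/deSitter classification given in Section \ref{sec:hyperbolic-space}.

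First I would unpack the coordinates. With $\p{p} = [e_{n+2}]$ we have $\scalarprod{p}{p} = -1$, and the decomposition $x = x_{n+2}\, p + \pi_p(x)$ immediately yields the two basic identities $\scalarprod{x}{p}^2 = x_{n+2}^2$ and $\pscalarprod{p}{x}{x} = \scalarprod{x}{x} + x_{n+2}^2$. Combined with \eqref{eq:quadratic-form-cone} this gives $\Delta_p(x) = \pscalarprod{p}{x}{x}$, and Proposition \ref{prop:Q-sphere-projection} then shows $\sprojection{\p{p}}(\p{x}) = \cksphere{\pi_{\p{p}}(\p{x})}{\mu}$ with Cayley-Klein radius $\mu = x_{n+2}^2/\pscalarprod{p}{x}{x}$. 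The map $\spprojection{\p{p}}$ produces the \emph{polar} Cayley-Klein sphere, so by Definition \ref{def:polar-Cayley-Klein-sphere}
\[
\spprojection{\p{p}}(\p{x}) = \cksphere{\pi_{\p{p}}(\p{x})}{1-\mu}, \qquad 1-\mu = \frac{\scalarprod{x}{x}}{\pscalarprod{p}{x}{x}}.
\]

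Second, I would locate the center $\pi_{\p{p}}(\p{x})$ using Proposition \ref{prop:Q-sphere-projection}: since $\scalarprod{p}{p} < 0$, the projection covers $\cds$, and the sign of $\Delta_p(x) = \pscalarprod{p}{x}{x}$ determines on which side of $\widetilde{\mob}$ the center lies—in $\ds$ if positive, on $\widetilde{\mob}$ if zero, in $\hyp$ if negative. Then, using the sign of $1-\mu$ and Proposition \ref{prop:Cayley-Klein-spheres}, I determine on which side the \emph{points} of $\spprojection{\p{p}}(\p{x})$ lie. The five cases fall out in order: $\p{x}\in\laghyp$ forces $\mu = 1$, $1-\mu=0$, giving the polar hyperplane (a hyperbolic hyperplane) by Proposition \ref{prop:Cayley-Klein-spheres}; $\pscalarprod{p}{x}{x}<0$ places the center in $\hyp$ with $1-\mu>1$, a hyperbolic sphere; $\pscalarprod{p}{x}{x}=0$ places the center on $\widetilde{\mob}$, a horosphere via Remark \ref{rem:Cayley-Klein-spheres}~\ref{rem:Cayley-Klein-horospheres}; $\pscalarprod{p}{x}{x}>0$ places the center in $\ds$, where the sign of $\scalarprod{x}{x}$ discriminates between $1-\mu<0$ (constant distance surface) and $0<1-\mu<1$ (deSitter sphere).

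Third, in each case I would extract the stated radius formula by applying the respective distance equation from Section \ref{sec:hyperbolic-space}—$\cosh^2 r$, $-\sinh^2 r$, or $\cos^2 r$—to $1-\mu = \scalarprod{x}{x}/\pscalarprod{p}{x}{x}$ after the indicated normalization, and eliminating $\scalarprod{x}{x}$ via $\pscalarprod{p}{x}{x}-x_{n+2}^2$. The main obstacle, such as it is, lies not in any single computation but in accurately tracking signs: the polar operation $\mu\mapsto 1-\mu$ can move the points of $\spprojection{\p{p}}(\p{x})$ to a different side of $\widetilde{\mob}$ than the center, and the identification of a "hyperbolic radius" as opposed to a "hyperbolic distance to a polar hyperplane" must be chosen according to which of the two sides is in play. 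Once the bookkeeping is done, each case reduces to a one-line substitution.
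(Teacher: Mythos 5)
Your proposal is correct and follows precisely the route the paper leaves implicit: Theorem~\ref{thm:hyperbolic-Laguerre-spheres} is stated without proof, and the intended argument is the one you give, i.e.\ the proof of the analogous Möbius-case proposition in Section~\ref{sec:hyperbolic-geometry-from-mobius} (Proposition~\ref{prop:Q-sphere-projection} together with $\Delta_p(x)=\pscalarprod{p}{x}{x}$ and $\mu = x_{n+2}^2/\pscalarprod{p}{x}{x}$), composed with the polarity $\mu\mapsto 1-\mu$ of Definition~\ref{def:polar-Cayley-Klein-sphere} and the catalogue of hyperbolic/deSitter Cayley-Klein spheres from Section~\ref{sec:hyperbolic-space}. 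One caveat in the last case: Section~\ref{sec:hyperbolic-space} supplies no distance formula for a deSitter sphere, and your recipe of setting $\cos^2 r = 1-\mu = 1-x_{n+2}^2$ yields $\sin^2 r = x_{n+2}^2$ rather than the stated $\cos^2 r = x_{n+2}^2$; the paper's ``deSitter radius'' is the complementary angle, read off from the lift $[y,\pm\cos r]$ in Table~\ref{tab:hyperbolic-laguerre-geometry} (equivalently from the radius $\mu$ of $\sprojection{\p{p}}(\p{x})$ rather than of its polar sphere), so that convention must be fixed explicitly rather than imported from the hyperbolic formulas.
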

\begin{remark}\
  \label{rem:hyperbolic-Laguerre-spheres}
  \nobreakpar
  \begin{enumerate}
  \item
    \label{rem:hyperbolic-Laguerre-spheres-distinction}
    The points $\p{x}$ representing hyperbolic spheres/distance hypersurfaces/horospheres can be distinguished
    from the points representing deSitter spheres by the first satisfying $\scalarprod{x}{x} < 0$,
    i.e.\ lying ``inside'' the hyperbolic Laguerre quadric,
    and the latter satisfying $\scalarprod{x}{x} > 0$,
    i.e.\ lying ``outside'' the hyperbolic Laguerre quadric.
  \item The map $\spprojection{\p{p}}$ is a double cover of the spheres described in
    Theorem \ref{thm:hyperbolic-Laguerre-spheres}, branching on the subset of hyperbolic points, and deSitter null-spheres
    (see Proposition \ref{prop:polar-sphere-double-cover}).
    We interpret the lift to carry the orientation of the hyperbolic Laguerre spheres.
    Upon the normalization given in Theorem \ref{thm:hyperbolic-Laguerre-spheres} the orientation is encoded in the sign of the $x_{n+2}$-component.
    The involution $\sigma_{\p{p}}$ acts on the set of hyperbolic Laguerre spheres as orientation reversion.
  \item The Cayley-Klein distance induced on $\spheres$ by the hyperbolic Laguerre quadric $\laghyp$ measures the Cayley-Klein tangent distance
    between the corresponding hyperbolic Laguerre spheres (see Proposition \ref{prop:tangent-distance})
    if they possess a common oriented tangent hyperplane (see Remark~\ref{rem:tangent-distance}).
  \item Using the projection $\pi_{\p{p}}$ instead of the polar projection $\pi_{\p{p}}^*$ hyperbolic Laguerre geometry
    may be interpreted as the ``Möbius geometry'' of deSitter space (cf.\ Section \ref{sec:classification}).
  \end{enumerate}
\end{remark}
\begin{table}[H]
  \centering
  \def\arraystretch{2.0}
  \begin{tabular}{c|c}
    \textbf{hyperbolic geometry} & \textbf{Laguerre geometry}\\
    \hline\hline
    \makecell{\emph{hyperplane}\\ with pole $\p{y} \in \ds$, $y \in \dS^n$} & $[y, \pm 1] \in \laghyp$ \\
    \hline
    \makecell{\emph{sphere}\\ with center $\p{y} \in \hyp$, $y \in \HH^n$\\ and radius $r > 0$} & $[y, \pm \sinh r] \in \laghyp^- \cap \cone{\laghyp}{\p{p}}^-$ \\
    \hline
    \makecell{\emph{horosphere}\\
    with center $\p{y} \in \widetilde{\mob}$} & $[y, \pm e^r] \in \hyplag^- \cap \cone{\hyplag}{\p{p}}$ \\
    \hline
    \makecell{\emph{surface of constant distance}\\ $r > 0$ to a hyperplane\\ with pole $\p{y} \in \ds$, $y \in \dS^n$} & $[y, \pm \cosh r] \in \laghyp^- \cap \cone{\laghyp}{\p{p}}^+$ \\
    \hline
    \makecell{\emph{deSitter sphere}\\ with center $\p{y} \in \ds$, $y \in \dS^n$\\ and deSitter radius $r > 0$} & $[y, \pm \cos r] \in \laghyp^+$
  \end{tabular}
  \caption{
    Laguerre spheres in hyperbolic Laguerre geometry.
  }
  \label{tab:hyperbolic-laguerre-geometry}
\end{table}

\subsubsection{Hyperbolic Laguerre transformations}
\label{sec:hyperbolic-laguerre-transformations}
\begin{figure}
  \centering
  \includegraphics[width=0.32\textwidth]{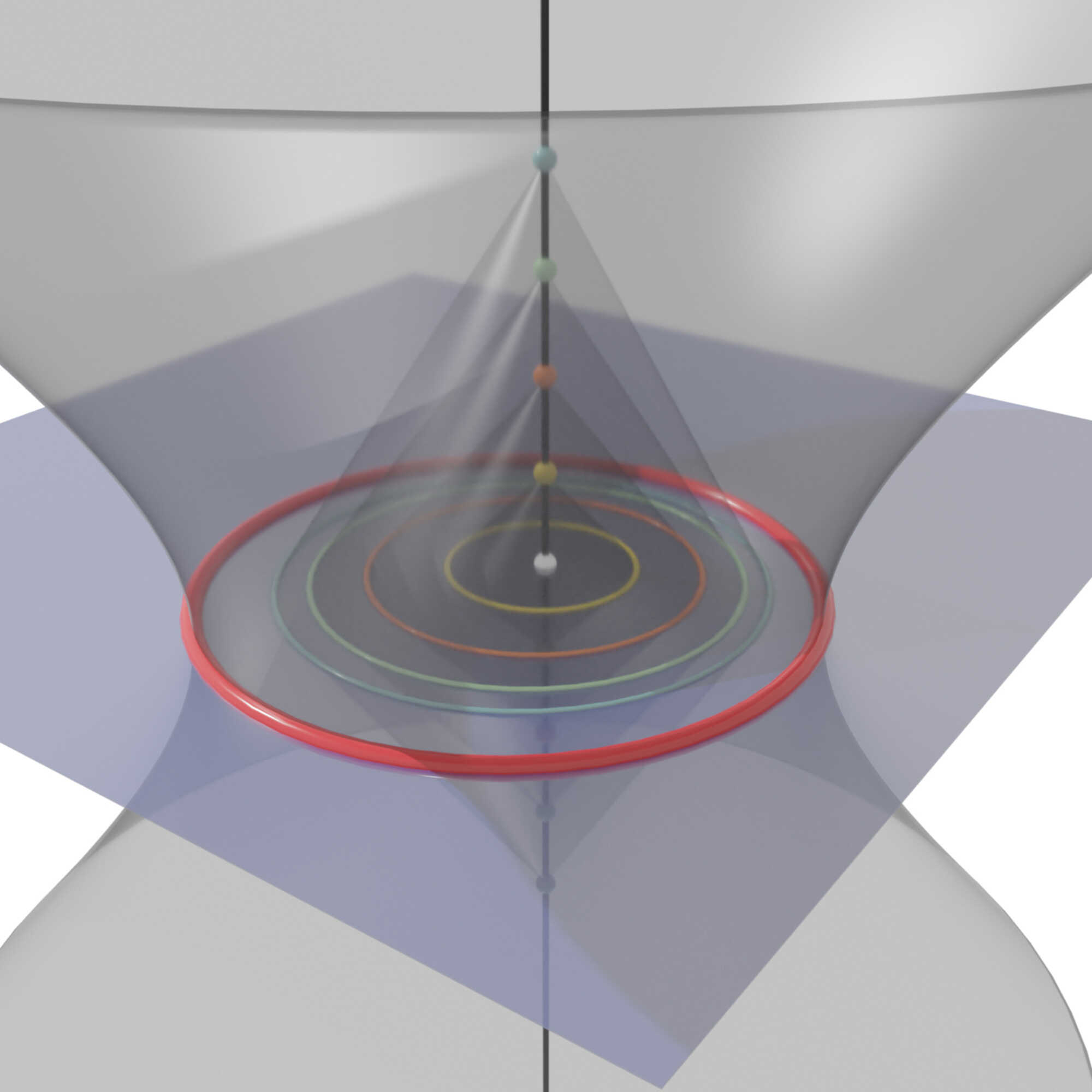}
  \includegraphics[width=0.32\textwidth]{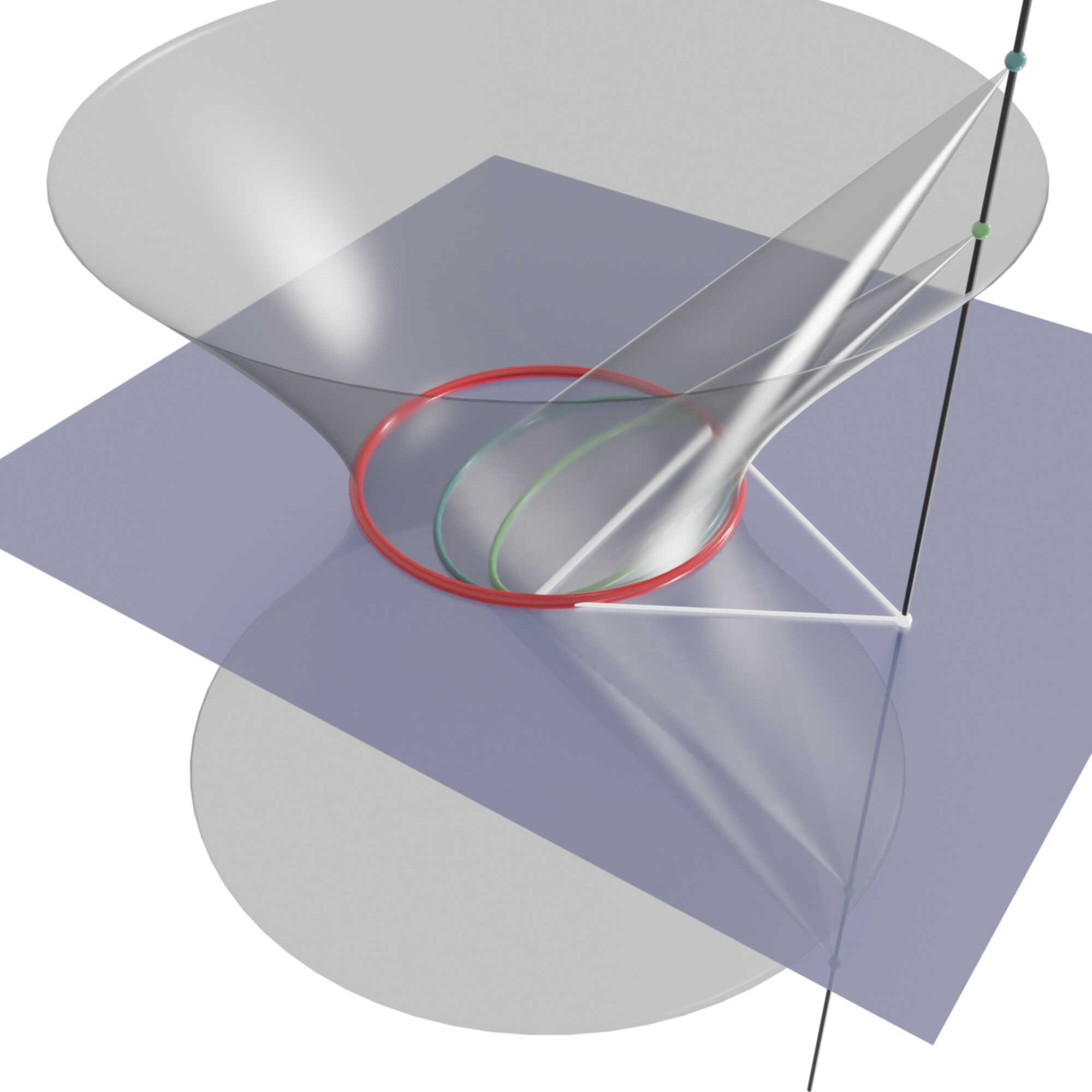}
  \includegraphics[width=0.32\textwidth]{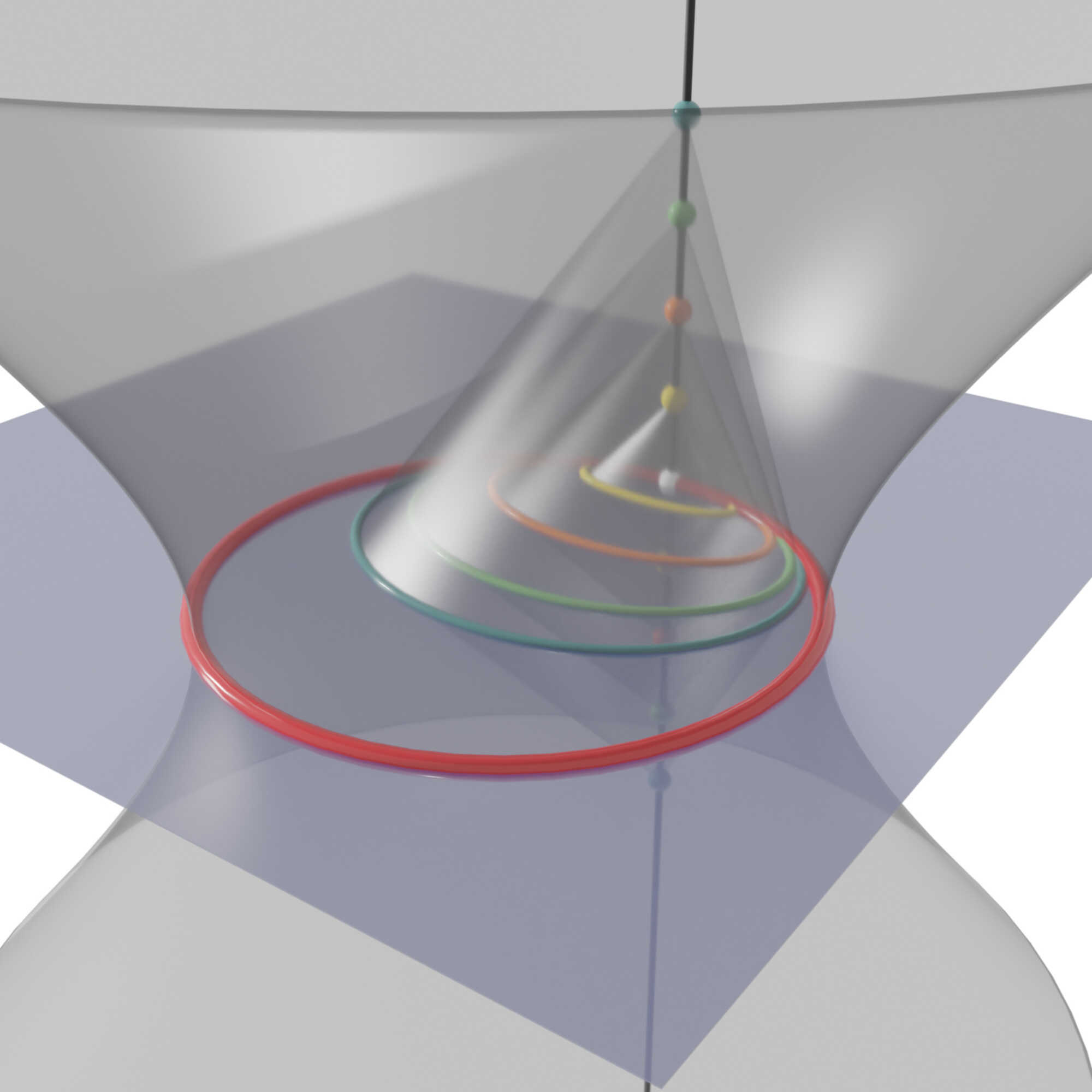}
  \caption{
    Concentric circles in the hyperbolic plane.
    \emph{Left:} Concentric hyperbolic circles.
    \emph{Middle:} Curves of constant distance to a common line.
    \emph{Right:} Concentric horocycles with center on the absolute conic.
  }
  \label{fig:hyperbolic-concentric-circles}
\end{figure}
Every (local) transformation mapping (non-oriented) hyperbolic hyperplanes to hyperbolic hyperplanes (not necessarily points to points)
while preserving (tangency to) hyperbolic spheres can be lifted and extended to a hyperbolic Laguerre transformation
(see Theorem \ref{thm:mobius-transformation-lift}).

The hyperbolic Laguerre group
\[
  \laghyptrafos = \PO(n,2)
\]
preserves the hyperbolic Laguerre quadric $\laghyp$
and maps planar sections of $\laghyp$ to planar sections of $\laghyp$.
Under the polar projection this means it maps oriented hyperplanes to oriented hyperplanes,
or hyperbolic Laguerre spheres to hyperbolic Laguerre spheres,
while preserving the tangent distance and in particular the oriented contact (see Remark~\ref{rem:tangent-distance}).

The hyperbolic Laguerre group contains (doubly covers) the group of hyperbolic isometries as $\PO(n,2)_{\p{p}}$.
To generate the whole Laguerre group we only need to add three specific one-parameter families of scalings along concentric Laguerre spheres (see Remark~\ref{rem:mobius-decomposition} and Figure~\ref{fig:hyperbolic-concentric-circles}).
\begin{itemize}
\item Consider the family of transformations
  \[
    T^{\text{(s)}}_t \coloneqq
    \left[
      \def\arraystretch{1.}
      \begin{array}{c|cc}
        I_n & \begin{matrix}0\\\vdots\\0\end{matrix} & \begin{matrix}0\\\vdots\\0\end{matrix}\\
        \hline
        \begin{matrix}0&\cdots&0\end{matrix} & \cos t & \sin t\\
        \begin{matrix}0&\cdots&0\end{matrix} & -\sin t & \cos t
      \end{array}
    \right]
    \qquad
    \text{for}\;t\in[-\pi/2,\pi/2].
  \]
  It maps the absolute $\p{p} = [0,\ldots,0,1]$ to
  \[
    T^{\text{(s)}}_t(\p{p}) = [0, \ldots, 0, \sin t, \cos t],
  \]
  which is a hyperbolic sphere with center $[0,\ldots,1,0]$.
  It turns from the absolute for $t=0$ into a point for $t=\pm\pi/2$,
  while changing orientation when it passes through the center or through the absolute,
  i.e.\ when $\cos t\cdot\sin t$ changes sign.
\item Consider the family of transformations
  \[
    T^{\text{(c)}}_t \coloneqq
    \left[
      \def\arraystretch{1.}
      \begin{array}{c|ccc}
            I_{n-1} & \begin{matrix}0\\\vdots\\0\end{matrix} & \begin{matrix}0\\\vdots\\0\end{matrix} & \begin{matrix}0\\\vdots\\0\end{matrix}\\  
            \hline
            \begin{matrix}0&\cdots&0\end{matrix}& \cosh t & 0 & \sinh t\\
            \begin{matrix}0&\cdots&0\end{matrix}& 0 & 1 & 0\\
            \begin{matrix}0&\cdots&0\end{matrix}& \sinh t & 0 & \cosh t
      \end{array}
    \right]
    \qquad
    \text{for}\;t\in\R\,
  \]
  It maps the absolute $\p{p} = [0,\ldots,0,1]$ to
  \[
    T^{\text{(c)}}_t(\p{p}) = [0, \ldots, 0, \sinh t, 0, \cosh t],
  \]
  which is an oriented hypersurface of constant distance to the hyperbolic hyperplane $[0,\ldots,0,1,0,1]$.
  It turns from the absolute for $t=0$ into the hyperplane for $t=\infty$,
  while changing orientation when it passes through the absolute, i.e. when $t$ changes sign.
\item Consider the family of transformations
  \[
    T^{\text{(h)}}_t \coloneqq
    \left[
      \def\arraystretch{1.1}
      \begin{array}{c|cc|c}
        I_{n-1} & \begin{matrix}0\\\vdots\\0\end{matrix}& \begin{matrix}0\\\vdots\\0\end{matrix}& \begin{matrix}0\\\vdots\\0\end{matrix}\\
        \hline
        \begin{matrix}0&\cdots&0\end{matrix}& 1+\tfrac{t^2}{2} & \tfrac{t^2}{2} &t\\
        \begin{matrix}0&\cdots&0\end{matrix}& -\tfrac{t^2}{2} & 1-\tfrac{t^2}{2} &-t\\
        \hline
        \begin{matrix}0&\cdots&0\end{matrix}& t & t & 1
      \end{array}
    \right]
    \qquad
    \text{for}\;t\in\R.
  \]
  It maps the absolute $\p{p} = [0, \ldots, 0, 1]$ to
  \[
    T^{\text{(h)}}_t(\p{s}) = [0, \ldots, 0, t, -t, 1],
  \]
  which is a horosphere with center $[0, \ldots, 0, 1, -1, 0]$ on the absolute.
  It turns from the absolute for $t=0$ into the center for $t=\infty$,
  while changing orientation when $t$ changes its sign.
\end{itemize}
\begin{remark}
  \label{remark:character_not_preserved}
  While Laguerre transformations preserve oriented contact
  they do not preserve the notion of sphere, horosphere and
  constant distance surface. For example the transformation $T^{\text{(s)}}_{\pi/2}$
  transforms the origin into the absolute and thus turns all
  spheres which contain the origin into horospheres.
\end{remark}
Now the hyperbolic Laguerre group can be generated by hyperbolic motions
and the three introduced one-parameter families of scalings (see Remark \ref{rem:mobius-decomposition}).
\begin{theorem}
  Every hyperbolic Laguerre transformation $f \in \PO(n,2)$ can be written as
  \[
    f = \Phi T_t\Psi,
  \]
  where $\Phi, \Psi \in \PO(n,2)_{\p{p}}$ are hyperbolic motions
  and $T_t\in\{T^{\mathrm{(s)}}_t,T^{\mathrm{(c)}}_t,T^{\mathrm{(h)}}_t\}$ a scaling for some $t\in\R$.
\end{theorem}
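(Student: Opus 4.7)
The strategy is to reduce the statement to the general decomposition principle already established in Remark~\ref{rem:mobius-decomposition}: any element of $\PO(r,s)$ can be written as $\Phi \circ T \circ \Psi$ with $\Phi,\Psi$ in the $\p{q}$-stabilizer and $T$ chosen from one of three fixed one-parameter scaling families, one for each signature type of line through $\p{q}$. Specialized to our setting with $(r,s)=(n,2)$ and $\p{q}=\p{p}$, the stabilizer $\PO(n,2)_{\p p}$ is precisely the group of hyperbolic motions (acting on $\p{p}^\perp$ and preserving $\widetilde{\mob}$). Hence everything reduces to exhibiting three one-parameter families of scalings, one of each type from Remark~\ref{rem:scaling-types}, and identifying them with the matrices $T^{(\mathrm{s})}_t$, $T^{(\mathrm{c})}_t$, $T^{(\mathrm{h})}_t$.

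To this end, for each family I would perform three routine checks. First, that the given matrix lies in $\PO(n,2)$, i.e.\ preserves the bilinear form of signature $(n,2)$; this is a direct block computation using that the displayed blocks are respectively a Euclidean rotation in the negative-definite $(e_{n+1},e_{n+2})$-plane (for $T^{(\mathrm{s})}_t$), a hyperbolic rotation in the mixed-signature $(e_n,e_{n+2})$-plane (for $T^{(\mathrm{c})}_t$), and a unipotent transvection fixing the isotropic direction $e_{n+1}-e_{n+2}$ (for $T^{(\mathrm{h})}_t$). Second, that each matrix fixes pointwise every hyperplane through a distinguished line $\ell$ containing $\p{p}$: namely $\ell^{(\mathrm{s})}=\p{p}\wedge[0,\ldots,0,1,0]$, $\ell^{(\mathrm{c})}=\p{p}\wedge[0,\ldots,0,1,0,0]$, and $\ell^{(\mathrm{h})}=\p{p}\wedge[0,\ldots,0,1,-1,0]$. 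This is immediate from the fact that each transformation acts as the identity on the polar subspace $\ell^\perp$. Third, that each family moves $\p{p}$ along $\ell$, which the explicit images of $\p{p}$ computed in Section~\ref{sec:hyperbolic-laguerre-transformations} already show; combined with the second check this identifies the family as the scaling $T_{\p{p},T_t(\p{p})}$ in the sense of the definition preceding Remark~\ref{rem:scaling-types}.

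The three lines $\ell^{(\mathrm{s})}$, $\ell^{(\mathrm{c})}$, $\ell^{(\mathrm{h})}$ have pairwise distinct signatures with respect to $\scalarprod{\cdot}{\cdot}$: the direction vectors $e_{n+1}$, $e_n$ and $e_{n+1}-e_{n+2}$, together with $e_{n+2}$, span planes of signatures $(--)$, $(+-)$ and $(-0)$ respectively. Thus $\{T^{(\mathrm{s})},T^{(\mathrm{c})},T^{(\mathrm{h})}\}$ provides one scaling family of each type, as required to apply Remark~\ref{rem:mobius-decomposition}. The theorem follows.

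The main (and only real) obstacle is the bookkeeping in the signature-type classification: one has to be sure that the three concrete pencils genuinely realize the three \emph{different} types listed in Remark~\ref{rem:scaling-types}, because a scaling chosen from only two of the types would not suffice to generate $\PO(n,2)/\PO(n,2)_{\p p}$. Once the signature computation is recorded, the decomposition is a direct invocation of the already-proved general principle.
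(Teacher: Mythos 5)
Your proposal follows the paper's own route exactly: the theorem is a direct application of Remark~\ref{rem:mobius-decomposition} once the three families $T^{(\mathrm{s})}_t$, $T^{(\mathrm{c})}_t$, $T^{(\mathrm{h})}_t$ are verified to be scalings along pencils of concentric spheres through $\p{p}$ of the three distinct signature types, which is precisely what the surrounding text of Section~\ref{sec:hyperbolic-laguerre-transformations} establishes. One slip to correct: the isotropic direction for the horospherical family is $e_n - e_{n+1}$ (consistent with your own $\ell^{(\mathrm{h})} = \p{p}\wedge[0,\ldots,0,1,-1,0]$), not $e_{n+1}-e_{n+2}$, which has norm $-2$ and together with $e_{n+2}$ spans a plane of signature $(--)$ rather than $(-0)$; with the corrected vector the three signatures $(--)$, $(+-)$, $(-0)$ come out as claimed and the invocation of Remark~\ref{rem:mobius-decomposition} goes through.
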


\subsection{Elliptic Laguerre geometry}
\label{sec:elliptic-laguerre-geometry}
When projecting down from Möbius geometry to elliptic geometry (cf.\ Section \ref{sec:elliptic-geometry-from-mobius})
we obtain a double cover of the points in elliptic space.
Since every elliptic hyperplane has a pole in the elliptic space, this equivalently leads to a double cover of the elliptic hyperplanes.
\begin{definition}\
  \label{def:laguerre-elliptic}
  \nobreakpar
  \begin{enumerate}
  \item We call the quadric
    \[
      \lagell \subset \RP^{n+1}
    \]
    corresponding to the standard bilinear form of signature $(n+1,1)$ in $\R^{n+2}$, i.e.,
    \[
      \scalarprod{x}{y} \coloneqq x_1y_1 + \ldots + x_{n+1}y_{n+1} - x_{n+2}y_{n+2}
    \]
    for $x, y \in \R^{n+2}$ the \emph{elliptic Laguerre quadric}.
  \item The corresponding transformation group
    \[
      \lagelltrafos \coloneqq \PO(n+1,1) \simeq \mobtrafos
    \]
    is called the group of \emph{elliptic Laguerre transformations}.
  \end{enumerate}
\end{definition}
\begin{remark}
  Thus, $n$-dimensional elliptic Laguerre geometry is isomorphic to $n$-dimensional Möbius geometry.
\end{remark}
To recover elliptic space in the projection, choose a point $\p{p} \in \RP^{n+1}$ with $\scalarprod{p}{p} < 1$, w.l.o.g.,
\[
  \p{p} \coloneqq [e_{n+2}] = [0, \ldots, 0, 1].
\]
The corresponding involution and projection take the form
\[
  \begin{aligned}
    &\sigma_{\p{p}} : [x_1, \ldots, x_{n+1}, x_{n+2}] \mapsto [x_1, \ldots, , x_{n+1}, -x_{n+2}],\\
    &\pi_{\p{p}} : [x_1, \ldots, x_{n+1}, x_{n+2}] \mapsto [x_1, \ldots, , x_{n_1}, 0].
  \end{aligned}
\]
The quadric in the polar hyperplane of $\p{p}$
\[
  \ellipb = \lagell \cap \p{p}^\perp
\]
has signature $(n+1,0)$, and its non-empty side $\ellip = \ellipb^+$ can be identified with $n$-dimensional elliptic space
(see Section \ref{sec:elliptic-space})

Under the projection $\pi_{\p{p}}$ the elliptic Laguerre quadric projects down to the elliptic space
\[
  \pi_{\p{p}}(\lagell) = \ellip
\]
Thus, the polar projection $\pi_{\p{p}}^*$ of every point on $\lagell$ yields an elliptic hyperplane,
where the double cover can be interpreted as carrying the orientation of that hyperplane.
\begin{remark}
  The quadric $\lagell$ is the projective version of the sphere $\S^n$ introduced in Section~\ref{sec:elliptic-space}
  as a double cover of elliptic space.
\end{remark}

We call the hyperplanar sections of $\lagell$, i.e.\ the $\lagell$-spheres, \emph{elliptic Laguerre spheres}.
By polarity, we identify the space of hyperbolic Laguerre spheres with the the outside of $\lagell$
\begin{equation}
  \label{eq:laguerre-spheres-elliptic}
  \spheres = \lagell^+ \cup \lagell.
\end{equation}
\begin{remark}
  As we have done in Section \ref{sec:elementary}, one might want to exclude oriented hyperplanes from the space of elliptic Laguerre spheres and thus take $\spheres = \lagell^+$ (cf.\ Remark~\ref{rem:Q-spheres}~\ref{rem:Q-spheres-degenerate}).
\end{remark}
Under the polar projection $\spprojection{\p{p}}$ points in $\spheres$ are mapped to spheres that are polar to elliptic spheres,
i.e.\ they are mapped to all elliptic spheres (Table~\ref{tab:elliptic-laguerre-geometry} and Figure~\ref{fig:elliptic-laguerre-geometry}).
\begin{theorem}
  \label{thm:elliptic-Laguerre-spheres}
  Under the map
  \[
    \spprojection{\p{p}} : \p{x} \mapsto \cone{\laghyp}{\p{x}} \cap \p{p}^\perp
  \]
  a point $\p{x} \in \spheres = \RP^{n+1}$ is mapped to an \emph{elliptic sphere} in $\ellip$ with center $\pi_{\p{p}}(\p{x})$.
  In the normalization $\pscalarprod{p}{x}{x} = 1$ its elliptic radius is given by $r \geq 0$, where $x_{n+2}^2 = \sin^2r$.
  In particular, $\p{x} \in \lagell$ is mapped to an \emph{elliptic hyperplane} with pole $\pi_{\p{p}}(\p{x}) \in \ellip$.
\end{theorem}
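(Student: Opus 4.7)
The plan is to deduce this theorem as an immediate specialization of Proposition \ref{prop:Q-sphere-projection} together with the polarity of Cayley-Klein spheres (Definition \ref{def:polar-Cayley-Klein-sphere}), followed by a short coordinate computation in the chosen frame. Since the elliptic absolute $\ellipb = \lagell \cap \p{p}^\perp$ has signature $(n+1,0)$, every planar section of $\lagell$ lies on one side of $\ellipb$, so $\pi_{\p{p}}(\lagell) = \ellip$ is all of elliptic space and no degenerate or ``deSitter-type'' cases arise, in contrast to the hyperbolic case treated in Theorem \ref{thm:hyperbolic-Laguerre-spheres}.

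First, I would invoke Proposition \ref{prop:Q-sphere-projection} with $\quadric = \lagell$ and the given $\p{p}$. It gives that $\sprojection{\p{p}}(\p{x})$ is a Cayley-Klein sphere in $\ellip$ with center $\pi_{\p{p}}(\p{x})$ and Cayley-Klein radius $\mu = \scalarprod{x}{p}^2 / \Delta_p(x)$, where $\Delta_p$ is the quadratic form of the cone of contact to $\lagell$ from $\p{p}$. By construction of $\spprojection{\p{p}}$ as the polar of $\sprojection{\p{p}}$ (and by Definition \ref{def:polar-Cayley-Klein-sphere}), the image $\spprojection{\p{p}}(\p{x})$ is then the concentric Cayley-Klein sphere with center $\pi_{\p{p}}(\p{x})$ and Cayley-Klein radius $\tilde{\mu} = 1 - \mu$.

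Second, I would carry out the coordinate computation with $p = e_{n+2}$ and the bilinear form of signature $(n+1,1)$. Using the normalization $\pscalarprod{p}{x}{x} = x_1^2 + \cdots + x_{n+1}^2 = 1$ together with $\scalarprod{p}{p} = -1$, $\scalarprod{x}{p} = -x_{n+2}$ and $\scalarprod{x}{x} = 1 - x_{n+2}^2$, formula \eqref{eq:quadratic-form-cone} yields
\[
  \Delta_p(x) = \scalarprod{x}{p}^2 - \scalarprod{x}{x}\scalarprod{p}{p} = x_{n+2}^2 + (1-x_{n+2}^2) = 1,
\]
so $\mu = x_{n+2}^2$ and $\tilde{\mu} = 1 - x_{n+2}^2$. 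By the discussion in Section \ref{sec:elliptic-space}, a Cayley-Klein sphere in elliptic space of Cayley-Klein radius $\tilde{\mu}$ is a metric sphere of elliptic radius $r\ge 0$ with $\cos^2 r = \tilde{\mu}$, hence $\sin^2 r = x_{n+2}^2$ as claimed.

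Finally, for the degenerate case $\p{x} \in \lagell$ one has $\scalarprod{x}{x} = 0$, forcing $x_{n+2}^2 = 1$ and thus $r = \pi/2$. Directly from Proposition \ref{prop:Q-sphere-projection} the cone of contact $\cone{\lagell}{\p{x}}$ collapses to the tangent hyperplane $\p{x}^\perp$, and its intersection with $\p{p}^\perp$ is the elliptic hyperplane in $\ellip$ whose pole (with respect to $\ellipb$) is $\pi_{\p{p}}(\p{x})$; equivalently, a Cayley-Klein radius $\tilde{\mu} = 0$ corresponds to the polar hyperplane of the center by Proposition \ref{prop:Cayley-Klein-spheres}. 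There is no serious obstacle: the statement is essentially a bookkeeping exercise applying the general projection machinery to this specific signature, and the only step one has to be careful with is tracking the sign in $\scalarprod{p}{p} = -1$ when computing $\Delta_p(x)$.
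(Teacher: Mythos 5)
Your proposal is correct and follows essentially the route the paper intends: the paper omits an explicit proof of this theorem, but its analogous statements (e.g.\ the proposition in Section \ref{sec:hyperbolic-geometry-from-mobius} and Theorem \ref{thm:hyperbolic-Laguerre-spheres}) are established exactly by invoking Proposition \ref{prop:Q-sphere-projection}, passing to the polar sphere of radius $1-\mu$, and evaluating $\Delta_p(x)=-\scalarprod{p}{p}\pscalarprod{p}{x}{x}=1$ in the chosen normalization. Your computation $\mu=x_{n+2}^2$, $\tilde{\mu}=1-x_{n+2}^2=\cos^2 r$, hence $\sin^2 r=x_{n+2}^2$, and the identification of the $\tilde{\mu}=0$ case with the polar hyperplane via Proposition \ref{prop:Cayley-Klein-spheres} are all consistent with the paper's conventions.
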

\begin{remark}\
  \nobreakpar
  \begin{enumerate}
  \item The map $\spprojection{\p{p}}$ is a double cover of elliptic spheres,
    branching on the subset of elliptic points
    (see Proposition \ref{prop:polar-sphere-double-cover}).
    We interpret the lift to carry the orientation of the elliptic Laguerre spheres.
    Upon the normalization given in Theorem \ref{thm:elliptic-Laguerre-spheres} the orientation is encoded in the sign of the $x_{n+2}$-component.
    The involution $\sigma_{\p{p}}$ acts on the set of elliptic Laguerre spheres as orientation reversion.
  \item The Cayley-Klein distance induced on $\spheres$ by the elliptic Laguerre quadric $\lagell$ measures the Cayley-Klein tangent distance
    between the corresponding elliptic Laguerre spheres (see Proposition \ref{prop:tangent-distance})
    if they possess a common oriented tangent hyperplane (see Remark \ref{rem:tangent-distance}).
  \item Using the projection $\pi_{\p{q}}$ instead of the polar projection $\pi_{\p{q}}^*$ elliptic Laguerre geometry
    coincides with Möbius geometry (cf.\ Section \ref{sec:mobius-geometry} and Section \ref{sec:classification}).
  \end{enumerate}
\end{remark}
\begin{table}[H]
  \centering
  \def\arraystretch{2.0}
  \begin{tabular}{c|c}
    \textbf{elliptic geometry} & \textbf{Laguerre geometry}\\
    \hline\hline
    \makecell{\emph{hyperplane}\\ with pole $\p{y} \in \ellip$, $y \in \S^n$} & $[y, \pm 1] \in \lagell$ \\
    \hline
    \makecell{\emph{sphere}\\ with center $\p{y} \in \ellip$, $y \in \S^n$\\ and radius $r > 0$} & $[y, \pm \sin r] \in \lagell^+$
  \end{tabular}
  \caption{
    The lifts of elliptic spheres to elliptic Laguerre geometry.
  }
  \label{tab:elliptic-laguerre-geometry}
\end{table}

\subsubsection{Elliptic Laguerre transformations}
\label{sec:elliptic-laguerre-transformations}
\begin{figure}
  \centering
  \includegraphics[width=0.45\textwidth]{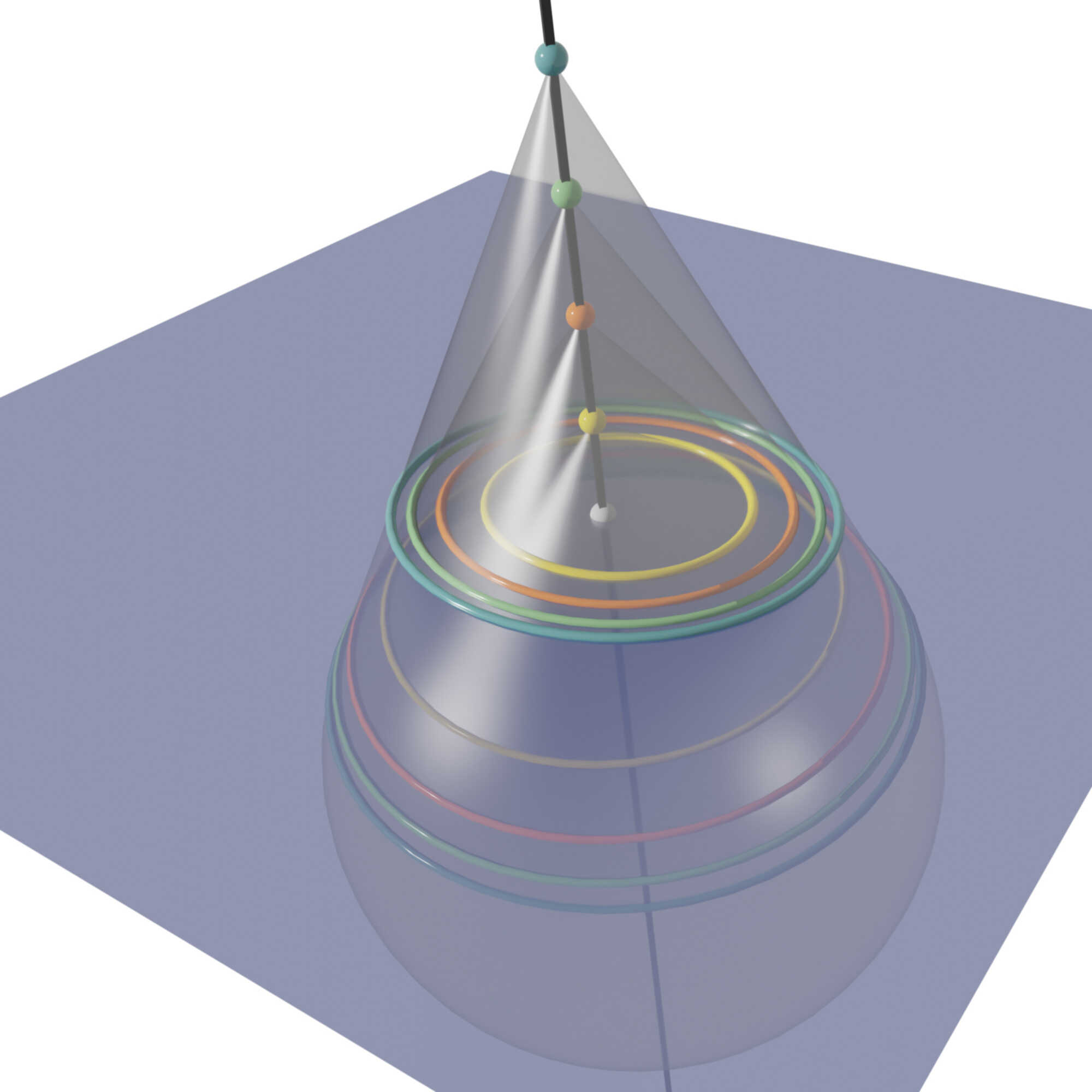}
  \caption{
    Concentric circles in the elliptic plane.
  }
  \label{fig:elliptic-concentric-circles}
\end{figure}
Every (local) transformation mapping (non-oriented) elliptic hyperplanes to elliptic hyperplanes (not necessarily points to points)
while preserving (tangency to) elliptic spheres can be lifted and extended to an elliptic Laguerre transformation
(see Theorem \ref{thm:mobius-transformation-lift}).

The elliptic Laguerre group
\[
  \lagelltrafos = \PO(n+1,1)
\]
preserves the elliptic Laguerre quadric $\lagell$
and maps planar sections of $\lagell$ to planar sections of $\lagell$.
Under the polar projection this means it maps oriented hyperplanes to oriented hyperplanes,
or elliptic Laguerre spheres to elliptic Laguerre spheres,
while preserving the tangent distance and in particular the oriented contact (see Remark \ref{rem:tangent-distance}).

The elliptic Laguerre group contains (doubly covers) the group of elliptic isometries as ${\PO(n+1,1)_{\p{p}}}$.
To generate the whole Laguerre group we only need to add one specific one-parameter family of scalings along concentric Laguerre spheres
(see Remark~\ref{rem:mobius-decomposition} and Figure~\ref{fig:elliptic-concentric-circles}).
\begin{itemize}
\item Consider the family of transformations
  \[
    S_t \coloneqq
    \left[
      \def\arraystretch{1.}
      \begin{array}{c|cc}
        I_n & \begin{matrix}0\\\vdots\\0\end{matrix} & \begin{matrix}0\\\vdots\\0\end{matrix}\\
        \hline
        \begin{matrix}0&\cdots&0\end{matrix} & \cosh t & \sinh t\\
        \begin{matrix}0&\cdots&0\end{matrix} & \sinh t & \cosh t
      \end{array}
    \right]
    \qquad
    \text{for}\;t\in\R.
  \]
  It maps the absolute $\p{p} = [0,\ldots,0,1]$ to
  \[
    T^{\text{(s)}}_t(\p{p}) = [0, \ldots, 0, \sinh t, \cosh t],
  \]
  which is an elliptic sphere with center $[0,\ldots,1,0]$.
  It turns from the absolute for $t=0$ into a point for $t=\infty$,
  while changing orientation when it passes through the center or through the absolute,
  i.e.\ when $t$ changes sign.
\end{itemize}
Now the elliptic Laguerre group can be generated by elliptic motions
and this one-parameter family of scalings (see Remark \ref{rem:mobius-decomposition}).
\begin{theorem}
  Any elliptic Laguerre transformation $f \in \PO(n+1,1)$ can be written as
  \[
    f = \Phi S_t\Psi,
  \]
  where $\Phi, \Psi \in \PO(n+1,1)_{\p{p}}$ are elliptic motions
  and $t \in \R$.
\end{theorem}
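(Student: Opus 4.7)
The plan is to apply the general decomposition scheme behind Remark~\ref{rem:mobius-decomposition}, specialised to the present setting where only one type of scaling along a pencil of concentric Laguerre spheres is needed. The key observation is that $\p{p} = [e_{n+2}]$ is timelike (i.e.\ $\scalarprod{p}{p} < 0$) while its polar hyperplane $\p{p}^\perp$ carries the positive definite form associated to $\ellipb$. Hence every line through $\p{p}$ that does not lie in $\cone{\lagell}{\p{p}}$ has signature $(+-)$; by Remark~\ref{rem:scaling-types} there is a single type of scaling to consider, and one single one-parameter family $S_t$ should suffice to realise every scaling up to composition with isometries in $\PO(n+1,1)_{\p{p}}$.

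The argument would proceed in three steps. First, given $f \in \PO(n+1,1)$, I set $\p{x} := f(\p{p})$; since $f$ preserves the bilinear form up to sign, $\scalarprod{x}{x} < 0$, and I choose a representative $x$ with $\scalarprod{x}{x} = -1$ and $x_{n+2} \geq 1$ (negating if necessary). Second, I set $t := \arcosh(x_{n+2}) \geq 0$, so that $\sinh^2 t = x_{n+2}^2 - 1 = x_1^2 + \cdots + x_{n+1}^2$; then there exists $O \in \Ortho(n+1)$ mapping $(0,\ldots,0,\sinh t) \in \R^{n+1}$ to $(x_1,\ldots,x_{n+1})$. Lifting $O$ to the isometry $\Phi \in \PO(n+1,1)_{\p{p}}$ that acts as $O$ on $\Span(e_1,\ldots,e_{n+1})$ and as the identity on $e_{n+2}$, one checks
\[
  \Phi(S_t(\p{p})) = [x_1,\ldots,x_{n+1},\cosh t] = \p{x} = f(\p{p}).
\]
Third, I define $\Psi := S_t^{-1}\,\Phi^{-1}\,f$. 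By construction $\Psi(\p{p}) = \p{p}$, so $\Psi \in \PO(n+1,1)_{\p{p}}$, and the desired factorisation $f = \Phi \circ S_t \circ \Psi$ follows.

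The main point to verify carefully is the transitivity claim used in the second step, namely that the $\PO(n+1,1)_{\p{p}}$-orbit of the curve $\{S_t(\p{p}) : t \in \R\}$ exhausts every timelike projective point. This is immediate once one notes that the stabiliser acts as $\Ortho(n+1)$ on the first $n+1$ coordinates (with an optional sign flip on $e_{n+2}$ that is absorbed projectively), while $S_t$ sweeps out all admissible values $\cosh t \geq 1$ of the normalised last coordinate. No further obstacle arises, because no second or third type of scaling is needed: this reflects the definite signature of $\p{p}^\perp$, in contrast to the hyperbolic Laguerre case where lines through $\p{p}$ split into three signature classes and correspondingly three families $T^{\mathrm{(s)}}_t$, $T^{\mathrm{(c)}}_t$, $T^{\mathrm{(h)}}_t$ are required.
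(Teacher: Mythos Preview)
Your proposal is correct and follows essentially the same approach the paper indicates: the paper does not give a detailed proof here but simply refers to Remark~\ref{rem:mobius-decomposition}, and your argument is precisely the explicit realisation of that remark in the elliptic case, carrying out the transitivity of $\PO(n+1,1)_{\p{p}}$ on each sphere $\{S_t(\p{p})\}$ by hand via the $\Ortho(n+1)$-action on the first $n+1$ coordinates. Nothing further is needed.
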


\newpage
\section{Lie geometry} 
\label{sec:lie}
\begin{figure}
  \centering
  \def\svgwidth{0.47\textwidth}
\begingroup%
  \makeatletter%
  \providecommand\color[2][]{%
    \errmessage{(Inkscape) Color is used for the text in Inkscape, but the package 'color.sty' is not loaded}%
    \renewcommand\color[2][]{}%
  }%
  \providecommand\transparent[1]{%
    \errmessage{(Inkscape) Transparency is used (non-zero) for the text in Inkscape, but the package 'transparent.sty' is not loaded}%
    \renewcommand\transparent[1]{}%
  }%
  \providecommand\rotatebox[2]{#2}%
  \newcommand*\fsize{\dimexpr\f@size pt\relax}%
  \newcommand*\lineheight[1]{\fontsize{\fsize}{#1\fsize}\selectfont}%
  \ifx\svgwidth\undefined%
    \setlength{\unitlength}{2025.31640625bp}%
    \ifx\svgscale\undefined%
      \relax%
    \else%
      \setlength{\unitlength}{\unitlength * \real{\svgscale}}%
    \fi%
  \else%
    \setlength{\unitlength}{\svgwidth}%
  \fi%
  \global\let\svgwidth\undefined%
  \global\let\svgscale\undefined%
  \makeatother%
  \begin{picture}(1,1)%
    \lineheight{1}%
    \setlength\tabcolsep{0pt}%
    \put(0,0){\includegraphics[width=\unitlength,page=1]{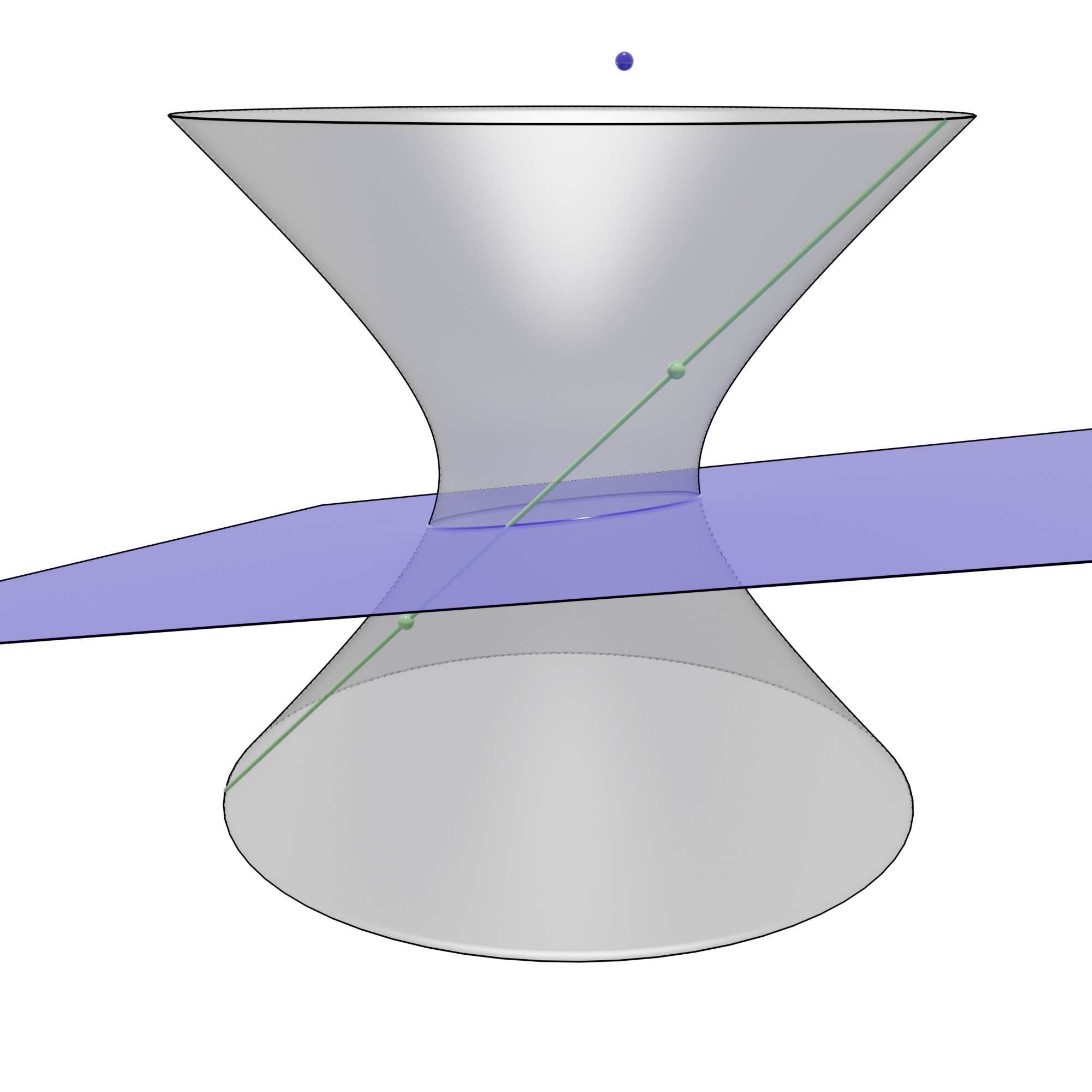}}%
    \put(0.59153319,0.94965675){\color[rgb]{0,0,0}\makebox(0,0)[lt]{\lineheight{1.25}\smash{\begin{tabular}[t]{l}{\small $\p{p}$}\end{tabular}}}}%
    \put(0.0617849,0.43020595){\color[rgb]{0,0,0}\makebox(0,0)[lt]{\lineheight{1.25}\smash{\begin{tabular}[t]{l}{\small $\p{p}^{\perp}$}\end{tabular}}}}%
    \put(0.68077805,0.20137298){\color[rgb]{0,0,0}\makebox(0,0)[lt]{\lineheight{1.25}\smash{\begin{tabular}[t]{l}{\small $\lie$}\end{tabular}}}}%
    \put(0.65102978,0.53089248){\color[rgb]{0,0,0}\makebox(0,0)[lt]{\lineheight{1.25}\smash{\begin{tabular}[t]{l}{\small $\mob$}\end{tabular}}}}%
    \put(0.56178492,0.68077803){\color[rgb]{0,0,0}\makebox(0,0)[lt]{\lineheight{1.25}\smash{\begin{tabular}[t]{l}{\small $\p{s}_1$}\end{tabular}}}}%
    \put(0.3798627,0.40045767){\color[rgb]{0,0,0}\makebox(0,0)[lt]{\lineheight{1.25}\smash{\begin{tabular}[t]{l}{\small $\p{s}_2$}\end{tabular}}}}%
    \put(0.02059497,0.94393593){\color[rgb]{0,0,0}\makebox(0,0)[lt]{\lineheight{1.25}\smash{\begin{tabular}[t]{l}{\small $\RP^{n+2}$}\end{tabular}}}}%
  \end{picture}%
\endgroup%

  \hspace{0.02\textwidth}
  \def\svgwidth{0.47\textwidth}
\begingroup%
  \makeatletter%
  \providecommand\color[2][]{%
    \errmessage{(Inkscape) Color is used for the text in Inkscape, but the package 'color.sty' is not loaded}%
    \renewcommand\color[2][]{}%
  }%
  \providecommand\transparent[1]{%
    \errmessage{(Inkscape) Transparency is used (non-zero) for the text in Inkscape, but the package 'transparent.sty' is not loaded}%
    \renewcommand\transparent[1]{}%
  }%
  \providecommand\rotatebox[2]{#2}%
  \newcommand*\fsize{\dimexpr\f@size pt\relax}%
  \newcommand*\lineheight[1]{\fontsize{\fsize}{#1\fsize}\selectfont}%
  \ifx\svgwidth\undefined%
    \setlength{\unitlength}{2025.31640625bp}%
    \ifx\svgscale\undefined%
      \relax%
    \else%
      \setlength{\unitlength}{\unitlength * \real{\svgscale}}%
    \fi%
  \else%
    \setlength{\unitlength}{\svgwidth}%
  \fi%
  \global\let\svgwidth\undefined%
  \global\let\svgscale\undefined%
  \makeatother%
  \begin{picture}(1,1)%
    \lineheight{1}%
    \setlength\tabcolsep{0pt}%
    \put(0,0){\includegraphics[width=\unitlength,page=1]{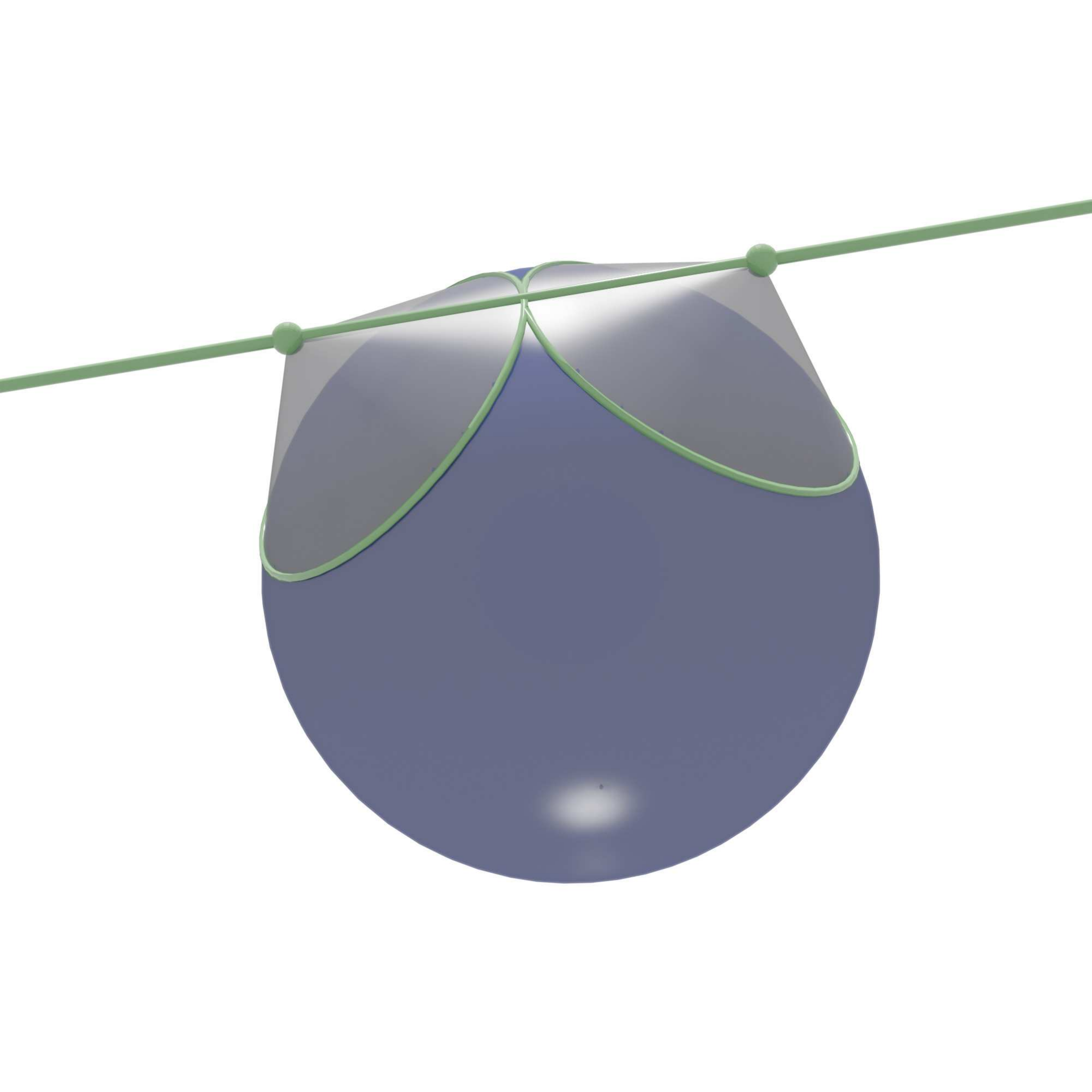}}%
    \put(0.60780501,0.31118949){\color[rgb]{0,0,0}\makebox(0,0)[lt]{\lineheight{1.25}\smash{\begin{tabular}[t]{l}{\small $\mob$}\end{tabular}}}}%
    \put(0.01362679,0.94436703){\color[rgb]{0,0,0}\makebox(0,0)[lt]{\lineheight{1.25}\smash{\begin{tabular}[t]{l}{\small $\p{p}^{\perp} \simeq \RP^{n+1}$}\end{tabular}}}}%
    \put(0.24723687,0.7317502){\color[rgb]{0,0,0}\makebox(0,0)[lt]{\lineheight{1.25}\smash{\begin{tabular}[t]{l}{\small $\pi_{\p{p}}(\p{s}_1)$}\end{tabular}}}}%
    \put(0.67714341,0.80184366){\color[rgb]{0,0,0}\makebox(0,0)[lt]{\lineheight{1.25}\smash{\begin{tabular}[t]{l}{\small $\pi_{\p{p}}(\p{s}_2)$}\end{tabular}}}}%
    \put(0,0){\includegraphics[width=\unitlength,page=2]{touching_circles.pdf}}%
  \end{picture}%
\endgroup%

  \caption{
    \emph{Left:} The Lie quadric $\lie \subset \RP^{n+2}$
    (depicted in the case $n=1$).  The choice of $\p{p}$ with
    $\liesc{p}{p} < 0$ determines the point complex, or Möbius quadric, $\mob \subset \lie$.
    The points $\p{s}_1, \p{s}_2 \in \lie$ contained in a common isotropic subspace of $\lie$
    correspond to two oriented hyperspheres in oriented contact.
    \emph{Right:} The Möbius quadric $\mob \subset \RP^{n+1}$
    (depicted in the case $n=2$).
    The two points $\pi_{\p{p}}(\p{s}_1), \pi_{\p{p}}(\p{s}_2) \in \mob^{+}$
    correspond to two hyperspheres in $\mob$ via polarity.
    In the chosen normalization, their orientation is encoded in the last component of $s_1$, $s_2$ respectively.
  }
  \label{fig:lie}
\end{figure}

Möbius geometry (signature $(n+1,1)$, see Section \ref{sec:mobius-geometry}),
hyperbolic Laguerre geometry (signature $(n,2)$, see Section \ref{sec:hyperbolic-laguerre-geometry}),
elliptic Laguerre geometry (signature $(n+1,1)$, see Section \ref{sec:elliptic-laguerre-geometry}),
as well as Euclidean Laguerre geometry (signature $(n,1,1)$, see Appendix \ref{sec:euclidean-laguerre-geometry})
can all be lifted to \emph{Lie geometry} (signature $(n+2,2)$)
using the methods from Sections \ref{sec:projection} and \ref{sec:laguerre}.

In this section we first give an elementary introduction to Lie geometry, which leads to its projective model, see, e.g. \cite{Bl2, C}.
We then show how to unify Möbius and Laguerre geometry of Cayley-Klein spaces in the framework of Lie geometry by considering certain (compatible) \emph{sphere complexes}.
The groups of Möbius transformations, Laguerre transformations, and isometries appear as quotients of the group of \emph{Lie transformations}.

\subsection{Oriented hyperspheres of $\S^n$}
We first give an intuitive description of Lie (sphere) geometry
as the geometry of oriented hyperspheres of the
$n$-dimensional sphere $\S^n$ and their oriented contact.

Thus, let
\[
  \S^n = \set{y \in \R^{n+1}}{ \dotprod{y}{y} = 1} \subset \R^{n+1},
\]
where $\dotprod{y}{y}$ denotes the standard scalar product on $\R^{n+1}$.
An oriented hypersphere of $\S^n$ can be represented by its center $c \in \S^n$
and its signed spherical radius $r \in \R$ (see Figure \ref{fig:lie}).
Tuples $(c, r) \in \S^n \times \R$ represent the same oriented hypersphere
if they are related by a sequence of the transformations
\begin{equation}
  \label{eq:oriented-spheres-equivalence}
  \rho_1 : (c, r) \mapsto (c, r + 2\pi), \quad
  \rho_2 : (c, r) \mapsto (-c, r - \pi).
\end{equation}
The corresponding hypersphere as a set of points is given by
\begin{equation}
  \label{eq:non-oriented-sphere}
  \set{ y \in \S^n }{ \dotprod{c}{y} = \cos r },
\end{equation}
while its orientation is obtained in the following way:
The hypersphere separates the sphere $\S^n$ into two regions.
For $r \in [0,\pi)$ consider the region which contains the center $c$ to be the ``inside'' of the hypersphere,
and endow the hypersphere with an orientation by assigning normal vectors pointing towards the other region, the ``outside'' of the hypersphere.
The orientation of the hypersphere for other values of $r$ is then obtained by \eqref{eq:oriented-spheres-equivalence}.
\begin{definition}
  We call 
  \[
    \ospheres \coloneqq \faktor{\left( \S^n \times \R \right)}{\{\rho_1, \rho_2\}}.
  \]
  the \emph{space of oriented hyperspheres} of $\S^n$.
\end{definition}
\begin{remark}
  \label{rem:non-oriented-spheres}
  Orientation reversion defines an involution on $\ospheres$, which is given by
  \[
    \rho : (c, r) \mapsto (c, -r).
  \]
  Thus, the \emph{space of (non-oriented) hyperspheres} of $\S^n$ may be represented by
  \[
    \nospheres \coloneqq \faktor{\ospheres}{\rho} = \faktor{\left( \S^n \times \R \right)}{\{\rho, \rho_1, \rho_2\}}.
  \]
\end{remark}
Two oriented hyperspheres $(c_1, r_1)$ and $(c_2, r_2)$ are in \emph{oriented contact} if (see Figure \ref{fig:lie})
\begin{equation}
  \label{eq:oriented-spheres-contact}
  \dotprod{c_1}{c_2} = \cos(r_1 - r_2),
\end{equation}
which is a well-defined relation on $\ospheres$.
Upon introducing coordinates $(c, \cos r, \sin r)$ on $\ospheres$
the transformations \eqref{eq:oriented-spheres-equivalence} may be replaced by
\begin{equation}
  \label{eq:oriented-spheres-equivalence-projective}
  (c, \cos r, \sin r) \mapsto (-c, \cos(r-\pi), \sin(r-\pi)) = - (c, \cos r, \sin r),
\end{equation}
while \eqref{eq:oriented-spheres-contact} becomes a bilinear relation, i.e.
\begin{equation}
  \label{eq:oriented-spheres-contact-projective}
  \dotprod{c_1}{c_2} - \cos r_1 \cos r_2 - \sin r_1 \sin r_2 = 0.
\end{equation}
This gives rise to a projective model of Lie geometry as described in the following.
\begin{definition}\
  \nobreakpar
  \begin{enumerate}
  \item The quadric
    \[
      \lie \subset \RP^{n+2}
    \]
    corresponding to the standard bilinear form of signature $(n+1, 2)$
    \[
      \liesc{x}{y} \coloneqq \sum_{i=1}^{n+1}x_i y_i - x_{n+2} y_{n+2} - x_{n+3} y_{n+3}
    \]
    for $x, y \in \R^{n+3}$,
    is called the \emph{Lie quadric}.
  \item Two points $\p{s}_1, \p{s}_2 \in \lie$ on the Lie quadric are called
    \emph{Lie orthogonal} if $\liesc{s_1}{s_2} = 0$,
    or equivalently if the line $\p{s}_1 \wedge \p{s}_2$ is isotropic,
    i.e. is contained in $\lie$.
    An isotropic line is called a \emph{contact element}.
  \item The projective transformations of $\RP^{n+2}$ that preserve the Lie quadric $\lie$
    \[
      \lietrafos \coloneqq \PO(n+1,2).
    \]
    are called \emph{Lie transformations}.
  \end{enumerate}
  \label{def:lie-geometry}
  \label{def:contact-element}
\end{definition}
\begin{proposition}
  \label{prop:lie-sphere-correspondence}
  The set of oriented hyperspheres $\ospheres$ of $\S^n$
  is in one-to-one correspondence with the Lie quadric $\lie$ by the map
  \[
    \vec{S} : \ospheres \rightarrow \lie, \quad (c, r) \mapsto (c, \cos r, \sin r)
  \]
  such that two oriented hyperspheres are in oriented contact if and only if
  their corresponding points on the Lie quadric are Lie orthogonal.
\end{proposition}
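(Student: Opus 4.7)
The plan is to prove the proposition by direct verification, breaking the claim into four parts: well-definedness of $\vec S$ on the quotient, the image landing on $\lie$, bijectivity, and finally the contact/orthogonality equivalence. The argument is essentially computational; the only subtle point is matching the two generators $\rho_1,\rho_2$ of the equivalence on $\ospheres$ with the projective rescaling $s\sim -s$ inherent in $\RP^{n+2}$.

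First I would check that $\vec S$ descends to the quotient $\ospheres=(\S^n\times\R)/\{\rho_1,\rho_2\}$. The generator $\rho_1:(c,r)\mapsto(c,r+2\pi)$ leaves $(c,\cos r,\sin r)$ invariant by $2\pi$-periodicity of sine and cosine. The generator $\rho_2:(c,r)\mapsto(-c,r-\pi)$ sends $(c,\cos r,\sin r)$ to $(-c,-\cos r,-\sin r)$, which is the same point in $\RP^{n+2}$. Next, the image lies on $\lie$ because
\[
  \liesc{(c,\cos r,\sin r)}{(c,\cos r,\sin r)} = \dotprod{c}{c}-\cos^2 r-\sin^2 r = 1-1 = 0,
\]
using $c\in\S^n$.

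For bijectivity I would construct the inverse explicitly. Given $[s]\in\lie$, write $s=(x,u,v)$ with $x\in\R^{n+1}$ and $u,v\in\R$. The equation $\liesc{s}{s}=0$ reads $\dotprod{x}{x}=u^2+v^2$. Since $s\neq 0$ and $\dotprod{\cdot}{\cdot}$ is positive definite on $\R^{n+1}$, this common value is strictly positive (otherwise $x=0$ and then $u=v=0$ as well). Rescaling by $1/\sqrt{\dotprod{x}{x}}$, which is unique up to an overall sign, one sets $c\coloneqq x/\sqrt{\dotprod{x}{x}}\in\S^n$ and obtains $(\tilde u,\tilde v)$ on the unit circle, giving a unique angle $r\in[0,2\pi)$ with $(\tilde u,\tilde v)=(\cos r,\sin r)$. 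The residual sign ambiguity corresponds exactly to the identification $\rho_2$, while the choice of $r$ modulo $2\pi$ corresponds to $\rho_1$; hence every point of $\lie$ has a unique preimage in $\ospheres$.

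Finally, the contact/orthogonality equivalence follows from the trigonometric identity
\[
  \liesc{\vec S(c_1,r_1)}{\vec S(c_2,r_2)} = \dotprod{c_1}{c_2}-\cos r_1\cos r_2-\sin r_1\sin r_2 = \dotprod{c_1}{c_2}-\cos(r_1-r_2),
\]
whose vanishing is exactly the oriented contact condition \eqref{eq:oriented-spheres-contact}. There is no genuine obstacle here; the closest thing to a delicate point is ensuring that the $\{\rho_1,\rho_2\}$-equivalence on the source is \emph{precisely} matched, with no extra identifications, by the projective scaling together with the $2\pi$-periodicity on the target, which is taken care of by the normalization argument in the previous paragraph.
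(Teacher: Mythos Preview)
Your proof is correct and follows essentially the same approach as the paper: the paper's argument is a terse two-liner referring back to the prepared equations \eqref{eq:oriented-spheres-equivalence-projective} and \eqref{eq:oriented-spheres-contact-projective}, while you spell out the same computations (well-definedness under $\rho_1,\rho_2$, landing on $\lie$, explicit inverse via normalization, and the cosine-difference identity) in full detail.
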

\begin{proof}
  A point $\p{s} \in \lie$ can always be represented by $\p{s} = [c, \cos r, \sin r]$ with $c \in \S^n$, $r \in \R$.
  Now the statement follows from \eqref{eq:oriented-spheres-equivalence-projective} and \eqref{eq:oriented-spheres-contact-projective}.
\end{proof}
\begin{table}[H]
  \centering
  \def\arraystretch{1.7}
  \begin{tabular}{c|c}
    \textbf{spherical geometry} & \textbf{Lie geometry}\\
    \hline\hline
    \emph{point} $\hat{x}\in\S^n$ & $\left[\hat{x}, 1, 0\right] \in \lie$\\
    \hline
    \makecell{\emph{oriented hypersphere}\\ with center $\hat{s}\in\R^n$ and signed radius $r \in \R$} & $\left[\hat{s}, \cos r, \sin r\right] \in \lie$
  \end{tabular}
  \caption{Correspondence of hyperspheres of the $n$-sphere $\S^n$ and points on the Lie quadric $\lie$.}
  \label{tab:lie-sphere-correspondence}
\end{table}
This correspondence leads to an embedding of $\S^n$ into the Lie quadric in the following way.
Among all oriented hyperspheres the map $\vec{S}$ distinguishes the set of ``points'', or \emph{null-spheres},
as the set of oriented hyperspheres with radius $r=0$.
It turns out that
\[
  \set{\vec{S}(c,0)}{c \in \S^n} = \set{\p{x} \in \lie}{x_{n+3} = 0} = \lie \cap \p{p}^\lieperp,
\]
where
\[
  \p{p} \coloneqq [e_{n+3}] = [0, \ldots, 0, 1] \in \RP^{n+2}.
\]
\begin{definition}
  \label{def:point-complex}
  The quadric
  \[
    \mob \coloneqq \lie \cap \p{p}^\lieperp
  \]
  is called the \emph{point complex}.
\end{definition}
\begin{remark}
  Every choice of a timelike point $\p{p} \in \RP^{n+2}$, i.e.\ $\liesc{p}{p} < 0$,
  leads to the definition of a point complex $\mob = \lie \cap \p{p}^\lieperp$,
  all of which are equivalent up to a Lie transformation.
  The chosen point complex $\mob$ then leads to a correspondence
  of points on the Lie quadric $\lie$ and oriented hyperspheres on $\mob \simeq \S^n$.
\end{remark}
The point complex is a quadric of signature $(n+1, 1)$
which we identify with the Möbius quadric (see Section \ref{sec:mobius-geometry}).
The corresponding involution and projection associated with the point $\p{p}$
(see Definition \ref{def:involution-projection}) take the form
\[
  \begin{aligned}
    &\sigma_{\p{p}} : [x_1, \ldots, x_{n+2}, x_{n+3}] \mapsto [x_1, \ldots, , x_{n+2}, -x_{n+3}],\\
    &\pi_{\p{p}} : [x_1, \ldots, x_{n+2}, x_{n+3}] \mapsto [x_1, \ldots, , x_{n+2}, 0].
  \end{aligned}
\]
The image of the Lie quadric $\lie$ under the projection $\pi_{\p p}$ is given by
\[
  \pi_{\p p}(\lie) = \mob^+ \cup \mob = \set{\p{s} \in \p{p}^\lieperp}{ \liesc{s}{s} \geq 0}.
\]
By polarity, each point $\mob^+ \cup \mob$ corresponds to a hyperplanar section of $\mob$.
Thus, the Lie quadric can be seen as a double cover of the set of spheres of Möbius geometry, encoding their orientation,
while, vice versa, the orientation of hyperspheres in Lie geometry vanishes in the projection to Möbius geometry.
\begin{proposition}\
  \label{prop:non-oriented-sphere-polarity}
  \nobreakpar
  \begin{enumerate}
  \item
    \label{prop:non-oriented-sphere-polarity-involution}
    The involution
    $\sigma_{\p p} : \lie \rightarrow \lie$
    corresponds to the orientation reversion on $\ospheres$.
  \item
    \label{prop:non-oriented-sphere-polarity-projection}
    The projection
    $\pi_{\p p} : \lie \rightarrow \mob^+ \cup \mob$
    defines a double cover with branch locus $\mob$.
  \item The set $\nospheres$ of non-oriented hyperspheres of $\S^n$ (see Remark \ref{rem:non-oriented-spheres})
    is in one-to-one correspondence with $\mob^+ \cup \mob$ by the map
    \[
      S = \pi_{\p{p}}\circ \vec{S} : \nospheres \rightarrow \mob^+ \cup \mob, \quad
      (c, r) \mapsto (c, \cos r, 0).
    \]
  \item
    \label{prop:non-oriented-sphere-polarity-points-on-sphere}
    The set of ``points'' on $\mob \subset \lie$ lying on an oriented hypersphere $\p{s} \in \lie$,
    or equivalently lying on the non-oriented hypersphere $\pi_{\p{p}}(\p{s}) \in \mob^+ \cup \mob$ is given by
    \begin{equation}
      \p{s}^\lieperp \cap \mob = \pi_{\p{p}}(\p{s})^\perp \cap \mob.
    \end{equation}
  \item 
    The non-oriented hyperspheres corresponding to two points $\p{s}_1, \p{s}_2 \in \mob^+ \cup \mob$
    touch if and only if the line $\p{s}_1 \wedge \p{s}_2$ connecting them is tangent to $\mob$.

    Thus, the points on the cone of contact $\cone{\mob}{\p{s}}$ (see Definition \ref{def:cone-of-contact})
    correspond to all spheres touching the sphere corresponding to $\p{s} \in \mob^+ \cup \mob$.    
  \end{enumerate}
\end{proposition}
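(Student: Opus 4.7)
The plan is to prove parts (i)--(v) in order, each reducing either to an earlier result or to a short direct computation.

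For part (i), I compute directly: since $\sigma_{\p{p}}$ flips the sign of the last coordinate, $\sigma_{\p{p}}(\vec{S}(c,r)) = [c,\cos r,-\sin r] = [c,\cos(-r),\sin(-r)] = \vec{S}(c,-r)$, so under the bijection of Proposition~\ref{prop:lie-sphere-correspondence} it realizes the orientation reversion $\rho$. Part (ii) follows at once from Proposition~\ref{prop:involution-projection}~\ref{prop:involution-projection-double-cover} applied to $\lie$ and the timelike point $\p{p}$, with branch locus $\lie \cap \p{p}^\lieperp = \mob$; that the image equals $\mob^+ \cup \mob$ follows from Proposition~\ref{prop:projection-side} since $\liesc{p}{p}<0$.

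Part (iii) then just assembles (i) and (ii) with Proposition~\ref{prop:lie-sphere-correspondence} into the chain of bijections $\nospheres = \ospheres/\rho \simeq \lie/\sigma_{\p{p}} \simeq \pi_{\p{p}}(\lie) = \mob^+\cup\mob$, and the explicit formula $S(c,r)=[c,\cos r,0]$ is read off from $\pi_{\p{p}}\circ\vec{S}$. For part (iv), I use that every $\p{x}\in\mob\subset\p{p}^\lieperp$ satisfies $\liesc{p}{x}=0$, so $\liesc{s}{x}=\liesc{\pi_p(s)}{x}$ because $s-\pi_p(s)$ is a scalar multiple of $p$; this yields $\p{s}^\lieperp\cap\mob = \pi_{\p{p}}(\p{s})^\perp\cap\mob$. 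To interpret this set as the ``points lying on the hypersphere'', I plug $\p{s}=[\hat{s},\cos r,\sin r]$ and $\p{x}=[c,1,0]$ to obtain $\liesc{s}{x} = \hat{s}\cdot c - \cos r$, whose vanishing is exactly the defining condition~\eqref{eq:non-oriented-sphere} for the point $c$ to lie on the hypersphere.

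The main obstacle is part (v), where projective tangency of the line $L := \p{s}_1 \wedge \p{s}_2$ to $\mob$ must be matched with geometric tangency of the two hyperspheres. I apply Lemma~\ref{lem:quadric-line-intersection} to $L$: since $\p{s}_1,\p{s}_2\in\mob^+\cup\mob$, the Gram matrix of $\liesc{\cdot}{\cdot}$ on the linear span has at least one non-negative eigenvalue, and the possible line signatures are $(++)$, $(+-)$, or $(+0)$, corresponding to $\Delta>0$, $\Delta<0$, or $\Delta=0$, respectively, where $\Delta = \liesc{s_1}{s_2}^2 - \liesc{s_1}{s_1}\liesc{s_2}{s_2}$. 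By part (iv) the intersection of the two hyperspheres is the real part of $L^\perp\cap\mob$; invoking Lemma~\ref{lemma:orthogonal-subspaces}, $L^\perp$ acquires signatures $(n-1,1)$, $(n,0)$, and $(n-1,0,1)$ in the three cases, yielding respectively a non-degenerate $(n-2)$-dimensional quadric of intersection, an imaginary quadric (no real intersection), and a single real point (the unique vertex of the degenerate signature-$(n-1,0,1)$ form). Thus the hyperspheres touch precisely when $\Delta=0$, i.e.\ when $L$ is tangent to $\mob$. The concluding statement about $\cone{\mob}{\p{s}}$ is then immediate from Definition~\ref{def:cone-of-contact}, since $\Delta_s(s')=0$ is by construction the condition that $\p{s}\wedge\p{s}'$ be tangent to $\mob$.
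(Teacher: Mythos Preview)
Parts (i)--(iv) of your proof coincide with the paper's argument, both in the results invoked and in the explicit verification in (iv); your treatment of (iv) is in fact slightly more complete, since you also justify the identity $\p{s}^\lieperp\cap\mob = \pi_{\p{p}}(\p{s})^\perp\cap\mob$ algebraically before unpacking the sphere condition.

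For part (v) you take a genuinely different route. The paper argues via the ambient Lie quadric: two non-oriented hyperspheres touch if and only if some pair of oriented lifts to $\lie$ are in oriented contact, i.e.\ span an isotropic line (contact element) of $\lie$, and such lines project under $\pi_{\p{p}}$ precisely to the tangent lines of $\mob$. Your argument instead stays entirely within $\p{p}^\lieperp$: you identify the intersection of the two hyperspheres with $L^\perp\cap\mob$ via (iv) and read off its geometry from the signature trichotomy for $L$ using Lemmas~\ref{lemma:orthogonal-subspaces} and~\ref{lem:quadric-line-intersection}. This is more self-contained and yields the full trichotomy (transversal intersection / disjoint / tangent) as a by-product, whereas the paper's argument is terser and ties the statement back to the Lie-geometric notion of contact elements.

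One small slip in your (v): the correspondence between signature and the sign of $\Delta$ is reversed. Per Lemma~\ref{lem:quadric-line-intersection}, $\Delta>0$ gives signature $(+-)$ (line meets $\mob$ in two points) and $\Delta<0$ gives $(++)$ (no real intersection), not the other way round. This does not affect your conclusion, since the tangent case $\Delta=0$ is correctly identified either way and your signature computations for $L^\perp$ are matched to the $L$-signatures rather than to $\Delta$; but as written, your labeling of which of the two non-tangent cases is ``transversal'' and which is ``disjoint'' is swapped.
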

\begin{proof}\
  \nobreakpar
  \begin{enumerate}
  \item Note that $\sigma_{\p{p}}(\vec{S}(c,r)) = \vec{S}(c, -r)$ and compare with Remark \ref{rem:non-oriented-spheres}.
  \item See Proposition \ref{prop:involution-projection} \ref{prop:involution-projection-double-cover}.
  \item Follows from \ref{prop:non-oriented-sphere-polarity-involution} and \ref{prop:non-oriented-sphere-polarity-projection}.
  \item
    The set $\p{s}^\lieperp \cap \mob \subset \lie$ describes all hyperspheres in oriented contact with $\p{s}$
    that simultaneously correspond to ``points'', i.e.\ ``points'' that lie on the hypersphere.
    Indeed, with $\p{s} = [\project{s}, \cos r, \sin r] \in \lie$ we find for a ``point'' $\p{x} = [\project{x}, 1, 0] \in \mob$ that
    \[
      \liesc{s}{x} = 0
      ~\Leftrightarrow~
      \scalarprod{\project{s}}{\project{x}} = \cos r.
    \]
  \item
    This generalizes the statement in \ref{prop:non-oriented-sphere-polarity-points-on-sphere}
    and follows from the fact that the isotropic subspaces of $\lie$ (contact elements, cf.\ Definition \ref{def:contact-element})
    project to tangent lines of $\mob$.
  \end{enumerate}
\end{proof}
The subgroup $\lietrafos_{\p{p}}$ of Lie transformations that preserve the point complex $\mob$,
i.e.\ map ``points'' to ``points'',
becomes the group of Möbius transformations in the projection to $\p{p}^\perp$
\[
  \mobtrafos
  = \faktor{\lietrafos_{\p p}}{\sigma_{\p p}}
  \simeq \PO(n+1,1).
\]

\subsection{Laguerre geometry from Lie geometry}
%
%
A sphere complex in Lie geometry is given by the intersection of the Lie quadric with a hyperplane of $\RP^{n+2}$.
It may equivalently be described by the polar point of this hyperplane.
Two points in $\RP^{n+2}$ can be mapped to each other by a Lie transformation
if and only if they have the same signature.
Thus, any two sphere complexes of the same signature are Lie equivalent.
\begin{definition}
  \label{def:sphere-complexes}
  For a point $\p{q} \in \RP^{n+2}$ the set of points
  \[
    \lie \cap \p{q}^\perp
  \]
  on the Lie quadric 
  as well as the $n$-parameter family of oriented hyperspheres
  corresponding to these points is called a \emph{sphere complex}.
  A sphere complex is further called
  \begin{itemize}
  \item
    \emph{elliptic} if $\liesc{q}{q} > 0$,
  \item
    \emph{hyperbolic} if $\liesc{q}{q} < 0$,
  \item
    \emph{parabolic} if $\liesc{q}{q} = 0$.
  \end{itemize}
\end{definition}
\begin{remark}\
  \nobreakpar
  \begin{enumerate}
  \item We adopted the classical naming convention for sphere complexes here, see e.g.~\cite{Bl2}.
  \item The point complex (see Definition \ref{def:point-complex}) is a hyperbolic sphere complex.
  \item
    A non-parabolic sphere complex induces an invariant for pairs of oriented spheres (see Appendix \ref{sec:invariant}).
    In particular, the invariant induced by the point complex, i.e., the point $\p{p}$, is the \emph{signed inversive distance} (see Appendix \ref{sec:signed-inversive-distance}), which generalizes the intersection angle of spheres.
    It further allows for a geometric description of sphere complexes (see Appendix \ref{sec:sphere-complexes-geometric}).
  \end{enumerate}
\end{remark}
Laguerre geometry is the geometry of oriented hyperplanes and oriented hyperspheres in a certain space form,
and their oriented contact (cf.\ Section \ref{sec:laguerre}).
It appears as a subgeometry of Lie geometry by distinguishing the set of ``oriented hyperplanes''
as a sphere complex among the set of oriented hyperspheres.

The point complex $\mob = \lie \cap \p{p}^\lieperp$,
where $\p{p} \in \RP^{n+2}$ is a timelike point,
induces the notion of orientation reversion given by the involution $\sigma_{\p p}$.
For another sphere complex $\lie \cap \p{q}^\lieperp$, where $\p{q} \in \RP^{n+2}$,
to play the distinguished role of the set of ``oriented hyperplanes'' on $\mob$
it must be invariant under orientation reversion, i.e.,\ $\sigma_{\p p}(\lie \cap \p{q}^\lieperp) = \lie \cap \p{q}^\lieperp$,
which is equivalent to $\liesc{p}{q} = 0$.
\begin{definition}
  \label{def:plane-complex}
  For a point $\p{q} \in \RP^{n+2}$ with
  \[
    \liesc{p}{q} = 0
  \]
  we call the sphere complex
  \[
    \lag \coloneqq \lie \cap \p{q}^{\lieperp},
  \]
  a \emph{plane complex}.
\end{definition}

Up to a Lie transformation that fixes $\p{p}$, i.e.\ a Möbius transformation (cf.\ Section \ref{sec:mobius-geometry}), 
we can set, w.l.o.g.,
\[
  \p{q} = \left\{
    \begin{aligned}
      [e_{n+1}] &= [0, \ldots, 1, 0, 0] &&\text{if}~\liesc{q}{q} > 0\\
      [e_{n+2}] &= [0, \ldots, 0, 1, 0] &&\text{if}~\liesc{q}{q} < 0\\
      [e_\infty] &= [0, \ldots, \tfrac{1}{2}, \tfrac{1}{2}, 0] &&\text{if}~\liesc{q}{q} = 0.
    \end{aligned}
  \right.
\]
Consider the restriction of the Lie quadric to $\p{q}^\lieperp \simeq \RP^{n+1}$.
Then for the non-parabolic cases we identify each of the plane complexes with one of the \emph{Laguerre quadrics}
which we have introduced in Section \ref{sec:laguerre}.
The parabolic plane complex corresponds to the classical case of Euclidean Laguerre geometry.
Thus, we recover (see Figure \ref{fig:hyperbolic_lag})
\begin{itemize}
\item
  \emph{hyperbolic Laguerre geometry} if $\liesc{q}{q} > 0$ (see Section \ref{sec:hyperbolic-laguerre-geometry}),
\item
  \emph{elliptic (``spherical'') Laguerre geometry} if $\liesc{q}{q} < 0$ (see Section \ref{sec:elliptic-laguerre-geometry}),
\item
  \emph{Euclidean Laguerre geometry} if $\liesc{q}{q} = 0$ (see Section \ref{sec:euclidean-laguerre-geometry}).
\end{itemize}
\begin{remark}
  Note that according to the classical naming convention of sphere complexes,
  which we adopted in Definition \ref{def:sphere-complexes},
  an elliptic sphere complex is associated with hyperbolic Laguerre geometry,
  while a hyperbolic sphere complex is associated with elliptic Laguerre geometry.
\end{remark}
The corresponding groups of \emph{Laguerre transformations} are induced by the groups of Lie transformations
that preserve the corresponding Laguerre quadric $\lag$, or equivalently the point $\p{q}$,
\[
  \faktor{\lietrafos_{\p q}}{\sigma_{\p q}} \simeq \left\{
    \begin{aligned}
      &\PO(n,2)   &&\text{if}~\liesc{q}{q} > 0\\
      &\PO(n+1,1) &&\text{if}~\liesc{q}{q} < 0\\
      &\PO(n,1,1) &&\text{if}~\liesc{q}{q} = 0,
    \end{aligned}
  \right.
\]
where $\sigma_{\p q}$ is the involution associated with the plane complex (cf.\ Definition \ref{def:involution-projection}),
and we set $\sigma_{\p q} = \id$ if $\liesc{q}{q} = 0$.
\begin{remark}
  \label{rem:involutions-commute}
  In the non-parabolic cases,
  the condition $\liesc{p}{q}=0$ is equivalent to the condition that the two involutions $\sigma_{\p p}$ and $\sigma_{\p q}$ commute, i.e.
  \[
    \sigma_{\p p} \circ \sigma_{\p q} = \sigma_{\p q} \circ \sigma_{\p p}.
  \]
\end{remark}

\begin{figure}
  \centering
  \def\svgwidth{0.57\textwidth}
  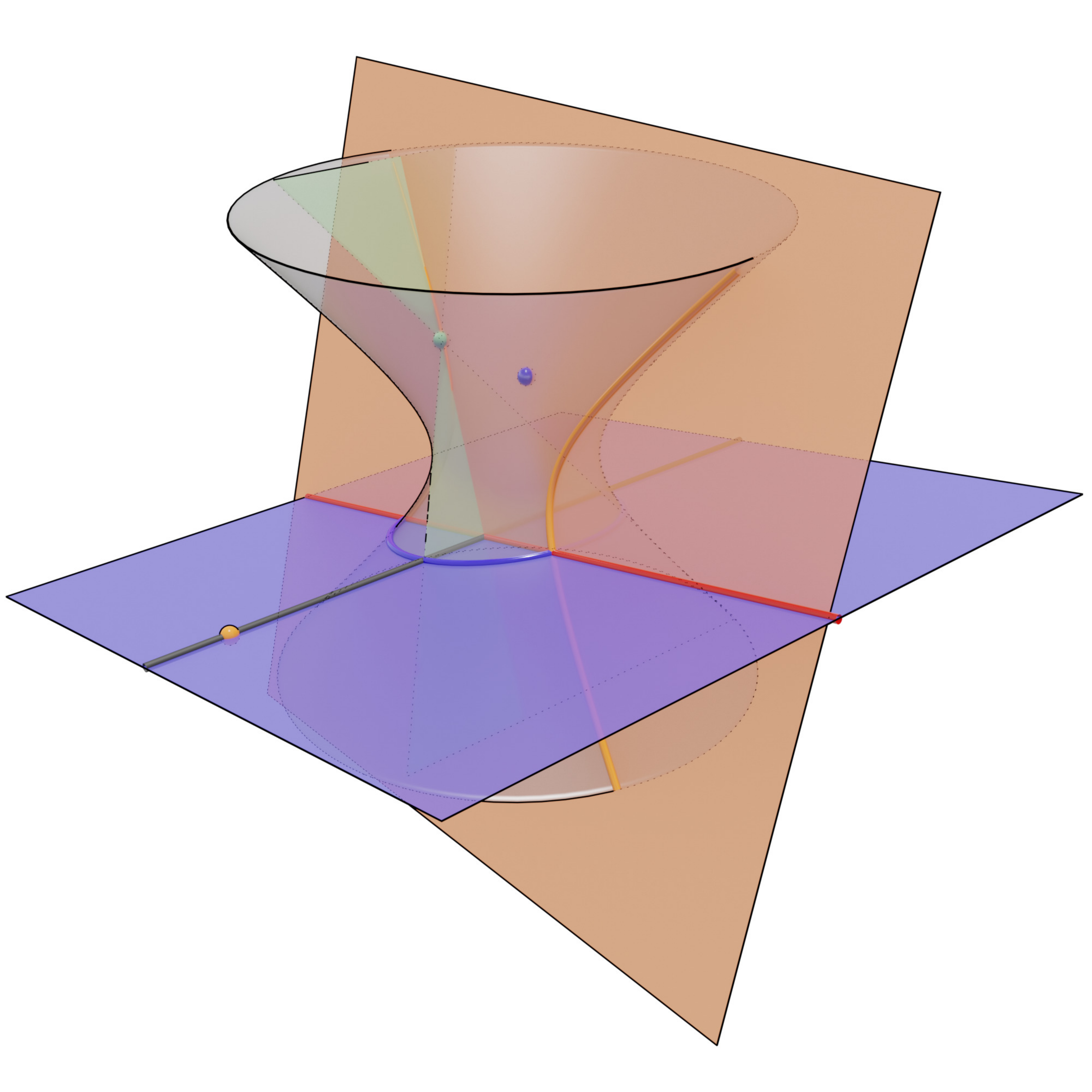
  \caption{
    Laguerre geometry from Lie geometry.
    The choice of a point $\p{q}$ with $\scalarprod{p}{q} = 0$ determines a plane complex, or Laguerre quadric $\lag \subset \lie$.
    This induces Laguerre geometry on a Cayley-Klein space in the base plane $\baseplane$.
    A point $\p{s} \in \lag$ corresponds to an oriented line in that space via polar projection.
  }
\label{fig:hyperbolic_lag}
\end{figure}

We recognized the different Laguerre quadrics by their signature, which depends on the signature of the point $\p{q}$ only,
but is entirely independent of the point $\p{p}$ with $\liesc{p}{q} = 0$.
We yet have to establish the geometric relation to Lie geometry.
\begin{definition}
  Given the two points $\p{p}, \p{q} \in \RP^{n+2}$ defining the point complex and the plane complex respectively,
  we call the set
  \[
    \baseplane \coloneqq \p{p}^\perp \cap \p{q}^\perp \simeq \RP^n.
  \]
  the \emph{base plane}.
\end{definition}
In the restriction to the hyperplane of the point complex $\p{p}^\perp$,
the point $\pi_{\p{p}}(\p{q})$ plays the role of the point $\p{q}$
from the Sections \ref{sec:hyperbolic-laguerre-geometry} and \ref{sec:elliptic-laguerre-geometry}.
Thus, polar projection with respect to this point yields hyperbolic/elliptic geometry in the base plane $\baseplane$.
In the parabolic case, projection with respect to $\p{q}$ should be replaced by stereographic projection,
which recovers Euclidean (similarity) geometry (cf.\ Appendix~\ref{sec:mobius-euclidean}).

On the other hand, in the restriction to the hyperplane of the plane complex $\p{q}^\perp$,
the point $\pi_{\p{q}}(\p{p})$ plays the role of the point $\p{p}$
from the Sections \ref{sec:hyperbolic-laguerre-geometry} and \ref{sec:elliptic-laguerre-geometry}.
Thus, polar projection with respect to this point yields hyperbolic/elliptic geometry in the base plane $\baseplane$,
while in the parabolic case projection with respect to $\p{p}$ leads to dual Euclidean (similarity) geometry
(cf.\ Appendix \ref{sec:euclidean-laguerre-geometry}).

On the level of the transformation group this can be described in the following way.
Consider the Lie transformations $\lietrafos_{\p{p}, \p{q}}$
that fix all ``points'' and ``planes'', i.e.\ the point complex $\mob$ and the plane complex $\lag$,
or equivalently, the two points $\p{p}, \p{q} \in \RP^{n+2}$.
These transformations naturally factor to $\sfrac{\lietrafos_{\p{p}, \p{q}}}{\{\sigma_{\p p}, \sigma_{\p q}\}}$,
where again we set $\sigma_{\p q} = \id$ if $\liesc{q}{q} = 0$.
Their action is well-defined on the quotient space $\sfrac{\mob}{\sigma_{\p q}}$, due to Remark \ref{rem:involutions-commute}.
The quotient space $\sfrac{\mob}{\sigma_{\p q}}$ can be embedded into the base plane $\baseplane$ using the projection $\pi_{\p{q}}$,
which should be replaced by stereographic projection in the Euclidean case (cf.\ Appendix~\ref{sec:euclidean-laguerre-geometry}).
Thus, we may equivalently consider the action of these Lie transformations on the base plane $\baseplane$,
on which they act as lower dimensional projective orthogonal groups again:
\[
  \faktor{\lietrafos_{\p{p}, \p{q}}}{\{\sigma_{\p p}, \sigma_{\p q}\}}
  \simeq \faktor{\PO(n+1, 1)}{\sigma_{\p q}}
  \simeq \left\{
    \begin{aligned}
      &\PO(n,1), &&\text{if}~\liesc{q}{q} > 0\\
      &\PO(n+1), &&\text{if}~\liesc{q}{q} < 0\\
      &\PO(n,0,1), &&\text{if}~\liesc{q}{q} = 0.
    \end{aligned}
  \right.
\]
We recognize $\PO(n,1)$ and $\PO(n+1)$ as the isometry groups of hyperbolic and elliptic space
(cf.\ Sections \ref{sec:hyperbolic-space} and \ref{sec:elliptic-space}),
while $\PO(n,0,1)$ corresponds to the group of dual similarity transformations,
i.e.\ the group of dual transformations $\PO(n,0,1)^*$ corresponds to isometries and scalings of Euclidean space
(cf.\ Appendix~\ref{sec:euclidean-space}).
\begin{remark}
  \label{rem:space-form-models}
  We end up with two models of the space form associated to each Laguerre geometry (see Figure \ref{fig:hyperbolic_lag}).
  One is represented by the point complex $\mob \subset \p{p}^\lieperp \simeq \RP^{n+1}$,
  with opposite points with respect to $\sigma_{\p q}$ identified,
  which we refer to as the \emph{sphere model} (see Figures \ref{fig:ell_cic_nets}, \ref{fig:hyp_cic_nets_ellipse}, and \ref{fig:hyp_cic_nets_hyperbola}, top).
  In this model the oriented hyperspheres that correspond to sections of $\mob$
  with hyperplanes that contain the point $\pi_{\p p}(\p{q})$ are the distinguished ``oriented hyperplanes''

  Another model is obtained by its projection $\pi_{\p q}(\mob)$ onto the base plane $\baseplane \simeq \RP^n$,
  which we refer to as the \emph{projective model} (see Figures \ref{fig:ell_cic_nets}, \ref{fig:hyp_cic_nets_ellipse}, and \ref{fig:hyp_cic_nets_hyperbola}, left).
  In this model the ``oriented hyperplanes'' become (oriented) projective hyperplanes.
\end{remark}
\begin{proposition}\
  \label{prop:laguerre-space-forms}
  \nobreakpar
  \begin{enumerate}
  \item In the non-Euclidean cases of Laguerre geometry, i.e.\ $\liesc{q}{q} \neq 0$,
    the point complex $\mob$ may be identified with hyperbolic/elliptic space respectively,
    after taking the quotient with respect to $\sigma_{\p{q}}$, or equivalently, projection onto the base plane
    \[
      \faktor{\mob}{\sigma_{\p q}}
      \simeq \pi_{\p q}(\mob)
      \subset \baseplane
      \simeq \RP^n.
    \]
    The Lie transformations that fix the point complex and the plane complex act on $\pi_{\p q}(\mob) \subset \baseplane$
    as the corresponding isometry group.
  \item In the case of Euclidean Laguerre geometry, i.e.\ $\liesc{q}{q} = 0$,
    the point complex $\mob$ may be identified with Euclidean space upon stereographic projection.
    The Lie transformations that fix the point complex and the plane complex act on $\baseplane$
    as dual similarity transformations.
  \end{enumerate}
\end{proposition}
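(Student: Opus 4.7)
The plan is to verify the proposition by applying the central projection machinery of Section \ref{sec:projection} to the Möbius quadric $\mob \subset \p{p}^\perp$ viewed as a quadric of signature $(n+1,1)$ inside the projective space $\p{p}^\perp \simeq \RP^{n+1}$, with center of projection taken at $\p{q}$. The condition $\liesc{p}{q}=0$ guarantees that $\p{q}\in\p{p}^\perp$ and that the polar hyperplane of $\p{q}$ inside $\p{p}^\perp$ is precisely the base plane $\baseplane = \p{p}^\perp\cap\p{q}^\perp \simeq \RP^n$. Since the restriction of $\liesc{\cdot}{\cdot}$ to $\p{p}^\perp$ is the Möbius form, the signature of $\p{q}$ with respect to the Möbius form agrees with $\sgn\,\liesc{q}{q}$, and (by Lemma~\ref{lemma:orthogonal-subspaces}) the quadric $\secquadric = \mob\cap\p{q}^\perp$ has signature $(n,1)$ or $(n+1,0)$ in $\baseplane$ according to whether $\liesc{q}{q}>0$ or $\liesc{q}{q}<0$.

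For the first (non-Euclidean) case I would then invoke Proposition~\ref{prop:involution-projection}~\ref{prop:involution-projection-double-cover} to conclude that $\pi_{\p{q}}\restrict{\mob}\colon\mob\to\pi_{\p{q}}(\mob)\subset\baseplane$ is a double cover whose covering involution is $\sigma_{\p{q}}\restrict{\mob}$, yielding $\mob/\sigma_{\p{q}}\simeq\pi_{\p{q}}(\mob)$. Proposition~\ref{prop:projection-side} then identifies the image: if $\liesc{q}{q}>0$, $\pi_{\p{q}}(\mob)=\secquadric^-\cup\secquadric$, matching hyperbolic space $\hyp\cup\widetilde{\mob}=\chyp$ as defined in Section~\ref{sec:hyperbolic-space}; if $\liesc{q}{q}<0$, $\pi_{\p{q}}(\mob)=\secquadric^+\cup\secquadric$, matching elliptic space $\ellip$ as defined in Section~\ref{sec:elliptic-space}. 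For the transformation-group statement, each $f\in\lietrafos_{\p{p},\p{q}}$ preserves $\baseplane$ and $\secquadric$, so the induced action factors through $\PO(\tilde r,\tilde s,\tilde t)$. Applying the quotient identity \eqref{eq:lie-trafos-quotient} twice — first to $\sigma_{\p{p}}$ to pass from $\lietrafos_{\p{p},\p{q}}$ to the Möbius stabilizer $\PO(n+1,1)_{\p{q}}$ acting on $\p{p}^\perp$, then to $\sigma_{\p{q}}$ to descend to $\baseplane$ — gives $\PO(n,1)$ or $\PO(n+1)$, i.e.\ exactly the hyperbolic or elliptic isometry group.

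For the Euclidean case $\liesc{q}{q}=0$, the point $\p{q}$ now lies on $\mob$ itself, so the central projection degenerates and must be replaced by stereographic projection from $\p{q}\in\mob$ onto a hyperplane not containing $\p{q}$; this is the standard Möbius-geometric identification of $\mob\setminus\{\p{q}\}$ with Euclidean $n$-space (cf.\ Appendix~\ref{sec:euclidean}). Under this identification the Lie stabilizer $\lietrafos_{\p{p},\p{q}}$ projects to the subgroup of Möbius transformations that fix the distinguished point $\p{q}$ on $\mob$; this is precisely the similarity group of the induced Euclidean structure. Because the Laguerre model sees these elements through the plane complex $\lag$ rather than through $\mob$, they act as dual similarities on $\baseplane$, and the projective realization is the group $\PO(n,0,1)$ (via its dual $\PO(n,0,1)^*$) described in Appendix~\ref{sec:euclidean-space}.

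The main obstacle will be bookkeeping rather than insight: keeping signatures consistent through the nested restrictions ($\RP^{n+2}\supset\p{p}^\perp\supset\baseplane$) and correctly matching the sign conventions of Sections~\ref{sec:hyperbolic-space} and~\ref{sec:elliptic-space} for which side of $\secquadric$ carries the space form. The Euclidean statement additionally requires unpacking the stereographic identification and the duality between similarities and dual similarities from the appendix, but once these are in place the argument is essentially a direct translation through the definitions.
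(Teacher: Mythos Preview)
Your proposal is correct and follows essentially the same approach as the paper: the paper does not give a formal proof block for this proposition but treats it as a summary of the discussion immediately preceding it, which invokes precisely the machinery you cite --- the central projection of $\mob$ from $\p{q}$ (equivalently $\pi_{\p{p}}(\p{q})$, which equals $\p{q}$ since $\liesc{p}{q}=0$) onto $\baseplane$, the double-cover property from Proposition~\ref{prop:involution-projection}, the identification of the image side via Proposition~\ref{prop:projection-side}, and the iterated quotient \eqref{eq:lie-trafos-quotient} for the transformation groups, with the Euclidean case deferred to the stereographic-projection setup of Appendix~\ref{sec:euclidean}. Your organization is if anything more explicit than the paper's.
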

\begin{remark}
  \label{rem:lie-quadric-projection}
  In Laguerre geometry the hyperplanar sections correspond to oriented spheres, which, in the non-Euclidean cases,
  can be identified with their polar points.
  In elliptic Laguerre geometry the Lie quadric projects to the ``outside'' of the elliptic Laguerre quadric
  \[
    \pi_{\p{q}}(\lie) = \lagell^+ \cup \lagell
  \]
  which represents all poles of hyperplanar sections \eqref{eq:laguerre-spheres-elliptic}.
  In hyperbolic Laguerre geometry, on the other hand, the Lie quadric projects to the ``inside'' of the hyperbolic Laguerre quadric
  \[
    \pi_{\p{q}}(\lie) = \laghyp^- \cup \lagell,
  \]
  while the poles of hyperplanar sections are the whole space \eqref{eq:laguerre-spheres-hyperbolic}.
  The Lie quadric only projects to the points corresponding to hyperbolic spheres/distance hypersurfaces/horospheres,
  and not to points representing deSitter spheres
  (cf.\ Remark \ref{rem:hyperbolic-Laguerre-spheres} \ref{rem:hyperbolic-Laguerre-spheres-distinction}).
  Vice versa, Laguerre spheres that are deSitter spheres do not possess a (real) lift to the Lie quadric.
\end{remark}

\subsection{Subgeometries of Lie geometry}
\label{sec:classification}

Choosing different signatures for the points $\p{p}$ and $\p{q}$,
i.e.\ different signatures for the point complex and plane complex,
we recover different subgeometries of Lie geometry (see Table \ref{tab:classification}).

Fixing both points in the Lie group induces (a quadruple covering of) the corresponding isometry group.
We call the group obtained by fixing only $\p{p}$ (a double cover of) the corresponding \emph{Möbius group},
and the group obtained by fixing only $\p{q}$ (a double cover of) the corresponding \emph{Laguerre group}.
For each isometry group the corresponding Möbius group describes the transformations that map
points in the space form to points while preserving spheres,
while the Laguerre group describes the transformations that map hyperplanes to hyperplanes
while preserving spheres.
For this to hold, the transformations either have to be considered locally,
or acting on the set of oriented points/oriented hyperplanes respectively
(see Theorem \ref{thm:mobius-transformation-lift},
Remark ~\ref{rem:moebius-transformation-lift}~\ref{rem:conformal-geometry-lift},
Remark \ref{rem:hyperbolic-mobius} \ref{rem:hyperbolic-mobius-transformations},
Remark \ref{rem:elliptic-mobius} \ref{rem:elliptic-mobius-transformations},
Section \ref{sec:hyperbolic-laguerre-transformations},
Section \ref{sec:elliptic-laguerre-transformations}).
\begin{remark}
  Note that certain geometries have the same transformation group.
  In particular, $n$-dimensional Lie geometry has the same transformation group
  as $(n+1)$-dimensional hyperbolic Laguerre geometry.
  Geometrically this is due to the fact that one can identify the oriented hyperspheres of $\mob$
  with the oriented hyperbolic hyperplanes of the inside $\hyp = \mob^-$.
\end{remark}
\begin{table}[H]
  \centering
  \begin{tabular}{c|c|c|c|c}
    \textbf{space form} & \textbf{isometry grp.} & \textbf{Möbius grp.} & \textbf{Laguerre grp.} & \textbf{sign.} $\p{p}$, $\p{q}$\\
    \hline\hline
    elliptic space & $\PO(n+1)$ & $\PO(n+1,1)$ & $\PO(n+1,1)$ & $(-)$ $(-)$\\
    \hline
    hyperbolic space & $\PO(n,1)$ & $\PO(n+1,1)$ & $\PO(n,2)$ & $(-)$ $(+)$\\
    \hline
    deSitter space & $\PO(n,1)$ & $\PO(n,2)$ & $\PO(n+1,1)$ & $(+)$ $(-)$\\
    \hline
    (dual) Euclidean space & $\PO(n,0,1)$ & $\PO(n+1,1)$ & $\PO(n,1,1)$ & $(-)$ $(0)$\\
    \hline
    (dual) Minkowski space & $\PO(n-1,1,1)$ & $\PO(n,2)$ & $\PO(n,1,1)$ & $(+)$ $(0)$
  \end{tabular}
  \caption{
    Isometry group, Möbius group, and Laguerre group for different space forms,
    and the signatures of the points $\p{p}$ and $\p{q}$ defining the corresponding point complex and plane complex in Lie geometry respectively.
    In the degenerate cases of Euclidean and Minkowski geometry, the given ``isometry group''
    is actually the group of similarity transformations represented on the dual space.
  }
  \label{tab:classification}
\end{table}


\newpage
\section{Checkerboard incircular nets}
\label{sec:icnets}

In this section, as an application of two-dimensional Lie and Laguerre geometry, we present new research results.
While incircular nets and their Laguerre geometric generalization to \emph{checkerboard incircular nets} have been studied in great detail \cite{B, AB, BST}, we introduce their generalization to Lie geometry, and show that they may be classified in terms of checkerboard incircular nets in hyperbolic/elliptic/Euclidean Laguerre geometry.
We prove incidence theorems of Miquel type, show that all lines of a checkerboard incircular net are tangent to a hypercycle, and give explicit formulas in terms of Jacobi elliptic functions.
This generalizes the results from \cite{BST} and leads to a unified treatment of checkerboard incircular nets in all space forms.

\subsection{Checkerboard incircular nets in Lie geometry}
\begin{figure}
  \centering
  \def\svgwidth{0.4\textwidth}
  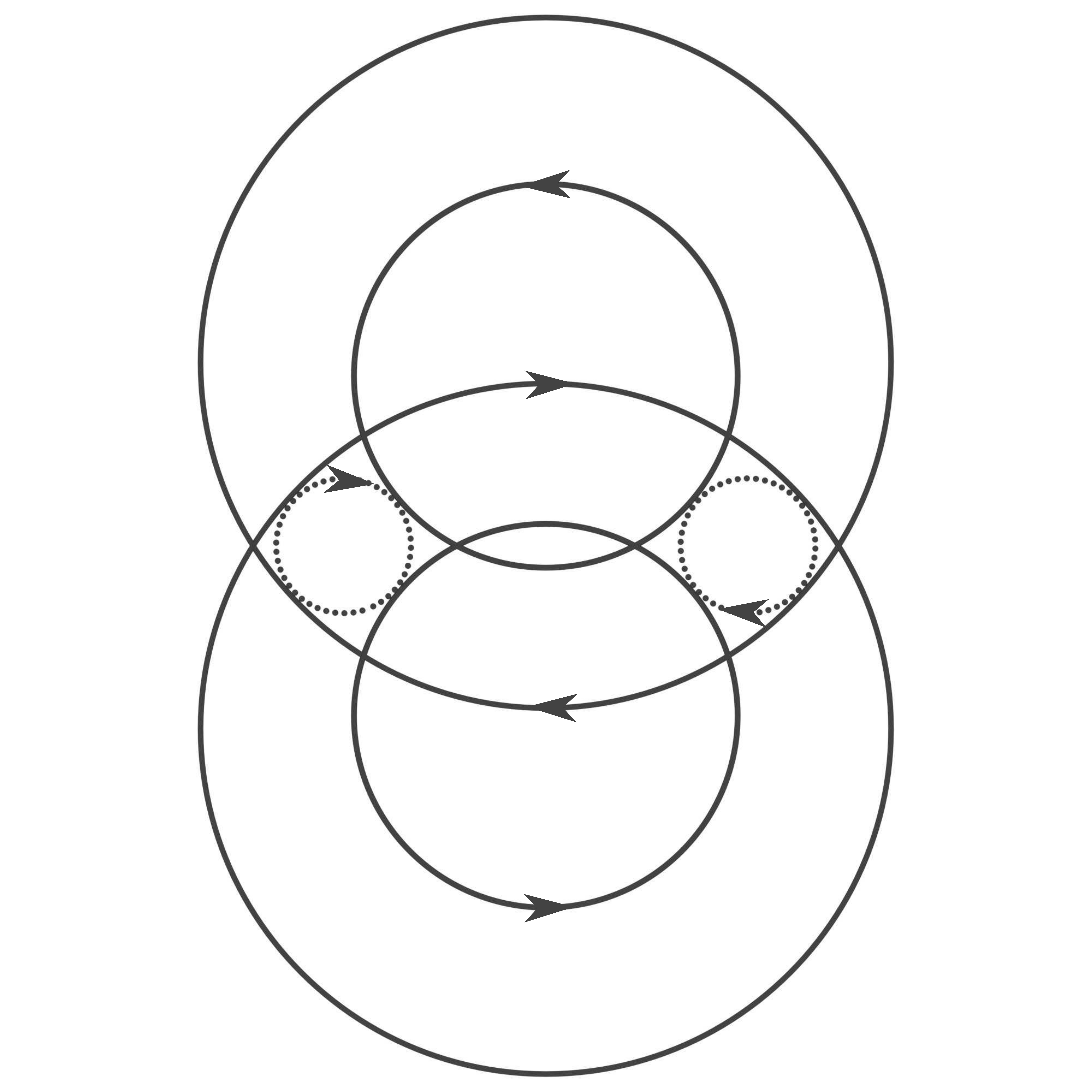
  \hspace{0.08\textwidth}
  \def\svgwidth{0.4\textwidth}
  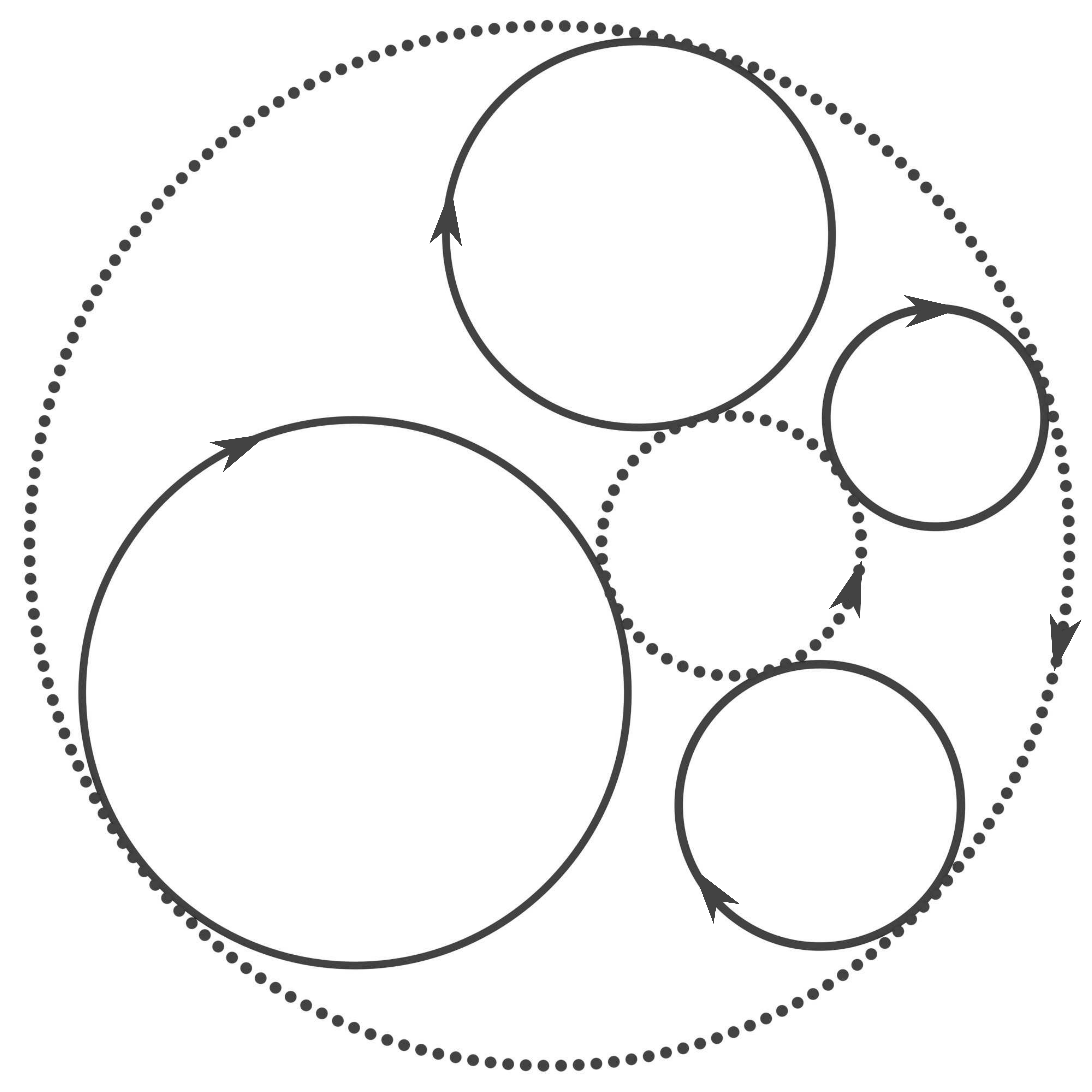
  \caption{
    Lie-circumscribed quadrilaterals.
  }
\label{fig:lie-circumscribed}
\end{figure}

To investigate configurations of oriented circles and their oriented contact on the two-sphere,
we identify oriented circles with points on the Lie quadric $\lie \subset \RP^4$,
which is a quadric of signature $(+++--)$, as described in Section \ref{sec:lie}.
\begin{definition}[Lie quadrilateral]
  A \emph{Lie quadrilateral} is a quadruple of oriented circles,
  called \emph{edge circles}.
\end{definition}
\begin{remark}
  Two edge circles of a Lie quadrilateral do not necessarily intersect.
  Thus, e.g., both quadrilaterals shown in Figure \ref{fig:lie-circumscribed} are admissible
  Lie quadrilaterals.
\end{remark}
\begin{definition}[Lie circumscribed]
  A Lie quadrilateral is called \emph{circumscribed} if the four points on the Lie quadric
  corresponding to its four oriented edge circles are coplanar.
  We call the signature of the plane in which these points lie
  the \emph{signature} of the circumscribed Lie quadrilateral.
\end{definition}
To justify the term ``circumscribed'' consider a plane $\mathbf{U} \subset \RP^4$ of signature $(++-)$.
Then according to Lemma \ref{lemma:orthogonal-subspaces} its polar line
has signature $(+-)$, and thus, ${\mathbf{U}^\lieperp \cap \lie = \{\p{c}_1, \p{c}_2\}}$ consists of exactly two points.
The one parameter family of circles corresponding to the points in $\mathbf{U} \cap \lie$
are the circles in oriented contact with the two circles corresponding to $\p{c}_1$ and $\p{c}_2$.
Therefore, a circumscribed Lie quadrilateral of signature $(++-)$ is in oriented contact with exactly two circles (see Figure \ref{fig:lie-circumscribed}).

To characterize all possible cases of circumscribed Lie quadrilaterals we need to
distinguish all possible signatures of the plane $\mathbf{U}$. 
\begin{proposition}
  \label{prop:circumscribed-quads}
  For a plane $\mathbf{U} \subset \RP^4$ the family of oriented circles
  corresponding to $\mathbf{U} \cap \lie$ is exactly one of the following
  depending on the signature of $\mathbf{U}$ with respect to the Lie quadric~$\lie$.
  \begin{itemize}
  \item $(+++)$ Empty family.
  \item $(++-)$ One parameter family of circles in oriented contact with
    the two oriented circles given by $\mathbf{U} \cap \lie$.
  \item $(+--)$ Circles from the intersection of two hyperbolic circle complexes
    (cf.\ Definition \ref{def:sphere-complexes} and Appendix \ref{sec:sphere-complexes-geometric}).
  \item $(+-0)$ Two contact elements (see Definition \ref{def:contact-element}) with a common circle.
  \item $(++0)$ One circle.
  \item $(+00)$ One contact element.
  \end{itemize}
\end{proposition}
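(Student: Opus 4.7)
The plan is to exhaust the six signatures by combining Lemma \ref{lemma:orthogonal-subspaces}, applied to the Lie quadric of signature $(3,2)$, with the Lie--sphere correspondence of Proposition \ref{prop:lie-sphere-correspondence}. For each signature of $\mathbf{U}$ I would first read off the signature of the polar line $\mathbf{U}^\lieperp$: the three non-degenerate signatures $(+++)$, $(++-)$, $(+--)$ give $(--)$, $(+-)$, $(++)$ respectively, while the three degenerate signatures $(++0)$, $(+-0)$, $(+00)$ give $(-0)$, $(+0)$, $(00)$. This is a direct substitution into the relations of Lemma \ref{lemma:orthogonal-subspaces}, and the same lemma together with the constraints $\tilde r\leq 3$, $\tilde s\leq 2$ rules out every other signature of a three-dimensional subspace of $\R^{3,2}$, so the six cases stated are exhaustive.

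For the three non-degenerate cases I would argue via the polar line. If $\mathbf{U}$ has signature $(+++)$, then $\mathbf{U}$ is positive definite and contains no isotropic vector, so $\mathbf{U}\cap\lie=\varnothing$. If the signature is $(++-)$, the polar line has signature $(+-)$ and meets $\lie$ in exactly two points $\p{c}_1,\p{c}_2$ by Lemma \ref{lem:quadric-line-intersection}; a point $\p{s}\in\mathbf{U}$ lies on $\lie$ iff $\liesc{s}{c_1}=\liesc{s}{c_2}=0$, which by Proposition \ref{prop:lie-sphere-correspondence} translates to oriented contact of the circle $\p{s}$ with the two circles $\p{c}_1$ and $\p{c}_2$. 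If the signature is $(+--)$, the polar line has signature $(++)$ and misses $\lie$; writing $\mathbf{U}=\p{q}_1^{\lieperp}\cap\p{q}_2^{\lieperp}$ for any two independent points spanning $\mathbf{U}^\lieperp$ exhibits $\mathbf{U}\cap\lie$ as the intersection of two circle complexes in the sense of Definition \ref{def:sphere-complexes}.

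For the three degenerate cases I would analyze the restricted bilinear form on $\mathbf{U}$ directly. In each case the kernel of this form is an isotropic subspace and therefore automatically lies on $\lie$. For $(++0)$ the quotient modulo the kernel is positive definite, so $\mathbf{U}\cap\lie$ reduces to the one-dimensional kernel, i.e.\ a single oriented circle. For $(+00)$ the two-dimensional kernel \emph{is} the full isotropic set $\mathbf{U}\cap\lie$, a single contact element in the sense of Definition \ref{def:contact-element}. The richest case $(+-0)$ has a one-dimensional quotient of signature $(+-)$, whose isotropic cone is a union of two distinct lines; these lift to two isotropic lines of $\lie$ meeting at the vertex, and by Proposition \ref{prop:lie-sphere-correspondence} they are two contact elements sharing the oriented circle represented by the common vertex.

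The main obstacle is not the algebra, which reduces to a tabulation via Lemmas \ref{lemma:orthogonal-subspaces} and \ref{lem:quadric-line-intersection}, but the geometric translation in the degenerate cases: one must verify carefully that an isotropic line in $\lie$ through a point of $\lie$ is precisely a contact element passing through the corresponding oriented circle, and that two such lines sharing a vertex really describe the advertised two contact elements with a common circle rather than a single coincident object.
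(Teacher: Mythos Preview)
Your proposal is correct and follows essentially the same route as the paper: exhaust the signatures via Lemma~\ref{lemma:orthogonal-subspaces}, handle the non-degenerate cases through the polar line, and treat the degenerate cases as tangent planes whose intersection with $\lie$ is governed by the kernel of the restricted form. Your kernel/quotient analysis for the degenerate cases is in fact more explicit than the paper's one-line summary, and the ``obstacle'' you flag is not a real one, since a contact element is \emph{defined} as an isotropic line (Definition~\ref{def:contact-element}), so no further geometric verification is required.
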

\begin{proof}
  The Lie quadric has signature $(+++--)$.
  Thus, the listed signatures are all possible cases that can occur.
  A plane with signature $(+++)$ does not intersect the Lie quadric.
  The case $(++-)$ was already discussed before the proposition.
  For the case $(+--)$ the polar line has signature $(++)$.
  Thus, we may view $\mathbf{U}$ as the intersection of two hyperbolic circle complexes.
  The cases $(+-0)$, $(++0)$, and $(+00)$ each describes a tangent plane that, in turn,
  intersects the Lie quadric in two isotropic subspaces,
  touches the Lie quadric in exactly one point,
  intersects the Lie quadric in exactly one isotropic subspace.
\end{proof}
\begin{remark}
  \label{rem:generic-quads}
  For a generic circumscribed Lie quadrilateral, i.e., no three of the four points on the Lie quadric are collinear, only the signatures $(++-)$, $(+--)$, and $(+-0)$ can occur.
\end{remark}
\begin{figure}
  \centering
  \def\svgwidth{0.3\textwidth}
  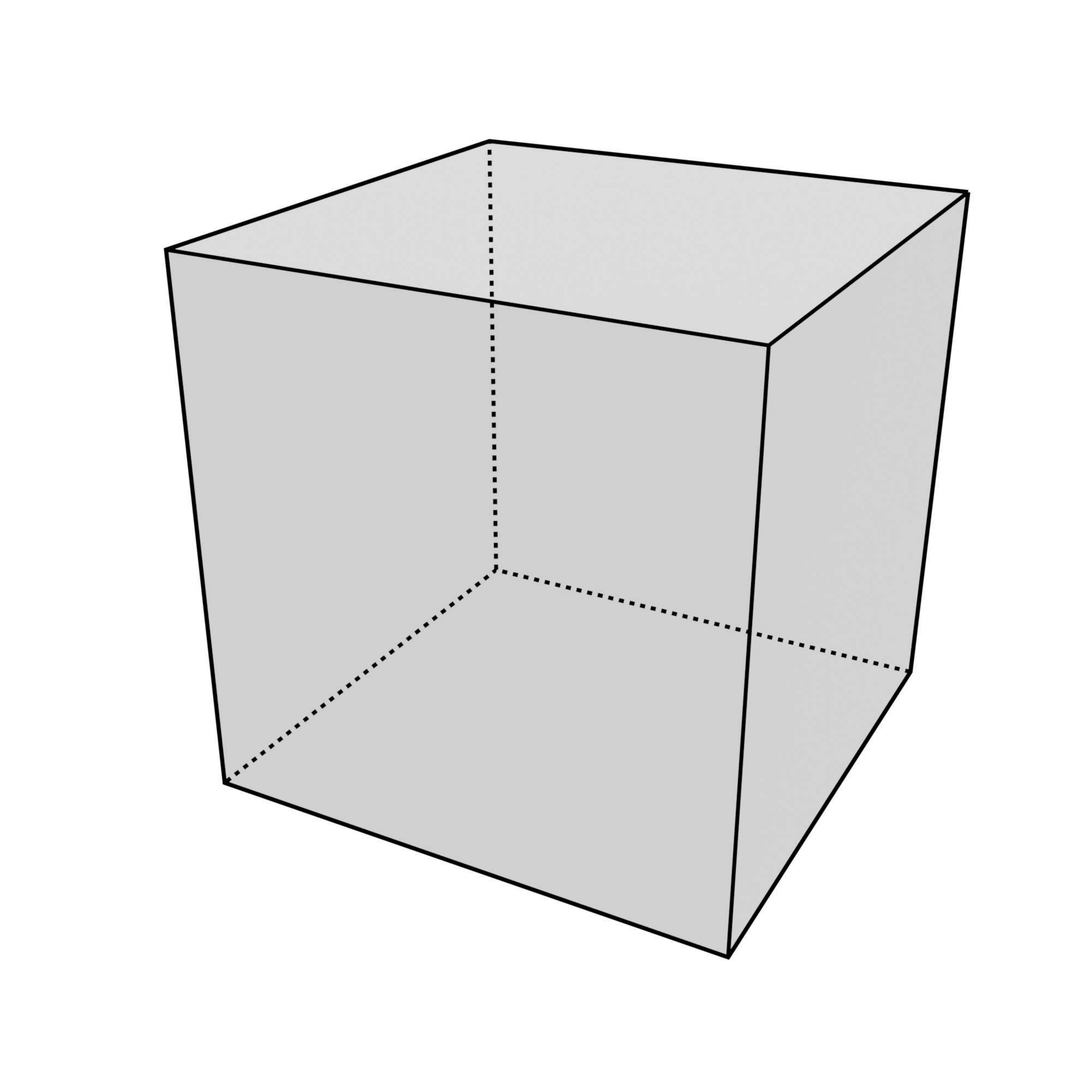
  \hspace{0.08\textwidth}
  \def\svgwidth{0.55\textwidth}
  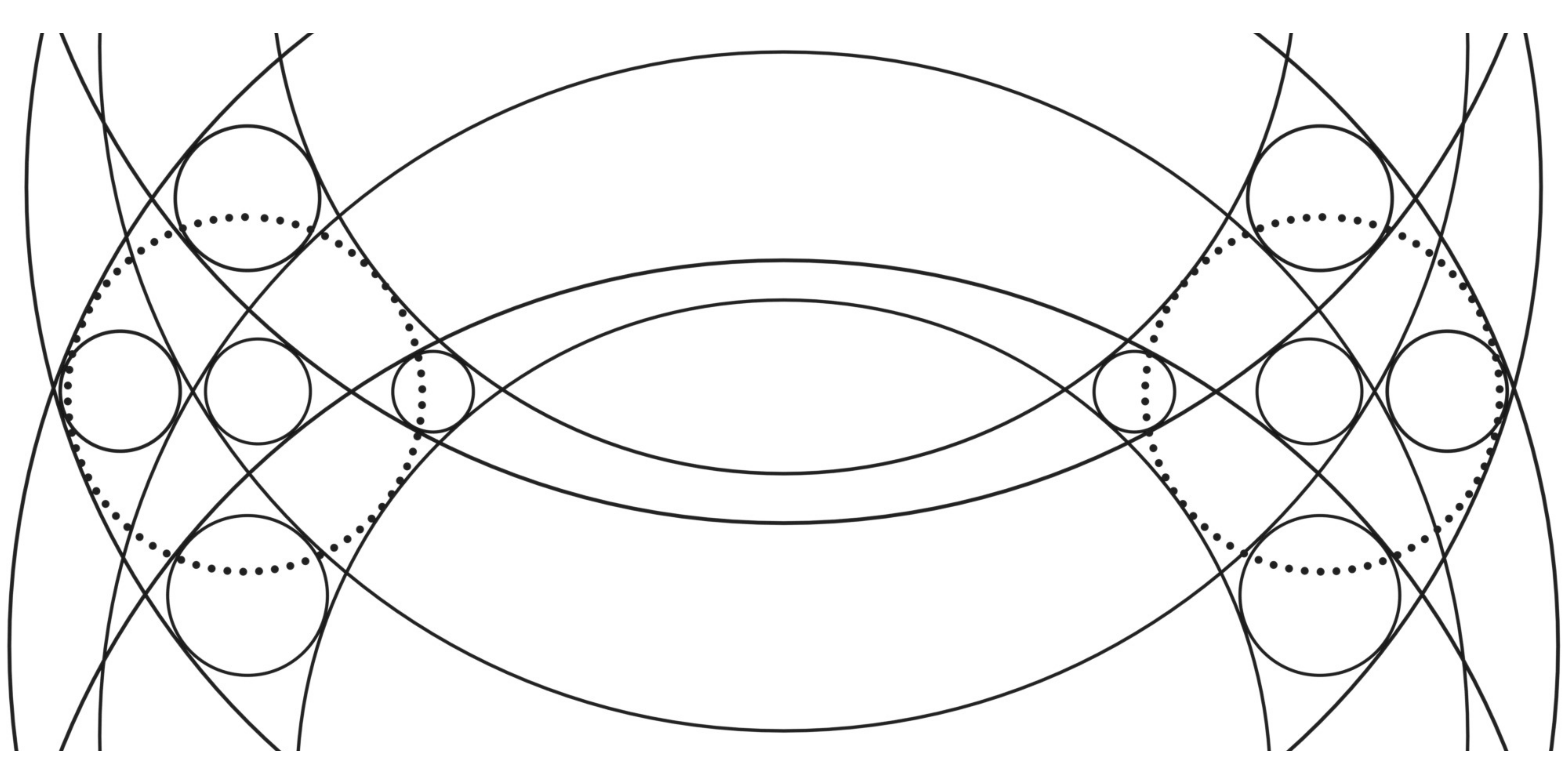
  \caption{
    Lie geometric version of Miquel's theorem.
    \emph{Left:} Combinatorial picture.
    \emph{Right:} Geometric picture.
  }
\label{fig:miquel}
\end{figure}
The definition of Lie circumscribility via planarity in the Lie quadric
immediately implies a Lie geometric version of the classical Miquel's theorem.
To see this, we employ the following statement of projective geometry
about the eight intersection points of three quadrics in space, see, e.g., \cite[Theorem 3.12]{BS}.
\begin{lemma}[Associated points]
  \label{lem:associated-points}
  Given eight distinct points which are the set of intersections of three quadrics in $\RP^3$,
  all quadrics through any seven of those points must pass through the eighth point.
\end{lemma}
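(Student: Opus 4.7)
The plan is to prove this via a standard Cayley--Bacharach-type argument. The space $V$ of quadratic forms on $\R^4$ is $10$-dimensional, so quadrics in $\RP^3$ form $\RP^9$ and passage through a prescribed point is a single linear condition. First I would consider the evaluation map $\mathrm{ev}\colon V \to \R^8$, $f \mapsto (f(P_1),\ldots,f(P_8))$, whose kernel is the space of quadratic forms vanishing on all eight points. By hypothesis this kernel contains the three forms defining $Q_1, Q_2, Q_3$, and these must be linearly independent, since otherwise their common zero set would contain a curve and hence infinitely many points, contradicting the assumption that the intersection consists of exactly eight distinct points. Hence $\dim \ker \mathrm{ev} \geq 3$ and $\mathrm{rank}\,\mathrm{ev} \leq 7$, so the image fails to span $\R^8$ and there exists a nontrivial linear relation $\sum_{i=1}^{8} \lambda_i\, f(P_i) = 0$ valid for every $f \in V$.

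The heart of the argument is then to show that every coefficient $\lambda_i$ in this relation is nonzero. Once this is established, any quadric $Q$ with defining form $f$ vanishing on seven of the points, say $P_1,\ldots,P_7$, forces $\lambda_8\, f(P_8) = 0$ and hence $f(P_8) = 0$, so $P_8 \in Q$. The geometric route I would take to obtain the nonvanishing is to pass to the curve $C = Q_1 \cap Q_2$, which in the generic case is a smooth elliptic quartic of arithmetic genus $1$, and to note that the eight points appear on $C$ as the Bezout intersection $Q_3 \cap C$. Any other quadric $Q$ through seven of the points meets $C$ in a degree-$8$ divisor $D'$ linearly equivalent to $D = Q_3 \cap C$, and $D - D'$ is then an effective-difference of degree zero on a curve of arithmetic genus $1$; injectivity of the Abel--Jacobi map on points of such a curve forces the remaining point of $D'$ to coincide with $P_8$. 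Equivalently, one may argue directly that no seven of the eight base points can impose dependent conditions on the $10$-dimensional space of quadrics, for otherwise the quadrics through those seven points would form a linear system of dimension at least $4$ strictly larger than $\mathrm{span}(Q_1,Q_2,Q_3)$, contradicting that the eight points form the complete intersection of a net of quadrics.

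The main obstacle is precisely this nondegeneracy step: a priori the hypothesis only guarantees that the intersection is eight distinct points, and $C = Q_1 \cap Q_2$ might be reducible or singular, in which case the clean elliptic-curve argument needs to be replaced by a statement about divisor classes on a curve of arithmetic genus $1$ (handled via the exact sequence $0 \to \mathcal{O}(-4) \to \mathcal{O}(-2)^{2} \to \mathcal{O} \to \mathcal{O}_C \to 0$ and the resulting Hilbert function computation $h^0(C,\mathcal{O}_C(2)) = 8$). This confirms that $Q_1,Q_2,Q_3$ already span the full space of quadrics through the eight points, so any other quadric through seven of them is a linear combination of the three given quadrics and therefore vanishes at the eighth.
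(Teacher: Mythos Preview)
The paper does not supply its own proof of this lemma; it is stated as a known result with a citation to \cite[Theorem~3.12]{BS}, so there is nothing in the paper to compare your argument against directly.

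Your approach is the standard Cayley--Bacharach argument and is correct. The dimension count (quadratic forms on $\R^4$ form a $10$-dimensional space, the three given forms are independent since their common zero set is finite, hence $\mathrm{rank}\,\mathrm{ev}\le 7$ and a nontrivial relation $\sum_i\lambda_i f(P_i)=0$ exists) is clean. You correctly isolate the real issue, namely that every $\lambda_i$ is nonzero, and your primary route via the genus-one curve $C=Q_1\cap Q_2$ is the right one. One point worth making explicit: because Bezout gives total intersection length $2\cdot2\cdot2=8$ and the hypothesis provides eight \emph{distinct} points, each $P_i$ is a transversal intersection and hence a smooth point of $C$, so the Abel--Jacobi injectivity argument goes through even when $C$ is reducible or nodal. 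Your alternative phrasing at the end of the second paragraph (``contradicting that the eight points form the complete intersection of a net of quadrics'') is not a self-contained contradiction as written and would need the Hilbert-function computation you sketch in the last paragraph to close; the divisor argument on $C$ is the cleaner and complete route.
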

\begin{theorem}[Miquel's theorem in Lie geometry]
  \label{thm:miquel-lie}
  \begin{sloppypar}
    Let $\ell_1, \ell_2, \ell_3, \ell_4, m_1, m_2, m_3, m_4$ be eight generic oriented circles on the sphere
    such that the five Lie quadrilaterals $(\ell_1, \ell_2, m_1, m_2)$, $(\ell_1, \ell_2, m_3, m_4)$, $(\ell_3, \ell_4, m_1, m_2)$,
    $(\ell_3, \ell_4, m_3, m_4)$, $(\ell_2, \ell_3, m_2, m_3)$ are circumscribed,
    then so is the Lie quadrilateral $(\ell_1, \ell_4, m_1, m_4)$ (see Figure \ref{fig:miquel}).
  \end{sloppypar}
\end{theorem}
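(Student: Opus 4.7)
Proof plan.

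The plan is to lift each of the eight oriented circles to a representative vector on the light cone of the Lie bilinear form in $\R^5$, so that a Lie quadrilateral is circumscribed precisely when its four lifts are linearly dependent. Writing $A_i,B_j\in\R^5$ for the lifts of $\ell_i,m_j$ respectively, each of the five hypotheses becomes a non-trivial linear relation among four of the eight vectors, and the conclusion requires such a relation among $A_1,A_4,B_1,B_4$. The combinatorial cube in Figure~\ref{fig:miquel} pairs the six faces into three pairs of opposite faces $(F_1,F_4),(F_2,F_3),(F_5,F_6)$, and the plan is to exploit this symmetry.

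First I would use the eight scalar freedoms available in the choice of lifts to normalize the two hypotheses from the opposite pair $(F_1,F_4)$ into the canonical form
\[
  A_1+A_2+B_1+B_2=0,\qquad A_3+A_4+B_3+B_4=0.
\]
For a generic configuration this is possible and fixes the eight lifts rigidly. The three remaining hypotheses (for $F_2,F_3,F_5$) are then linear relations with fully determined coefficients; substituting $B_1=-A_1-A_2-B_2$ and $B_3=-A_3-A_4-B_4$ into them yields three linear equations among the six rigid vectors $A_1,A_2,A_3,A_4,B_2,B_4\in\R^5$.

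Next I would form the unique (up to scale) linear combination of these three equations whose $A_3$-component vanishes and whose $A_2$- and $B_2$-components coincide. After undoing the substitution for $B_1$, this combination takes the form $\alpha A_1+\beta A_4+\gamma B_1+\delta B_4=0$, which is exactly the circumscribability of the sixth Lie quadrilateral $(\ell_1,\ell_4,m_1,m_4)$. The main obstacle is to verify that the derived coefficients $\alpha,\beta,\gamma,\delta$ are all non-zero, so that the resulting relation is a genuine rank-three dependence among four distinct oriented circles; this is where the genericity hypothesis is used, excluding precisely the degenerate strata listed in Proposition~\ref{prop:circumscribed-quads} and Remark~\ref{rem:generic-quads}. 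Equivalently, one may reduce the statement to Lemma~\ref{lem:associated-points} in a generic $\RP^3$ section of $\RP^4$, with the two reducible quadrics coming from the unions $\Pi_1\cup\Pi_4$ and $\Pi_2\cup\Pi_3$ of opposite face planes, and the third quadric built from $\Pi_5$ together with the hyperplane through $\ell_1,\ell_4,m_1$; the associated points lemma then forces $m_4$ onto this hyperplane, giving the sixth planarity directly.
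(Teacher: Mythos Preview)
Your closing alternative via Lemma~\ref{lem:associated-points} is exactly the paper's proof: first confine all eight points to a common $\RP^3$ (the paper does this by taking $\p{V}=\ell_2\wedge\ell_3\wedge m_1\wedge m_2$ and using the face coplanarities to pull in the remaining four points one by one), then intersect the induced quadric $\tilde{\lie}=\lie\cap\p{V}$ with the two reducible quadrics coming from opposite face-pairs, and apply associated points to the reducible quadric built from the fifth face together with the plane $\ell_1\wedge\ell_4\wedge m_1$.

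The linear-algebra elimination you outline first is a legitimate alternative route. Two small corrections: the normalization $A_1+A_2+B_1+B_2=0$, $A_3+A_4+B_3+B_4=0$ does not fix the lifts rigidly (each quadruple still carries one overall scale), and the coefficients appearing in $F_2,F_3,F_5$ are determined by the geometry of the configuration rather than by your normalization---what matters is only that such relations exist with all coefficients non-zero. With those understood, your elimination is sound: solving two linear conditions on $(\lambda,\mu,\nu)$ produces a combination $\lambda F_2+\mu F_3+\nu F_5$ with vanishing $A_3$-part and matching $A_2$- and $B_2$-parts, and re-inserting $A_2+B_2=-(A_1+B_1)$ from $F_1$ then collapses it to a relation supported on $\{A_1,A_4,B_1,B_4\}$. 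The associated-points route is cleaner because the non-degeneracy hypothesis reduces to the single transparent condition ``no five of the eight points are coplanar'' (Remark~\ref{rem:miquel-genericity}), whereas your elimination leaves the non-vanishing of each of $\alpha,\beta,\gamma,\delta$ to be verified separately from that same hypothesis.
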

\begin{remark}
  \label{rem:miquel-genericity}
  A sufficient genericity condition for the eight points on the Lie quadric is that no five points are coplanar.
\end{remark}
\begin{proof}
  Consider the eight points on the Lie quadric as the vertices of a combinatorial cube (see Figure \ref{fig:miquel}).
  Coplanarity of the bottom and side faces corresponds to the assumed circumscribility.
  Thus, we have to show that the top face is planar as well.  
  
  As a first step we show that all eight vertices of the cube are contained in a three-dimensional projective subspace.
  Indeed, let $\p V$ be the subspace spanned by $\ell_2, \ell_3, m_1, m_2$.
  Then the assumed circumscribility implies that for instance $\ell_1$ lies in a plane with $\ell_2, m_1, m_2$
  and therefore $\ell_1 \in \p V$. Similarly, $\ell_4, m_3 \in V$, and finally $m_4 \in \p V$.

  A three-dimensional subspace intersects the Lie quadric in a (at most once degenerate) two-dimensional quadric $\tilde{\lie}$.
  Consider the two degenerate quadrics $\mathcal{Q}_1, \mathcal{Q}_2$ consisting of two opposite face planes of the cube, respectively.
  Then, due to the genericity condition, the eight points of the cube are the intersection points of $\tilde{\lie}, \mathcal{Q}_1, \mathcal{Q}_2$.
  Now consider the degenerate quadric $\mathcal{Q}_3$ consisting of the bottom plane of the cube and the plane spanned by $\ell_1, \ell_4, m_1$.
  Then $\mathcal{Q}_3$ contains seven of the eight points, and therefore, according to Lemma \ref{lem:associated-points},
  also the eighth point $m_4$. Since $m_4$ may not lie in the bottom plane,
  we conclude that the quadrilateral $(\ell_1, \ell_4, m_1, m_4)$ is circumscribed.
\end{proof}

We now introduce nets consisting of two families of oriented circles
such that every second Lie quadrilateral (in a checkerboard-manner) is circumscribed.
\begin{definition}[Lie checkerboard incircular nets]
  \label{def:lie-cbic-net}
  Two families $(\ell_i)_{i\in\Z}$, $(m_j)_{j\in\Z}$ of oriented circles on the sphere
  are called a \emph{Lie checkerboard incircular net} if for every $i, j \in \Z$ with even $i+j$
  the Lie quadrilateral $(\ell_i, \ell_{i+1}, m_j, m_{j+1})$ is circumscribed.
\end{definition}
In the following we will always assume generic Lie checkerboard incircular nets in the sense of Remark \ref{rem:miquel-genericity}.
As an immediate consequence of Theorem \ref{thm:miquel-lie} we find that Lie checkerboard incircular nets
have many more circumscribed Lie quadrilaterals than introduced in its definition.
\begin{corollary}
  \label{cor:cbic-all-circles}
  Let $(\ell_i)_{i\in\Z}$, $(m_j)_{j\in\Z}$ be the oriented circles of a Lie checkerboard incircular net.
  Then for every $i, j, k \in \Z$ with even $i+j$ the Lie quadrilateral $(\ell_i, m_j, \ell_{i+2k+1}, m_{j+2k+1})$ is circumscribed.
\end{corollary}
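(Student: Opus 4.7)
I propose to prove the corollary by induction on $k\geq 0$, using Theorem~\ref{thm:miquel-lie} as the inductive engine. The case $k<0$ then follows by the symmetric reindexing $(i,j,k)\mapsto(i+2k+1,j+2k+1,-k-1)$, since the four circles listed form the same unordered set and circumscribility is a property of this set (four points being coplanar in $\lie$).

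For the base case $k=0$, the quadrilateral $(\ell_i,m_j,\ell_{i+1},m_{j+1})$ consists of the same four circles as $(\ell_i,\ell_{i+1},m_j,m_{j+1})$, which is circumscribed by Definition~\ref{def:lie-cbic-net} since $i+j$ is even. For the inductive step, assume the claim for $k-1$ and consider the eight circles
\[
\ell_i,\;\ell_{i+1},\;\ell_{i+2k},\;\ell_{i+2k+1},\qquad m_j,\;m_{j+1},\;m_{j+2k},\;m_{j+2k+1},
\]
organized combinatorially as in the cube of Figure~\ref{fig:miquel} via
$(\tilde\ell_1,\tilde\ell_2,\tilde\ell_3,\tilde\ell_4)=(\ell_i,\ell_{i+1},\ell_{i+2k},\ell_{i+2k+1})$ and
$(\tilde m_1,\tilde m_2,\tilde m_3,\tilde m_4)=(m_j,m_{j+1},m_{j+2k},m_{j+2k+1})$. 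The four ``small'' face quadrilaterals
\[
(\tilde\ell_1,\tilde\ell_2,\tilde m_1,\tilde m_2),\;
(\tilde\ell_1,\tilde\ell_2,\tilde m_3,\tilde m_4),\;
(\tilde\ell_3,\tilde\ell_4,\tilde m_1,\tilde m_2),\;
(\tilde\ell_3,\tilde\ell_4,\tilde m_3,\tilde m_4)
\]
are each of the form $(\ell_a,\ell_{a+1},m_b,m_{b+1})$ with $a+b$ even, hence circumscribed by Definition~\ref{def:lie-cbic-net}. The fifth face $(\tilde\ell_2,\tilde\ell_3,\tilde m_2,\tilde m_3)=(\ell_{i+1},\ell_{i+2k},m_{j+1},m_{j+2k})$ is, up to reordering, the quadrilateral $(\ell_{i'},m_{j'},\ell_{i'+2(k-1)+1},m_{j'+2(k-1)+1})$ with $i'=i+1$, $j'=j+1$ (so $i'+j'$ remains even), which is circumscribed by the inductive hypothesis. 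Theorem~\ref{thm:miquel-lie} then yields circumscribility of the remaining face $(\tilde\ell_1,\tilde\ell_4,\tilde m_1,\tilde m_4)=(\ell_i,\ell_{i+2k+1},m_j,m_{j+2k+1})$, which is the desired claim.

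The main obstacle is verifying the genericity hypothesis of Theorem~\ref{thm:miquel-lie} (cf.~Remark~\ref{rem:miquel-genericity}), namely that no five of the eight points on $\lie$ in the cube above are coplanar. I would address this by invoking the standing genericity assumption on the Lie checkerboard incircular net: if any such degeneracy occurred along the induction, it would already violate the assumption that the original circumscribed quadrilaterals involved are generic in the sense of Remark~\ref{rem:generic-quads}, so we may restrict attention to the generic stratum where the inductive step goes through, and the statement for non-generic nets then follows by a closure/continuity argument (circumscribility is a closed condition cut out by vanishing of a $4\times 5$ minor in homogeneous coordinates).
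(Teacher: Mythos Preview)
Your proof is correct and follows the intended approach: the paper presents Corollary~\ref{cor:cbic-all-circles} simply as ``an immediate consequence of Theorem~\ref{thm:miquel-lie}'' without spelling out details, and your induction on $k$ using Miquel's cube (with the fifth face supplied by the inductive hypothesis) is precisely the natural way to unpack this. Your handling of the genericity hypothesis via the standing assumption stated after Definition~\ref{def:lie-cbic-net} is also in line with the paper's conventions.
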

Similar to the argument in the proof of Theorem \ref{thm:miquel-lie} (or as a consequence thereof),
we find that the points on the Lie quadric corresponding to a Lie checkerboard incircular net
can not span the entire space.
\begin{figure}
  \centering
  \def\svgwidth{0.5\textwidth}
  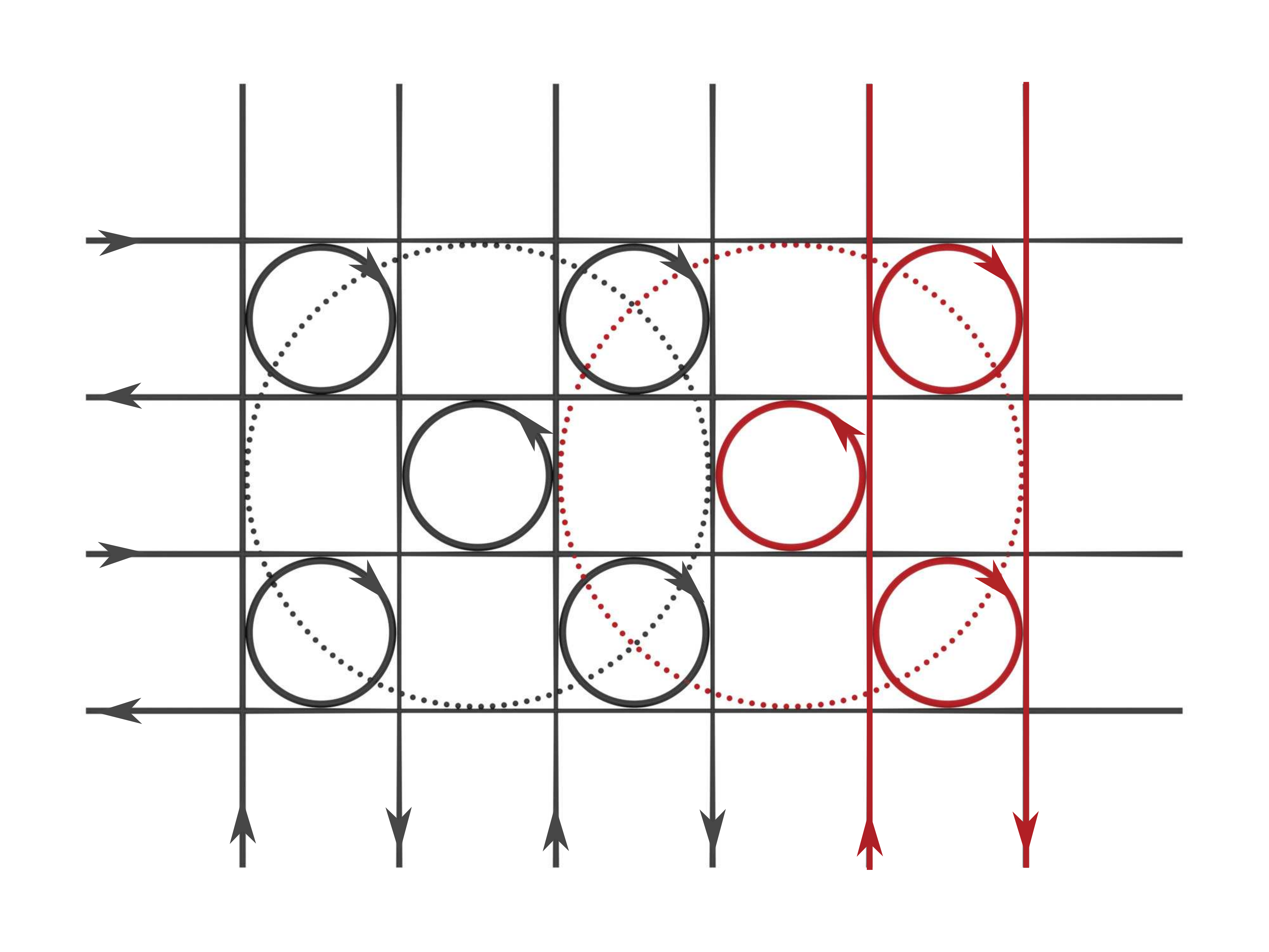
  \caption{
    On the combinatorics of adjacent ``cubes'' of a checkerboard incircular net.
  }
\label{fig:lie-cbic-dimension}
\end{figure}
\begin{theorem}
  \label{thm:lie-cbic-dimension}
  The points on the Lie quadric $\lie \subset \RP^4$ corresponding to
  the oriented circles of a Lie checkerboard incircular net lie in a common hyperplane of $\RP^4$.
\end{theorem}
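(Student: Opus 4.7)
The plan is to reduce this to an iterated application of Theorem~\ref{thm:miquel-lie}, or more precisely to the first step of its proof, which established that the eight vertices of a Miquel cube actually lie in a common $3$-dimensional subspace of $\RP^4$ (i.e.\ a hyperplane). First I would fix indices $a,b$ with $a+b$ even and observe that the five circumscribed quadrilaterals hypothesized in Theorem~\ref{thm:miquel-lie} are, after renaming, exactly
\[(\ell_a,\ell_{a+1},m_b,m_{b+1}),\ (\ell_a,\ell_{a+1},m_{b+2},m_{b+3}),\ (\ell_{a+2},\ell_{a+3},m_b,m_{b+1}),\]
\[(\ell_{a+2},\ell_{a+3},m_{b+2},m_{b+3}),\ (\ell_{a+1},\ell_{a+2},m_{b+1},m_{b+2}),\]
each of which satisfies the parity $i+j\equiv a+b\pmod 2$, hence is circumscribed by Definition~\ref{def:lie-cbic-net}. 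Invoking the first step of the proof of Theorem~\ref{thm:miquel-lie}, the eight points $\ell_a,\ldots,\ell_{a+3},m_b,\ldots,m_{b+3}$ all lie in a common three-dimensional projective subspace $V_{a,b}\subset\RP^4$.

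Next I would propagate this subspace across the entire net. The subspaces $V_{a,b}$ and $V_{a+2,b}$ share the six points $\ell_{a+2},\ell_{a+3},m_b,m_{b+1},m_{b+2},m_{b+3}$; under a suitable genericity assumption these six points span a three-dimensional subspace, which forces $V_{a,b}=V_{a+2,b}$. The analogous argument in the $m$-direction gives $V_{a,b}=V_{a,b+2}$. Since the parity $a+b$ is preserved by both shifts, a simple induction in both directions shows that a single hyperplane $V\subset\RP^4$ contains every $V_{a,b}$, and in particular every $\ell_i$ and every $m_j$.

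The main obstacle is formulating the genericity hypothesis cleanly: one needs the six common points between neighbouring $V_{a,b}$ to span a $3$-plane (not merely a $2$-plane), so that $V_{a,b}=V_{a\pm 2,b}$ can actually be concluded. The natural way is to strengthen the genericity condition of Remark~\ref{rem:miquel-genericity} locally for every Miquel cube in the net (no five of the relevant eight points coplanar), which is automatic for a generic checkerboard incircular net and is exactly what the proof of Theorem~\ref{thm:miquel-lie} required in the first place; the propagation step then becomes purely linear-algebraic and the theorem follows.
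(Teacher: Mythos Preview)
Your proposal is correct and follows essentially the same approach as the paper: you build the Miquel cube $V_{a,b}$ using the first step of the proof of Theorem~\ref{thm:miquel-lie}, observe that adjacent cubes share six vertices, and propagate by induction. The only difference is that you make the genericity requirement (that the six shared vertices actually span a $3$-plane) explicit, whereas the paper's proof leaves this implicit in its standing genericity assumption from Remark~\ref{rem:miquel-genericity}.
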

\begin{proof}
  Consider ``adjacent'' cubes $(\ell_1, \ell_2, \ell_3, \ell_4, m_1, m_2, m_3, m_4)$
  and $(\ell_3, \ell_4, \ell_5, \ell_5, m_1, m_2, m_3, m_4)$ from the Lie checkerboard incircular net
  with vertices on the Lie quadric (see Figure \ref{fig:lie-cbic-dimension}).
  Each of these cubes lies in a three-dimensional subspace of $\RP^4$,
  and they coincide in six of its eight vertices.
  Thus, both cubes, and by induction the whole net, lie in the same three-dimensional subspace.
\end{proof}
As we have seen in Section \ref{sec:lie}, depending on its signature,
a three-dimensional subspace of the Lie quadric induces one of the three types
of Laguerre geometry.
Thus, our study of Lie checkerboard incircular nets may be reduced to the study
of its three Laguerre geometric counterparts as we will see in the next section.

\subsection{Laguerre checkerboard incircular nets}
\begin{figure}
  \centering
  \def\svgwidth{0.18\textwidth}
  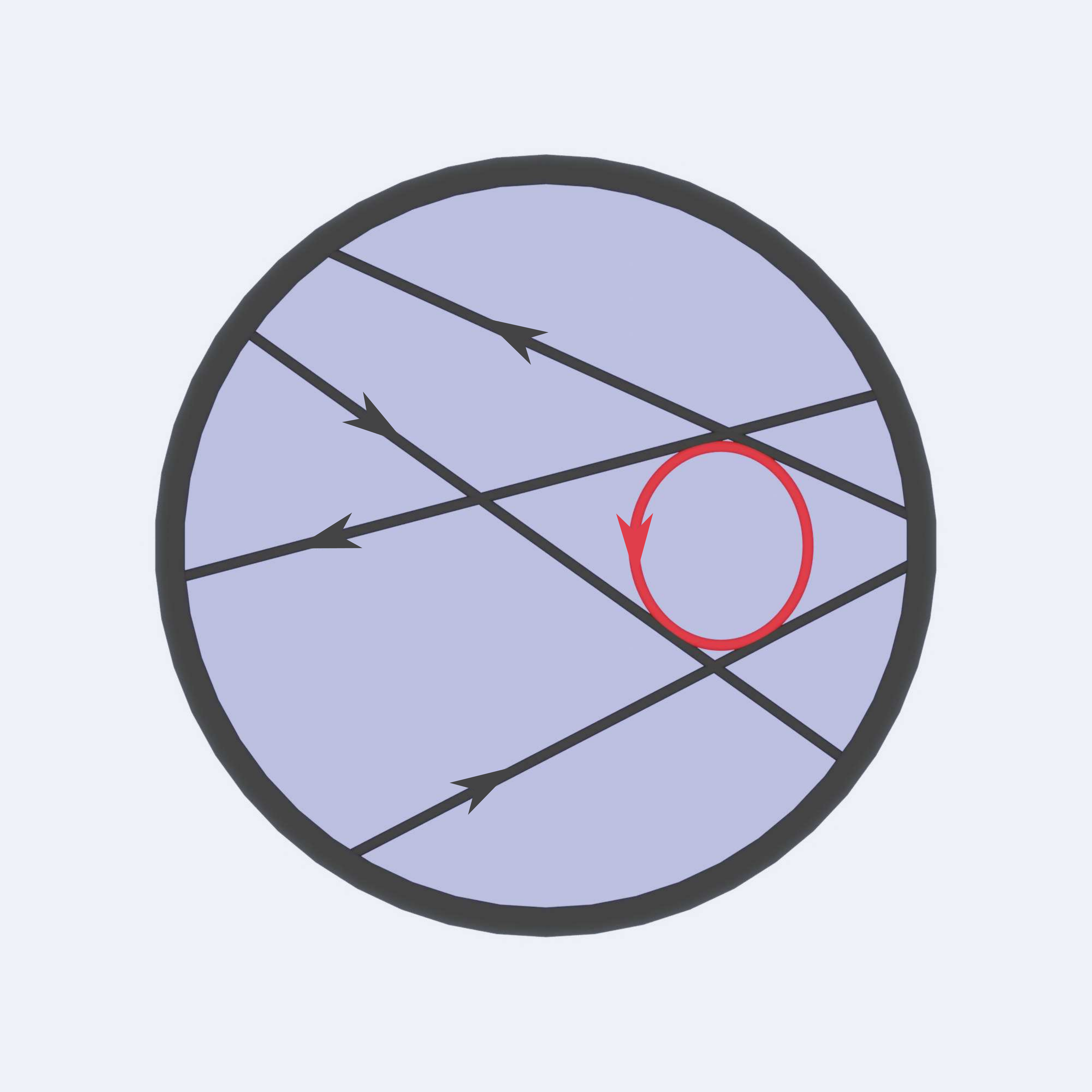
  \def\svgwidth{0.18\textwidth}
  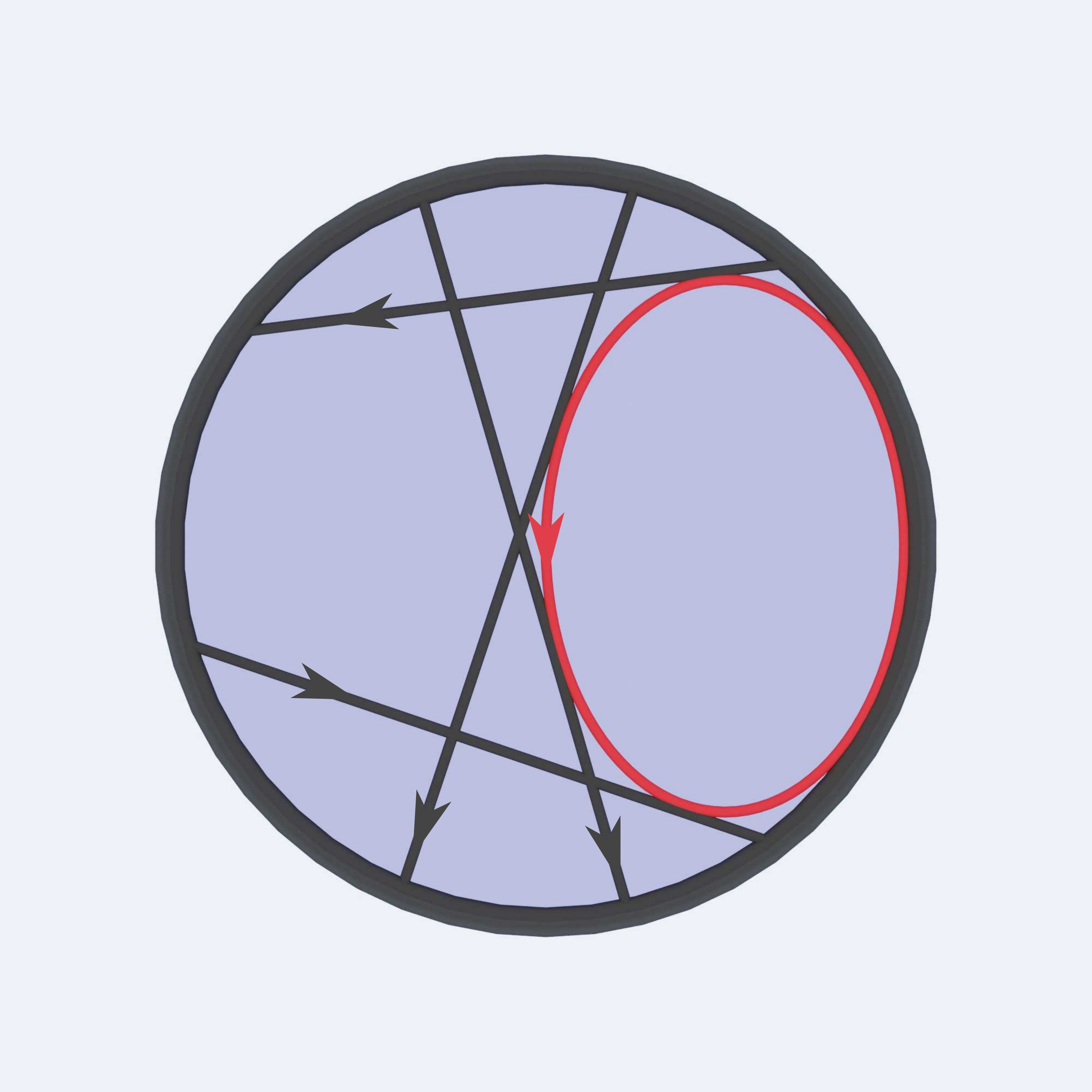
  \def\svgwidth{0.18\textwidth}
  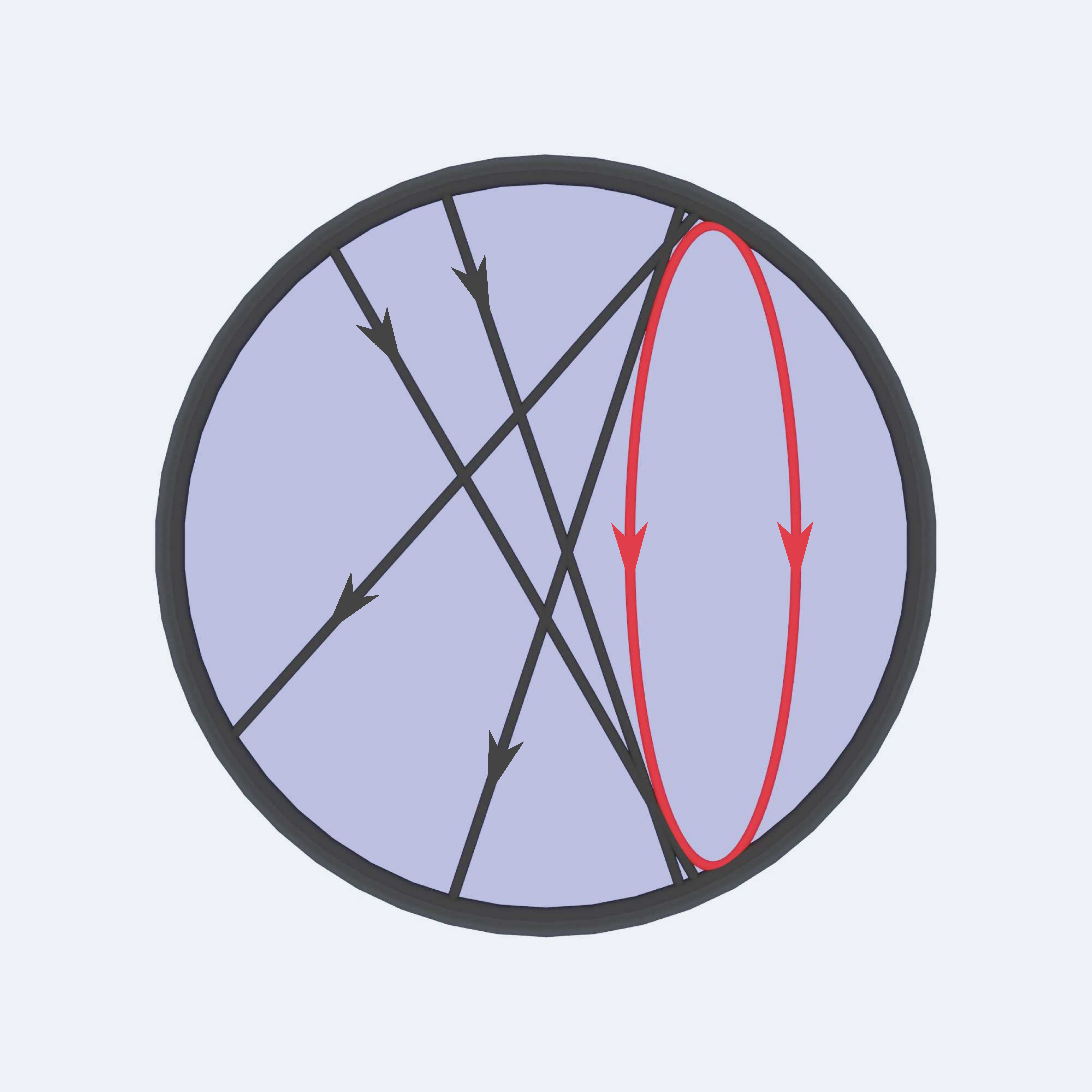
  \def\svgwidth{0.18\textwidth}
  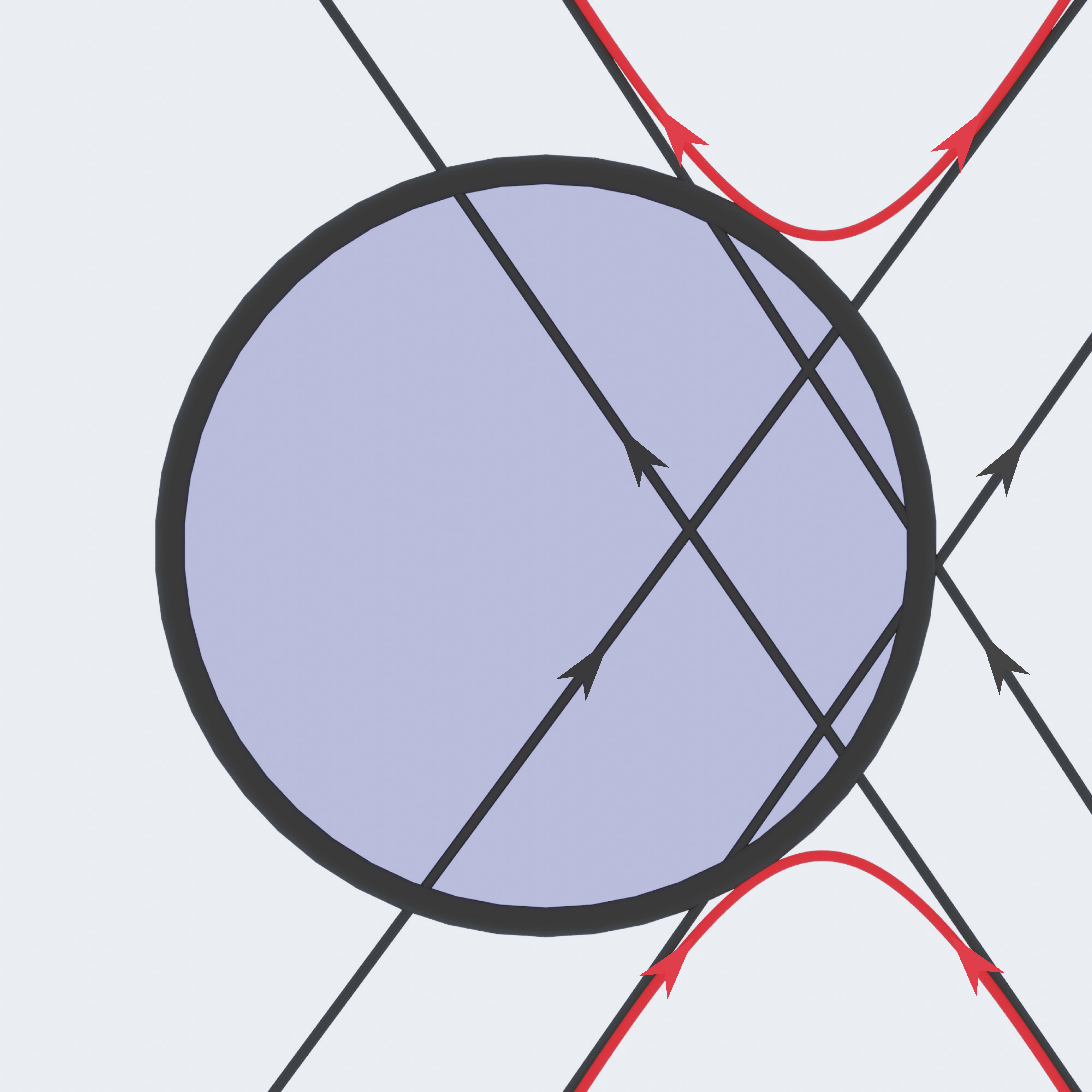
  \def\svgwidth{0.18\textwidth}
  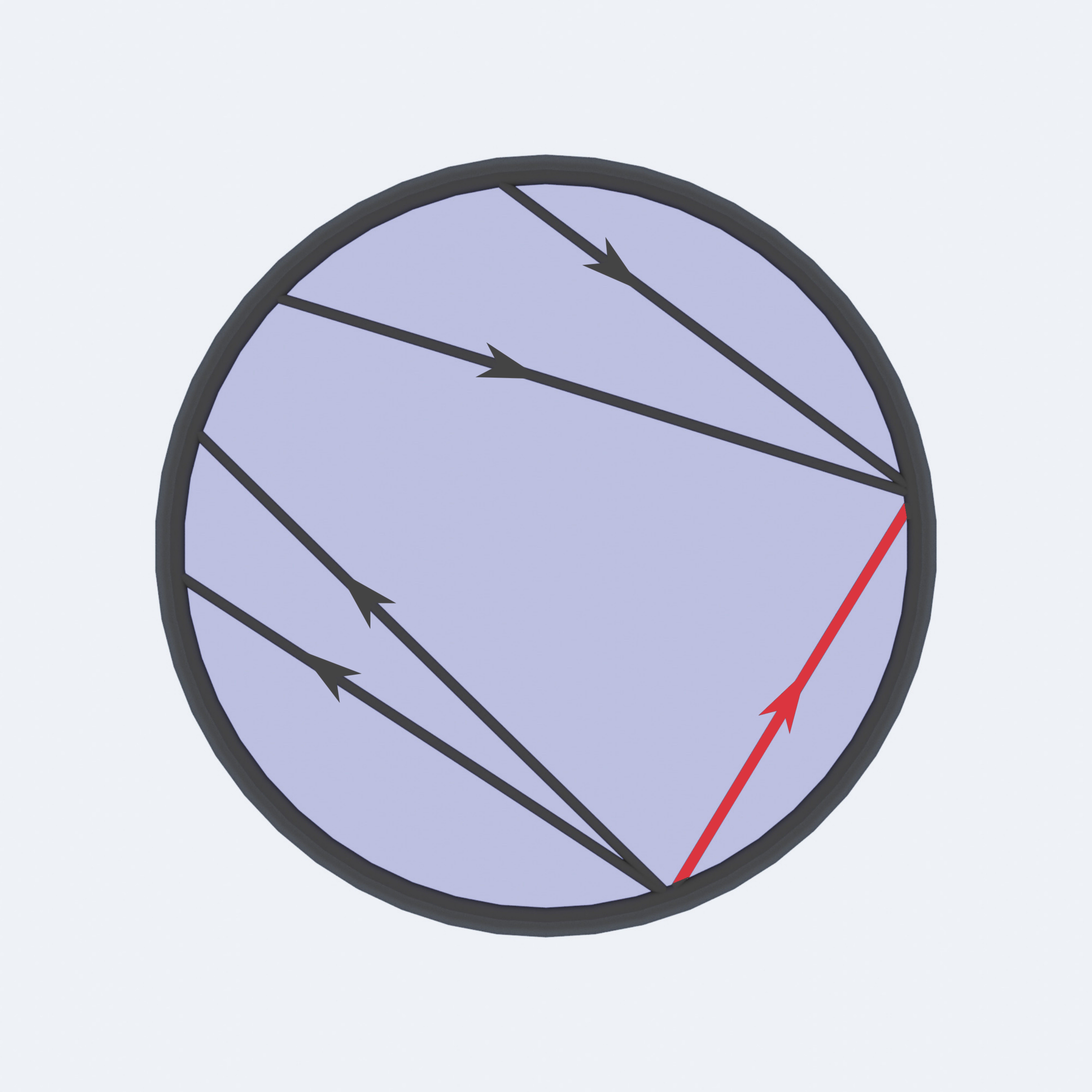
  \caption{
    Inscribed quadrilaterals in the hyperbolic plane.
    The right most case is degenerate and consists of four oriented lines ``touching'' an oriented line
    at its points at infinity.
  }
\label{fig:inscribed-quads}
\end{figure}

Two dimensional hyperbolic/elliptic/Euclidean Laguerre geometry is the geometry of oriented lines
in the hyperbolic/elliptic/Euclidean plane and their oriented contact to oriented circles (Laguerre circles) in the respective space form.
We identify oriented lines with points on, and oriented circles with planar sections of, the corresponding Laguerre quadric $\lag$,
which is a quadric of signature $(++--)$, $(+++-)$, $(++-0)$ respectively
(see Sections \ref{sec:hyperbolic-laguerre-geometry}, \ref{sec:elliptic-laguerre-geometry}, \ref{sec:euclidean-laguerre-geometry}).

Similar to the condition for Lie circumscribility, four oriented lines touch a common oriented circle
if and only if the corresponding points on the Laguerre quadric are coplanar.
On the other hand, all three Laguerre geometries are subgeometries of Lie geometry,
by restricting the Lie quadric to a three-dimensional subspace.
Thus, in this restriction, a Lie circumscribed quadrilateral turns into four lines touching a common oriented circle.
Accordingly one obtains the following Laguerre geometric version of Theorem \ref{thm:miquel-lie}.
\begin{theorem}[Miquel's theorem in Laguerre geometry]
  \label{thm:miquel-laguerre}
  Let $\ell_1, \ell_2, \ell_3, \ell_4, m_1, m_2, m_3, m_4$ be eight generic oriented lines in the hyperbolic/elliptic/Euclidean plane
  such that the five quadrilaterals $(\ell_1, \ell_2, m_1, m_2)$, $(\ell_1, \ell_2, m_3, m_4)$, $(\ell_3, \ell_4, m_1, m_2)$,
  $(\ell_3, \ell_4, m_3, m_4)$, $(\ell_2, \ell_3, m_2, m_3)$ are circumscribed (each touches a common oriented circle),
  then so is the quadrilateral $(\ell_1, \ell_4, m_1, m_4)$ (cf.\ Figure \ref{fig:lie-cbic-dimension}).
\end{theorem}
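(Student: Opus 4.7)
The plan is to deduce Theorem~\ref{thm:miquel-laguerre} directly from its Lie geometric counterpart, Theorem~\ref{thm:miquel-lie}, by using the embedding of each Laguerre geometry into Lie geometry from Section~\ref{sec:lie}. In all three cases the Laguerre quadric $\lag$ arises as a hyperplanar section $\lag = \lie \cap \p{q}^\perp$ with $\p{q}^\perp \simeq \RP^3$, an oriented line of the corresponding space form corresponds to a point of $\lag \subset \lie$, and four oriented lines touch a common oriented Laguerre circle if and only if their lifts are coplanar inside $\p{q}^\perp$. This is discussed for the hyperbolic and elliptic cases in Sections~\ref{sec:hyperbolic-laguerre-geometry} and~\ref{sec:elliptic-laguerre-geometry}, and for the Euclidean case in Appendix~\ref{sec:euclidean-laguerre-geometry}.

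First I would lift the eight oriented lines $\ell_i, m_j$ to eight points $\p{\lift{\ell}}_i, \p{\lift{m}}_j \in \lag$ via the polar projection correspondences cited above. The five hypothesized Laguerre-circumscribed quadrilaterals then become five quadruples of lifted points that are coplanar in the three-dimensional subspace $\p{q}^\perp$, and hence coplanar in the ambient $\RP^4$. In other words each of those five quadrilaterals is Lie-circumscribed in the sense of the preceding subsection. Invoking Theorem~\ref{thm:miquel-lie} then yields that $(\p{\lift{\ell}}_1, \p{\lift{\ell}}_4, \p{\lift{m}}_1, \p{\lift{m}}_4)$ is Lie-circumscribed, i.e.\ its four points are coplanar in $\RP^4$.

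To conclude, I would observe that since these four points all lie in the three-dimensional subspace $\p{q}^\perp \simeq \RP^3$, any plane of $\RP^4$ containing three non-collinear ones of them is automatically contained in $\p{q}^\perp$; hence coplanarity in $\RP^4$ forces coplanarity inside $\p{q}^\perp$ itself, which is exactly Laguerre-circumscribility of $(\ell_1, \ell_4, m_1, m_4)$. The only point of care is the matching of the genericity assumptions: ``eight generic oriented lines'' should be interpreted so that no five of the eight lifted points are coplanar in $\p{q}^\perp$, and by the same containment argument this is equivalent to the Lie-genericity hypothesis stated in Remark~\ref{rem:miquel-genericity}. The argument is therefore essentially a pullback of the Lie-geometric Miquel theorem along the inclusion $\lag \hookrightarrow \lie$, and no additional computation beyond that already carried out in the proof of Theorem~\ref{thm:miquel-lie} is required.
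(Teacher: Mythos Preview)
Your proposal is correct and matches the paper's approach: the paper does not give a separate proof but states in the paragraph preceding the theorem that it is obtained from the Lie geometric version (Theorem~\ref{thm:miquel-lie}) by restricting to the three-dimensional subspace $\p{q}^\perp$ in which the Laguerre quadric sits, using that Laguerre-circumscribility is coplanarity on $\lag$. Your write-up simply spells out this reduction in more detail, including the observation about genericity, which is exactly in line with Remark~\ref{rem:miquel-genericity}.
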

The Laguerre geometric version of checkerboard incircular nets (see Definition \ref{def:lie-cbic-net}) is given the following definition \cite{AB}.
Examples of checkerboard incircular nets in the elliptic and hyperbolic plane are shown in Figures~\ref{fig:ell_cic_nets}, \ref{fig:hyp_cic_nets_ellipse}, and \ref{fig:hyp_cic_nets_hyperbola} (see also \cite{gallery}).
\begin{definition}[Laguerre checkerboard incircular nets]
  \label{def:cbic-net}
  Two families $(\ell_i)_{i\in\Z}$, $(m_j)_{j\in\Z}$ of oriented lines in
  the hyperbolic/elliptic/Euclidean plane
  are called a \emph{(hyperbolic/elliptic/Euclidean) checkerboard incircular net}
  if for every $i, j \in \Z$ with even $i+j$
  the four lines $\ell_i, \ell_{i+1}, m_j, m_{j+1}$ touch a common
  oriented circle (Laguerre circle).
\end{definition}
\begin{remark}
  \label{rem:laguerre-all-circles}
  From Corollary \ref{cor:cbic-all-circles}, or Theorem \ref{thm:miquel-laguerre}, we find that,
  same as in the Lie geometric case, every quadrilateral $(\ell_i, m_j, \ell_{i+2k+1}, m_{j+2k+1})$, $i, j, k \in \Z$
  of a checkerboard incircular net with even $i+j$ is circumscribed.
\end{remark}
Now we can formulate the following classification result for Lie checkerboard incircular nets.
\begin{theorem}[classification of Lie checkerboard incircular nets]
  \label{cbic-classification}
  Every Lie checkerboard incircular net is given by a Lie transformation of
  a hyperbolic, elliptic, or Euclidean checkerboard incircular net.
\end{theorem}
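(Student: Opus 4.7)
The plan is to leverage Theorem \ref{thm:lie-cbic-dimension}, which establishes that the points on the Lie quadric $\lie \subset \RP^4$ corresponding to the oriented circles of a Lie checkerboard incircular net all lie in a common hyperplane $\mathbf{U} \subset \RP^4$. Every such hyperplane is of the form $\p{q}^\lieperp$ for a unique point $\p{q} \in \RP^4$, and the signature of $\p{q}$ relative to $\lie$ (which itself has signature $(+++--)$) falls into exactly three cases. By Lemma \ref{lemma:orthogonal-subspaces}, if $\liesc{q}{q} > 0$ then $\lie \cap \p{q}^\lieperp$ has signature $(++--)$, projectively equivalent to the hyperbolic Laguerre quadric $\laghyp$; if $\liesc{q}{q} < 0$ it has signature $(+++-)$, projectively equivalent to the elliptic Laguerre quadric $\lagell$; and if $\liesc{q}{q} = 0$ it has signature $(++-0)$, projectively equivalent to the Euclidean Laguerre quadric.

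The next step invokes the homogeneity of $\lietrafos = \PO(3,2)$ on points of each signature class. By Witt's extension theorem, or equivalently by the Cartan--Dieudonn\'e Theorem \ref{thm:cartan}, there exists a Lie transformation $T \in \lietrafos$ mapping $\p{q}$ to the standard reference point used in Definition \ref{def:laguerre-hyperbolic}, Definition \ref{def:laguerre-elliptic}, or Section \ref{sec:euclidean-laguerre-geometry} respectively. Then $T$ maps $\p{q}^\lieperp$ onto the canonical hyperplane carrying the standard Laguerre quadric, and the given Lie checkerboard incircular net is carried to a configuration of points lying in the appropriate standard Laguerre quadric. Via the polar projection $\spprojection{\p{p}}$ these points correspond to oriented lines in the respective space form.

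It remains to verify that the transformed configuration satisfies Definition \ref{def:cbic-net} of a Laguerre checkerboard incircular net. This follows from the fact that Lie transformations preserve planarity on $\lie$ and hence the circumscribility condition of Lie quadrilaterals. Under the polar projection, a coplanar quadruple on the Laguerre quadric becomes a quadruple of oriented lines tangent to a common oriented Laguerre circle, by Proposition \ref{prop:circumscribed-quads} combined with the interpretation of hyperplanar sections of the Laguerre quadric as oriented Laguerre circles from Section \ref{sec:laguerre}. Thus the Lie circumscribility of every Lie quadrilateral $(\ell_i, \ell_{i+1}, m_j, m_{j+1})$ with $i+j$ even translates into the required Laguerre circumscribility.

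The genuinely hard input is already packaged into Theorem \ref{thm:lie-cbic-dimension}; conditional on that, the remaining classification reduces to a clean signature analysis together with a transitivity argument for $\PO(3,2)$. The subtlety I would be most careful about is the genericity assumption: one must ensure that the lifted net spans a full hyperplane of $\RP^4$, so that $\p{q}$ is unambiguously defined and the trichotomy by $\liesc{q}{q}$ genuinely captures all nondegenerate cases. Should the net lift to a lower-dimensional subspace, either the argument still applies after choosing any $\p{q}$ whose polar contains $\mathbf{U}$ (giving non-uniqueness but not contradicting the classification), or the net is degenerate in a sense that lies outside the generic hypothesis of Remark \ref{rem:generic-quads}, and should be treated as a limiting case.
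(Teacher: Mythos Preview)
Your proposal is correct and follows essentially the same route as the paper's proof: invoke Theorem~\ref{thm:lie-cbic-dimension} to confine the net to a hyperplane, classify the hyperplane by the three possible signatures $(++--)$, $(+++-)$, $(++-0)$ via the sign of $\liesc{q}{q}$, and then normalize by a Lie transformation. The paper's version is terser, omitting the explicit transitivity argument and the verification that circumscribility descends to the Laguerre picture, but the substance is identical.
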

\begin{proof}
  According to Theorem \ref{thm:lie-cbic-dimension} every Lie checkerboard incircular net
  lies in a three-dimensional subspace of $\RP^4$.
  This subspace can only have one of the signatures $(+++-)$, $(++--)$, $(++-0)$ and thus may
  be identified (after a certain Lie transformation) with a checkerboard incircular net
  in the corresponding Laguerre geometry.
\end{proof}
In the different space forms different types of generic (see Remark \ref{rem:generic-quads}) circumscribed quadrilaterals (see Proposition \ref{prop:circumscribed-quads}) can still occur
(cf.\ Remark \ref{rem:lie-quadric-projection}):
\begin{itemize}
\item hyperbolic Laguerre geometry $(++--)$
  (see Remark \ref{rem:hyperbolic-Laguerre-spheres} \ref{rem:hyperbolic-Laguerre-spheres-distinction}
  and Figure \ref{fig:inscribed-quads})
  \begin{itemize}
  \item $(++-)$:
    Four lines touching a common oriented hyperbolic circle/distance curve/horocircle.
  \item $(+--)$:
    Four lines touching a common deSitter circle.
  \item $(+-0)$:
    Four lines touching a common oriented hyperbolic line at infinity.
  \end{itemize}
\item elliptic Laguerre geometry $(+++-)$:
  \begin{itemize}
  \item $(++-)$:
    Four lines touching a common oriented elliptic circle.
  \end{itemize}
\item Euclidean Laguerre geometry $(++-0)$:
  \begin{itemize}
  \item $(++-)$:
    Four lines touching a common oriented Euclidean circle.
  \item $(+-0)$:
    Four lines from two families of parallel oriented lines.
  \end{itemize}  
\end{itemize}

\begin{figure}
  \centering
  \includegraphics[width=0.62\textwidth]{ell_cic_sphere_1}\\
  \hspace{\fill}
  \includegraphics[width=0.47\textwidth]{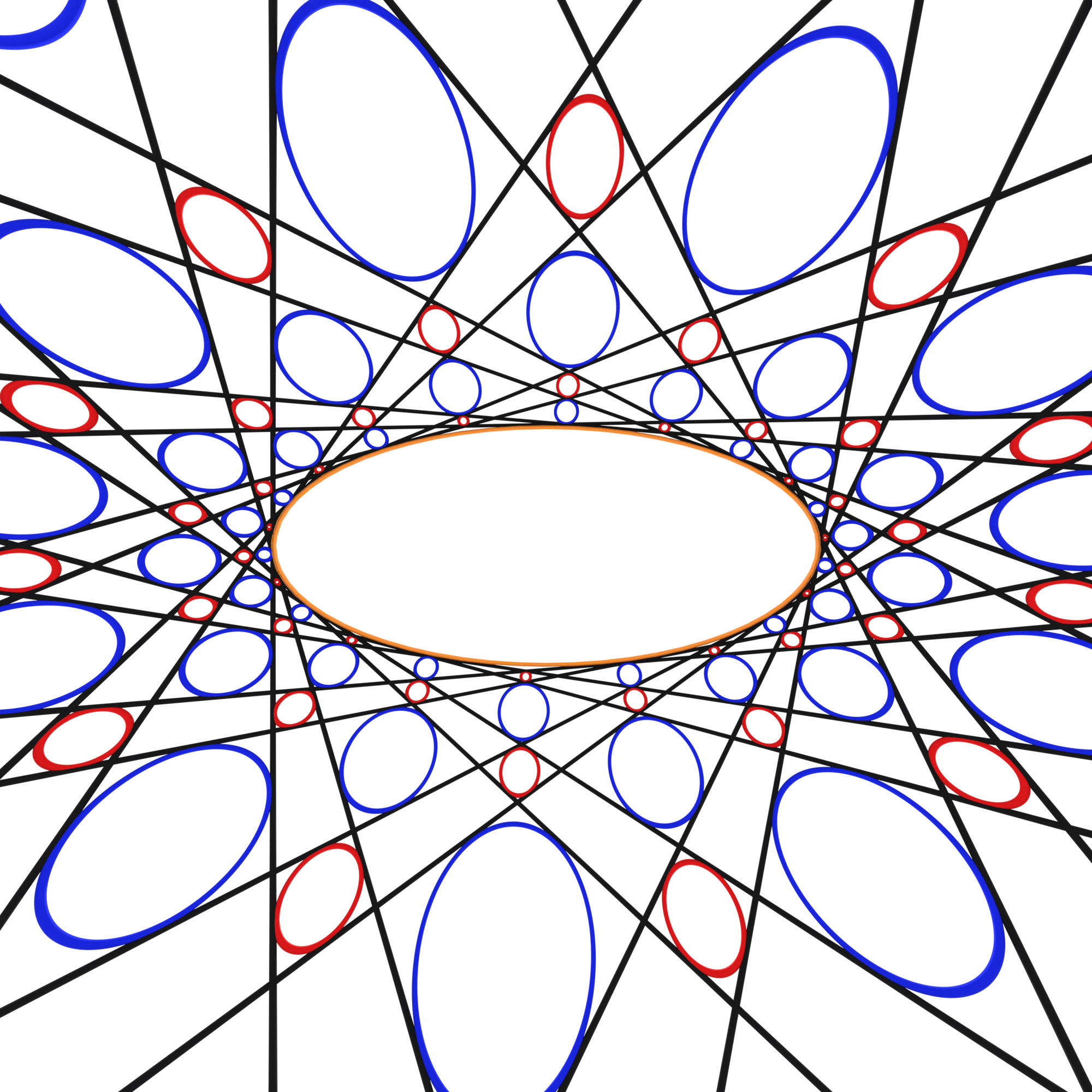}
  \hspace{\fill}
  \includegraphics[width=0.47\textwidth]{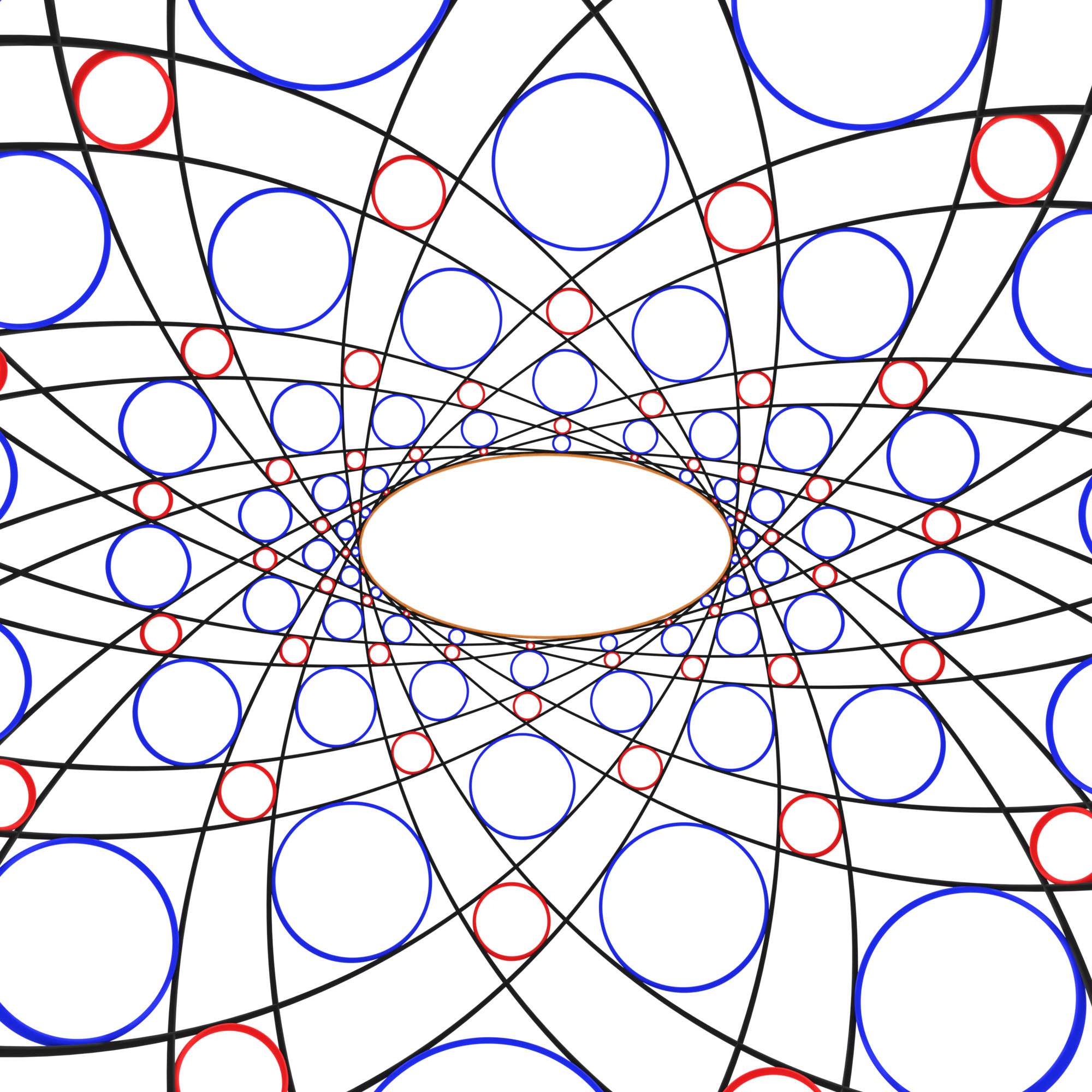}
  \hspace{\fill}
  \caption{
    \emph{Top:} Checkerboard incircular net tangent to an ellipse in the sphere model of the elliptic plane.
    \emph{Bottom-left:} Central projection to the projective model of the elliptic plane.
    \emph{Bottom-right:} Stereographic projection to a conformal model of the elliptic plane.
  }
\label{fig:ell_cic_nets}
\end{figure}
\begin{figure}
  \centering
  \includegraphics[width=0.62\textwidth]{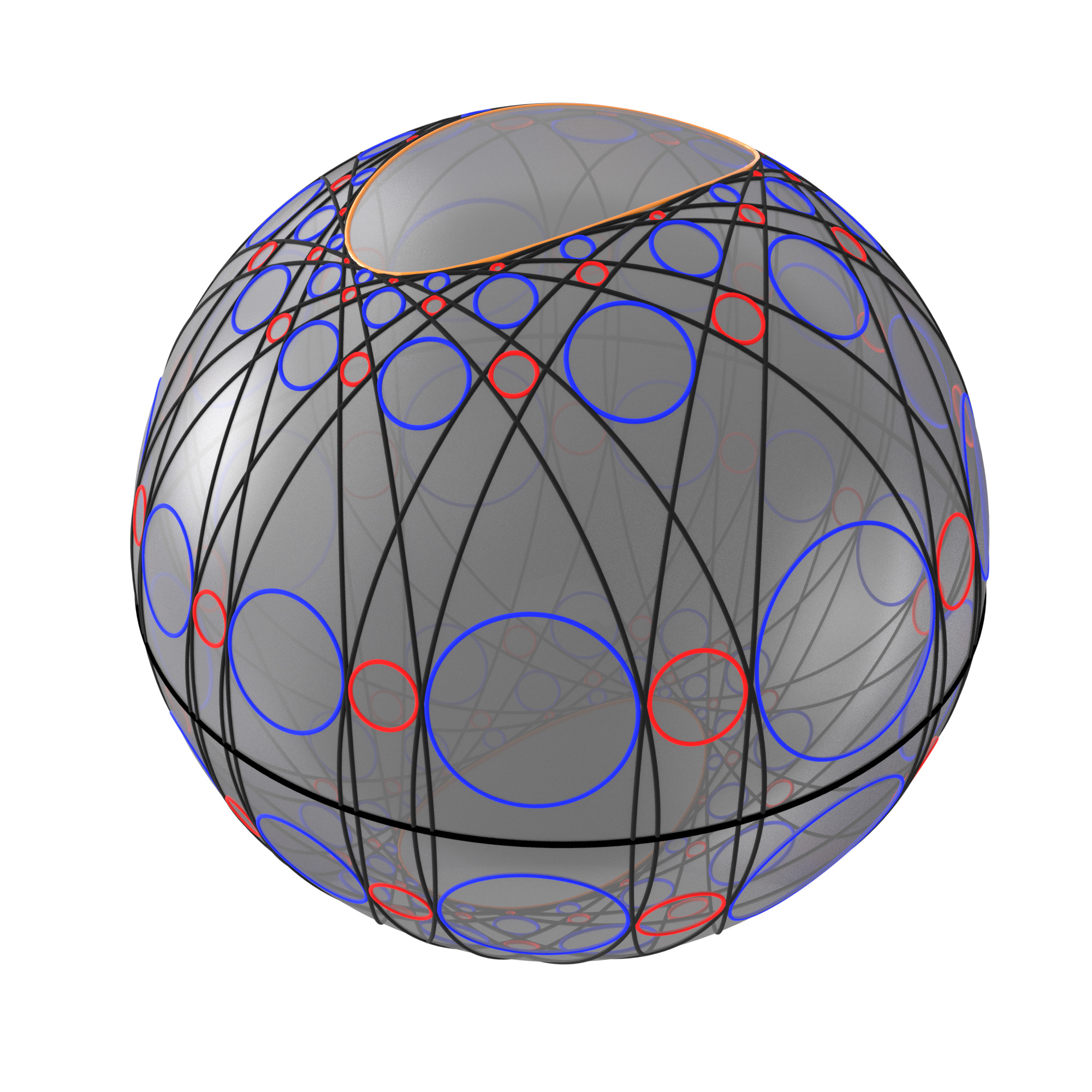}\\
  \hspace{\fill}
  \includegraphics[width=0.47\textwidth]{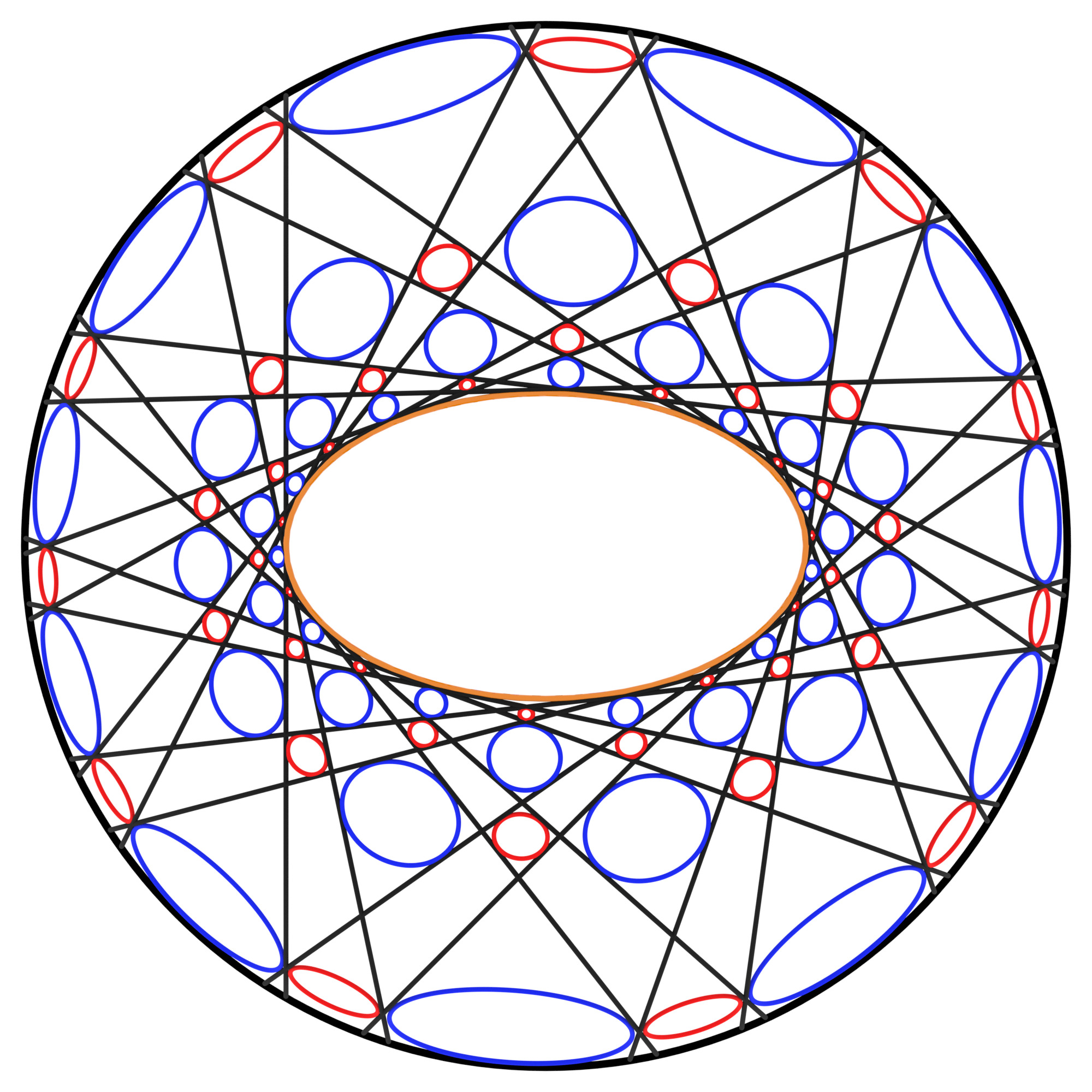}
  \hspace{\fill}
  \includegraphics[width=0.47\textwidth]{hyp_cic_ellipse_stereog_projection_1}
  \hspace{\fill}\\
  \vspace{1cm}
  \includegraphics[width=0.65\textwidth]{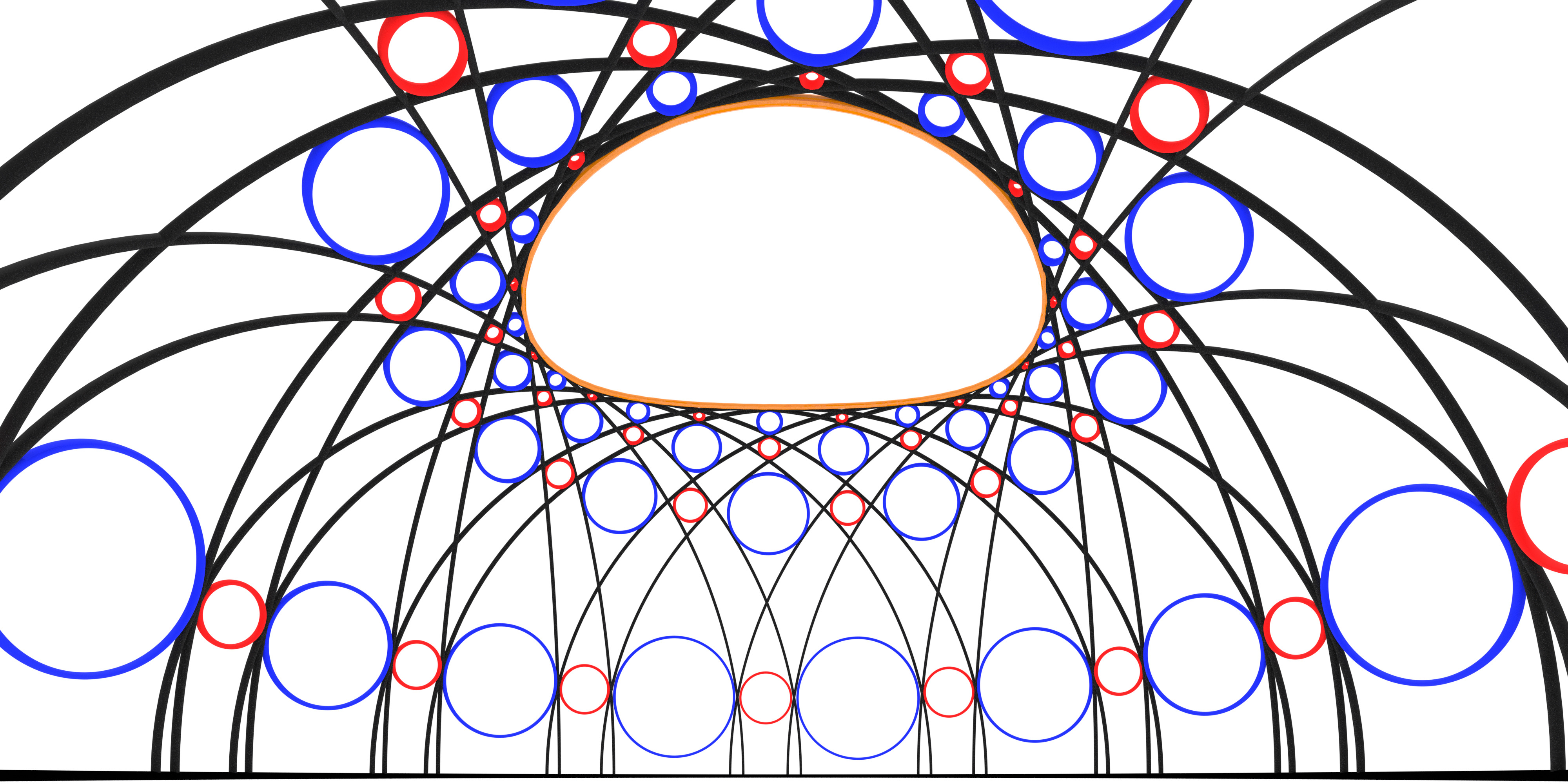}
  \caption{
    \emph{Top:} Checkerboard incircular net tangent to an ellipse in the sphere model of the hyperbolic plane.
    Two copies of the hyperbolic plane are realized as half-spheres.
    \emph{Middle-left:} Orthogonal projection to the Klein-Beltrami disk model.
    \emph{Middle-right:} Stereographic projection to the Poincaré disk model.
    \emph{Bottom:} Stereographic projection to the Poincaré half-plane model.
  }
\label{fig:hyp_cic_nets_ellipse}
\end{figure}
\begin{figure}
  \centering
  \includegraphics[width=0.62\textwidth]{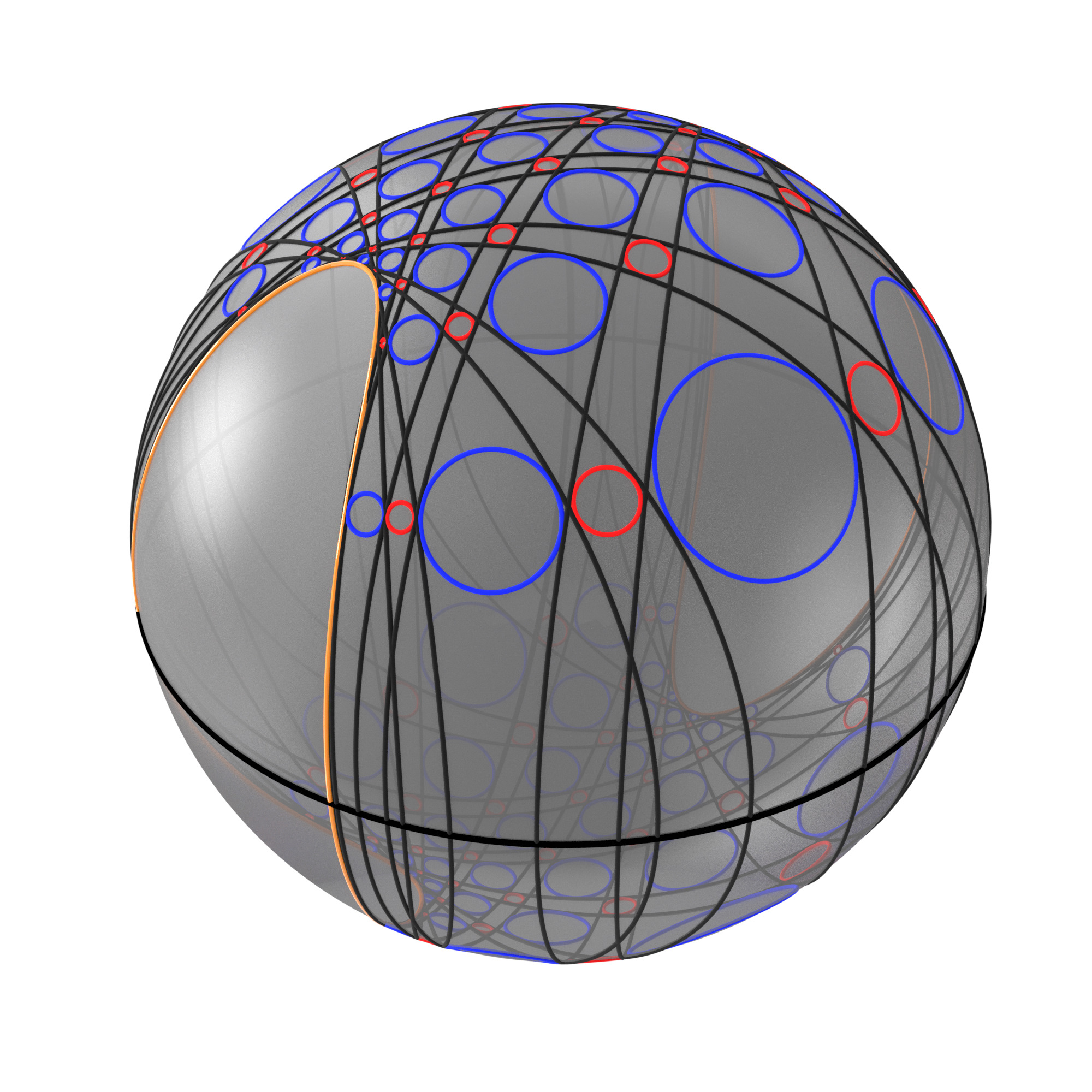}\\
  \hspace{\fill}
  \includegraphics[width=0.47\textwidth]{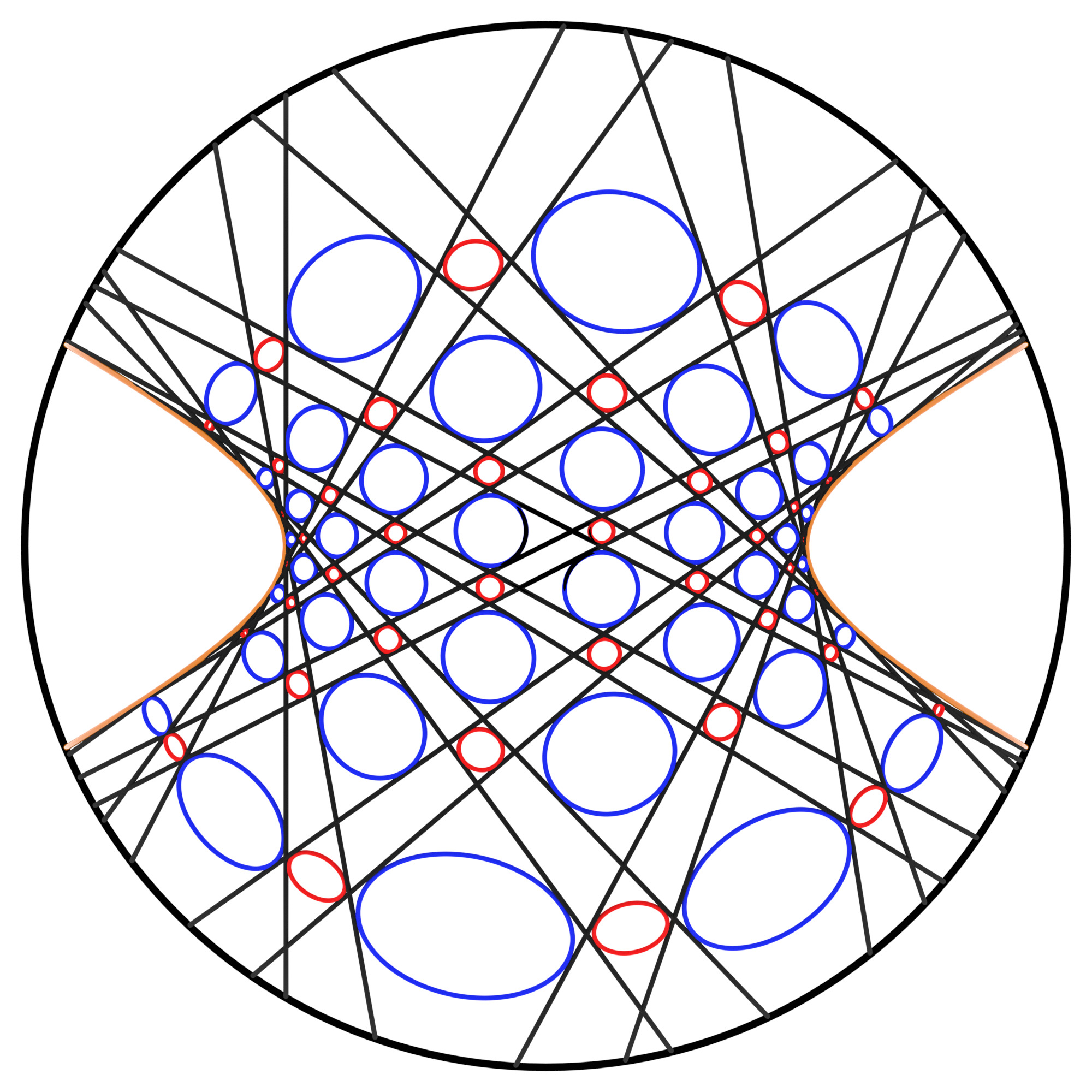}
  \hspace{\fill}
  \includegraphics[width=0.47\textwidth]{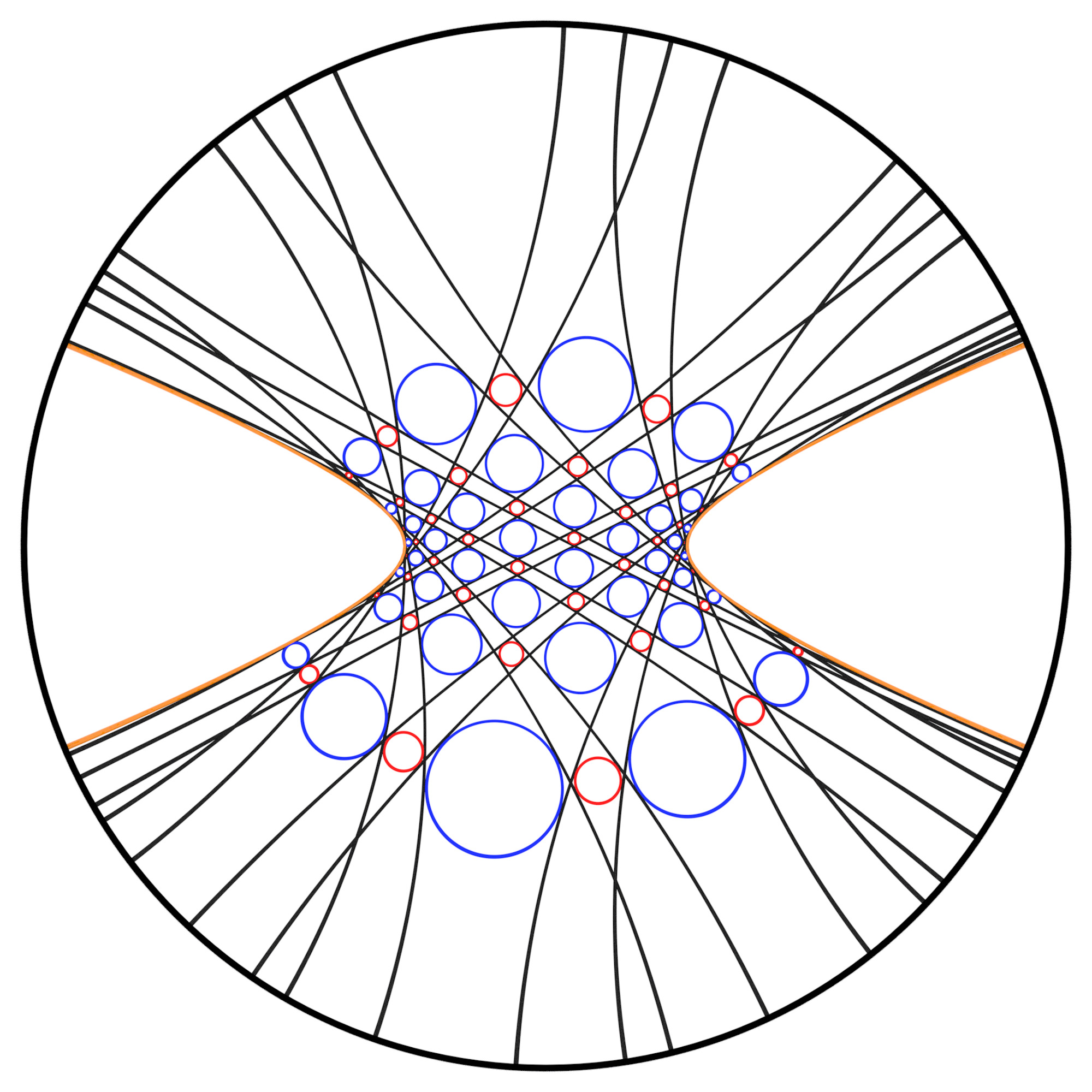}
  \hspace{\fill}\\
  \vspace{1cm}
  \includegraphics[width=0.65\textwidth]{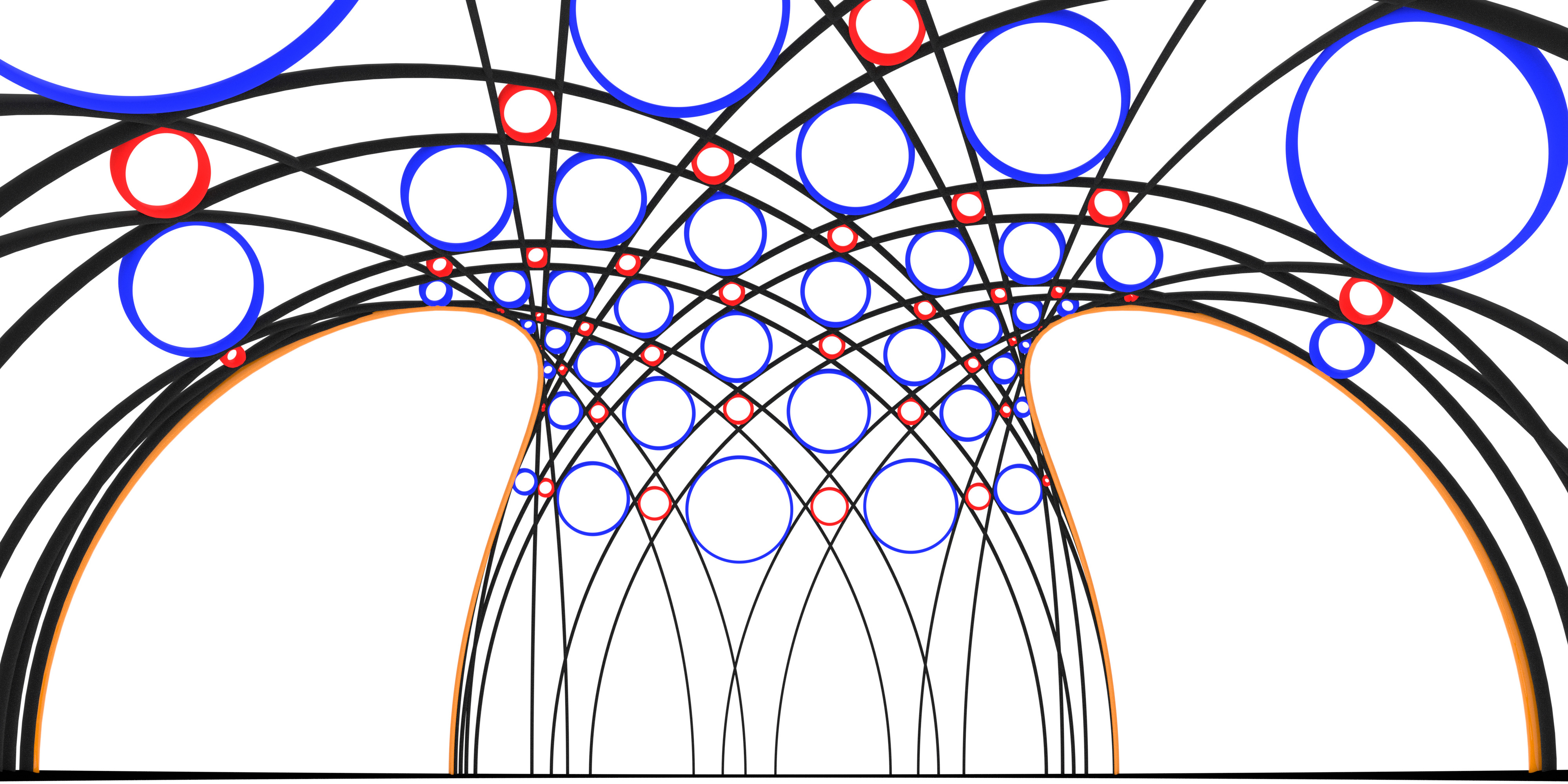}
  \caption{
    \emph{Top:} Checkerboard incircular net tangent to a hyperbola in the sphere model of the hyperbolic plane.
    Two copies of the hyperbolic plane are realized as half-spheres.
    \emph{Middle-left:} Orthogonal projection to the Klein-Beltrami disk model.
    \emph{Middle-right:} Stereographic projection to the Poincaré disk model.
    \emph{Bottom:} Stereographic projection to the Poincaré half-plane model.
  }
\label{fig:hyp_cic_nets_hyperbola}
\end{figure}

\newpage
\subsection{Hypercycles}
\begin{figure}
  \centering
  \def\svgwidth{0.55\textwidth}
  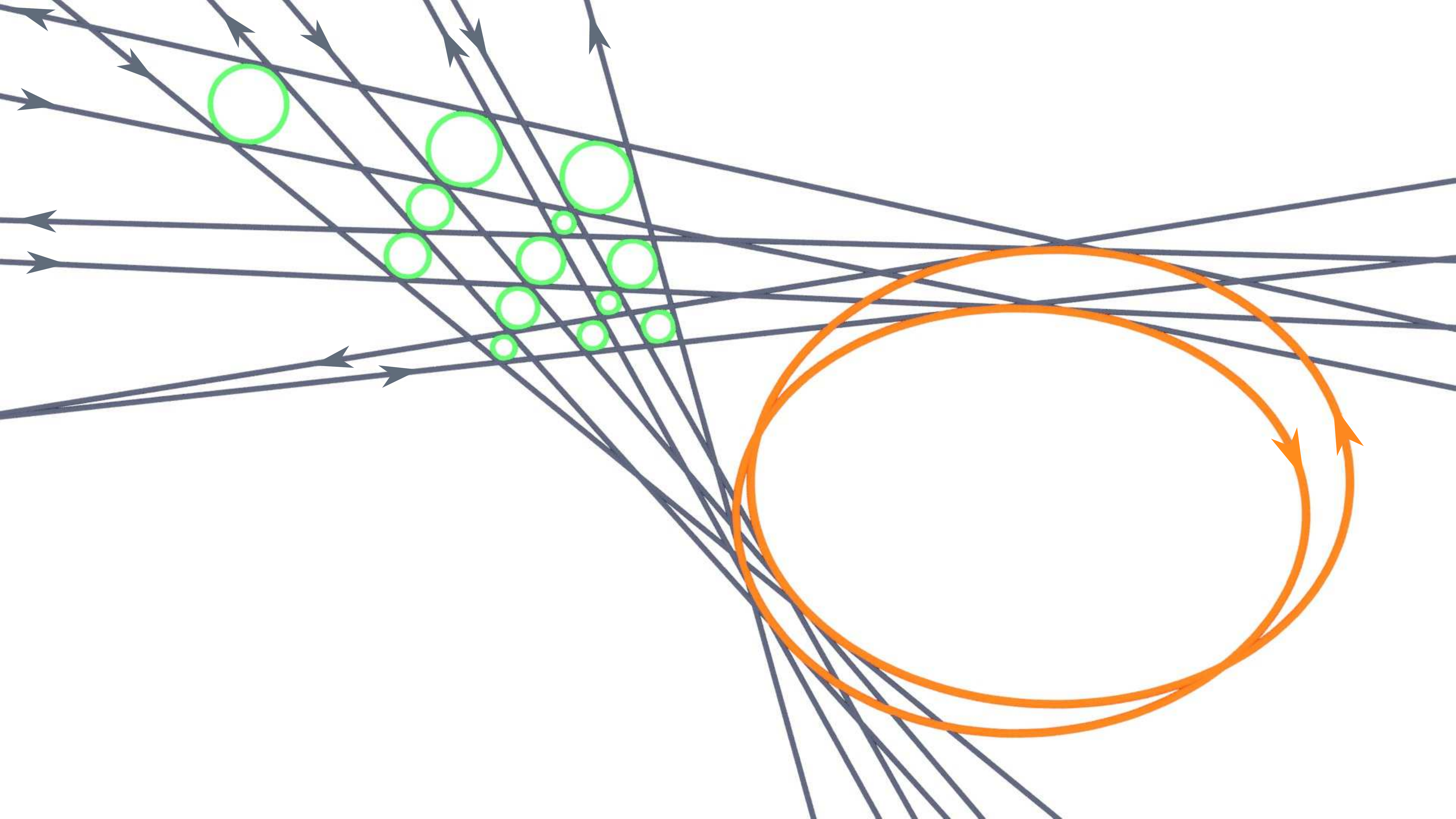
  \caption{
    Hypercycle in the Euclidean plane.
  }
\label{fig:hypercycle}
\end{figure}
In Laguerre geometry the oriented lines, and not the points, of a given space form are invariant objects.
Thus, in Laguerre geometry, it is natural to describe an (oriented) curve in the
(hyperbolic/elliptic/Euclidean) plane by its (oriented) tangent lines.
Conversely, we say that every curve on the Laguerre quadric corresponds to a curve in the plane.
Note that in the case of the hyperbolic plane the envelope of such a ``curve'' might lie partially
(or even entirely) ``outside'' the hyperbolic plane.
We still consider this to be an admissible (non-empty) \emph{Laguerre curve}.
\begin{definition}
  The one-parameter family of oriented lines (in the hyperbolic/elliptic/Euclidean plane)
  corresponding to a curve on the Laguerre quadric $\lag$
  is called a \emph{(hyperbolic/elliptic/Euclidean) Laguerre curve}.
\end{definition}
We have noted that planar sections of the Laguerre quadric correspond to Laguerre circles,
also called (generalized) \emph{cycles} in the two-dimensional case.
Consequently, the next higher order intersections with the Laguerre quadric are called \emph{hypercycles} \cite{Bl1}.
\begin{definition}
  A (hyperbolic/elliptic/Euclidean) Laguerre curve corresponding to the intersection of the Laguerre quadric
  with another quadric is called a \emph{(hyperbolic/elliptic/Euclidean) hypercycle}.
  The corresponding curve on the Laguerre quadric is called \emph{hypercycle base curve}.
\end{definition}
\begin{example}
  In every space form a conic endowed with both orientations, joined together as the two components of a single oriented curve (see Section \ref{sec:conics}) is a hypercycle.
  A more general example is shown in Figure \ref{fig:hypercycle}.
\end{example}
The intersection curve of two quadrics (\emph{base curve}) is contained in all quadrics of the pencil spanned by the two quadrics.
Thus, a hypercycle, through its hypercycle base curve, corresponds not just to one quadric
but the whole pencil of quadrics spanned by it and the Laguerre quadric.

We call a hypercycle \emph{non-degenerate} if its hypercycle base curve contains at least 8 points in general position.
In this case the hypercycle can be uniquely identified with the corresponding pencil of quadrics.
In the following we assume all hypercycles to be non-degenerate.

The following theorem establishes a relation between a checkerboard incircular net and a hypercycle,
as well as two certain hyperboloids in the pencil of quadrics corresponding to its hypercycle base curve.
In the Euclidean case this was shown in \cite[Theorem 3.4]{BST} as part of an incidence theorem
for checkerboard incircular nets (see Theorem \ref{thm:5x5-incidence}).
%
\begin{figure}[h]
    \begin{center}
      \begin{tikzpicture}[line cap=line join=round,>=stealth,x=1.0cm,y=1.0cm, scale=0.7]
        \begin{scriptsize}
          \draw (0, -0.8) node {$\ell_1$};
          \draw (1, -0.8) node {$\ell_2$};
          \draw (2, -0.8) node {$\ell_3$};
          \draw (3, -0.8) node {$\ell_4$};
          \draw (4, -0.8) node {$\ell_5$};
          \draw (5, -0.8) node {$\ell_6$};
          \draw (-0.8, 0) node {$m_1$};
          \draw (-0.8, 1) node {$m_2$};
          \draw (-0.8, 2) node {$m_3$};
          \draw (-0.8, 3) node {$m_4$};
          \draw (-0.8, 4) node {$m_5$};
          \draw (-0.8, 5) node {$m_6$};
          \draw [color=blue] (0.5, -0.6) node {$L_1$};
          \draw [color=red]  (1.5, -0.6) node {$L_2$};
          \draw [color=blue] (2.5, -0.6) node {$L_3$};
          \draw [color=red]  (3.5, -0.6) node {$L_4$};
          \draw [color=blue] (4.5, -0.6) node {$L_5$};
          \draw [color=blue] (-0.6, 0.5) node {$M_1$};
          \draw [color=red]  (-0.6, 1.5) node {$M_2$};
          \draw [color=blue] (-0.6, 2.5) node {$M_3$};
          \draw [color=red]  (-0.6, 3.5) node {$M_4$};
          \draw [color=blue] (-0.6, 4.5) node {$M_5$};
        \end{scriptsize}
        \clip(-0.5,-0.5) rectangle (6,6);
        \draw [line width=0.8pt, color=red,  domain=-0.3:5.3] plot(1.5,\x);
        \draw [line width=0.8pt, color=red,  domain=-0.3:5.3] plot(3.5,\x);
        \draw [line width=0.8pt, color=red,  domain=-0.3:5.3] plot(\x,1.5);
        \draw [line width=0.8pt, color=red,  domain=-0.3:5.3] plot(\x,3.5);
        \draw [line width=0.8pt, color=blue, domain=-0.3:5.3] plot(0.5,\x);
        \draw [line width=0.8pt, color=blue, domain=-0.3:5.3] plot(2.5,\x);
        \draw [line width=0.8pt, color=blue, domain=-0.3:5.3] plot(4.5,\x);
        \draw [line width=0.8pt, color=blue, domain=-0.3:5.3] plot(\x,0.5);
        \draw [line width=0.8pt, color=blue, domain=-0.3:5.3] plot(\x,2.5);
        \draw [line width=0.8pt, color=blue, domain=-0.3:5.3] plot(\x,4.5);
        \draw [->, thick] (1.0,0.5) arc (0:-685:0.5);
        \draw [->, thick] (3.0,0.5) arc (0:-685:0.5);
        \draw [->, thick] (5.0,0.5) arc (0:-685:0.5);
        \draw [->, thick] (1.0,2.5) arc (0:-685:0.5);
        \draw [->, thick] (3.0,2.5) arc (0:-685:0.5);
        \draw [->, thick] (5.0,2.5) arc (0:-685:0.5);
        \draw [->, thick] (1.0,4.5) arc (0:-685:0.5);
        \draw [->, thick] (3.0,4.5) arc (0:-685:0.5);
        \draw [->, thick] (5.0,4.5) arc (0:-685:0.5);
        \draw [->, thick] (2.0,1.5) arc (0:415:0.5);
        \draw [->, thick] (4.0,1.5) arc (0:415:0.5);
        \draw [->, thick] (2.0,3.5) arc (0:415:0.5);
        \draw [->, thick] (4.0,3.5) arc (0:415:0.5);
        \begin{tiny}
          \def\co{0.2}
          \draw ({0.5+\co},{0.5+\co}) node {$S_1$};
          \draw ({2.5+\co},{0.5+\co}) node {$S_2$};
          \draw ({4.5+\co},{0.5+\co}) node {$S_3$};
          \draw ({1.5+\co},{1.5+\co}) node {$S_4$};
          \draw ({3.5+\co},{1.5+\co}) node {$S_5$};
          \draw ({0.5+\co},{2.5+\co}) node {$S_6$};
          \draw ({2.5+\co},{2.5+\co}) node {$S_7$};
          \draw ({4.5+\co},{2.5+\co}) node {$S_8$};
          \draw ({1.5+\co},{3.5+\co}) node {$S_9$};
          \draw ({3.5+\co},{3.5+\co}) node {$S_{10}$};
          \draw ({0.5+\co},{4.5+\co}) node {$S_{11}$};
          \draw ({2.5+\co},{4.5+\co}) node {$S_{12}$};
          \draw ({4.5+\co},{4.5+\co}) node {$S_{13}$};
        \end{tiny}
        \draw [line width=1pt, domain=-0.5:5.5] plot(0,\x);
        \draw [>-, thick] (0,-0.4) -- (0,-0.3);
        \draw [line width=1pt, domain=-0.5:5.5] plot(1,\x);
        \draw [<-, thick] (1,-0.4) -- (1,-0.3);
        \draw [line width=1pt, domain=-0.5:5.5] plot(2,\x);
        \draw [>-, thick] (2,-0.4) -- (2,-0.3);
        \draw [line width=1pt, domain=-0.5:5.5] plot(3,\x);
        \draw [<-, thick] (3,-0.4) -- (3,-0.3);
        \draw [line width=1pt, domain=-0.5:5.5] plot(4,\x);
        \draw [>-, thick] (4,-0.4) -- (4,-0.3);
        \draw [line width=1pt, domain=-0.5:5.5] plot(5,\x);
        \draw [<-, thick] (5,-0.4) -- (5,-0.3);
        \draw [line width=1pt, domain=-0.5:5.5] plot(\x,0);
        \draw [<-, thick] (-0.4,0) -- (-0.3,0);
        \draw [line width=1pt, domain=-0.5:5.5] plot(\x,1);
        \draw [>-, thick] (-0.4,1) -- (-0.3,1);
        \draw [line width=1pt, domain=-0.5:5.5] plot(\x,2);
        \draw [<-, thick] (-0.4,2) -- (-0.3,2);
        \draw [line width=1pt, domain=-0.5:5.5] plot(\x,3);
        \draw [>-, thick] (-0.4,3) -- (-0.3,3);
        \draw [line width=1pt, domain=-0.5:5.5] plot(\x,4);
        \draw [<-, thick] (-0.4,4) -- (-0.3,4);
        \draw [line width=1pt, domain=-0.5:5.5] plot(\x,5);
        \draw [>-, thick] (-0.4,5) -- (-0.3,5);
      \end{tikzpicture}
      \def\svgwidth{0.32\textwidth}
      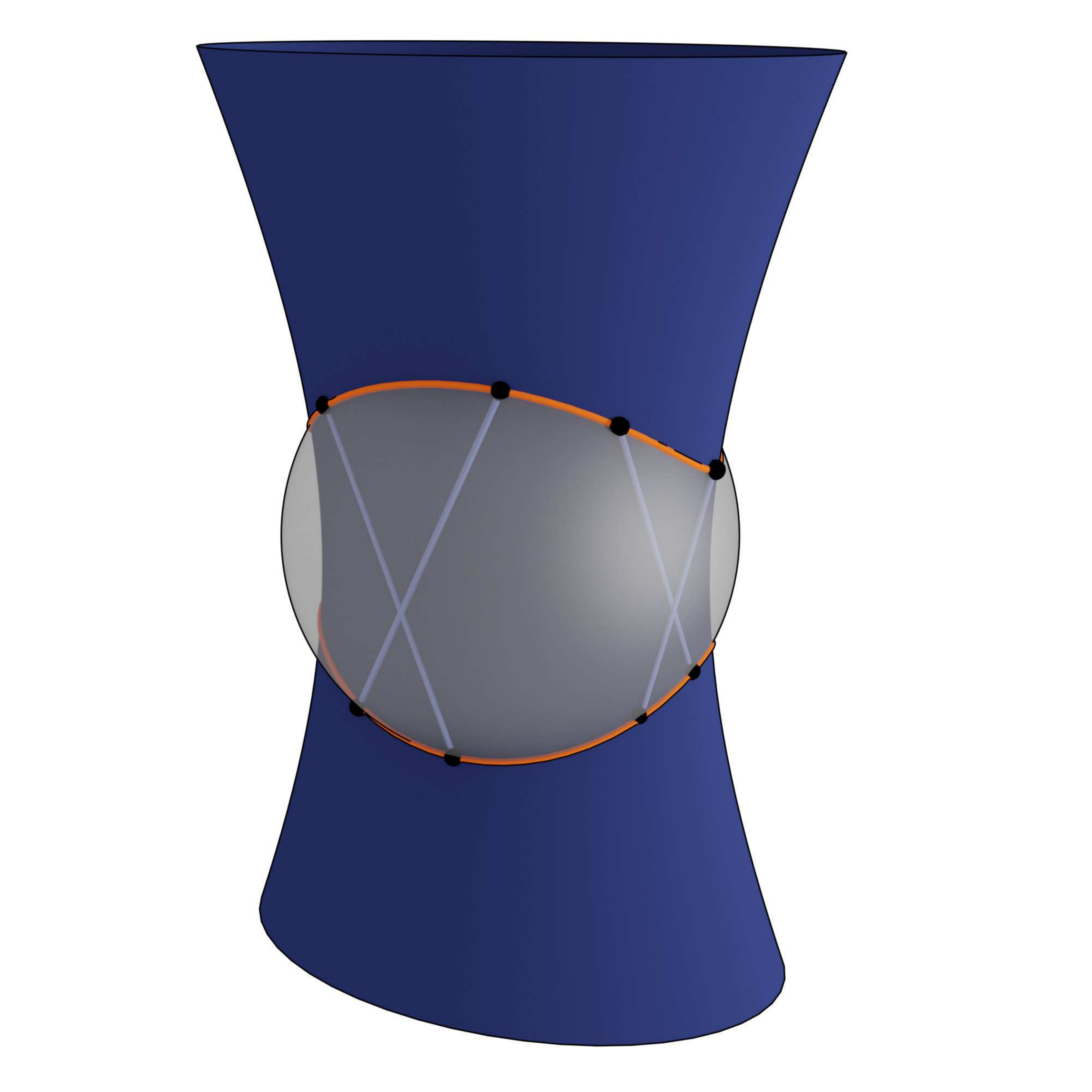
      \def\svgwidth{0.32\textwidth}
      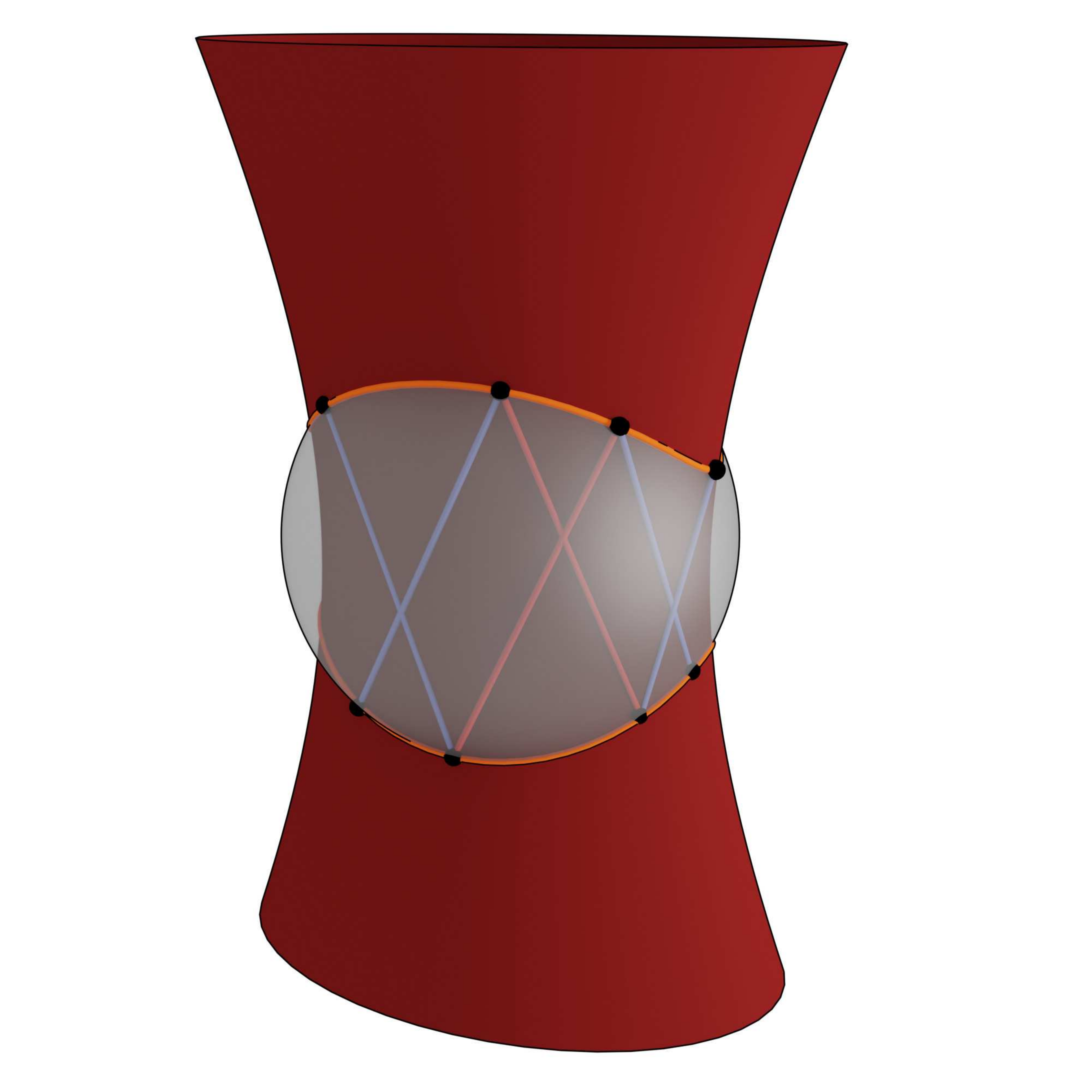
    \end{center}
    \caption{
      \emph{Left:} Combinatorial picture of the lines of a checkerboard incircular net.
      \emph{Middle/Right:} The two hyperboloids in the pencil of quadrics through the hypercycle base curve
      associated with a checkerboard incircular net in the elliptic plane.
  }
\label{fig:associated-hyperboloids}
\end{figure}
\begin{theorem}
  \label{thm:touching-hypercycle}
  The lines of a (hyperbolic/elliptic/Euclidean) checkerboard incircular net are in oriented contact with a common hypercycle (see Figure \ref{fig:hypercycle}).

  Moreover, the corresponding pencil of quadrics, which contains the hypercycle base curve,
  contains two unique hyperboloids $\cbichyp, \widetilde{\cbichyp}$ distinguished in the following way (see Figure \ref{fig:associated-hyperboloids}).
  Let $(\ell_i)_{i\in\Z}$, $(m_j)_{j\in\Z}$ be the points on the Laguerre quadric $\lag \subset \RP^3$
  corresponding to the oriented lines of the checkerboard incircular net.
  Consider the lines
  \[
    L_i \coloneqq \ell_i \wedge \ell_{i+1},\qquad
    M_i \coloneqq m_i \wedge m_{i+1}.
  \]
  Then, all lines $L_{2k}$, $M_{2l}$ lie on a common hyperboloid $\cbichyp \subset \RP^3$,
  and similarly, all lines $L_{2k+1}$, $M_{2l+1}$ lie on a common hyperboloid $\widetilde{\cbichyp} \subset \RP^3$.  
\end{theorem}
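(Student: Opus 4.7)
My approach is to extract two ruled quadrics directly from the combinatorial-projective data of the net and then identify the hypercycle as the curve the three quadrics $\lag$, $\cbichyp$, $\widetilde{\cbichyp}$ share. The starting observation is that the circumscribility condition for the ``black'' quadrilaterals of the net means precisely that, for every $k,l\in\Z$, the four points $\ell_{2k},\ell_{2k+1},m_{2l},m_{2l+1}$ are coplanar on $\lag$, and by Remark~\ref{rem:laguerre-all-circles} the analogous statement holds for every $k,l$ when one shifts all indices by one. Coplanarity of these four points translates directly into the two lines $L_{2k}=\ell_{2k}\wedge\ell_{2k+1}$ and $M_{2l}=m_{2l}\wedge m_{2l+1}$ lying in a common plane, hence intersecting; and similarly $L_{2k+1}$ meets $M_{2l+1}$ for every $k,l$.

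Next I would use the classical projective fact that three pairwise skew lines in $\RP^{3}$ determine a unique doubly ruled quadric, and that a fourth line lies on this quadric (in the opposite ruling) if and only if it meets each of the three. Applied to $L_{0},L_{2},L_{4}$ (skew by genericity), this produces a quadric $\cbichyp$ whose opposite ruling is met by every $M_{2l}$, hence contains every $M_{2l}$; reversing the roles, every further $L_{2k}$ meets $M_{0},M_{2},M_{4}$ and therefore also belongs to $\cbichyp$. The same argument, applied to the odd-indexed lines, yields a second doubly ruled quadric $\widetilde{\cbichyp}$ containing all $L_{2k+1}$ and all $M_{2l+1}$.

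Once $\cbichyp$ and $\widetilde{\cbichyp}$ are in hand, the hypercycle part is essentially immediate. Each point $\ell_{i}$ lies on two consecutive lines of the family $(L_{j})$, namely $L_{i-1}$ and $L_{i}$, one of which sits on $\cbichyp$ and the other on $\widetilde{\cbichyp}$; so every $\ell_{i}$ belongs simultaneously to $\cbichyp$, $\widetilde{\cbichyp}$, and obviously $\lag$. The analogous statement holds for every $m_{j}$. Hence all points $\ell_{i},m_{j}$ lie on $\lag\cap\cbichyp\cap\widetilde{\cbichyp}$, and under the standing non-degeneracy assumption the intersection curve of any two of these three quadrics determines a single pencil that contains the third. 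This pencil is the pencil of a hypercycle base curve, proving that the lines of the net are tangent to a common hypercycle and simultaneously identifying $\cbichyp$ and $\widetilde{\cbichyp}$ as two distinguished quadrics of that pencil.

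The main technical obstacle is the genericity bookkeeping: I need the triples $L_{0},L_{2},L_{4}$ and $M_{0},M_{2},M_{4}$ (and their odd counterparts) to be pairwise skew so that they actually define smooth doubly ruled quadrics, I need $\cbichyp\neq\widetilde{\cbichyp}$ so that the pencil is genuinely one-dimensional, and I need at least eight of the points $\ell_{i},m_{j}$ to be in general position on $\lag$ in order to identify the pencil with a non-degenerate hypercycle in the sense defined earlier. All of these should follow from the same genericity hypothesis that is already in force for Miquel's theorem (Remark~\ref{rem:miquel-genericity}) via Theorem~\ref{thm:lie-cbic-dimension} and Theorem~\ref{cbic-classification}, but the careful verification is where the real work lies.
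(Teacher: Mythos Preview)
Your construction of $\cbichyp$ and $\widetilde{\cbichyp}$ is exactly the paper's: the circumscribility of the black quadrilaterals gives $L_{2k}\cap M_{2l}\neq\varnothing$ for all $k,l$, and three skew lines from one family determine a doubly ruled quadric that then absorbs all the others. The paper states this step in one sentence and moves on.

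Where you diverge is in the last step, showing that $\widetilde{\cbichyp}$ lies in the pencil $\lag\wedge\cbichyp$. You argue by observing that every net point $\ell_i,m_j$ lies on all three quadrics and then invoke a general-position/dimension count to force the three quadrics into one pencil. The paper instead proves and uses a small dedicated lemma (Lemma~\ref{lem:pencils-and-lines}): if $\p{x}_1,\p{x}_2$ are common to all quadrics of a pencil, the line $\p{x}_1\wedge\p{x}_2$ lies on a unique member of the pencil, and two such lines that meet pick out the same member. Since the endpoints $\ell_{2k+1},\ell_{2k+2}$ of each odd line $L_{2k+1}$ are already base points of $\lag\wedge\cbichyp$ (they sit on $\lag$ and on $\cbichyp$ via the adjacent even lines), each $L_{2k+1}$ selects a quadric of this pencil; the pairwise intersections $L_{2k+1}\cap M_{2l+1}$ then collapse all these selections to a single quadric, which is necessarily $\widetilde{\cbichyp}$. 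This bypasses the need to verify that eight net points impose independent conditions on quadrics, which is the part you flag as ``where the real work lies.'' Your route is sound but costs you that bookkeeping (or, alternatively, a Bezout argument that $\widetilde{\cbichyp}$ meets the quartic $\lag\cap\cbichyp$ in infinitely many net points and hence contains it); the paper's lemma trades that for a two-line computation of the pencil parameter.
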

\begin{proof}
  Due to the inscribability property of checkerboard incircular nets every line $L_{2k}$ intersects every line $M_{2l}$, and vice versa.
  Thus, all lines $L_{2k}$, $M_{2l}$ generically lie on a common hyperboloid $\cbichyp$.
  Similarly, all lines $L_{2k+1}$, $M_{2l+1}$ lie on a common hyperboloid $\widetilde{\cbichyp}$.
  We now show that both hyperboloids $\cbichyp$, $\widetilde{\cbichyp}$ intersect the Laguerre quadric $\lag$
  in the same curve, that is, they belong to the same pencil of quadrics.
  Indeed, according to Lemma \ref{lem:pencils-and-lines},
  for each line $L_{2k+1}$, there exists a unique quadric in the pencil spanned by $\lag$ and $\cbichyp$ containing $L_{2k+1}$.
  Same for each line $M_{2l+1}$.
  Since the lines $L_{2k+1}$ and $M_{2l+1}$ pairwise intersect, again according to Lemma \ref{lem:pencils-and-lines},
  the corresponding quadrics coincide with each other and eventually with $\widetilde{\cbichyp}$.
  Thus, all points $\ell_i$, $m_j$ lie on the intersection $\lag \cap \cbichyp = \lag \cap \widetilde{\cbichyp}$.
\end{proof}
\begin{lemma}
  \label{lem:pencils-and-lines}
  Let $\p{x}_1,\p{x}_2$ be two points which belong to all members of a pencil of quadrics $\mathcal{Q}_\lambda$.
  Then, there exists a unique quadric $\mathcal{Q}_{\lambda_{12}}$ from the pencil which contains the whole line $L_{12}=\p{x}_1 \wedge \p{x}_2$.
  
  If the line $L_{34}= \p{x}_3 \wedge \p{x}_4$ associated with another pair of base points $\p{x}_3,\p{x}_4$ intersects the line $L_{12}$ then the two quadrics  $\mathcal{Q}_{\lambda_{12}}$ and $\mathcal{Q}_{\lambda_{34}}$ coincide.
\end{lemma}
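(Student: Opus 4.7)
The plan is to reduce both assertions to the observation that containment of the line $L_{12}=\p{x}_1\wedge\p{x}_2$ in a member of the pencil is a single linear condition in the pencil parameter. Writing the pencil via bilinear forms as $b_\lambda = \lambda_1 b_1 + \lambda_2 b_2$, the assumption that $\p{x}_1,\p{x}_2$ are base points gives $b_1(x_i,x_i)=b_2(x_i,x_i)=0$, hence $b_\lambda(x_i,x_i)=0$ for every $[\lambda_1:\lambda_2]\in\RP^1$. Since a projective line lies on a quadric precisely when the associated bilinear form vanishes identically on the corresponding 2-plane in homogeneous coordinates, the only remaining requirement for $L_{12}\subset \mathcal{Q}_\lambda$ is the single equation
\[
\lambda_1 b_1(x_1,x_2) + \lambda_2 b_2(x_1,x_2) = 0,
\]
which has a unique solution $\lambda_{12}\in\RP^1$ (degenerating to ``all $\lambda$'' only if both coefficients vanish, in which case $L_{12}$ is contained in every member of the pencil and the statement becomes vacuously true).

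For the second part, I would let $\p{p} \coloneqq L_{12}\cap L_{34}$ and let $\pi \subset \RP^n$ be the plane spanned by $L_{12}\cup L_{34}$. The intersection $\mathcal{Q}_{\lambda_{12}}\cap\pi$ is a conic in $\pi$ that contains the entire line $L_{12}$, so it must split as
\[
\mathcal{Q}_{\lambda_{12}}\cap\pi = L_{12}\cup L'
\]
for some second line $L'\subset\pi$ (possibly equal to $L_{12}$). Now $\p{x}_3,\p{x}_4\in\mathcal{Q}_{\lambda_{12}}$, since they are base points of the pencil, and $\p{x}_3,\p{x}_4\in\pi$ since $L_{34}\subset\pi$. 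Under the mild genericity assumption that $\p{x}_3,\p{x}_4\notin L_{12}$, both points must lie on $L'$, which forces $L'=L_{34}$. Hence $L_{34}\subset\mathcal{Q}_{\lambda_{12}}$, and the uniqueness established in the first part then yields $\mathcal{Q}_{\lambda_{12}}=\mathcal{Q}_{\lambda_{34}}$.

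The main obstacle will be treating the degenerate configurations cleanly, in particular if $\p{x}_3$ or $\p{x}_4$ happens to lie on $L_{12}$, or if $L_{12}=L_{34}$, or if the linear equation for $\lambda_{12}$ has no well-defined unique solution because it vanishes identically. In those cases either the conclusion is immediate (both sides refer to the same quadric or to the whole pencil) or it must be interpreted in the natural ``unique or everywhere'' sense. In the intended application to Theorem \ref{thm:touching-hypercycle}, the checkerboard incircular net furnishes enough distinct pairwise-intersecting lines that the generic case always applies, so the classical formulation of the lemma suffices.
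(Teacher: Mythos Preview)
Your argument is correct and, for the first assertion, essentially identical to the paper's: both reduce containment of $L_{12}$ to the single linear equation $\lambda_1 b_1(x_1,x_2)+\lambda_2 b_2(x_1,x_2)=0$ and read off the unique $\lambda_{12}$.

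For the second assertion the paper takes a shorter route than yours. Rather than passing to the plane $\pi$ and splitting the conic $\mathcal{Q}_{\lambda_{12}}\cap\pi$ into $L_{12}\cup L'$, it simply observes that the intersection point $\p{p}=L_{12}\cap L_{34}$ lies on $\mathcal{Q}_{\lambda_{12}}$ (since $L_{12}\subset\mathcal{Q}_{\lambda_{12}}$), and that $\p{x}_3,\p{x}_4$ also lie on $\mathcal{Q}_{\lambda_{12}}$ as base points; thus $L_{34}$ meets $\mathcal{Q}_{\lambda_{12}}$ in three points and must be entirely contained in it, whence $\mathcal{Q}_{\lambda_{12}}=\mathcal{Q}_{\lambda_{34}}$ by uniqueness. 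This avoids the auxiliary plane and the conic-splitting step, and carries the same mild genericity assumption (that $\p{p},\p{x}_3,\p{x}_4$ are three distinct points). Your approach is perfectly valid but slightly heavier; the paper's ``three points on a line'' argument is the more economical way to close.
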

\begin{proof}
  Let $q_1,q_2$ be two quadratic forms generating the pencil with the quadratic form $q_\lambda=q_1+\lambda q_2$.
  The points $\p{x}_1$, $\p{x}_2$ belong to all quadrics of the pencil if and only if
  \[
    q_1(p_1)=q_1(p_2)=q_2(p_1)=q_2(p_2)=0.
  \]
  The line $L_{12}=\p{x}_1\wedge \p{x}_2$ belongs to the quadric determined by $q_{\lambda_{12}}$ if and only if $q_{\lambda_{12}}(p_1,p_2)=0$
  so that
  \[
    t_{12}=-\frac{q_1(p_1,p_2)}{q_2(p_1,p_2)}.
  \]
  Vanishing of the denominator is the case when the line lies on the quadric determined by $q_2$.
  
  Moreover, if the line $L_{34}=\p{x}_3\wedge \p{x}_4$ passing through another pair of common points $\p{x}_3,\p{x}_4$ intersects the line $L_{12}$
  then the point of intersection and $\p{x}_3,\p{x}_4$ belong to the quadric $\mathcal{Q}_{\lambda_{12}}$.
  Accordingly, the line $L_{34}$ is contained in $\mathcal{Q}_{\lambda_{12}}$ so that $\mathcal{Q}_{\lambda_{12}}=\mathcal{Q}_{\lambda_{34}}$.
\end{proof}
The oriented circles of a checkerboard incircular net correspond to the planes spanned by pairs of lines $L_{2k}, M_{2l}$ or $L_{2k+1}, M_{2l+1}$,
i.e.\ they correspond to tangent planes of the two hyperboloids $\cbichyp, \widetilde{\cbichyp}$, respectively.
We identify each circle with its polar point with respect to the Laguerre quadric $\lag$,
or in the Euclidean case with a point in the cyclographic model (cf.\ Appendix~\ref{sec:euclidean-laguerre-geometry}).
\begin{corollary}\
  \label{cor:cbic-dual-pencil}
  \nobreakpar
  \begin{enumerate}
  \item The polar points corresponding to the oriented circles of a hyperbolic/elliptic checkerboard incircular net
    lie on two quadrics, the polar pencil of which contains the Laguerre quadric (polar with respect to the Laguerre quadric).
  \item The points in the cyclographic model corresponding to the oriented circles of a Euclidean checkerboard incircular net
    lie on two quadrics, the dual pencil of which contains the absolute quadric (i.e.\ the two quadrics are Minkowski confocal quadrics).
  \end{enumerate}
\end{corollary}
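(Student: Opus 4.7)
The plan is to derive both parts of the corollary as a duality consequence of Theorem \ref{thm:touching-hypercycle}. I first reduce part~(i) to tangency data on the Laguerre quadric. An oriented circle of the checkerboard incircular net is identified with the pole (with respect to $\lag$) of the plane cutting the Laguerre quadric. By Theorem \ref{thm:touching-hypercycle} each cutting plane is spanned by two rulings $L_i$ and $M_j$ (with $i+j$ even) which lie both on $\lag$ and on $\cbichyp$ (when $i,j$ are both even) or on $\widetilde{\cbichyp}$ (both odd). Since the two rulings intersect, the cutting plane is precisely the tangent plane of the respective hyperboloid at their common point.

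Next, for a non-degenerate quadric $\mathcal{R} \subset \RP^3$ with matrix $R$, the poles with respect to $\lag$ (with matrix $B$) of all tangent planes of $\mathcal{R}$ form a quadric with matrix $BR^{-1}B$, the polar image of $\mathcal{R}$ with respect to $\lag$. Applying this to $\cbichyp$ and $\widetilde{\cbichyp}$, the polar points of the two families of circles lie on the two polar images $\mathcal{R}_1 \coloneqq \cbichyp^\perp$ and $\mathcal{R}_2 \coloneqq \widetilde{\cbichyp}^\perp$.

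The polar operation $\mathcal{R} \mapsto \mathcal{R}^\perp$ is an involution on non-degenerate quadrics of $\RP^3$ and fixes $\lag$ itself. The polar pencil of $\mathcal{R}_1$ and $\mathcal{R}_2$ with respect to $\lag$ is, by definition, the family of quadrics whose polar images span the linear pencil of $\mathcal{R}_1^\perp = \cbichyp$ and $\mathcal{R}_2^\perp = \widetilde{\cbichyp}$. Hence this polar pencil contains $\lag$ if and only if $\lag^\perp = \lag$ lies in the linear pencil of $\cbichyp$ and $\widetilde{\cbichyp}$, which is precisely the pencil assertion of Theorem \ref{thm:touching-hypercycle}. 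This proves~(i).

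Part~(ii) follows by the same duality argument carried out in the cyclographic model of Appendix~\ref{sec:euclidean-laguerre-geometry}, with oriented circles represented as points of Minkowski space $\R^{2,1}$ and polarity with respect to $\lag$ replaced by duality with respect to the Minkowski absolute; Minkowski confocality is by definition the condition that the dual pencil of two quadrics contains the absolute, and the membership of the Euclidean Laguerre quadric in the pencil through $\cbichyp$ and $\widetilde{\cbichyp}$ yields exactly this statement after dualization. The main obstacle will be the degeneracy of the Euclidean Laguerre quadric: one must verify that the pole–tangent-plane correspondence used in the non-Euclidean argument passes cleanly to Minkowski duality in the cyclographic model, so that the involution property and the resulting linearity of the polar pencil survive the degeneracy.
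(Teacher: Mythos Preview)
Your argument is correct and is essentially the same as the paper's: the paper also observes that the circle planes are tangent planes of $\cbichyp$ and $\widetilde{\cbichyp}$, passes to the polar quadrics $\cbichyp^\perp$, $\widetilde{\cbichyp}^\perp$ with respect to $\lag$, and then uses that $\lag$ lies in the linear pencil of $\cbichyp$ and $\widetilde{\cbichyp}$ to conclude that $\lag = \lag^\perp$ lies in the polar pencil. Your additional remarks (the explicit matrix $BR^{-1}B$, the involution property) are more detailed than the paper's one-line version, and your caveat about the degeneracy in the Euclidean case is legitimate---the paper simply asserts that ``a similar argument holds by dualization to the cyclographic model'' without spelling this out.
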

\begin{proof}\
  \nobreakpar
  \begin{enumerate}
  \item
    Under polarization in the Laguerre quadric $\lag$ the tangent planes of $\cbichyp$ become points on the polar quadric $\cbichyp^\lagperp$.
    Similarly, the tangent planes of $\widetilde{\cbichyp}$ become points on the polar quadric $\widetilde{\cbichyp}^\lagperp$.
    Since $\cbichyp$, $\widetilde{\cbichyp}$, and $\lag$ are contained in a common pencil of quadrics,
    their polar images $\cbichyp^\lagperp$, $\widetilde{\cbichyp}^\lagperp$, and $\lag^\lagperp \cong \lag$
    are contained in the polar pencil of quadrics.
  \item For the Euclidean case a similar argument holds by dualization to the cyclographic model.
  \end{enumerate}
\end{proof}
We conclude this section on hypercycles by stating an incidence result concerning eight lines touching a hypercycle,
which is similar to Theorem \ref{thm:miquel-laguerre}.
\begin{figure}
  \centering
  \def\svgwidth{0.55\textwidth}
  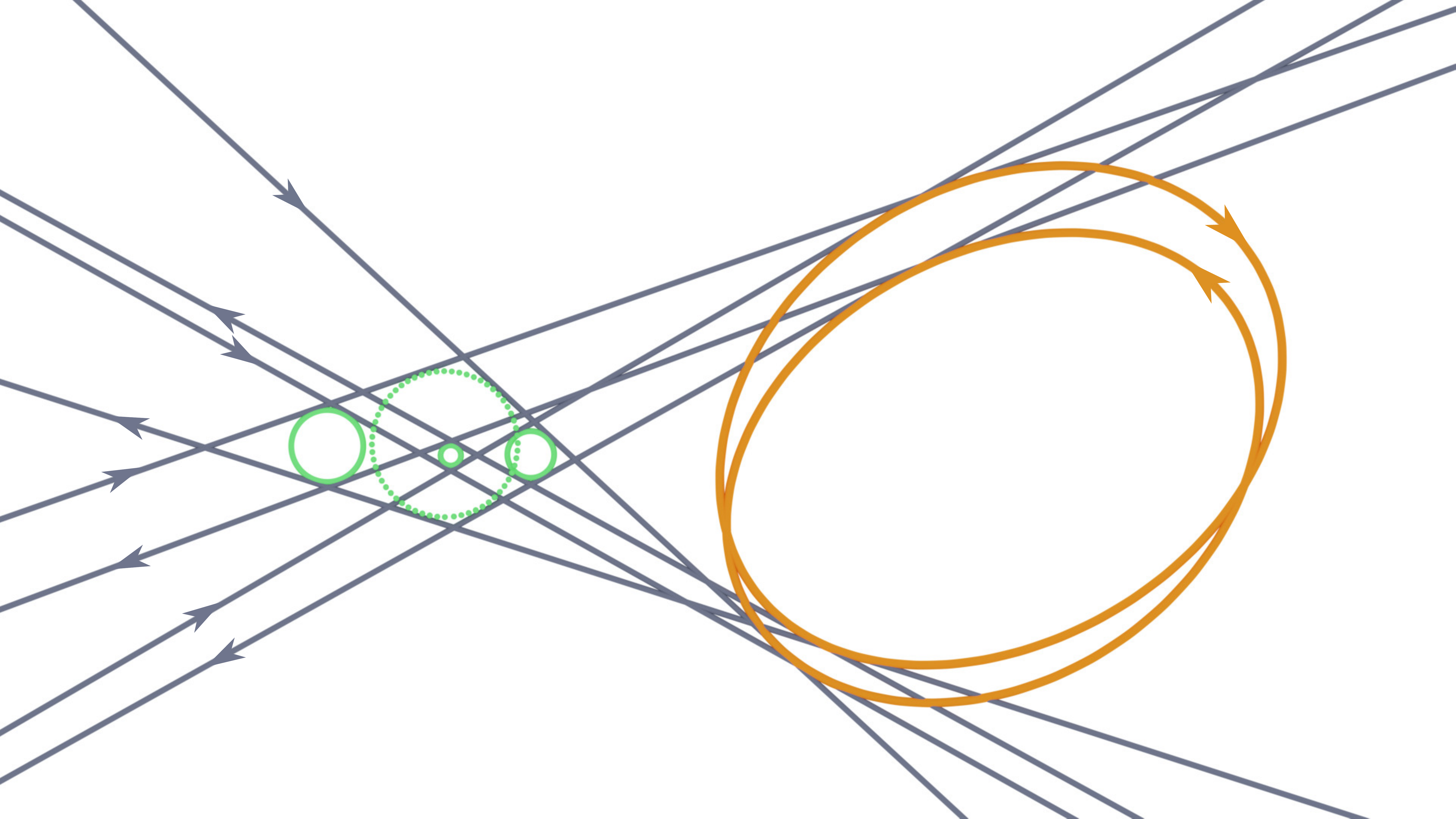
  \caption{
    Incidence theorem for eight lines touching a hypercycle.
  }
\label{fig:laguerre-subdivision}
\end{figure}
\begin{theorem}
  \label{thm:laguerre-subdivision}
  Let $\ell_1, \ell_2, \ell_3, \ell_4, m_1, m_2, m_3, m_4$ be eight generic lines touching a hypercycle.
  If the three quadrilaterals $(\ell_1, \ell_2, m_1, m_2)$, $(\ell_2, \ell_3, m_2, m_3)$, $(\ell_3, \ell_4, m_3, m_4)$ are circumscribed,
  then so is the quadrilateral $(\ell_1, \ell_4, m_1, m_4)$ (see Figure \ref{fig:laguerre-subdivision}).
\end{theorem}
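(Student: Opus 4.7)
The plan is to mimic the proof of Miquel's theorem in Lie geometry (Theorem 7.1) by invoking the lemma on associated points (Lemma 5.1), applied this time to eight points on the hypercycle base curve in the Laguerre quadric $\lag \subset \RP^3$, rather than to eight points on the Lie quadric.

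First I would lift the eight lines to the corresponding eight points $\ell_1,\ldots,\ell_4, m_1,\ldots,m_4$ on the Laguerre quadric $\lag$. Since all eight lines touch a common non-degenerate hypercycle, these points lie on the hypercycle base curve $C = \lag \cap \mathcal{R}$, where $\mathcal{R}$ is any quadric distinct from $\lag$ in the defining pencil. Generically, $C$ is a smooth quartic curve in $\RP^3$. The three circumscribability hypotheses give three planes $\Pi_{12}$, $\Pi_{23}$, $\Pi_{34}$, containing the quadruples $\{\ell_1,\ell_2,m_1,m_2\}$, $\{\ell_2,\ell_3,m_2,m_3\}$, $\{\ell_3,\ell_4,m_3,m_4\}$, respectively, and I want to produce a fourth plane $\Pi$ containing $\{\ell_1,\ell_4,m_1,m_4\}$.

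The key construction is the reducible quadric $\mathcal{Q}_1 \coloneqq \Pi_{12} \cup \Pi_{34}$, which contains all eight of our points. Since a plane meets the quartic curve $C$ in exactly four points (by Bezout), and since $\Pi_{12}$ and $\Pi_{34}$ already account for four points of $C$ each, we get $C \cap \mathcal{Q}_1 = \{\ell_1,\ldots,\ell_4,m_1,\ldots,m_4\}$; hence the three quadrics $\lag$, $\mathcal{R}$, $\mathcal{Q}_1$ meet in precisely these eight points, putting us in the setting of Lemma 5.1. Now let $\Pi'$ be the plane spanned by $\ell_1$, $\ell_4$, $m_1$ (well-defined by genericity), and form the second reducible quadric $\mathcal{Q}_2 \coloneqq \Pi_{23} \cup \Pi'$. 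It already contains the four points on $\Pi_{23}$ together with the three points $\ell_1, \ell_4, m_1$ on $\Pi'$, i.e.\ seven of the eight associated points. By Lemma 5.1, $\mathcal{Q}_2$ must also pass through the remaining point $m_4$; since generically $m_4 \notin \Pi_{23}$, we conclude $m_4 \in \Pi'$, so $\ell_1, \ell_4, m_1, m_4$ are coplanar, proving that the fourth quadrilateral is circumscribed.

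The main obstacle is not the core argument, which is already essentially contained in the proof of Theorem 7.1, but rather a careful accounting of the genericity hypotheses: that the hypercycle base curve is an irreducible quartic, that $\Pi_{12}$ and $\Pi_{34}$ are distinct and neither contains a component of $C$, that $\Pi'$ is genuinely two-dimensional (i.e.\ $\ell_1,\ell_4,m_1$ are not collinear), and that $m_4$ does not happen to lie on $\Pi_{23}$. All of these fall out of the same sort of generic-position assumption used for Theorem 7.1 (cf.\ Remark 7.1), and could be formulated uniformly as: no five of the eight points are coplanar and no four lie on a line of $\lag$.
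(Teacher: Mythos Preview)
Your proposal is correct and follows essentially the same argument as the paper: lift to the Laguerre quadric, realize the eight points as the common intersection of $\lag$, a second quadric $\mathcal{R}$ from the hypercycle pencil, and the reducible quadric $\Pi_{12}\cup\Pi_{34}$, then apply the associated-points lemma to the reducible quadric $\Pi_{23}\cup\Pi'$ with $\Pi'=\ell_1\wedge\ell_4\wedge m_1$. Your discussion of the genericity hypotheses is in fact more careful than the paper's, which simply invokes ``no five points may lie in a plane.''
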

\begin{proof}
  We identify the eight oriented lines with its corresponding points on the Laguerre quadric.
  The hypercycle base curve is the intersection of two quadrics.
  Define the degenerate quadric given by the two planes through $\ell_1, \ell_2, m_1, m_2$ and $\ell_3, \ell_4, m_3, m_4$ respectively.
  Then the given eight points on the Laguerre quadric are the intersection of those three quadrics.
  According to Lemma \ref{lem:associated-points} every quadric through seven of those points must pass through the eighth.
  Consider the degenerate quadric given by the two planes through $\ell_2, \ell_3, m_2, m_3$ and $\ell_1, \ell_4, m_1$ respectively.
  Then this quadric must also pass through $m_4$.
  Since no five points may lie in a plane we can conclude that
  $\ell_1, \ell_4, m_1, m_4$ lie in a common plane, and thus, that the corresponding quadrilateral is circumscribed.
\end{proof}

\subsection{Conics and incircular nets}
\label{sec:conics}
Towards the parametrization of checkerboard incircular nets
it turns out to be useful to consider certain normal forms of hypercycles, one of which are conics.
In \cite{BST} it is demonstrated that in the Euclidean case a generic hypercycle can be mapped to a conic by a Laguerre transformation
if and only if the corresponding pencil of quadrics is diagonalizable.
In the non-Euclidean cases diagonalizable hypercycles are still a subset of hypercycles that can be mapped to conics.
%
\begin{definition}[Conics in spaceforms]
  In the projective model of the hyperbolic/elliptic/Euclidean plane embedded into $\RP^2$
  a \emph{(hyperbolic/elliptic/Euclidean) conic} is a projective conic in $\RP^2$.
\end{definition}
\begin{remark}\
  \nobreakpar
  \begin{enumerate}
  \item
    From this projective definition of conics one recovers the familiar metric properties of conics in the different space forms, see, e.g., \cite{Cha, Sto, I}.
  \item
    In hyperbolic geometry a conic might lie ``outside'' the hyperbolic plane and be considered a ``deSitter conic''.
    These cases are still relevant in our Laguerre geometric considerations as long as they possess hyperbolic tangent lines.
  \end{enumerate}
\end{remark}
Recall that in Laguerre geometry reflection in the special point $\p{p}$ corresponds to orientation reversion (see Section \ref{sec:laguerre}).
We use this point in the following way to characterize conics in the set of hypercycles.
\begin{lemma}
  \label{lem:hypercycle-conic}
  A hypercycle in the hyperbolic/elliptic/Euclidean plane is a conic (doubly covered with opposite orientation)
  if and only if its hypercycle base curve is given by the intersection of the Laguerre quadric with a cone with vertex $\p{p}$.
\end{lemma}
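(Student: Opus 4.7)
The plan is to translate the statement through the polar projection $\pi_{\p p}^*$, which by Proposition \ref{prop:polar-sphere-double-cover} is a double cover of the oriented hyperplanes of the base plane $\baseplane = \p{p}^\perp$ with covering involution $\sigma_{\p p}$ playing the role of orientation reversal. The key elementary fact I would use is that a cone $\mathcal{C}'$ with vertex $\p{p}$ is automatically $\sigma_{\p p}$-invariant, since $\sigma_{\p p}$ fixes $\p{p}$ and preserves every line through $\p{p}$ setwise; together with the $\sigma_{\p p}$-invariance of $\lag$ this immediately accounts for the ``doubly covered with opposite orientation'' part of the statement.

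For the direction $(\Leftarrow)$, I would suppose the hypercycle base curve equals $\lag \cap \mathcal{C}'$ for a cone $\mathcal{C}'$ with vertex $\p{p}$. Then the base curve is $\sigma_{\p p}$-invariant, so every oriented line of the hypercycle is paired with its orientation reverse. Since every line through $\p{p}$ projects to a single point of $\baseplane$, the central projection sends the cone onto its planar section $\pi_{\p p}(\mathcal{C}') = \mathcal{C}' \cap \baseplane$, a conic $C^* \subset \baseplane$. Hence $\pi_{\p p}(\lag \cap \mathcal{C}') \subseteq C^*$, and composing with polarity in $\widetilde{\lag} = \lag \cap \baseplane$ exhibits the unoriented lines of the hypercycle as the polars (with respect to $\widetilde{\lag}$) of the points of $C^*$. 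By the classical dual-conic construction this is precisely the tangent family of the conic $C$ polar to $C^*$.

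For the converse $(\Rightarrow)$, I would start with the conic $C$ in $\baseplane$ and form the conic $C^* \subset \baseplane$ consisting of the $\widetilde{\lag}$-poles of its tangent lines. An oriented line lifted to $\lag$ is tangent to $C$ iff its polar projection in $\baseplane$ is tangent to $C$, iff its central projection under $\pi_{\p p}$ lies on $C^*$. The preimage $\pi_{\p p}^{-1}(C^*) \subset \RP^{n+1}$ is by construction the quadratic cone $\mathcal{C}'$ over $C^*$ with vertex $\p{p}$, so the hypercycle base curve of the doubly oriented conic $C$ is exactly $\lag \cap \mathcal{C}'$, which places $\mathcal{C}'$ into the pencil of quadrics through the base curve.

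The main obstacle will be the Euclidean case, in which $\p{p}$ is isotropic and the maps $\pi_{\p p}$ and $\pi_{\p p}^*$ must be interpreted via the stereographic projection and its dual from Appendix \ref{sec:euclidean}. The purely projective content of the argument -- namely that a quadric singular at $\p{p}$ is exactly the $\pi_{\p p}$-pullback of its planar section in any hyperplane avoiding $\p{p}$ -- is insensitive to the signature of $\p{p}$, so the same polarity-and-cone construction should carry over after passing to the cyclographic model.
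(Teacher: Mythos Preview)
Your proposal is correct and follows essentially the same route as the paper's proof: both hinge on the fact that, under polarity with respect to $\lag$, a cone with vertex $\p{p}$ corresponds to a conic in the base plane $\p{p}^\perp$, so that the tangent planes of the base curve are exactly the tangent planes of a conic. Your version is more explicit---you separate the two directions and spell out the $\sigma_{\p p}$-invariance behind the double cover---whereas the paper compresses this into a single polarity statement. One small correction: in the Euclidean case $\p{p}$ is \emph{not} isotropic (it still satisfies $\scalarprod{p}{p}<0$, so $\pi_{\p p}$ and $\sigma_{\p p}$ remain well-defined); the actual obstruction is that the Laguerre quadric $\lag$ itself is degenerate, so polarity in $\lag$ (and in $\widetilde{\lag}$) fails to be bijective, which is why both you and the paper pass to duality and the cyclographic model there.
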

\begin{proof}
  In hyperbolic and elliptic Laguerre geometry $\p{p}$ is the polar point of the base plane of the projective model of the corresponding space form.
  The polar of a cone with vertex $\p{p}$ is therefore a conic contained in this base plane.
  Thus, the tangent planes to the hypercycle base curve are the planes tangent to a conic,
  if and only if $\p{p}$ is the vertex of a cone intersecting the Laguerre quadric in the hypercycle base curve.
  In that case corresponding oriented lines envelop the conic (twice with opposite orientation).
  
  In Euclidean Laguerre geometry a similar argument holds upon dualization and considering the cyclographic model.
\end{proof}
\begin{remark}
  A generic hypercycle for which the corresponding pencil of quadrics is in diagonal form is a conic.
  Vice versa, in elliptic and Euclidean geometry a generic conic (excluding the non-generic case of parabolas)
  can be brought into diagonal form by an isometry (a Laguerre transformation fixing the point $\p{p}$).
  In hyperbolic geometry there also exist non-diagonalizable generic conics (semihyperbolas, cf.\ \cite{I}).
  Thus, by considering conics up to Laguerre transformations, we are restricting the class
  of hypercycles to (a subclass of) diagonalizable hypercycles.
  \label{rem:diagonalizable-hypercycles}
\end{remark}
%
%
%
We now give the definition for incircular nets \cite{B, AB}.
Examples of incircular nets in the elliptic and hyperbolic plane are shown in Figures~\ref{fig:ell_ic_nets}, \ref{fig:hyp_ic_nets_ellipse}, and \ref{fig:hyp_ic_nets_hyperbola} (see also \cite{gallery}).
\begin{definition}[Incircular nets]
  Two families $(\ell_k)_{k\in\Z}$, $(m_l)_{l\in\Z}$ of (non-oriented) lines in the hyperbolic/elliptic/Euclidean plane
  are called a \emph{(hyperbolic/elliptic/Euclidean) incircular net} (IC-net)
  if for every $k, l \in \Z$ the four lines $\ell_k, \ell_{k+1}, m_l, m_{l+1}$ touch
  a common circle (non-oriented Laguerre circle) $S_{kl}$.
\end{definition}
\begin{remark}
  While checkerboard incircular nets are instances of the corresponding (hyperbolic/elliptic/ Euclidean) Laguerre geometry,
  incircular nets are a notion of the corresponding metric geometry, i.e. only invariant under isometries (Laguerre transformations that fix $\p{p}$).
\end{remark}
In the limit of a checkerboard incircular net $(\ell_i)_{i\in\Z}$, $(m_j)_{j\in\Z}$
in which all incircles of the quadrilaterals $\ell_{2k}, \ell_{2k+1}, m_{2l}, m_{2l+1}$ collapse to a point,
the pairs of lines $\ell_{2k}, \ell_{2k+1}$ as well as the pairs of lines $m_{2l}, m_{2l+1}$ coincide respectively up to their orientation.
Such a pair of oriented lines may be regarded as a non-oriented line.
The points on the Laguerre quadric corresponding to two lines that agree up to their orientation are connected by a line
that goes through the point $\p{p}$.
Considering the associated hyperboloids of a checkerboard incircular net from Theorem~\ref{thm:touching-hypercycle}
we find that the generator lines $L_{2k}$, $M_{2l}$ all go through the point $\p{p}$ and the hyperboloid $\cbichyp$
becomes a cone with vertex at $\p{p}$.
In this limit a checkerboard incircular net becomes an ``ordinary'' incircular net.
\begin{remark}
  An incircular net obtained from a checkerboard incircular net as the special case described above possesses the following additional \emph{regularity property}:
  The line through the centers of $S_{kl}$, $S_{k+1,l+1}$ and the line through the centers of $S_{k+1,l}$, $S_{k,l+1}$ are the distinct angle bisectors of the lines $\ell_{k+1}$ and $m_{l+1}$.
  Thus, it is convenient to append this property to the definition of incircular nets.
\end{remark}
By Lemma~\ref{lem:hypercycle-conic} incircular nets are now characterized as special checkerboard incircular nets
in terms of its associated hyperboloids (see Theorem \ref{thm:touching-hypercycle}).
\begin{theorem}
  \label{thm:icnets}
  A checkerboard incircular net is an incircular net,
  if one of its two associated hyperboloids is a cone with vertex at $\p{p}$.
\end{theorem}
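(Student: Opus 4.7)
The plan is to exploit Theorem~\ref{thm:touching-hypercycle} to translate the cone-with-vertex-$\p{p}$ condition into a statement about orientations of the lines of the net, and then to read off the incircular net property from the checkerboard property with an appropriate shift of indices. The key idea is that passing through $\p{p}$ is exactly the projective signature of ``the two oriented lines of a pair coincide after orientation reversion''.

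First I would invoke Theorem~\ref{thm:touching-hypercycle} to recall that every line $L_{2k} = \ell_{2k} \wedge \ell_{2k+1}$ and every line $M_{2l} = m_{2l} \wedge m_{2l+1}$ lies on $\cbichyp$. Assuming $\cbichyp$ is a cone with vertex $\p{p}$, the standard fact that every line on a non-degenerate quadratic cone in $\RP^3$ passes through the vertex (otherwise the plane spanned by this line and $\p{p}$ would contain three distinct lines of the cone, contradicting the fact that this planar section is at most a pair of generators through $\p{p}$) yields $\p{p}\in L_{2k}$ and $\p{p}\in M_{2l}$ for all $k,l\in\Z$. Next I would translate this incidence on $\lag$ into orientation reversion: the involution $\sigma_{\p{p}}$ preserves $\lag$ and realizes orientation reversion on the oriented lines (cf.\ Section~\ref{sec:laguerre}); a line through $\p{p}$ is $\sigma_{\p{p}}$-invariant, and since $\sigma_{\p{p}}$ fixes only $\p{p}$ on this line, it must swap its two intersection points with $\lag$ (which exist generically by Lemma~\ref{lem:quadric-line-intersection}). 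Hence $\ell_{2k+1}=\sigma_{\p{p}}(\ell_{2k})$ and $m_{2l+1}=\sigma_{\p{p}}(m_{2l})$, so that $\ell_{2k}$ and $\ell_{2k+1}$ represent the same underlying non-oriented line $\bar{\ell}_k$, and likewise $m_{2l}$ and $m_{2l+1}$ define a single non-oriented line $\bar{m}_l$.

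Finally I would verify the incircular net property directly from Definition~\ref{def:cbic-net} with a shift of indices. Picking $i=2k+1$ and $j=2l+1$ (so that $i+j$ is even), the circumscribability condition furnishes an oriented circle tangent to the four oriented lines $\ell_{2k+1},\ell_{2k+2},m_{2l+1},m_{2l+2}$, whose underlying non-oriented lines are precisely $\bar{\ell}_k,\bar{\ell}_{k+1},\bar{m}_l,\bar{m}_{l+1}$. This provides the incircle required by the ICN property; running $k,l$ over $\Z$ covers every cell of the net. The main obstacle I anticipate is the rigorous justification of the first step, namely that every line on $\cbichyp$ passes through $\p{p}$: this is classical in the real quadratic cone case but has to be checked in the signatures of $\cbichyp$ that actually occur in each of the hyperbolic, elliptic and Euclidean settings, and also requires a genericity hypothesis on the net to avoid the situation where $\cbichyp$ degenerates further. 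Once this step is secured, the identification $\ell_{2k+1}=\sigma_{\p{p}}(\ell_{2k})$ and the subsequent reading off of the ICN incircle are essentially bookkeeping.
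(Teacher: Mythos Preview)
Your proposal is correct and follows essentially the same approach as the paper: the paper's justification is the discussion immediately preceding the theorem, which argues exactly as you do that lines $L_{2k},M_{2l}$ on the cone $\cbichyp$ pass through the vertex $\p{p}$, hence $\ell_{2k}$ and $\ell_{2k+1}$ (resp.\ $m_{2l}$ and $m_{2l+1}$) are related by $\sigma_{\p{p}}$ and represent a single non-oriented line, after which the checkerboard incircles of the odd-indexed quadrilaterals furnish the incircular net. Your write-up is in fact more explicit than the paper's, particularly in justifying why every line on the cone meets the vertex and in pinpointing the index shift $(i,j)=(2k+1,2l+1)$.
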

Together with Theorem \ref{thm:touching-hypercycle} and Lemma \ref{lem:hypercycle-conic} we obtain that for incircular nets the tangent hypercycle is a conic \cite{B}, \cite{AB}.
\begin{corollary}
  All lines of a (hyperbolic/elliptic/Euclidean) incircular net touch a common conic.
\end{corollary}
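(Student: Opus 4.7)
The plan is to deduce this immediately by chaining the three preceding results: Theorem~\ref{thm:touching-hypercycle}, Theorem~\ref{thm:icnets}, and Lemma~\ref{lem:hypercycle-conic}. First I would observe that by Theorem~\ref{thm:icnets} an incircular net may be regarded as a checkerboard incircular net with the special feature that one of the two associated hyperboloids, say $\cbichyp$, is a cone with vertex at $\p{p}$. Therefore Theorem~\ref{thm:touching-hypercycle} applies: the lines of the net are in oriented contact with a common hypercycle, whose base curve on the Laguerre quadric $\lag$ equals $\lag\cap\cbichyp$ and hence lies in the pencil spanned by $\lag$ and $\cbichyp$.

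Next I would invoke Lemma~\ref{lem:hypercycle-conic}: the pencil defining the hypercycle contains a cone with vertex $\p{p}$ (namely $\cbichyp$ itself), so the hypercycle base curve is of the form ``Laguerre quadric intersected with a cone of vertex $\p{p}$,'' which the lemma identifies exactly as the condition for the hypercycle to be a (doubly covered) conic in the underlying hyperbolic, elliptic, or Euclidean plane. Consequently the common hypercycle touched by all lines of the incircular net is in fact a conic.

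Finally I would note that no further work is required to distinguish the three space forms, since Theorem~\ref{thm:touching-hypercycle}, Theorem~\ref{thm:icnets}, and Lemma~\ref{lem:hypercycle-conic} are each stated uniformly for the hyperbolic, elliptic, and Euclidean settings (with the Euclidean case handled via the cyclographic model/dualization in the proof of the lemma). There is essentially no obstacle: the only subtle point is to make sure that ``one of the two associated hyperboloids is a cone with vertex $\p{p}$'' does translate, via the pencil-of-quadrics description, into the hypothesis of Lemma~\ref{lem:hypercycle-conic}, rather than producing some other degenerate member of the pencil; but this is exactly the content of the lemma and of the definition of the pencil spanned by $\lag$, $\cbichyp$, $\widetilde{\cbichyp}$ in Theorem~\ref{thm:touching-hypercycle}.
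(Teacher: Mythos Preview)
Your proposal is correct and matches the paper's own argument essentially verbatim: the paper also derives the corollary directly by combining Theorem~\ref{thm:icnets} (incircular nets are checkerboard incircular nets with one associated hyperboloid a cone at $\p{p}$), Theorem~\ref{thm:touching-hypercycle} (all lines touch a common hypercycle), and Lemma~\ref{lem:hypercycle-conic} (such a hypercycle is a conic).
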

\begin{remark}
  By the classical Graves-Chasles theorem incircular nets are closely related to configurations of confocal conics (see \cite{B} for the Euclidean case, \cite{AB} for the Euclidean and hyperbolic case, and \cite{I} for a treatment in all space forms).
  A relation to discrete confocal conics is given in \cite{BSST16, BSST18}.
\end{remark}

%

\begin{figure}[h]
  \centering
  \includegraphics[width=0.62\textwidth]{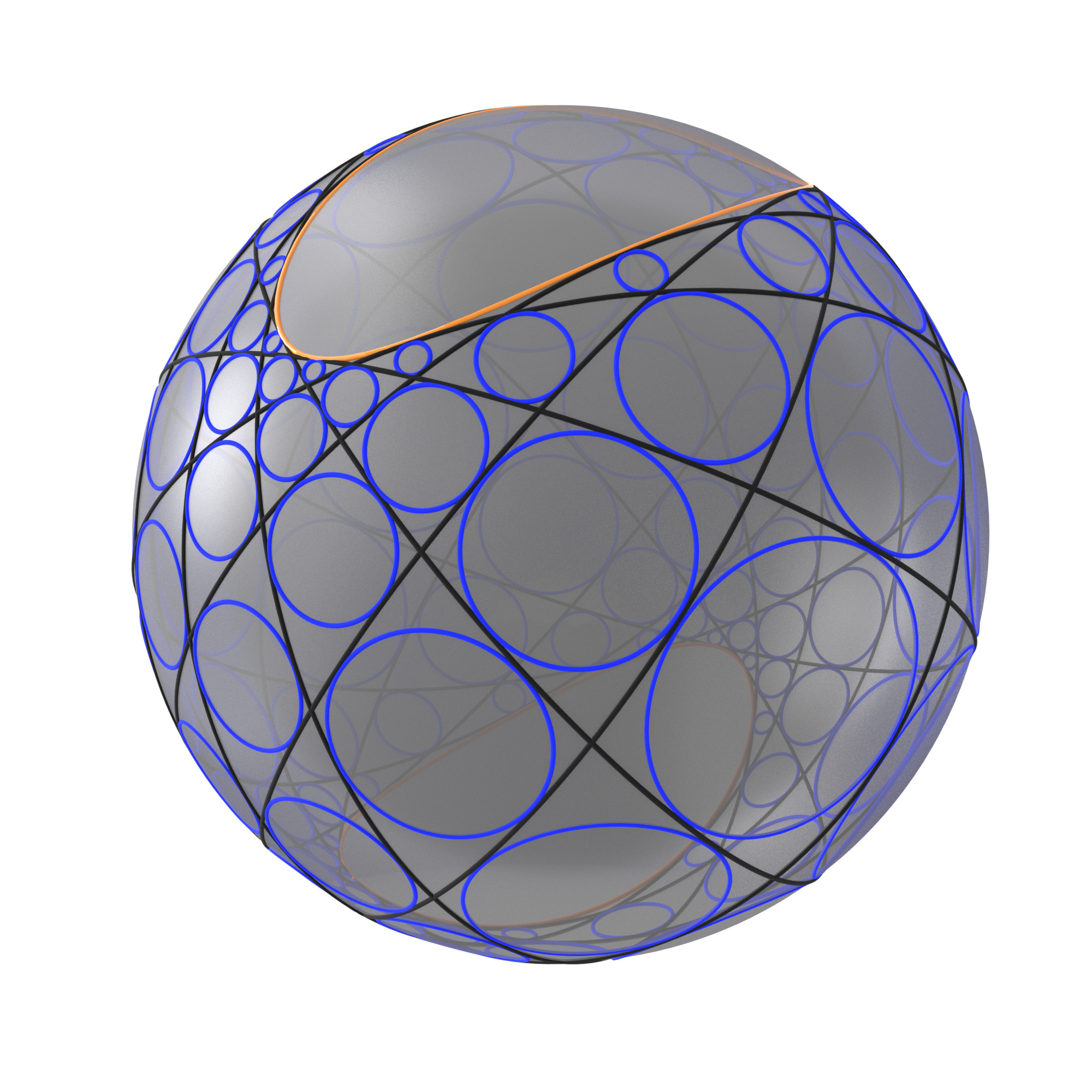}\\
  \hspace{\fill}
  \includegraphics[width=0.47\textwidth]{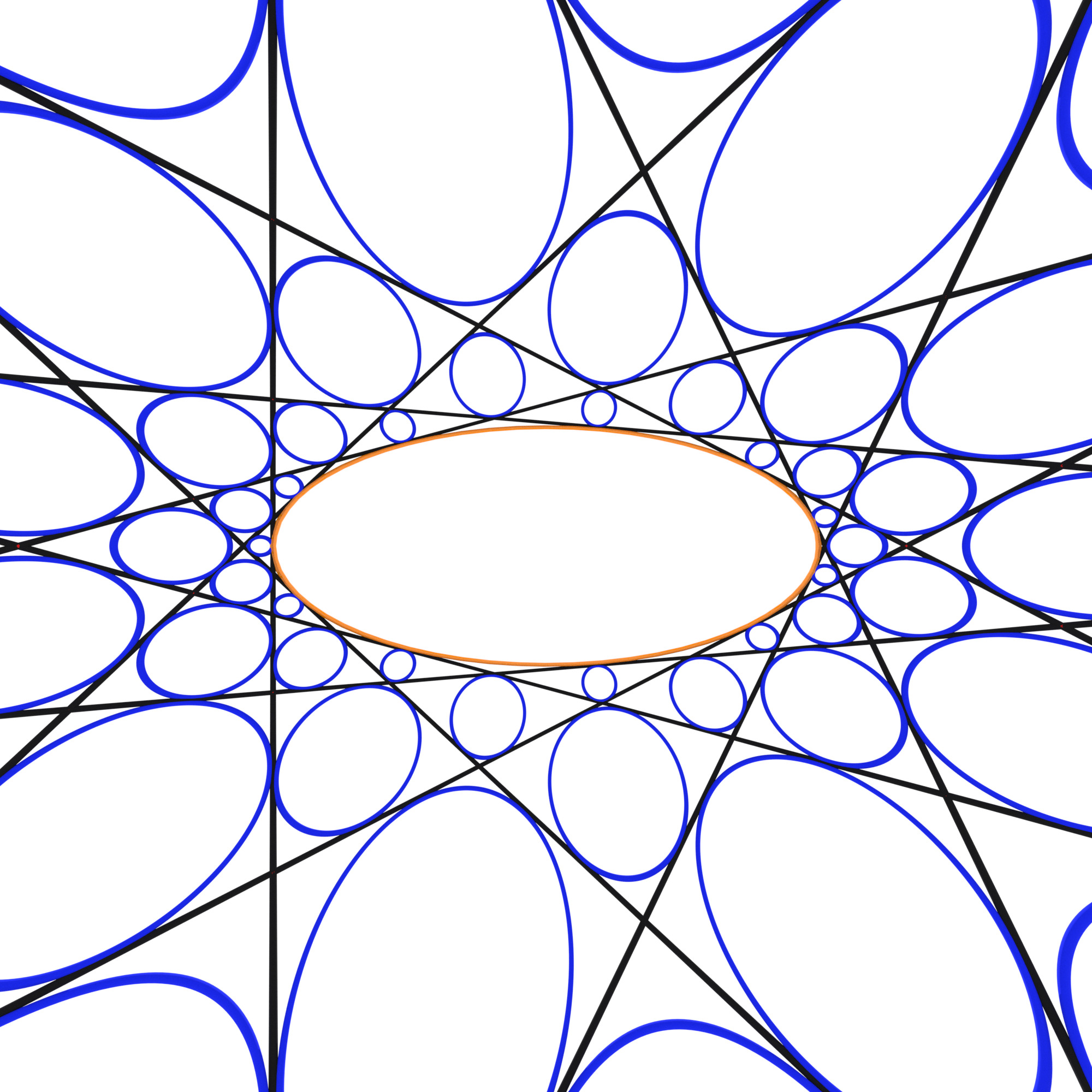}
  \hspace{\fill}
  \includegraphics[width=0.47\textwidth]{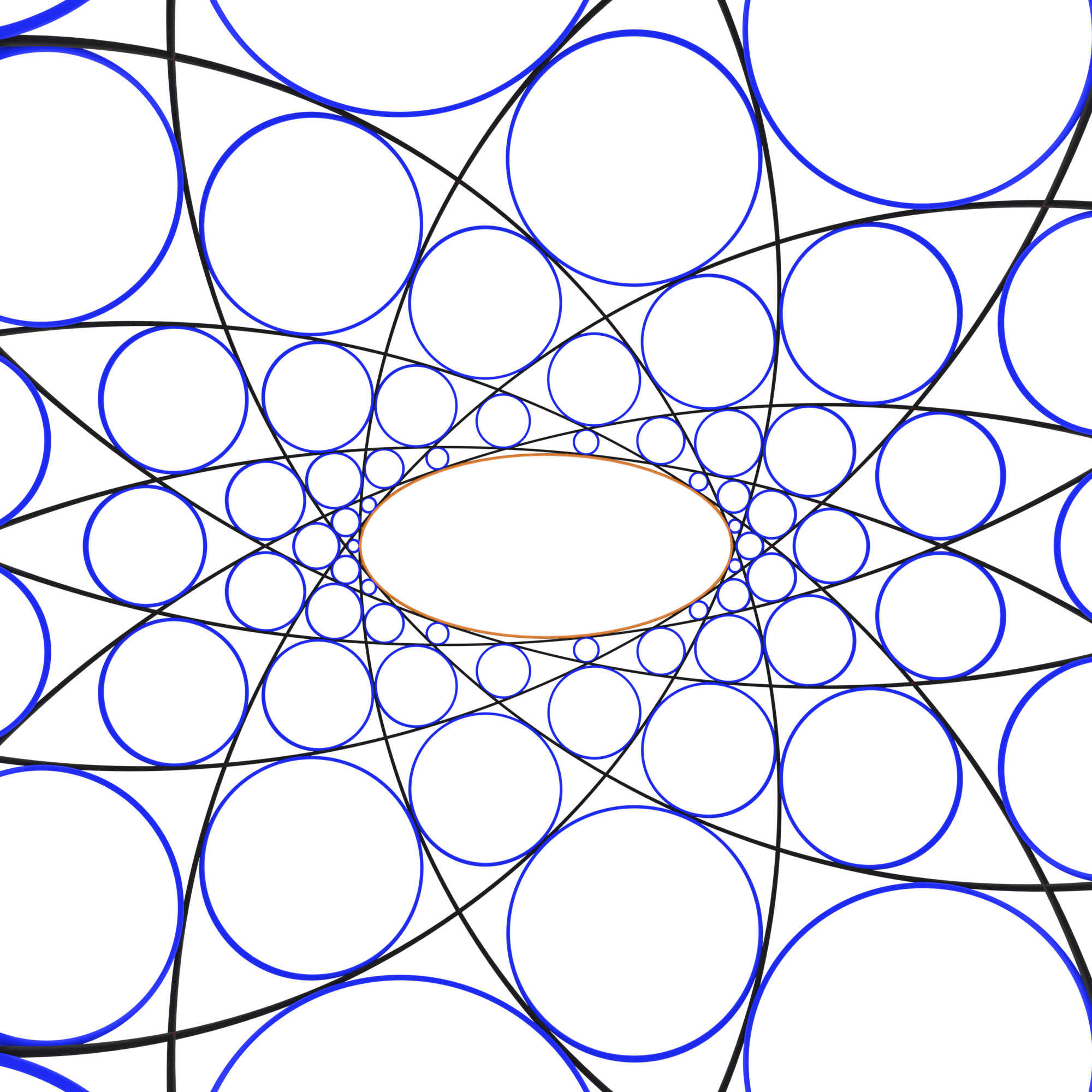}
  \hspace{\fill}
  \caption{
    \emph{Top:} Incircular net tangent to an ellipse in the sphere model of the elliptic plane.
    \emph{Bottom-left:} Central projection to the projective model of the elliptic plane.
    \emph{Bottom-right:} Stereographic projection to a conformal model of the elliptic plane.
  }
\label{fig:ell_ic_nets}
\end{figure}
\begin{figure}[h]
  \centering
  \includegraphics[width=0.62\textwidth]{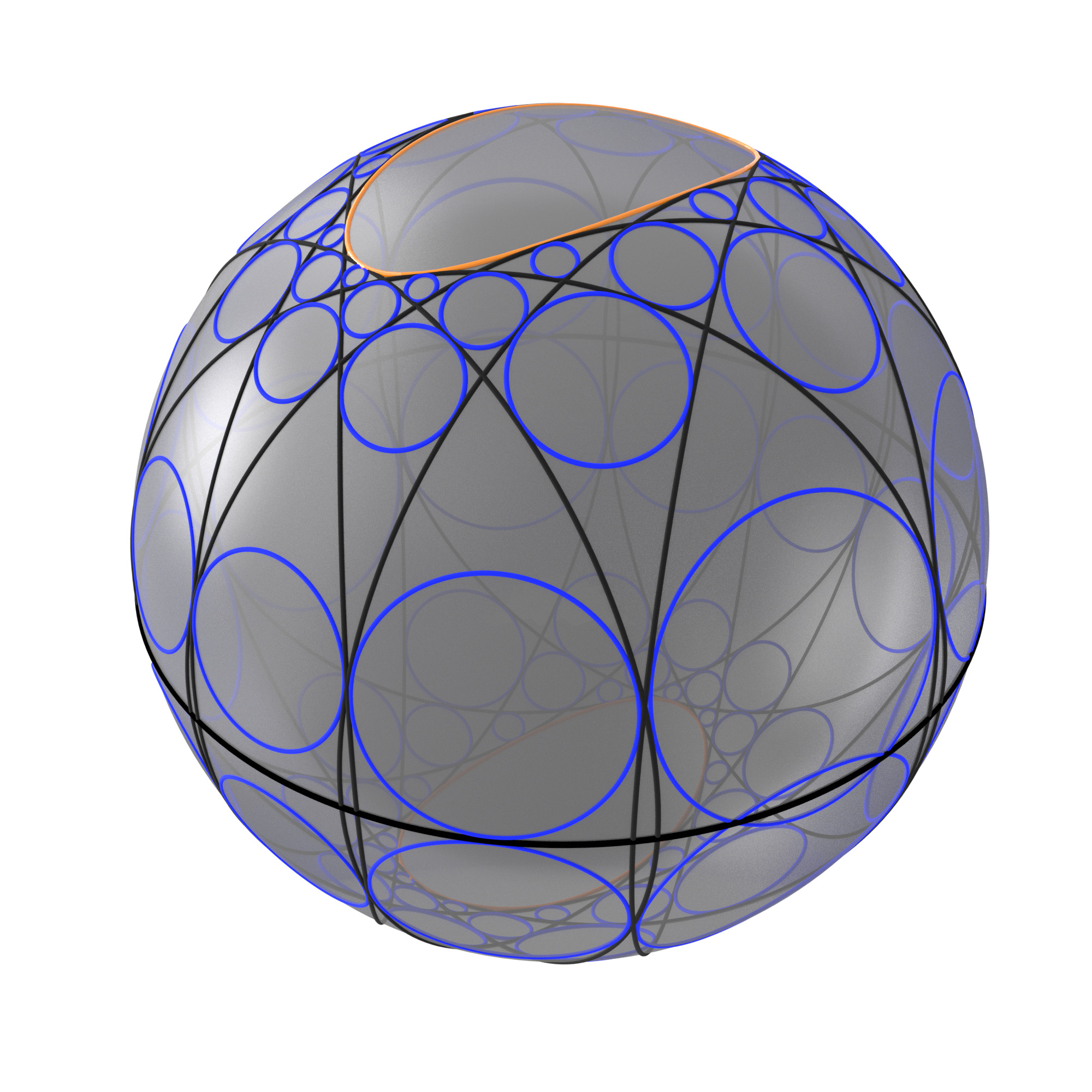}\\
  \hspace{\fill}
  \includegraphics[width=0.47\textwidth]{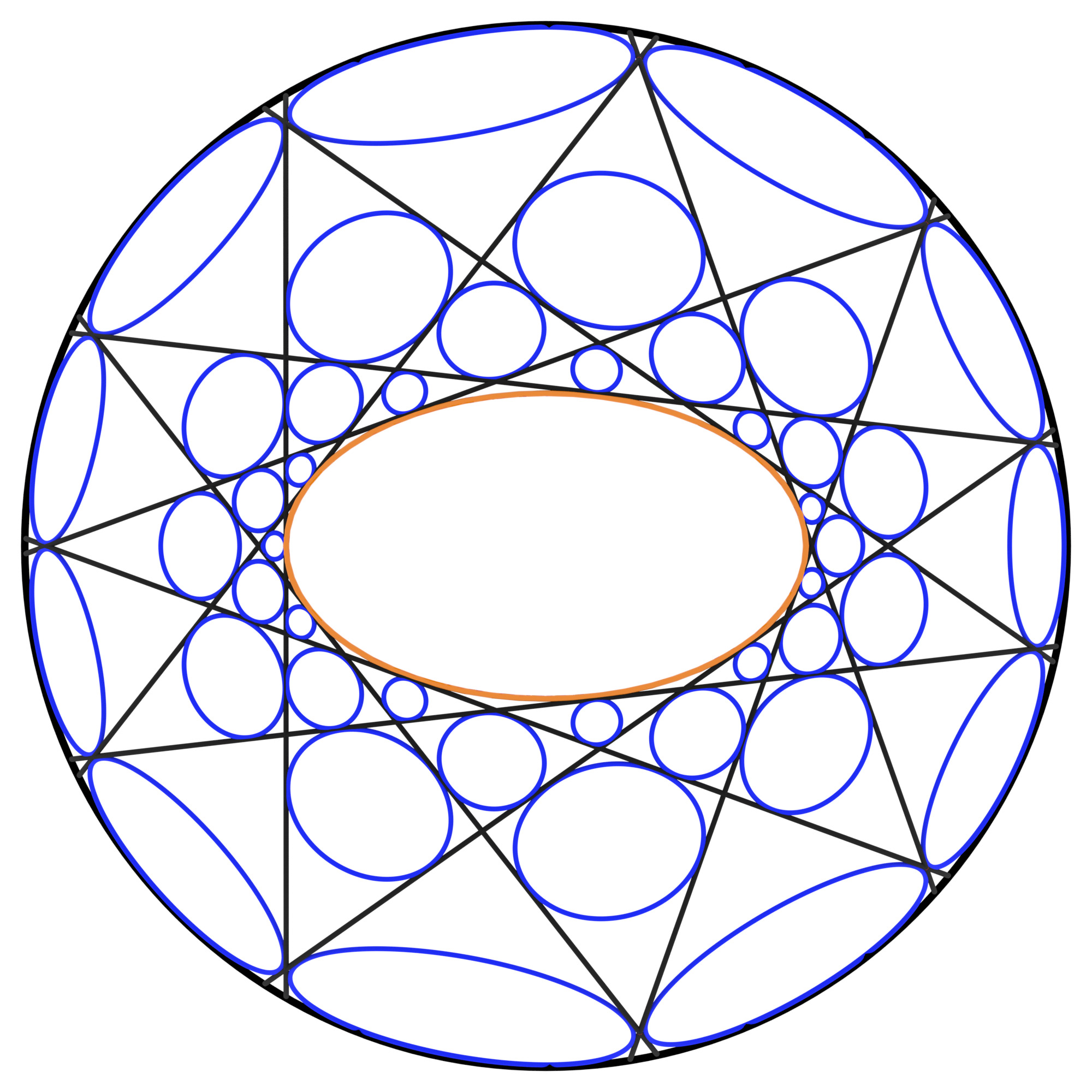}
  \hspace{\fill}
  \includegraphics[width=0.47\textwidth]{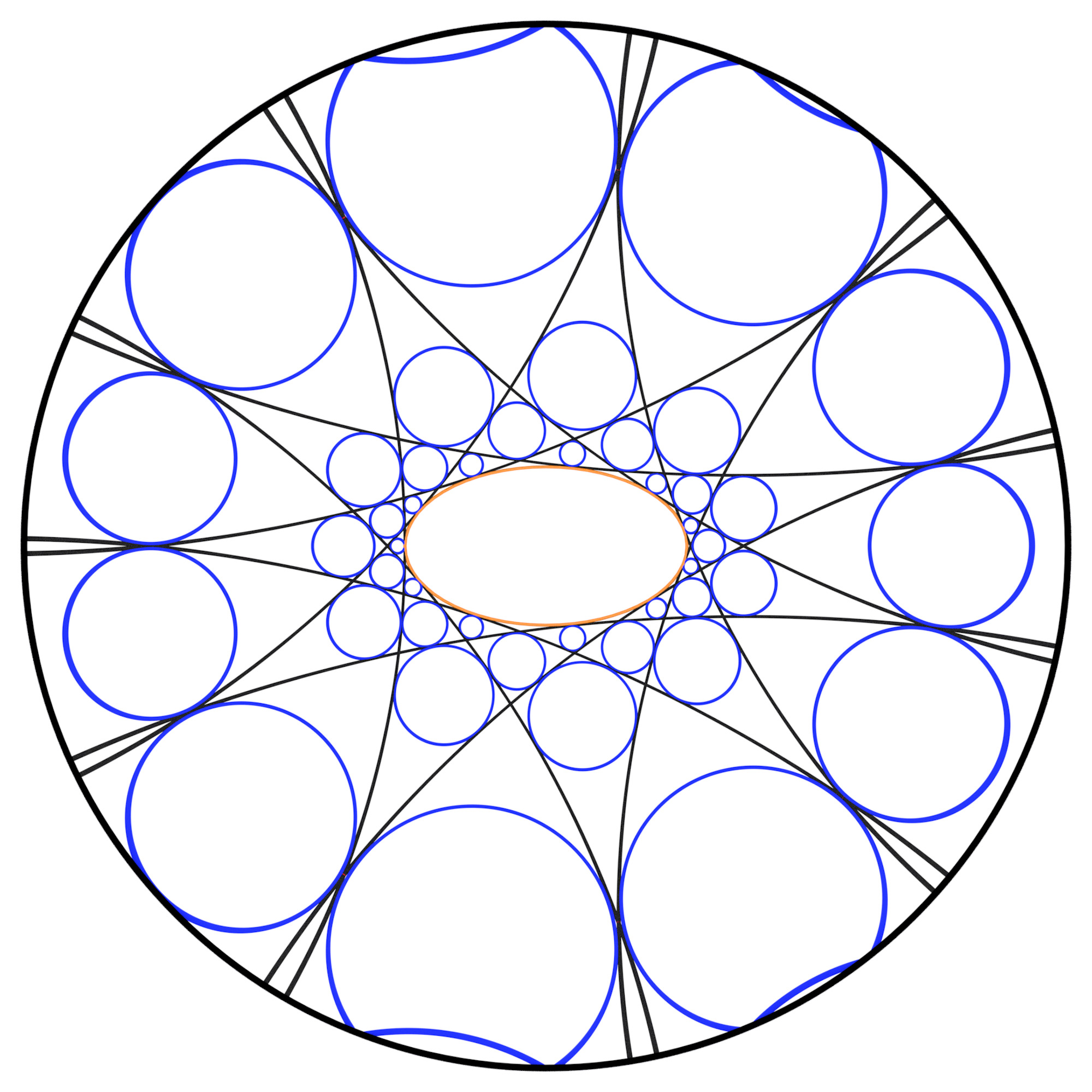}
  \hspace{\fill}\\
  \vspace{1cm}
  \includegraphics[width=0.65\textwidth]{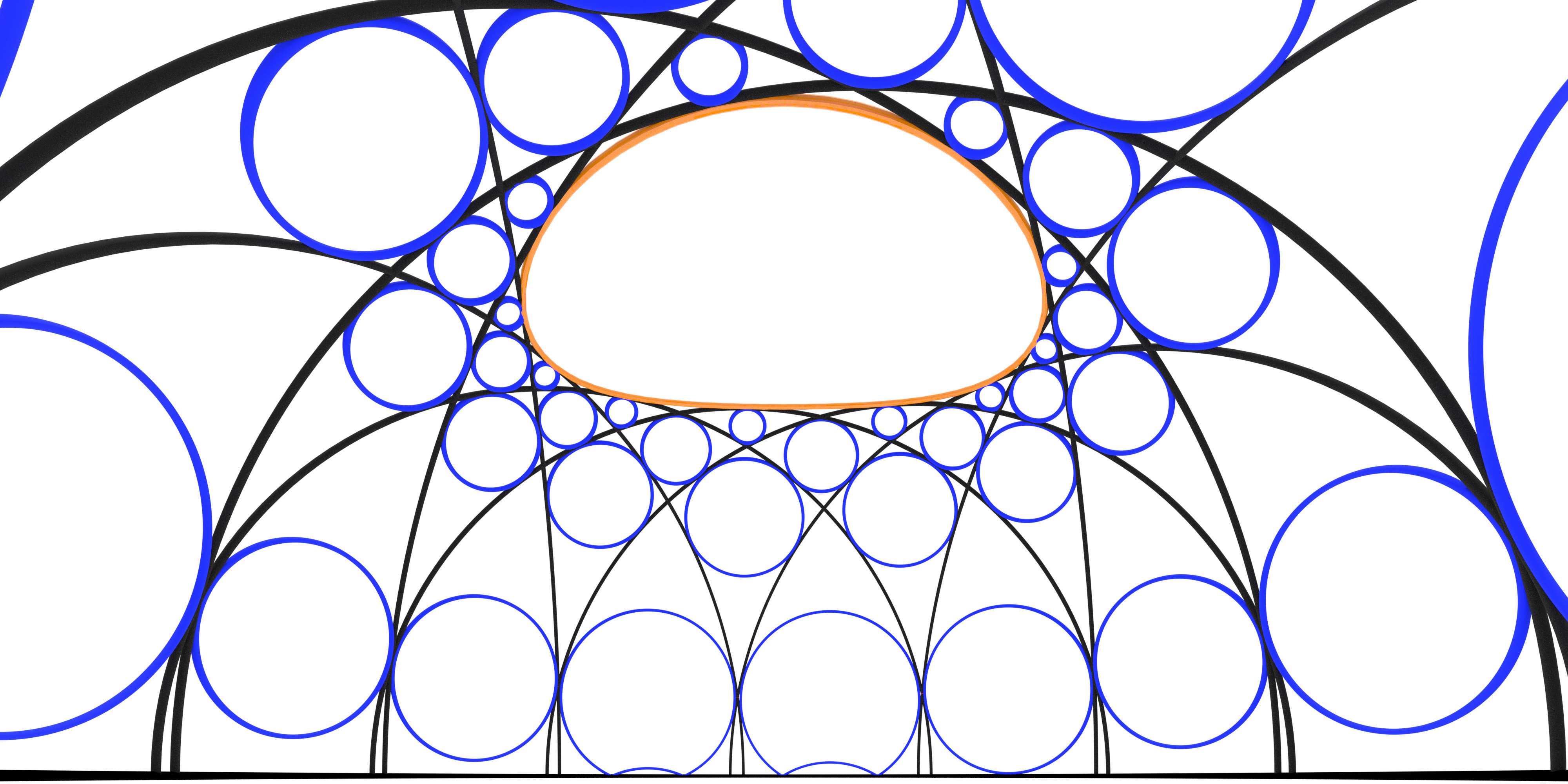}
  \caption{
    \emph{Top:} Incircular net tangent to an ellipse in the sphere model of the hyperbolic plane.
    Two copies of the hyperbolic plane are realized as half-spheres.
    \emph{Middle-left:} Orthogonal projection to the Klein-Beltrami disk model.
    \emph{Middle-right:} Stereographic projection to the Poincaré disk model.
    \emph{Bottom:} Stereographic projection to the Poincaré half-plane model.
  }
\label{fig:hyp_ic_nets_ellipse}
\end{figure}
\begin{figure}[h]
  \centering
  \includegraphics[width=0.62\textwidth]{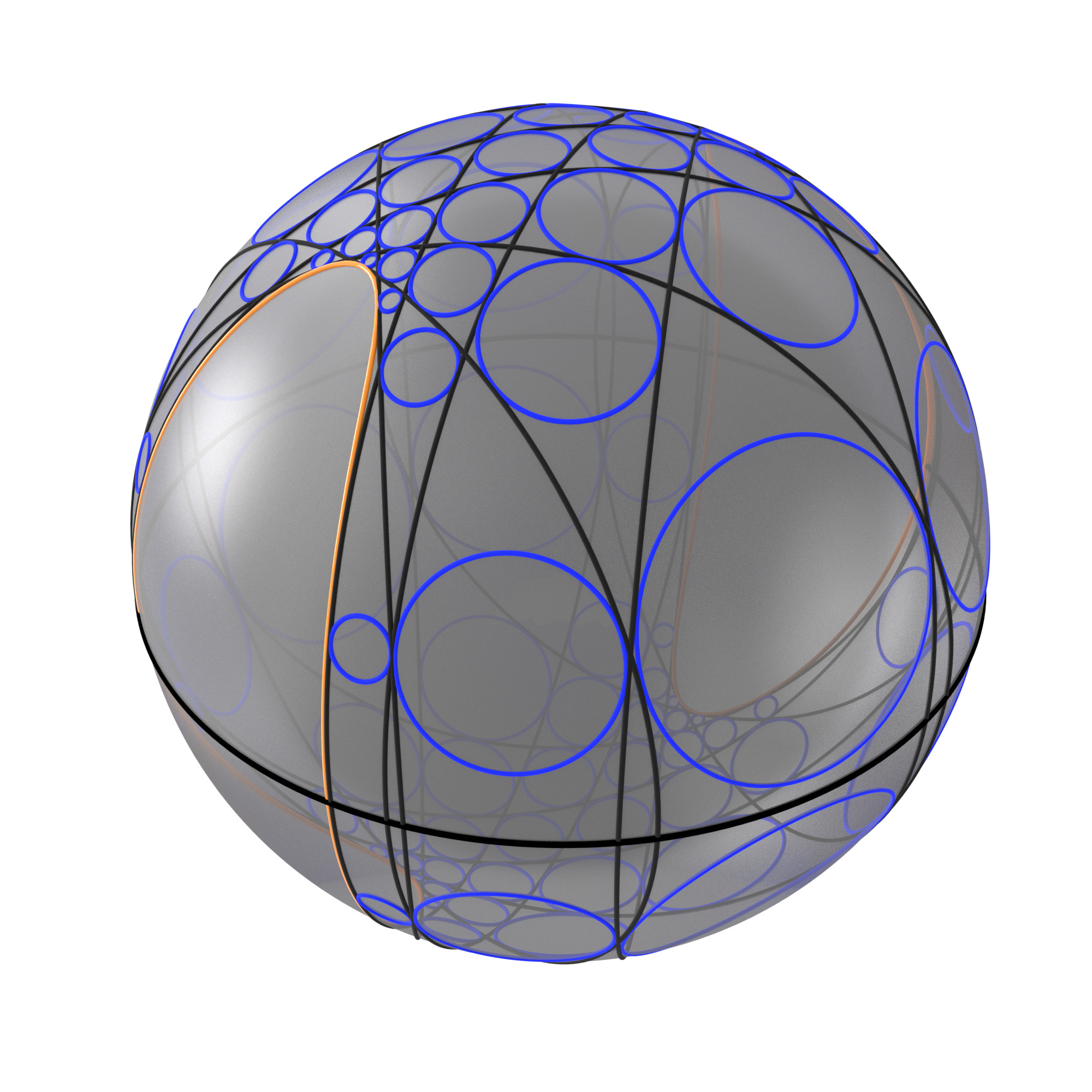}\\
  \hspace{\fill}
  \includegraphics[width=0.47\textwidth]{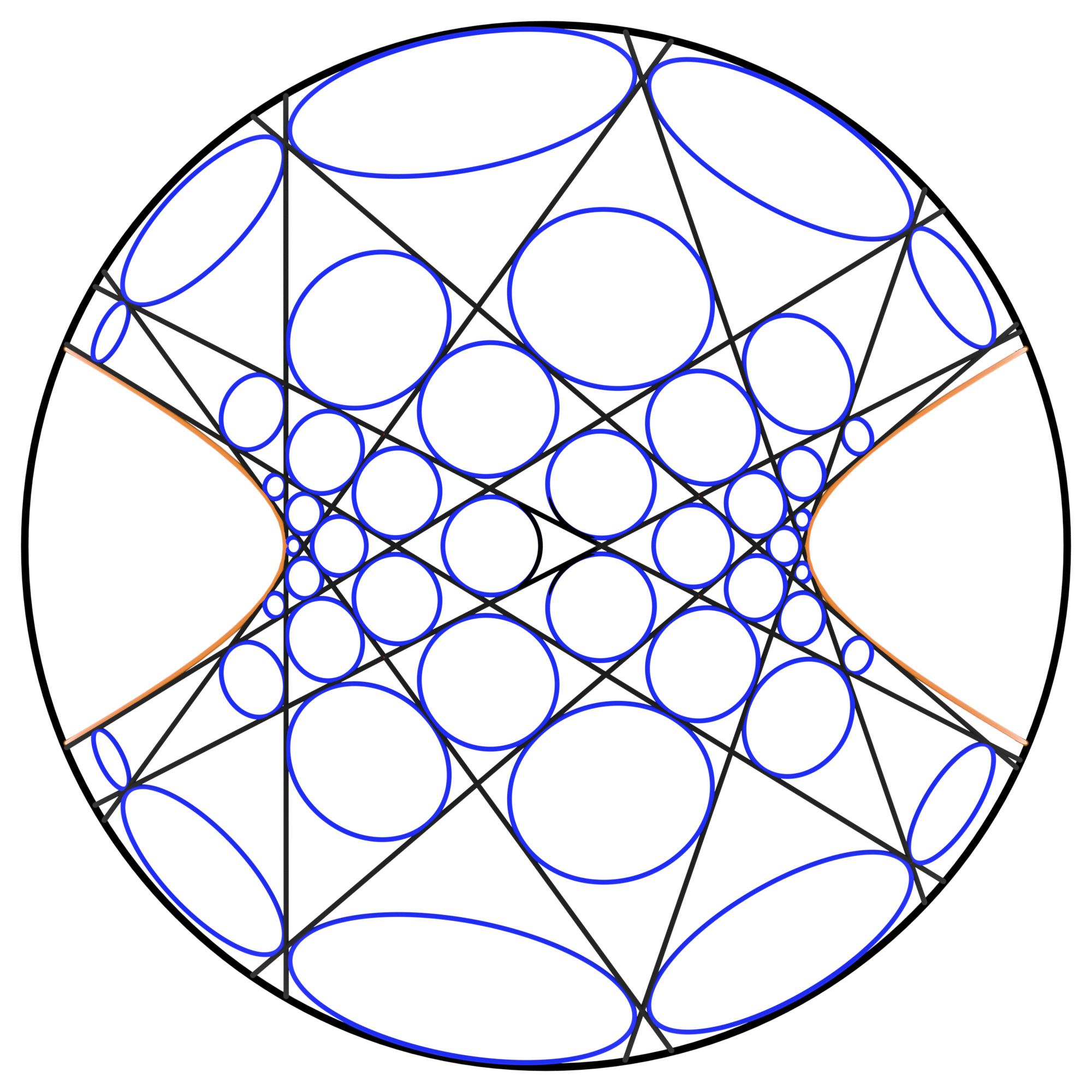}
  \hspace{\fill}
  \includegraphics[width=0.47\textwidth]{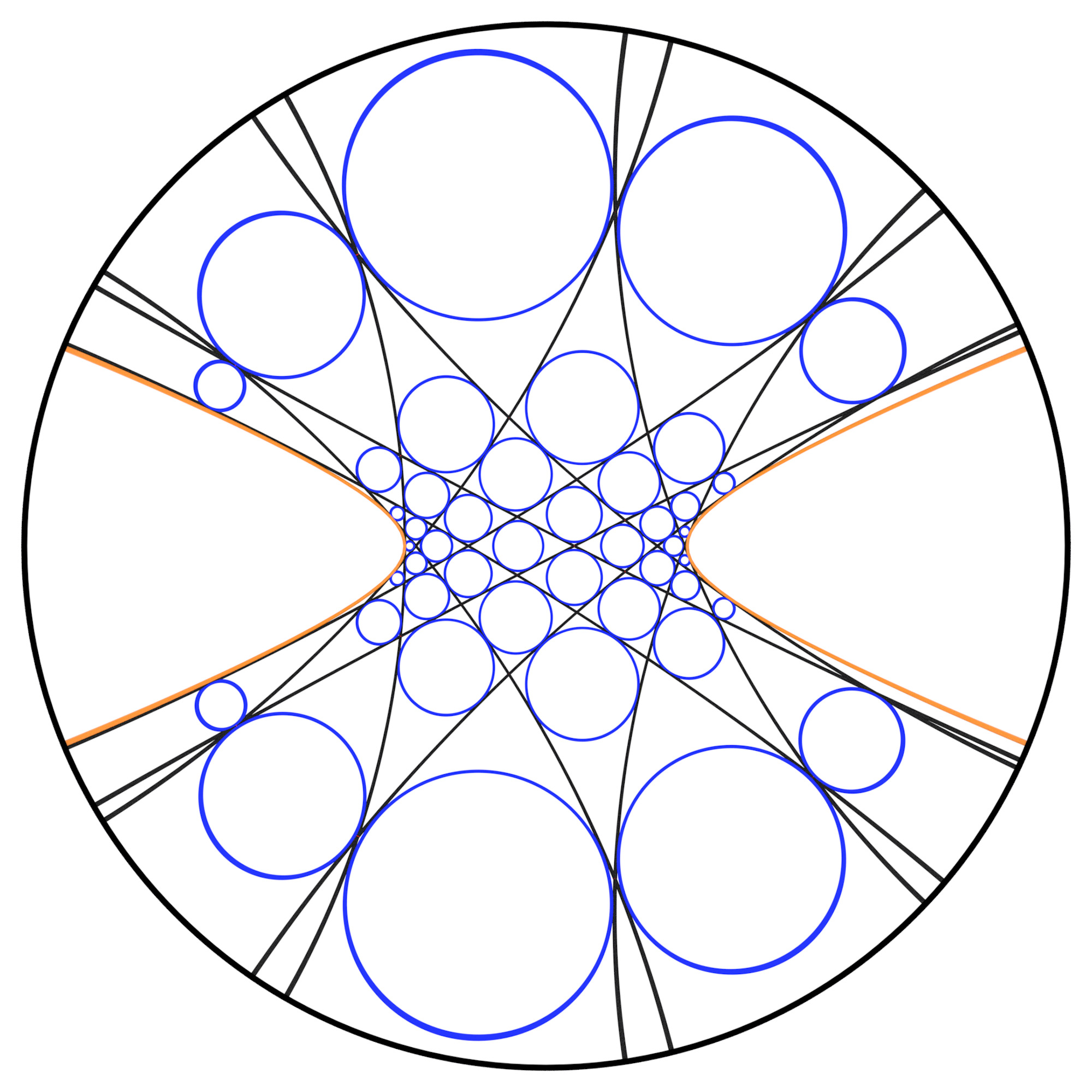}
  \hspace{\fill}\\
  \vspace{1cm}
  \includegraphics[width=0.65\textwidth]{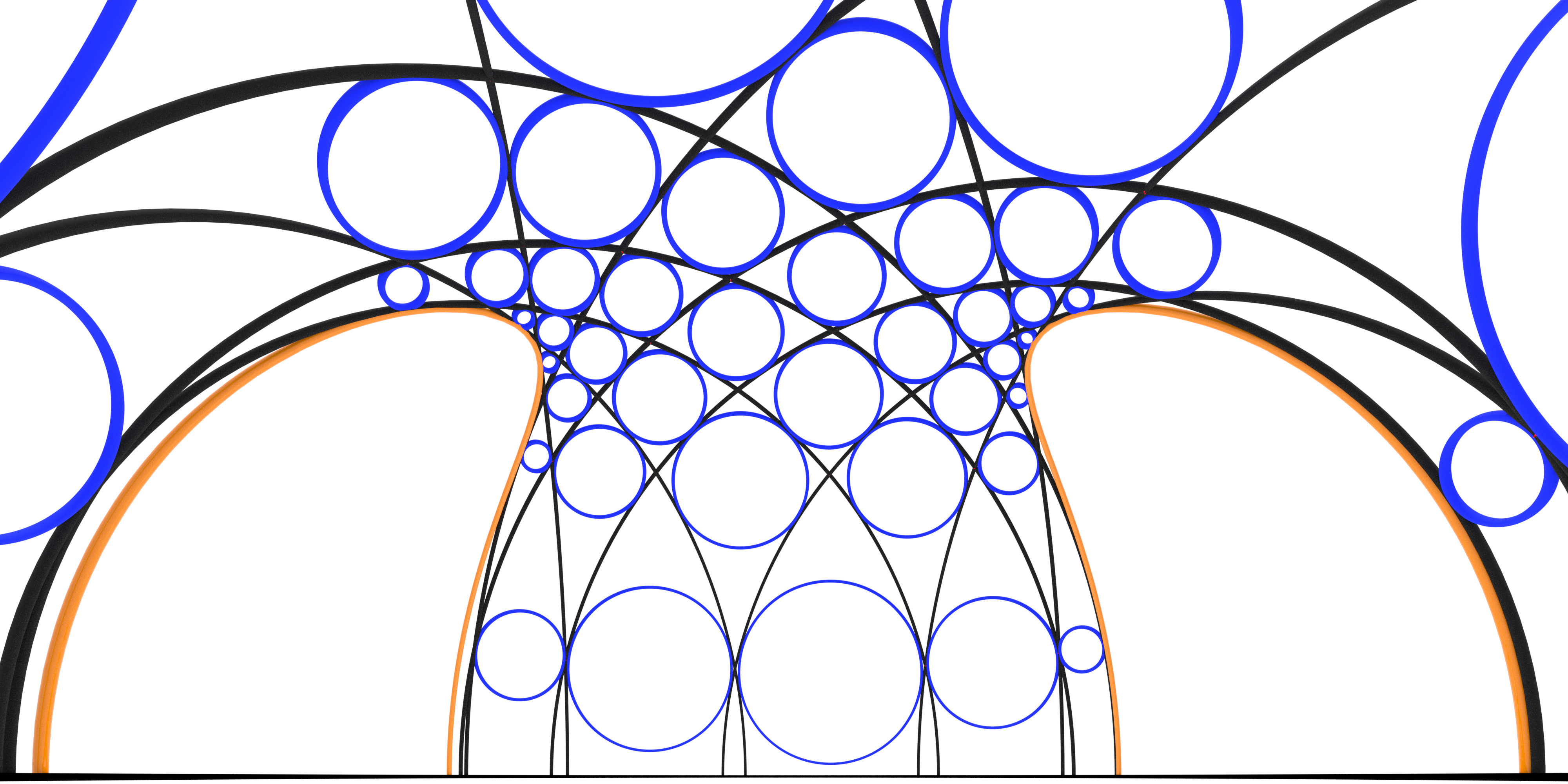}
  \caption{
    \emph{Top:} Incircular net tangent to a hyperbola in the sphere model of the hyperbolic plane.
    Two copies of the hyperbolic plane are realized as half-spheres.
    \emph{Middle-left:} Orthogonal projection to the Klein-Beltrami disk model.
    \emph{Middle-right:} Stereographic projection to the Poincaré disk model.
    \emph{Bottom:} Stereographic projection to the Poincaré half-plane model.
  }
\label{fig:hyp_ic_nets_hyperbola}
\end{figure}

\clearpage
\subsection{Construction and parametrization of checkerboard incircular nets}
The elementary construction of a checkerboard incircular net from a small patch
(line by line, while ensuring the incircle constraint)
is guaranteed to work due to the following incidence theorem (see Figure \ref{fig:associated-hyperboloids}, left) \cite{AB, BST}.
This construction has 12 real degrees of freedom.
\begin{theorem}
  \label{thm:5x5-incidence}
  Let $\ell_1, \ldots, \ell_6$, $m_1,\ldots,m_6$ be 12 oriented lines in the hyperbolic/elliptic/Euclidean plane
  which are in oriented contact with 12 oriented circles $S_1, \ldots, S_{12}$, in a checkerboard manner,
  as shown in Figure \ref{fig:associated-hyperboloids}, left.
  In particular, the lines $\ell_1, \ell_2, m_1, m_2$ are in oriented contact with the circle $S_1$,
  the lines $\ell_3, \ell_4, m_1, m_2$ are in oriented contact with the circle $S_2$ etc.
  Then, the 13th checkerboard quadrilateral also has an inscribed circle,
  i.e., the lines $\ell_5, \ell_6, m_5, m_6$ have a common circle $S_{13}$ in oriented contact.
\end{theorem}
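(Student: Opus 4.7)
The plan is to lift the configuration to the Laguerre quadric $\lag \subset \RP^3$, where each oriented line is a point on $\lag$ and four oriented lines touch a common circle precisely when the four points are coplanar. Introducing the auxiliary lines in $\RP^3$
\[
  L_k \coloneqq \ell_{2k-1} \wedge \ell_{2k},\quad M_l \coloneqq m_{2l-1} \wedge m_{2l}\quad (k,l \in \{1,2,3\}),
\]
\[
  L'_k \coloneqq \ell_{2k} \wedge \ell_{2k+1},\quad M'_l \coloneqq m_{2l} \wedge m_{2l+1}\quad (k,l \in \{1,2\}),
\]
the given incidences become intersection conditions: $S_1, S_2, S_3$ say that $M_1$ meets each of $L_1, L_2, L_3$; $S_6, S_7, S_8$ that $M_2$ meets each $L_i$; $S_{11}, S_{12}$ that $M_3$ meets $L_1, L_2$; and $S_4, S_5, S_9, S_{10}$ that all four pairs $(L'_k, M'_l)$ intersect. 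The conclusion $S_{13}$ is equivalent to $L_3 \cap M_3 \neq \varnothing$. The strategy is to reconstruct, from these twelve conditions alone, the two ``associated hyperboloids'' that appear in Theorem \ref{thm:touching-hypercycle} for the full net, and then use them to force the remaining intersection.

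Under the natural genericity assumption that $L_1, L_2, L_3$ are pairwise skew, they determine a unique ruled quadric $\cbichyp \subset \RP^3$ carrying them as one ruling; being transversals to all three $L_i$, the lines $M_1$ and $M_2$ then lie on the opposite ruling of $\cbichyp$. In particular, the ten points $\ell_1, \ldots, \ell_6, m_1, m_2, m_3, m_4$ all lie in the base locus $\lag \cap \cbichyp$ of the pencil of quadrics spanned by $\lag$ and $\cbichyp$. Applying Lemma \ref{lem:pencils-and-lines} to the base-point pairs $(\ell_2, \ell_3)$, $(\ell_4, \ell_5)$, and $(m_2, m_3)$, each of $L'_1, L'_2, M'_1$ is contained in a uniquely determined quadric of this pencil; the intersection conditions $S_4$ and $S_5$ force these three quadrics to coincide with a single quadric $\widetilde{\cbichyp}$ in the pencil, so that $L'_1, L'_2, M'_1 \subset \widetilde{\cbichyp}$.

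The next step is to extend this to $M'_2$, which is the first place where Lemma \ref{lem:pencils-and-lines} does not apply directly, because $m_5$ is not yet known to be a base point. However, the line $M'_2 = m_4 \wedge m_5$ passes through the base point $m_4 \in \widetilde{\cbichyp}$, and the conditions $S_9$ and $S_{10}$ provide two further points $M'_2 \cap L'_1$ and $M'_2 \cap L'_2$ of $\widetilde{\cbichyp}$; these three points on $M'_2$ are generically distinct, so $M'_2 \subset \widetilde{\cbichyp}$ and in particular $m_5 \in \widetilde{\cbichyp}$. Since $\widetilde{\cbichyp}$ is a non-$\lag$ member of the pencil, a direct algebraic check gives the identity $\lag \cap \widetilde{\cbichyp} = \lag \cap \cbichyp$ of base loci, so $m_5 \in \cbichyp$.

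The same device now applies one more time to $M_3$: the conditions $S_{11}$ and $S_{12}$ place the intersection points $M_3 \cap L_1$ and $M_3 \cap L_2$ on $\cbichyp$, and $m_5 \in M_3$ is by the previous step also on $\cbichyp$, giving three generically distinct points of $M_3 \cap \cbichyp$. Hence $M_3 \subset \cbichyp$; since $M_3$ meets $L_1$ it cannot share the ruling of $L_1, L_2, L_3$, so it lies on the opposite ruling of $\cbichyp$ and therefore meets $L_3$ as well. The lines $L_3$ and $M_3$ being coplanar in $\RP^3$ is precisely the statement that $\ell_5, \ell_6, m_5, m_6$ are coplanar on $\lag$, i.e.\ that they are in oriented contact with a common circle $S_{13}$. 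The part I expect to require the most care is the double use of Lemma \ref{lem:pencils-and-lines} combined with the ``three points on a line lying in a quadric'' principle, and verifying under explicit genericity hypotheses that the three points forcing $M'_2 \subset \widetilde{\cbichyp}$ and the three points forcing $M_3 \subset \cbichyp$ are indeed pairwise distinct.
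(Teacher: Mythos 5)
Your argument is correct, and it is worth noting at the outset that the paper itself does not prove Theorem~\ref{thm:5x5-incidence}: it only remarks that the statement holds in all three Laguerre geometries ``with literally the same proof as given in \cite{BST} for the Euclidean case.'' Your proof is essentially a self-contained reconstruction of that argument, built from ingredients already present in this paper: the translation of oriented contact into coplanarity on $\lag$, the two associated ruled quadrics of Theorem~\ref{thm:touching-hypercycle}, and Lemma~\ref{lem:pencils-and-lines}. The dictionary between the twelve incircles and the intersection conditions on the lines $L_k, M_l, L'_k, M'_l$ matches Figure~\ref{fig:associated-hyperboloids} exactly, and the four-stage escalation (build $\cbichyp$ from $L_1,L_2,L_3$ and their transversals $M_1,M_2$; build $\widetilde{\cbichyp}$ in the pencil via Lemma~\ref{lem:pencils-and-lines} and $S_4,S_5$; bootstrap $m_5$ into the base locus via $S_9,S_{10}$; bootstrap $M_3$ into $\cbichyp$ via $S_{11},S_{12}$ and conclude by the ruling argument) is sound. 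Two small points you could tighten. First, several of the distinctness conditions you defer to genericity are in fact automatic: the three points $M_1\cap L_1$, $M_1\cap L_2$, $M_1\cap L_3$ are distinct because $L_1,L_2,L_3$ are pairwise skew, and $M'_2\cap L'_1\neq M'_2\cap L'_2$ whenever $L'_1,L'_2$ are skew, which holds once $\widetilde{\cbichyp}$ is smooth and they lie in a common ruling; only conditions of the type $m_4\notin\ell_2\wedge\ell_3$ genuinely require a genericity hypothesis, and these are subsumed by the ``no five points coplanar / no three collinear'' convention of Remark~\ref{rem:miquel-genericity}. Second, the step $\lag\cap\widetilde{\cbichyp}=\lag\cap\cbichyp$ needs $\widetilde{\cbichyp}\neq\lag$, which amounts to requiring that $\ell_2\wedge\ell_3$ is not an isotropic line of $\lag$, i.e.\ that $\ell_2$ and $\ell_3$ are not themselves in oriented contact; this is worth stating explicitly, since in the hyperbolic Laguerre quadric (signature $(++--)$) such lines do exist. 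An alternative, which the paper's own machinery suggests but which you did not take, would be to iterate the Miquel-type Theorem~\ref{thm:miquel-laguerre} (i.e.\ Lemma~\ref{lem:associated-points}) to propagate circumscribed quadrilaterals across the grid; your quadric-theoretic route is more structural and has the advantage of exhibiting the two hyperboloids of Theorem~\ref{thm:touching-hypercycle} along the way.
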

\begin{remark}
  This incidence theorem holds in all three Laguerre geometries with literally the same proof as given in \cite{BST} for the Euclidean case.
\end{remark}
Though possible in principle, the elementary construction from, e.g., 6 lines as initial data,
which only describes the local behavior,
is not stable, and thus impractical for the construction of large checkerboard incircular nets.
Yet, by Theorem \ref{thm:touching-hypercycle}, we find that a checkerboard incircular net can equivalently be prescribed by
\begin{itemize}
\item choosing a hypercycle (8 degrees of freedom),
\item choosing two hyperboloids $\mathcal{Q}, \tilde{\mathcal{Q}}$ in the pencil of quadrics
  corresponding to the hypercycle base curve (2 degrees of freedom),
\item and choosing two initial lines tangent to the hypercycle, one from each of the $m$- and $\ell$-family (2 degrees of freedom).
\end{itemize}
Then further lines of, say, the $\ell$-family are obtained by alternately going along a chosen family of rulings of $\mathcal{Q}$ and $\tilde{\mathcal{Q}}$ from one point of the base curve to the next (see Figure \ref{fig:associated-hyperboloids}, middle/right).
Similarly for the $m$-family of lines, while using the respective other families of rulings of the two hyperboloids.
The intersection of two rulings from the two different families of the same hyperboloid implies the coplanarity
of the four intersection points with the base curve, which, in turn, corresponds to the existence of an incircle.
We demonstrate for certain classes of checkerboard incircular nets
how the parametrization of the hypercycle base curve in terms of Jacobi elliptic functions leads to explicit formulas for the net,
in which the free parameters determine the global behavior.
They can be further constraint to obtain periodic and ``embedded'' solutions.
\begin{remark}
  Note the resemblance to a ``confocal billiards'' type construction and a Poncelet porism type statement in the periodic case.
\end{remark}

In the following we derive explicit formulas for checkerboard incircular nets tangent to certain types of diagonalizable conics (see Remark \ref{rem:diagonalizable-hypercycles}).
We treat the hyperbolic/elliptic/Euclidean cases simultaneously by considering the standard bilinear form
of signature $(++ \varepsilon -)$ in $\R^4$, i.e.,
\[
  \scalarprod{x}{y} = x_1y_1 + x_2y_2 + \varepsilon x_3y_3 - x_4y_4
\]
for $x, y \in \R^4$,
which defines the corresponding Laguerre quadric $\lag \in \RP^3$.
The hyperbolic case is given by $\varepsilon = -1$, the elliptic case by $\varepsilon = 1$, and the Euclidean case by $\varepsilon = 0$
(see~Sections~\ref{sec:elliptic-laguerre-geometry},~\ref{sec:hyperbolic-laguerre-geometry},~\ref{sec:euclidean-laguerre-geometry}).
By Lemma \ref{lem:hypercycle-conic}, a hypercycle that corresponds to a conic is given by the intersection curve
of $\lag$ with a cone with vertex $\p{p} = [0,0,0,1]$.
We consider cones in diagonal form and cover checkerboard incircular nets tangent to
\begin{itemize}
\item ellipses in all space forms,
\item hyperbolas in the Euclidean plane, and convex hyperbolas in the hyperbolic plane,
\end{itemize}
excluding concave hyperbolas, deSitter hyperbolas and (the non-diagonalizable) semihyperbolas in the hyperbolic plane (cf.\ Remark \ref{rem:diagonalizable-hypercycles}),
as well as all further non-diagonalizable hypercycles.

\subsubsection{Parametrization of checkerboard incircular nets tangent to an ellipse}
\label{sec:ellipse}
Consider a cone $\mathcal{C}$ given by
\begin{equation}
  \alpha^2 x_1^2 + \beta^2 x_2^2 - x_3^2 = 0.
  \label{eq:ellipse-cone}
\end{equation}
with
\begin{equation}
  \label{eq:ellipse-cone-coefficients}
  \alpha > \beta > 0,\qquad
  1 + \varepsilon \alpha^2, 1 + \varepsilon \beta^2 > 0.
\end{equation}
It intersects the Laguerre quadric $\lag$ given by
\begin{equation}
  x_1^2 + x_2^2 + \varepsilon x_3^2 - x_4^2 = 0
  \label{eq:laguerre-quadric}
\end{equation}
in the hypercycle base curve $\lag \cap \mathcal{C}$ (see Figure \ref{fig:hypercycle-base-curves-ellipse}).
\begin{figure}
  \centering
  \begin{overpic}[width=0.32\textwidth]{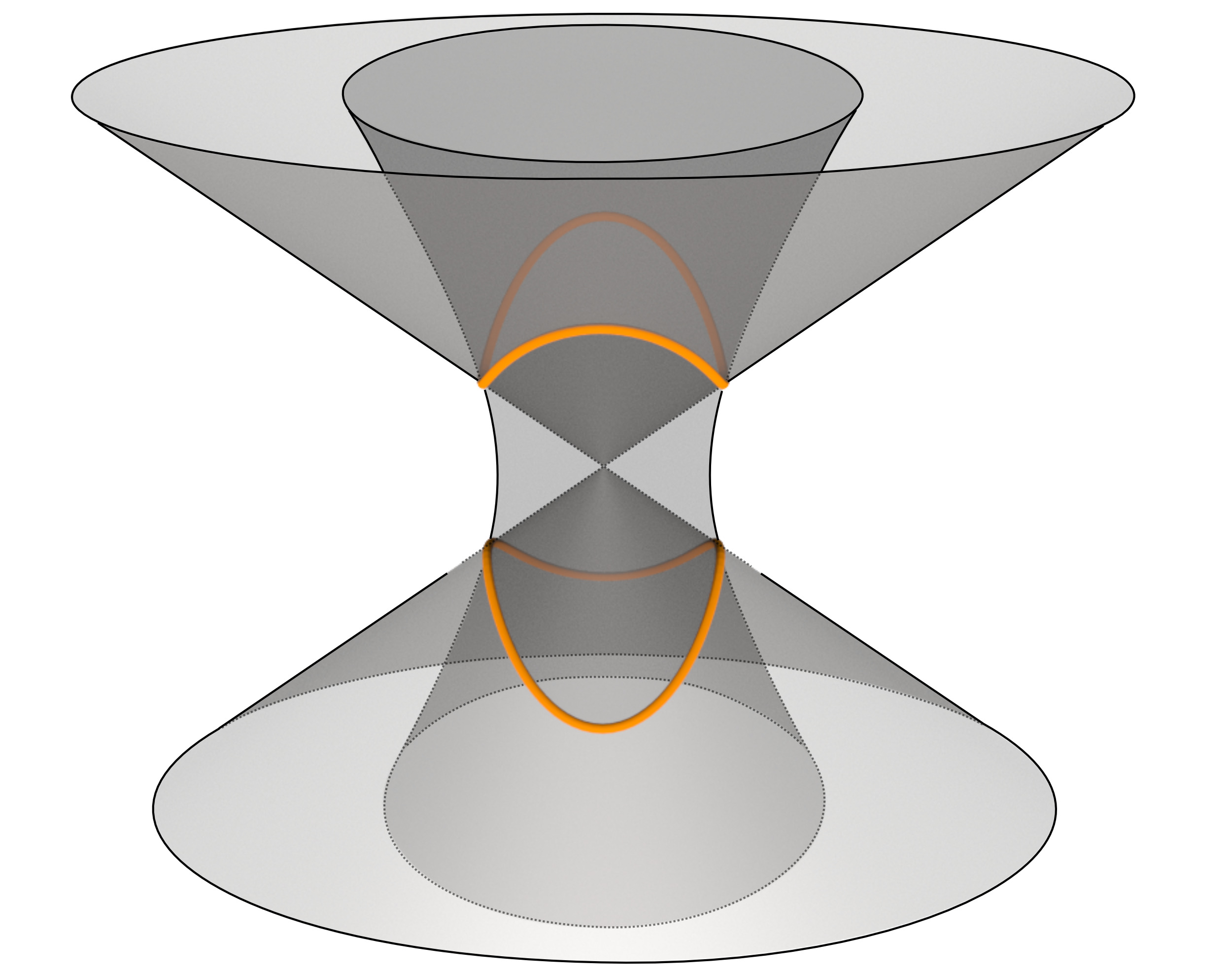}
    \put(60,38){$\laghyp$}
    \put(82,77){$\mathcal{C}$}
  \end{overpic}
    \begin{overpic}[width=0.32\textwidth]{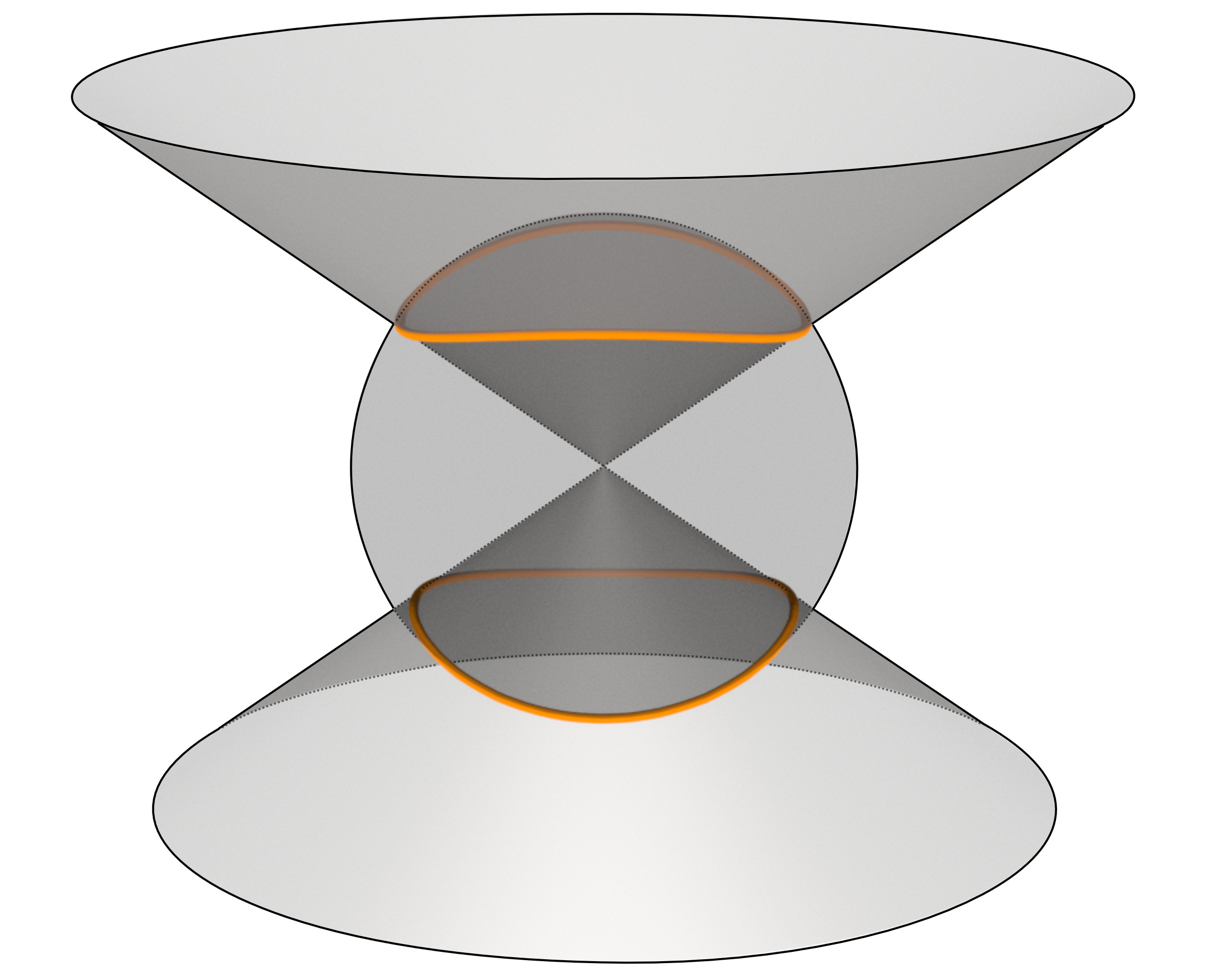}
    \put(71,38){$\lagell$}
    \put(82,77){$\mathcal{C}$}
  \end{overpic}
    \begin{overpic}[width=0.32\textwidth]{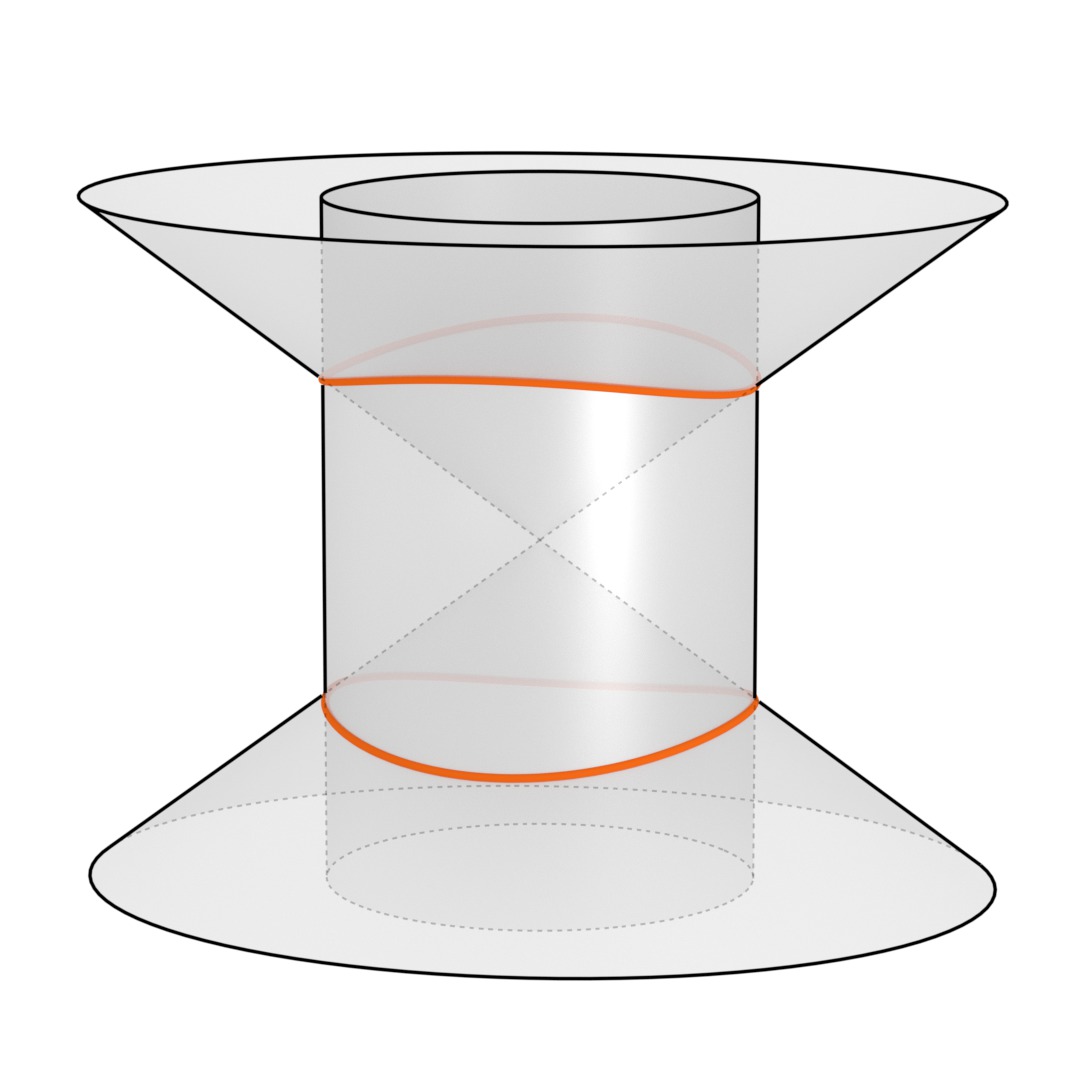}
    \put(63,38){$\lageucl$}
    \put(82,77){$\mathcal{C}$}
  \end{overpic}
  \caption{
    Hypercycle base curve $\lag \cap \mathcal{C}$ for an ellipse
    in hyperbolic (\emph{left}), elliptic (\emph{middle}), and Euclidean (\emph{right}) Laguerre geometry.
  }
\label{fig:hypercycle-base-curves-ellipse}
\end{figure}
\begin{proposition}
  \label{prop:tangent-ellipse}
  The hypercycle base curve $\lag \cap \mathcal{C}$ corresponds to the (oriented) tangent lines
  of an ellipse given in homogeneous coordinates of the hyperbolic/elliptic/Euclidean plane by
  \begin{equation}
    \frac{x_1^2}{\alpha^2} + \frac{x_2^2}{\beta^2} - x_3^2 = 0.
    \label{eq:ellipse}
  \end{equation}
\end{proposition}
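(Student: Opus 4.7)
The plan is to identify, for each of the three values $\varepsilon\in\{-1,0,+1\}$, the line coordinates in the base plane of the oriented line corresponding to a point $\p{x}\in\lag$ and then to observe that the cone equation \eqref{eq:ellipse-cone} is exactly the dual-conic condition for tangency to \eqref{eq:ellipse}. Lemma~\ref{lem:hypercycle-conic} already guarantees that the hypercycle is a conic (doubly covered with opposite orientations), because $\p{p}=[0,0,0,1]$ is the vertex of $\mathcal{C}$; only the explicit equation of this conic remains to be pinned down.

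First I would compute the line coordinates $[u_1{:}u_2{:}u_3]$ in the base plane. For the elliptic and hyperbolic Laguerre geometries (Sections~\ref{sec:elliptic-laguerre-geometry} and~\ref{sec:hyperbolic-laguerre-geometry}), the oriented line attached to $\p{x}=[x_1,x_2,x_3,x_4]\in\lag$ is the polar projection $\p{x}^{\lagperp}\cap\p{p}^{\lagperp}$, whose equation reads $x_1 y_1 + x_2 y_2 + \varepsilon x_3 y_3 = 0$, giving $[u_1{:}u_2{:}u_3]=[x_1{:}x_2{:}\varepsilon x_3]$. In the Euclidean case ($\varepsilon=0$) the Blaschke-cylinder model of Appendix~\ref{sec:euclidean-laguerre-geometry} assigns to $\p{x}$ with $x_1^2+x_2^2=x_4^2$ the oriented line with unit normal $(x_1/x_4,x_2/x_4)$ and signed distance $x_3/x_4$ from the origin, i.e.\ the line $x_1 y_1 + x_2 y_2 - x_3 y_3 = 0$, so $[u_1{:}u_2{:}u_3]=[x_1{:}x_2{:}-x_3]$. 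The key uniform observation is that, in all three cases, $u_1^2=x_1^2$, $u_2^2=x_2^2$ and $u_3^2=x_3^2$.

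I would then invoke the dual of the proposed ellipse $E:\frac{y_1^2}{\alpha^2}+\frac{y_2^2}{\beta^2}-y_3^2=0$, namely $E^*:\alpha^2 u_1^2+\beta^2 u_2^2-u_3^2=0$: a line is tangent to $E$ if and only if its line coordinates satisfy $E^*$. Substituting the $u_i$ computed above turns this tangency condition into $\alpha^2 x_1^2+\beta^2 x_2^2-x_3^2=0$, which is exactly the cone equation \eqref{eq:ellipse-cone}. Hence $\p{x}\in\mathcal{C}$ if and only if the attached oriented line is tangent to $E$, and the base curve $\lag\cap\mathcal{C}$ therefore parametrizes precisely the oriented tangent lines of $E$. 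The involution $\sigma_{\p{p}}:[x_1,x_2,x_3,x_4]\mapsto[x_1,x_2,x_3,-x_4]$ preserves both $\lag$ and $\mathcal{C}$ and reverses the orientation of each tangent line, matching the ``doubly covered with opposite orientation'' statement of Lemma~\ref{lem:hypercycle-conic}. The hypotheses \eqref{eq:ellipse-cone-coefficients} guarantee that $E$ is a real ellipse whose tangent lines are genuine oriented lines of the ambient space form; in particular, in the hyperbolic case $\alpha,\beta<1$ places $E$ strictly inside the absolute.

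The only real obstacle is bookkeeping: the three space forms use different constructions for the oriented-line correspondence (polar projection in the non-Euclidean cases, the Blaschke cylinder/cyclographic model in the Euclidean case), and one must verify that in each model the line coordinates reduce to entries whose squares coincide with $x_1^2$, $x_2^2$, $x_3^2$. Once this normalization is fixed, the proposition collapses to the single algebraic identity $\alpha^2 u_1^2+\beta^2 u_2^2-u_3^2=\alpha^2 x_1^2+\beta^2 x_2^2-x_3^2$, and no limiting argument as $\varepsilon\to 0$ is required.
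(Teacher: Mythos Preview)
Your proof is correct and follows essentially the same line as the paper's: the paper observes that the projection of $\lag\cap\mathcal{C}$ to $\p{p}^\perp$ is the conic \eqref{eq:ellipse-cone} and that its polar conic (with respect to the absolute in the base plane) is \eqref{eq:ellipse}, deferring the Euclidean case to \cite{BST}. You unpack this into explicit line coordinates $[u_1{:}u_2{:}u_3]$ and the dual-conic tangency condition, and treat the Euclidean case directly via the Blaschke cylinder rather than by citation; this makes your argument slightly more self-contained, but the underlying idea is the same polarity/duality computation.
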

\begin{proof}
  The hyperbolic and elliptic planes are naturally embedded into $\p{p}^\perp$.
  The projection of the intersection curve onto $\p{p}^\perp$ is a conic with equation \eqref{eq:ellipse-cone}.
  Its polar conic is given by \eqref{eq:ellipse}.
  For the Euclidean case, see \cite{BST}.
\end{proof}
\begin{proposition}
  The hypercycle base curve $\lag \cap \mathcal{C}$ consists of two components
  which are parametrized in terms of Jacobi elliptic functions by
  \begin{equation}
    \p{v}_{\pm}(u) =
    \left[
      \frac{1}{\sqrt{1 + \varepsilon\alpha^2}}\, \jac{cn}(u,k),~
      \frac{1}{\sqrt{1 + \varepsilon\beta^2}}\, \jac{sn}(u,k),~
      \frac{\alpha}{\sqrt{1 + \varepsilon\alpha^2}}\, \jac{dn}(u,k),~
      \pm 1
    \right],
    \label{eq:ellipse-parametrization}
  \end{equation}
  for $u \in \R$, where the modulus $k$ is given by
  \begin{equation}
    k^2 = 1 - \frac{\beta^2(1 + \varepsilon\alpha^2)}{\alpha^2(1 + \varepsilon\beta^2)}.
    \label{eq:ellipse-parametrization-modulus}
  \end{equation}
  Alternatively,
  \begin{equation}
    \p{v}_{\pm}(\hat{u}) =
    \left[
      \frac{1}{\alpha}\, \jac{cn}(\hat{u},\hat{k}),~
      \frac{1}{\beta}\, \jac{sn}(\hat{u},\hat{k}),~
      1,~
      \pm \frac{\sqrt{1 + \varepsilon\alpha^2}}{\alpha}\, \jac{dn}(\hat{u},\hat{k})
    \right],
    \label{eq:ellipse-parametrization2}
  \end{equation}
  for $\hat{u} \in \R$, where the modulus $\hat{k}$ is given by
  \begin{equation}
    \hat{k}^2 = 1 - \frac{\alpha^2(1 + \varepsilon\beta^2)}{\beta^2(1 + \varepsilon\alpha^2)}.
    \label{eq:ellipse-parametrization2-modulus}
  \end{equation}
\end{proposition}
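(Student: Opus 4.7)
The plan is to verify the parametrization by direct substitution into the defining equations \eqref{eq:laguerre-quadric} and \eqref{eq:ellipse-cone}, and then address why the two formulas together cover the entire intersection curve. The key simplification is that eliminating $x_3^2$ between the two quadratic equations yields
\begin{equation}
(1 + \varepsilon\alpha^2)\,x_1^2 + (1 + \varepsilon\beta^2)\,x_2^2 = x_4^2,
\end{equation}
which by the positivity assumptions \eqref{eq:ellipse-cone-coefficients} describes an ellipse in the affine chart $x_4 = \pm 1$. The Jacobi functions $\jac{sn}(u,k)$ and $\jac{cn}(u,k)$ are the natural substitutes for $\sin$ and $\cos$ in parametrizing such an ellipse.

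Given the Ansatz for $x_1, x_2$, the cone equation \eqref{eq:ellipse-cone} prescribes $x_3^2$ as a specific quadratic combination of $\jac{cn}^2$ and $\jac{sn}^2$. I would then match this against $\bigl(\alpha\,\jac{dn}(u,k)/\sqrt{1+\varepsilon\alpha^2}\bigr)^2$ using only the standard Jacobi identities $\jac{sn}^2 + \jac{cn}^2 = 1$ and $\jac{dn}^2 + k^2\jac{sn}^2 = 1$. Substituting $\jac{cn}^2 = 1 - \jac{sn}^2$ and $\jac{dn}^2 = 1 - k^2\jac{sn}^2$ and matching the constant and $\jac{sn}^2$-coefficients reduces to a single algebraic equation which pins down $k^2$ uniquely as the value in \eqref{eq:ellipse-parametrization-modulus}; with this $k^2$ in hand the Laguerre quadric equation then holds automatically. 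For completeness I would observe that under \eqref{eq:ellipse-cone-coefficients} the system has no real solutions with $x_4 = 0$, so every real projective point of $\lag \cap \mathcal{C}$ admits an affine representative with $x_4 = \pm 1$; for each such sign the cone equation determines $x_3$ up to its own sign, but projectively that sign can be absorbed into the sign of $x_4$, so the two signs in $\p{v}_\pm$ correspond precisely to the two connected real components, each traced out exactly once over the real period $4K(k)$.

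The alternative parametrization \eqref{eq:ellipse-parametrization2} follows by the same strategy, starting instead from the affine normalization $x_3 = 1$, which reduces the cone equation to the standard ellipse $\alpha^2 x_1^2 + \beta^2 x_2^2 = 1$ and leads to the complementary modulus $\hat{k}$ of \eqref{eq:ellipse-parametrization2-modulus}. The main subtlety here, which I expect to be the principal obstacle requiring explicit comment, is that the assumption $\alpha > \beta$ makes $\hat{k}^2$ negative in all three space forms; this does not affect the formal algebraic verification, which holds as an identity in $\jac{sn}$, $\jac{cn}$, $\jac{dn}$ regardless of the sign of the modulus squared, but to obtain a real-analytic parametrization one must re-interpret the formula via Jacobi's imaginary-modulus (real) transformation, which rewrites Jacobi functions of imaginary modulus in terms of Jacobi functions of a real, complementary modulus.
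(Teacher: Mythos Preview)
Your proposal is correct and follows essentially the same approach as the paper: direct verification via the identities $\jac{sn}^2+\jac{cn}^2=1$ and $\jac{dn}^2+k^2\jac{sn}^2=1$, with the equivalence of the two parametrizations handled through the real Jacobi (imaginary-modulus) transformation linking $k$ and $\hat{k}$. Your additional remarks on completeness and the two real components go slightly beyond what the paper proves here, but are correct and welcome.
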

\begin{proof}
  Using the elementary identities \cite{NIST, WW}
  \[
    \jac{cn}^2 + \jac{sn}^2 = 1,\qquad
    \jac{dn}^2 + k^2\jac{sn}^2 = 1,
  \]
  one easily checks that, e.g., the parametrization \eqref{eq:ellipse-parametrization2} with \eqref{eq:ellipse-parametrization2-modulus}
  satisfies the two equations \eqref{eq:ellipse-cone} and \eqref{eq:laguerre-quadric}.
  The two parametrizations are related by the real Jacobi transformations
  \[
    \jac{cd}(u,k) = \jac{cn}(\hat{u},\hat{k}),\quad
    \jac{sd}(u,k) = \frac{1}{\sqrt{1 - k^2}}\jac{sn}(\hat{u},\hat{k}),\quad
    \jac{nd}(u,k) = \jac{dn}(\hat{u},\hat{k}).
  \]
  where
  \[
    \hat{u} = \sqrt{1 - k^2}\, u,\qquad
    \hat{k}^2 = \frac{k^2}{k^2 - 1}, \qquad
    \jac{cd}=\frac{\jac{cn}}{\jac{dn}},\ \jac{sd}=\frac{\jac{sn}}{\jac{dn}},\ \jac{nd}=\frac{1}{\jac{dn}}.
  \]
\end{proof}
\begin{remark}\
  \label{rem:base-curve-conventions}
  \nobreakpar
  \begin{enumerate}
  \item
    From \eqref{eq:ellipse-cone-coefficients} we find $0<k^2<1$, or equivalently $\hat{k}^2 < 0$,
    and thus $\p{v}_{\pm}$ attains real values for $u, \hat{u} \in \R$.
  \item
    Over the complex numbers the intersection curve $\lag \cap \mathcal{C}$ is connected
    and constitutes an embedding of an elliptic curve, i.e., a torus.
    The two real components are related by
    \begin{equation}
      \label{eq:elliptic-curve-components}
      \p{v}_\pm(u) = \p{v}_\mp(2i\KK'(k) - u),
    \end{equation}
    where $\KK(k)$ and $\KK'(k) = \KK(\sqrt{1-k^2})$ are the quarter periods of the Jacobi elliptic functions.
  \item
    The signs in the parametrizations of the two compontents are chosen
    such that points on the different components with the same argument $u$ represent the same line with opposite orientation
    \[
      \p{v}_\pm(u) = \sigma_{\p{p}}\left( \p{v}_\mp(u) \right).
    \]
  \item
    The hypercycle base curves treated in Section \ref{sec:ellipse} and \ref{sec:hyperbola} are all projectively
    equivalent for different values of $\varepsilon$.
    Thus, their parametrizations may all be obtained from, e.g., \eqref{eq:ellipse-parametrization} with $\varepsilon=0$ 
    by reinterpreting another quadric of the pencil as the Laguerre quadric and applying a suitable projective transformation.
    \label{rem:base-curve-conventions-signs}
  \end{enumerate}
\end{remark}
\begin{figure}[h]
  \centering
  \begin{overpic}[width=0.45\textwidth]{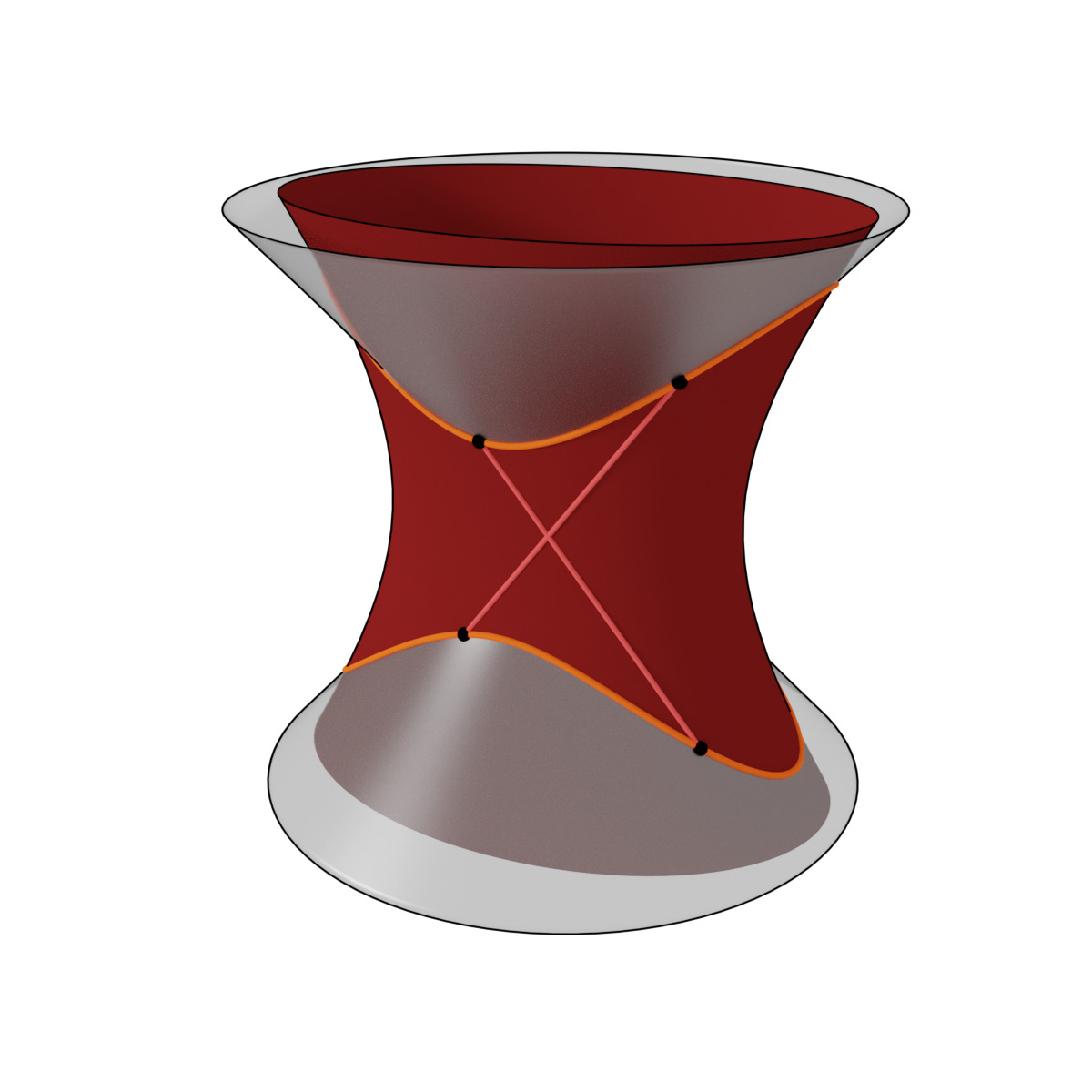}
    \definecolor{darkred}{HTML}{831b1a}
    \put(69,50){$\color{darkred}\lag$}
    \put(85,78){$\mathcal{C}$}
    \put(36,62){\color{white}$\p{v}_+(u)$}
    \put(50,68){\color{white}$\p{v}_+(\tilde u + s)$}
    \put(36,36){\color{white}$\p{v}_-(\tilde u)$}
    \put(50,25){\color{white}$\p{v}_-(u + s)$}
  \end{overpic}\qquad
  \raisebox{1.2cm}{
    \begin{overpic}[width=0.3\textwidth]{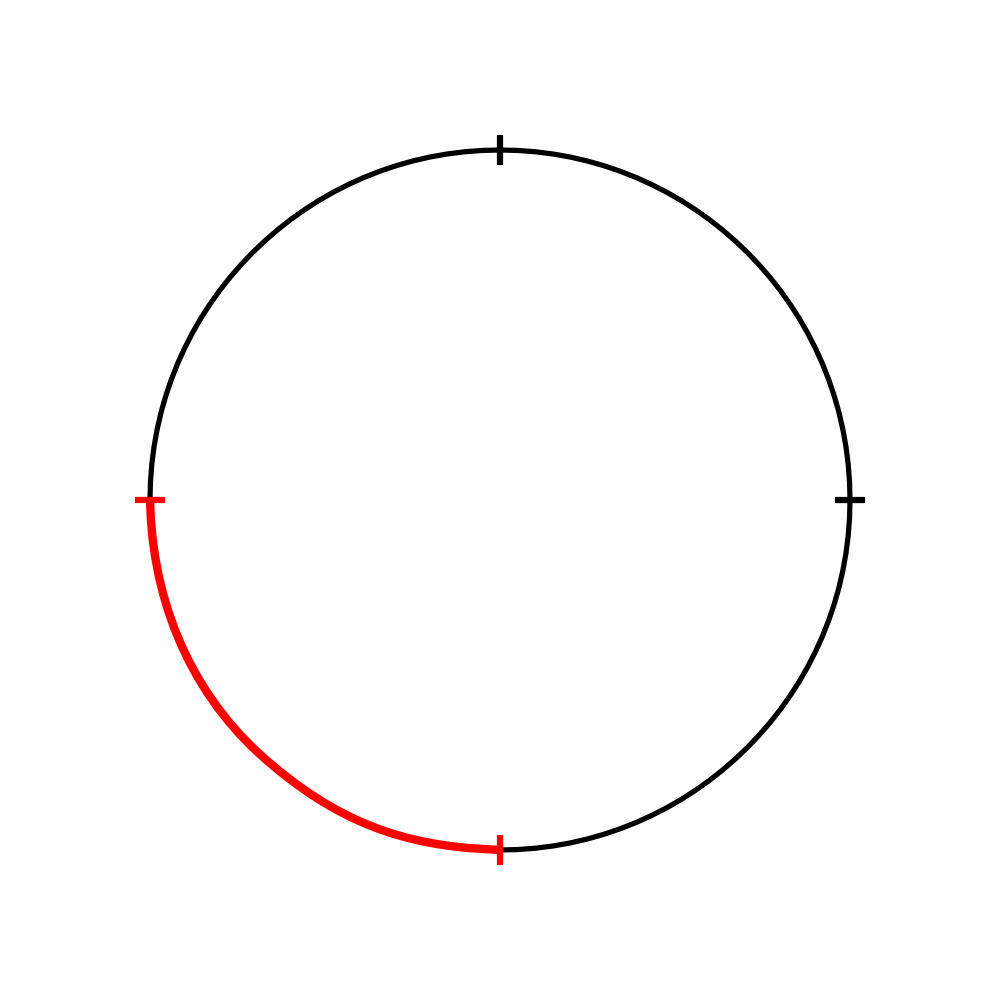}
      \put(47,47){\Large$\lambda$}
      \put(-5,48){$\mathcal{C}, \infty$}
      \put(88,48){$-\frac{1}{\alpha^2}$}
      \put(42,92){$-\frac{1}{\beta^2}$}
      \put(48,7){$\varepsilon$}
      \put(79,77){\tiny$(-++-)$}
      \put(-3,77){\tiny$(--+-)$}
      \put(-3,20){\tiny$(++--)$}
      \put(79,20){\tiny$(+++-)$}
    \end{overpic}
  }
  \caption{
    \emph{Left:}
    Four coplanar points on a hypercycle base curve $\lag \cap \mathcal{C}$.
    The two lines are rulings from a common hyperboloid in the pencil corresponding to $\lag \cap \mathcal{C}$.
    \emph{Right:}
    The parameter $\lambda$ for the pencil $\lag \wedge \mathcal{C}$ as given by \eqref{eq:pencil-ellipse}.
    The four values $-\frac{1}{\beta^2}, -\frac{1}{\alpha^2}, \varepsilon, \infty$ correspond to the degenerate quadrics in the pencil.
    In between, the signature of the quadrics from the pencil are given.
    The function \eqref{eq:lambda-ellipse} takes values in $[\varepsilon, \infty]$ and corresponds to hyperboloids whose rulings intersect both components of the base curve.
  }
  \label{fig:planarity-and-pencil-ellipse}
\end{figure}
This parametrization features the following remarkable property which is related to the addition on elliptic curves (cf.\ \cite{husemoeller}).
\begin{proposition}\
  \label{prop:parametrization-coplanarity}
  \nobreakpar
  \begin{enumerate}
  \item
    \label{prop:parametrization-coplanarity1}
    Let $u, \tilde{u}, s \in \R$.
    Then the four points $\p{v}_+(u), \p{v}_-(u+s), \p{v}_-(\tilde{u}), \p{v}_+(\tilde{u} + s)$ are coplanar (see Figure \ref{fig:planarity-and-pencil-ellipse}, \textit{left}).
  \item
    Let $s \in \R$.
    Then the lines $\p{v}_+(u)\wedge\p{v}_-(u+s)$ with $u \in \R$
    constitute one family of rulings of a common hyperboloid in the pencil $\lag \wedge \mathcal{C}$
    \begin{equation}
      \label{eq:pencil-ellipse}
      (1 + \lambda\alpha^2)x_1^2 + (1 + \lambda\beta^2)x_2^2 + (\varepsilon - \lambda)x_3^2 - x_4^2 = 0
    \end{equation}
    given by
    \begin{equation}
      \label{eq:lambda-ellipse}
      \lambda(s) = \frac{1}{\beta^2}\jac{cs}^2(\tfrac{s}{2},k) + \varepsilon\jac{ns}^2(\tfrac{s}{2}, k),\qquad
      \text{where}~ \jac{cs} = \frac{\jac{cn}}{\jac{sn}},\ \jac{ns} = \frac{1}{\jac{sn}}.
    \end{equation}
    The second family of rulings is given by the lines $\p{v}_+(u)\wedge\p{v}_-(u-s)$ with $u \in \R$.
  \end{enumerate}
\end{proposition}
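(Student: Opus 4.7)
I would prove part (ii) first and deduce part (i) from it. For part (ii), since every point of the base curve $\lag\cap\mathcal{C}$ lies on every quadric of the pencil $\lag\wedge\mathcal{C}$, both $\p{v}_+(u)$ and $\p{v}_-(u+s)$ belong to each $Q_\lambda$ from \eqref{eq:pencil-ellipse}. Hence the whole line $\p{v}_+(u)\wedge\p{v}_-(u+s)$ lies in $Q_\lambda$ if and only if the associated bilinear form vanishes on the pair, which uniquely determines
\[
  \lambda(u,s) \;=\; -\frac{b_{\lag}\bigl(v_+(u),v_-(u+s)\bigr)}{b_{\mathcal{C}}\bigl(v_+(u),v_-(u+s)\bigr)},
\]
where $b_{\lag}$ and $b_{\mathcal{C}}$ denote the symmetric bilinear forms polarising the quadratic forms \eqref{eq:laguerre-quadric} and \eqref{eq:ellipse-cone}. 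The content of the assertion is then twofold: this ratio is independent of $u$, and it equals the closed form \eqref{eq:lambda-ellipse}.

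To verify both points I would substitute the parametrization, apply the Jacobi addition formulas to expand $\jac{cn}(u+s),\jac{sn}(u+s),\jac{dn}(u+s)$ in terms of the values at $u$ and $s$, and clear the common denominator $1-k^2\jac{sn}^2(u)\jac{sn}^2(s)$ from numerator and denominator. Repeated use of the identities $\jac{cn}^2+\jac{sn}^2=1$ and $\jac{dn}^2+k^2\jac{sn}^2=1$, together with the relation \eqref{eq:ellipse-parametrization-modulus} between $k$ and $\alpha,\beta,\varepsilon$, should produce cancellation of all $u$-dependent factors, leaving an expression in $\jac{cn}(s),\jac{sn}(s),\jac{dn}(s)$ alone. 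Recasting the result via the half-argument identities $\jac{sn}^2(s/2)=(1-\jac{cn}(s))/(1+\jac{dn}(s))$ and $\jac{cn}^2(s/2)=(\jac{cn}(s)+\jac{dn}(s))/(1+\jac{dn}(s))$ then delivers the form \eqref{eq:lambda-ellipse}. For the second ruling family I would observe that $\jac{cs}^2$ and $\jac{ns}^2$ are even functions, so $\lambda(-s)=\lambda(s)$; consequently the lines $\p{v}_+(u)\wedge\p{v}_-(u-s)$ also lie on the same hyperboloid $Q_{\lambda(s)}$. Since the two one-parameter families $\{\p{v}_+(u)\wedge\p{v}_-(u+s)\}_u$ and $\{\p{v}_+(u)\wedge\p{v}_-(u-s)\}_u$ are distinct for $s\neq 0$ and a non-degenerate doubly-ruled hyperboloid carries exactly two one-parameter families of lines, they must be precisely its two ruling families.

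Part (i) then follows at once: setting $u':=\tilde u+s$, the line
\[
  \p{v}_+(\tilde u+s)\wedge\p{v}_-(\tilde u) \;=\; \p{v}_+(u')\wedge\p{v}_-(u'-s)
\]
is a ruling from the second family of $Q_{\lambda(s)}$, whereas $\p{v}_+(u)\wedge\p{v}_-(u+s)$ is a ruling from the first family. Two rulings from opposite families of a doubly-ruled hyperboloid always meet in a single point, so the two lines are coplanar, and hence the four vertices $\p{v}_+(u),\p{v}_-(u+s),\p{v}_-(\tilde u),\p{v}_+(\tilde u+s)$ lie in a common plane.

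The principal obstacle is the Jacobi-function simplification that establishes $u$-independence of the above ratio and identifies it with the half-argument expression \eqref{eq:lambda-ellipse}. The manipulation is routine in principle but combinatorially dense; the cleanest route appears to be to factor the ubiquitous quantity $1-k^2\jac{sn}^2(u)\jac{sn}^2(s)$ out of both bilinear evaluations symmetrically, and then to observe that what remains in the numerator and the denominator differs only by a $u$-free coefficient determined by $\alpha,\beta,\varepsilon,k$.
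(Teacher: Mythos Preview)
Your approach is correct but inverts the paper's logical order, and this inversion costs you the two shortcuts that make the paper's argument clean.

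For part (i) the paper does not go through the hyperboloid at all. Using the relation $\p{v}_\pm(u)=\p{v}_\mp(2i\KK'(k)-u)$ between the two real components, the $4\times 4$ determinant of $v_+(u),v_-(u+s),v_-(\tilde u),v_+(\tilde u+s)$ becomes a determinant of four values of $v_+$ at complex arguments summing to zero, and then vanishes by the classical Jacobi identity
\[
  \det\begin{pmatrix}\jac{cn}(z_j)&\jac{sn}(z_j)&\jac{dn}(z_j)&1\end{pmatrix}_{j=1}^{4}=0,\qquad z_1+z_2+z_3+z_4=0.
\]
This is a one-line appeal to a known addition theorem; your route requires first establishing the full ruling structure.

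For the $u$-independence in part (ii), the paper avoids your ``combinatorially dense'' expansion entirely. The condition $b_{\lag}(v_+(u),v_-(u+s))+\lambda\,b_{\mathcal C}(v_+(u),v_-(u+s))=0$ has the form
\[
  \rho_{\jac{cn}}\jac{cn}(u)\jac{cn}(u+s)+\rho_{\jac{sn}}\jac{sn}(u)\jac{sn}(u+s)+\rho_{\jac{dn}}\jac{dn}(u)\jac{dn}(u+s)+\rho_1=0,
\]
and a short lemma (the paper's Lemma~\ref{lem:linear-combination-addition-theorems}) shows that such an identity holds for all $u$ precisely when two explicit linear relations in $\jac{cn}(s),\jac{dn}(s)$ among the $\rho$'s are satisfied. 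Solving either for $\lambda$ gives the closed form directly, and the half-argument step is then immediate. This is exactly the ``clean route'' you were hoping to find; the cancellation you anticipate is packaged as a rank-two linear system rather than chased by hand.

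Finally, your deduction of (i) from (ii) needs one extra sentence: merely observing that the $+s$ and $-s$ families are distinct one-parameter families on $Q_{\lambda(s)}$ does not by itself force them into opposite ruling families. The quickest fix is to note that both lines $\p{v}_+(u_0)\wedge\p{v}_-(u_0\pm s)$ pass through the same point $\p{v}_+(u_0)$ and are distinct for generic $s$, so they must be the two rulings through that point. (The paper instead uses part (i) to place the $-s$ lines in the second family, which is why it proves (i) first.)
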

\begin{proof}\
  \nobreakpar
  \begin{enumerate}
  \item
    By \eqref{eq:elliptic-curve-components} we obtain
    \[
      \begin{aligned}
        &\det\left(v_+(u), v_-(u+s), v_-(\tilde{u}), v_+(\tilde{u} + s)\right)\\
        &\qquad= \det\left(v_+(u), v_+(2i\KK'(k)-u-s), v_+(-2i\KK'(k)-\tilde{u}), v_+(\tilde{u} + s)\right)        
      \end{aligned}
    \]
    which is zero due to the following addition theorem for Jacobi elliptic functions \cite{NIST}:
    \begin{equation}
      \label{eq:Jacobi-identity}
      \def\arraystretch{1.}
      \begin{vmatrix}
        \jac{cn}(z_1,k) & \jac{sn}(z_1,k) &\jac{dn}(z_1,k) & 1\\
        \jac{cn}(z_2,k) & \jac{sn}(z_2,k) &\jac{dn}(z_2,k) & 1\\
        \jac{cn}(z_3,k) & \jac{sn}(z_3,k) &\jac{dn}(z_3,k) & 1\\
        \jac{cn}(z_4,k) & \jac{sn}(z_4,k) &\jac{dn}(z_4,k) & 1                     
      \end{vmatrix} = 0,\quad z_1+z_2+z_3+z_4 = 0.
    \end{equation}
  \item
    By Lemma \ref{lem:pencils-and-lines}, there exists a unique hyperboloid $\mathcal{Q}$ in the pencil $\lag \wedge \mathcal{C}$
    containing the line $\p{v}_+(u)\wedge\p{v}_-(u+s)$.
    If we denote by $\scalarprod{\cdot}{\cdot}_{\mathcal{C}}$ the symmetric bilinear form
    corresponding to the quadratic form \eqref{eq:ellipse-cone} of the cone $\mathcal{C}$,
    the parameter $\lambda$ corresponding to $\mathcal{Q}$ is given by
    \[
      \scalarprod{v_+(u)}{v_-(u+s)} + \lambda \scalarprod{v_+(u)}{v_-(u+s)}_{\mathcal{C}} = 0,
    \]
    which is equivalent to \eqref{eq:linear-combination-addition-theorems} with
    \[
      \rho_{\jac{cn}} = \frac{1+\lambda\alpha^2}{1+\varepsilon\alpha^2},\quad
      \rho_{\jac{sn}} = \frac{1+\lambda\beta^2}{1+\varepsilon\beta^2},\quad
      \rho_{\jac{dn}} = \frac{(\varepsilon-\lambda)\alpha^2}{1+\varepsilon\alpha^2},\quad
      \rho_{1} = 1.
    \]
    Thus, by Lemma \ref{lem:linear-combination-addition-theorems}, one obtains
    \[
      \begin{aligned}
        (1 + \lambda\alpha^2) \jac{cn}(s)
        +  (\varepsilon - \lambda) \alpha^2 \jac{dn}(s)
        + 1 + \varepsilon\alpha^2 &= 0,\\
        (1 + \lambda\beta^2) \jac{cn}(s)
        +  (\varepsilon - \lambda) \beta^2
        + (1 + \varepsilon\beta^2) \jac{dn}(s) &= 0.
      \end{aligned}
    \]
    These two equations for $\lambda$ are equivalent and give
    \[
      \lambda
      = \frac{1}{\beta^2}\frac{\jac{cn}(s) + \jac{dn}(s)}{1 - \jac{cn}(s)}
      + \varepsilon \frac{1 + \jac{dn}(s)}{1 - \jac{cn}(s)}
      = \frac{1}{\beta^2}\jac{cs}^2(\tfrac{s}{2})
      + \varepsilon\jac{ns}^2(\tfrac{s}{2}).
    \]
  \end{enumerate}
  Note that $s$ and $-s$ correspond to the same hyperboloid since $\lambda(s) = \lambda(-s)$.
  Given the line $\p{L} = \p{v}_+(u) \wedge \p{v}_-(u+s)$, all lines $\p{v}_+(\tilde{u})\wedge\p{v}_-(\tilde{u}-s)$ with $\tilde{u} \in \R$
  intersect the line $\p{L}$ by \ref{prop:parametrization-coplanarity1},
  and thus belong to the respective other (and therefore the same) family of rulings of $\mathcal{Q}$.
\end{proof}
\begin{remark}\
  \nobreakpar
  \begin{enumerate}
  \item
    The alternative expression for $\lambda$ in terms of $\hat{s} = \sqrt{1-k^2}s$ and $\hat{k}^2 = \frac{k^2}{k^2-1}$ is given by
    \[
      \lambda(\hat{s}) = \frac{1}{\alpha^2}\jac{cs}^2(\tfrac{\hat{s}}{2},\hat{k}) + \varepsilon\jac{ns}^2(\tfrac{\hat{s}}{2}, \hat{k}).
    \]
  \item
    The four degenerate quadrics from the pencil \eqref{eq:pencil-ellipse} are given by
    the values $\lambda = -\frac{1}{\beta^2}, -\frac{1}{\alpha^2}, \varepsilon, \infty$,
    where $\lambda = \infty$ corresponds to the cone $\mathcal{C}$ (see Figure \ref{fig:planarity-and-pencil-ellipse}, right).
    By construction, the hyperboloids obtained by \eqref{eq:lambda-ellipse} have rulings connecting the two components of the base curve,
    which corresponds to the fact that
    \[
      \varepsilon \leq \lambda(s) \leq \infty
    \]
    for $s \in \R$ with $\lambda(0) = \infty$ and $\lambda(2\KK(k)) = \varepsilon$.
  \end{enumerate}
\end{remark}
\begin{lemma}
  \label{lem:linear-combination-addition-theorems}
  Let $s \in \R$ and $\rho_{\jac{cn}}, \rho_{\jac{sn}}, \rho_{\jac{dn}}, \rho_{1} \in \R$ such that
  \begin{equation}
    \label{eq:linear-combination-addition-theorems}
    \rho_{\jac{cn}}\jac{cn}(u)\jac{cn}(u+s)
    + \rho_{\jac{sn}}\jac{sn}(u)\jac{sn}(u+s)
    + \rho_{\jac{dn}}\jac{dn}(u)\jac{dn}(u+s)
    + \rho_1
    = 0
  \end{equation}
  for all $u \in \R$.
  Then
  \begin{equation}
    \label{eq:linear-combination-addition-theorems-coefficients}
    \begin{aligned}
      \rho_{\jac{cn}}\jac{cn}(s) + \rho_{\jac{dn}}\jac{dn}(s) + \rho_{1} &= 0,\\
      \rho_{\jac{sn}}\jac{cn}(s) + \rho_{\jac{dn}}(1-k^2) + \rho_{1}\jac{dn}(s) &= 0.
    \end{aligned}
  \end{equation}
\end{lemma}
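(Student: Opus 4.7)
The plan is to derive the two coefficient relations by evaluating the hypothesised identity at two specially chosen values of $u$ at which the Jacobi elliptic functions either vanish or reduce to simple quantities. Since the identity is assumed to hold for all $u \in \R$, any specialisation yields a necessary relation on the coefficients $\rho_{\jac{cn}}, \rho_{\jac{sn}}, \rho_{\jac{dn}}, \rho_1$.

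First I would substitute $u = 0$. Using the standard initial values $\jac{cn}(0,k) = 1$, $\jac{sn}(0,k) = 0$, $\jac{dn}(0,k) = 1$, the middle term containing $\rho_{\jac{sn}}$ drops out and the identity collapses to
\[
\rho_{\jac{cn}}\jac{cn}(s) + \rho_{\jac{dn}}\jac{dn}(s) + \rho_1 = 0,
\]
which is exactly the first claimed relation.

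Next I would substitute $u = \KK(k)$, the real quarter period. At this value one has $\jac{cn}(\KK,k) = 0$, $\jac{sn}(\KK,k) = 1$, $\jac{dn}(\KK,k) = \sqrt{1-k^2}$, and the shift identities
\[
\jac{sn}(\KK+s,k) = \frac{\jac{cn}(s,k)}{\jac{dn}(s,k)}, \qquad
\jac{dn}(\KK+s,k) = \frac{\sqrt{1-k^2}}{\jac{dn}(s,k)},
\]
together with an analogous formula for $\jac{cn}(\KK+s,k)$ which is irrelevant here because its coefficient contains the factor $\jac{cn}(\KK,k) = 0$. Substituting these values and clearing the common denominator $\jac{dn}(s,k)$ yields
\[
\rho_{\jac{sn}}\jac{cn}(s) + \rho_{\jac{dn}}(1-k^2) + \rho_{1}\jac{dn}(s) = 0,
\]
which is the second claimed relation.

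There is essentially no obstacle: the argument reduces to bookkeeping with the known special values and the translation formulas for $\jac{sn}$, $\jac{cn}$, $\jac{dn}$ by the quarter period $\KK(k)$, which can be cited from a standard reference such as~\cite{NIST, WW}. The only minor point worth noting is that the two chosen specialisations suffice precisely because the two resulting linear relations are independent (the first involves $\rho_{\jac{cn}}$ but not $\rho_{\jac{sn}}$, the second is the converse), which is exactly what the statement of the lemma records.
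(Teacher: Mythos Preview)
Your proof is correct and in fact more direct than the paper's. The paper expands each product $\jac{cn}(u)\jac{cn}(u+s)$, $\jac{sn}(u)\jac{sn}(u+s)$, $\jac{dn}(u)\jac{dn}(u+s)$ via the addition theorems, rewrites the resulting expression as a linear combination of the three independent functions $1$, $\jac{sn}^2(u)$, and $\jac{sn}(u)\jac{cn}(u)\jac{dn}(u)$, and then sets each coefficient to zero; this produces three linear relations among the $\rho$'s which turn out to form a rank~$2$ system equivalent to~\eqref{eq:linear-combination-addition-theorems-coefficients}. Your evaluation at $u=0$ and $u=\KK(k)$ bypasses the expansion entirely and lands immediately on the two displayed relations. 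The paper's route has the mild conceptual advantage of confirming that rank~$2$ is the \emph{exact} rank (so the two relations exhaust the constraints), but the lemma as stated only asserts necessity, for which your argument is both sufficient and shorter.
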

\begin{proof}
  Applying the addition theorems for Jacobi elliptic functions \cite{NIST} to \eqref{eq:linear-combination-addition-theorems},
  the resulting equation can be written as a sum of three independent functions, say, $\jac{sn}^2(u)$, $\jac{sn}(u)\jac{cn}(u)\jac{dn}(u)$, and 1.
  The vanishing of the coefficients of these three functions leads to three linear equations in $\rho_{\jac{cn}}, \rho_{\jac{sn}}, \rho_{\jac{dn}}, \rho_{1}$, which constitute a rank 2 linear system equivalent to \eqref{eq:linear-combination-addition-theorems-coefficients}.
\end{proof}
This allows to parametrize a checkerboard incircular net
tangent to a given ellipse in the following way (see Figures~\ref{fig:periodic-cbic-net-elliptic-ellipse},~\ref{fig:periodic-cbic-net-hyperbolic-ellipse}).
\begin{theorem}
  \label{thm:cbic-formulas-ellipse}
  Let $\varepsilon \in \{-1, 0, 1\}$.
  Then for $\alpha > \beta > 0$ with $1 + \varepsilon\alpha^2, 1 + \varepsilon\beta^2 > 0$,
  $s, \tilde{s} \in \R$, and $u_0^{\ell}, u_0^{m} \in \R$
  the two families of lines $\left(\ell_i\right)_{i\in\Z}$ and $\left(m_j\right)_{j\in\Z}$ given by
  \[
    \begin{aligned}
      \ell_{2k} &= \p{v}_+(u_0^{\ell} + k(s + \tilde{s}))\\
      \ell_{2k+1} &= \p{v}_-(u_0^{\ell} + k(s + \tilde{s}) + s)\\
      m_{2l} &= \p{v}_-(u_0^{m} + l(s + \tilde{s}))\\
      m_{2l+1} &= \p{v}_+(u_0^{m} + l(s + \tilde{s}) + s)
    \end{aligned}
  \]
  constitute a hyperbolic/elliptic/Euclidean checkerboard incircular net (according to the value of~$\varepsilon$)
  tangent to the ellipse
  \[
    \frac{x_1^2}{\alpha^2} + \frac{x_2^2}{\beta^2} - x_3^2 = 0.
  \]
\end{theorem}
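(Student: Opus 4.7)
The plan is to reduce the theorem to the coplanarity result of Proposition~\ref{prop:parametrization-coplanarity}(i) together with the tangency identification from Proposition~\ref{prop:tangent-ellipse}, with no new analytic work required.

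First, since every line in the prescribed families is by construction of the form $\p{v}_+(\cdot)$ or $\p{v}_-(\cdot)$, it corresponds to a point on the hypercycle base curve $\lag \cap \mathcal{C}$. By Proposition~\ref{prop:tangent-ellipse}, this is exactly the collection of oriented tangent lines to the ellipse $\tfrac{x_1^2}{\alpha^2} + \tfrac{x_2^2}{\beta^2} - x_3^2 = 0$, so the tangency claim is immediate.

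For the inscribed-circle condition I would verify that for each $i,j$ with $i+j$ even the four points on the Laguerre quadric representing $(\ell_i,\ell_{i+1},m_j,m_{j+1})$ are coplanar (which, by the definition of a Lie/Laguerre circumscribed quadrilateral, is equivalent to the four lines touching a common oriented circle). Two sub-cases appear. When $i=2k$ and $j=2l$, set $u := u_0^{\ell} + k(s+\tilde{s})$ and $\tilde{u} := u_0^{m} + l(s+\tilde{s})$; then the four vertices are $\p{v}_+(u)$, $\p{v}_-(u+s)$, $\p{v}_-(\tilde{u})$, $\p{v}_+(\tilde{u}+s)$, which is exactly the quadruple of Proposition~\ref{prop:parametrization-coplanarity}(i). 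When $i=2k+1$ and $j=2l+1$, set $u' := u_0^{\ell} + k(s+\tilde{s}) + s$ and $\tilde{u}' := u_0^{m} + l(s+\tilde{s}) + s$; then the four vertices are $\p{v}_-(u')$, $\p{v}_+(u'+\tilde{s})$, $\p{v}_+(\tilde{u}')$, $\p{v}_-(\tilde{u}'+\tilde{s})$, which is the same configuration after exchanging $\p{v}_+ \leftrightarrow \p{v}_-$ and $s \to \tilde{s}$. Since the proof of Proposition~\ref{prop:parametrization-coplanarity}(i) reduces to the Jacobi identity \eqref{eq:Jacobi-identity}, which is invariant (up to a sign of the determinant) under negating the last row of its matrix, the same identity yields coplanarity in this sub-case as well.

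The only point of care is the bookkeeping: one must check that the two shifts $s$ and $\tilde{s}$ enter in exactly the right positions so that every elementary checkerboard quadrilateral, regardless of parity, has its two $\p{v}_+$ vertices and its two $\p{v}_-$ vertices arranged diagonally in the alternating pattern required by Proposition~\ref{prop:parametrization-coplanarity}(i). Once this is verified, the theorem follows without further computation, and $s$, $\tilde{s}$, $u_0^{\ell}$, $u_0^{m}$ appear as the four real parameters of the resulting family of checkerboard incircular nets tangent to the fixed ellipse.
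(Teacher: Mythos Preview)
Your proof is correct and follows the same approach as the paper, which simply invokes Proposition~\ref{prop:parametrization-coplanarity} for the incircle condition and Proposition~\ref{prop:tangent-ellipse} for tangency. One minor simplification: in your second sub-case you do not need the $\p{v}_+\leftrightarrow\p{v}_-$ symmetry argument, since the quadruple $\p{v}_-(u'),\p{v}_+(u'+\tilde{s}),\p{v}_+(\tilde{u}'),\p{v}_-(\tilde{u}'+\tilde{s})$ is already a direct instance of Proposition~\ref{prop:parametrization-coplanarity}(i) after relabelling $u\to\tilde{u}'$, $\tilde{u}\to u'$, $s\to\tilde{s}$.
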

\begin{proof}
  The existence of incircles follows from Proposition \ref{prop:parametrization-coplanarity},
  while tangency to the given ellipse follows from Proposition \ref{prop:tangent-ellipse}.
\end{proof}
The choice of
\begin{itemize}
\item $\alpha$ and $\beta$ determines the ellipse,
\item $s$ and $\tilde{s}$ determines two hyperboloids in the pencil of quadrics,
  and further allows to distinguish the two families of rulings on each of them,
\item $u_0^{\mathrm{v}}$, $u_0^{\mathrm{h}}$ determines one initial line tangent to the ellipse
  in each of the two families of lines.
\end{itemize}

Note that for $s = 0$ the corresponding hyperboloid degenerates to the cone $\mathcal{C}$.
In this case,
\[
  \ell_{2k} = \ell_{2k+1},\qquad
  m_{2l} = m_{2l+1},
\]
and thus, $\left(\ell_i\right)_{i\in\Z}$ and $\left(m_j\right)_{j\in\Z}$ constitutes an ``ordinary'' incircular net.

Periodicity can be achieved by setting
\[
  s + \tilde{s} = \frac{4\KK(k)}{N}.
\]
To achieve ``embeddedness'' of the checkerboard incircular nets one has to additionally demand
that the two different families of lines agree (up to their orientation), e.g.,\
\[
  \ell_{i} = \sigma_{\p{p}}(m_i),
\]
which is obtained by setting
\[
  u_0^{\ell} = u_0^{m}.
\]

\begin{figure}[H]
  \centering
  \hspace{\fill}
  \includegraphics[width=0.38\textwidth]{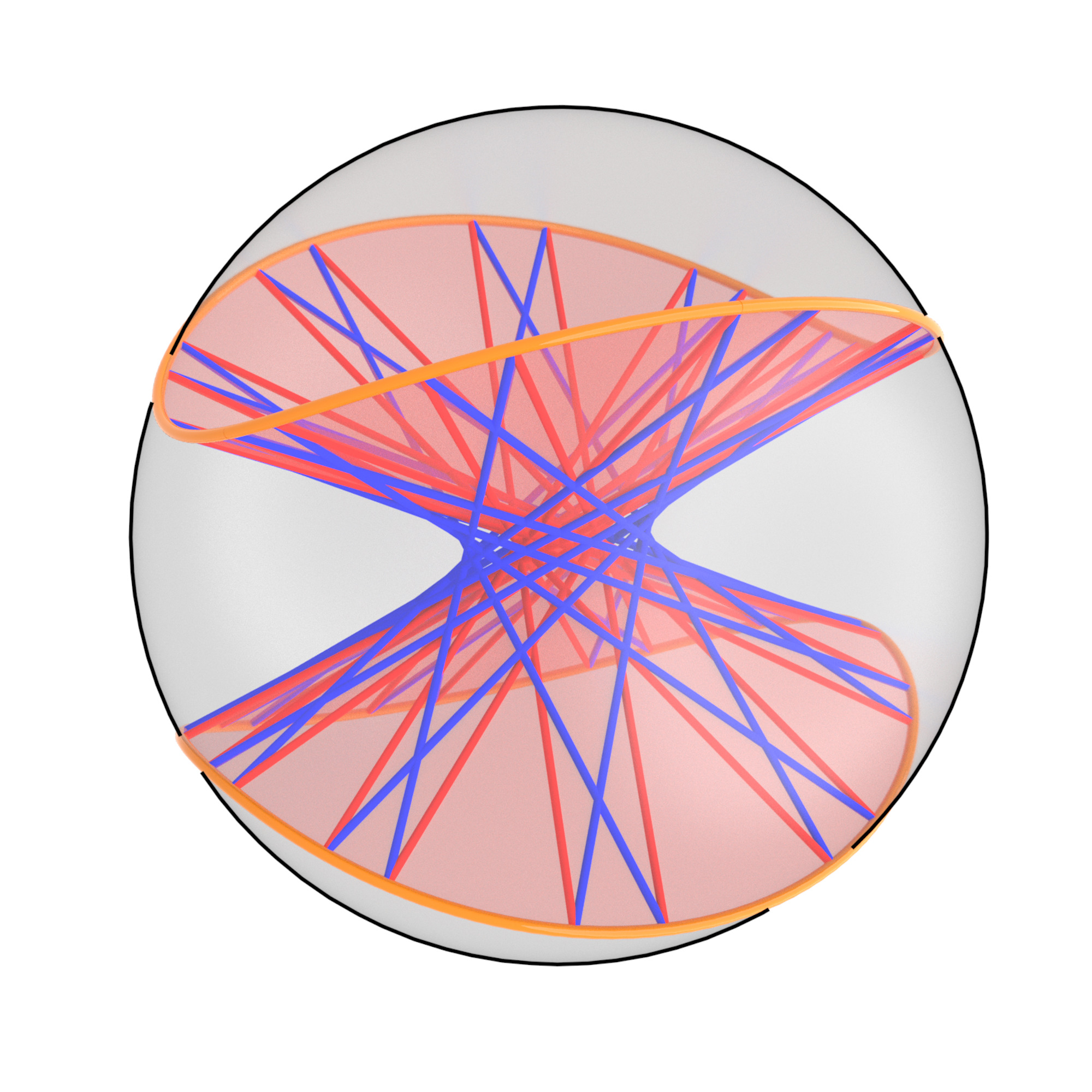}
  \hspace{\fill}
  \includegraphics[width=0.38\textwidth]{ell_cic_sphere_1}
  \hspace{\fill}
  \newline
  \mbox{}
  \hspace{\fill}
  \includegraphics[width=0.38\textwidth]{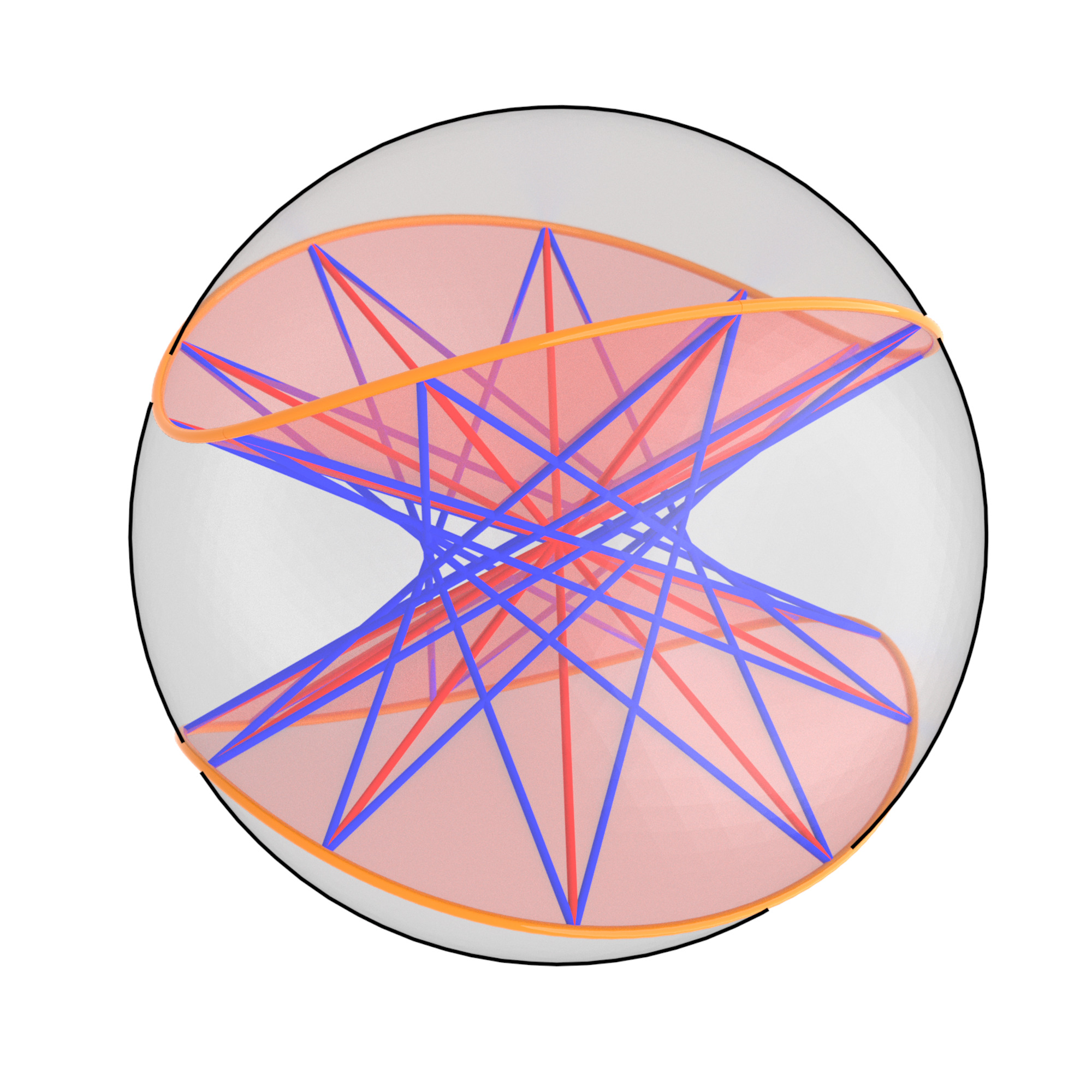}
  \hspace{\fill}
  \includegraphics[width=0.38\textwidth]{ell_ic_sphere_1}
  \hspace{\fill}
  \caption{
    Periodic checkerboard incircular net in elliptic geometry ($\varepsilon=1$)
    tangent to an ellipse with $\alpha = 0.9$, $\beta = 0.4$, $N = 11$, and $s=0.23$ (\emph{top}) / $s = 0$ (\emph{bottom}).
    Represented on the Laguerre quadric (\emph{left}) and on the sphere model of elliptic geometry (\emph{right}).
  }
\label{fig:periodic-cbic-net-elliptic-ellipse}
\end{figure}
\begin{figure}[H]
  \centering
  \hspace{\fill}
  \includegraphics[width=0.38\textwidth]{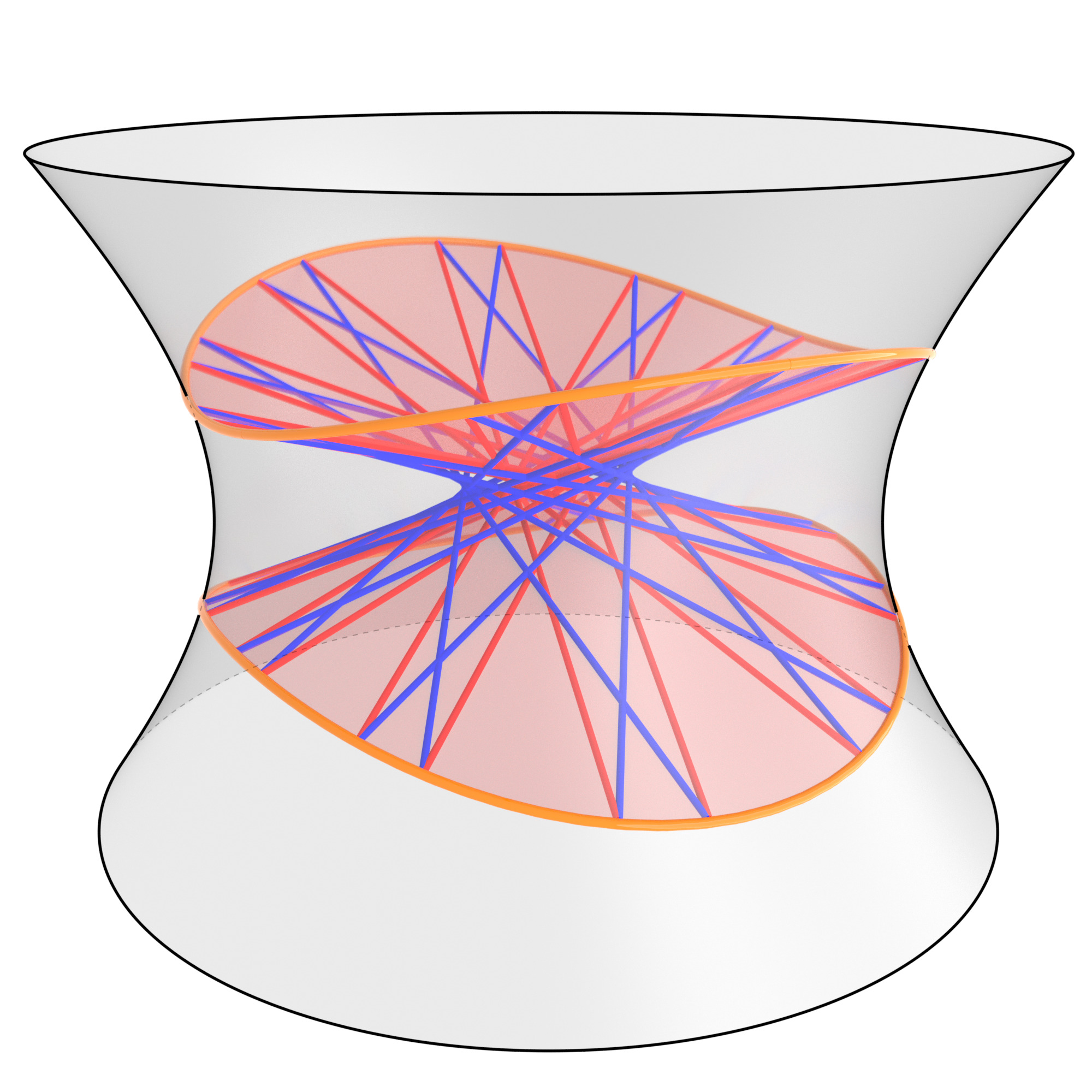}
  \hspace{\fill}
  \raisebox{0.01\textwidth}{
    \includegraphics[width=0.36\textwidth]{hyp_cic_ellipse_stereog_projection_1}
  }
  \hspace{\fill}
  \newline
  \mbox{}
  \hspace{\fill}
  \includegraphics[width=0.38\textwidth]{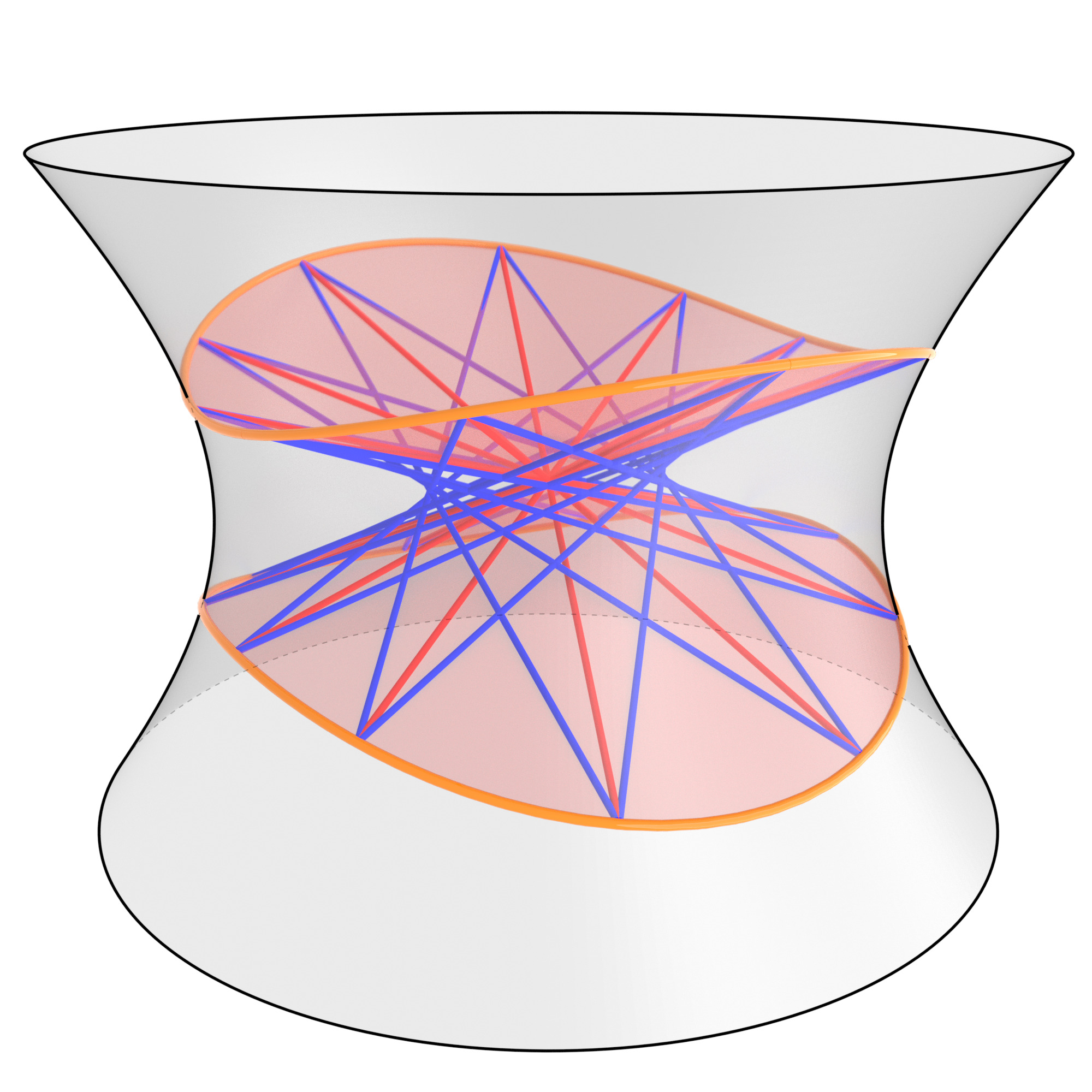}
  \hspace{\fill}
  \raisebox{0.01\textwidth}{
    \includegraphics[width=0.36\textwidth]{hyp_ic_ellipse_stereog_projection_1}
  }
  \hspace{\fill}
  \caption{
    Periodic checkerboard incircular net in hyperbolic geometry ($\varepsilon=-1$)
    tangent to an ellipse with $\alpha = 0.5$, $\beta = 0.3$, $N = 11$, and $s=0.23$ (\emph{top}) / $s = 0$ (\emph{bottom}).
    Represented on the Laguerre quadric (\emph{left}) and on the Poincaré disk model of hyperbolic geometry (\emph{right}).
  }
\label{fig:periodic-cbic-net-hyperbolic-ellipse}
\end{figure}

\subsubsection{Parametrization of checkerboard incircular nets tangent to a hyperbola}
\label{sec:hyperbola}
Consider a cone $\mathcal{C}$ given by
\begin{equation}
  \alpha^2 x_1^2 - \beta^2 x_2^2 - x_3^2 = 0
  \label{eq:hyperbola-cone}
\end{equation}
with
\begin{equation}
  \label{eq:hyperbola-cone-coefficients}
  \alpha, \beta > 0,\qquad
  1 + \varepsilon \alpha^2, 1 - \varepsilon \beta^2 > 0.
\end{equation}
It intersects the Laguerre quadric $\lag$ given by
\begin{equation}
  x_1^2 + x_2^2 + \varepsilon x_3^2 - x_4^2 = 0
  \label{eq:laguerre-quadric-alt}
\end{equation}
in the hypercycle base curve $\lag \cap \mathcal{C}$ (see Figure \ref{fig:hypercycle-base-curves-hyperbola}).
\begin{remark}
  In the elliptic plane all generic conics are ellipses.
  Correspondingly, for $\varepsilon = 1$ the case \eqref{eq:hyperbola-cone} is equivalent to \eqref{eq:ellipse-cone}.
\end{remark}
\begin{figure}
  \centering
  \begin{overpic}[width=0.32\textwidth]{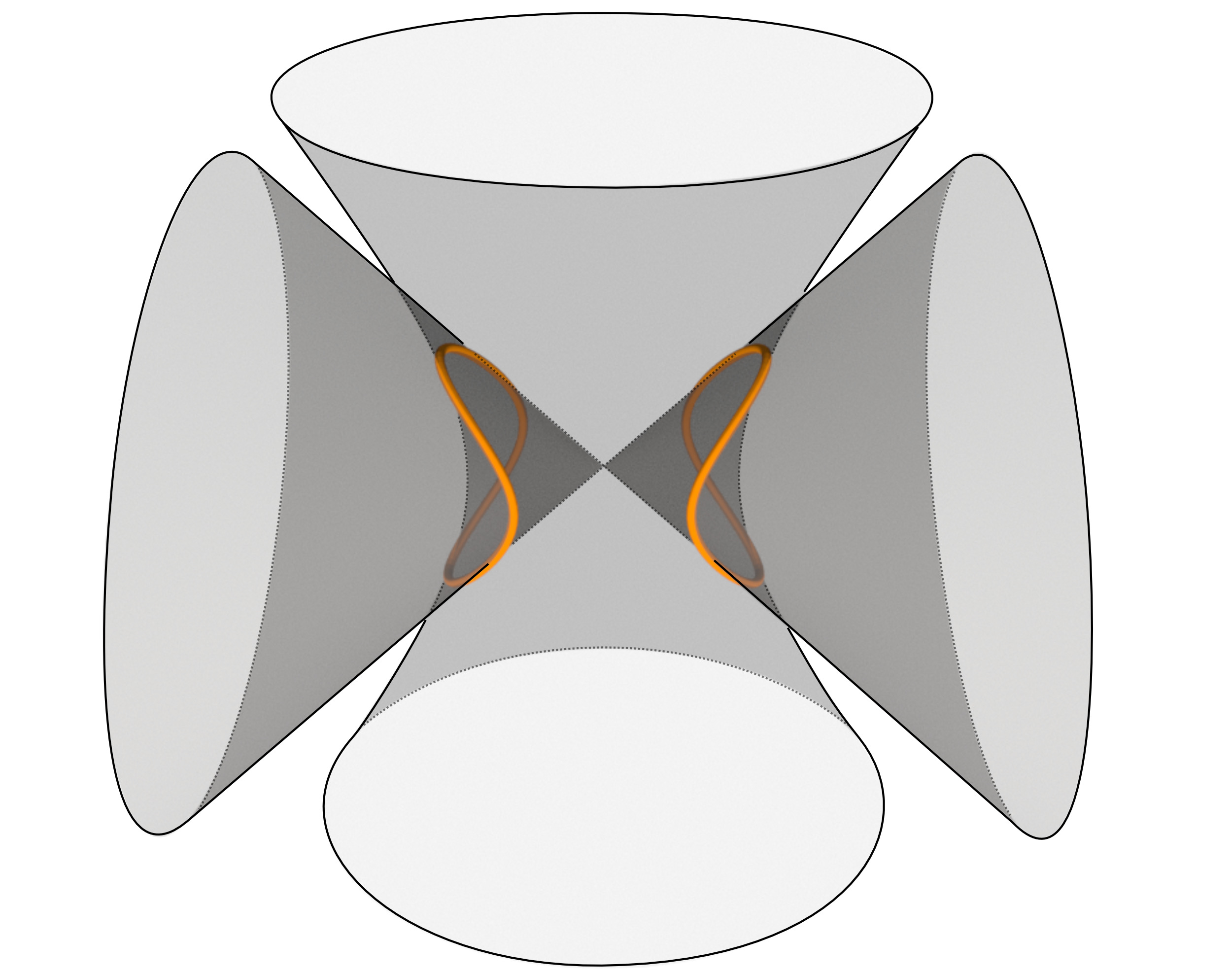}
    \put(60,80){$\laghyp$}
    \put(91,21){$\mathcal{C}$}
  \end{overpic}
  \hspace{1.5cm}
  \begin{overpic}[width=0.32\textwidth]{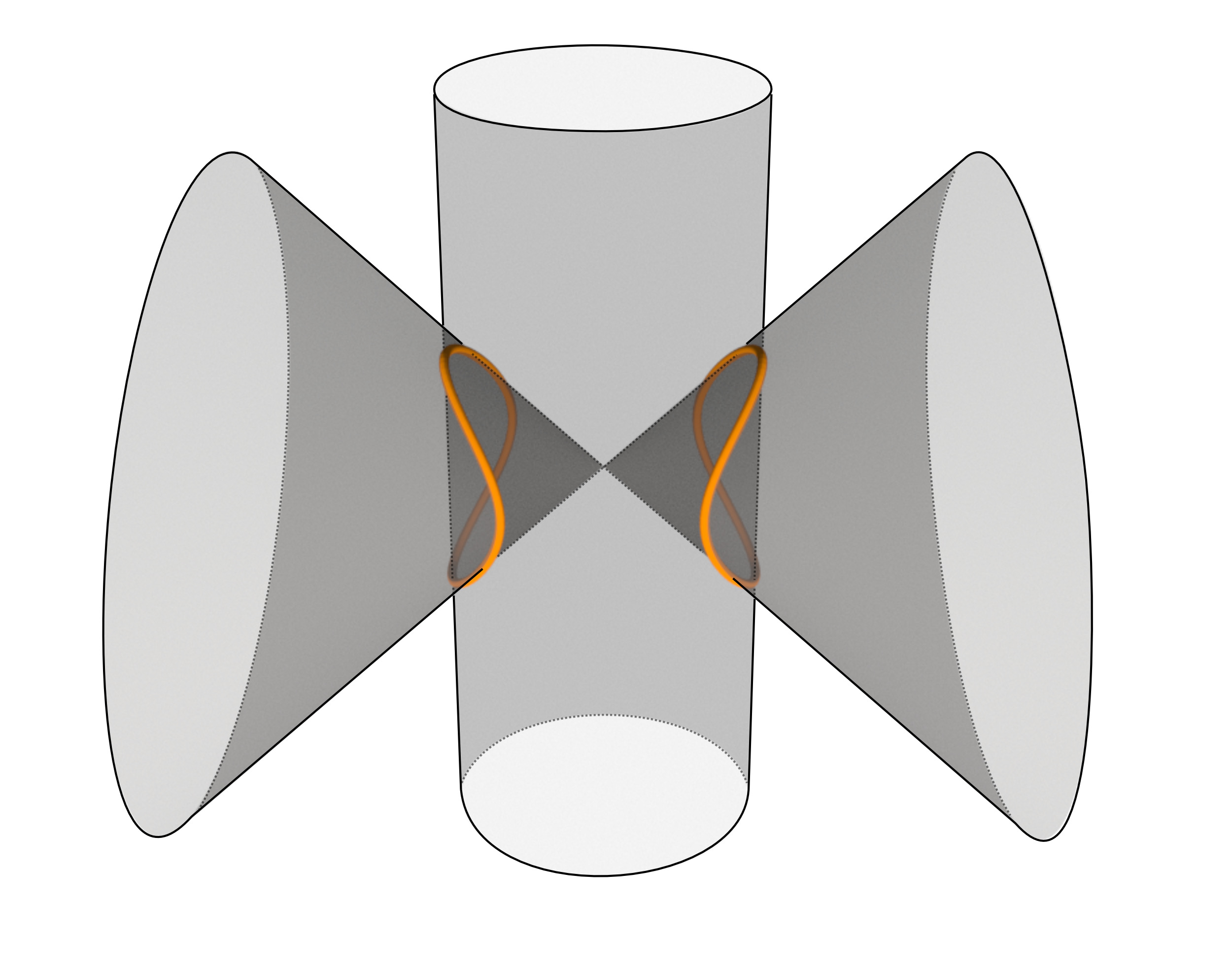}
    \put(58,76){$\lageucl$}
    \put(91,21){$\mathcal{C}$}
  \end{overpic}
  \caption{
    Hypercycle base curve $\lag \cap \mathcal{C}$ for a hyperbola
    in hyperbolic (\emph{left}), and Euclidean (\emph{right}) Laguerre geometry.
  }
\label{fig:hypercycle-base-curves-hyperbola}
\end{figure}
\begin{proposition}
  The hypercycle base curve $\lag \cap \mathcal{C}$ corresponds to the (oriented) tangent lines of a hyperbola
  given in homogeneous coordinates of the hyperbolic/Euclidean plane by
  \begin{equation}
    \frac{x_1^2}{\alpha^2} - \frac{x_2^2}{\beta^2} - x_3^2 = 0.
    \label{eq:hyperbola}
  \end{equation}
\end{proposition}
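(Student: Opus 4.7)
The plan is to follow closely the proof of Proposition~\ref{prop:tangent-ellipse}. By Lemma~\ref{lem:hypercycle-conic}, since $\mathcal{C}$ is a cone with vertex $\p{p} = [0,0,0,1]$, the curve $\lag \cap \mathcal{C}$ already corresponds to the oriented tangent lines of a conic (doubly covered with opposite orientations), so the remaining task is simply to identify this conic explicitly in each projective model. Note that the elliptic case $\varepsilon = 1$ was already subsumed by Proposition~\ref{prop:tangent-ellipse}, since \eqref{eq:hyperbola-cone} and \eqref{eq:ellipse-cone} are projectively equivalent there; so only $\varepsilon \in \{-1, 0\}$ really need treatment.

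For the hyperbolic case ($\varepsilon = -1$), the base plane $\p{p}^\perp \simeq \RP^2$ carries the absolute conic $x_1^2 + x_2^2 - x_3^2 = 0$ obtained by restricting $\lag$. Under $\pi_{\p{p}}$ the base curve maps onto $\mathcal{C} \cap \p{p}^\perp$, i.e.\ the conic $\alpha^2 x_1^2 - \beta^2 x_2^2 - x_3^2 = 0$. According to Section~\ref{sec:hyperbolic-laguerre-geometry}, these projected points are precisely the poles, with respect to the absolute conic, of the oriented tangent lines of the hypercycle. The envelope is therefore the polar conic of $\mathrm{diag}(\alpha^2,-\beta^2,-1)$ with respect to $\mathrm{diag}(1,1,-1)$; writing this out as $A K^{-1} A$ yields $\mathrm{diag}(1/\alpha^2, -1/\beta^2, -1)$, which is exactly~\eqref{eq:hyperbola}.

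For the Euclidean case ($\varepsilon = 0$) the absolute conic degenerates to a doubly counted hyperplane and the same polarity computation becomes singular. Here I would instead pass to the cyclographic model from Appendix~\ref{sec:euclidean-laguerre-geometry} and invoke the corresponding statement from \cite{BST} for the ellipse, whose proof transfers to the hyperbola verbatim under the substitution $\beta^2 \mapsto -\beta^2$.

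The argument is a routine polarity computation with no substantive obstacle---it mirrors the proof of Proposition~\ref{prop:tangent-ellipse} line for line. The only point deserving attention is that the inequalities~\eqref{eq:hyperbola-cone-coefficients} on $\alpha, \beta$ ensure the resulting hyperbola has real tangent lines in the ambient space form; in the hyperbolic case this precisely singles out the convex hyperbola scenario mentioned in Section~\ref{sec:conics}, avoiding concave hyperbolas, deSitter hyperbolas, and semihyperbolas.
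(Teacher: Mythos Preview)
Your proposal is correct and follows exactly the approach the paper intends: the paper omits the proof of this proposition entirely, since it is analogous to Proposition~\ref{prop:tangent-ellipse}, and your argument mirrors that ellipse proof line for line (project to $\p{p}^\perp$, take the polar conic, defer the Euclidean case to \cite{BST}). Your explicit polarity computation $A K^{-1} A = \mathrm{diag}(1/\alpha^2,-1/\beta^2,-1)$ simply spells out what the paper's ellipse proof abbreviates as ``its polar conic is given by~\eqref{eq:ellipse}.''
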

\begin{proposition}
  The intersection curve $\lag \cap \mathcal{C}$ consists of two components
  which are parametrized in terms of Jacobi elliptic functions by
  \begin{equation}
    \p{v}_{\pm}(u) =
    \left[
      \frac{1}{\sqrt{1 + \varepsilon\alpha^2}}\, \jac{dn}(u,k),~
      \frac{\alpha}{\sqrt{\alpha^2 + \beta^2}}\, \jac{sn}(u,k),~
      \frac{\alpha}{\sqrt{1 + \varepsilon\alpha^2}}\, \jac{cn}(u,k),~
      \pm 1
    \right],
  \end{equation}
  for $u \in \R$, where the modulus is given by
  \[
    k^2 = \frac{\alpha^2(1 - \varepsilon \beta^2)}{\alpha^2 + \beta^2}.
  \]
  Alternatively
  \begin{equation}
    \label{eq:hyperbola-parametrization2}
    \p{v}_{\pm}(\hat{u}) =
    \left[
      \frac{1}{\alpha}\, \jac{nc}(\hat{u},\hat{k}),~
      \frac{1}{\beta}\, \jac{sc}(\hat{u},\hat{k}),~
      1,~
      \pm \frac{\sqrt{1 + \varepsilon\alpha^2}}{\alpha}\, \jac{dc}(\hat{u},\hat{k})
    \right],
  \end{equation}
  for $\hat{u} \in \R$, where the modulus is given by
  \begin{equation}
    \label{eq:hyperbola-parametrization2-modulus}
    \hat{k}^2 = - \frac{\alpha^2(1 - \varepsilon\beta^2)}{\beta^2(1 + \varepsilon\alpha^2)}.
  \end{equation}
\end{proposition}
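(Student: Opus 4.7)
The plan is to verify the parametrizations by direct substitution into the two defining equations \eqref{eq:hyperbola-cone} and \eqref{eq:laguerre-quadric-alt}, using the elementary Jacobi identities
\[
\jac{cn}^2(u,k) + \jac{sn}^2(u,k) = 1,\qquad \jac{dn}^2(u,k) + k^2\jac{sn}^2(u,k) = 1,
\]
exactly as in the preceding ellipse case.  First I would insert the first parametrization into the cone equation $\alpha^2 v_1^2 - \beta^2 v_2^2 - v_3^2$: using $\jac{dn}^2 - \jac{cn}^2 = (1-k^2)\jac{sn}^2$, the result factors as a multiple of $\jac{sn}^2(u,k)$, and demanding that the coefficient vanish gives the single equation
\[
\frac{1-k^2}{1+\varepsilon\alpha^2} = \frac{\beta^2}{\alpha^2+\beta^2},
\]
which is equivalent to the claimed value $k^2 = \alpha^2(1-\varepsilon\beta^2)/(\alpha^2+\beta^2)$.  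Inserting the same parametrization into the Laguerre quadric equation and rewriting $\jac{dn}^2 = 1-k^2\jac{sn}^2$, $\jac{cn}^2 = 1-\jac{sn}^2$ turns the expression into another multiple of $\jac{sn}^2(u,k)$; its coefficient vanishes precisely under the same relation, so both equations are simultaneously satisfied with the stated modulus.

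The second step is to show that $\lag\cap\mathcal{C}$ has exactly two real components and that each is covered by one sign of the last coordinate.  As in Remark \ref{rem:base-curve-conventions}, over $\mathbb{C}$ the curve $\lag\cap\mathcal{C}$ is smooth of genus one, i.e.\ an elliptic curve, and the real locus of the affine variety $\{v_4 = \pm 1\}$ has two components distinguished by the sign.  Under \eqref{eq:hyperbola-cone-coefficients} the coefficients $1/\sqrt{1+\varepsilon\alpha^2}$ and $\alpha/\sqrt{1+\varepsilon\alpha^2}$ are real and $k^2\in(0,1)$ since $0 < \alpha^2(1-\varepsilon\beta^2) < \alpha^2+\beta^2$, so $\p{v}_\pm(u)$ runs through $\R$-points.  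A standard degree count (the composition with any coordinate projection is a meromorphic function of degree two on the elliptic curve) then shows that these parametrizations are surjective onto the two real components, and the sign in the fourth slot pins down which component.

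For the alternative parametrization I would use the real Jacobi reciprocal-modulus transformation
\[
\jac{nc}(\hat u,\hat k) = \jac{dn}(u,k),\qquad
\jac{sc}(\hat u,\hat k) = \tfrac{1}{\sqrt{1-k^2}}\,\jac{sn}(u,k)\cdot\tfrac{1}{\jac{dn}(u,k)/\ldots}
\]
(the precise form I would look up in \cite{NIST, WW}), which rescales the parameter as $\hat u = \sqrt{1-k^2}\,u$ or similar and sends $k\mapsto\hat k$ with $\hat k^2 = k^2/(k^2-1)$.  Substituting the claimed $k^2$ immediately gives $\hat k^2 = -\alpha^2(1-\varepsilon\beta^2)/\bigl(\beta^2(1+\varepsilon\alpha^2)\bigr)$, matching \eqref{eq:hyperbola-parametrization2-modulus}.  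Dividing every entry of $\p{v}_\pm(u)$ by $\jac{cn}(u,k)/\alpha$, which is a harmless rescaling in homogeneous coordinates, produces precisely \eqref{eq:hyperbola-parametrization2}.  Alternatively, one can bypass the transformation entirely and verify \eqref{eq:hyperbola-parametrization2} directly by the same substitution strategy as in paragraph one, using $\jac{nc}^2 - \jac{sc}^2 = 1$ and $\jac{dc}^2 - \hat k^2\jac{sc}^2 = 1-\hat k^2$.

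The main obstacle I anticipate is the bookkeeping of signs and branches in connecting the two parametrizations, since $\hat k^2$ is negative and so $\hat u$ lives most naturally on an imaginary axis of the elliptic curve; care is needed to see that $\jac{nc}(\hat u,\hat k)$, $\jac{sc}(\hat u,\hat k)$, $\jac{dc}(\hat u,\hat k)$ nonetheless produce real values for $\hat u\in\R$ under \eqref{eq:hyperbola-cone-coefficients}.  Everything else is routine: once the two Pythagorean-type Jacobi identities are applied, both the cone and the Laguerre quadric equations reduce to a single scalar relation that pins down the modulus, exactly as in the elliptic case treated before the proposition.
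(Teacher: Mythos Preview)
Your approach is essentially the paper's own: the analogous ellipse proposition is proved by direct substitution of the Jacobi parametrization into the two quadric equations using $\jac{cn}^2+\jac{sn}^2=1$ and $\jac{dn}^2+k^2\jac{sn}^2=1$, and the hyperbola proposition is left without an explicit proof, with only a remark afterward recording the relevant Jacobi transformations $\jac{dc}(u,k)=\jac{nc}(\hat u,\hat k)$, $\jac{sc}(u,k)=\tfrac{1}{\sqrt{1-k^2}}\jac{sc}(\hat u,\hat k)$, $\jac{nc}(u,k)=\jac{dc}(\hat u,\hat k)$ with $\hat u=\sqrt{1-k^2}\,u$, $\hat k^2=k^2/(k^2-1)$. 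Your substitution check is correct and matches this; your added paragraph on surjectivity onto the two real components is more than the paper provides but is reasonable. Two small slips: the transformation you sketched ($\jac{nc}(\hat u,\hat k)=\jac{dn}(u,k)$) is not the right one---you need the $\jac{dc}\leftrightarrow\jac{nc}$ swap above---and the rescaling factor to normalize the third coordinate to $1$ is $\frac{\alpha}{\sqrt{1+\varepsilon\alpha^2}}\jac{cn}$, not $\jac{cn}/\alpha$.
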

%
%
\begin{remark}\
  \nobreakpar
  \begin{enumerate}
  \item
    The two parametrizations are related by the real Jacobi transformations
    \[
      \jac{dc}(u,k) = \jac{nc}(\hat{u},\hat{k}),\quad
      \jac{sc}(u,k) = \frac{1}{\sqrt{1 - k^2}}\jac{sc}(\hat{u},\hat{k}),\quad
      \jac{nc}(u,k) = \jac{dc}(\hat{u},\hat{k}).
    \]
    where
    \[
      \hat{u} = \sqrt{1 - k^2}\, u,\qquad
      \hat{k}^2 = \frac{k^2}{k^2 - 1},\qquad
      \jac{dc} = \frac{\jac{dn}}{\jac{cn}},\ \jac{sc} = \frac{\jac{sn}}{\jac{cn}},\ \jac{nc} = \frac{\jac{1}}{\jac{cn}}.
    \]
  \item
    All points from Remark \ref{rem:base-curve-conventions} also apply to this parametrization.
  \end{enumerate}
\end{remark}
\begin{figure}[h]
  \centering
  \begin{overpic}[width=0.48\textwidth]{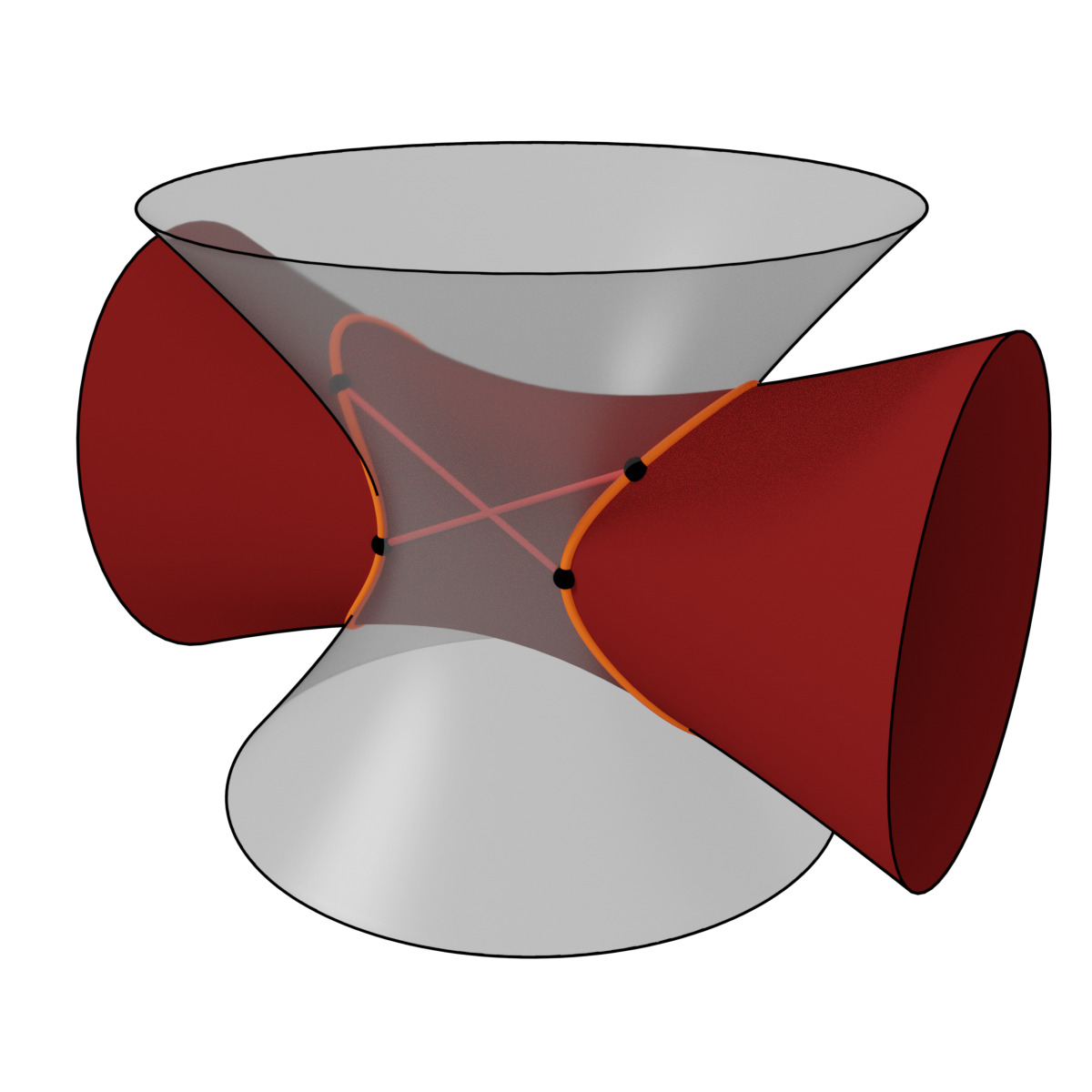}
    \definecolor{darkred}{HTML}{831b1a}
    \put(90,20){$\color{darkred}\lag$}
    \put(87,80){$\mathcal{C}$}
    \put(15,64){\color{white}$\p{v}_+(u)$}
    \put(11,49){\color{white}$\p{v}_+(\tilde u + s)$}
    \put(61,56){\color{white}$\p{v}_-(\tilde u)$}
    \put(55,46){\color{white}$\p{v}_-(u + s)$}
  \end{overpic}\qquad
  \raisebox{0.8cm}{
    \begin{overpic}[width=0.3\textwidth]{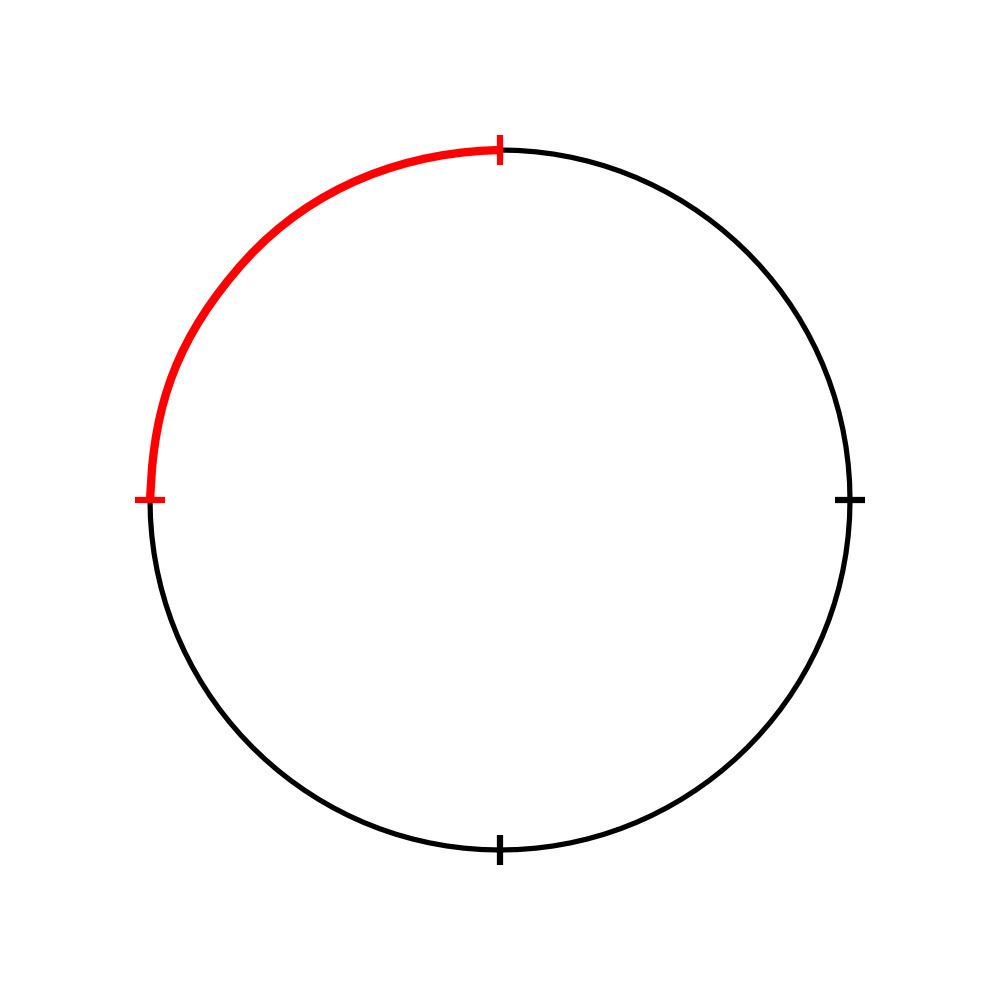}
      \put(47,47){\Large$\lambda$}
      \put(-5,48){$\mathcal{C}, \infty$}
      \put(89,48){$\varepsilon$}
      \put(42,92){$-\frac{1}{\alpha^2}$}
      \put(47,5){$\frac{1}{\beta^2}$}
      \put(79,77){\tiny$(+++-)$}
      \put(-3,77){\tiny$(-++-)$}
      \put(-3,20){\tiny$(+---)$}
      \put(79,20){\tiny$(++--)$}
    \end{overpic}
  }
  \caption{
    \emph{Left:}
    Four coplanar points on a hypercycle base curve $\lag \cap \mathcal{C}$.
    The two lines are rulings from a common hyperboloid in the pencil corresponding to $\lag \cap \mathcal{C}$.
    \emph{Right:}
    The parameter $\lambda$ for the pencil $\lag \wedge \mathcal{C}$ as given by \eqref{eq:pencil-hyperbola}.
    The four values $-\frac{1}{\alpha^2}, \varepsilon, \frac{1}{\beta^2}, \infty$ correspond to the degenerate quadrics in the pencil.
    In between, the signature of the quadrics from the pencil are given.
    The function \eqref{eq:lambda-hyperbola} takes values in $[-\infty, -\tfrac{1}{\alpha^2}]$ and corresponds to hyperboloids whose rulings intersect both components of the base curve.
  }
  \label{fig:planarity-and-pencil-hyperbola}
\end{figure}
\begin{proposition}\
  \nobreakpar
  \begin{enumerate}
  \item
    Let $u, \tilde{u}, s \in \R$.
    Then the four points $\p{v}_+(u), \p{v}_-(u+s), \p{v}_-(\tilde{u}), \p{v}_+(\tilde{u} + s)$ are coplanar (see Figure \ref{fig:planarity-and-pencil-hyperbola}, \textit{left}).
  \item
    Let $s \in \R$.
    Then the lines $\p{v}_+(u)\wedge\p{v}_-(u+s)$ with $u \in \R$
    constitute one family of rulings of a common hyperboloid in the pencil $\lag \wedge \mathcal{C}$
    \begin{equation}
      \label{eq:pencil-hyperbola}
      (1 + \lambda\alpha^2)x_1^2 + (1 - \lambda\beta^2)x_2^2 + (\varepsilon - \lambda)x_3^2 - x_4^2 = 0
    \end{equation}
    given by
    \begin{equation}
      \label{eq:lambda-hyperbola}
      \lambda(s) = -\frac{1}{\beta^2}\jac{cs}^2(\tfrac{s}{2},k) - \frac{1}{\alpha^2}\jac{ns}^2(\tfrac{s}{2}, k),\qquad
      \text{where}~ \jac{cs} = \frac{\jac{cn}}{\jac{sn}},\ \jac{ns} = \frac{\jac{1}}{\jac{sn}}.
    \end{equation}
    The second family of rulings is given by the lines $\p{v}_+(u)\wedge\p{v}_-(u-s)$ with $u \in \R$.
  \end{enumerate}
\end{proposition}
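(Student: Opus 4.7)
The plan is to mirror the proof strategy of Proposition~\ref{prop:parametrization-coplanarity} used in the ellipse case, adapting the addition-theorem arithmetic to the hyperbola parametrization. First I would work with the parametrization in terms of $(\jac{dn},\jac{sn},\jac{cn})$ rather than \eqref{eq:hyperbola-parametrization2}, since in that form standard Jacobi addition theorems and the identity \eqref{eq:Jacobi-identity} apply directly.

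For statement (i) I would verify the component-exchange relation
\[
\p{v}_\pm(u) = \p{v}_\mp(2i\KK'(k)-u),
\]
which for this parametrization rests on the imaginary quarter-period shifts of $\jac{dn}$, $\jac{sn}$, $\jac{cn}$ (analogous to \eqref{eq:elliptic-curve-components}). Substituting then reduces the $4\times 4$ determinant for $\p{v}_+(u), \p{v}_-(u+s), \p{v}_-(\tilde u), \p{v}_+(\tilde u+s)$ to one of the form
\[
\det\bigl(\p{v}_+(z_1),\p{v}_+(z_2),\p{v}_+(z_3),\p{v}_+(z_4)\bigr),\qquad z_1+z_2+z_3+z_4 = 0,
\]
which vanishes by exactly the Jacobi identity \eqref{eq:Jacobi-identity}, since the rows reduce (after pulling out constant column factors) to rows of the form $(\jac{cn},\jac{sn},\jac{dn},1)$ (now in the order \jac{dn},\jac{sn},\jac{cn},1, with columns merely reordered).

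For statement (ii) I would invoke Lemma~\ref{lem:pencils-and-lines}: since $\p{v}_+(u)$ and $\p{v}_-(u+s)$ both lie on every member of the pencil $\lag\wedge\mathcal{C}$, there is a unique quadric $\mathcal{Q}_{\lambda(s)}$ of the pencil \eqref{eq:pencil-hyperbola} containing their connecting line. Denoting the bilinear form of $\mathcal{C}$ by $\scalarprod{\cdot}{\cdot}_{\mathcal{C}}$, the parameter $\lambda$ is determined by $\scalarprod{v_+(u)}{v_-(u+s)} + \lambda\scalarprod{v_+(u)}{v_-(u+s)}_{\mathcal{C}} = 0$, which unfolds into a linear combination of $\jac{dn}(u)\jac{dn}(u+s)$, $\jac{sn}(u)\jac{sn}(u+s)$, $\jac{cn}(u)\jac{cn}(u+s)$ and $1$. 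Applying the analogue of Lemma~\ref{lem:linear-combination-addition-theorems} (same proof, just permuting the roles of $\jac{cn}$ and $\jac{dn}$ to match the signs in \eqref{eq:hyperbola-cone} and the hyperbola's coefficient $-\beta^2$) yields two equivalent linear equations for $\lambda$ whose solution simplifies, via the half-argument identities $\jac{cs}^2(\tfrac{s}{2}) = \tfrac{\jac{dn}(s)+\jac{cn}(s)}{1-\jac{cn}(s)}$ and $\jac{ns}^2(\tfrac{s}{2}) = \tfrac{1+\jac{dn}(s)}{1-\jac{cn}(s)}$, to the claimed formula \eqref{eq:lambda-hyperbola}. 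Independence of the parameter on $u$ confirms that all lines $\p{v}_+(u)\wedge\p{v}_-(u+s)$, $u\in\R$, lie on the single hyperboloid $\mathcal{Q}_{\lambda(s)}$; since any two such lines are skew (they would otherwise correspond to an extra coplanar quadruple of base points), they form one family of rulings. The other family is then $\p{v}_+(u)\wedge\p{v}_-(u-s)$, because by part~(i) each such line meets every line of the first family, so by Lemma~\ref{lem:pencils-and-lines} it lies on the same hyperboloid (noting $\lambda(s)=\lambda(-s)$) but in the opposite ruling family.

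The main obstacle I anticipate is bookkeeping: the sign changes in the hyperbola form \eqref{eq:hyperbola-cone} relative to \eqref{eq:ellipse-cone} flip several coefficients, and the analogue of Lemma~\ref{lem:linear-combination-addition-theorems} must be restated with $\jac{cn}$ and $\jac{dn}$ interchanged (reflecting that here the $\jac{sn}\jac{sn}$ term couples to $\beta^2$ with opposite sign). Once the correct bilinear identities for products $\jac{dn}(u)\jac{dn}(u+s)$ etc. are tabulated, the rest is routine and mimics the ellipse computation. No genuinely new analytic input beyond the classical Jacobi addition theorems is required.
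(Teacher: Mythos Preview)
Your proposal is correct and mirrors the paper's approach exactly: the paper gives no separate proof for the hyperbola proposition, relying implicitly on the analogy with the ellipse case (Proposition~\ref{prop:parametrization-coplanarity}), which is precisely what you outline. One small correction: you do not need an analogue of Lemma~\ref{lem:linear-combination-addition-theorems} with $\jac{cn}$ and $\jac{dn}$ interchanged---the lemma as stated already handles an arbitrary linear combination $\rho_{\jac{cn}}\jac{cn}\jac{cn}+\rho_{\jac{sn}}\jac{sn}\jac{sn}+\rho_{\jac{dn}}\jac{dn}\jac{dn}+\rho_1$, so it applies directly with the new coefficients $\rho_{\jac{dn}}=\tfrac{1+\lambda\alpha^2}{1+\varepsilon\alpha^2}$, $\rho_{\jac{sn}}=\tfrac{\alpha^2(1-\lambda\beta^2)}{\alpha^2+\beta^2}$, $\rho_{\jac{cn}}=\tfrac{(\varepsilon-\lambda)\alpha^2}{1+\varepsilon\alpha^2}$, $\rho_1=1$; solving either of the two resulting equations (the second is cleaner) gives \eqref{eq:lambda-hyperbola} via exactly the half-argument identities you quote.
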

\begin{remark}\
  \nobreakpar
  \begin{enumerate}
  \item
    The alternative expression for $\lambda$ in terms of $\hat{s} = \sqrt{1-k^2}s$ and $\hat{k}^2 = \frac{k^2}{k^2-1}$ is given by
    \[
      \lambda(\hat{s}) = - \varepsilon \jac{cs}^2(\tfrac{\hat{s}}{2},\hat{k}) - \frac{1}{\alpha^2}\jac{ns}^2(\tfrac{\hat{s}}{2}, \hat{k}).
    \]
  \item
    The four degenerate quadrics from the pencil \eqref{eq:pencil-hyperbola} are given by
    the values $\lambda = -\frac{1}{\alpha^2}, \varepsilon, \frac{1}{\beta^2}, \infty$,
    where $\lambda = \infty$ corresponds to the cone $\mathcal{C}$ (see Figure \ref{fig:planarity-and-pencil-hyperbola}, right).
    By construction, the hyperboloids obtained by \eqref{eq:lambda-hyperbola} have rulings connecting the two components of the base curve,
    which corresponds to the fact that
    \[
      -\infty \leq \lambda(s) \leq -\frac{1}{\alpha^2}
    \]
    for $s \in \R$ with $\lambda(0) = \infty$ and $\lambda(2\KK(k)) = -\frac{1}{\alpha^2}$.
  \end{enumerate}
\end{remark}
This allows to parametrize a checkerboard incircular net
tangent to a given hyperbola in the following way (see Figure~\ref{fig:periodic-cbic-net-hyperbolic-hyperbola}).
\begin{theorem}
  \label{thm:cbic-formulas-hyperbola}
  Let $\varepsilon \in \{-1, 0\}$.
  Then for $\alpha, \beta > 0$ with $1 + \varepsilon\alpha^2, 1 - \varepsilon\beta^2 > 0$,
  $s, \tilde{s} \in \R$, and $u_0^{\ell}, u_0^{m} \in \R$
  the two families of lines $\left(\ell_i\right)_{i\in\Z}$ and $\left(m_j\right)_{j\in\Z}$ given by
  \[
    \begin{aligned}
      \ell_{2k} &= \p{v}_+(u_0^{\ell} + k(s + \tilde{s}))\\
      \ell_{2k+1} &= \p{v}_-(u_0^{\ell} + k(s + \tilde{s}) + s)\\
      m_{2l} &= \p{v}_-(u_0^{m} + l(s + \tilde{s}))\\
      m_{2l+1} &= \p{v}_+(u_0^{m} + l(s + \tilde{s}) + s)
    \end{aligned}
  \]
  constitute a hyperbolic/Euclidean checkerboard incircular net (according to the value of~$\varepsilon$)
  tangent to the hyperbola
  \[
    \frac{x_1^2}{\alpha^2} - \frac{x_2^2}{\beta^2} - x_3^2 = 0.
  \]
\end{theorem}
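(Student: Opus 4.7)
My plan is to mirror the proof of Theorem~\ref{thm:cbic-formulas-ellipse} essentially verbatim, using the hyperbola analogues of Propositions~\ref{prop:tangent-ellipse} and \ref{prop:parametrization-coplanarity} that were already established in Section~\ref{sec:hyperbola}. The two facts I need are (a) every point $\p{v}_\pm(u)$ on the base curve $\lag\cap\mathcal{C}$ for the cone \eqref{eq:hyperbola-cone} lifts an oriented tangent line of the hyperbola \eqref{eq:hyperbola}, and (b) for all $u,\tilde u,s\in\R$ the quadruple $\p{v}_+(u),\p{v}_-(u+s),\p{v}_-(\tilde u),\p{v}_+(\tilde u+s)$ is coplanar in $\RP^3$. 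The first is exactly the hyperbola tangency proposition; the second is the coplanarity proposition preceding the statement.

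The first step is to verify that all lines $\ell_i$ and $m_j$ are tangent to the hyperbola. Since every $\ell_i$ and $m_j$ is of the form $\p{v}_\pm(\cdot)$ and hence lies on $\lag\cap\mathcal{C}$, tangency is immediate from the hyperbola analogue of Proposition~\ref{prop:tangent-ellipse}. The second step is to check the incircle condition in Definition~\ref{def:cbic-net}: for every $i,j$ with $i+j$ even, the four points $\ell_i,\ell_{i+1},m_j,m_{j+1}$ on $\lag$ must be coplanar, so that by Proposition~\ref{prop:circumscribed-quads} the corresponding Laguerre quadrilateral is circumscribed to an oriented circle. There are two sub-cases to treat.

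In the case $i=2k$, $j=2l$ I set $u:=u_0^\ell+k(s+\tilde s)$ and $\tilde u:=u_0^m+l(s+\tilde s)$, so that the four vertices become $\p{v}_+(u),\p{v}_-(u+s),\p{v}_-(\tilde u),\p{v}_+(\tilde u+s)$, which is coplanar by the coplanarity proposition. In the case $i=2k+1$, $j=2l+1$ the four vertices are
\[
\p{v}_-(u_0^\ell+k(s+\tilde s)+s),\; \p{v}_+(u_0^\ell+k(s+\tilde s)+s+\tilde s),\; \p{v}_+(u_0^m+l(s+\tilde s)+s),\; \p{v}_-(u_0^m+l(s+\tilde s)+s+\tilde s).
\]
Setting $u':=u_0^\ell+k(s+\tilde s)+s+\tilde s$ and $\tilde u':=u_0^m+l(s+\tilde s)+s$, this list rewrites as $\p{v}_+(u'),\p{v}_-(u'-\tilde s+\tilde s)=\p{v}_-(u'-\tilde s),\p{v}_+(\tilde u'),\p{v}_-(\tilde u'+\tilde s)$, which after relabeling $s\mapsto\tilde s$ (the coplanarity proposition is stated for an arbitrary shift parameter, and its proof via the Jacobi identity~\eqref{eq:Jacobi-identity} is indifferent to the sign of this shift) is again coplanar. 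Equivalently, one can invoke the identity $\p{v}_\pm(u)=\p{v}_\mp(2i\KK'(k)-u)$ coming from the connectedness of the complexified elliptic base curve and the addition law~\eqref{eq:Jacobi-identity} directly.

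I do not expect a genuine obstacle, since the hyperbola coplanarity proposition was proved in complete analogy with the ellipse case using the same Jacobi addition theorem. The only care needed is the bookkeeping of signs and arguments in the odd-odd sub-case, to confirm that the combinatorial pattern $(+,-,-,+)$ of components required by the coplanarity proposition is satisfied after the shift. Once that is verified, the conclusion that $(\ell_i)$ and $(m_j)$ form a hyperbolic or Euclidean checkerboard incircular net tangent to the prescribed hyperbola follows exactly as in Theorem~\ref{thm:cbic-formulas-ellipse}.
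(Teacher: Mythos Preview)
Your proposal is correct and follows exactly the approach the paper takes: the paper's proof of the ellipse analogue (Theorem~\ref{thm:cbic-formulas-ellipse}) consists of nothing more than citing Proposition~\ref{prop:parametrization-coplanarity} for the incircles and Proposition~\ref{prop:tangent-ellipse} for tangency, and the hyperbola theorem is understood to be proved identically via the corresponding hyperbola propositions. Your explicit verification of the two parity cases is more detailed than what the paper spells out (and contains a small typo in the odd--odd bookkeeping, where $\p{v}_-(u'-\tilde s+\tilde s)$ should simply read $\p{v}_-(u'-\tilde s)$), but the substance is the same.
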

\begin{remark}
  Periodicity and ``embeddedness'' are achieved as described in Section \ref{sec:ellipse}.
\end{remark}
\begin{figure}[H]
  \centering
  \hspace{\fill}
  \includegraphics[width=0.38\textwidth]{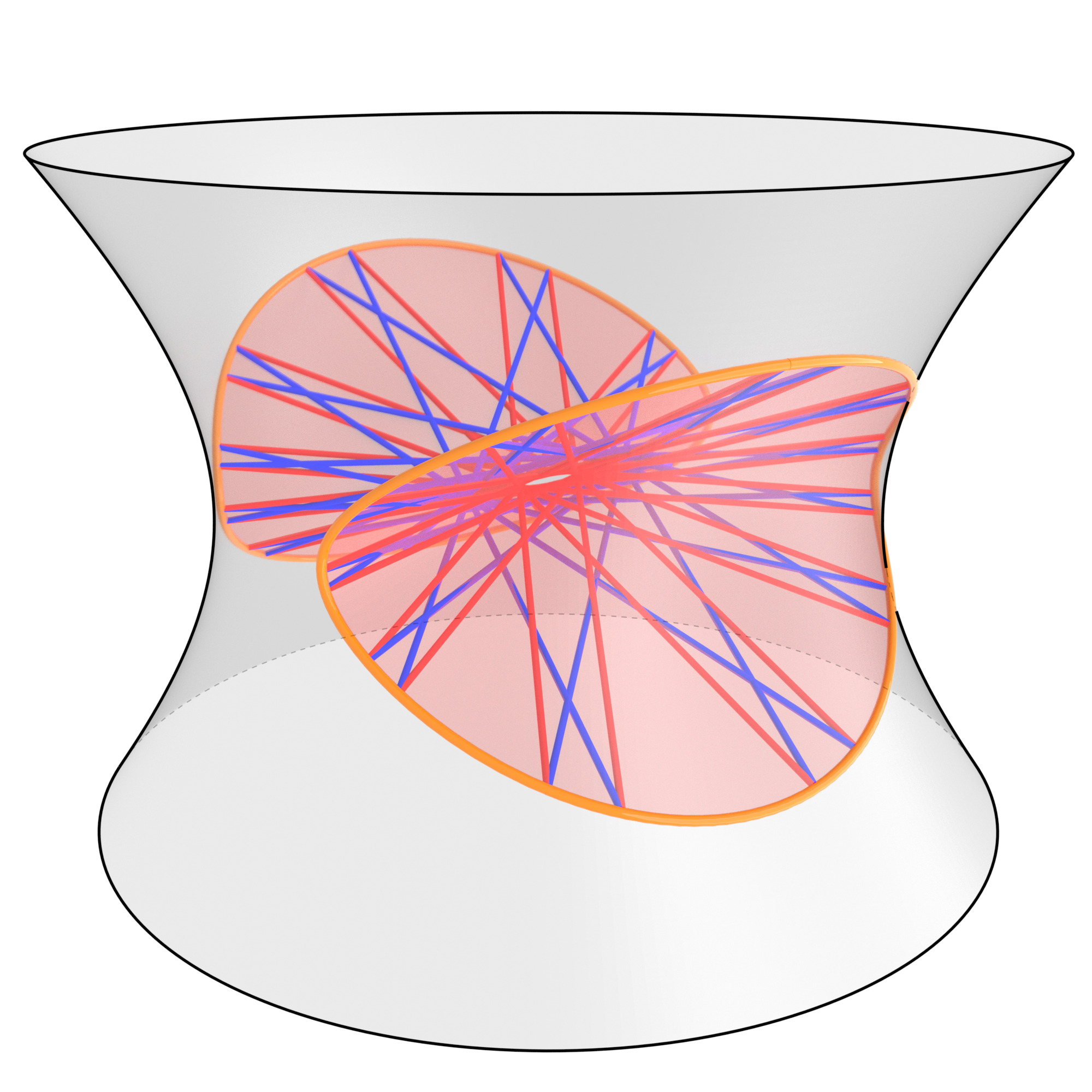}
  \hspace{\fill}
  \raisebox{0.01\textwidth}{
    \includegraphics[width=0.36\textwidth]{hyp_cic_hyperbola_stereog_projection_1}
  }
  \hspace{\fill}
  \newline
  \mbox{}
  \hspace{\fill}
  \includegraphics[width=0.38\textwidth]{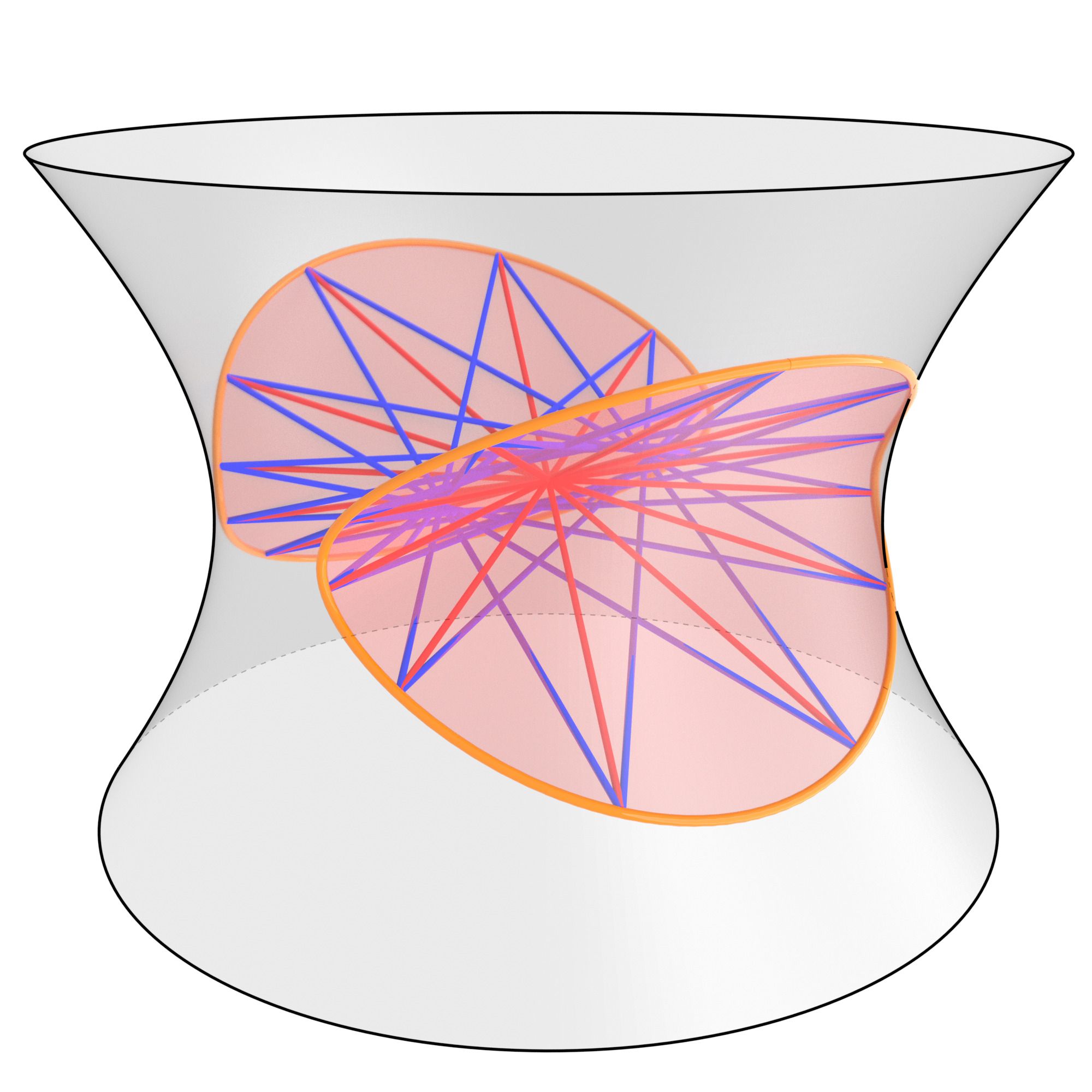}
  \hspace{\fill}
  \raisebox{0.01\textwidth}{
    \includegraphics[width=0.36\textwidth]{hyp_ic_hyperbola_stereog_projection_1}
  }
  \hspace{\fill}
  \caption{
    Periodic checkerboard incircular net in hyperbolic geometry ($\varepsilon=-1$)
    tangent to a hyperbola with $\alpha = 0.5$, $\beta = 0.25$, $N = 11$, and $s=0.23$ (\emph{top}) / $s = 0$ (\emph{bottom}).
    Represented on the Laguerre quadric (\emph{left}) and on the Poincaré disk model of hyperbolic geometry (\emph{right}).
  }
\label{fig:periodic-cbic-net-hyperbolic-hyperbola}
\end{figure}

\subsection{Checkerboard incircular nets as octahedral grids}
\label{sec:octahedral-grids}
According to Remark \ref{rem:laguerre-all-circles} a checkerboard incircular net
possesses more incircles than immediate from its definition.
The full symmetry of such a net is revealed when considering all these circles
and dividing its two families of lines $\left(\ell_i\right)_{i\in\Z}$, $\left(m_j\right)_{j\in\Z}$ into four families
\[
    \nu^{(1)}_{k_1} = \ell_{2k_1},\quad
    \nu^{(2)}_{k_2} = \ell_{-2k_2+1},\quad
    \nu^{(3)}_{k_3} = m_{2k_3},\quad
    \nu^{(4)}_{k_4} = m_{-2k_4+1},
\]
for $k_1, k_2, k_3, k_4 \in \Z$.
\begin{remark}
  The decomposition into four families of lines also seems natural after considering the formulas for checkerboard incircular nets given in Theorems \ref{thm:cbic-formulas-ellipse} and \ref{thm:cbic-formulas-hyperbola}, and more basically the identity \eqref{eq:Jacobi-identity}.
\end{remark}
\begin{proposition}
  For a checkerboard incircular net each quadrilateral $(\nu^{(1)}_{k_1}, \nu^{(2)}_{k_2}, \nu^{(3)}_{k_3}, \nu^{(4)}_{k_4})$ with
  \begin{equation}
    \label{eq:A3-root-system}
    k_1 + k_2 + k_3 + k_4 = 0,\qquad
    k_1, k_2, k_3, k_4 \in \Z
  \end{equation}
  is circumscribed.
\end{proposition}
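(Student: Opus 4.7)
My plan is to translate the coplanarity condition for circumscribability into the addition theorem \eqref{eq:Jacobi-identity} for Jacobi elliptic functions, using the hypercycle structure of Theorem \ref{thm:touching-hypercycle}. That theorem places every line of the checkerboard incircular net on a common hypercycle base curve $\lag \cap \cbichyp \subset \RP^3$, which generically is an elliptic curve. On such a curve, four points are coplanar in $\RP^3$ precisely when their parameters (in a uniformization by $\p{v}_+$) sum to an element of the period lattice, by Proposition \ref{prop:parametrization-coplanarity} and identity \eqref{eq:Jacobi-identity}.

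The argument proceeds in three steps. Because coplanarity is Lie-invariant and, by Theorem \ref{cbic-classification}, every Lie checkerboard incircular net is a Lie transformation of a hyperbolic, elliptic, or Euclidean one, I would first reduce (up to a further Lie equivalence to the diagonalizable case) to the explicit parametrizations of Theorem \ref{thm:cbic-formulas-ellipse} or \ref{thm:cbic-formulas-hyperbola}. Each line $\nu^{(i)}_{k_i}$ then takes the form $\p{v}_{\varepsilon_i}(\alpha_i + \sigma_i k_i (s + \tilde{s}))$ for a sign $\varepsilon_i \in \{+,-\}$ specifying the component of the base curve, a sign $\sigma_i \in \{+,-\}$ encoding the direction of the family index, and an offset $\alpha_i$ depending only on the family. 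Next, I would convert all four factors to the $\p{v}_+$-chart via the real reflection $\p{v}_-(u) = \p{v}_+(2iK'-u)$ of \eqref{eq:elliptic-curve-components}. The total parameter sum then splits as $\omega + L(k_1, k_2, k_3, k_4)$, where $\omega$ lies in the period lattice (a multiple of $2iK'$) and $L$ is a linear form with coefficients $\pm(s+\tilde{s})$. Finally, one verifies that the index reversals $\nu^{(2)}_{k_2} = \ell_{-2k_2+1}$ and $\nu^{(4)}_{k_4} = m_{-2k_4+1}$, combined with the alternating $\p{v}_\pm$-assignments of even and odd $\ell$- and $m$-lines in Theorem \ref{thm:cbic-formulas-ellipse}, conspire to make $L$ proportional to $k_1+k_2+k_3+k_4$. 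Under \eqref{eq:A3-root-system}, the total sum therefore lies in the period lattice, the four lifted points are coplanar by \eqref{eq:Jacobi-identity}, and the quadrilateral is circumscribed.

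The main obstacle is precisely this last sign verification: the two involutions at play --- orientation reversion $\sigma_{\p{p}}$ (swapping $\p{v}_+$ and $\p{v}_-$) and the index reversal built into $\nu^{(2)}$ and $\nu^{(4)}$ --- must combine to yield the symmetric $A_3$-type condition rather than an asymmetric variant. The extension from the diagonalizable parametric families back to every checkerboard incircular net then follows either by density of diagonalizable hypercycles inside the moduli of hypercycles, or more intrinsically by appealing to the cubic identity for coplanarity of four points on a smooth elliptic curve, of which \eqref{eq:Jacobi-identity} is just the Jacobi-function avatar.
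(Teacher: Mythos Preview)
The paper's proof is a one-line observation: the proposition is presented as a mere reindexing of Remark~\ref{rem:laguerre-all-circles}, which records that every quadrilateral $(\ell_i, m_j, \ell_{i+2k+1}, m_{j+2k+1})$ with $i+j$ even is circumscribed --- a fact already obtained from the Miquel-type Corollary~\ref{cor:cbic-all-circles}. Your route through the explicit Jacobi parametrizations of Theorems~\ref{thm:cbic-formulas-ellipse} and~\ref{thm:cbic-formulas-hyperbola} is considerably longer, covers only the diagonalizable families directly, and then needs a limiting argument to reach the general case; none of that machinery is invoked in the paper here.

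Your flagged ``main obstacle'' is genuine and in fact does not resolve the way you hope. Carrying out your computation with the formulas of Theorem~\ref{thm:cbic-formulas-ellipse} and converting everything to $\p{v}_+$ via \eqref{eq:elliptic-curve-components}, the four arguments attached to $(\nu^{(1)}_{k_1},\nu^{(2)}_{k_2},\nu^{(3)}_{k_3},\nu^{(4)}_{k_4})$ sum to
\[
  4i\KK' \;+\; (k_1 + k_2 - k_3 - k_4)\,(s+\tilde{s}),
\]
so the linear form $L$ is proportional to $k_1+k_2-k_3-k_4$, not $k_1+k_2+k_3+k_4$. The same mismatch appears if one pushes the paper's own argument through the stated definitions of the $\nu^{(i)}$: with $i=2k_1$, $j=2k_3$ in Remark~\ref{rem:laguerre-all-circles} one is forced to $k_2=-k_1-k$ and $k_4=-k_3-k$, hence $k_1+k_2-k_3-k_4=0$ while $k_1+k_2+k_3+k_4=-2k$ is unconstrained. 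The discrepancy is therefore a sign convention in the labeling of the $m$-families rather than a flaw in your method; replacing $(k_3,k_4)$ by $(-k_3,-k_4)$ --- equivalently setting $\nu^{(3)}_{k_3}=m_{-2k_3}$ and $\nu^{(4)}_{k_4}=m_{2k_4+1}$ --- makes both your computation and the paper's reformulation produce the $A_3$ condition $k_1+k_2+k_3+k_4=0$ as stated.
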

\begin{proof}
  This is a reformulation of the statement given in Remark \ref{rem:laguerre-all-circles},
  which describes the whole collection of incircles of a checkerboard incircular net.
\end{proof}
From \eqref{eq:A3-root-system} we find that the collection of incircles of a checkerboard incircular net
is naturally assigned to the points of an $A_3$ root-system (vertices of a \emph{tetrahedral-octahedral honeycomb lattice}, see Figure \ref{fig:octahedral-grid}, left), where
\[
  A_3 = \set{(k_1, k_2, k_3, k_4) \in \Z^4}{k_1 + k_2 + k_3 + k_4 = 0}.
\]
This correspondence can also be made geometric. To this end we identifying the four families of lines $(\nu^{(i)}_{k_i})_{k_i\in\Z}$, $i = 1,2,3,4$
with its corresponding points on the Laguerre quadric $\lag$ and denote its polar planes by $P^{(i)}_{k_i} = (\nu^{(i)}_{k_i})^\perp$
(or, in the Euclidean case, its dual planes by $P^{(i)}_{k_i} = (\nu^{(i)}_{k_i})^*$ in the cyclographic model).
\begin{figure}
  \centering
  \raisebox{.25\height}{
    \includegraphics[width=0.45\textwidth]{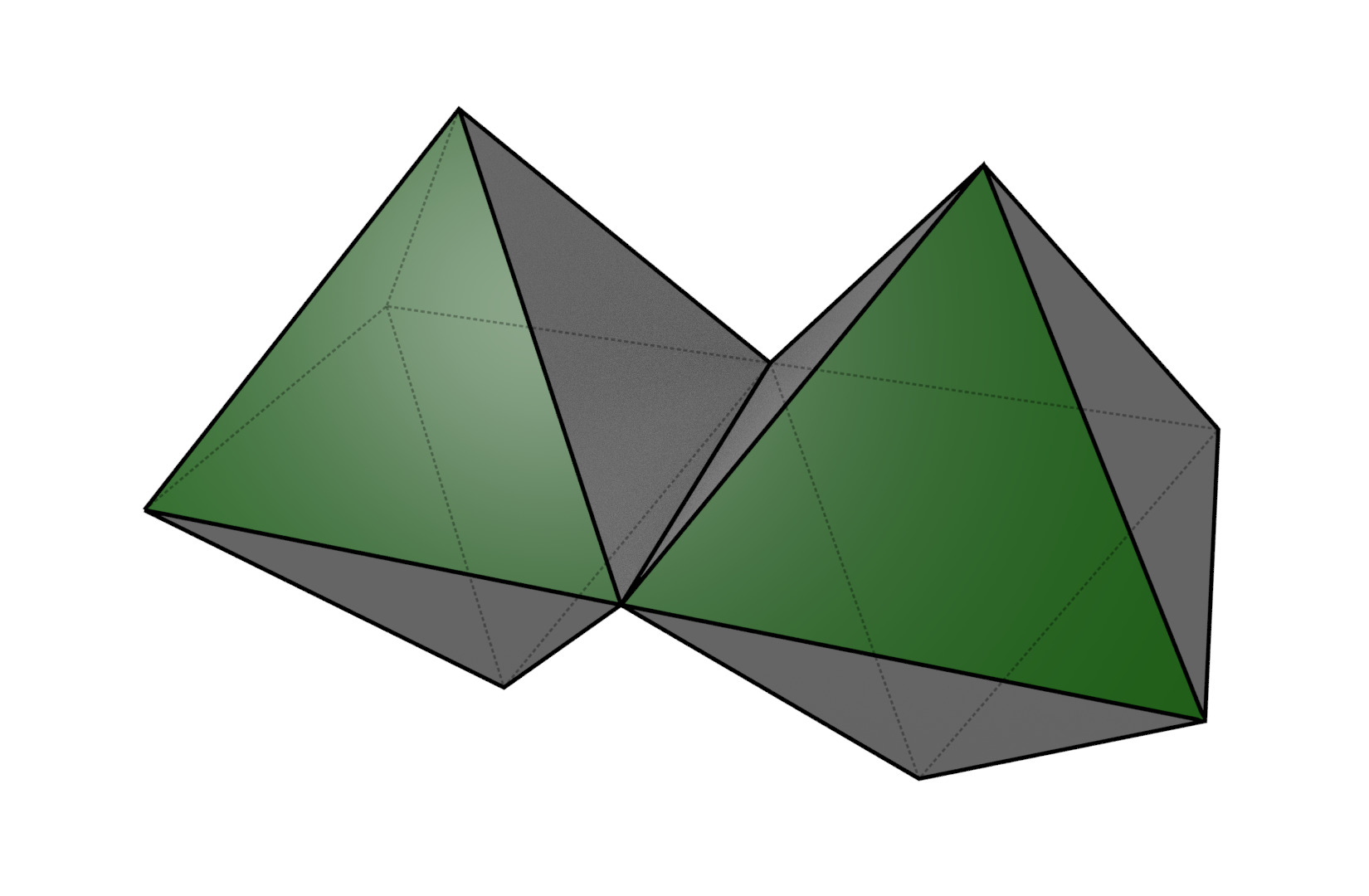}
  }
  \includegraphics[width=0.49\textwidth]{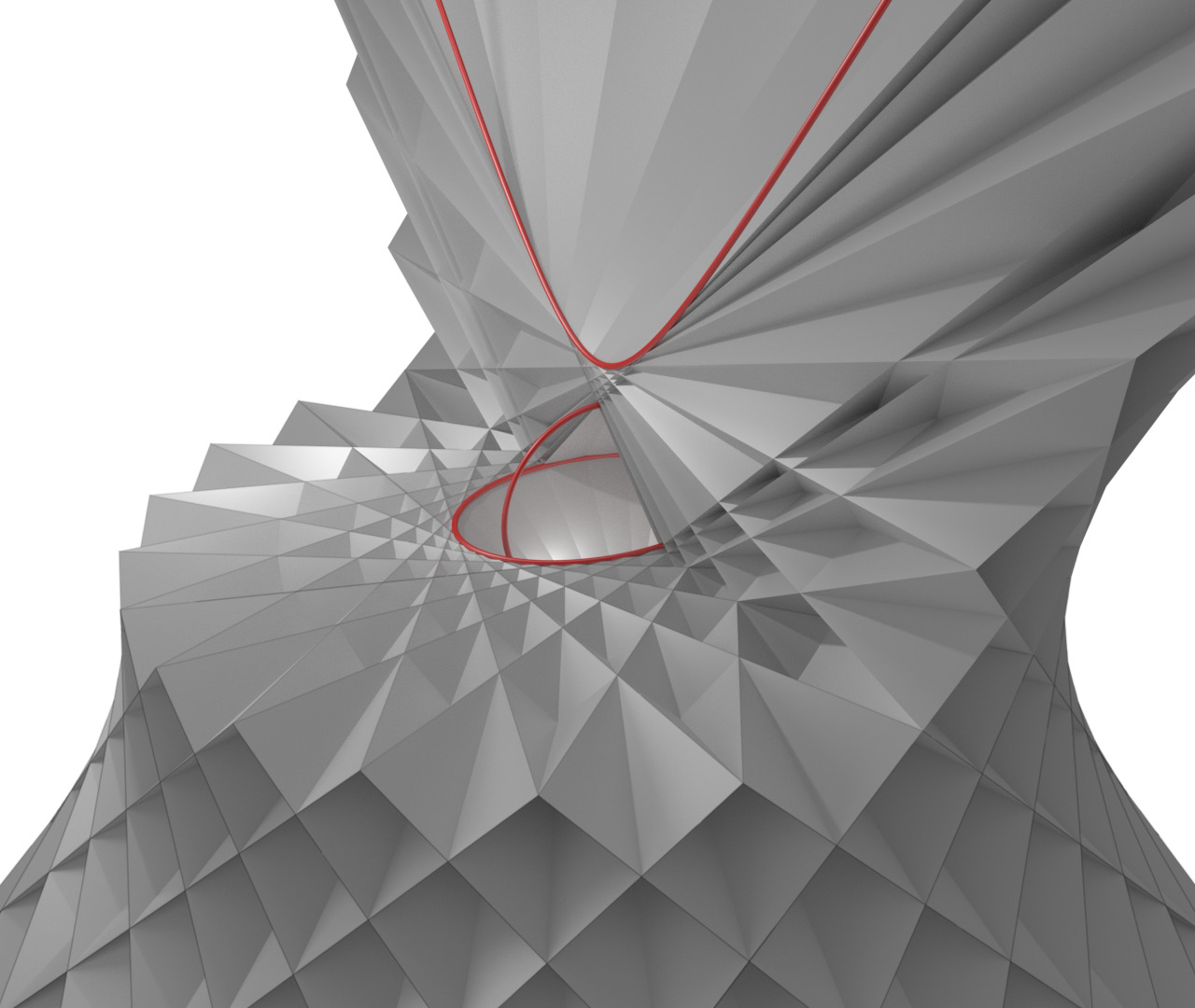}
  \caption{
    \emph{Left:} Two adjacent octahedra from an octahedral grid of planes.
    These correspond to the geometric configuration shown in Figure \ref{fig:lie-cbic-dimension}.
    \emph{Right:} Octahedral grid of planes corresponding to an Euclidean incircular net in the cyclographic model. All planes are tangent to the red conics, which are the degenerate dual quadrics in a dual pencil.
  }
\label{fig:octahedral-grid}
\end{figure}
\begin{proposition}
  \label{prop:octahedral-grid}
  The four families of planes $(P^{(i)}_{k_i})_{k_i\in\Z}$, $i = 1,2,3,4$ corresponding to a checkerboard incircular net
  constitute an \emph{octahedral grid of planes}, i.e.,\ for each
  \[
    k_1 + k_2 + k_3 + k_4 = 0,\qquad
    k_1, k_2, k_3, k_4 \in \Z
  \]
  the four planes $P^{(1)}_{k_1}$, $P^{(2)}_{k_2}$, $P^{(3)}_{k_3}$, $P^{(4)}_{k_4}$ intersect in a point (see Figure \ref{fig:octahedral-grid}, right, and cf.\ \cite{ABST}).
\end{proposition}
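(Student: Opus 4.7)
The plan is to derive Proposition \ref{prop:octahedral-grid} as an immediate polarity-dual of the preceding proposition, which asserts that for every quadruple $(k_1,k_2,k_3,k_4)$ with $k_1+k_2+k_3+k_4=0$ the four oriented lines $\nu^{(1)}_{k_1},\nu^{(2)}_{k_2},\nu^{(3)}_{k_3},\nu^{(4)}_{k_4}$ form a circumscribed Laguerre quadrilateral. By the definition of circumscribility in Laguerre geometry (and its Lie geometric counterpart, cf.\ Definitions of circumscribed quadrilaterals above), this means that the four corresponding points $\nu^{(i)}_{k_i}\in\lag$ lie in a common plane $\p{U}\subset\RP^3$.

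The main step is then to invoke the general projective fact that, under polarity with respect to the non-degenerate quadric $\lag$, a configuration of four coplanar points is mapped to a configuration of four concurrent hyperplanes, whose common point is precisely the pole $\p{U}^{\perp}$ of the plane $\p{U}$. Applying this observation to the four points $\nu^{(i)}_{k_i}\in\p{U}\cap\lag$ and their polar planes $P^{(i)}_{k_i}=(\nu^{(i)}_{k_i})^{\perp}$, one concludes
\[
  P^{(1)}_{k_1}\cap P^{(2)}_{k_2}\cap P^{(3)}_{k_3}\cap P^{(4)}_{k_4} \;=\; \p{U}^{\perp},
\]
which is a single point, as required.

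The only place that needs extra care is the Euclidean Laguerre case, where the Laguerre quadric $\lag=\lageucl$ is degenerate and polarity with respect to $\lag$ is not available in the usual sense. Following the convention already used in Corollary \ref{cor:cbic-dual-pencil}, I would instead pass to the cyclographic model and replace polarity by projective duality: the four coplanar points on $\lag$ correspond to four planes in the dual space concurring at the dual of their common plane. This gives the statement in the Euclidean setting as well, uniformly with the hyperbolic and elliptic ones.

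The argument is essentially a one-line polarity observation; I do not expect any substantive obstacle. The only subtlety worth remarking on is that the $A_3$-labeling in \eqref{eq:A3-root-system} matches the combinatorial pattern of the tetrahedral-octahedral honeycomb, so that each octahedral cell of the resulting grid of planes corresponds geometrically to one of the incircles of the checkerboard incircular net, explaining the term \emph{octahedral grid} and the depiction in Figure \ref{fig:octahedral-grid}.
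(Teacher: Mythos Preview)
Your argument is correct and matches the paper's approach: the proposition is stated without a formal proof environment, but the surrounding text makes the intended argument explicit when it says ``By polarity (or duality) the points $c_a$ correspond to the incircles of the checkerboard incircular net,'' which is exactly your observation that coplanarity of the four points $\nu^{(i)}_{k_i}$ (i.e., circumscribility from the preceding proposition) dualizes to concurrence of the polar planes $P^{(i)}_{k_i}$. Your careful handling of the degenerate Euclidean case via the cyclographic model is also precisely the convention the paper adopts.
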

\begin{remark}
  Generally, octahedral grids of planes have the property that all its planes are tangent to all quadrics of a dual pencil, or equivalently, to a certain developable surface (cf.\ \cite{Blacht, S}).
  In the case of checkerboard incircular nets this property is polar (or dual) to the property,
  that all the points $\nu^{(i)}_{k_i}$ lie on the hypercycle base curve.
  For the Euclidean case of incircular nets this fact was already employed by Böhm in \cite{B}.
\end{remark}

Denote the intersection points of the octahedral grid of planes by
\[
  c_a = P^{(1)}_{k_1} \cap P^{(2)}_{k_2} \cap P^{(3)}_{k_3} \cap P^{(4)}_{k_4},\qquad
  a = (k_1, k_2, k_3, k_4) \in A_3.
\]
By polarity (or duality) the points $c_a$ correspond to the incircles of the checkerboard incircular net.
We may now extend the statement from Corollary \ref{cor:cbic-dual-pencil} to all ``diagonal surfaces'' of $A_3$.
\begin{proposition}
  \label{prop:circles-on-dual-pencil}
  For an octahedral grid of planes corresponding to a checkerboard incircular net,
  the points of intersection $c_{a_1}, c_{a_2}, c_{a_3}, c_{a_4}$ with
  \[
    a_1 + a_2 + a_3 + a_4 = 0,\qquad
    a_1, a_2, a_3, a_4 \in A_3
  \]
  lie on a quadric from the dual pencil of quadrics which is polar (or dual in the Euclidean case) to the pencil of quadrics corresponding to the hypercycle base curve.
\end{proposition}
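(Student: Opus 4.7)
The plan is to combine the parametric description of the checkerboard incircular net from Sections \ref{sec:ellipse}--\ref{sec:hyperbola} with an elliptic function identity that upgrades Corollary \ref{cor:cbic-dual-pencil}. Recall that the dual pencil $\Pi^{*}$ is the one-parameter family of quadrics spanned by the two $\lag$-polar quadrics $\cbichyp^{\perp_\lag}$ and $\widetilde{\cbichyp}^{\perp_\lag}$, and that it also contains $\lag$ itself. Corollary \ref{cor:cbic-dual-pencil} already settles the special cases in which the four centres $c_{a_i}$ all belong to one parity class of the original double indexing of the net; they then lie on $\cbichyp^{\perp_\lag}$ or on $\widetilde{\cbichyp}^{\perp_\lag}$. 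The remaining task is to locate, for any quadruple with $a_1+a_2+a_3+a_4=0$ in $A_3$, a member $Q_{\mu^{*}} \in \Pi^{*}$ through all four $c_{a_i}$.

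First, I would express each centre $c_a$ in coordinates as the $\lag$-pole of the incircle plane $H_a$ spanned by the four base-curve points $\nu^{(1)}_{k_1}, \dots, \nu^{(4)}_{k_4}$. After writing $\nu^{(j)}_{k_j} = \p{v}_{\pm}(\alpha_j + \varepsilon_j k_j \beta)$ using the data of Theorem \ref{thm:cbic-formulas-ellipse} (respectively Theorem \ref{thm:cbic-formulas-hyperbola}) and converting all four factors to the $\p{v}_{+}$-branch via \eqref{eq:elliptic-curve-components}, the coordinates of $H_a$, and hence of $c_a$, become rational combinations of $\jac{sn}, \jac{cn}, \jac{dn}$ evaluated at the four shifted arguments.

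Second, a quadric $Q_{\mu} \in \Pi^{*}$ can be written in the normalisation $\cbichyp^{\perp_\lag} + \mu\,\widetilde{\cbichyp}^{\perp_\lag}$, and inserting $c_a$ into its equation yields, after applying Jacobi's addition formulas as in Lemma \ref{lem:linear-combination-addition-theorems}, a rational equation in $\mu$ whose unique root $\mu(a)$ determines the member of $\Pi^{*}$ containing $c_a$. The proposition then reduces to the identity $\mu(a_1) = \mu(a_2) = \mu(a_3) = \mu(a_4)$ whenever $\sum_i a_i = 0$ in $A_3$, which I expect to follow from the four-term addition identity \eqref{eq:Jacobi-identity} combined with the linear constraints $\sum_i k_j^{(i)} = 0$ for each $j=1,\dots,4$, and from the even symmetry of the parameter function \eqref{eq:lambda-ellipse}.

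The main obstacle will be pinning down this elliptic identity; while a direct verification via the addition formulas is tractable, a conceptual proof exploiting the $A_3$-symmetry of the octahedral grid would be more elegant. Once the identity is established for the diagonalizable hypercycles of Sections \ref{sec:ellipse}--\ref{sec:hyperbola}, the general non-degenerate case follows by Zariski closure, since ``four prescribed centres lie on a common quadric of $\Pi^{*}$'' is an algebraic condition in the data of the hypercycle and the lattice points $a_i$. The Euclidean Laguerre case is handled analogously in the cyclographic model by replacing $\Pi^{*}$ with the corresponding dual pencil, exactly as in the Euclidean clause of Corollary \ref{cor:cbic-dual-pencil}.
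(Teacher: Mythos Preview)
The paper states Proposition~\ref{prop:circles-on-dual-pencil} without proof, so there is no argument to compare yours against directly. That said, your reformulation contains a structural gap that must be fixed before any elliptic-function computation can begin.

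You translate the claim into ``$\mu(a_1)=\mu(a_2)=\mu(a_3)=\mu(a_4)$ whenever $\sum_i a_i=0$ in $A_3$'', where $\mu(a)$ is the unique pencil parameter with $c_a\in Q_{\mu(a)}\in\Pi^{*}$. But for arbitrary $a,b\in A_3$ the quadruple $(a,b,-a,-b)$ sums to zero, so your identity would force $\mu(a)=\mu(b)$ for \emph{all} $a,b\in A_3$; in other words every centre $c_a$ would lie on one and the same member of $\Pi^{*}$. This cannot be the intended content: the remark immediately following the proposition says that in the incircular-net limit the $c_a$ become the line-intersection points, and those lie on \emph{different} confocal conics---different members of $\Pi^{*}\cap\p{p}^{\perp}$. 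Since a non-base point lies on exactly one quadric of a pencil, ``four points on a common quadric of $\Pi^{*}$'' really is equivalent to equality of the four individual $\mu$-values, so the issue is not with your equivalence but with how the $A_3$ condition is to be read (note the phrase ``diagonal surfaces of $A_3$'' just before the proposition). Before attempting the Jacobi identity, you should first re-derive the circumscribility constraint on $(k_1,k_2,k_3,k_4)$ directly from \eqref{eq:Jacobi-identity} and the explicit labelling of the $\nu^{(i)}_{k_i}$ in Theorem~\ref{thm:cbic-formulas-ellipse}, and check whether it actually coincides with $k_1+k_2+k_3+k_4=0$; the correct combinatorial statement will tell you which invariant of the quadruple selects the quadric, and only then does your parametric approach have a target to hit.
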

\begin{remark}
  In the case of an ``ordinary'' incircular net this implies that the intersection points of its lines lie on conics which are confocal with the touching conic \cite{B}, \cite{AB}.
\end{remark}


\newpage

\appendix
\section{Euclidean cases}
\label{sec:euclidean}

The cases of Euclidean geometry and Euclidean Laguerre geometry, which we have excluded from our general discussion, are induced by degenerate quadrics, see, e.g., \cite{K, Bl2, Gie}.
For a degenerate quadric $\quadric \subset \RP^n$, polarity (see Section \ref{sec:polarity}) does no longer define a bijection between the set of points and the set of hyperplanes.
Instead one can apply the concept of \emph{duality}.

\subsection{Duality}
The $n$-dimensional \emph{dual real projective space} is given by
\[
  (\RP^n)^* \coloneqq \P\left((\R^{n+1})^*\right),
\]
where $\left(\R^{n+1}\right)^*$ is the space of linear functionals on $\R^{n+1}$.
We identify $(\RP^n)^{**} = \RP^n$ in the canonical way,
and obtain a bijection between projective subspaces $\p{U} = \P(U) \subset \RP^n$
and their \emph{dual projective subspaces}
\[
  \p{U}^* \coloneqq \set{\p{y} \in (\RP^n)^*}{y(x) = 0 ~\text{for all}~ x \in U},
\]
satisfying
\[
  \dim \p{U} + \dim \p{U}^* = n-1.
\]
Every projective transformation $f:\RP^n\rightarrow\RP^n \in \PGL(n+1)$
induces a \emph{dual projective transformation} $f^*:(\RP^n)^* \rightarrow (\RP^n)^* \in \PGL(n+1)^*$ such that
\[
  f(\p{U})^* = f^*(\p{U}^*)
\]
for every projective subspace $\p{U} \subset \RP^n$.
Introduce a basis on $\R^{n+1}$, say the conical basis, and its dual basis on $(\R^{n+1})^*$.
Then, if $F \in \R^{(n+1)\times(n+1)}$ is a matrix representing the transformation $f = [F]$,
a matrix $F^* \in \R^{(n+1)\times(n+1)}$ representing the dual transformation $f^* = [F^*]$ is given by
\begin{equation}
  \label{eq:dual-matrix}
  F^* \coloneqq \invtranspose{F}.
\end{equation}

For a quadric $\quadric \subset \RP^n$ its \emph{dual quadric} $\quadric^* \subset (\RP^n)^*$
may be defined as the set of points dual to the tangent hyperplanes of $\quadric$.
\begin{example}\
  \label{ex:dual-quadrics}
  \nobreakpar
  \begin{enumerate}
  \item
    For a non-degenerate quadric $\quadric \subset \RP^n$ of signature $(r,s)$ its dual quadric $\quadric^* \subset (\RP^n)^*$ is non-degenerate with the same signature.
  \item
    \label{ex:dual-cone}
    For a cone $\quadric \subset \RP^n$ of signature $(r,s,1)$ with vertex $\p{v} \in \quadric$,
    its dual quadric $\quadric^* \subset (\RP^n)^*$ consists of the set of points on a lower dimensional quadric of signature $(r,s)$
    contained in the hyperplane $\p{v}^* \subset (\RP^n)^*$.
  \end{enumerate}
\end{example}

\subsection{Euclidean geometry}
\label{sec:euclidean-space}
Let $\scalarprod{\cdot}{\cdot}$ be the standard degenerate bilinear form of signature $(n,0,1)$, i.e.
\[
  \scalarprod{x}{y} \coloneqq x_1y_1 + \ldots + x_ny_n
\]
for $x, y \in \R^{n+1}$.
The corresponding quadric $\euclb$ is an imaginary cone (cf.\ Example \ref{ex:quadrics} \ref{ex:quadrics-cone}).
Its real part consisting only of one point, the vertex of the cone:
\[
  \p{m}_\infty \in \RP^n, \qquad m_\infty \coloneqq e_{n+1} = (0, \ldots, 0, 1).
\]
While the set $\euclb^- = \varnothing$ is empty, the set
\[
  \eucl^* \coloneqq \euclb^+ = \RP^n \setminus \{\p{m}_\infty\}
\]
consists of the whole projective space but one point,
which we identify with the $n$-dimensional \emph{dual Euclidean space}, i.e., the space of Euclidean hyperplanes.

While in the projective models of hyperbolic/elliptic geometry,
we were able to identify certain points with hyperplanes in the same projective space by polarity,
this is not possible in the projective model of Euclidean geometry due to the degeneracy of the absolute quadric $\euclb$.
Instead, by duality, every point $\p{m} \in \eucl^*$ corresponds to a hyperplane $\p{m}^* \subset \eucl$ in
\[
  \eucl \coloneqq \left(\RP^n\right)^* \setminus (\p{m}_\infty)^* \simeq \R^n,
\]
which we identify with the $n$-dimensional \emph{Euclidean space}.
The hyperplane $(\p{m}_\infty)^*$ is called the \emph{hyperplane at infinity}.

For two points $\p{k}, \p{m} \in \eucl^*$ one always has $0 \leq \ck{\euclb}{\p{k}}{\p{m}} \leq 1$,
and the Euclidean angle $\alpha$, or equivalently its conjugate angle $\pi - \alpha$,
between the two hyperplanes $\p{k}^*, \p{m}^* \subset \eucl$ is given by
\[
  \ck{\euclb}{\p{k}}{\p{m}} = \cos^2 \alpha(\p{k}^*, \p{m}^*).
\]
The two planes are \emph{parallel} if the line $\p{k} \wedge \p{m}$ contains the point $\p{m}_\infty$.

The dual quadric $\euclb^*$ of the absolute cone can be identified with an imaginary quadric in the hyperplane at infinity $(\p{m}_\infty)^*$ of signature $(n,0)$ (cf.\ Example~\ref{ex:dual-quadrics}~\ref{ex:dual-cone}).
Since this does not induce a bilinear form on $(\RP^n)^*$, the Cayley-Klein distance is not well-defined on $\eucl$.
Yet the Euclidean distance may still be recovered in this setup,
e.g., as the limit of the Cayley-Klein distance of hyperbolic/elliptic space \cite{K, G}.
One may avoid these difficulties by treating Euclidean geometry
as a subgeometry of Möbius geometry (see Section \ref{sec:mobius-euclidean}).

We employ the following normalization for the dual Euclidean space
\[
  (\E^n)^* \coloneqq \set{m\in\R^{n+1}}{\scalarprod{m}{m}=1}
  = \set{(\widehat{m},-d)\in\R^{n+1}}{\widehat{m}\in\R^n,~d\in\R,~\dotprod{\widehat{m}}{\widehat{m}} = 1},
\]
where $\dotprod{\widehat{m}}{\widehat{m}}$ denotes the standard scalar product on $\R^n$.
Upon the (non-canonical) identification $(\R^{n+1})^* \simeq \R^{n+1}$, by identifying the canonical basis of $(\R^{n+1})^*$ with the dual basis of the canonical basis of $\R^{n+1}$, we introduce the following normalization for the Euclidean space.
\[
  \E^n \coloneqq \set{x \in (\R^{n+1})^*}{x(m_\infty) = 1}
  \simeq \set{(\widehat{x}, 1)\in\R^{n+1}}{\widehat{x}\in\R^n}.
\]
Then $\P(\E^n) = \eucl$ is an embedding and $\P((\E^n)^*) = \eucl^*$ a double cover.
The double cover may be used to encode the orientation of the corresponding Euclidean plane.

In this normalization the Euclidean distance of two points $\p{x}, \p{y} \in \eucl$, $x, y \in \E^n$ is given by
\[
  \abs{x - y} = d(\p{x}, \p{y}).
\]
The Euclidean hyperplane corresponding to a point $\p{m} \in \eucl^*$, $m = (\widehat{m},-d) \in (\E^n)^*$ is given by
\[
  \set{\p{x} \in \eucl}{\scalarprod{m}{x} = 0}
  = \P\left( \set{(\widehat{x},1)\in\E^n}{\dotprod{\widehat{m}}{\widehat{x}}} = d \right),
\]
while the formula for the angle between two Euclidean planes $\p{k}\in \eucl^*$, $k = (\widehat{k}, -c) \in (\E^n)^*$ and $\p{m} \in \eucl^*$, $m = (\widehat{m},-d) \in (\E^n)^*$ becomes
\[
  \scalarprod{k}{m} = \dotprod{\widehat{k}}{\widehat{m}} = \cos\alpha(\p{k}^*, \p{m}^*),
\]
where the intersection angle and its conjugate angle can be distinguished now.
Finally, the signed distance of a point $\p{x} \in \eucl$, $x = (\widehat{x},1) \in \E^n$ and a plane $\p{m} \in \eucl^*$, $(\widehat{m}, -d) \in (\E^n)^*$ is given by
\[
  \scalarprod{m}{x} = \dotprod{\widehat{m}}{\widehat{x}} - d = d(\p{x}, \p{m}^*)
\]

The transformation group induced by the absolute quadric $\euclb$ on the dual Euclidean space $\eucl^*$ is given by $\PO(n,0,1)$.
Its elements are of the form
\[
  [A] =
  \left[
    \begin{array}{c|c}
      \widehat{A} & 0\\
      \hline
      \transpose{\widehat{a}} & \varepsilon
    \end{array}
  \right] \in \PO(n,0,1),
\]
where $\widehat{A} \in O(n), \widehat{a} \in \R^n, \varepsilon \neq 0$.
Thus, its dual transformations, see \eqref{eq:dual-matrix}, are given by
\[
  [\invtranspose{A}] =
  \left[
    \begin{array}{c|c}
      \widehat{A} & -\widehat{A}\widehat{a}\\
      \hline
      0 & \varepsilon^{-1}
    \end{array}
  \right] \in \PO(n,0,1)^*.
\]
They act on $\eucl$ as the group of \emph{similarity transformations}, i.e., Euclidean motions and scalings.

\subsection{Euclidean geometry from Möbius geometry}
\label{sec:mobius-euclidean}
\begin{figure}
  \centering
  \begin{overpic}[width=0.7\textwidth]{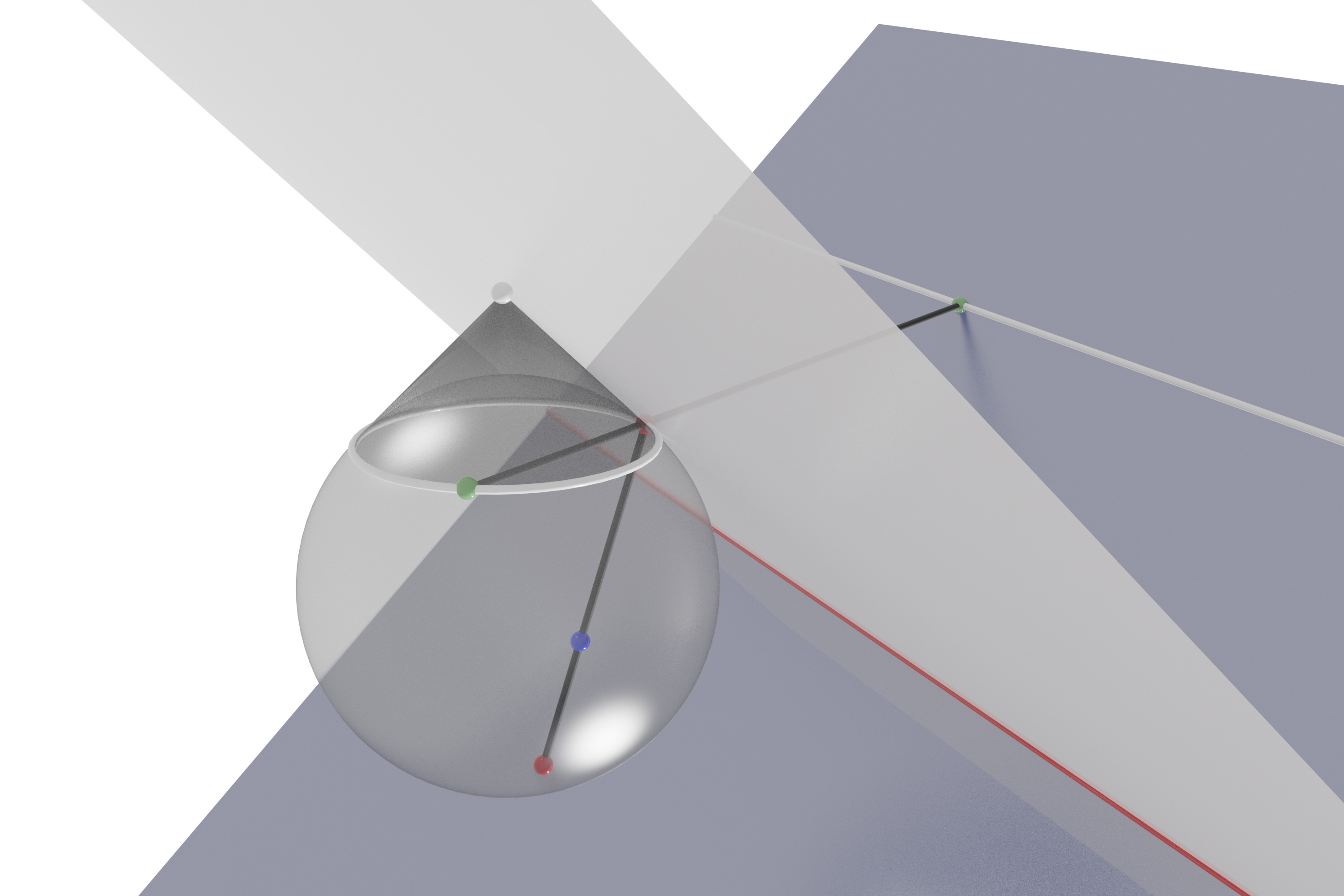}
    \put(19,18){$\mathcal{S}$}
    \put(65,59){$\baseplane = \p{b}^\perp$}
    \put(13,63){$\p{q}^\perp$}
    \put(47,37){$\p{q} = [e_\infty]$}
    \put(37,46.5){$\p{k}$}
    \put(40,18){$\p{b}$}
    \put(38,5){$[e_0]$}
    \put(25,26.5){$\sigma_{\p{q},\p{b}}(\p{x})$}
    \put(72,45){$\p{x}$}
    \put(70,11){$\ell_\infty$}
  \end{overpic}
  \caption{
    Stereographic projection from $\baseplane$ to $\mob$ through the point $\p{q}$.
    Every point $\p{k} \in \p{q}^\perp$ corresponds to a hyperplane in $\baseplane$.
  }
\label{fig:stereographic-projection}
\end{figure}
In Section \ref{sec:projection} we have excluded the choice of a point $\p{q} \in \quadric$ on the quadric,
since the projection $\pi_{\p{q}}$ (see Definition \ref{def:involution-projection}) to the polar hyperplane $\p{q}^\perp$ is not well-defined in that case.
Yet most of the constructions described still apply if we project to any other hyperplane instead.
We show this in the example of recovering Euclidean (similarity) geometry from Möbius geometry.

Thus, let $\scalarprod{\cdot}{\cdot}$ be the standard non-degenerate bilinear form of signature $(n+1,1)$, i.e.
\[
  \scalarprod{x}{y} \coloneqq x_1y_1 + \ldots + x_{n+1}y_{n+1} - x_{n+2}y_{n+2}
\]
for $x, y \in \R^{n+2}$, and denote by $\mob \subset \RP^{n+1}$ the corresponding \emph{Möbius quadric}.
Let $\p{q} \in \mob$ be a point on the Möbius quadric, w.l.o.g.,
\[
  \p{q} \coloneqq [e_{\infty}], \qquad
  e_{\infty} \coloneqq \tfrac{1}{2} \left( e_{n+1}+e_{n+2} \right) = (0, \ldots, 0, \tfrac{1}{2}, \tfrac{1}{2}).
\]
While $\p{q}^\perp$ is the tangent plane of $\mob$ at $\p{q}$,
we choose a different plane $\baseplane$ for the projection, w.l.o.g.,
\[
  \baseplane \coloneqq \p{b}^\perp, \qquad
  b \coloneqq e_{n+1} = (0, \ldots, 0, 1, 0),
\]
and consider the central projection from $\baseplane$ to $\mob$ through the point $\p{q}$,
which is also called \emph{stereographic projection} (see Figure \ref{fig:stereographic-projection}).
To this end, denote by $[e_0]$ the intersection point of the line $\p{q} \wedge \p{b}$ with $\mob$, where
\[
  e_0 \coloneqq \tfrac{1}{2} \left( e_{n+2}-e_{n+1} \right) = (0, \ldots, 0, -\tfrac{1}{2}, \tfrac{1}{2}).
\]
Then we have
\[
  \liesc{e_0}{e_0} = \liesc{e_\infty}{e_\infty} = 0, \quad
  \liesc{e_0}{e_\infty} = -\tfrac{1}{2},
\]
and $\liesc{e_0}{e_i} = \liesc{e_\infty}{e_i} = 0$ for $i = 1, \ldots, n$,
and the vectors $e_1, \ldots, e_n, e_0, e_{\infty}$ constitute a basis of $\R^{n+1,1}$.
\begin{proposition}
  Let $\ell_\infty \coloneqq \baseplane \cap \p{q}^\perp$.
  The stereographic projection from $\baseplane \setminus \ell_\infty$ to $\mob \setminus \p{q}$ through the point $\p{q}$ is given by the map
  \[
    \sigma_{\p{q}, \p{b}} : \p{x} = [\tilde{x} + e_0 - e_\infty] \mapsto [\tilde{x} + e_0 + \abs{\tilde{x}}^2e_\infty],
  \]
  where $\tilde{x} \in \Span\{e_1, \ldots, e_n\}$.
\end{proposition}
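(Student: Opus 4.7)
The plan is to show that the formula realizes the defining property of stereographic projection: for each $\p{x}\in\baseplane\setminus\ell_\infty$, the point $\sigma_{\p{q},\p{b}}(\p{x})$ is the unique second intersection of the projective line $\p{q}\wedge\p{x}$ with the Möbius quadric $\mob$. I will proceed in three steps: establish the parametrization of $\baseplane\setminus\ell_\infty$, verify the image lies on $\mob$, and verify it lies on the line through $\p{q}$ and $\p{x}$.

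For the parametrization, I would work in the basis $\{e_1,\ldots,e_n,e_0,e_\infty\}$ of $\R^{n+1,1}$, for which the only non-trivial pairings are $\scalarprod{e_i}{e_j}=\delta_{ij}$ for $i,j\le n$ and $\scalarprod{e_0}{e_\infty}=-\tfrac12$, while $\scalarprod{e_0}{e_0}=\scalarprod{e_\infty}{e_\infty}=0$ and $\tilde x\perp e_0,e_\infty$. Writing a general point as $y=\tilde y+\alpha e_0+\beta e_\infty$, the linear condition $y\in\baseplane=\p{b}^\perp$ imposes one relation between $\alpha$ and $\beta$, and $y\notin\ell_\infty=\baseplane\cap\p{q}^\perp$ guarantees that this relation can be normalized, giving the unique homogeneous representative of the form claimed.

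Next I would check that $w:=\tilde x+e_0+\abs{\tilde x}^{2}e_\infty$ is isotropic. By bilinearity, every cross term involving $\tilde x$ drops out because $\tilde x\perp e_0,e_\infty$, and the $e_0e_0$ and $e_\infty e_\infty$ contributions vanish, leaving
\[
\scalarprod{w}{w}=\abs{\tilde x}^{2}+2\abs{\tilde x}^{2}\scalarprod{e_0}{e_\infty}=\abs{\tilde x}^{2}-\abs{\tilde x}^{2}=0,
\]
so $[w]\in\mob$. The incidence $[w]\in\p{q}\wedge\p{x}$ is then a one-line check: I would exhibit $w$ explicitly as a linear combination of the chosen representative of $\p{x}$ and of $q=e_\infty$, using that $w$ and that representative differ only by a multiple of $e_\infty$.

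Finally, I would invoke Lemma~\ref{lem:quadric-line-intersection}: the line $\p{q}\wedge\p{x}$ is not tangent to $\mob$ because $\p{x}\notin\p{q}^\perp$ (this is exactly the reason for excluding $\ell_\infty$), so the line meets $\mob$ in exactly two distinct points. One of them is $\p{q}$ and the other is therefore forced to be $[w]$, matching the geometric definition of stereographic projection. There is no significant obstacle here; the whole argument is essentially forced once one passes to the null basis $\{e_0,e_\infty\}$, and the only real bookkeeping subtlety lies in identifying the linear form defining $\baseplane$ in that basis.
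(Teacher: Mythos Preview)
Your proposal is correct and follows essentially the same approach as the paper. The only cosmetic difference is that the paper derives the second intersection point directly via the formula $-2\scalarprod{x}{e_\infty}x + \scalarprod{x}{x}e_\infty$ (the specialization of Lemma~\ref{lem:quadric-line-intersection} to the case where one of the two points is already isotropic), whereas you verify a posteriori that the candidate $w=\tilde x+e_0+\abs{\tilde x}^2 e_\infty$ is isotropic and lies on $\p{q}\wedge\p{x}$; both arguments amount to the same two-line computation.
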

\begin{proof}
  First note that a point in $\p{x} \in \baseplane\setminus\ell_\infty$ may be normalized to $x = \tilde{x} + e_0 - e_\infty$
  The (second) intersection point of the line $\p{q} \wedge \p{x}$ with $\quadric$ is then given by
  \[
    -2\scalarprod{x}{e_\infty}x + \scalarprod{x}{x}e_\infty
    = x + (\abs{\tilde{x}}^2 - 1)e_\infty
    = \tilde{x} + e_0 + \abs{\tilde{x}}^2e_\infty.
  \]
\end{proof}
Now the Euclidean metric on $\baseplane$ may be recovered from the bilinear form corresponding to $\mob$
by observing that
\[
  \scalarprod{x}{y}
  = \scalarprod{\tilde{x} + e_0 + \abs{\tilde{y}}^2 e_\infty}{\tilde{y} + e_0 + \abs{\tilde{y}}^2 e_\infty}
  = -\frac{1}{2}\abs{\tilde{x} - \tilde{y}}.
\]
\begin{remark}
  To obtain the Euclidean metric in a projectively well-defined way one can start by considering the quantity
  \[
    \frac{\scalarprod{x}{y}}{\scalarprod{e_\infty}{x}\scalarprod{e_\infty}{y}},
  \]
  similar to Definition \ref{def:sphere-complex-distance}.
  Though not being invariant under different choices of homogeneous coordinate vectors for the point $\p{q} = [e_\infty]$,
  the quotient of two such expressions is.
  This fits the fact that it is not actually Euclidean geometry that we are recovering
  but similarity geometry.
\end{remark}

The restriction of the Möbius quadric $\mob$ to the tangent hyperplane $\p{q}^\perp$ yields a quadric of signature $(n,0,1)$.
Thus, we can identify the tangent hyperplane with the dual Euclidean space (see Section \ref{sec:euclidean-space}).
Indeed, by polarity in the Möbius quadric $\mob$, every point $\p{k} \in \p{q}^\perp$
corresponds to a hyperplanar section of $\mob$ containing the point $\p{q}$, i.e., an $\mob$-sphere through $\p{q}$,
which is, in turn, mapped to a hyperplane of $\baseplane$ by stereographic projection.
The Cayley-Klein distance of two points in the tangent hyperplane yields the Euclidean angle
between the two corresponding hyperplanes of $\baseplane$.
The group of Möbius transformations fixing the point $\p{q}$ induces the group of dual similarity transformations on $\baseplane$
\[
  \mobtrafos_{\p{q}} = \PO(n+1,1)_{\p{q}} \simeq \PO(n,0,1).
\]

\subsection{Euclidean Laguerre geometry}
\label{sec:euclidean-laguerre-geometry}
\begin{figure}
  \centering
  \begin{overpic}[width=0.46\textwidth]{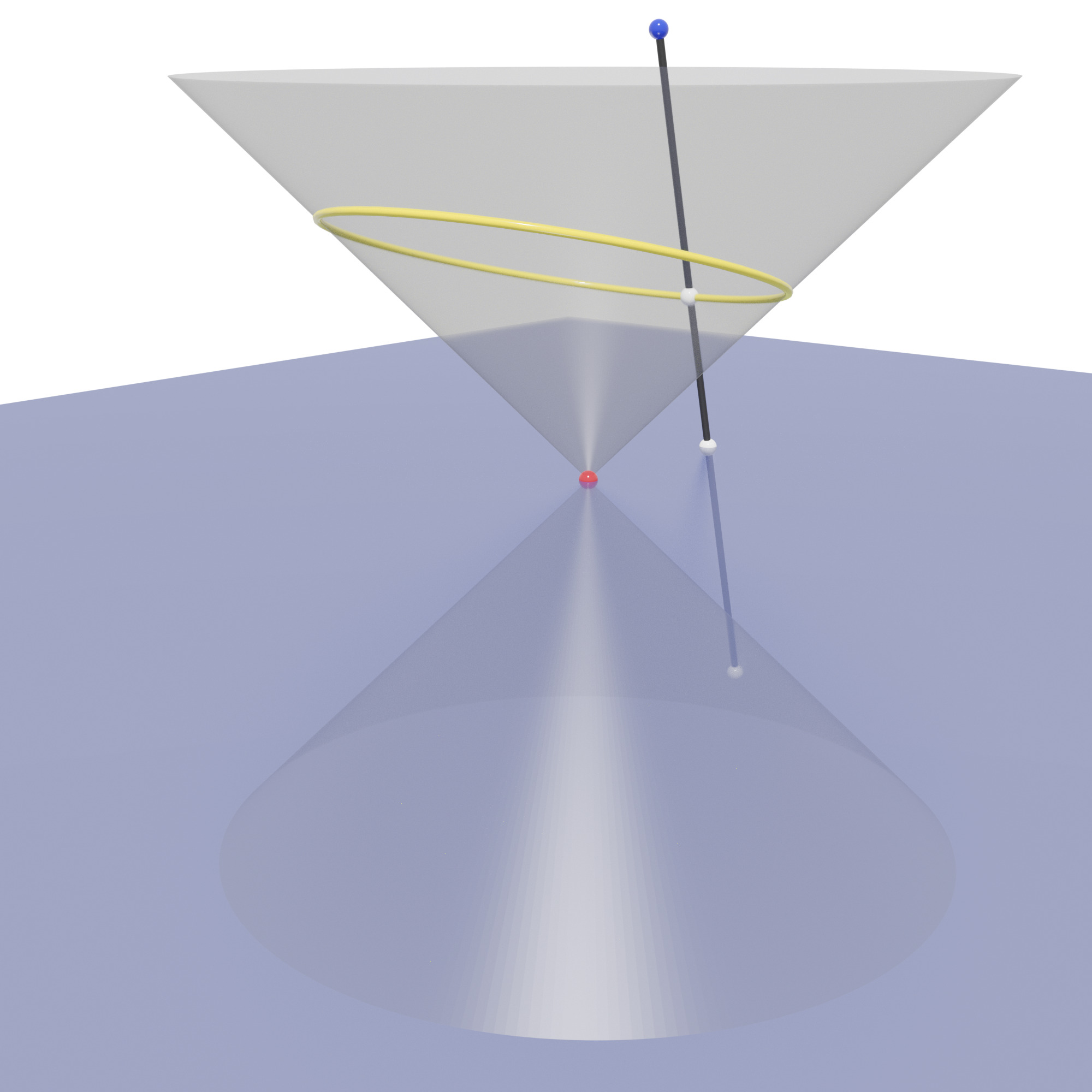}
    \put(0,80){$\RP^n$}
    \put(3,55){$\eucl^*$}
    \put(20,95){$\lageucl$}
    \put(59,100){$\p{p}$}
    \put(66,72){$\p{x}$}
    \put(67,58){$\pi_{\p{p}}(\p{x})$}
    \put(70,37){$\sigma_{\p{p}}(\p{x})$}
    \put(45,53){$\p{m}_\infty$}
    \put(36,83){$G(c)$}
  \end{overpic}
  \hspace{0.2cm}
  \begin{overpic}[width=0.46\textwidth]{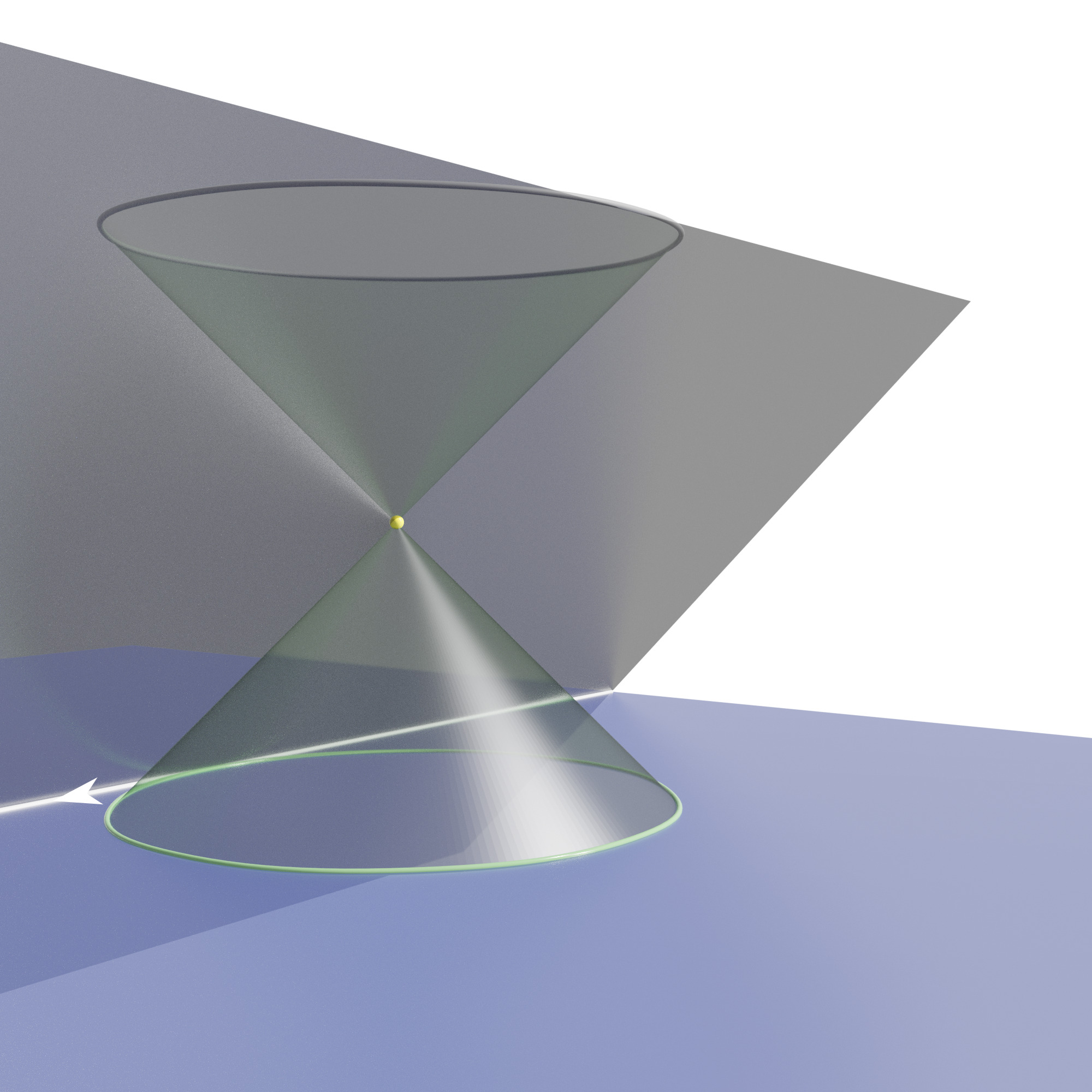}
    \put(85,90){$(\RP^n)^*$}
    \put(93,26){$\eucl$}
    \put(58,82){$\lageucl^*$}
    \put(0,20){$\ell$}
    \put(40,15){$c$}
    \put(44.5,57){$G(c)^*$}
    \put(78,68){$\p{x}^*$}
  \end{overpic}
  \caption{
    Euclidean Laguerre geometry.
    The corresponding Laguerre quadric $\lageucl$ is a cone (``Blaschke cylinder'') with its vertex corresponding to the point $\p{m}_\infty$ that represents the line at infinity in the dual Euclidean plane $\eucl^*$.
    Under dualization the Laguerre quadric becomes a conic $\lageucl^*$ in the cyclographic model of Laguerre geometry.
    A point $\p{x} \in \lageucl$ represents an oriented line $\ell$ in the Euclidean plane $\eucl$.
    By dualization the point becomes a plane $\p{x}^*$ that touches the conic $\lageucl^*$ and intersects $\eucl$ in the line $\ell$.
    A planar section $G(c)$ of $\lageucl$ represents an oriented circle $c$.
    By dualization it becomes a cone $G(c)^*$ that contains the conic $\lageucl^*$ and intersects $\eucl$ in the circle $c$.
  }
\label{fig:laguerre-euclidean}
\end{figure}
In the spirit of Sections \ref{sec:projection} and \ref{sec:laguerre} the absolute quadric $\euclb \subset \RP^n$
of the dual Euclidean (similarity) space with signature $(n,0,1)$ can be lifted to a quadric
$\lageucl \subset \RP^{n+1}$ of signature $(n,1,1)$, which we call the \emph{Euclidean Laguerre quadric},
or classically the \emph{Blaschke cylinder}.
The group of \emph{Euclidean Laguerre transformations} is given by
\[
  \lageucltrafos = \PO(n,1,1).
\]
For a point $\p{p}$ with $\scalarprod{p}{p} < 1$, w.l.o.g.,
\[
  \p{p} \coloneqq [0, \cdots,0,1,0]
\]
the involution $\sigma_{\p{p}}$ and projection $\pi_{\p{p}}$ (see Definition \ref{def:involution-projection})
are still well-defined, and the quotient
\[
  \faktor{\left(\lageucltrafos\right)_{\p{p}}}{\sigma_{\p{p}}} \simeq \PO(n,0,1)
\]
recovers the group of dual Euclidean (similarity) transformations.

The projection $\pi_{\p{p}}$ restricted to $\lageucl$ realizes a double cover of the dual Euclidean space $\euclb^+~=~\eucl^*$,
which may be interpreted as carrying the information of the orientation of the corresponding hyperplanes in $\eucl$.
The involution $\sigma_{\p{p}}$ plays again the role of orientation reversion (see Figure \ref{fig:laguerre-euclidean}, left).

The hyperplanar sections of $\lageucl \subset \RP^{n+1}$ correspond to (the tangent hyperplanes) of a Euclidean sphere in $\eucl$.
Yet due to the degeneracy of $\lageucl$ they cannot be identified with (polar) points in the same space.
Instead they can be identified with points in the dual space $(\RP^{n+1})^*$,
which is classically called the \emph{cyclographic model} of Laguerre geometry (see Figure \ref{fig:laguerre-euclidean}, right).
The dual quadric $\lageucl^*$ is given by a lower dimensional quadric of signature $(n,1)$ contained in the hyperplane $\p{m}_{\infty}^*$.
Thus, the cyclographic model is isomorphic to $(n+1)$-dimensional \emph{Minkowski space}.

\subsection{Lie geometry in Euclidean space}
A Euclidean model of Lie geometry is obtained by stereographic projection of the point complex $\mob \subset \lie$ (cf.\ Section~\ref{sec:lie}).

We write the bilinear form corresponding to the Lie quadric as
\[
  \liesc{x}{y} \coloneqq \dotprod{\project{x}}{\project{y}} - x_{n+2} y_{n+2} - x_{n+3} y_{n+3}
  ~=~ \sum_{i=1}^{n+1}x_i y_i - x_{n+2} y_{n+2} - x_{n+3} y_{n+3}
\]
for $x, y \in \R^{n+3}$, where
\[
  \project{\cdot} : \R^{n+3} \rightarrow \R^{n+1},\quad
  (x_1, \ldots, x_{n+3}) \mapsto (x_1, \ldots, x_{n+1}).
\]
The point complex $\mob$ is projectively equivalent to $\S^n$.
Indeed,
$\p{p}^{\lieperp} = \set{\p{x} \in \RP^{n+2}}{x_{n+3}=0} \simeq \RP^{n+1}$,
and for a point $\p{x} = [\project{x}, 1, 0] \in \p{p}^\lieperp$ we find that in affine coordinates ($x_{n+2} = 1$)
\[
  \liesc{x}{x} = 0 ~\Leftrightarrow~ \dotprod{\project{x}}{\project{x}} = 1.
\]
Thus, we obtain the identification
\[
  \mob
  = \set{\p{x} \in \p{p}^\lieperp}{ \liesc{x}{x} = 0}
  \simeq \set{\project{x} \in \R^{n+1}}{ \dotprod{\project{x}}{\project{x}} = 1}
  = \S^n.
\]
We embed the sphere $\S^n$ into the light cone
\[
  \L^{n+1,2} = \set{x\in\R^{n+3}}{\scalarprod{x}{x} = 0}
\]
in the following way
\begin{equation}
  \label{eq:embedding-sphere}
  \embedS : \S^n\hookrightarrow\L^{n+1,2}, \quad
  \project{x} \mapsto~ \project{x} + e_{n+2} + 0\cdot e_{n+3}.
\end{equation}
Then we have $\mob = \P(\embedS(\S^n))$,
where $\P$ acts one-to-one on the image of $\embedS$.

Denote
\[
  e_{\infty} := \tfrac{1}{2} \left( e_{n+2}+e_{n+1} \right), \quad
  e_0 := \tfrac{1}{2} \left( e_{n+2}-e_{n+1} \right),
\]
which are homogeneous coordinate vectors for the \emph{north pole} and \emph{south pole} of $\mob \simeq \S^n$ respectively.
They satisfy
\[
  \liesc{e_0}{e_0} = \liesc{e_\infty}{e_\infty} = 0, \quad
  \liesc{e_0}{e_\infty} = -\tfrac{1}{2},
\]
and $\liesc{e_0}{e_i} = \liesc{e_\infty}{e_i} = 0$ for $i = 1, \ldots, n, n+3$.
The vectors $e_1, \ldots, e_n, e_0, e_{\infty}, e_{n+3}$ constitute a basis of $\R^{n+1,2}$.
We define an embedding of $\R^n$ into the light cone $\L^{n+1,2}$ by the map
\begin{equation*}
  \embedR : \R^n\hookrightarrow\L^{n+1,2}, \quad
  \tilde{x} \mapsto~ \tilde{x} + e_0 + \abs{\tilde{x}}^2 e_\infty + 0\cdot e_{n+3}
\end{equation*}
and recognize that upon renormalizing the $(n+2)$-nd coordinate to $1$ this is nothing but stereographic projection from $\R^n$
onto the sphere $\S^n$, i.e.\
\[
  (\embedS)^{-1} \circ \embedR : \R^n \rightarrow \S^n, \quad
  \tilde{x} \mapsto \left( \frac{2\tilde{x}}{\abs{\tilde{x}}^2 + 1}, \frac{1 - \abs{\tilde{x}}^2}{1 + \abs{\tilde{x}}^2} \right),
\]
and $\mob = \ P(\embedS(\S^n)) = \P(\embedR(\R^n)) \cup \{ [e_\infty]\}$.
Every point $\p{s} \in \lie$ with $s_0 \neq 0$ can be represented by
\begin{equation*}
    s = \tilde{s} + e_0 + (\abs{\tilde{s}}^2 - r^2)e_\infty + r e_{n+3}
\end{equation*}
with $\tilde{s} \in \R^n$ and $r \in \R$.
Then for $x = \tilde{x} + e_0 + \abs{\tilde{x}}^2 e_\infty$ we find
\[
  \liesc{s}{x} = 0 \quad\Leftrightarrow\quad
  \abs{\tilde{s} - \tilde{x}}^2 = r^2.
\]
Thus, we may identify the point $\p{s}$ with the oriented Euclidean hypersphere of $\R^n$ with center $\tilde{s}$ and signed radius $r \in \R$.
Analogously a point $\p{n} \in \lie$ with $n_0 = 0$  may be represented by
\[
  n = \tilde{n} + 0 \cdot e_0 + 2d e_\infty + e_{n+3}
\]
and identified with the oriented hyperplane of $\R^n$ with normal $\tilde{n} \in \S^{n-1}$ and signed distance $d \in \R$ to the origin.
\begin{proposition}
  \label{prop:geo_interpretation_lie_euclidean}
  Under the aforementioned identification two oriented hyperspheres/hyperplanes of Euclidean space
  are in oriented contact if and only if the corresponding points on the Lie quadric are Lie orthogonal.
\end{proposition}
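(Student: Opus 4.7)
The plan is to verify the claim by direct computation in the three possible configurations: sphere--sphere, sphere--hyperplane, and hyperplane--hyperplane. In each case I would substitute the coordinate representatives introduced just before the proposition into the Lie bilinear form $\liesc{\cdot}{\cdot}$, simplify using the inner products of the basis vectors $e_1,\ldots,e_n,e_0,e_\infty,e_{n+3}$, and compare the resulting expression with the classical metric condition for oriented tangency in $\R^n$.

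First, for two oriented spheres with centers $\tilde s_1,\tilde s_2\in\R^n$ and signed radii $r_1,r_2\in\R$, I would substitute
\[
s_i = \tilde s_i + e_0 + (\abs{\tilde s_i}^2 - r_i^2)\,e_\infty + r_i\,e_{n+3}
\]
into $\liesc{s_1}{s_2}$. Using $\liesc{e_0}{e_0}=\liesc{e_\infty}{e_\infty}=0$, $\liesc{e_0}{e_\infty}=-\tfrac12$, $\liesc{e_{n+3}}{e_{n+3}}=-1$, and the orthogonality of $e_{n+3}$ and of $e_0,e_\infty$ to the $\tilde s_i$, the cross terms collapse and expanding $\dotprod{\tilde s_1}{\tilde s_2}=\tfrac12(\abs{\tilde s_1}^2+\abs{\tilde s_2}^2-\abs{\tilde s_1-\tilde s_2}^2)$ gives
\[
\liesc{s_1}{s_2} \;=\; -\tfrac{1}{2}\bigl(\abs{\tilde s_1-\tilde s_2}^2 - (r_1-r_2)^2\bigr).
\]
Vanishing of this expression is exactly the oriented tangency condition $\abs{\tilde s_1-\tilde s_2}=|r_1-r_2|$ (the distance between centers equals the \emph{difference} of signed radii, as opposed to the sum which would give contact with opposite orientations).

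Next, for a sphere $s$ as above and a hyperplane $n=\tilde n + 2d\,e_\infty + e_{n+3}$ with $\tilde n\in\S^{n-1}$ and signed distance $d\in\R$, the same substitution rules reduce the bilinear form to
\[
\liesc{s}{n} \;=\; \dotprod{\tilde s}{\tilde n} - d - r,
\]
which vanishes iff the \emph{signed} distance $\dotprod{\tilde s}{\tilde n}-d$ from the center $\tilde s$ to the oriented hyperplane equals the signed radius $r$ of the sphere; this is the standard oriented contact condition. For two hyperplanes $n_1,n_2$ one obtains $\liesc{n_1}{n_2}=\dotprod{\tilde n_1}{\tilde n_2}-1$, which, since $|\tilde n_i|=1$, vanishes iff $\tilde n_1=\tilde n_2$, i.e.\ the two hyperplanes are parallel with matching orientation — the remaining case of ``oriented contact at infinity'' already visible in the Lie model as sharing the isotropic direction $e_\infty$.

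This is essentially a sequence of three short bilinear form calculations, so no single step is genuinely hard. The only subtle point is stating the classical oriented contact relations with the correct signs: one must be careful to interpret ``oriented contact'' as the condition on \emph{signed} radii and \emph{signed} distances (so that reversing either orientation flips a sign), and then the three identities above match these conditions on the nose. Conversely, every point of $\lie$ arises from one of the three coordinate normal forms (the third coordinate patch $n_0=0$ is exactly the complement of the image of $\embedR$), so these three cases exhaust the claim.
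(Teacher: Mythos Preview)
Your proposal is correct and follows essentially the same approach as the paper: direct substitution of the coordinate representatives into the Lie bilinear form and comparison with the Euclidean oriented-contact condition. The paper's proof in fact only writes out the sphere--sphere case (prefaced by ``e.g.''), whereas you spell out all three cases explicitly; your version is therefore more complete, but not different in spirit.
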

\begin{proof}
  For, e.g.,\ two oriented hyperspheres of $\R^n$ represented by homogeneous coordinate vectors
  $s_i = \tilde{s_i} + e_0 + (\abs{\tilde{s_i}}^2 - r_i^2)e_\infty + r_i e_{n+3}$, $i = 1,2$
  we find
  \[
    \liesc{s_1}{s_2} = 0
    ~\Leftrightarrow~
    \abs{\tilde{s}_1 - \tilde{s}_2}^2 = (r_1 - r_2)^2.
  \]
\end{proof}
The condition $n_0 = 0$, which characterizes the oriented hyperplanes among all oriented hyperspheres,
is equivalent to $\liesc{n}{e_\infty} = 0$.
Thus, we can interpret oriented hyperplanes as oriented hyperspheres containing the point $\p{q} \coloneqq [e_\infty]$.
Similar to the point complex (see Definition \ref{def:point-complex}), we may introduce the \emph{Euclidean plane complex}
(cf.\ Definition \ref{def:plane-complex})
\begin{equation}
  \label{eq:euclidean-plane-complex}
  \lie \cap \p{q}^\lieperp \simeq \lageucl
\end{equation}
representing all oriented hyperplanes of $\R^n$.
The Euclidean plane complex is a parabolic sphere complex (see Definition \ref{def:sphere-complexes}).
Its signature is given by $(n,1,1)$, and we recover Euclidean Laguerre geometry (cf. Section \ref{sec:euclidean-laguerre-geometry})
by considering the action on $\p{q}^\perp$ of all Lie transformations that fix the point $\p{q}$
\[
  \lietrafos_{\p{q}} \simeq \lageucltrafos.
\]

\begin{table}
  \centering
  \begin{tabular}{c|c}
    \textbf{Euclidean geometry} & \textbf{Lie geometry}\\
    \hline\hline
    \emph{point}\;$\tilde{x}\in\R^n$
              &$\begin{aligned} & [\tilde{x} + e_0 + \abs{x}^2 e_\infty + 0\cdot e_{n+3}] \\
                &= \left[\tilde{x}, \tfrac{1-\abs{\tilde{x}}^2}{2}, \tfrac{1+\abs{\tilde{x}}^2}{2}, 0\right] \in \lie\end{aligned}$\\
    \hline
    \makecell{\emph{oriented hypersphere}\\ with center $\tilde{s}\in\R^n$ and signed radius $r \in \R$}
              &$\begin{aligned} & [\tilde{s} + e_0 + (\abs{\tilde{s}}^2 - r^2)e_\infty + r e_{n+3}] \\
                &= \left[\tilde{s}, \tfrac{1 - \abs{\tilde{s}}^2 + r^2}{2}, \tfrac{1 + \abs{\tilde{s}}^2 - r^2}{2}, r\right] \in \lie \end{aligned}$\\
    \hline
    \makecell{\emph{oriented hyperplane}\;$\scalarprod{\tilde{n}}{\tilde{x}}=d$,\\ with normal $\tilde{n} \in \S^{n-1}$ and signed distance $d \in \R$}
              &$\begin{aligned} & [\tilde{n} + 0 \cdot e_0 + 2d e_\infty + e_{n+3}] \\
                &= \left[\tilde{n}, -2d, 2d, 1\right] \in \lie \end{aligned}$
  \end{tabular}
  \caption{Correspondence between the geometric objects of Lie geometry in Euclidean space and points on the Lie quadric.}
\end{table}


\newpage
\section{Generalized signed inversive distance}
\label{sec:invariant}

While two points $\p{x}, \p{y} \in \quadric$ on a quadric $\quadric \subset \RP^{n+1}$ with $\scalarprod{x}{y} \neq 0$ possess no projective invariant, the additional choice of a fixed point $\p{q} \in \RP^{n+1} \setminus \quadric$ allows for the definition of such an invariant. It is closely related to the Cayley-Klein distance under the projection from the point $\p{q}$.

A special case is given by a signed version of the classical \emph{inversive distance} introduced Coxeter \cite{Cox}, which generalizes the intersection angle of spheres. It can be used for a geometric description of sphere complexes in Lie geometry.

\subsection{Invariant on a quadric induced by a point}
\begin{definition}
  \label{def:sphere-complex-distance}
  Let $\p{q} \in \RP^{n+1} \setminus \quadric$.
  Then we call
  \[
    \inv{\quadric}{\p{q}}{\p{x}}{\p{y}} \coloneqq 1 - \frac{\scalarprod{x}{y} \scalarprod{q}{q}}{\scalarprod{x}{q} \scalarprod{y}{q}}.
  \]
  the \emph{$\p{q}$-distance} for any two points $\p{x}, \p{y} \in \quadric$.
\end{definition}
\begin{remark}
  Although we are interested in the $\p{q}$-distance of points on the quadric for now,
  it can be extended to all of $\RP^{n+1} \setminus \p{q}^\perp$.
  Then the relation between the $\p{q}$-distance and the Cayley-Klein distance induced by $\quadric$ is given by
  \[
    \ck{\quadric}{\p{x}}{\p{y}}
    = \frac{(1-\inv{\quadric}{\p{q}}{\p{x}}{\p{y}})^2}{(1-\inv{\quadric}{\p{q}}{\p{x}}{\p{x}})(1-\inv{\quadric}{\p{q}}{\p{y}}{\p{y}})}
  \]
  for $\p{x}, \p{y} \in \RP^{n+1}\setminus(\quadric \cup \p{q}^\perp)$.
\end{remark}
The $\p{q}$-distance is projectively well-defined,
in the sense that it does not depend on the choice of homogeneous coordinate vectors for the points $\p{q}$, $\p{x}$, and $\p{y}$,
and it is invariant under the action of the group $\PO(r,s,t)_{\p{q}}$:
\begin{proposition}
  Let $\p{q} \in \RP^{n+1} \setminus \quadric$.
  Then the $\p{q}$-distance is invariant under all projective transformations that preserve the quadric $\quadric$
  and fix the point $\p{q}$, i.e.\
  \[
    \inv{\quadric}{\p{q}}{f(\p{x})}{f(\p{y})} = \inv{\quadric}{\p{q}}{\p{x}}{\p{x}}
  \]
  for $f \in \PO(r,s,t)_{\p{q}}$ and $\p{x}, \p{y} \in \quadric$.
\end{proposition}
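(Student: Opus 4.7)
The plan is to reduce everything to a one-line cancellation by choosing convenient homogeneous coordinate vectors. The key observation is that the defining ratio
\[
  R(x,y,q) \;:=\; \frac{\scalarprod{x}{y}\scalarprod{q}{q}}{\scalarprod{x}{q}\scalarprod{y}{q}},
\]
so that $\inv{\quadric}{\p{q}}{\p{x}}{\p{y}} = 1 - R(x,y,q)$, is balanced: each of $x$, $y$ appears once in both numerator and denominator, and $q$ appears twice on each side. This is exactly what makes the $\p{q}$-distance projectively well-defined in the first place, and the same structural feature should yield invariance under $f$.

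First I would pick any linear lift $F \in \mathrm{GL}(n+2)$ of $f \in \PO(r,s,t)_{\p{q}}$. Since $f$ preserves the quadric $\quadric$, the lift $F$ preserves the bilinear form up to a nonzero global scalar, say $\scalarprod{Fa}{Fb} = c\scalarprod{a}{b}$ for all $a,b \in \R^{n+2}$. Since $f$ fixes $\p{q}$, there exists $\mu \neq 0$ with $Fq = \mu q$, which means that $Fq$ is a second valid homogeneous coordinate vector for the projective point $\p{q}$. I would then evaluate $\inv{\quadric}{\p{q}}{f(\p{x})}{f(\p{y})}$ using the homogeneous coordinate vectors $Fx$ for $f(\p{x})$, $Fy$ for $f(\p{y})$, and $Fq$ for $\p{q}$; each of the four bilinear-form evaluations in $R$ picks up the same factor $c$, so the $c^{2}$ in the numerator cancels the $c^{2}$ in the denominator, yielding $R(Fx, Fy, Fq) = R(x, y, q)$ and hence the claim.

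There is no real obstacle here; the argument is essentially bookkeeping. The only non-algebraic input is the existence of a lift $F$ that rescales $\scalarprod{\cdot}{\cdot}$ globally, which is part of the standing characterization of $\PO(r,s,t)$ used throughout the paper. One mildly annoying subcase is when the ambient signature forces $c < 0$ so that the lift cannot be normalized to lie in $O(r,s,t)$; the argument above bypasses this entirely by carrying the factor $c$ symbolically rather than trying to set $c=1$. As a secondary observation, the same computation applied to independent diagonal rescalings $x \mapsto \alpha x$, $y \mapsto \beta y$, $q \mapsto \gamma q$ (which produce factors $\alpha\beta\gamma^{2}$ in both numerator and denominator of $R$) reproves the projective well-definedness of $\inv{\quadric}{\p{q}}{\p{x}}{\p{y}}$ asserted just before the proposition.
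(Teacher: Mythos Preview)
Your argument is correct. The paper does not actually supply a proof of this proposition; it is stated immediately after the observation that the $\p{q}$-distance is projectively well-defined, and the invariance under $\PO(r,s,t)_{\p{q}}$ is treated as evident from the same homogeneity considerations. Your write-up makes this explicit in exactly the expected way, and your handling of the scalar $c$ is a harmless over-generalization: by the paper's definition $\PO(r,s,t)$ is the projectivization of the linear orthogonal group, so one may simply take a lift $F$ with $c=1$, but carrying $c$ symbolically works just as well and, as you note, simultaneously reproves the independence from the choice of homogeneous coordinates.
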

Applying the involution $\sigma_{\p{q}}$ to only one of the arguments of the $\p{q}$-distance
results in a change of sign.
\begin{proposition}
  Let $\p{q} \in \RP^{n+1} \setminus \quadric$.
  Then the $\p{q}$-distance satisfies
  \[
    \inv{\quadric}{\p{q}}{\sigma_{\p{q}}(\p{x})}{\p{y}} =
    \inv{\quadric}{\p{q}}{\p{x}}{\sigma_{\p{q}}(\p{y})} =
    - \inv{\quadric}{\p{q}}{\p{x}}{\p{y}}.
  \]
  for all $\p{x}, \p{y} \in \quadric$.
\end{proposition}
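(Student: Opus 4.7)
The plan is to reduce the proposition to two elementary identities describing how $\sigma_{\p{q}}$ interacts with the bilinear form $\scalarprod{\cdot}{\cdot}$, and then substitute into the definition of the $\p{q}$-distance. The first identity I would record is $\scalarprod{\sigma_q(x)}{q} = -\scalarprod{x}{q}$, which is immediate from the formula in Definition \ref{def:involution-projection}, or conceptually from the fact that the orthogonal reflection $\sigma_{\p{q}}$ sends the homogeneous vector $q$ to $-q$. The second identity is
\[
    \scalarprod{\sigma_q(x)}{y} = \scalarprod{x}{y} - 2\,\frac{\scalarprod{x}{q}\scalarprod{y}{q}}{\scalarprod{q}{q}},
\]
which likewise follows by plugging in the defining expression for $\sigma_q(x)$ and is, notably, already symmetric in $x$ and $y$.

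With these two formulas in hand, the proof of the first equality is a one-line substitution. Inserting them into
\[
    \inv{\quadric}{\p{q}}{\sigma_{\p{q}}(\p{x})}{\p{y}} = 1 - \frac{\scalarprod{\sigma_q(x)}{y}\,\scalarprod{q}{q}}{\scalarprod{\sigma_q(x)}{q}\,\scalarprod{y}{q}}
\]
the sign flip from the first identity in the denominator combines with the additional term $-2\scalarprod{x}{q}\scalarprod{y}{q}$ in the numerator to produce, after simplification,
\[
    -1 + \frac{\scalarprod{x}{y}\,\scalarprod{q}{q}}{\scalarprod{x}{q}\,\scalarprod{y}{q}} = -\inv{\quadric}{\p{q}}{\p{x}}{\p{y}}.
\]

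For the second equality, $\inv{\quadric}{\p{q}}{\p{x}}{\sigma_{\p{q}}(\p{y})} = -\inv{\quadric}{\p{q}}{\p{x}}{\p{y}}$, I would simply appeal to the manifest symmetry of $\inv{\quadric}{\p{q}}{\cdot}{\cdot}$ in its two quadric-valued arguments (inherited from the symmetry of $\scalarprod{\cdot}{\cdot}$); equivalently, the same calculation runs verbatim with the roles of $x$ and $y$ interchanged, since the second auxiliary identity above is symmetric in its two arguments.

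I do not anticipate any serious obstacle: the argument is essentially algebraic once the two identities are recorded. The only point requiring mild care is the projective well-definedness of the statement, but this is already ensured by the preceding proposition together with the fact that $\sigma_{\p{q}}$ is a well-defined projective involution, so the identities descend unambiguously to $\RP^{n+1}$.
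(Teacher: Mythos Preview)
Your proof is correct and follows essentially the same approach as the paper's: both substitute the defining formula for $\sigma_q$ into the $\p{q}$-distance and simplify, with the paper carrying out inline the same two identities you record separately. The paper leaves the second equality implicit, while you make the symmetry argument explicit, which is a minor presentational improvement.
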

\begin{proof}
  Using Definitions \ref{def:sphere-complex-distance} and \ref{def:involution-projection} we obtain
  \[
    \begin{aligned}
      \inv{\quadric}{\p{q}}{\sigma_{\p{q}}(\p{x})}{\p{y}}
      &= 1 - \frac{\scalarprod{\sigma_q(x)}{y}\scalarprod{q}{q}}{\scalarprod{\sigma_q(x)}{q}\scalarprod{y}{q}}
      = 1 - \frac{\scalarprod{x}{y}\scalarprod{q}{q} - 2\scalarprod{x}{q}\scalarprod{y}{q}}{-\scalarprod{x}{q}\scalarprod{y}{q}}\\
      &= \frac{\scalarprod{x}{y}\scalarprod{q}{q}}{\scalarprod{x}{q}\scalarprod{y}{q}} - 1
      = - \inv{\quadric}{\p{q}}{\p{x}}{\p{y}}.
    \end{aligned}
  \]
\end{proof}
Thus, we find that the square of the $\p{q}$-distance is well-defined on the quotient $\sfrac{\quadric}{\sigma_{\p{q}}}$,
which, according to Proposition \ref{prop:involution-projection}, can be identified with its projection $\pi_{\p{q}}(\quadric)$ to the plane $\p{q}^\perp$.
In this projection the square of the $\p{q}$-distance becomes the Cayley-Klein distance induced by $\secquadric = \quadric \cap \p{q}^\perp$ (see Proposition \ref{prop:Cayley-Klein-distance-lift})
\[
  \inv{\quadric}{\p{q}}{\p{x}}{\p{y}}^2 = \ck{\secquadric}{\pi_{\p{q}}(\p{x})}{\pi_{\p{q}}(\p{y})}.
\]
Hypersurfaces of $\quadric$ of constant $\p{q}$-distance to a point on $\quadric$
are hyperplanar sections of $\quadric$, i.e.\ the $\quadric$-spheres (see Definition \ref{def:Q-sphere}).
\begin{proposition}
  \label{prop:q-spheres-planar-sections}
  The hypersurface in $\quadric$ of constant $\p{q}$-distance $\nu \in \R$ to a point $\p{\tilde{x}} \in \quadric$
  is given by the intersection with the polar hyperplane of the point $\p{x} \in \RP^{n+1}$,
  \[
    x \coloneqq \scalarprod{q}{q}\tilde{x} + (\nu - 1)\scalarprod{\tilde{x}}{q}q,
  \]
  i.e.\
  \[
    \set{\p{y} \in \quadric}{\inv{\quadric}{\p{q}}{\p{\tilde{x}}}{\p{y}} = \nu} ~=~ \p{x}^\perp \cap \quadric.
  \]
\end{proposition}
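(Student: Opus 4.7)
The plan is to verify the claimed equivalence by a direct algebraic manipulation of the defining equation of the $\p{q}$-distance. The key observation is that the condition $\inv{\quadric}{\p{q}}{\p{\tilde{x}}}{\p{y}} = \nu$ is, after clearing denominators, a \emph{linear} condition in $y$, and hence should cut out the intersection of $\quadric$ with a hyperplane; the proof amounts to identifying this hyperplane as the polar of the point $\p{x}$ defined in the statement.

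First, I would write out the equation $\inv{\quadric}{\p{q}}{\p{\tilde{x}}}{\p{y}} = \nu$ using Definition \ref{def:sphere-complex-distance}, namely
\[
  1 - \frac{\scalarprod{\tilde{x}}{y}\scalarprod{q}{q}}{\scalarprod{\tilde{x}}{q}\scalarprod{y}{q}} = \nu.
\]
Since $\p{q} \notin \quadric$ and $\p{\tilde{x}}$ is assumed not to lie in $\p{q}^\perp$ (otherwise the $\p{q}$-distance is undefined), the denominators $\scalarprod{q}{q}$ and $\scalarprod{\tilde{x}}{q}$ are nonzero, so we may multiply through by $\scalarprod{\tilde{x}}{q}\scalarprod{y}{q}$ to obtain the equivalent linear condition on $y$,
\[
  \scalarprod{q}{q}\scalarprod{\tilde{x}}{y} + (\nu-1)\scalarprod{\tilde{x}}{q}\scalarprod{y}{q} = 0.
\]

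Next, I would simply read off this expression as the bilinear form applied to $y$ and the vector
\[
  x = \scalarprod{q}{q}\tilde{x} + (\nu-1)\scalarprod{\tilde{x}}{q}\, q,
\]
using bilinearity of $\scalarprod{\cdot}{\cdot}$:
\[
  \scalarprod{x}{y} = \scalarprod{q}{q}\scalarprod{\tilde{x}}{y} + (\nu-1)\scalarprod{\tilde{x}}{q}\scalarprod{q}{y}.
\]
Thus the original equation is equivalent to $\scalarprod{x}{y} = 0$, i.e.\ $\p{y} \in \p{x}^\perp$, and restricting to $\p{y} \in \quadric$ yields exactly $\p{x}^\perp \cap \quadric$.

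There is essentially no obstacle here: the proof is a one-line bilinearity computation once the defining equation is cleared of denominators. The only thing worth a brief remark is to note that one should check $\p{x} \ne \p{0}$, i.e.\ that $\scalarprod{q}{q}\tilde{x}$ and $\scalarprod{\tilde{x}}{q}q$ are not proportional in a way that makes the combination vanish. Since $\p{\tilde{x}}$ and $\p{q}$ are distinct projective points (as $\p{\tilde{x}} \in \quadric$ while $\p{q} \notin \quadric$) and $\scalarprod{q}{q} \neq 0$, the vector $x$ is a nontrivial linear combination of the linearly independent vectors $\tilde{x}$ and $q$, hence $\p{x}$ is a well-defined point in $\RP^{n+1}$ and its polar hyperplane is well-defined.
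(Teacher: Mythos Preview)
Your proof is correct and takes essentially the same approach as the paper: both simply clear denominators in the defining equation of the $\p{q}$-distance and recognize the resulting linear condition as $\scalarprod{x}{y}=0$. Your version is slightly more detailed (you justify that the denominators are nonzero and that $\p{x}$ is a well-defined projective point), but the argument is the same.
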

\begin{proof}
  The equation
  \[
    \inv{\quadric}{\p{q}}{\p{\tilde{x}}}{\p{y}} = 1 - \frac{\scalarprod{\tilde{x}}{y}\scalarprod{q}{q}}{\scalarprod{\tilde{x}}{q}\scalarprod{y}{q}} = \nu
  \]
  is equivalent to
  \[
    \scalarprod{x}{y} = \scalarprod{q}{q}\scalarprod{\tilde{x}}{y} + (\nu - 1)\scalarprod{\tilde{x}}{q}\scalarprod{q}{y} = 0.
  \]
\end{proof}
But are all hyperplanar sections of $\quadric$ such hypersurfaces (cf.\ Proposition \ref{prop:Q-sphere-projection})?
Following Proposition \ref{prop:q-spheres-planar-sections} the potential centers of a given planar section $\p{x}^\perp \cap \quadric$
are given by the points of intersection of the line $\p{q} \wedge \p{x}$ with the quadric $\quadric$.
Yet such lines do not always intersect the quadric in real points.
\begin{proposition}
  \label{prop:q-spheres-planar-sections2}
  Denote by
  \[
    \Delta_q(x) \coloneqq \scalarprod{x}{q}^2 - \scalarprod{x}{x}\scalarprod{q}{q} = - \scalarprod{q}{q}\scalarprod{x}{x}_q
  \]
  the quadratic form of the cone of contact $\cone{\quadric}{\p{q}}$.
  Let $\p{x} \in \RP^{n+1}$ such that $\p{x}^\perp \cap \quadric \neq \varnothing$.
  \begin{itemize}
  \item If $\Delta_q(x) > 0$, then the line $\p{q} \wedge \p{x}$ intersects the quadric $\quadric$ in two (real) points, and
    \[
      \p{x}^\perp \cap \quadric = \set{\p{y} \in \quadric}{\inv{\quadric}{\p{q}}{\p{x}_\pm}{\p{y}} = \nu_\pm}
    \]
    with
    \[
      x_{\pm} = \scalarprod{q}{q}x + \left( -\scalarprod{x}{q} \pm \sqrt{\Delta} \right) q,\qquad
      \nu_\pm \coloneqq \pm \frac{\scalarprod{x}{q}}{\sqrt{\Delta}}.
    \]
  \item If $\Delta_q(x) < 0$, then the line $\p{q} \wedge \p{x}$ intersects the quadric $\quadric$ in two complex conjugate points, and
    \[
      \p{x}^\perp \cap \quadric = \set{\p{y} \in \quadric}{\inv{\quadric}{\p{q}}{\p{x}_\pm}{\p{y}} = \nu_\pm}
    \]
    with
    \[
      x_{\pm} = \scalarprod{q}{q}x + \left( -\scalarprod{x}{q} \pm i\sqrt{-\Delta} \right) q,\qquad
      \nu_\pm \coloneqq \pm \frac{\scalarprod{x}{q}}{i\sqrt{-\Delta}}.
    \]
  \end{itemize}
\end{proposition}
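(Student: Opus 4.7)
The plan is to reduce this to a direct application of Proposition \ref{prop:q-spheres-planar-sections} by identifying, for the given hyperplane $\p{x}^\perp$, two candidate centers on $\quadric \cap (\p{q} \wedge \p{x})$ together with associated $\p{q}$-distances. First I would invoke Lemma \ref{lem:quadric-line-intersection} applied to the points $\p{q}$ and $\p{x}$: since $\scalarprod{q}{q} \neq 0$ (as $\p{q} \notin \quadric$), the discriminant controlling the intersection of the line $\p{q} \wedge \p{x}$ with $\quadric$ is, after normalizing with $\scalarprod{q}{q}$, precisely $\Delta_q(x) = \scalarprod{x}{q}^2 - \scalarprod{x}{x}\scalarprod{q}{q}$. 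According to the sign of $\Delta_q(x)$, Lemma \ref{lem:quadric-line-intersection} yields either two real intersection points $\p{x}_\pm$ or two complex conjugate intersection points, with homogeneous coordinate vectors exactly as asserted in the proposition.

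Next, I would show that each of the two points $\p{x}_\pm \in \quadric_\C$ serves as a center for the hyperplanar section $\p{x}^\perp \cap \quadric$. By Proposition \ref{prop:q-spheres-planar-sections}, the hypersurface of points in $\quadric$ at $\p{q}$-distance $\nu$ from $\p{x}_\pm$ is cut out by the polar hyperplane of
\[
  z_\pm(\nu) \coloneqq \scalarprod{q}{q} x_\pm + (\nu - 1)\scalarprod{x_\pm}{q} q.
\]
A short computation using $x_\pm = \scalarprod{q}{q} x + (-\scalarprod{x}{q} \pm \sqrt{\Delta_q(x)})\, q$ gives $\scalarprod{x_\pm}{q} = \pm\sqrt{\Delta_q(x)}\,\scalarprod{q}{q}$, so
\[
  z_\pm(\nu) = \scalarprod{q}{q}^2\, x + \scalarprod{q}{q}\left(-\scalarprod{x}{q} \pm \nu\sqrt{\Delta_q(x)}\right) q .
\]
Thus $z_\pm(\nu)$ is proportional to $x$ precisely when $\nu = \nu_\pm = \pm \scalarprod{x}{q}/\sqrt{\Delta_q(x)}$, establishing the claimed equality of sets. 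The case $\Delta_q(x) < 0$ is handled by the same computation, just with $\sqrt{\Delta_q(x)}$ replaced by $i\sqrt{-\Delta_q(x)}$, since all arguments in Proposition \ref{prop:q-spheres-planar-sections} are bilinear/algebraic and extend to the complexified setting.

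The main obstacle, such as it is, lies in making the statement meaningful in the complex conjugate case $\Delta_q(x) < 0$: the ``centers'' $\p{x}_\pm$ are then complex conjugate points on $\quadric_\C$ and the $\p{q}$-distances $\nu_\pm$ are purely imaginary. I would address this by noting that (i) the bilinear form $\scalarprod{\cdot}{\cdot}$ extends $\C$-bilinearly to $\C^{n+2}$, so the identity $\inv{\quadric}{\p{q}}{\p{x}_\pm}{\p{y}} = \nu_\pm$ has an unambiguous meaning for $\p{y} \in \quadric$ and $\p{x}_\pm \in \quadric_\C$, and (ii) since $x_+, x_-$ are complex conjugates and $\nu_+, \nu_-$ as well, the two equations cut out the same real hyperplane $\p{x}^\perp$, consistent with the single (real) section $\p{x}^\perp \cap \quadric$. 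Thus no additional real solutions are lost or gained, and the equality of sets in the complex case is simply the complexified version of the real argument. The degenerate case $\Delta_q(x) = 0$ (tangency of $\p{q}\wedge\p{x}$ to $\quadric$) is excluded, corresponding to a horospherical section of $\quadric$ in the sense of Proposition \ref{prop:Q-sphere-projection}.
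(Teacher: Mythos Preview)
Your proposal is correct and follows essentially the same approach as the paper: both invoke Lemma~\ref{lem:quadric-line-intersection} for the intersection points $\p{x}_\pm$ and both rest on the computation $\scalarprod{x_\pm}{q} = \pm\sqrt{\Delta_q(x)}\,\scalarprod{q}{q}$. The only cosmetic difference is that you route the identification of $\nu_\pm$ through Proposition~\ref{prop:q-spheres-planar-sections} (solving for the $\nu$ that makes $z_\pm(\nu)$ proportional to $x$), whereas the paper substitutes $x_\pm$ directly into the definition of $\inv{\quadric}{\p{q}}{\cdot}{\cdot}$ under the constraint $\scalarprod{x}{y}=0$; these are two sides of the same short calculation.
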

\begin{proof}
  The first equality for the quadratic form of the cone of contact follows from Lemma \ref{lem:cone-of-contact},
  while the second equality immediately follows from substituting $x = \alpha q + \pi_q(x)$.
  
  In the case $\Delta_q(x) \neq 0$ the form of the intersection points $\p{x}_\pm$ follows from Lemma \ref{lem:quadric-line-intersection}.
  Substituting into the $\p{q}$-distance gives, e.g., in the case $\Delta_q(x) > 0$
  \[
    \inv{\quadric}{\p{q}}{\p{x}_\pm}{\p{y}}
    = 1 - \frac{(-\scalarprod{x}{q} \pm \sqrt{\Delta})\scalarprod{q}{q}}{\scalarprod{x_\pm}{q}}
    = \pm \frac{\scalarprod{x}{q}}{\sqrt{\Delta}},
  \]
  where we used $\scalarprod{x}{q} = \pm \sqrt{\Delta} \scalarprod{q}{q}$.
\end{proof}
\begin{remark}
  The $\p{q}$-distance of two points $\p{\tilde{x}}, \p{\tilde{y}} \in \RP^{n+1}$ with $\p{x}^\perp \cap \quadric \neq \varnothing$, which represent two $\p{q}$-spheres with centers $\p{x}, \p{y} \in \quadric$ and $\p{q}$-radii $\nu_1, \nu_2$ is given by
  \[
    \inv{\quadric}{\p{q}}{\p{\tilde{x}}}{\p{\tilde{y}}} = \frac{\inv{\quadric}{\p{q}}{\p{x}}{\p{y}}}{\nu_1 \nu_2}.
  \]
  Note that the change of the representing center and radius, e.g.\ $\p{x} \rightarrow \sigma_{\p{q}}(\p{x})$, $\nu_1 \rightarrow -\nu_1$,
  leaves the resulting quantity invariant.
\end{remark}

\subsection{Signed inversive distance}
\label{sec:signed-inversive-distance}
We first give a Euclidean definition for the signed inversive distance.
\begin{definition}
  The \emph{signed inversive distance} of two oriented hyperspheres in $\R^n$ 
  with centers $\tilde{s}_1, \tilde{s}_2 \in \R^n$ and signed radii $r_1, r_2 \in \R$ is given by
  \[
    I \coloneqq \frac{r_1^2 + r_2^2 - \abs{\tilde{s}_1 - \tilde{s}_2}^2}{2 r_1 r_2}.
  \]
  In particular, if the two spheres intersect, it is the cosine of their intersection angle,
  by the cosine law for Euclidean triangles.
\end{definition}
\begin{remark}
  This classical invariant is usually given in its unsigned version,
  which was introduced by Coxeter \cite{Cox} as a Möbius invariant.
\end{remark}
\begin{proposition}
  \label{prop:angle}
  The signed inversive distance $I$ satisfies
  \begin{itemize}
  \item $I \in (-1,1)$ $\Leftrightarrow$ the two oriented hyperspheres intersect.
    In this case $I = \cos\varphi$ where $\varphi \in [0,\pi]$ is the angle between the two oriented hyperspheres.
  \item $I = 1$ $\Leftrightarrow$ the two oriented hyperspheres touch with matching orientation (Lie incidence).
  \item $I = -1$ $\Leftrightarrow$ the two oriented hyperspheres touch with opposite orientation.
  \item $I \in (\infty, -1) \cup (1,\infty)$ $\Leftrightarrow$ the two oriented hyperspheres are disjoint.
  \end{itemize}
\end{proposition}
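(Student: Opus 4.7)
The plan is to reduce the proposition to two polynomial identities relating $I$ to $d \coloneqq \abs{\tilde{s}_1 - \tilde{s}_2}$ and to the signed radii $r_1, r_2$. Direct manipulation of the defining formula for $I$ yields
\[
2r_1r_2\,(1 - I) = d^2 - (r_1 - r_2)^2, \qquad 2r_1r_2\,(1 + I) = (r_1 + r_2)^2 - d^2,
\]
and multiplying these,
\[
4 r_1^2 r_2^2\,(1 - I^2) = \bigl(d^2 - (|r_1| - |r_2|)^2\bigr)\bigl((|r_1| + |r_2|)^2 - d^2\bigr),
\]
where on the right-hand side I have used $\{(r_1-r_2)^2,(r_1+r_2)^2\} = \{(|r_1|-|r_2|)^2,(|r_1|+|r_2|)^2\}$ as unordered pairs (this rests only on $r_i^2 = |r_i|^2$).

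The two tangency cases now fall out of the first identity together with Proposition~\ref{prop:geo_interpretation_lie_euclidean}. Indeed, $I = 1$ is equivalent to $d^2 = (r_1 - r_2)^2$, which by that proposition is precisely Lie-orthogonality of the two oriented-sphere representatives on $\lie$, i.e.\ oriented contact with matching orientation. Reversing the orientation of the second sphere, $r_2 \mapsto -r_2$, flips the sign of $I$ while sending the ``matching'' contact condition to the ``opposite'' one, so $I = -1$ is equivalent to oriented contact with opposite orientation.

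For the remaining range cases the product identity shows that $|I| < 1$ holds if and only if $||r_1| - |r_2|| < d < |r_1| + |r_2|$, the classical Euclidean condition for two hyperspheres to meet transversely in a real $(n{-}2)$-sphere; conversely, $|I| > 1$ forces the two factors on the right-hand side to have opposite signs, which (since $(|r_1|-|r_2|)^2 \leq (|r_1|+|r_2|)^2$) means either $d > |r_1| + |r_2|$ or $d < ||r_1| - |r_2||$, i.e.\ the spheres are disjoint. To identify $I$ with $\cos\varphi$ in the transverse case I would then pick any point $p$ in the intersection and consider the oriented unit normals $\nu_i \coloneqq (p - \tilde{s}_i)/r_i$ (whose sign convention is dictated by the sign of $r_i$); expanding $d^2 = \abs{(p - \tilde{s}_1) - (p - \tilde{s}_2)}^2$ together with $\abs{p - \tilde{s}_i}^2 = r_i^2$ gives $\dotprod{p - \tilde{s}_1}{p - \tilde{s}_2} = \tfrac{1}{2}(r_1^2 + r_2^2 - d^2)$, hence $\cos\varphi = \dotprod{\nu_1}{\nu_2} = I$. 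The main (and really only) subtlety of the whole argument is the sign bookkeeping between $r_i$ and $|r_i|$ in the product identity, which is taken care of once and for all by the unordered-pair observation in the first paragraph.
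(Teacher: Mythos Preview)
Your argument is correct. The paper does not actually supply a proof of this proposition: it states the result as elementary, with only the hint in the preceding definition that the intersection-angle interpretation follows ``by the cosine law for Euclidean triangles''. Your write-up makes this explicit via the factorizations $2r_1r_2(1\mp I) = \pm\bigl(d^2 - (r_1\mp r_2)^2\bigr)$ and the product identity, and it correctly handles the sign bookkeeping through the unordered-pair observation and the oriented normals $\nu_i = (p-\tilde s_i)/r_i$. The appeal to Proposition~\ref{prop:geo_interpretation_lie_euclidean} for the $I=\pm 1$ cases is appropriate and matches how the paper connects oriented contact to the condition $d^2 = (r_1-r_2)^2$.
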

The signed inversive distance is nothing but the $\p{p}$-distance (see Definition \ref{def:sphere-complex-distance})
associated with the point complex $\mob \subset \lie$ in Lie geometry (see Definition \ref{def:point-complex}),
where
\[
  \p{p} = [0, \cdots, 0, 1] \in \RP^{n+2}.
\]
\begin{proposition}
  For two oriented hyperspheres represented by
  \[
    \p{s}_i = [\tilde{s}_i + e_0 + (\abs{\tilde{s}_i}^2 - r_i^2)e_\infty + r_ie_{n+3}], \qquad i=1,2
  \]
  with Euclidean centers $\tilde{s}_1, \tilde{s}_2 \in \R^n$ and signed radii $r_1, r_2 \neq 0$ the $\p{p}$-distance
  associated with the point complex $\mob$ is equal to the signed inversive distance, i.e.
  \[
    \inv{\lie}{\p{p}}{\p{s}_1}{\p{s}_2} = \frac{r_1^2 + r_2^2 - \abs{\tilde{s}_1 - \tilde{s}_2}^2}{2 r_1 r_2}.
  \]
\end{proposition}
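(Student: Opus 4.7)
The plan is a direct computation: expand both sides of Definition~\ref{def:sphere-complex-distance} using the explicit homogeneous coordinate representatives given in the statement and the bilinear form of the Lie quadric, then simplify.

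First I would record the three inner products of the basis vectors that matter: $\liesc{e_0}{e_0}=\liesc{e_\infty}{e_\infty}=0$, $\liesc{e_0}{e_\infty}=-\tfrac{1}{2}$, $\liesc{e_{n+3}}{e_{n+3}}=-1$, all other cross-pairings vanish, and the Euclidean components satisfy $\liesc{\tilde{s}_1}{\tilde{s}_2}=\dotprod{\tilde{s}_1}{\tilde{s}_2}$. Since $p=e_{n+3}$, this immediately gives $\liesc{p}{p}=-1$ and $\liesc{s_i}{p}=-r_i$ (the $e_{n+3}$-coefficient of $s_i$ is $r_i$, and only the $e_{n+3}$-term of $s_i$ pairs nontrivially with $p$).

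Next I would compute $\liesc{s_1}{s_2}$ by bilinearity. The only non-vanishing contributions are the Euclidean part $\dotprod{\tilde{s}_1}{\tilde{s}_2}$, the mixed $e_0/e_\infty$ cross terms which yield $-\tfrac{1}{2}\left((\abs{\tilde{s}_1}^2-r_1^2)+(\abs{\tilde{s}_2}^2-r_2^2)\right)$, and the $e_{n+3}$-term $-r_1 r_2$. Collecting these gives
\[
\liesc{s_1}{s_2}=\dotprod{\tilde{s}_1}{\tilde{s}_2}-\tfrac{1}{2}\abs{\tilde{s}_1}^2-\tfrac{1}{2}\abs{\tilde{s}_2}^2+\tfrac{1}{2}(r_1-r_2)^2=-\tfrac{1}{2}\bigl(\abs{\tilde{s}_1-\tilde{s}_2}^2-(r_1-r_2)^2\bigr).
\]

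Finally I would substitute into Definition~\ref{def:sphere-complex-distance}:
\[
\inv{\lie}{\p{p}}{\p{s}_1}{\p{s}_2}=1-\frac{\liesc{s_1}{s_2}\cdot(-1)}{(-r_1)(-r_2)}=1+\frac{\liesc{s_1}{s_2}}{r_1 r_2},
\]
and a short algebraic simplification, using $(r_1-r_2)^2+2r_1r_2=r_1^2+r_2^2$, produces the claimed expression. There is no real obstacle here; the only thing to watch is keeping the signs straight in the pairing $\liesc{e_0}{e_\infty}=-\tfrac{1}{2}$ and the final cancellation of the factor $\tfrac{1}{2}$ against the $2r_1r_2$ in the denominator, which is what converts the expression for $\liesc{s_1}{s_2}$ into the numerator $r_1^2+r_2^2-\abs{\tilde{s}_1-\tilde{s}_2}^2$ over $2r_1r_2$.
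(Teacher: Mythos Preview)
Your proof is correct and follows essentially the same direct computation as the paper's own proof; the paper simply records the same substitution $1 + \frac{(r_1-r_2)^2 - \abs{\tilde{s}_1 - \tilde{s}_2}^2}{2r_1r_2}$ with less detail than you provide.
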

\begin{proof}
  With the given representation of the hyperspheres we find
  \[
    \inv{\lie}{\p{p}}{\p{s}_1}{\p{s}_2}
    = 1 - \frac{\liesc{s_1}{s_2}\liesc{p}{p}}{\liesc{s_1}{p}\liesc{s_2}{p}}
    = 1 + \frac{(r_1^2 + r_2^2 - 2r_1r_2)- \abs{\tilde{s}_1 - \tilde{s}_2}^2}{2r_1r_2}
    = \frac{r_1^2 + r_2^2 - \abs{\tilde{s}_1 - \tilde{s}_2}^2}{2r_1r_2}.
  \]
\end{proof}
\begin{remark}
  Since we have expressed the signed inversive distance in terms of the $\p{p}$-distance
  it follows that it is similarly well-defined for two oriented hyperspheres of $\S^n$.
  Furthermore, the signed inversive distance is invariant under all Lie transformations that preserve the point complex $\mob$, i.e.\ all Möbius transformations.
  In particular, the intersection angle of spheres is a Möbius invariant.
  As follows from Proposition \ref{prop:Cayley-Klein-distance-lift} the Cayley-Klein distance of Möbius geometry,
  i.e.\ the Cayley-Klein distance induced by $\mob$ onto $\p{p}^\lieperp$ is the squared inversive distance.
\end{remark}



\subsection{Geometric interpretation for sphere complexes}
\label{sec:sphere-complexes-geometric}
We now use the inversive distance to give a geometric interpretation for most sphere complexes in Lie geometry (see Definition \ref{def:sphere-complexes}).
Let again
\[
  \p{p} = [0, \cdots, 0, 1] \in \RP^{n+2},
\]
which distinguishes the point complex $\mob = \lie \cap \p{p}^\perp$.
\begin{proposition}
  \label{prop:sphere-complexes}
  Let $\p{q} \in \RP^{n+2}$, $\p{q} \neq \p{p}$ such that the line $\p{p} \wedge \p{q}$
  through $\p{p}$ and $\p{q}$ intersects the Lie quadric in two points,
  i.e.\ $\p{p} \wedge \p{q}$ has signature $(+-)$.
  Denote by
  \[
    \{ \p{q}_+, \p{q}_- \} \coloneqq (\p{p} \wedge \p{q}) \cap \lie
  \]
  the two intersection points of this line with the Lie quadric
  (the oriented hyperspheres corresponding to $\p{q}_+$ and $\p{q}_-$ only differ in their orientation).

  Then the sphere complex corresponding to the point $\p{q}$
  is given by the set of oriented hyperspheres that have some fixed constant inversive distance $I_{\lie, \p{p}}$
  to the oriented hypersphere corresponding to $\p{q}_+$,
  or equivalently, fixed constant inversive distance $-I_{\lie, \p{p}}$
  to the oriented hypersphere corresponding to $\p{q}_-$.
  
  In particular, in this case the sphere complex is
  \begin{itemize}
  \item
    \emph{elliptic} if $I_{\lie, \p{p}} \in (-1, 1)$,
  \item
    \emph{hyperbolic} if $I_{\lie, \p{p}} \in (-\infty, -1) \cup (1, \infty)$,
  \item
    \emph{parabolic} if $I_{\lie, \p{p}} \in \{ -1, 1 \}$.
  \end{itemize}
\end{proposition}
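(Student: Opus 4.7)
The plan is to exploit the fact that the points $\p{q}_\pm$ are collinear with $\p{p}$ and $\p{q}$ in order to rewrite the signed inversive distance with respect to $\p{q}_+$ of an arbitrary point $\p{s}$ of the sphere complex as a quantity that depends only on $\p{p}$ and $\p{q}$.

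First I would parametrize. Since $\p{q}_\pm \in \p{p}\wedge\p{q}$, write
\begin{equation}
q_{\pm} = \alpha_{\pm}\,p + \beta_{\pm}\,q.
\end{equation}
For any $\p{s}\in\lag = \lie\cap\p{q}^\lieperp$ we have $\liesc{s}{q}=0$, hence $\liesc{s}{q_+} = \alpha_+ \liesc{s}{p}$. Substituting into the definition of the $\p{p}$-distance gives
\begin{equation}
\inv{\lie}{\p{p}}{\p{s}}{\p{q}_+} = 1 - \frac{\alpha_+\liesc{s}{p}\liesc{p}{p}}{\liesc{s}{p}(\alpha_+\liesc{p}{p}+\beta_+\liesc{p}{q})} = 1 - \frac{\alpha_+\liesc{p}{p}}{\liesc{q_+}{p}} =: I_{\lie,\p{p}},
\end{equation}
which is manifestly independent of $\p{s}$. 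Conversely, solving this identity for $\liesc{s}{q_+}$ yields $\liesc{s}{q_+} = \alpha_+\liesc{s}{p}$, whence $\liesc{s}{q} = \beta_+^{-1}(\liesc{s}{q_+}-\alpha_+\liesc{s}{p}) = 0$, so $\p{s}$ lies in the sphere complex. This gives the bijective description in terms of $\p{q}_+$. The analogous statement for $\p{q}_-$ then follows from Proposition \ref{prop:involution-projection}: $\sigma_{\p{p}}$ interchanges the two intersection points of $\p{p}\wedge\p{q}$ with $\lie$, so $\p{q}_- = \sigma_{\p{p}}(\p{q}_+)$, and by the sign-change property $\inv{\lie}{\p{p}}{\p{s}}{\sigma_{\p{p}}(\p{q}_+)} = -\inv{\lie}{\p{p}}{\p{s}}{\p{q}_+}$ we obtain constant inversive distance $-I_{\lie,\p{p}}$ to $\p{q}_-$.

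For the classification, I would compute $I_{\lie,\p{p}}$ explicitly in terms of the pairings of $\p{p}$ and $\p{q}$. Using Lemma \ref{lem:quadric-line-intersection} to write down $q_{\pm}$ (after normalizing $\liesc{p}{p}=-1$), one finds
\begin{equation}
I_{\lie,\p{p}}^{\,2} \;=\; \frac{\liesc{p}{q}^{\,2}}{\liesc{p}{q}^{\,2} - \liesc{p}{p}\liesc{q}{q}} \;=\; \frac{\liesc{p}{q}^{\,2}}{\liesc{p}{q}^{\,2} + \liesc{q}{q}}.
\end{equation}
From this expression one reads off: $I_{\lie,\p{p}}^{\,2}<1 \Leftrightarrow \liesc{q}{q}>0$ (elliptic), $I_{\lie,\p{p}}^{\,2}>1 \Leftrightarrow \liesc{q}{q}<0$ (hyperbolic), and $I_{\lie,\p{p}}^{\,2}=1 \Leftrightarrow \liesc{q}{q}=0$ (parabolic), matching the classification in Definition \ref{def:sphere-complexes} and the case analysis in Proposition \ref{prop:angle}.

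The computation itself is routine linear algebra, and the only real subtlety is bookkeeping: one must verify that the denominators in the $\p{p}$-distance do not vanish under the hypothesis that $\p{p}\wedge\p{q}$ has signature $(+-)$ (so in particular $\alpha_+\liesc{p}{p}+\beta_+\liesc{p}{q}=\liesc{q_+}{p}\neq 0$, which holds since $\p{q}_+\notin\p{p}^\lieperp$), and to check that $\liesc{s}{p}\neq 0$ on the sphere complex in question (so that the inversive distance is defined). I would therefore include a brief remark identifying the degenerate loci, which correspond precisely to $\p{s}\in\mob$, i.e.\ to the ``point spheres'' on which the inversive distance with the prescribed value is attained only in a limiting sense.
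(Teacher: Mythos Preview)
Your argument is correct and in fact closely mirrors Proposition~\ref{prop:q-spheres-planar-sections2}: your explicit value
\[
I_{\lie,\p{p}}^{\,2} = \frac{\liesc{p}{q}^{\,2}}{\liesc{p}{q}^{\,2} - \liesc{p}{p}\liesc{q}{q}}
\]
is exactly the formula $\nu_\pm^2 = \liesc{x}{q}^2/\Delta_q(x)$ appearing there, specialised to the Lie quadric with $\p{q}$ in the role of $\p{x}$ and $\p{p}$ in the role of $\p{q}$.

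The paper's own proof takes a different, more concrete route: it fixes Euclidean coordinates on the Lie quadric (the $e_0, e_\infty, e_{n+3}$ basis), writes $\p{q}_\pm$ as oriented spheres of Euclidean centre~$\tilde q$ and signed radius~$\pm R$, and parametrizes $\p{q}$ along the line $\p{p}\wedge\p{q}$ by a scalar~$\kappa$ in the $e_{n+3}$-slot. The inversive distance then comes out as the ratio $\kappa/R$, and the trichotomy is read off from $\kappa^2 \lessgtr R^2$ (equivalently $\liesc{q}{q} \gtrless 0$). This has the pedagogical advantage of making the geometric meaning of the constant $I_{\lie,\p{p}}$ transparent in terms of Euclidean radii, at the cost of a case distinction (the separate treatment of $\liesc{q}{e_\infty}=0$) that your coordinate-free argument avoids entirely. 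Your approach is cleaner and makes the link to the general machinery of Appendix~\ref{sec:invariant} explicit; the paper's approach is more hands-on and ties the statement directly to the Euclidean description of Lie geometry.
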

\begin{proof}
  The two points $\p{q}_{\pm}$ may be represented by
  \[
    q_{\pm} = \tilde{q} + e_0 + \left(\abs{\tilde{q}}^2 - R^2\right)e_{\infty} \pm R e_{n+3},
  \]
  with some $R \neq 0$, where we assumed that the $e_0$-component of $q$ does not vanish.
  The case with $\liesc{q}{e_\infty} = 0$, which corresponds to $\p{q}_{\pm}$ being planes, may be treated analogously.
  
  Now the point $\p{q}$ may be represented by
  \[
    q = \tilde{q} + e_0 + \left(\abs{\tilde{q}}^2 - R^2\right)e_{\infty} + \kappa e_{n+3}
  \]
  with some $\kappa \in \R$.
  For any point $\p{s} \in \lie$ represented by
  \[
    s = \tilde{q} + e_0 + \left(\abs{\tilde{q}}^2 - r^2\right)e_{\infty} +  r e_{n+3},
  \]
  we find that the condition to lie on the sphere complex is given by
  \[
    \liesc{q}{s} = 0
    \quad\Leftrightarrow\quad
    \liesc{q}{s}_p = r \kappa.
  \]
  Thus, the signed inversive distance of $\p{q}_+$ and $\p{s}$ is given by
  \[
    I_{\p{p}}(\p{q}_+, \p{s})
    = 1 - \frac{\liesc{q_+}{s}\liesc{p}{p}}{\liesc{q_+}{p}\liesc{s}{p}}
    = \frac{\liesc{s}{q}_p}{rR}
    = \frac{\kappa}{R}.
  \]
  The change $\p{q}_+ \rightarrow \p{q}_-$ is equivalent to $R \rightarrow -R$
  which leads to $I \rightarrow -I$.

  The distinction of the three types of sphere complexes in terms of the value of the inversive distance
  is obtained by observing that
  \[
        \begin{aligned}
          \liesc{q}{q} &> 0, &&\text{if } \kappa^2 < R^2,\\
          \liesc{q}{q} &< 0, &&\text{if } \kappa^2 > R^2,\\
          \liesc{q}{q} &= 0, &&\text{if } \kappa^2 = R^2.
        \end{aligned}
  \]
\end{proof}
\begin{remark}
  \label{rem:elliptic-sphere-complexes-angle}
  For an elliptic sphere complex the line $\p{p}\wedge\p{q}$ always has signature $(+-)$.
  Furthermore, in this case we have $I_{\p{p}} \in (-1, 1)$.
  Thus, according to Proposition \ref{prop:angle},
  any elliptic sphere complex is given by all oriented hyperspheres with constant angle
  to some fixed oriented hypersphere.

  For hyperbolic sphere complexes the line $\p{p}\wedge\p{q}$ can have signature $(+-)$, $(--)$, or $(-0)$.
  The first case is captured by Proposition \ref{prop:sphere-complexes}.
  An example with signature $(--)$ is given by $\p{q} = [0, \sin R, \cos R]$,
  which describes all oriented hyperspheres of $\S^n$ with spherical radius $R$.
  An example with signature $(-0)$ is given by $\p{q} = [-2R e_\infty + e_{n+3}]$,
  which describes all oriented hyperspheres of $\R^n$ with (Euclidean) radius $R$.
  Note that the point complex $\mob$ itself is also a hyperbolic sphere complex.

  Parabolic sphere complexes are captured by Proposition \ref{prop:sphere-complexes}
  if and only if $\p{q} \not\in \mob$.
  Note that the (Euclidean) plane complex \eqref{eq:euclidean-plane-complex} is parabolic.
\end{remark}



\newpage


\begin{thebibliography}{BLPT2020}
  
\bibitem[AB2018]{AB}
  A.V.\ Akopyan and A.I.\ Bobenko, Incircular nets and confocal conics, {\sl Trans.\ AMS} {370:4} (2018), 2825--2854.

\bibitem[ABST2019]{ABST}
  A.V.\ Akopyan, A.I. Bobenko, W.K. Schief, J. Techter, On mutually diagonal nets on (confocal) quadrics and 3-dimensional webs,
  {\sl arXiv:1908.00856} (2019)

\bibitem[Bec1910]{Be}
  H.\ Beck, {Ein} {Seitenstück} zur {Moebius}’schen {Geometrie} der {Kreisverwandtschaften},
    {\sl Trans. Amer. Math. Soc.} 11, pp. 414-448 (1910).

\bibitem[Ben1973]{Ben1}
  W.\ Benz, {\sl Vorlesungen \"uber Geometrie der Algebren}, Vol. 197.
  Grundlehren Math. Wiss. Berlin, Springer, 1973
  
\bibitem[Ben1992]{Ben2}
  W.\ Benz, {\sl Geometrische Transformationen. Unter besonderer Ber\"ucksichtigung der Lorentztransformationen},
  BI Wissenschaftsverlag, 1992

\bibitem[Bla1910]{Bl1}
  W.\ Blaschke, Untersuchungen \"uber die Geometrie der Speere in der Euklidischen Ebene, Separatdruck aus ``Monatshefte f. Mathematik u. Physik'', XXI, Hamburg (1910).

\bibitem[Bla1928]{Blacht}
  W.\ Blaschke, Topologische Fragen der Differentialgeometrie II. Achtflachgewebe,
  {\sl Math. Z.} 28 (1928), 158–160.

\bibitem[Bla1923]{Bl2a}
  W.\ Blaschke, {\sl Vorlesungen \"uber Differentialgeometrie und Geometrische Grundlagen von Einsteins Relativit\"atstheorie. II.\ Affine Differentialgeometrie}, Ed. by K. Reidemeister. Vol. 7. Grundlehren Math. Wiss., Springer, Berlin-Heidelberg (1923).

\bibitem[Bla1929]{Bl2}
  W.\ Blaschke, {\sl Vorlesungen \"uber Differentialgeometrie und Geometrische Grundlagen von Einsteins Relativit\"atstheorie. III.\ Differentialgeometrie der Kreise und Kugeln}, Ed. by T. Thomsen. Vol. 29. Grundlehren Math. Wiss., Springer, Berlin-Heidelberg (1929).

\bibitem[Bla1954]{Blproj}
  W.\ Blaschke, {\sl Projektive Geometrie}, Springer (1954).
  
\bibitem[BPW2010]{BPW}
A.I.\ Bobenko, H.\ Pottmann, J.\ Wallner, A curvature theory for discrete surfaces based on mesh parallelity, {\sl Math. Annalen} {348} (2010), 1--24

\bibitem[BSST2016]{BSST16}
  A.I.\ Bobenko, W.K.\ Schief, Y.B.\ Suris and J.\ Techter, On a discretization of confocal quadrics. I.\ An integrable systems approach, {\sl J.\ Integrable Systems} {1} (2016) xyw005 (34 pp).

\bibitem[BSST2018]{BSST18}  
  A.I.\ Bobenko, W.K.\ Schief, Y.B.\ Suris and J.\ Techter, On a discretization of confocal quadrics. II.\ A geometric approach to general parametrizations, {\sl International Mathematics Research Notices}, doi:10.1093/imrn/rny279, (2018)

\bibitem[BST2018]{BST}
  A.I.\ Bobenko, W.K.\ Schief and J.\ Techter, Checkerboard incircular nets. Laguerre geometry and parametrisation, {\sl Geometriae Dedicata}  {204:1} (2020), 97-129

\bibitem[BS2006]{BS06}
A.I.\  Bobenko and Yu.B. Suris, Isothermic surfaces in sphere geometries as Moutard nets. {\sl Proc. Royal Society A} {463} (2006), 3173--3193.

\bibitem[BS2007]{BS07} 
A.I.\ Bobenko, Yu. B.\ Suris, On organizing principles of Discrete Differential Geometry. The geometry of spheres. {\sl Russian Math. Surveys} {62:1} (2007), 1--4

\bibitem[BS2008]{BS}
  A.I.\ Bobenko, Yu.B.\ Suris. {\em Discrete differential geometry. Integrable structure}. Graduate Studies in Mathematics, Vol. 98.  AMS, Providence, 2008, xxiv+404 pp.

\bibitem[B\"oh1970]{B}
  W.\ B\"ohm, Verwandte S\"atze \"uber Kreisvierseitnetze, {\sl Arch.\ Math.\ (Basel)} {21} (1970), 326--330.

\bibitem[Cec1992]{C}
  T.\ Cecil, {\it Lie Sphere Geometry: With Applications to Submanifolds.},
  Universitext. New York: Springer, 1992

\bibitem[Cha1841]{Cha}
  M.\ Chasles, C.\ Graves.
  {\it Two geometrical memoirs on the general properties of cones of the second degree, and on the spherical conics},
  Dublin: For Grant and Bolton, 1841.

\bibitem[Cox1971]{Cox}
  H.S.M.\ Coxeter, Inversive Geometry, {\sl Educational Studies in Mathematics}, vol. 3, no. 3/4 (1971) pp. 310–321.

\bibitem[DGDGallery]{gallery}
  {\sl DGD Gallery} (online at gallery.discretization.de),
  SFB/TRR 109 Discretization in Geometry and Dynamics.
	
\bibitem[Far2008]{Far}
R.T.\ Farouki, {\sl Pythagorean Hodograph Curves: Algebra and Geomery Inseparable}, Springer, 2008.	
	
\bibitem[Fla1956]{F1}
  K. Fladt, {Die nichteuklidische {Zyklographie} und ebene {Inversionsge}ometrie ({Geometrie} von {Laguerre}, {Lie} und {Möbius}) {I}},
  {\sl Arch. Math.} 7 (1956), pp. 391-398.

\bibitem[Fla1957]{F2}
  K. Fladt, {Die nichteuklidische {Zyklographie} und ebene {Inversionsge}ometrie ({Geometrie} von {Laguerre}, {Lie} und {Möbius}) {II}},
  {\sl Arch. Math.} 7 (1956), pp. 399-405.

\bibitem[Gie1982]{Gie}
  O.\ Giering, {\sl Vorlesungen \"uber h\"ohere Geometrie},
  Friedr. Vieweg \& Sohn, 1982

\bibitem[Gra1934]{G34}
  U.\ Graf, \"Uber Laguerresche Geometrie in Ebenen mit nichteuklidischer Maßbestimmung und den Zusammenhang mit Raumstrukturen der Relativit\"atstheorie,
  {\sl Tohoku Math. J.} 39 (1934), pp. 279-291.
  
\bibitem[Gra1937]{G37}
  U.\ Graf, Zur Liniengeometrie im linearen {Strahlenkomplex} und zur {Laguerreschen} {Kugelgeometrie},
  {\sl Math. Z.} 42 (1937), pp. 189-202.

\bibitem[Gra1939]{G39}
  U.\ Graf, Über {Laguerresche} {Geometrie} in {Ebenen} und {Räumen} mit nichteuklidischer {Metrik},
  {\sl Jahresber. Dtsch. Math.-Ver.} 45 (1939).

\bibitem[Gun2011]{G}
  C.\ Gunn, {\sl Geometry, {Kinematics}, and {Rigid} {Body} {Mechanics} in {Cayley}-{Klein} {Geometries}},
  Dissertation, TU Berlin, 2011
  
\bibitem[HPR1898]{HPR}
  M.\ Hermite, H.\ Poincaré, E. Rouché, {\sl Œuvres de {Laguerre}},
  Gauthiers-Villars, 1898

\bibitem[Hus1987]{husemoeller}
  D.\ Husemoeller, {\sl Elliptic Curves},
  Graduate Texts in Mathematics 111, Springer, 1987

\bibitem[Izm2017]{I}
  I.\ Izmestiev, Spherical and hyperbolic conics, {\sl arXiv:1702.06860} (2017)

\bibitem[Kle1928]{K}
  F.\ Klein, {\sl Vorlesungen über nicht-euklidische Geometrie.}
  Die Grundlehren der math.Wiss. 26, Springer, 1928

\bibitem[Kle1893]{K2}
  F.\ Klein, Vergleichende Betrachtungen \"uber neuere geometrische Forschungen
  {\sl Math. Ann.} 43 (1893) pp. 63-100


\bibitem[Lag1885]{L}
  E.N.\ Laguerre, {\sl Recherches sur la {Géométrie} de {Direction}},
  Gauthiers- Villars, 1885


\bibitem[NIST]{NIST}
  {\sl NIST Digital Library of Mathematical Functions} (online at dlmf.nist.gov),
  National Institute of Standards and Technology, U.S.\ Department of Commerce.
	
\bibitem[PW*2020]{PW20}
D.\ Pellis, H.\ Wang, M.\ Kilian, F.\ Rist, H.\ Pottmann and C.\ M\"uller, Principal symmetric meshes, {\sl ACM Trans. Graphics}
{39} (2020), 127:1--127:17.	
	
\bibitem[PP1998a]{PPa}
 M.\ Peternell and H.\ Pottmann, A Laguerre geometric approach to rational offsets, {\sl Comp.\ Aided Geom.\ Design\ }
{15} (1998), pp. 223--249. 

\bibitem[Por1995]{P}
  I.R.\ Porteous, {\sl Clifford Algebras and the Classical Groups},
  Cambridge University Press, 1995
	

\bibitem[PGB2010]{PGB}
  H.\ Pottmann, P.\ Grohs and B.\ Baschitz, Edge offset meshes in Laguerre geometry, {\sl Adv.\ Comp.\ Math.\ }{33:1} (2010), pp. 45--73.

\bibitem[PGM2009]{PGM}
  H.\ Pottmann, P.\ Grohs, N.J.\ Mitra, Laguerre minimal surfaces, isotropic geometry and linear elasticity,
  {\sl Adv. Comput. Math.} 31 (2009), pp. 391-419.
	
\bibitem[PL*2007]{PLW} 
H.\ Pottmann, Y.\ Liu, J.\ Wallner, A.\ Bobenko and W.\ Wang, Geometry of multi-layer
freeform structures for architecture, {\sl ACM Trans. Graphics} {26} (2007), 65:1--65:11.
	
\bibitem[PP1998b]{PPb}
H.\ Pottmann and M.\ Peternell, Applications of Laguerre geometry in CAGD, {\sl Comp.\ Aided Geom.\ Design\ }
{15} (1998), pp. 165--186. 

\bibitem[PW2001]{PW}
  H.\ Pottmann and J.\ Wallner, {\sl Computational Line Geometry}, Springer, 2001.

\bibitem[PW2008]{PW2}
  H.~Pottmann, J.~Wallner,
  The focal geometry of circular and conical meshes,
  {\sl Adv. Comp. Math.}, (2008), \textbf{29}, no.~3, 249--268.

\bibitem[Sau1925]{S}
  R.\ Sauer, Die Raumeinteilungen, welche durch Ebenen erzeugt werden, von denen je vier sich in einem Punkt schneiden,
  {\sl Sitzgsb. der bayer. Akad. der Wiss., math.-naturw. Abt.} (1925), pp. 41-56.

\bibitem[SPG2012]{SPG}
  M.\ Skopenkov, H.\ Pottmann, P.\ Grohs, Ruled {Laguerre} minimal surfaces,
  {\sl Math. Z.} 272 (2012), pp. 645-674.

\bibitem[Som1914]{Som}
  D.M.Y.\ Sommerville, {\sl Elements of Non Euclidean Geometry},
  G. Bell and Sons, Ltd. 1914

\bibitem[Sto1883]{Sto}
  W.E.\ Story,
  On Non-Euclidian properties of conics,
  {\sl Am. J. Math.}, 5:358–382, 1883.
  
  \bibitem[WW1927]{WW}
  E.T.\ Whittaker, G.N.\ Watson, {\sl A Course of Modern Analysis}, Cambridge University Press, 1927

\bibitem[Yag1968]{Y}
  I.M.\ Yaglom, {\sl Complex Numbers in Geometry},
  Academic Press, 1968
\end{thebibliography}
\end{document}